\newcommand{\defnword}[1]{\textbf{#1}}
\newcommand{\comment}[1]{}
\newcommand{\on}[1]{\operatorname{#1}}
\numberwithin{equation}{subsection}
\newtheorem{introthm}{Theorem}
\newtheorem{introconj}{Conjecture}
\newtheorem{introprop}{Proposition}
\newtheorem{proposition}[subsection]{Proposition}
\newtheorem{theorem}[subsection]{Theorem}
\newtheorem{conjecture}[subsection]{Conjecture}
\newtheorem*{thm*}{Theorem}
\newtheorem{lemma}[subsection]{Lemma}
\newtheorem*{lem*}{Lemma}
\newtheorem{corollary}[subsection]{Corollary}
\theoremstyle{definition}
\newtheorem{definition}[subsection]{Definition}
\newtheorem{example}[subsection]{Example}
\theoremstyle{remark}
\newtheorem{introrem}{Remark}
\newtheorem{remark}[subsection]{Remark}
\newtheorem{assumption}[subsection]{Assumption}
\newtheorem*{assump*}{Assumption}
\newcommand{\Real}{\mathbb{R}}
\newcommand{\Int}{\mathbb{Z}}
\newcommand{\Comp}{\mathbb{C}}
\newcommand{\Adele}{\mathbb{A}}
\newcommand{\Field}{\mathbb{F}}
\newcommand{\Rat}{\mathbb{Q}}
\newcommand{\Lie}{\on{Lie}}
\newcommand{\Map}{\on{Map}}
\newcommand{\Fun}{\on{Fun}}
\newcommand{\form}{\mathfrak{F}}
\newcommand{\colim}{\on{colim}}
\newcommand{\gr}{\on{gr}}
\newcommand{\Rg}{\mathscr{O}}
\newcommand{\Reg}[1]{\Rg_{#1}}
\newcommand{\Hom}{\on{Hom}}
\newcommand{\End}{\on{End}}
\newcommand{\Ext}{\on{Ext}}
\newcommand{\Aut}{\on{Aut}}
\newcommand{\Sym}{\on{Sym}}
\newcommand{\define}{\stackrel{\mathrm{def}}{=}}
\newcommand{\Gm}{\mathbb{G}_m}
\newcommand{\Gmh}[1]{\mathbb{G}_{m,#1}}
\newcommand{\Pic}{\on{Pic}}
\newcommand{\dR}{\on{dR}}
\newcommand{\pow}[1]{[\vert#1\vert]}
\newcommand{\Rep}{\on{Rep}}
\newcommand{\Mod}[2][ ]{\on{Mod}_{#2}^{#1}}
\newcommand{\Spec}{\on{Spec}}
\newcommand{\Spf}{\on{Spf}}
\newcommand{\Fzip}{F\mathrm{Zip}}
\newcommand{\shvMap}{\mathbb{M}ap}
\newcommand{\Fil}{\on{Fil}}
\newcommand{\Sh}{\on{Sh}}
\newcommand{\Ss}{\mathcal{S}}
\newcommand{\hker}{\operatorname{hker}}
\newcommand{\hcoker}{\operatorname{hcoker}}
\newcommand{\GSp}{\on{GSp}}
\newcommand{\GL}{\on{GL}}
\newcommand{\SL}{\on{SL}}
\newcommand{\op}{\on{op}}
\begin{document}
\title[Derived special cycles]{Derived special cycles on Shimura varieties}
\author{Keerthi Madapusi}
\address{Keerthi Madapusi\\
Department of Mathematics\\
Maloney Hall\\
Boston College\\
Chestnut Hill, MA 02467\\
USA}
\email{madapusi@bc.edu}

\begin{abstract}
I employ methods from derived algebraic geometry to give a uniform moduli-theoretic construction of special cycle classes on integral models many Shimura varieties of Hodge type, including unitary, quaternionic, and orthogonal Shimura varieties. All desired properties of these cycles, even for those corresponding to degenerate Fourier coefficients under the Kudla correspondence, follow naturally from the construction. I formulate Kudla's modularity conjectures in this general framework, and give some preliminary evidence towards their validity. 
\end{abstract}

\maketitle

\setcounter{tocdepth}{1}
\tableofcontents

\section{Introduction} 

In this article, I introduce methods from derived algebraic geometry to the study of special cycles on Shimura varieties. More precisely, I present a uniform construction of special cycle classes on a wide range of Shimura varieties of Hodge type at places of good reduction, and conjecture, following Kudla, that they can be organized into modular generating series.

\subsection{Special cycle classes in the generic fiber}

To explain the results, let's begin with a Shimura datum $(G,X)$ where $G$ has a $\Rat$-representation $W$ with the following additional structure: There is a $\Rat$-algebra $D$ equipped with a positive involution $\iota$ acting $G$-equivariantly on $W$ such that there is an $\iota$-Hermitian form $\Phi$ on $W$, preserved by $G$. Moreover, let's demand that every point $x\in X$ determine a Hodge structure on $W$ polarized by the symmetric form underlying $\Phi$, and also require that this structure is pure of weights $(-1,1),(0,0),(1,-1)$. 

A basic example here is the orthogonal or GSpin Shimura datum considered in~\cite{HMP:mod_codim}. Other instances arise from unitary Shimura data that are related to the stacks of Kudla and Rapoport studied in~\cite{Kudla2009-lc}. A \emph{canonical} instance arises by looking at the adjoint representation $\Lie G$ equipped with its Killing form (with $D= \Rat$).

Fix some compact open subgroup $K\subset G(\Adele_f)$, and let $\Sh_K$ be the Shimura variety over the reflex field $E$ of $(G,X)$ associated with $K$, so that we have
\[
\Sh_K(\Comp) = G(\Rat)\backslash X\times G(\Adele_f)/K.
\]
The representation $W$ with its form $\Phi$ gives rise to a polarized variation of Hodge structures $\Fil^\bullet_{\mathrm{Hdg}}\mathbf{dR}(W)$ over $\Sh_K$, and the quotient $\gr^{-1}_{\mathrm{Hdg}}\mathbf{dR}(W)$, the \defnword{tautological bundle} (associated with $W$), is a vector bundle of some rank $d_+$. In particular, we have its top Chern class
\[
c_{\mathrm{top}}(\gr^{-1}_{\mathrm{Hdg}}\mathbf{dR}(W))\in \mathrm{CH}^{d_+}(\Sh_K).
\]

For $n\ge 1$, let $\mathcal{S}(W(\Adele_f)^m)^K$ be the space of $K$-invariant elements in the Schwartz space of compactly supported, locally constant functions on $W(\Adele_f)^n$. Let $\mathrm{Herm}_n^+(D)$ be the set of positive semi-definite $n\times n$ $\iota$-Hermitian matrices over $D$. Let $\bm{T}_{\Sh_K}$ be the tangent bundle of $\Sh_K$, and make the following technical assumption:
\begin{assumption}
\label{assumption}
The Kodaira-Spencer map 
\[
\bm{T}_{\Sh_K}\to \underline{\Hom}(\gr^0_{\mathrm{Hdg}}\mathbf{dR}(W),\gr^{-1}_{\mathrm{Hdg}}\mathbf{dR}(W))
\]
associated with the variation of Hodge structures $\Fil^\bullet_{\mathrm{Hdg}}\mathbf{dR}(W)$ is \emph{injective}.
\end{assumption}

Following Kudla, one can then attach to every $\varphi\in \mathcal{S}(W(\Adele_f)^n)^K$ and $T\in \mathrm{Herm}_n^+(D)$ a cycle class 
\begin{align*}
C_{\Sh_K,W}(T,\varphi)\in \mathrm{CH}^{nd_+}(\Sh_K)_{\Comp}.
\end{align*}

This goes as follows\footnote{The presentation here is a little different than the one found in sources like~\cite{kudla:special_cycles},~\cite{Zhang2009-pr} or~\cite{MR2551992}, but will be convenient for what I aim to do in this paper.}: For any $\underline{w}\in W^n$, let
\[
\Phi(\underline{w}') = \left(\frac{\Phi(w'_i,w'_j)}{2}\right)_{1\leq i,j\leq n}
\]
be the Gram matrix attached to $\underline{w}'$. Write $W(\Rat)^n_T\subset W(\Rat)^n = W^n$ for the subspace of those $n$-tuples whose Gram matrix is $T$.

Given a lattice $U_{\Int}\subset W^n$ whose base-change over $\Adele_f$ is stabilized by $K$ (we will call this a \defnword{$K$-stable} lattice) and a $K$-invariant coset $\mu\in W^n/U_{\Int}$, we consider the locally symmetric space 
\[
 X(U_{\Int},\mu,T)= \{(\underline{w},x,g)\in W(\Rat)^n_T\times X\times G(\Adele_f):\;\underline{w}\in g(\mu+U_{\Int}(\widehat{\Int})),\;h_x(i)\cdot w = w\},
\]
from which we obtain a finite and unramified map $Z_K(U_{\Int},\mu,T)\to \Sh_K$ over $E$ with
\[
Z_K(U_{\Int},\mu,T)(\Comp) = G(\Rat)\backslash X(U_{\Int},\mu,T)/K.
\]

One checks that, under Assumption~\ref{assumption}, $Z_K(U_{\Int},\mu,T)$ has relative codimension $d_+r(T)$ over $\Sh_K$, where $r(T)$ is the rank of the matrix $T\in M_n(D)$, and so gives a class $[Z_K(U_{\Int},\mu,T)]\in \mathrm{CH}^{d_+r(T)}(\Ss_K)$. There is now a canonical assignment
\begin{align}
\label{eqn:char0_cycle_class}
\mathcal{S}(W(\Adele_f)^n)^K\xrightarrow{\varphi\mapsto C_{\Sh_K,W}(T,\varphi)}\mathrm{CH}^{d_+n}(\Sh_K)_{\Comp}
\end{align}
determined by the property that when $\varphi$ is the characteristic function of the coset $\mu + U_{\Int}(\widehat{\Int})$, $C_{\Sh_K,W}(T,\varphi)$ is the \defnword{corrected class}
\[
 C_{\Sh_K,W}(T,\varphi) = [Z_K(U_{\Int},\mu,T)]\cdot c_{top}(\gr^{-1}_{\mathrm{Hdg}}\mathbf{dR}(W))^{n-r(T)}.
\]

This definition is justified---among other things---by the fact that it has the following properties:
\begin{enumerate}
	\item (Linear invariance) For $g\in \GL_n(D)$, we have
	\[
     C_{\Sh_K,W}({}^t\iota(g)Tg,L_g\varphi) = C_{\Sh_K,W}(T,\varphi),
	\]
	where $L_g\varphi(\underline{w}) = \varphi(g^{-1}\underline{w})$.
	\item (Product formula) For $n_i\in \Int_{\ge 1}$, $\varphi_i\in \mathcal{S}(W(\Adele_f)^{n_i})$ and $T_i\in \mathrm{Herm}^+_{n_i}(D)$, for $i=,1,2$, we have
	\[
     C_{\Sh_K,W}(T_1,\varphi_1)\cdot C_{\Sh_K,W}(T_2,\varphi_2) = \sum_{T = \begin{pmatrix}
     T_1&*\\
     *&T_2
     \end{pmatrix}}C_{\Sh_K,W}(T,\varphi_1\otimes \varphi_2).
	\]
\end{enumerate}

The first property is immediate from the definition, while the second property requires a bit of argument; see~\cite[Theorem 4.15]{Kudla2019-dq} for the case of orthogonal Shimura varieties associated with quadratic spaces over totally real fields. We will also recover both statements independently (and in quite a bit of generality) below using the methods of this paper.

The two properties above are consequences of---and evidence for---conjectures of Kudla on the modularity of generating series obtained from these cycle classes, an instance of an anticipated \emph{geometric} theta correspondence; see for instance~\cite{kudla:special_cycles} and also Conjecture~\ref{introconj} below.

\subsection{Cycle classes over the integral model: The main theorem}

Suppose now that $(G,X)$ is of \emph{Hodge type}: This means that $G$ admits a \emph{faithful} representation$(H,\psi)$ of \defnword{Siegel type}---that is, a representation with a symplectic form $\psi$ preserved by $G$ up to similitudes, such that the corresponding variation of Hodge structures is polarized by $\psi$ of weights $(-1,0),(0,-1)$. Then the Shimura variety can be viewed as a moduli space of abelian varieties with some additional structure. 

Kisin has constructed~\cite{kisin:abelian} a smooth integral model $\Ss_K$ over $\Reg{E}[D_K^{-1}]$, where $D_K\in \Int$ is the product of the primes $p$ such that $K_p$ is not hyperspecial (see~\cite{Kim2016-fb} for the case $p=2$). With an eye towards Kudla's \emph{arithmetic} modularity conjectures, one would now like to have well-behaved cycle classes over $\Ss_K$ restricting to the corrected classes $C_{\Sh_K,W}(T,\varphi)$ over the generic fiber. 

Given a $K$-stable lattice $W_{\Int}\subset W$\footnote{By this, I mean that the associated $\widehat{\Int}$-lattice in $W(\Adele_f)$ is $K$-stable.}, the filtered vector bundle $\Fil^\bullet_{\mathrm{Hdg}}\mathbf{dR}(W)$ has a natural extension $\Fil^\bullet_{\mathrm{Hdg}}\mathbf{dR}(W_{\Int})$ over $\Ss_K$, and so has a well-defined top Chern class in $\mathrm{CH}^{d_+}(\Ss_K)$. An immediate guess would now be to first extend $Z_K(U_{\Int},\varphi,T)$ to a finite scheme over $\Ss_K$ via normalization or flat Zariski closure, and to define the corrected classes via the procedure used in the generic fiber. However, there is no good reason for such a construction to be as well-behaved as its counterpart over the generic fiber, and indeed, both properties that justified the definition of the corrected cycles $C_{\Sh_K,W}(T,\varphi)$ will in general fail to hold for this na\"ive extension over the integral model. One needs to therefore find a different way of constructing the desired cycles.

This was the problem solved in~\cite{HMP:mod_codim} in the special case of orthogonal Shimura varieties. In \emph{loc. cit.}, we also verified that Kudla's modularity conjecture is valid for these Chow classes over the integral model as well, following the strategy of W. Zhang and Bruinier-Raum in the generic fiber; see~\cite{bruinier_raum}. The proofs of linear invariance and the product formula there---which play a crucial role in the proof of modularity---involve arguments modeled on the ideas in~\cite{Howard2019-td}, which seem particular to the situation where $d_+ = 1$, and do not seem to generalize in any obvious way.

The main result of this paper is that one can construct these cycle classes quite generally, under the following technical assumption often satisfied in practice (but we will not require any analogue of Assumption~\ref{assumption}):
\begin{assumption}
\label{assump:hodge_emb}
There exist symplectic representations $(H_1,\psi_1)$, $(H_2,\psi_2)$ of Siegel type and a $G$-equivariant embedding $W\subset \Hom(H_1,H_2)$.
\end{assumption}

\begin{introthm}
\label{introthm:main}
Suppose that Assumption~\ref{assump:hodge_emb} holds. Then for any $T\in\mathrm{Herm}_n^+(D)$, there exists a natural assignment
\begin{align*}
\mathcal{S}(W(\Adele_f)^n)^K&\to \mathrm{CH}^{nd_+}(\Ss_K)_{\Comp}\\
\varphi&\mapsto  \mathcal{C}_{\Ss_K,W}(T,\varphi)
\end{align*}
satisfying the linear invariance property and the product formula described above. Moreover, when Assumption~\ref{assumption} holds, the restriction of this assignment over the generic fiber agrees with~\eqref{eqn:char0_cycle_class}.
\end{introthm}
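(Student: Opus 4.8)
The plan is to realize the corrected cycle classes $\mathcal{C}_{\Ss_K,W}(T,\varphi)$ as pushforwards of fundamental classes along certain \emph{derived} special cycle stacks $\mathcal{Z}_K(U_{\Int},\mu,T)$ mapping to $\Ss_K$, constructed using the moduli interpretation of $\Ss_K$ afforded by the Hodge embeddings in Assumption~\ref{assump:hodge_emb}. The key point is that the Hodge embedding $W \subset \Hom(H_1,H_2)$ realizes the relevant de Rham cohomology sheaves as direct summands of the de Rham cohomology of the universal abelian schemes, so that an element $\underline{w}$ of $W^n$ with integral structure becomes a collection of morphisms (up to isogeny) between the abelian schemes; imposing that these morphisms be ``special'' — i.e.\ that they lie in the appropriate sub-local-system and kill the Hodge filtration in the prescribed way — defines $\mathcal{Z}_K(U_{\Int},\mu,T)$ as a derived zero locus inside the relevant moduli stack. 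Because the construction is a derived vanishing locus of a section of an explicit vector bundle of rank $n d_+$, it automatically carries a fundamental class in $\mathrm{CH}^{n d_+}(\Ss_K)$ of the expected codimension, \emph{without} any regularity hypothesis like Assumption~\ref{assumption}, and the dependence on $\varphi$ is then obtained by the standard limit procedure: one defines $\mathcal{C}_{\Ss_K,W}(T,\varphi)$ for $\varphi$ the characteristic function of a $K$-stable coset $\mu + U_{\Int}(\widehat{\Int})$ as the pushforward of that fundamental class, and extends $\Comp$-linearly using linear invariance to cover all of $\mathcal{S}(W(\Adele_f)^n)^K$.

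The two formal properties should fall out of the construction. \emph{Linear invariance} under $\GL_n(D)$ is built in: changing $\underline{w}$ by $g \in \GL_n(D)$ replaces $T$ by ${}^t\iota(g) T g$ and the coset by its $g$-translate, but leaves the derived vanishing locus — hence its class — unchanged, so one only has to check that the bookkeeping of lattices and cosets is equivariant, which is routine. The \emph{product formula} should follow from a derived base-change / excess-intersection identity: the fibered product $\mathcal{Z}_K(T_1,\varphi_1) \times_{\Ss_K} \mathcal{Z}_K(T_2,\varphi_2)$ decomposes, over the locus where the two families of special morphisms are imposed simultaneously, as a disjoint union of the derived cycles $\mathcal{Z}_K(T,\varphi_1 \otimes \varphi_2)$ indexed by the possible off-diagonal blocks completing $T_1, T_2$ to $T$; since the derived fundamental class is compatible with derived intersection products, taking classes gives exactly the stated sum. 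The one subtlety is that the naive product of derived cycles may be empty or of the wrong dimension for individual blocks $T$ with $r(T) > n_1 + n_2$, but these contribute zero, which is again automatic in the derived framework because the virtual class of an empty or excess-dimensional piece vanishes or is supported where expected.

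The main obstacle will be the \emph{comparison with the generic fiber}, i.e.\ showing that under Assumption~\ref{assumption} the derived cycle $\mathcal{Z}_K(U_{\Int},\mu,T)$ restricted to $\Sh_K$ recovers the corrected class $[Z_K(U_{\Int},\mu,T)] \cdot c_{\mathrm{top}}(\gr^{-1}_{\mathrm{Hdg}}\mathbf{dR}(W))^{n - r(T)}$. The heart of this is a local structural analysis: étale-locally on $\Sh_K$ near a point of $Z_K(U_{\Int},\mu,T)$, the derived vanishing locus should be shown to be the product (in the derived sense) of the honest special cycle $Z_K(\cdot,\cdot,T')$ for a rank-$r(T)$ matrix $T'$ with a derived complete intersection cut out by a regular section of a bundle of rank $d_+(n - r(T))$ — and Assumption~\ref{assumption} is precisely what guarantees, via the Kodaira–Spencer injectivity, that the section defining $Z_K(\cdot,\cdot,T)$ is as transverse as possible so that the derived structure concentrates exactly in the excess directions. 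One then invokes the excess intersection formula to identify the self-intersection contribution of the degenerate directions with the top Chern class power of the tautological bundle. Carrying this out will require a careful local description of the Kodaira–Spencer map and of the section whose derived zero locus defines the cycle; I expect this to be the most technical part of the paper, and the reason the generic-fiber comparison is stated as a conditional (``when Assumption~\ref{assumption} holds'') rather than unconditional assertion.
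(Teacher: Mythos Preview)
Your high-level strategy is correct and matches the paper's: construct quasi-smooth derived cycles $\mathcal{Z}_K(U_{\Int},\mu,T)\to\Ss_K$ of virtual codimension $nd_+$, take their virtual fundamental classes, and deduce linear invariance and the product formula from geometric identities among the derived stacks themselves. Where you go wrong is in the sentence ``the construction is a derived vanishing locus of a section of an explicit vector bundle of rank $nd_+$''. This is precisely what does \emph{not} work, and the bulk of the paper is devoted to getting around it.

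Concretely: the natural starting point is the derived mapping stack $\widetilde{\mathbb{H}}(\mathcal{A}_1,\mathcal{A}_2)$ of zero-preserving maps between the universal abelian schemes. Its cotangent complex over $\Ss_K$ is the pullback of $\omega_{\mathcal{A}_2}\otimes(\tau^{\ge 1}R\pi_{1,*}\Reg{\mathcal{A}_1})^\vee$, which has Tor amplitude $[-\dim\mathcal{A}_1,-1]$, so $\widetilde{\mathbb{H}}(\mathcal{A}_1,\mathcal{A}_2)$ is \emph{not} quasi-smooth unless $\mathcal{A}_1$ is an elliptic curve. There is no evident section of a rank-$nd_+$ bundle whose derived zero locus gives the right object; the author states explicitly that no direct moduli-theoretic construction of the desired quasi-smooth substack $\mathbb{H}(\mathcal{A}_1,\mathcal{A}_2)$ is known in general. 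What the paper does instead is build, for each prime $p$, a theory of derived transversally filtered $F$-crystals over $\Ss^{\form}_{K,p}$ and show (Theorem~\ref{introthm:filcrys}) that their spaces of sections are representable by locally quasi-smooth $p$-adic formal schemes; an analogous infinitesimal-crystal construction works over the generic fiber. These local quasi-smooth objects are then glued via a derived Beauville--Laszlo argument (Proposition~\ref{introprop:gluing}) into a global closed quasi-smooth substack of $\widetilde{\mathbb{H}}(\mathcal{A}_1,\mathcal{A}_2)$. This is the technical heart of the paper, and your proposal skips it entirely.

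You also misjudge where the difficulty lies. The generic fiber comparison under Assumption~\ref{assumption}, which you flag as ``the most technical part'', is in fact a short consequence of the construction: once one knows the cotangent complex is $\bm{co}(W_{\Int})[1]$, smoothness of the classical truncation over $\Sh_K$ plus Lemma~\ref{lem:quasi-smooth_with_section} identifies the virtual class with the na\"ive class times the appropriate top Chern power (Remark~\ref{rem:cycles_over_generic_fiber}). No delicate local excess-intersection analysis is needed there.
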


\begin{introrem}
\label{introrem:cycles}
Some details of the construction can be found in~\eqref{introsubsec:derived} and later subsections of this introduction. For now, here are some auxiliary remarks:
\begin{enumerate}[label=(\Alph*)]
    \item The cycle classes are independent of the choice of symplectic representations $H_1$ and $H_2$ in Assumption~\ref{assump:hodge_emb}.

    \item The theorem applies in essentially\footnote{To remove this adverb, one would have to extend the methods here to Shimura varieties of abelian type, as well as to their integral models at places of bad reduction.} every known context where one expects to have special cycle classes on Shimura varieties in a range of codimensions. These include:
    
    \begin{enumerate}[label=(\roman*)]
    	\item The classical orthogonal case considered in~\cite{kudla:special_cycles},~\cite{Zhang2009-pr} or~\cite{MR2551992}, involving quadratic spaces over totally real fields that are definite at all but one real place. In the case where the field is $\Rat$, the construction of the desired cycle classes over the integral model can be found in~\cite{HMP:mod_codim}, but already for arbitrary totally real fields, the results here appear to be new.
    	\item The more general orthogonal case considered in~\cite{Kudla2019-dq} involving quadratic spaces over totally real fields that are either definite or have signature $(n,2)$ at every real place. Here, the theorem recovers in particular the construction in~\emph{loc. cit.} of the corrected cycle classes over the generic fiber.
    	\item The unitary Shimura varieties of Kudla and Rapoport~\cite{Kudla2009-lc}: while the case of signature $(n,1)$ has been considered both in the cited work and in subsequent work such as~\cite{Howard2019-td}, the results here appear to be the first that produce a good theory of cycle classes in arbitrary codimension for unitary groups of arbitrary signature.\footnote{There is actually a much more direct construction of the cycle classes here, still using derived methods, but without any crystalline theory. This appeared in an earlier version of this paper, which can be found on the arXiv.} The results also work for unitary groups over totally real fields, as in the context found in~\cite{Mihatsch2021-mf}.
    	\item There are several new examples one can produce in this framework that seem to have not been systematically considered before; see~\ref{rem:adjoint} below for instance.
    \end{enumerate}

  \item The actual construction involves derived algebraic geometry, but has the advantage that it is entirely geometric and---outside of the eventually inessential symplectic representations arising in Assumption~\ref{assump:hodge_emb}---involves no non-canonical choices. The linear invariance property and the product formula can be understood directly in terms of the moduli interpretation of certain derived stacks.

  \item In particular, we recover the formulas for the corrected cycle classes in the generic fiber as a \emph{consequence} of the general construction, instead of their being instituted by fiat. Finding such an intrinsic definition was one of the original motivations for this work.

	\item In what is perhaps the most noticeable departure from the story of special cycles in the literature, I do \emph{not} treat the representation $W$ as being intrinsic to the Shimura datum. This allows for direct comparison between cycles associated with different representations,  leading, among other things, to a proof of the so-called pullback identities (see Corollary~\ref{cor:pullback_trivial} below) that works only over $\Ss_K$ without reference to any larger Shimura varieties. This flexibility is afforded almost entirely because of the use of derived methods, which erase the distinction between proper and improper intersections between cycles.
    
	\item\label{rem:adjoint} The eventual goal is to realize the cycle classes above as Fourier coefficients of a geometric theta series for a certain quasi-split unitary group $\mathrm{U}_D(n,n)$ (see Conjecture~\ref{introconj} below). Along the lines of the previous remark, however, I am not requiring $G$ to be part of a dual reductive pair with $\mathrm{U}_D(n,n)$. 

	This allows for consideration of perhaps some unusual examples such as the adjoint representation of any Shimura variety of Hodge type: this gives rise to classes in codimension $\dim \Sh_K$ on $\Ss_K$ that should be organized into Fourier coefficients of (vector valued half-integer weight) \emph{classical} modular forms for $\SL_2$. The dual reductive group here would be the orthogonal group associated with a certain twist of the Killing form on $\Lie G$, which has signature $(\dim\Sh_K,\dim \Lie G - \dim\Sh_K)$. Except in some low dimensional cases, this does not have an associated Shimura variety. What we are seeing here is a shadow of a geometric theta correspondence that should extend beyond the ambit of groups forming part of Shimura data.
\end{enumerate}

\end{introrem}

\subsection{Modularity}
\label{introsubsec:modularity}

One can now formulate the main conjecture, which is essentially due to Kudla, though it appears to be stated in the literature explicitly only for certain orthogonal and unitary Shimura varieties. My chief contribution here is making the objects involved defined unconditionally.
\begin{introconj}\label{introconj}
For $\varphi\in \mathcal{S}(W(\Adele_f)^n)^K$, the formal generating series 
\[
\Theta^{\mathrm{geom}}_{\Ss_K,W}(\varphi,\tau) = \sum_{T\in \mathrm{Herm}_n^+(D)}\mathcal{C}_{\Ss_K,W}(T,\varphi)\bm{q}^T 
\]
converges absolutely to a parallel weight $\mathrm{rank}_D(W)/2$ automorphic form relative to the Weil representation on the space $\mathcal{S}(W(\Adele)^n)$ for the unitary group $\mathrm{U}_D(n,n)$ attached to the standard split $2n$-dimensional skew $\iota$-Hermitian space over $D$.
\end{introconj}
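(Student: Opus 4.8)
Since the statement is a conjecture, what follows is a plan of attack rather than a proof. Theorem~\ref{introthm:main} is tailored to supply the two formal inputs---linear invariance and the product formula---on which the bootstrap of W.~Zhang~\cite{Zhang2009-pr} and Bruinier--Raum~\cite{bruinier_raum} is built, so the plan is to run that machinery with the space of Fourier coefficients taken to be $V=\mathrm{CH}^{nd_+}(\Ss_K)_{\Comp}$ rather than $\Comp$, and with $\Sp_{2n}$ replaced by the quasi-split group $\mathrm{U}_D(n,n)$ (for $D=\Rat$ these coincide and one is literally in the Bruinier--Raum situation). Concretely: (i) prove that $\Theta^{\mathrm{geom}}_{\Ss_K,W}(\varphi,\tau)$ converges absolutely, in one---equivalently every---finite-dimensional quotient of $V$; (ii) set up, for $\mathrm{U}_D(n,n)$ and the Weil representation on $\mathcal{S}(W(\Adele)^n)$, the notion of a formal $V$-valued modular form together with the Fourier--Jacobi and Siegel-operator apparatus relating genus $n$ to genus $n-1$, and a criterion for a formal series to be genuinely modular (this is the Bruinier--Raum theorem for $D=\Rat$, is partly available for $D$ imaginary quadratic, and would need development in general); (iii) verify that criterion for $\Theta^{\mathrm{geom}}_{\Ss_K,W}(\varphi,\tau)$, using linear invariance to produce the required $\GL_n(D)$-equivariance and the product formula with $n_1=n-1$, $n_2=1$ to express its Fourier--Jacobi coefficients through the genus-$(n-1)$ series; and (iv) bottom out the resulting induction at genus $1$. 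Steps (ii) and (iii) are essentially formal once (iv) is available; the substance is in (i) and, above all, in (iv).

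For (i) I would fix an ample class $h$ on a toroidal compactification of $\Ss_K$ and bound the degrees $\deg\bigl(\mathcal{C}_{\Ss_K,W}(T,\varphi)\cdot h^{\dim\Ss_K-nd_+}\bigr)$. Since $\mathcal{C}_{\Ss_K,W}(T,\varphi)$ is represented, up to the Chern-class correction factors, by the derived special cycle parametrizing tuples $\underline{w}$ in a fixed $K$-orbit with Gram matrix $T$, and the defining condition $h_x(i)\cdot w=w$ confines each $w$ to a definite $\iota$-Hermitian lattice, these degrees grow at most polynomially in $\det T$; hence $\sum_T\deg(\cdots)\,\bm{q}^T$ converges on the Hermitian symmetric domain of $\mathrm{U}_D(n,n)$, with the expected shape of a convergent Siegel--Eisenstein series of weight $\mathrm{rank}_D(W)/2$. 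Alternatively, the contribution of the horizontal cycles can be estimated by reduction to the generic fibre via the last clause of Theorem~\ref{introthm:main}, leaving only the vertical cycles to be bounded directly.

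It is worth noting a preliminary simplification. The pullback identities of Corollary~\ref{cor:pullback_trivial}, applied to the Hodge embedding of Assumption~\ref{assump:hodge_emb}, realize $\Theta^{\mathrm{geom}}_{\Ss_K,W}(\varphi,\tau)$ as a theta-type image of $\Theta^{\mathrm{geom}}_{\Ss_K,\Hom(H_1,H_2)}$---note that $\Hom(H_1,H_2)$, with symmetric form $f,g\mapsto\tr\bigl(\psi_1^{-1}f^{\vee}\psi_2 g\bigr)$ and Hodge weights $(-1,1),(0,0),(1,-1)$, is itself of the type considered---by an operation that preserves automorphy. One may thus reduce to $W=\Hom(H_1,H_2)$, where the cycle classes have a transparent moduli description as loci of special homomorphisms between the two universal abelian schemes. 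This focuses the problem but does not dispose of it: modularity is equally open there for $d_+>1$.

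The real obstacle is (iv): that $\sum_{t\ge 0}\mathcal{C}_{\Ss_K,W}(t,\varphi)\,\bm{q}^t$ is a vector-valued, weight $\mathrm{rank}_D(W)/2$, possibly half-integral modular form for $\mathrm{U}_D(1,1)$---for $D=\Rat$, a classical modular form for $\SL_2$---with values in $\mathrm{CH}^{d_+}(\Ss_K)_{\Comp}$. For $d_+=1$ this is exactly the content of~\cite{HMP:mod_codim} in the orthogonal case, proved via Borcherds products and the embedding trick of~\cite{Howard2019-td}; but for $d_+>1$, Borcherds' construction yields only relations among divisors, and no substitute is presently known. The route I would pursue, in the spirit of Remark~\ref{introrem:cycles}, is to realize the entire genus-$1$ series as a single \emph{geometric theta kernel}: a class on $\Ss_K\times Y$, with $Y$ a modular curve (or its $\mathrm{U}_D(1,1)$-analogue), obtained as a derived special cycle for $W$ on the product and extended across a toroidal boundary of $Y$, whose $t$-th Fourier coefficient along the cusp is $\mathcal{C}_{\Ss_K,W}(t,\varphi)$; modularity would then descend from the modular geometry of $Y$. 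Constructing such a kernel, controlling the boundary behaviour of the derived cycle, and matching Fourier coefficients is where the genuine difficulty lies---this, and not the formal steps (ii)--(iii), is the crux. Failing that, one falls back on the arithmetic Siegel--Weil philosophy: identify the pairing of $\Theta^{\mathrm{geom}}_{\Ss_K,W}(\varphi,\tau)$ against a fixed algebraic class with (a derivative of) an explicit Eisenstein series; but the local computations this needs are not yet available in the generality of Theorem~\ref{introthm:main}.
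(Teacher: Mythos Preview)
You correctly recognize that this is a conjecture, and the paper does not prove it either. What the paper actually establishes is: (a) Proposition~\ref{prop:easy_modularity}, automorphy of $\Theta^{\mathrm{geom}}_{\Ss_K,W}$ with respect to the Siegel parabolic $\mathrm{P}_D(n,n)$, deduced from the linear and translation invariance properties in Theorem~\ref{thm:main_thm_adelic}; and (b) Proposition~\ref{prop:embedding_trick}, the embedding trick. It then frames what remains, following Zhang, as absolute convergence together with automorphy under a Weyl element. Your Bruinier--Raum induction on genus is a reasonable refinement of that framing---it is the route used in~\cite{HMP:mod_codim} for $D=\Rat$---and your identification of the genus-$1$ base case as the crux, together with your acknowledgment that the formal-series criterion would need development for general $D$, matches the state of affairs.

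There is, however, a genuine error in your ``preliminary simplification'' paragraph. Corollary~\ref{cor:pullback_trivial} and the embedding trick built on it require a decomposition $\tilde{W}=W\oplus W'$ with $G$ acting \emph{trivially} on $W'$: this is what makes $\Theta_{W'}$ a classical theta function (Remark~\ref{rem:trivial_theta}) and hence something one can divide by in the factorization $\Theta_{\tilde{W}}=\Theta_W\cdot\Theta_{W'}$. The embedding $W\subset\Hom(H_1,H_2)$ from Assumption~\ref{assump:hodge_emb} supplies no such trivial complement---the orthogonal complement of $W$ in $\Hom(H_1,H_2)$ carries a nontrivial $G$-action in general---so you cannot reduce modularity for $W$ to modularity for $\Hom(H_1,H_2)$ by this mechanism. (The embedding trick runs in the opposite direction: one \emph{enlarges} $W$ by a trivial summand to land in a setting where $n$ is small relative to the dimension.) Fortunately this paragraph is not load-bearing in your overall plan and can simply be dropped.
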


I ask the reader to quickly glance at Section~\ref{sec:modularity} for the precise definitions involved in this conjecture.

The history of this conjecture and its known cases is quite long and storied, and I cannot recount it here; please refer to Kudla's article~\cite{kudla:special_cycles} and also C. Li's article~\cite{Li2023-uc}.  The following short remarks however can be seen as evidence:
\begin{enumerate}[label=(\Alph*)]
	\item The modularity of the associated generating series of cohomological classes in $H^{2nd_+}(\Sh_{K,\Comp},\Rat(nd_+))$ can be deduced from the constructions of Kudla and Millson~\cite{Kudla1986-ut},\cite{Kudla1987-ze},\cite{Kudla1988-gp},\cite{Kudla1990-om}. In \emph{loc. cit.} this is essentially done when the fixed points of $\iota$ are just the field of rational numbers. When $D = F$ is a totally real extension with $\iota$ trivial, and $W$ is an $(m+2)$-dimensional quadratic space over $F$ that is {}of signature $(m,2)$ or $(m+2,0)$ at every real place of $F$, the desired modularity is the content of~\cite{Rosu2020-an}; see also~\cite[\S 5.3]{Kudla2019-dq}. The argument in general has yet to be written up fully.

	\item When $F = \Rat$ in the last mentioned example, the proof of the conjecture is essentially the content of~\cite{HMP:mod_codim}, building on the seminal work of Borcherds~\cite{borcherds:gkz}, and using the modularity criteria of Zhang and Bruinier-Raum~\cite{Zhang2009-pr},~\cite{bruinier_raum}.

	\item In the case of divisors (that is, where $n= d_+ = 1$) on unitary Shimura varieties of signature $(n,1)$, the conjecture (and quite a bit more) is proven in~\cite{MR4183376}.

	\item For Shimura varieties associated with orthogonal (or GSpin) groups over totally real fields, Kudla shows in~\cite{Kudla2019-dq} that the conjecture is implied in the generic fiber by the Bloch-Beilinson conjectures.

	\item In this article, I show that the `easy' part of the modularity statement, namely automorphy with respect to the Siegel parabolic does indeed follow easily from the construction; see Proposition~\ref{prop:easy_modularity}. As in~\cite{Zhang2009-pr}, the remaining (very hard) part is establishing absolute convergence and automorphy with respect to a Weyl element.

	\item I also show that the `embedding trick' from~\cite{Kudla2019-dq}, which was also used in the proof of the main result of~\cite{HMP:mod_codim}, applies quite generally; see Proposition~\ref{prop:embedding_trick}. This allows in some cases to `reduce' the conjecture to the situation where $n$ is small with respect to the dimension of $\Ss_K$.
\end{enumerate}

\subsection{The role of derived algebraic geometry}
\label{introsubsec:derived}

The classes in Theorem~\ref{introthm:main} will be obtained as linear combinations of virtual fundamental classes of \emph{quasi-smooth derived stacks} over $\Ss_K$. 

Let's unpack this: A basic observation that can be found in~\cite{Gillet1987-ny} is that the $K$-theoretic class of the Koszul complex associated with a section of a vector bundle of rank $d$ over a flat regular $\Int$-scheme $X$ lies in the $d$-th Adams eigenspace $K_0(X)^{(d)}_{\Rat}$. Via the comparison between $K$-theory and the rational Chow groups of $X$ (see Appendix~\ref{app:cycle_classes}), this then produces a cycle class in $\mathrm{CH}^d(X)_{\Rat}$, which is of course the top Chern class of the vector bundle. On the other hand, if the section is \emph{regular}, it is also the cycle class associated with the zero section as a \emph{physical} regularly immersed closed subscheme of $X$. So at least in the case of such a regular section, we have a natural globalization of the Koszul complex in $K$-theory: the structure sheaf of a regular immersion into $X$ of codimension $d$---and part of the way towards the proof of Grothendieck-Riemann-Roch in~\cite{sga6} is the assertion that the class of such a structure sheaf also lies in the $d$-th Adams eigenspace.

Derived algebraic geometry allows one to dispense with the regularity hypothesis from this last assertion. Roughly speaking, just like Grothendieck's introduction of non-reduced rings allowed one to distinguish between a subspace cut out by an equation $x$ and another cut out by $x^2$, derived methods allow us further distinguish both from a `subspace' defined by the equations $x,x^2$. More concretely, for any sequence of elements $a_1,\ldots,a_d$ in a ring $R$, one can write down a \emph{derived} quotient $R/{}^{\mathbb{L}}(a_1,\ldots,a_d)$ obtained via the derived base change
\[
R\otimes^{\mathbb{L}}_{a_i\mapsfrom x_i,\Int[x_1,\ldots,x_d]}\Int.
\]
This is an example of an \emph{animated} or \emph{derived} commutative ring (and even an $R$-algebra in this case): it has the classical or discrete commutative ring $R/(a_1,\ldots,a_d)$ as its underlying ring of connected components, but can in general have non-zero higher homotopy or homology groups; in this case they are simply the cohomology groups of the corresponding Koszul complex. 

Before we go further, a brief pr\'ecis on derived algebraic geometry: It is geometry that is locally modeled by spaces whose rings of functions are animated commutative rings. Here, an animated commutative ring can be thought of as a simplicial commutative ring up to homotopy. We will take the functor of points approach in this paper: objects in derived algebraic geometry will be functors on animated commutative rings, but valued in the ($\infty$-)category of spaces or homotopy types, rather than sets. Any animated commutative ring $R$ gives rise via its corepresentable functor to an \emph{affine} derived scheme $\Spec R$, the \emph{spectrum} of $R$. A derived scheme (resp. derived algebraic space, resp. derived Deligne-Mumford stack) is a functor that satisfies \'etale descent, and admits a covering by affine derived schemes in a suitable topology: Zariski, Nisnevich or \'etale in each of the listed cases; see~\cite[\S 2.1]{Khan2020-pd}. Since any animated commutative ring $R$ can be viewed as a (pro-)nilpotent thickening of its classical truncation $\pi_0(R)$, we can think of a derived scheme as a classical scheme along with a `thickening' of its structure sheaf in higher `derived' directions.

We now return to our particular setting: $\Spec R/{}^{\mathbb{L}}(a_1,\ldots,a_d)$ is an instance of a \defnword{quasi-smooth} derived closed subscheme of $\Spec R$ of \defnword{virtual codimension $d$}. Such objects are studied in~\cite{Khan2018-dk}, and are a generalization of regular immersions to the derived context. One should view them as morphisms $f:Y\to X$ of derived schemes that are relatively affine, and where $f_*\Reg{Y}$ can locally on the source be represented by a finite Koszul complex on $X$ of length $d$. In particular, the zero section of \emph{any} section of a vector bundle over a scheme $X$ of rank $d$ will always give rise to a quasi-smooth closed derived subscheme $Z\hookrightarrow X$ of virtual codimension $d$. 

For future reference, I'll note that there is a more intrinsic definition of a quasi-smooth morphism $Z\to X$ involving the \emph{relative cotangent complex} $\mathbb{L}_{Z/X}$. This is a complex of quasi-coherent sheaves (or rather an object in the derived category of such sheaves), and it controls the deformation theory of $Z$ relative to $X$. A finitely presented morphism $Z\to X$ is quasi-smooth and unramified of virtual codimension $d$ precisely when the shifted complex $\mathbb{L}_{Z/X}[-1]$\footnote{We will always use \emph{co}homological conventions for complexes in this paper.} is (quasi-isomorphic to) a vector bundle of rank $d$ over $Z$. Note that this recovers the usual differential criterion for a closed immersion to be regular.

A. Khan has been able to show that every quasi-smooth morphism $Z\to X$ has a canonical \defnword{virtual fundamental class} $[Z/X]$ attached to it in a degree determined by its virtual codimension $d$. One can realize this fundamental class in various motivic cohomology theories as in~\cite{khan:virtual}, but for our purposes here it turns out to be more convenient to use the $K$-theoretic realization Khan studies in~\cite{khan:gtheory}, since it has a direct relationship with Chow groups with rational coefficients for the spaces of interest to us. Using the methods of~\cite{sga6} in a derived context, Khan shows that the structure sheaf of $Z$ lies in $K_0(X)^{(d)}_{\Rat}$ and thus gives rise to a cycle class in $\mathrm{CH}^d(X)_{\Rat}$.

With this in mind, Theorem~\ref{introthm:main} is now a consequence of the following geometric result:
\begin{introthm}
\label{introthm:derived_cycles}
Suppose that we have a $K$-invariant coset $\mu + U_{\Int}(\widehat{\Int})$ for a $K$-stable lattice $U_{\Int}\subset W^n$. Then, for any $T\in \mathrm{Herm}_n^+(D)$, there is a canonical quasi-smooth and finite unramified morphism $\mathcal{Z}_K(U_{\Int},\mu,T)\to \Ss_K$ of virtual codimension $nd_+$ with the following properties:
\begin{enumerate}
	\item (Cotangent complex) Its cotangent complex over $\Ss_K$ is the pullback of $\bm{co}(U_{\Int})[1]$, where
	\[
     \bm{co}(U_{\Int}) = (\gr^{-1}_{\mathrm{Hdg}}\mathbf{dR}(U_{\Int}))^\vee.
	\]
	\item (Generic fiber) Its underlying classical object in the generic fiber recovers $Z_K(U_{\Int},\mu,\Lambda)$.
	\item (Linear invariance) If $g\in \GL_n(D)$, there is a canonical equivalence of $\Ss_K$-stacks
	\[
      \mathcal{Z}_K(U_{\Int},\mu,T)\xrightarrow{\simeq}\mathcal{Z}_K(gU_{\Int},g\mu,{}^t\iota(g)Tg).
	\]
	Here, the action of $g$ is via the identification $W^n = D^n\otimes_DW$.
	\item (Product identity) If $U'_{\Int}\subset W^{n'}$ is another $K$-stable lattice and $\mu'+U'_{\Int}(\widehat{\Int})$ a $K$-invariant coset, then for $T'\in \mathrm{Herm}^+_{n'}(T)$ we have
	\[
     \mathcal{Z}_K(U_{\Int},\mu,T)\times_{\Ss_K}\mathcal{Z}_K(U'_{\Int},\mu',T')\simeq \bigsqcup_{\tilde{T} = \begin{pmatrix}
     T&*\\
     *&T'
     \end{pmatrix}}\mathcal{Z}_K(U_{\Int}\oplus U'_{\Int},\mu+\mu',\tilde{T}).
	\]
 \end{enumerate}
\end{introthm}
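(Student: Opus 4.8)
The plan is to produce each $\mathcal{Z}_K(U_{\Int},\mu,T)$ as the \emph{derived zero locus} of a canonical section of an explicit vector bundle of rank $nd_+$, sitting over an étale cover of $\Ss_K$; Theorem~\ref{introthm:main} then follows by passing to virtual fundamental classes. Fix a Hodge embedding and auxiliary data as in Assumption~\ref{assump:hodge_emb}. This furnishes $\Ss_K$ with compatible integral de Rham, crystalline, and $\ell$-adic realizations of the $G$-representation $W$ and of any $K$-stable lattice $U_{\Int}\subset W^n$, together with the Hodge filtration on $\mathbf{dR}(U_{\Int})$ and the comparison isomorphisms between these realizations, all cut out of the cohomology of the universal abelian schemes attached to $H_1,H_2$. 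First I would build the parameter space $\mathcal{Y}=\mathcal{Y}_K(U_{\Int},\mu,T)\to\Ss_K$ which, étale-locally on $\Ss_K$, classifies those $n$-tuples $\underline{w}$ of sections of the $\ell$-adic realization of $W^n$ (equivalently, by Assumption~\ref{assump:hodge_emb}, of special quasi-homomorphisms between the associated abelian schemes) lying in the coset $\mu+U_{\Int}(\widehat{\Int})$ and with Gram matrix $\Phi(\underline{w})=T$. Since $K$ stabilizes $U_{\Int}(\widehat{\Int})$ and preserves $\Phi$, these are locally constant conditions, so $\mathcal{Y}\to\Ss_K$ is étale (a disjoint union of finite étale covers, infinitely many once $T$ is degenerate). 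On $\mathcal{Y}$ there is a tautological tuple $\underline{w}^{\univ}$; transporting it into $\mathbf{dR}(U_{\Int})$ via the comparison and projecting modulo $\Fil^0_{\mathrm{Hdg}}$ — under the weight hypothesis, $\mathbf{dR}(U_{\Int})/\Fil^0_{\mathrm{Hdg}}=\gr^{-1}_{\mathrm{Hdg}}\mathbf{dR}(U_{\Int})$ has rank $nd_+$ — gives a canonical section $s_T$ of $\gr^{-1}_{\mathrm{Hdg}}\mathbf{dR}(U_{\Int})|_{\mathcal{Y}}$. I would then \emph{define} $\mathcal{Z}_K(U_{\Int},\mu,T)$ to be the derived zero locus of $s_T$ inside $\mathcal{Y}$.

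\textbf{Cotangent complex, quasi-smoothness, finiteness.} By construction $\mathcal{Z}_K(U_{\Int},\mu,T)\hookrightarrow\mathcal{Y}$ is the derived vanishing locus of a section of a rank-$nd_+$ bundle, hence quasi-smooth and unramified of virtual codimension $nd_+$, with $\mathbb{L}_{\mathcal{Z}/\mathcal{Y}}\simeq(\gr^{-1}_{\mathrm{Hdg}}\mathbf{dR}(U_{\Int}))^\vee[1]$. Since $\mathcal{Y}\to\Ss_K$ is étale, $\mathbb{L}_{\mathcal{Z}/\Ss_K}\simeq\mathbb{L}_{\mathcal{Z}/\mathcal{Y}}$ is the pullback of $\bm{co}(U_{\Int})[1]$, giving property (1) and re-confirming quasi-smoothness, unramifiedness, and the virtual codimension. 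It remains to verify that $\mathcal{Z}_K(U_{\Int},\mu,T)\to\Ss_K$ is \emph{finite}: quasi-finiteness is clear, and for properness — which includes showing that only finitely many components of $\mathcal{Y}$ meet $\mathcal{Z}$ — I would apply the valuative criterion together with positivity of $\Phi$ on special crystalline classes; over $\Comp$ this is exactly the statement that $\Phi$ is positive definite on the space of Hodge classes, so that the equations $\underline{w}\in\Fil^0_{\mathrm{Hdg}}$, $\Phi(\underline{w})=T$, $\underline{w}$ in the given coset, have only finitely many solutions. I expect this finiteness, for possibly degenerate $T$, to be one of the two genuinely delicate points.

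\textbf{Generic fiber and independence of auxiliary data.} Over $E$ there is an evident morphism $Z_K(U_{\Int},\mu,T)\to\pi_0(\mathcal{Z}_K(U_{\Int},\mu,T))_E$: a point of $Z_K$ records a tuple $\underline{w}$ in the coset with Gram matrix $T$ that is a Hodge class, hence in particular lies in $\Fil^0_{\mathrm{Hdg}}$, so $s_T$ vanishes on it. To see this is an isomorphism it suffices to check on $\Comp$-points, where, $\underline{w}$ being $\Rat$-rational via the Betti comparison, it reduces to the elementary fact that a rational class lying in $\Fil^0$ of a pure weight-$0$ Hodge structure with Hodge types $(-1,1),(0,0),(1,-1)$ must be of type $(0,0)$ — as $\Fil^0\cap\overline{\Fil^0}$ is the $(0,0)$-part — hence is a Hodge class; positivity of $\Phi$ on that part ensures the solution set is genuinely finite and matches the classical cycle. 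This gives property (2). Independence of the choice of $(H_i,\psi_i)$ (Remark~\ref{introrem:cycles}(A)) follows because the realizations of $W$ over $\Ss_K$, their Hodge filtrations, and the comparison isomorphisms are canonically determined by $(G,X)$ and $K$ through the functoriality of Kisin's integral models; two Hodge embeddings thus yield canonically identified data $(\mathcal{Y},s_T)$, hence canonically equivalent derived zero loci.

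\textbf{Linear invariance, the product identity, and the main obstacle.} Properties (3) and (4) are formal once the moduli description of $\mathcal{Y}$ is in hand — this being the main payoff of working with derived, rather than flat or normalized, cycles. For (3): $g\in\GL_n(D)$ acts on $W^n=D^n\otimes_D W$, carrying $\mu+U_{\Int}(\widehat{\Int})$ to $g\mu+gU_{\Int}(\widehat{\Int})$ and the condition $\Phi(\underline{w})=T$ to $\Phi(g\underline{w})={}^t\iota(g)Tg$, so it identifies $\mathcal{Y}_K(U_{\Int},\mu,T)$ with $\mathcal{Y}_K(gU_{\Int},g\mu,{}^t\iota(g)Tg)$ over $\Ss_K$; since $s_T$ only records the image of $\underline{w}$ in $\gr^{-1}_{\mathrm{Hdg}}\mathbf{dR}$, which is intrinsic to $W$ and blind to the $D^n$-coordinates, this identification is compatible with the sections and descends to the asserted equivalence of derived zero loci. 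For (4): writing $W^{n+n'}=W^n\oplus W^{n'}$, one has a canonical identification $\coprod_{\tilde{T}}\mathcal{Y}_K(U_{\Int}\oplus U'_{\Int},\mu+\mu',\tilde{T})\simeq\mathcal{Y}_K(U_{\Int},\mu,T)\times_{\Ss_K}\mathcal{Y}_K(U'_{\Int},\mu',T')$, the union being over $\tilde{T}$ with diagonal blocks $T,T'$ — the off-diagonal block of $\tilde{T}$ is precisely the free datum $\Phi(\underline{w},\underline{w}')$ — and under it the section $s_{\tilde{T}}$ becomes $s_T\boxplus s_{T'}$, because $\gr^{-1}_{\mathrm{Hdg}}\mathbf{dR}(U_{\Int}\oplus U'_{\Int})=\gr^{-1}_{\mathrm{Hdg}}\mathbf{dR}(U_{\Int})\oplus\gr^{-1}_{\mathrm{Hdg}}\mathbf{dR}(U'_{\Int})$; and the derived zero locus of a direct sum of sections on a product is the product of the derived zero loci, with no transversality needed — exactly where derived geometry earns its keep — yielding (4). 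The real obstacle is the first step: constructing, over the integral model $\Ss_K$ and not merely its generic fiber, the realizations of $W$, the Hodge filtration, and above all the comparison isomorphism defining $s_T$, which rests on the integral crystalline and $p$-adic comparison theory for the universal abelian schemes and on $W$ being cut out of their cohomology via Assumption~\ref{assump:hodge_emb}; the finiteness of $\mathcal{Z}_K(U_{\Int},\mu,T)$ for degenerate $T$ is the secondary delicate point.
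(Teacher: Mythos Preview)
The central gap is in the construction of the section $s_T$, and it is not a matter of detail but of principle. You propose an \'etale $\mathcal{Y}\to\Ss_K$ parameterizing either $\ell$-adic sections or special quasi-homomorphisms, together with a section $s_T$ of $\gr^{-1}_{\mathrm{Hdg}}\mathbf{dR}(U_{\Int})$ obtained ``via the comparison.'' Neither reading survives over the integral model. If $\mathcal{Y}$ parameterizes sections of the $\ell$-adic local system, it is only defined over $\Ss_K[\ell^{-1}]$, and more seriously, over a point of characteristic $p\neq\ell$ there is simply no comparison from $\ell$-adic to de~Rham or crystalline cohomology that would let you transport an $\ell$-adic class into $\mathbf{dR}(U_{\Int})$ --- these are different cohomology theories with no functorial map between them in this direction. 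If instead $\mathcal{Y}$ is the classical scheme of special (quasi-)homomorphisms, then it is only \emph{unramified}, not \'etale, over $\Ss_K$ (hence not smooth, and a derived zero locus over it will not have the asserted cotangent complex over $\Ss_K$); and in any case the de~Rham realization of an honest homomorphism of abelian schemes already lies in $\Fil^0_{\mathrm{Hdg}}$, so your section $s_T$ would be identically zero and cut out nothing.

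The paper confronts exactly this obstruction --- phrased there as the problem of \emph{integrating} the quotient $\mathbb{L}_{\widetilde{\mathbb{H}}(\mathcal{A}_1,\mathcal{A}_2)/\Ss_K}\twoheadrightarrow\bm{co}(W_{\Int})[1]$ of~\eqref{eqn:cotangent_cplx_integrable?} to the cotangent complex of an actual closed derived substack --- and does not solve it by exhibiting a global zero locus. Instead it builds the derived cycle \emph{locally} and glues: over the generic fiber via spaces of sections of derived filtered infinitesimal crystals (Section~\ref{sec:infcrys}), and over each $p$-adic completion via the filtered $F$-crystal structure, where Frobenius supplies the rigidity needed to make the space of sections representable by a locally quasi-smooth formal scheme (Theorem~\ref{introthm:filcrys}); a Beauville--Laszlo argument (Proposition~\ref{introprop:gluing}, Corollary~\ref{cor:exist_Y}) then patches these inside the derived mapping scheme $\widetilde{\mathbb{H}}(\mathcal{A}_1,\mathcal{A}_2)$. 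The paper explicitly notes (Remark~\ref{rem:derham_stack}) that without Frobenius the analogous space of sections is not representable, so there really is no global smooth $\mathcal{Y}$ carrying a section of the kind you want. Your arguments for properties (3) and (4) are fine and match the paper's once the construction exists; the construction is where all the work is.
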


When $\varphi$ is the characteristic function of $\mu + U_{\Int}(\widehat{\Int})$, the cycle class $\mathcal{C}_K(T,\varphi)$ is now recovered as the virtual fundamental class of $\mathcal{Z}_K(U_{\Int},\mu,T)$.

The previous theorem is deduced from a somewhat more general result. To phrase this cleanly, let $\mathrm{Latt}^0_{G,\mu,K}$ be the category of pairs $(W,W_{\Int})$, where $W$ is a representation of $G$ satisfying Assumption~\ref{assump:hodge_emb}, and $W_{\Int}\subset W$ is a $K$-stable lattice. Let $\mathrm{Aff}^{\op}_{/\Ss_K}$ be the $\infty$-category of pairs $(R,x)$, where $R$ is an animated commutative ring and $x\in \Ss_K(R)$ is an $R$-valued point: It is opposite to the $\infty$-category of derived affine schemes $x:\Spec R\to \Ss_K$ over $\Ss_K$. Let $\mathrm{qSmAb}_{/\Ss_K}$ be the $\infty$-category of locally (on the source) quasi-smooth and finite unramified maps $Y\to \Ss_K$ that represent functors 
\[
\mathrm{Aff}^{\op}_{/\Ss_K}\to \Mod[\mathrm{cn}]{\Int}
\]
to the derived category of connective complexes of abelian groups.

I deduce Theorem~\ref{introthm:derived_cycles} from the following result, which is the main content of Section~\ref{sec:shimura}:
\begin{introthm}
\label{introthm:W_scheme}
There is a functor
\begin{align*}
\mathrm{Latt}^0_{G,\mu,K}&\to \mathrm{qSmAb}_{/\Ss_K}\\
(W,W_{\Int})&\mapsto \mathcal{Z}_K(W_{\Int})
\end{align*}
along with a functorial equivalence
\[
\mathbb{L}_{\mathcal{Z}_K(W_{\Int})/\Ss_K}[-1]\xrightarrow{\simeq}\Reg{\mathcal{Z}_K(W_{\Int})}\otimes_{\Reg{\Ss_K}}\bm{co}(W_{\Int})
\]
and a functorial isomorphism
\[
\mathcal{Z}_K(W_{\Int})(\Comp) \simeq G(\Rat)\backslash X(W_{\Int})/K,
\]
where $X(W_{\Int}) = \{(w,x,g)\in W(\Adele_f)\times X\times G(\Adele_f):\; w\in gW_{\Int}(\widehat{\Int})\;;h_x(i)\cdot w = w\}$.
\end{introthm}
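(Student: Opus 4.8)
The plan is to realize $\mathcal{Z}_K(W_{\Int})$ as a homotopy fiber product over $\Ss_K$ that imposes the Hodge-filtration condition, \emph{in the derived sense}, on an \'etale ``space of integral realizations''. As a preliminary step, I would use Assumption~\ref{assump:hodge_emb} together with Kisin's construction of $\Ss_K$ over $\Reg{E}[D_K^{-1}]$ (and \cite{Kim2016-fb} at $p=2$) to produce, functorially in $(W,W_{\Int})$, the filtered vector bundle with integrable connection $\mathbf{dR}(W_{\Int})$ on $\Ss_K$: one realizes $W_{\Int}\subset\Hom(H_1,H_2)$ inside $\underline{\Hom}(\mathbf{dR}(H_{1,\Int}),\mathbf{dR}(H_{2,\Int}))$, where $\mathbf{dR}(H_{i,\Int})$ is the first de Rham/Dieudonn\'e homology of the universal abelian scheme attached to the $i$-th Siegel datum. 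This comes equipped with its Hodge sub-bundle $\Fil^0\mathbf{dR}(W_{\Int})$ and the tautological quotient $\gr^{-1}_{\mathrm{Hdg}}\mathbf{dR}(W_{\Int})$ of rank $d_+$, whose dual is $\bm{co}(W_{\Int})$.

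Next I would construct the $\Ss_K$-scheme $\mathbf{L}(W_{\Int})$ whose $R$-points over $x\in\Ss_K(R)$ are the homomorphisms $\mathbf{dR}(H_{1,\Int})\to\mathbf{dR}(H_{2,\Int})$ that are horizontal for the connection (equivalently, homomorphisms of the associated crystals at each $p\nmid D_K$, matched with the prime-to-$p$ \'etale realizations via the comparison isomorphisms) and whose value lies in the sub-object $W_{\Int}$, \emph{with no condition imposed on the Hodge filtrations}. The essential input here is that the $G$-structure cutting $W$ out of $\Hom(H_1,H_2)$ propagates crystalline over $\Ss_K\otimes\Int_{(p)}$: this is exactly the crystalline theory underlying Kisin's integral canonical models and the theory of special endomorphisms, and it is what makes ``$\phi\in W_{\Int}$'' a meaningful condition on horizontal homomorphisms of crystals. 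Since such a homomorphism is rigid, $\mathbf{L}(W_{\Int})\to\Ss_K$ is unramified with $\mathbb{L}_{\mathbf{L}(W_{\Int})/\Ss_K}\simeq 0$ (it is \'etale, of generally infinite degree); over $\Comp$, Riemann--Hilbert identifies it with the \'etale space of horizontal sections of the Betti local system underlying $\mathbf{dR}(W)$ that are integral with respect to $W_{\Int}$, so that $\mathbf{L}(W_{\Int})(\Comp) = G(\Rat)\backslash\{(w,x,g):\ w\in gW_{\Int}(\widehat{\Int})\cap W(\Rat)\}/K$. Finally $\mathbf{L}(W_{\Int})$ carries a tautological comparison morphism $\mathbf{L}(W_{\Int})\to\mathrm{Tot}_{\Ss_K}(\mathbf{dR}(W_{\Int}))$, $\phi\mapsto$ its de Rham realization, which does \emph{not} a priori factor through $\mathrm{Tot}_{\Ss_K}(\Fil^0\mathbf{dR}(W_{\Int}))$.

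I then set
\[
\mathcal{Z}_K(W_{\Int}) := \mathbf{L}(W_{\Int})\times^{\mathrm{h}}_{\mathrm{Tot}_{\Ss_K}(\mathbf{dR}(W_{\Int}))}\mathrm{Tot}_{\Ss_K}(\Fil^0\mathbf{dR}(W_{\Int})),
\]
equivalently the derived vanishing locus, over $\mathbf{L}(W_{\Int})$, of the composite $\mathbf{L}(W_{\Int})\to\mathrm{Tot}(\mathbf{dR}(W_{\Int}))\to\mathrm{Tot}(\gr^{-1}_{\mathrm{Hdg}}\mathbf{dR}(W_{\Int}))$, a section of a rank-$d_+$ bundle. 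This is a closed derived subscheme of $\mathbf{L}(W_{\Int})$ (hence a derived scheme), it inherits the structure of a sheaf of connective complexes of abelian groups since all three terms carry one, and it is quasi-smooth of virtual codimension $d_+$ over $\mathbf{L}(W_{\Int})$, hence over $\Ss_K$; everything is functorial in $(W,W_{\Int})$ by construction. For the cotangent complex: $\mathrm{Tot}(\Fil^0\mathbf{dR}(W_{\Int}))\hookrightarrow\mathrm{Tot}(\mathbf{dR}(W_{\Int}))$ is a regular sub-bundle immersion whose conormal bundle is the pullback of $(\gr^{-1}_{\mathrm{Hdg}}\mathbf{dR}(W_{\Int}))^{\vee}=\bm{co}(W_{\Int})$, so base change in the homotopy-Cartesian square gives $\mathbb{L}_{\mathcal{Z}_K(W_{\Int})/\mathbf{L}(W_{\Int})}\simeq\Reg{\mathcal{Z}_K(W_{\Int})}\otimes_{\Reg{\Ss_K}}\bm{co}(W_{\Int})[1]$, and since $\mathbb{L}_{\mathbf{L}(W_{\Int})/\Ss_K}\simeq 0$ the transitivity triangle yields the asserted equivalence $\mathbb{L}_{\mathcal{Z}_K(W_{\Int})/\Ss_K}[-1]\simeq\Reg{\mathcal{Z}_K(W_{\Int})}\otimes_{\Reg{\Ss_K}}\bm{co}(W_{\Int})$ (in particular the map is unramified, as the $H^0$ of this complex vanishes). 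For the complex points: base-changing to $\Comp$ and passing to $\pi_0$, which commutes with homotopy fiber products, turns the homotopy fiber product into the classical one, and over $[x,g]$ a horizontal integral vector $w\in W(\Rat)\cap gW_{\Int}(\widehat{\Int})$ maps into $\Fil^0$ at $x$ iff---being rational, hence real---it lies in $W^{0,0}_x$, iff $h_x(i)\cdot w = w$; so $\mathcal{Z}_K(W_{\Int})(\Comp)\simeq G(\Rat)\backslash X(W_{\Int})/K$.

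To land in $\mathrm{qSmAb}_{/\Ss_K}$ one finally checks that $\mathcal{Z}_K(W_{\Int})\to\Ss_K$ is locally on the source finite unramified: it is the disjoint union, over the similitude invariant $t=\Phi(w,w)/2$, of pieces that are genuinely finite over $\Ss_K$---quasi-finiteness is clear (e.g.\ over $\Comp$ by reduction theory, integral vectors of bounded length modulo an arithmetic group being finite, using positivity of $\Phi$), and properness over $\Ss_K$ follows from a boundedness argument for integral special homomorphisms of abelian schemes, exactly as in the GSpin case treated in~\cite{HMP:mod_codim}. I expect the second step above to be the main obstacle: making the ``space of integral realizations'' $\mathbf{L}(W_{\Int})$ precise, and functorial, over the \emph{integral} model---this is where one genuinely needs the crystalline theory of Shimura varieties of Hodge type (the flat crystalline extension of the Hodge tensors cutting out $G$, due to Kisin and, at $p=2$, to~\cite{Kim2016-fb}, together with crystalline--\'etale comparison), whereas over the generic fiber or over $\Comp$ the construction is elementary; the secondary difficulty is the properness over $\Ss_K$ of the Gram-components. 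Once $\mathbf{L}(W_{\Int})$ is in hand, the homotopy-fiber-product formalism makes the cotangent-complex computation and the complex-points identification essentially formal. I note that the construction carries the moduli interpretation promised in the introduction: via Grothendieck--Messing, $\mathcal{Z}_K(W_{\Int})$ is the derived moduli of $W_{\Int}$-integral special homomorphisms of the universal abelian schemes, the Hodge-filtration obstruction being precisely what produces the virtual codimension $d_+$ and, over the generic fiber, the classical special cycle invoked in Theorem~\ref{introthm:derived_cycles}(2).
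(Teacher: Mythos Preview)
Your overall strategy---impose the Hodge-filtration condition as a derived zero locus on a formally \'etale ``space of horizontal sections'' $\mathbf{L}(W_{\Int})$---captures the right deformation-theoretic picture, but the object $\mathbf{L}(W_{\Int})$ you posit does not exist as a scheme over $\Ss_K$. The paper addresses exactly this point: the prestack $\mathcal{M}^{\nabla}$ of horizontal sections of a filtered infinitesimal crystal is formally \'etale (Lemma~\ref{lem:M'_cohesive}), but it is \emph{not} representable---see Remark~\ref{rem:derham_stack}, where already for $R=\Rat$ one has $\mathcal{M}^{\nabla}(C)\simeq \pi_0(C)_{\mathrm{red}}\otimes_{\Rat}\bm{M}$. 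So there is no global \'etale $\Ss_K$-scheme playing the role of your $\mathbf{L}(W_{\Int})$, and your homotopy fiber product, while infinitesimally cohesive with the correct cotangent complex, does not obviously yield a derived scheme. The alternative of taking $\mathbf{L}(W_{\Int})$ to be the classical $\underline{\Hom}(\mathcal{A}_1,\mathcal{A}_2)$ fails for the opposite reason: its derived enhancement $\widetilde{\mathbb{H}}(\mathcal{A}_1,\mathcal{A}_2)$ is representable but is not \'etale---its cotangent complex involves \emph{all} of $\tau^{\ge 1}R\Gamma(\mathcal{A}_1,\Reg{})$ (Proposition~\ref{introprop:h_tilde_cotangent}), so it is not even quasi-smooth when $\dim\mathcal{A}_1>1$.

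The paper's route resolves this tension by working \emph{locally} and then gluing. Over each $p$-adic formal completion, the Frobenius structure provides enough rigidity that the space of sections of the filtered $F$-crystal $\mathcal{Z}_{K,crys}(W_{\Int})^{\form}_p$ \emph{is} representable by a quasi-smooth formal scheme (Theorem~\ref{thm:FilFCrys_discrete}); over the generic fiber, one instead pulls the (non-representable) $\mathcal{Z}_{K,\infty}(W)$ back along the formally \'etale realization map $\widetilde{\mathbb{H}}(\mathcal{A}_1,\mathcal{A}_2)_{\Rat}\to\widetilde{\mathbb{H}}_{\infty}$ (Proposition~\ref{prop:hom_stack_tangent_space}) to obtain a representable closed derived subscheme of $\widetilde{\mathbb{H}}$. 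These local pieces are then assembled into a global $u$-marked scheme over $\widetilde{\mathbb{H}}(\mathcal{A}_1,\mathcal{A}_2)$ via a Beauville-Laszlo argument (Corollary~\ref{cor:exist_Y}, Proposition~\ref{prop:unique_Y}), with uniqueness forced by the cotangent-complex constraint. Functoriality and independence of the auxiliary Siegel embeddings are then handled by a separate embedding trick (Step~3 of the proof of Theorem~\ref{thm:W_representability}). Your proposal correctly identifies the second step---the precise construction of the ``integral realization'' space---as the hard part, but underestimates just how hard it is: it cannot be done globally, and the local-to-global patching is where most of the paper's technical work lies.
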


\subsection{The classical truncation}
\label{subsec:classical}

One can restrict $\mathcal{Z}_K(W_{\Int})$ to the usual category of commutative rings $R$ equipped with a map $\Spec R\to \Ss_K$: this gives the \emph{classical truncation} $\mathcal{Z}^{\mathrm{cl}}_K(W_{\Int})$. This can be defined without any reference to derived algebraic geometry under a certain further assumption. 

To begin, we reinterpret the $\Comp$-points of $\mathcal{Z}_K(W_{\Int})$ in a moduli theoretic way. For any point $z = [(x,g)]\in \Ss_K(\Comp)$, we obtain a $\Int$-Hodge structure $\mathbf{HS}_z(W_{\Int})$ described as follows: Its underlying $\Int$-module is $W\cap gW_{\Int}(\widehat{\Int})$ with Hodge structure given by the Deligne cocharacter $h_x$. Its base-change over $\Comp$ is the filtered vector space $\Fil^\bullet_{\mathrm{Hdg}}\mathbf{dR}_z(W)$, with underlying vector space $\mathbf{dR}_z(W) = W_{\Comp}$. Theorem~\ref{introthm:W_scheme} now tells us that we must have a canonical identification
\begin{align}\label{introeqn:zk_C_points}
\mathcal{Z}_K(W_{\Int})(z) \simeq \mathbf{HS}^{(0,0)}_z(W_{\Int}),
\end{align}
where the right hand side is the $\Int$-module of weight $(0,0)$ elements of the Hodge structure.

The choice of symplectic representations in Assumption~\ref{assump:hodge_emb}, along with appropriate choices of $K$-stable lattices $H_{1,\Int}\subset H_1$ and $H_{2,\Int}\subset H_2$ such that $W_{\Int}\subset \Hom(H_{1,\Int},H_{2,\Int})$ gives rise to abelian schemes $\mathcal{A}_1,\mathcal{A}_2\to \Ss_K$, whose homological realizations are the various sheaf-theoretic avatars of the pairs $(H_1,H_{1,\Int}),(H_2,H_{2,\Int})$. The construction as described here will depend on these choices, but I explain in Section~\ref{sec:shimura} why they are immaterial in the end, and in fact give rise to a functorially defined stack. 

Introduce the following additional hypothesis:
\begin{assumption}\label{introassump:lattice}
We can choose $(H_1,H_{1,\Int}),(H_2,H_{2,\Int})$ such that $W_{\Int}$ is a direct summand of $\Hom(H_{1,\Int},H_{2,\Int})$.
\end{assumption}

We now consider the subfunctor $\mathcal{Z}^{\mathrm{cl}}_K(W_{\Int})\subset \underline{\Hom}(\mathcal{A}_1,\mathcal{A}_2)$ determined by the following properties: 
\begin{enumerate}
	\item A geometric point $z = (x,f)$ of this Hom scheme parameterizing a point $x\in \Ss_K(k)$ with a homomorphism $f:\mathcal{A}_{1,x}\to \mathcal{A}_{2,x}$ lies in $\mathcal{Z}^{\mathrm{cl}}_K(W_{\Int})(k)$ if and only if its homological realization lies in $\mathbf{dR}_x(W_{\Int})$ if $k$ has characteristic $0$, and in $\mathbf{Crys}_{p,x}(W_{\Int})$ if $k$ has characteristic $p$.
	\item An arbitrary point $z\in \underline{\Hom}(\mathcal{A}_1,\mathcal{A}_2)(C)$ lies in $\mathcal{Z}^{\mathrm{cl}}_K(W_{\Int})$ if and only if its restriction to every geometric point of $\Spec C$ does so.
\end{enumerate}

Theorem~\ref{introthm:W_scheme} now has a concrete consequence for \emph{classical} schemes:

\begin{introthm}
\label{introthm:classical}
The map $\mathcal{Z}^{\mathrm{cl}}_K(W_{\Int})\to \Ss_K$ is a locally finite unramified map of classical schemes, whose $\Comp$-points are as described in Theorem~\ref{introthm:W_scheme}. Moreover:
\begin{enumerate}
	\item For every square-zero thickening $\tilde{C}\to C$ of classical commutative rings with kernel $I$, and every $x\in \Ss_K(C)$ lifting to $\tilde{x}\in \Ss_K(\tilde{C})$, there is an \emph{obstruction map}
\[
\mathrm{obst}_{\tilde{x}}:\mathcal{Z}^{\mathrm{cl}}_K(W_{\Int})(x)\to \gr^{-1}_{\mathrm{Hdg}}\mathbf{dR}_x(W_{\Int})\otimes_CI
\]
such that $\mathcal{Z}^{\mathrm{cl}}_K(W_{\Int})(\tilde{x}) = \ker\mathrm{obst}_{\tilde{x}}$.
\item Let $d_+(W)$ be the rank of the vector bundle $\gr^{-1}_{\mathrm{Hdg}}\mathbf{dR}(W)$. Given an open and closed subscheme $\mathcal{Z}^\circ \subset \mathcal{Z}^{\mathrm{cl}}_K(W_{\Int})$, the following are equivalent:
\begin{enumerate}[label=(\roman*)]
	\item $\mathcal{Z}^\circ$ is an equidimensional stack over $\Int$ of relative dimension $\dim \Sh_K - d_+(W)$.
	\item $\mathcal{Z}^\circ$ is an lci stack over $\Int$ of relative dimension $\dim \Sh_K - d_+(W)$.
\end{enumerate}
\end{enumerate}
\end{introthm}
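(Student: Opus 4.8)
\smallskip

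The plan is to transport everything from the derived picture of Theorem~\ref{introthm:W_scheme}, so that $\mathcal{Z}^{\mathrm{cl}}_K(W_{\Int})$ is the classical truncation of the finite, unramified, quasi-smooth derived stack $\mathcal{Z}_K(W_{\Int})\to\Ss_K$ of virtual codimension $d:=d_+(W)$, whose relative cotangent complex is the pullback of $\bm{co}(W_{\Int})[1]$ with $\bm{co}(W_{\Int})$ a vector bundle of rank $d$. Finiteness and unramifiedness pass to classical truncations: for unramifiedness, apply the transitivity triangle to $\mathcal{Z}^{\mathrm{cl}}_K(W_{\Int})\hookrightarrow\mathcal{Z}_K(W_{\Int})\to\Ss_K$ and use that the relative cotangent complex of a classical truncation sits in cohomological degrees $\le -1$, which forces $\Omega^1_{\mathcal{Z}^{\mathrm{cl}}_K(W_{\Int})/\Ss_K}=H^0(\mathbb{L}_{\mathcal{Z}_K(W_{\Int})/\Ss_K}|_{\mathcal{Z}^{\mathrm{cl}}_K(W_{\Int})})=H^0(\bm{co}(W_{\Int})[1])=0$. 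Thus $\mathcal{Z}^{\mathrm{cl}}_K(W_{\Int})\to\Ss_K$ is a locally finite unramified map of classical schemes. Finally $\mathcal{Z}^{\mathrm{cl}}_K(W_{\Int})(\Comp)=\mathcal{Z}_K(W_{\Int})(\Comp)$ since $\Comp$ is discrete, so the $\Comp$-points are those computed in Theorem~\ref{introthm:W_scheme}.

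For (1), I would run the relative square-zero deformation theory of $\mathcal{Z}_K(W_{\Int})$ over $\Ss_K$. Given the thickening $\tilde C\to C$ with kernel $I$, a lift $\tilde x\in\Ss_K(\tilde C)$ of $x\in\Ss_K(C)$, and a point $z\in\mathcal{Z}^{\mathrm{cl}}_K(W_{\Int})(x)=\mathcal{Z}_K(W_{\Int})(x)$, the obstruction to lifting $z$ over $\tilde x$ is a class in $\Ext^1_C(z^*\mathbb{L}_{\mathcal{Z}_K(W_{\Int})/\Ss_K},I)$, and when it vanishes the lifts form a torsor under $\Ext^0_C(z^*\mathbb{L}_{\mathcal{Z}_K(W_{\Int})/\Ss_K},I)$. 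Since $z^*\mathbb{L}_{\mathcal{Z}_K(W_{\Int})/\Ss_K}\simeq(\gr^{-1}_{\mathrm{Hdg}}\mathbf{dR}_x(W_{\Int}))^\vee[1]$ is a vector bundle in cohomological degree $-1$, the torsor group is $\Ext^{-1}_C((\gr^{-1}_{\mathrm{Hdg}}\mathbf{dR}_x(W_{\Int}))^\vee,I)=0$, so lifts are unique when they exist, while the obstruction group is $\Hom_C((\gr^{-1}_{\mathrm{Hdg}}\mathbf{dR}_x(W_{\Int}))^\vee,I)=\gr^{-1}_{\mathrm{Hdg}}\mathbf{dR}_x(W_{\Int})\otimes_C I$. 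Setting $\mathrm{obst}_{\tilde x}(z)$ equal to this obstruction class, uniqueness of lifts identifies $\mathcal{Z}^{\mathrm{cl}}_K(W_{\Int})(\tilde x)$ with the locus of $z$ where $\mathrm{obst}_{\tilde x}(z)=0$; and because $\mathcal{Z}_K(W_{\Int})$ lies in $\mathrm{qSmAb}_{/\Ss_K}$, the values $\mathcal{Z}_K(W_{\Int})(x)$ and $\mathcal{Z}_K(W_{\Int})(\tilde x)$ are connective complexes of abelian groups and the map between them extends to a fiber sequence of such complexes, so $\mathrm{obst}_{\tilde x}$ is additive and this locus is $\ker\mathrm{obst}_{\tilde x}$.

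For (2), the plan is to reduce to a local Koszul model. By the local structure of finite quasi-smooth morphisms recalled in \S\ref{introsubsec:derived} (see \cite{Khan2018-dk}) together with the shape of $\mathbb{L}_{\mathcal{Z}_K(W_{\Int})/\Ss_K}$, \'etale-locally on $\mathcal{Z}_K(W_{\Int})$ there are an \'etale chart $U\to\Ss_K$ and elements $f_1,\dots,f_d\in\Reg{U}$ such that $\mathcal{Z}_K(W_{\Int})\times_{\Ss_K}U$ is the derived vanishing locus in $U$ of $f_1,\dots,f_d$, hence $\mathcal{Z}^{\mathrm{cl}}_K(W_{\Int})\times_{\Ss_K}U=V(f_1,\dots,f_d)\subset U$; here $\Reg{U}$ is regular, hence Cohen--Macaulay, since $\Ss_K$ is smooth over $\Reg{E}[D_K^{-1}]$ of relative dimension $\dim\Sh_K$. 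For a point $z$ of $\mathcal{Z}^{\mathrm{cl}}_K(W_{\Int})$ over $u\in U$, Krull's principal ideal theorem gives $\dim\mathscr{O}_{\mathcal{Z}^{\mathrm{cl}}_K(W_{\Int}),z}\ge\dim\mathscr{O}_{U,u}-d$, with equality exactly when $f_1,\dots,f_d$ is part of a system of parameters of $\mathscr{O}_{U,u}$, which (as $\mathscr{O}_{U,u}$ is Cohen--Macaulay) happens exactly when $f_1,\dots,f_d$ is a regular sequence there, equivalently when the Koszul complex resolves $\mathscr{O}_{U,u}/(f_1,\dots,f_d)$ --- so $\mathcal{Z}_K(W_{\Int})$ is classical at $z$ --- and $\mathcal{Z}^{\mathrm{cl}}_K(W_{\Int})\hookrightarrow U$ is a regular immersion of codimension $d$ there. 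Applying this equivalence at every point of the open-and-closed substack $\mathcal{Z}^\circ$, and unwinding the equality of local dimensions into condition (i) --- using that $U\to\Spec\Reg{E}[D_K^{-1}]$ is smooth of relative dimension $\dim\Sh_K$ and that $\Reg{E}[D_K^{-1}]$ is excellent and regular of dimension $1$ --- and the regular-immersion-of-codimension-$d$ statement into condition (ii), yields the desired equivalence.

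The formal ingredients --- unramifiedness, the $\Comp$-points, and the deformation-theoretic construction of $\mathrm{obst}_{\tilde x}$ --- are essentially bookkeeping on top of Theorem~\ref{introthm:W_scheme}; the one step with genuine content is the equivalence in (2), which rests on Krull's principal ideal theorem and the Cohen--Macaulayness of $\Ss_K$. I expect the main point of care there to be the dimension count in mixed characteristic: one must check that ``relative dimension $\dim\Sh_K-d$ over $\Int$'', read on each side, excludes spurious components of $\mathcal{Z}^\circ$ concentrated in a single residue characteristic of the wrong dimension --- which is precisely why that relative dimension figures in both (i) and (ii).
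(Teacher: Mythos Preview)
Your proposal is correct and follows essentially the same approach as the paper's own proof (Corollary~\ref{cor:classical_truncation}). Both arguments derive part~(1) directly from the identification of the relative cotangent complex in Theorem~\ref{thm:W_representability}, and both handle part~(2) by invoking the Khan--Rydh local structure theorem to cut out $\mathcal{Z}^{\mathrm{cl}}_K(W_{\Int})$ \'etale-locally by $d$ equations in a smooth (hence Cohen--Macaulay) chart, then appealing to the standard fact that in a Cohen--Macaulay local ring a sequence of $d$ elements is regular precisely when the quotient has the expected dimension drop. Your write-up is somewhat more explicit in spelling out the unramifiedness of the classical truncation and the additivity of the obstruction map via the $\mathrm{qSmAb}$ structure, but these are exactly the details the paper leaves implicit.
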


In the case of orthogonal Shimura varieties, what we have here is a space of \emph{special endomorphisms} considered first in~\cite[\S 5]{mp:reg}. The corresponding analogue for the obstruction map is explicitly stated in~\cite[Prop. 6.5.1]{Howard2020-wv}: our vector bundle $\gr^{-1}_{\mathrm{Hdg}}\mathbf{dR}(W)$ here corresponds to the \emph{cotautological bundle} $\bm{\omega}^{-1}$ considered there. 

One can of course try to prove Theorem~\ref{introthm:classical} using only classical methods, but when I gave it a shot, I found a subtle technical point involving torsion in divided power envelopes rearing its head. The only way I know to get around it in general is to use derived methods as we do here. These methods also play a substantial role in the removal of Assumption~\ref{introassump:lattice}, though I'm not certain if they are absolutely essential.

\subsection{Construction of the derived special cycle: the global story}

So the task laid out in front of us now is upgrading the subfunctor $\mathcal{Z}^{\mathrm{cl}}_K(W_{\Int})$ defined above into the realm of derived algebraic geometry.\footnote{There is also the secondary task of removing Assumption~\ref{introassump:lattice}, but I will punt on that to the body of the paper.} A glance at the definition above indicates that we need to have a good handle on derived analogues of the relevant homological realization as well as of the space of homomorphisms between the abelian schemes $\mathcal{A}_1$ and $\mathcal{A}_2$.

Let us consider the second issue first. We want to get a derived version of the scheme $\underline{\Hom}(\mathcal{A}_1,\mathcal{A}_2)$ of homomorphisms. For this, recall the rigidity property of abelian varieties: A homomorphism between abelian schemes is simply a morphism that preserves zero sections. 

The key point now is that there is a good theory of mapping stacks between (projective) derived schemes (see Appendix~\ref{sec:lurie}): One can define a \emph{derived} stack $\tilde{\mathbb{H}}(\mathcal{A}_1,\mathcal{A}_2)\to \Ss_K$ that assigns to every $(C,z)\in \mathrm{Aff}^{\op}_{/\Ss_K}$ the \emph{space} of zero-section preserving maps from $\mathcal{A}_{1,z}$ to $\mathcal{A}_{2,z}$. By the rigidity property just mentioned, its underlying classical truncation is simply $\underline{\Hom}(\mathcal{A}_1,\mathcal{A}_2)$.

The study of the deformation theory of this functor is basically a glorified version of classical Kodaira-Spencer theory for abelian schemes. There is however a new wrinkle: In the classical realm, the deformation theory of the scheme $\underline{\Hom}(\mathcal{A}_1,\mathcal{A}_2)$ is governed by the \emph{vector bundle} $\pi_{2,*}\Omega^1_{\mathcal{A}_2/\Ss_K}\otimes_{\Reg{\Ss_K}}(R^1\pi_{1,*}\Reg{\mathcal{A}_1})^\vee$. When deforming in derived directions, we are forced to consider the contribution from the total relative cohomology of $\mathcal{A}_1$ (in degrees greater than $0$). This is a general phenomenon in derived algebraic geometry.

Here is the precise formulation:
\begin{introprop}
\label{introprop:h_tilde_cotangent}
Let $\pi_i:\mathcal{A}_i\to \Ss_K$ be the structure maps for $i=1,2$, and let $d_1$ be the relative dimension of $\mathcal{A}_1$ over $\Ss_K$. The relative cotangent complex of the derived scheme $\tilde{\mathbb{H}}(\mathcal{A}_1,\mathcal{A}_2)$ over $\Ss_K$ is the pullback of the perfect complex
\[
\pi_{2,*}\Omega^1_{\mathcal{A}_2/\Ss_K}\otimes_{\Reg{\Ss_K}}(\tau^{\ge 1}R\pi_{1,*}\Reg{\mathcal{A}_1})^\vee \simeq \pi_{2,*}\Omega^1_{\mathcal{A}_2/\Ss_K}\otimes_{\Reg{\Ss_K}}\bigoplus_{i=1}^{d_1}(R^i\pi_{1,*}\Reg{\mathcal{A}_1}[-i])^\vee.
\]
\end{introprop}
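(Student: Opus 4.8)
The plan is to compute the relative cotangent complex of $\tilde{\mathbb{H}}(\mathcal{A}_1,\mathcal{A}_2)$ by realizing it as a mapping stack and invoking the general formula for the cotangent complex of a derived mapping stack (see Appendix~\ref{sec:lurie}). Recall that $\tilde{\mathbb{H}}(\mathcal{A}_1,\mathcal{A}_2)$ represents the functor sending $(C,z)$ to the space of zero-section-preserving maps $\mathcal{A}_{1,z}\to \mathcal{A}_{2,z}$; equivalently, after translating the target by its zero section, we may think of it as a pointed mapping stack, and the relevant deformation theory is that of $\underline{\Map}_{\Ss_K}(\mathcal{A}_1,\mathcal{A}_2)$ pulled back along the zero section of $\mathcal{A}_1$ in the source variable. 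The standard formula says that for $f\colon \mathcal{A}_{1,z}\to \mathcal{A}_{2,z}$ corresponding to a point of the mapping stack over $z\colon \Spec C\to \Ss_K$, the relative cotangent complex is
\[
\mathbb{L}_{\tilde{\mathbb{H}}/\Ss_K}\big|_{(z,f)} \simeq \big(\pi_{1,*}\, f^*\mathbb{L}_{\mathcal{A}_2/\Ss_K}\big)\,[\text{with the zero-section corrections}],
\]
more precisely its dual, the tangent complex, is $\pi_{1,*} f^* T_{\mathcal{A}_2/\Ss_K}$ where $\pi_1$ is proper of relative dimension $d_1$, so this pushforward lives in homological degrees $[0,d_1]$.

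Next I would use the abelian group structure. Because $\mathcal{A}_2$ is an abelian scheme, $\mathbb{L}_{\mathcal{A}_2/\Ss_K}$ is the pullback along the structure map of $\pi_{2,*}\Omega^1_{\mathcal{A}_2/\Ss_K}$ (the bundle of invariant differentials), so $f^*\mathbb{L}_{\mathcal{A}_2/\Ss_K}$ is the pullback to $\mathcal{A}_{1,z}$ of the bundle $\pi_{2,*}\Omega^1_{\mathcal{A}_2/\Ss_K}$ on $\Ss_K$, independent of $f$. Projection formula then gives
\[
\pi_{1,*} f^*\mathbb{L}_{\mathcal{A}_2/\Ss_K} \simeq \big(R\pi_{1,*}\Reg{\mathcal{A}_1}\big)\otimes_{\Reg{\Ss_K}}\pi_{2,*}\Omega^1_{\mathcal{A}_2/\Ss_K}.
\]
The zero-section-preserving condition removes the degree-zero ($i=0$) summand $\mathcal{O}_{\Ss_K}$ of $R\pi_{1,*}\Reg{\mathcal{A}_1}$, leaving $\tau^{\ge 1}R\pi_{1,*}\Reg{\mathcal{A}_1}$; since $\mathcal{A}_1$ is an abelian scheme this complex splits canonically (via the Hodge/Künneth decomposition of the cohomology of an abelian scheme, or the multiplication-by-$n$ operators acting by $n^i$ on $R^i$) as $\bigoplus_{i=1}^{d_1} R^i\pi_{1,*}\Reg{\mathcal{A}_1}[-i]$. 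Dualizing the whole computation (the cotangent complex is the $\Reg{}$-linear dual of the tangent complex, which is perfect) yields
\[
\mathbb{L}_{\tilde{\mathbb{H}}(\mathcal{A}_1,\mathcal{A}_2)/\Ss_K} \simeq \pi_{2,*}\Omega^1_{\mathcal{A}_2/\Ss_K}\otimes_{\Reg{\Ss_K}}(\tau^{\ge 1}R\pi_{1,*}\Reg{\mathcal{A}_1})^\vee,
\]
pulled back to $\tilde{\mathbb{H}}(\mathcal{A}_1,\mathcal{A}_2)$, which is exactly the claimed expression. Finally I would observe that the classical truncation in degree zero recovers the classical Kodaira–Spencer bundle $\pi_{2,*}\Omega^1_{\mathcal{A}_2/\Ss_K}\otimes (R^1\pi_{1,*}\Reg{\mathcal{A}_1})^\vee$, which is the well-known deformation space of $\underline{\Hom}(\mathcal{A}_1,\mathcal{A}_2)$, giving a consistency check.

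The main obstacle I anticipate is pinning down the precise form of the cotangent complex of the mapping stack and justifying that the zero-section constraint has the clean effect of truncating away the $i=0$ piece. Concretely, one must show that $\tilde{\mathbb{H}}$ is the fiber of the evaluation-at-zero map $\underline{\Map}_{\Ss_K}(\mathcal{A}_1,\mathcal{A}_2)\to \mathcal{A}_2$ over the zero section, compute the cotangent complex of that larger mapping stack via the general result of Lurie (Appendix~\ref{sec:lurie}), and then take the cofiber sequence associated to the fiber product; this is where the degree-zero summand $\mathcal{O}_{\Ss_K}$ (coming from $H^0$ of the fiber $\mathcal{A}_{1,z}$) gets cancelled against the cotangent complex of $\mathcal{A}_2$ relative to $\Ss_K$. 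One also needs the representability of $\tilde{\mathbb{H}}$ by a derived scheme and the perfectness of the relevant pushforward, both of which follow from properness and flatness of $\pi_1$ together with the standard representability theorems for mapping stacks out of proper flat schemes; I would cite these from the appendix rather than reprove them here.
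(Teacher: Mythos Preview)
Your proposal is correct and follows essentially the same route as the paper's proof (Proposition~\ref{prop:hom_stack}): realize $\tilde{\mathbb{H}}$ as the fiber of the evaluation-at-zero map $\shvMap(\mathcal{A}_1,\mathcal{A}_2)\to \mathcal{A}_2$, invoke the general mapping-stack cotangent complex formula (Proposition~\ref{prop:deform_maps} and Remark~\ref{rem:abelian_scheme_dualizing}), use that $\mathbb{L}_{\mathcal{A}_2/\Ss_K}$ is pulled back from the base, and observe that the fiber sequence kills the $H^0$ summand via $[0]^*:H^0(\mathcal{A}_1,\Reg{\mathcal{A}_1})\xrightarrow{\simeq}\Reg{\Ss_K}$.

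One notational slip worth tightening: the quantity $\pi_{1,*}f^*\mathbb{L}_{\mathcal{A}_2/\Ss_K}$ you display is \emph{not} the cotangent complex of the mapping stack, nor is it the tangent complex. The cotangent complex is $g_{+}f^*\mathbb{L}_{\mathcal{A}_2/\Ss_K}$, where $g_+$ is the \emph{left} adjoint to pullback (Grothendieck--Serre duality), and this is precisely where the dual $(R\pi_{1,*}\Reg{\mathcal{A}_1})^\vee$ comes from; see Remark~\ref{rem:abelian_scheme_dualizing}. Equivalently, compute the tangent complex as $R\pi_{1,*}f^*T_{\mathcal{A}_2/\Ss_K}\simeq R\pi_{1,*}\Reg{\mathcal{A}_1}\otimes\omega_{\mathcal{A}_2}^\vee$ and then dualize. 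Your final formula and your last paragraph are correct; just make the intermediate step consistent with one of these two routes.
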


In particular, note that the derived stack $\widetilde{\mathbb{H}}(\mathcal{A}_1,\mathcal{A}_2)$ is \emph{not} in general locally quasi-smooth over $\Ss_K$: Its cotangent complex has Tor amplitude $[-d_1,-1]$.

Now, the theory of automorphic vector bundles on $\Ss_K$ gives us a quotient map
\[
\pi_{2,*}\Omega^1_{\mathcal{A}_2/\Ss_K}\otimes_{\Reg{\Ss_K}}(R^1\pi_{1,*}\Reg{\mathcal{A}_1})^\vee\twoheadrightarrow \bm{co}(W_{\Int}) \overset{\mathrm{defn}}{=} (\gr^{-1}_{\mathrm{Hdg}}\mathbf{dR}(W_{\Int}))^\vee
\]
of vector bundles over $\Ss_K$, which will in turn give us a map
\begin{align}
\label{eqn:cotangent_cplx_integrable?}
\mathbb{L}_{\widetilde{\mathbb{H}}(\mathcal{A}_1,\mathcal{A}_2)/\Ss_K}\twoheadrightarrow \Reg{\widetilde{\mathbb{H}}(\mathcal{A}_1,\mathcal{A}_2)}\otimes_{\Reg{\Ss_K}}\bm{co}(W_{\Int})[1]
\end{align}
of perfect complexes over $\widetilde{\mathbb{H}}(\mathcal{A}_1,\mathcal{A}_2)$. 

Therefore, the question of constructing the locally quasi-smooth stack $\mathcal{Z}_K(W_{\Int})$ now becomes one of \emph{integrating} this `quotient bundle' to the cotangent complex of a closed derived substack of $\widetilde{\mathbb{H}}(\mathcal{A}_1,\mathcal{A}_2)$. Towards this, I prove the following proposition in Section~\ref{sec:abvar_crystals} using a Beauville-Laszlo gluing argument:

\begin{introprop}
\label{introprop:gluing}
Suppose that $\mathcal{Z}_K^{\mathrm{cl}}(W_{\Int})\subset\underline{\Hom}(\mathcal{A}_1,\mathcal{A}_2)$ is represented by a closed substack, and that the following holds:
\begin{enumerate}
  \item Over the generic fiber $\mathcal{Z}_K^{\mathrm{cl}}(W_{\Int})_{\Rat}$ lifts to a closed derived substack $\mathcal{Z}_K(W_{\Int})_{\Rat}$ of $\widetilde{\mathbb{H}}(\mathcal{A}_1,\mathcal{A}_2)_{\Rat}$ with relative cotangent complex
  \[
    \mathbb{L}_{\mathcal{Z}_K(W_{\Int})_{\Rat}/\Sh_K}\simeq \Reg{\mathcal{Z}_K(W_{\Int})_{\Rat}}\otimes_{\Reg{\Ss_K}}\bm{co}(W_{\Int})[1].
  \]
  \item For every prime $p$, and every $n\geq 1$, $\mathcal{Z}_K^{\mathrm{cl}}(W_{\Int})_{\Int/p^n\Int}$ lifts to a closed derived substack $\mathcal{Z}_K(W_{\Int})_{\Int/p^n\Int}$ of $\widetilde{\mathbb{H}}(\mathcal{A}_1,\mathcal{A}_2)_{\Int/p^n\Int}$ with relative cotangent complex
  \[
    \mathbb{L}_{\mathcal{Z}_K(W_{\Int})_{\Int/p^n\Int}/\Ss_{K,\Int/p^n\Int}}\simeq \Reg{\mathcal{Z}_K(W_{\Int})_{\Int/p^n\Int}}\otimes_{\Reg{\Ss_K}}\bm{co}(W_{\Int})[1].
  \]
\end{enumerate}
Then these lifts are necessarily unique, and $\mathcal{Z}_K^{\mathrm{cl}}(W_{\Int})$ lifts to a closed derived substack\footnote{For this proposition to be literally true, one needs the identifications of the cotangent complexes to be done in a manner compatible with the surjection~\eqref{eqn:cotangent_cplx_integrable?} in a precise sense.
} $\mathcal{Z}_K(W_{\Int})$ of $\widetilde{\mathbb{H}}(\mathcal{A}_1,\mathcal{A}_2)$ with relative cotangent complex
  \[
    \mathbb{L}_{\mathcal{Z}_K(W_{\Int})/\Ss_K}\simeq \Reg{\mathcal{Z}_K(W_{\Int})}\otimes_{\Reg{\Ss_K}}\bm{co}(W_{\Int})[1].
  \]
\end{introprop}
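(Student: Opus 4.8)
Write $\widetilde{\mathbb{H}}$ for $\widetilde{\mathbb{H}}(\mathcal{A}_1,\mathcal{A}_2)$. The plan is to produce $\mathcal{Z}_K(W_{\Int})$ by gluing the hypothesized lifts over $\Rat$ and over the rings $\Int/p^n\Int$ along the arithmetic fracture square $\Int\simeq\Rat\times_{\Adele_f}\widehat{\Int}$, with $\widehat{\Int}=\prod_p\Int_p$ and $\Adele_f=\Rat\otimes_\Int\widehat{\Int}$ --- a version of Beauville--Laszlo gluing, which in its derived incarnation applies to the finitely presented connective derived $\Int$-algebras at hand. The statement is local on $\Ss_K$ and, since $\mathcal{Z}^{\mathrm{cl}}_K(W_{\Int})$ is locally finite unramified over $\Ss_K$ by Theorem~\ref{introthm:classical}, also local on $\mathcal{Z}^{\mathrm{cl}}_K(W_{\Int})$; so I would pass to a chart in which $\mathcal{Z}^{\mathrm{cl}}_K(W_{\Int})$ is finite over $\Ss_K$, reducing everything to connective derived commutative rings. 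Over a base ring $R$ (a localization or quotient of $\Int$), call a closed derived substack $\mathcal{Z}\hookrightarrow\widetilde{\mathbb{H}}_R$ \emph{admissible} if its classical truncation is $\mathcal{Z}^{\mathrm{cl}}_K(W_{\Int})_R$ and the canonical map $\mathbb{L}_{\widetilde{\mathbb{H}}_R/\Ss_{K,R}}|_{\mathcal{Z}}\to\mathbb{L}_{\mathcal{Z}/\Ss_{K,R}}$ is identified with the restriction of the surjection~\eqref{eqn:cotangent_cplx_integrable?}; in particular then $\mathbb{L}_{\mathcal{Z}/\Ss_{K,R}}\simeq\Reg{\mathcal{Z}}\otimes_{\Reg{\Ss_K}}\bm{co}(W_{\Int})[1]$. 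Keeping track of this identification with~\eqref{eqn:cotangent_cplx_integrable?}, and not merely of the equivalence class of the cotangent complex, is the ``precise sense'' referred to in the footnote to the statement. The hypotheses supply admissible lifts over $\Rat$ and over every $\Int/p^n\Int$, and the goal is an admissible lift over $\Int$.

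The crux is a rigidity statement: over any such $R$, an admissible lift --- if one exists --- is unique up to a contractible space of choices. The mechanism is a connectivity estimate. Since $\bm{co}(W_{\Int})[1]$ is a vector bundle placed in cohomological degree $-1$, an admissible $\mathcal{Z}$ is quasi-smooth over $\Ss_K$, hence over $R$, of virtual codimension $d_+(W)=\rk\bm{co}(W_{\Int})$. Admissibility forces the conormal complex $\mathbb{L}_{\mathcal{Z}/\widetilde{\mathbb{H}}_R}$ to be the cofiber of the restriction of~\eqref{eqn:cotangent_cplx_integrable?}; by Proposition~\ref{introprop:h_tilde_cotangent} the complex $\mathbb{L}_{\widetilde{\mathbb{H}}/\Ss_K}$ has Tor-amplitude in $[-d_1,-1]$ (with $d_1$ the relative dimension of $\mathcal{A}_1$), and~\eqref{eqn:cotangent_cplx_integrable?} is surjective on its top cohomology sheaf $H^{-1}$ onto $\bm{co}(W_{\Int})$, so this cofiber sits in cohomological degrees $\leq -2$. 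A closed immersion into $\widetilde{\mathbb{H}}_R$ with such a highly connected conormal complex and prescribed classical truncation admits no nontrivial infinitesimal deformations or automorphisms inside $\widetilde{\mathbb{H}}_R$; running this up the Postnikov tower of $\Reg{\mathcal{Z}}$, and invoking the deformation theory of derived closed immersions (cf.~\cite{Khan2018-dk}), gives the claimed uniqueness --- in particular the uniqueness of the lifts over $\Rat$ and the $\Int/p^n\Int$ asserted in the statement.

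Granting rigidity, the gluing is essentially formal. First assemble the $p$-adic data: for each prime $p$, the base change of $\mathcal{Z}_K(W_{\Int})_{\Int/p^{n+1}\Int}$ along $\Int/p^{n+1}\Int\to\Int/p^n\Int$ is again an admissible lift over $\Int/p^n\Int$ --- its classical truncation is $\mathcal{Z}^{\mathrm{cl}}_K(W_{\Int})_{\Int/p^n\Int}$ because classical truncation commutes with fibre products, and its cotangent complex together with the identification with~\eqref{eqn:cotangent_cplx_integrable?} base-changes along the reduction --- hence it is canonically, and by rigidity uniquely, equivalent to $\mathcal{Z}_K(W_{\Int})_{\Int/p^n\Int}$. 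The resulting coherent inverse system has a limit over $n$ which is an admissible lift over $\Int_p$ (using that the system has uniformly bounded Tor-amplitude, all terms being quasi-smooth of virtual codimension $d_+(W)$ over $\Ss_K$, and the algebraizability of the limit in our finite chart), and together over all $p$ these give an admissible lift over $\widehat{\Int}$. Next, the $\Adele_f$-base changes of the $\Rat$-lift $\mathcal{Z}_K(W_{\Int})_\Rat$ and of this $\widehat{\Int}$-lift are both admissible lifts over $\Adele_f$, hence canonically and uniquely equivalent by rigidity; this is the gluing datum, and the cocycle condition holds automatically since every space of equivalences between admissible lifts is contractible. Feeding the triple of the $\Rat$-lift, the $\widehat{\Int}$-lift and this gluing into the arithmetic fracture square produces a closed derived substack $\mathcal{Z}_K(W_{\Int})\hookrightarrow\widetilde{\mathbb{H}}$ over $\Int$ with classical truncation $\mathcal{Z}^{\mathrm{cl}}_K(W_{\Int})$ and with $\mathbb{L}_{\mathcal{Z}_K(W_{\Int})/\Ss_K}\simeq\Reg{\mathcal{Z}_K(W_{\Int})}\otimes_{\Reg{\Ss_K}}\bm{co}(W_{\Int})[1]$ compatibly with~\eqref{eqn:cotangent_cplx_integrable?}. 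The local constructions glue over $\Ss_K$ by rigidity, and global uniqueness follows from uniqueness on each fracture piece.

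The step I expect to be the main obstacle is the rigidity statement --- and, within it, making ``compatibly with~\eqref{eqn:cotangent_cplx_integrable?}'' into a constraint that genuinely rigidifies the deformation problem: one has to carry the surjection onto $\bm{co}(W_{\Int})[1]$ as data at every stage of the Postnikov tower of $\mathcal{Z}$, since dropping it would permit nontrivial twists of the attaching maps and wreck uniqueness. A secondary, more bureaucratic obstacle is that $\widetilde{\mathbb{H}}$ and $\mathcal{Z}^{\mathrm{cl}}_K(W_{\Int})$ are not proper over $\Ss_K$, so the fracture gluing and the passage to the limit over $n$ have to be carried out in a chart where the ambient rings are Noetherian and the relevant derived schemes affine; what makes the obstruction and automorphism groups vanish there is precisely the Tor-amplitude bound on $\mathbb{L}_{\widetilde{\mathbb{H}}/\Ss_K}$ from Proposition~\ref{introprop:h_tilde_cotangent}.
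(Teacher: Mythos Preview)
Your proposal is essentially correct and follows the same architecture as the paper: a rigidity statement for lifts with prescribed cotangent data, followed by Beauville--Laszlo--type gluing. The paper packages this via the notion of a \emph{$u$-marked scheme} (Proposition~\ref{prop:unique_Y} and Corollary~\ref{cor:exist_Y}), whose condition~(3) is exactly your admissibility condition, and whose uniqueness proof is the Postnikov-tower argument you describe. The extra conditions (1) and (2) in the paper's definition---the group structure on the classical truncation and the zero section---are there to recover the abelian-group-object structure, not for uniqueness; so your leaner formulation suffices for what is claimed here.

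The one genuine difference is in the gluing scheme. The paper first spreads the $\Rat$-lift to a lift over $\Int[1/N]$ for some integer $N$ (Lemma~\ref{lem:Y_filt_colimit}), then for each of the finitely many primes $p\mid N$ produces an $\Int_{(p)}$-lift via single-prime Beauville--Laszlo (Proposition~\ref{prop:beauville_laszlo}), and finally glues these into the $\Int[1/N]$-lift by faithfully flat descent (Lemma~\ref{lem:gluing_p_pinv}). You instead aim for a single adelic fracture $\Int\simeq\Rat\times_{\Adele_f}\widehat{\Int}$, assembling all the $\Int_p$-lifts into a $\widehat{\Int}$-lift at once. This can be made to work in the finitely presented affine charts you reduce to (where $A\otimes_{\Int}\widehat{\Int}\simeq\prod_p A\otimes_{\Int}\Int_p$), but it requires some care with the infinite product; the paper's prime-by-prime route sidesteps this by reducing to finitely many primes. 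Either route yields the result.
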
  

So we need to show that $\mathcal{Z}_K^{\mathrm{cl}}(W_{\Int})$ is representable by a closed substack, and also construct local lifts to derived substacks of $\widetilde{\mathbb{H}}(\mathcal{A}_1,\mathcal{A}_2)$ with the right cotangent complex. In fact, $\mathcal{Z}_K^{\mathrm{cl}}(W_{\Int})$ will turn out to be an open and closed substack of $\underline{\Hom}(\mathcal{A}_1,\mathcal{A}_2)$\footnote{Assumption~\ref{introassump:lattice} is required here.}, so even its representability effectively becomes a local question: One just has to pick out the correct connected components.

\subsection{Construction of the derived special cycle: the local story}

The construction of the local lifts of $\mathcal{Z}_K^{\mathrm{cl}}(W_{\Int})$ is tied up intimately with the first named issue of understanding derived analogues of homological realizations. For this, note that, for many points $z$ of $\Ss_K$, we can define spaces that are quite close to $\mathcal{Z}_K(W_{\Int})(z)$ using only information from these realizations.

When $k = \Comp$, we directly have $\mathcal{Z}_K(W_{\Int})(z) = \mathbf{HS}_{z}^{(0,0)}(W_{\Int})$.

For any finitely generated field $k$ in characteristic $0$, and $z\in \Sh_K(k)$, we have a filtered $k$-vector space $\Fil^\bullet_{\mathrm{Hdg}}\mathbf{dR}_z(W)$ equipped with an integrable connection $\nabla_z:\mathbf{dR}_z(W)\to \mathbf{dR}_z(W)\otimes\Omega^1_{k/\Rat}$, so we can take the space
\[
(\Fil^0_{\mathrm{Hdg}}\mathbf{dR}_z(W))^{\nabla = 0},
\] 
which is a finite dimensional $\Rat$-vector space into which $\mathcal{Z}_K(W_{\Int})(z)$ will embed.

When $k$ is a perfect field in characteristic $p$, we obtain for any $z\in \Ss_K(k)$ an $F$-crystal $\mathbf{Crys}_{p,z}(W_{\Int})$, which is a $W(k)$-module equipped with a Frobenius semi-linear automorphism $\varphi_0:\mathbf{Crys}_{p,z}(W_{\Int})[p^{-1}]\xrightarrow{\simeq}\mathbf{Crys}_{p,z}(W_{\Int})[p^{-1}]$. We can now take
\[
\mathbf{Crys}_{p,z}(W_{\Int})^{\varphi_0 = \mathrm{id}}
\]
which is a finite rank $\Int_p$-module into which $\mathcal{Z}_K(W_{\Int})(z)$ will embed. In fact, it embeds densely when $k$ is a finite extension of $\Field_p$. This is a variation of Tate's theorem for homorphisms of abelian varieties, and is shown in Proposition~\ref{prop:tate_conjecture} in the body of the paper.

One of the key points of this paper (and to my eye perhaps its most important contribution) is that it is possible to use derived methods to interpolate these (semi-)linearly defined spaces into geometric objects living locally over $\Ss_K$. These are defined purely in terms of families of what should be homological realizations associated with such objects (or with motives, more generally). Here, these realizations take the form of \emph{derived} filtered crystals. The reader can consult Sections~\ref{sec:infcrys} and~\ref{sec:fcrys} for precise definitions: the basic idea is to work with the usual definition of a crystal, but with respect to animated infinitesimal or divided power thickenings, following for instance~\cite{Mao2021-jt}. Among other things, such animated thickenings are better behaved than their classical counterparts when working with very singular or non-reduced schemes, \emph{especially} in the crystalline context.

The result is cleanest in the $p$-adic setting. Using the methods of Sections~\ref{sec:fcrys} and~\ref{sec:proofs}, we find that there is a $p$-adic transversally filtered derived $F$-crystal $\mathbf{Crys}_p(W_{\Int})$ over $\Ss^{\form}_{K,p}$\footnote{I use the notation $(\cdot)_p^\mathcal{F}$ to denote the $p$-adic formal completion of a scheme or stack, because we will be dealing with schemes valued in complexes of abelian groups, and I'd like to reserve the notation $(\cdot)_p^{\wedge}$ for the (derived) $p$-completion of such objects.}, which interpolates the spaces $\mathbf{Crys}_{p,z}(W_{\Int})$ considered above. We can now consider the space of sections of this object. We will also need to consider $p$-adic formal objects: These will be presehaves on the $\infty$-subcategory $\mathrm{Aff}_{/\Ss^{\form}_{K,p}}$ of $\mathrm{Aff}_{/\Ss_K}$ spanned (in the opposite category) by objects $(C,z)$ where $p$ is nilpotent in $\pi_0(C)$. All the relevant terminology extends to this situation painlessly.

Now, one can consider the \defnword{functor of sections} of the derived filtered $F$-crystal $\mathbf{Crys}_p(W_{\Int})$: to every $(C,z)$ in $\mathrm{Aff}_{/\Ss^{\form}_{K,p}}$ it assigns the space $\Map(\mathbf{1},\mathbf{Crys}_{p,z}(W_{\Int}))$ of maps in an appropriate $\infty$-category of derived filtered $F$-crystals over $C$, where $\mathbf{1}$ is the `unit' object. When $C = k$ is a perfect field, this recovers the space $\mathbf{Crys}_{p,z}(W_{\Int})^{\varphi_0 = \mathrm{id}}$ considered above.
 
Doing this systematically yields the key technical result of this paper, for which we will need one additional piece of notation. Let $\mathrm{qSmAb}_{/\Ss^{\form}_{K,p}}$ be the $\infty$-category of locally (on the source) quasi-smooth and finite unramified maps $\mathfrak{Y}\to \Ss^{\form}_{K,p}$ that represent functors $\mathrm{Aff}^{\op}_{/\Ss^{\form}_{K,p}}\to \Mod[\mathrm{cn}]{\Int}$.

\begin{introthm}
\label{introthm:filcrys}
For every prime $p$, there is a functor
\begin{align*}
\mathrm{Latt}^0_{G,\mu,K}&\to \mathrm{qSmAb}_{/\Ss^{\form}_{K,p}}\\
(W,W_{\Int})&\mapsto \mathcal{Z}_{K,crys}(W_{\Int})^{\form}_p
\end{align*}
along with a functorial equivalence
\[
\mathbb{L}_{\mathcal{Z}_{K,crys}(W_{\Int})^{\form}_p/\Ss^{\form}_{K,p}} \simeq \Reg{\mathcal{Z}_{K,crys}(W_{\Int})^{\form}_p}\otimes_{\Reg{\Ss_K}}\bm{co}(W_{\Int})[1],
\]
and is such that, for $(k,z)$ in $\mathrm{Aff}^{\op}_{/\Ss^{\form}_{K,p}}$ with $k$ a perfect field, we have
\[
\mathcal{Z}_{K,crys}(W_{\Int})^{\form}_p(z) \simeq \mathbf{Crys}_{p,z}(W_{\Int})^{\varphi_0 = \mathrm{id}}.
\]
\end{introthm}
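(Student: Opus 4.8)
The plan is to build $\mathcal{Z}_{K,crys}(W_{\Int})^{\form}_p$ as the functor of sections of a derived filtered $F$-crystal, and then check it has the three asserted properties (functoriality, cotangent complex, perfect-point identification) by reducing everything to the linear algebra of derived filtered $F$-crystals established in Sections~\ref{sec:fcrys} and~\ref{sec:proofs}. First I would invoke the existence (from those sections) of the $p$-adic transversally filtered derived $F$-crystal $\mathbf{Crys}_p(W_{\Int})$ over $\Ss^{\form}_{K,p}$, functorial in $(W,W_{\Int})$, interpolating the pointwise $\mathbf{Crys}_{p,z}(W_{\Int})$; this uses that the tautological abelian schemes carry such crystalline realizations and that a $G$-equivariant lattice $W_{\Int}\subset \Hom(H_{1,\Int},H_{2,\Int})$ cuts out a sub-object (here Assumption~\ref{introassump:lattice} is convenient, and the removal of it is deferred to the body). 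Then define $\mathcal{Z}_{K,crys}(W_{\Int})^{\form}_p$ on $(C,z)\in\mathrm{Aff}^{\op}_{/\Ss^{\form}_{K,p}}$ to be the mapping space $\Map(\mathbf{1},\mathbf{Crys}_{p,z}(W_{\Int}))$ in the $\infty$-category of derived filtered $F$-crystals over $C$ — i.e. the sections that are simultaneously Frobenius-fixed and in filtration degree $0$. Functoriality in $(W,W_{\Int})$ is immediate from functoriality of $\mathbf{Crys}_p(-)$ and of $\Map(\mathbf{1},-)$.

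The cotangent complex computation is the heart of the matter. The functor of sections is a (homotopy) limit: it is the fiber of a map comparing the de Rham/Hodge-filtered realization with the crystalline Frobenius structure, and $\Map(\mathbf{1},-)$ of a transversally filtered object is corepresented so that its relative cotangent complex is controlled by the negative part of the associated graded of the filtration. Concretely, $\mathbf{Crys}_p(W_{\Int})$ has underlying filtered vector bundle $\Fil^\bullet_{\mathrm{Hdg}}\mathbf{dR}(W_{\Int})$ with weights $-1,0,1$; the obstruction to extending a filtration-$0$ Frobenius-fixed section along a square-zero thickening lands in $\gr^{-1}_{\mathrm{Hdg}}\mathbf{dR}(W_{\Int})\otimes I$, and the section space itself contributes nothing further because the relevant $\Ext^{\geq 1}$ between the unit and $\Fil^0$ vanishes after accounting for the $F$-structure (the transversality hypothesis is exactly what makes this clean). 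This yields that the functor is infinitesimally cohesive, nilcomplete, locally of finite presentation, with $\mathbb{L}$ over $\Ss^{\form}_{K,p}$ given by $\Reg{}\otimes_{\Reg{\Ss_K}}(\gr^{-1}_{\mathrm{Hdg}}\mathbf{dR}(W_{\Int}))^\vee[1] = \bm{co}(W_{\Int})[1]$; by the representability criterion (Lurie/Artin–Lurie, cf. Appendix~\ref{sec:lurie}) together with the fact that $\mathbb{L}[-1]$ is a vector bundle of rank $d_+(W)=nd_+$, the morphism is quasi-smooth and, since the pointwise fibers are discrete, finite unramified. All identifications are made compatibly with the surjection from $\pi_{2,*}\Omega^1_{\mathcal{A}_2/\Ss_K}\otimes(R^1\pi_{1,*}\Reg{\mathcal{A}_1})^\vee$ onto $\bm{co}(W_{\Int})$, so the output feeds into Proposition~\ref{introprop:gluing}.

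Finally, for $(k,z)$ with $k$ a perfect field of characteristic $p$, the $\infty$-category of derived filtered $F$-crystals over $k$ degenerates: the filtration splits and $\Map(\mathbf{1},\mathbf{Crys}_{p,z}(W_{\Int}))$ computes precisely $\mathbf{Crys}_{p,z}(W_{\Int})^{\varphi_0=\mathrm{id}}$ (a discrete $\Int_p$-module), giving the asserted pointwise description; one also checks, via the dense-embedding statement of Proposition~\ref{prop:tate_conjecture}, that over finite fields this matches the classical special-homomorphism space, which is needed for compatibility with $\mathcal{Z}^{\mathrm{cl}}_K(W_{\Int})$. I expect the main obstacle to be the $\mathbb{L}$ computation in families — specifically, establishing that the higher derived contributions of the crystalline realization (the analogue of the $\tau^{\geq 1}R\pi_{1,*}$ phenomenon in Proposition~\ref{introprop:h_tilde_cotangent}) are killed upon imposing the Frobenius-fixed and filtration-$0$ conditions, so that one genuinely lands in a vector bundle of the correct rank rather than a complex of larger Tor amplitude. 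This is where the transversality of the filtered $F$-crystal, and the careful animated (rather than classical) divided-power formalism of Section~\ref{sec:fcrys}, are essential.
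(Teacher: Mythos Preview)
Your high-level outline is correct and matches the paper: define $\mathcal{Z}_{K,crys}(W_{\Int})^{\form}_p$ as the functor of sections $\Map(\mathbf{1},\mathbf{Crys}_{p,z}(W_{\Int}))$ of the filtered $F$-crystal, then verify Lurie's representability criteria. The perfect-field identification is also handled essentially as you describe. But the cotangent-complex step, which you correctly flag as the heart of the matter, has a genuine gap.

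You assert that ``the relevant $\Ext^{\geq 1}$ between the unit and $\Fil^0$ vanishes after accounting for the $F$-structure'' and that transversality ``is exactly what makes this clean.'' This does not name the mechanism. Transversality only guarantees that the associated graded pieces are vector bundles; it does not by itself explain why only $\gr^{-1}_{\mathrm{Hdg}}$ survives in $\mathbb{L}$ and not $\gr^0_{\mathrm{Hdg}}$ or the crystalline cohomology of the base. The paper's argument (Section~\ref{sec:proofs}) is rather different: one works mod $p$ and exploits the \emph{conjugate filtration} on $R\Gamma_{\mathrm{crys}}(\overline{C},f^*\mathcal{M}_1)$. The functor $\mathcal{Z}^+_1$ is compared, via an intermediate $\mathcal{Z}^{\diamond}$, with an auxiliary prestack $\mathcal{Z}^{\heartsuit}$ which is the homotopy fiber of an explicit Frobenius-\emph{semilinear} endomorphism of $\gr^{-1}_{\mathrm{Hdg}}M_{\overline{C}}$. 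On a trivial square-zero extension $C\oplus P$, Frobenius restricts to the nullhomotopic map on $\overline{P}$, so the tangent space of $\mathcal{Z}^{\heartsuit}$ splits trivially and is read off directly. The passage from $\mathcal{Z}^{\heartsuit}$ back to $\mathcal{Z}^{\diamond}$ is controlled by Lemma~\ref{lem:exts_equivalent_restr}: the difference of the two classifying maps factors through $\gr^0_{\mathrm{Hdg}}M_{\overline{C}}$ and is nullhomotopic on $\hker(\varphi_{\overline{C}})\otimes_{\overline{C}}\gr^0_{\mathrm{Hdg}}M_{\overline{C}}$---hence nullhomotopic on the $P$-direction. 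This is the precise reason the $\gr^0$-contribution disappears from the tangent space, and it is not an $\Ext$-vanishing statement. The same conjugate-filtration comparison is used again to establish discreteness of $\mathcal{Z}(\mathcal{M}_1)(C)$ for discrete $C$ (Proposition~\ref{prop:m1_repble}), which is a separate input to Lurie's criterion that your sketch does not address. The general $n$ then follows by d\'evissage along $\mathcal{Z}^+_{n-1}\to \mathcal{Z}^+_n\to \mathcal{Z}^+_1$.
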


Theorem~\ref{introthm:filcrys} is shown by verifying the criteria for representability in Lurie's generalization of Artin's representability theorem (see Appendix~\ref{sec:lurie}): This process is applied to certain truncated versions of the functor of sections $\mathcal{Z}_{K,crys}(W_{\Int})^{\form}_p$, and the statement for $\mathcal{Z}_{K,crys}(W_{\Int})^{\form}_p$ itself follows by taking limits.

We now get the desired local lifts $\mathcal{Z}_K(W_{\Int})_{\Int/p^n\Int}$ that feed into Proposition~\ref{introprop:gluing} as follows: Both the formal $p$-adic completion $\widetilde{\mathbb{H}}(\mathcal{A}_1,\mathcal{A}_2)^{\form}_p$ and the space $\mathcal{Z}_{K,crys}(W_{\Int})^{\form}_p$ above admit maps to a common space $\widetilde{\mathbb{H}}^{\form}_{p,crys}$, which is constructed in the same way as $\mathcal{Z}_{K,crys}(W_{\Int})^{\form}_p$, but now parameterizes \emph{maps} of filtered $F$-crystals associated with the crystalline cohomology of $\mathcal{A}_1$ and $\mathcal{A}_2$. The map from $\widetilde{\mathbb{H}}(\mathcal{A}_1,\mathcal{A}_2)^{\form}_p$ to $\widetilde{\mathbb{H}}^{\form}_{p,crys}$ is formally \'etale---this is a derived analogue of Grothendieck-Messing theory---and the map from $\mathcal{Z}_{K,crys}(W_{\Int})^{\form}_p$ is a closed immersion. The pullback of $\mathcal{Z}_{K,crys}(W_{\Int})^{\form}_p$ over $\widetilde{\mathbb{H}}(\mathcal{A}_1,\mathcal{A}_2)^{\form}_p$ now gives the desired local lift $\mathcal{Z}_K(W_{\Int})_{\Int/p^n\Int}$ for each $n\geq 1$. In other words, we are \emph{geometrically} constructing a stack of derived homomorphisms between the abelian schemes $\mathcal{A}_1$ and $\mathcal{A}_2$ with the correct crystalline homological realizations. 

As the previous paragraph hints at, one example in which the methods of Theorem~\ref{introthm:filcrys} apply is the case where we are looking to parameterize maps between the usual crystalline realizations of two abelian schemes; equivalently, we are looking at maps between the Dieudonn\'e $F$-crystals associated with their $p$-divisible groups. What the theorem is saying in this case is that we can get a good geometric space that parameterizes such maps---but we have to upgrade everything into the derived or animated realm to be able to do so. 

To obtain the \emph{generic} fiber $\mathcal{Z}_K(W_{\Int})_{\Rat}$, we get by with a bit less. The category of vector bundles with an integrable connection and a filtration compatible with Griffiths transversality generalizes to what I call here a \emph{transversally filtered infinitesimal crystal}. This is studied in Section~\ref{sec:infcrys}, where it is shown that the spaces of sections of such objects are reasonably well-behaved (though not representable) derived prestacks, which are good enough to allow use of the strategy from the paragraph before the last to construct the appropriate closed derived subscheme $\mathcal{Z}_K(W_{\Int})_{\Rat}$ of $\widetilde{\mathbb{H}}(\mathcal{A}_1,\mathcal{A}_2)_{\Rat}$. In fact, for the global applications, we could have also gotten away with a similarly limited result in the $p$-adic context as well---but I find the fact that one can establish representability purely locally quite striking, and so have included it here.

\subsection{Derived homomorphisms of abelian schemes}
\label{subsec:derived_hom}

The process used in the construction of $\mathcal{Z}_K(W_{\Int})$ has an interesting consequence for abelian schemes that is of independent interest.

Given abelian schemes $X,Y$ over a (discrete or classical) commutative ring $R$, we have the scheme $\underline{\Hom}(X,Y)$ of homomorphisms between them, and, just as we did above, we can consider the derived enhancement $\widetilde{\mathbb{H}}(X,Y)$ of zero section preserving maps. Unlike in the classical situation, the roles of $X$ and $Y$ in this derived scheme are quite asymmetric; by this I mean that we do not have an identification of the form
\[
\widetilde{\mathbb{H}}(X,Y)\simeq \widetilde{\mathbb{H}}(Y^\vee,X^\vee),
\]
where we are considering the dual abelian schemes on the right. This has to do with the fact (observed above already in Proposition~\ref{introprop:h_tilde_cotangent}) that the relative cotangent complex for $\widetilde{\mathbb{H}}(X,Y)$ over $R$ is the pullback of the complex
\[
\omega_Y\otimes_R\bigoplus_{i=1}^{\dim X}H^i(X,\Reg{X})^\vee[i],
\]
which is also asymmetric in $X$ and $Y$.

In particular, note that $\widetilde{\mathbb{H}}(X,Y)$ is not locally quasi-smooth over $R$ \emph{unless} $\dim X  = 1$; that is, when $X$ is an elliptic curve. One can exploit this last fact to give a much more direct construction of (analogues of) the derived stacks in Theorem~\ref{introthm:derived_cycles} over the moduli stacks of Kudla-Rapoport from~\cite{Kudla2009-lc}, though I do not do so here.
 
However, the methods used in the proof of Theorem~\ref{introthm:derived_cycles} show:
\begin{introthm}\label{introthm:end}
There is a closed derived subscheme $\mathbb{H}(X,Y)$ of $\widetilde{\mathbb{H}}(X,Y)$ that is locally quasi-smooth of virtual codimension $\dim(X)\dim(Y)$ over $R$ with underlying classical scheme $\underline{\Hom}(X,Y)$, and with relative cotangent complex
\[
\mathbb{L}_{\mathbb{H}(X,Y)/R} \simeq \Reg{\mathbb{H}}\otimes_R(\omega_Y\otimes_RH^1(X,\Reg{X}))[1]\simeq \Reg{\mathbb{H}}\otimes_R(\omega_Y\otimes_R\omega_{X^\vee})[1]. 
\]
\end{introthm}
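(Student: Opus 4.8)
The plan is to mimic the global construction of $\mathcal{Z}_K(W_{\Int})$ in the special case where the Shimura variety is replaced by $\Spec R$ and the pair of abelian schemes with $G$-structure is replaced by the arbitrary pair $(X,Y)$. First I would take the derived enhancement $\widetilde{\mathbb{H}}(X,Y)$ of zero-section preserving maps, whose relative cotangent complex over $R$ was already computed (in the form stated) to be the pullback of $\omega_Y\otimes_R(\tau^{\ge 1}R\pi_*\Reg X)^\vee \simeq \omega_Y\otimes_R\bigoplus_{i=1}^{\dim X}H^i(X,\Reg X)^\vee[i]$; this is Proposition~\ref{introprop:h_tilde_cotangent} applied with $\mathcal{A}_1 = X$, $\mathcal{A}_2 = Y$ and $\Ss_K$ replaced by $\Spec R$. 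The truncation $\tau^{\ge 1} \to \tau^{[1,1]}$ gives a surjection of perfect complexes $(\tau^{\ge1}R\pi_*\Reg X)^\vee \twoheadrightarrow H^1(X,\Reg X)^\vee[-1]$, hence a surjection
\[
\mathbb{L}_{\widetilde{\mathbb{H}}(X,Y)/R}\twoheadrightarrow \Reg{\widetilde{\mathbb{H}}(X,Y)}\otimes_R\bigl(\omega_Y\otimes_R H^1(X,\Reg X)\bigr)[1],
\]
and the target is (the shift of) a vector bundle of rank $\dim(X)\dim(Y)$ using the standard identification $H^1(X,\Reg X)\simeq \omega_{X^\vee}$. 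So the task is exactly the integration problem: find a closed derived subscheme of $\widetilde{\mathbb{H}}(X,Y)$ whose cotangent complex realizes this quotient, with the correct classical truncation.

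Next I would carry out the integration by the same two-step recipe used for the global statement. Over $\Spec R[p^{-1}]$-parts and more generally over $\Rat$-algebras, the transversally filtered infinitesimal crystal attached to the de Rham realizations of $X$ and $Y$ produces, via its functor of sections, the desired lift $\mathbb{H}(X,Y)_{\Rat}$ (here there is no $G$-equivariance to impose, one simply takes \emph{all} homomorphisms, so $\mathcal{Z}^{\cl}$ becomes all of $\underline{\Hom}(X,Y)$ and the relevant automorphic bundle quotient is just the evident Hodge-theoretic one). For each prime $p$ and each $n$, over $R/p^nR$ I would invoke Theorem~\ref{introthm:filcrys}'s mechanism: the $p$-adic transversally filtered derived $F$-crystal built from the crystalline cohomology of $X$ and $Y$ has a representable functor of sections (applying Lurie's representability theorem to its truncations as in the proof of that theorem), and it maps by a closed immersion into a crystalline avatar $\widetilde{\mathbb{H}}^{\form}_{p,crys}$, which receives a formally étale map from $\widetilde{\mathbb{H}}(X,Y)^{\form}_p$ by derived Grothendieck-Messing theory; pulling back gives $\mathbb{H}(X,Y)_{R/p^nR}$ with the right cotangent complex. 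Finally, the Beauville-Laszlo gluing of Proposition~\ref{introprop:gluing} (stated there for $\Ss_K$ but the argument is local and applies verbatim with $\Ss_K$ replaced by $\Spec R$, noting $\underline{\Hom}(X,Y)$ is affine over $R$ so representability of the classical truncation is automatic) assembles the generic-fiber and mod-$p^n$ lifts into a single closed derived subscheme $\mathbb{H}(X,Y)\subset\widetilde{\mathbb{H}}(X,Y)$ with
\[
\mathbb{L}_{\mathbb{H}(X,Y)/R}\simeq \Reg{\mathbb{H}}\otimes_R\bigl(\omega_Y\otimes_R H^1(X,\Reg X)\bigr)[1]\simeq \Reg{\mathbb{H}}\otimes_R(\omega_Y\otimes_R\omega_{X^\vee})[1].
\]
Since this complex is (the shift by one of) a vector bundle of rank $\dim(X)\dim(Y)$, the morphism $\mathbb{H}(X,Y)\to R$ is locally quasi-smooth of virtual codimension $\dim(X)\dim(Y)$, and its classical truncation is $\underline{\Hom}(X,Y)$ by construction (it is cut out of $\widetilde{\mathbb{H}}(X,Y)$, whose truncation is already $\underline{\Hom}(X,Y)$ by rigidity).

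The main obstacle is, as in the global case, the mod-$p^n$ step: establishing that the functor of sections of the $p$-adic derived $F$-crystal attached to the crystalline cohomology of $X$ and $Y$ is representable by a quasi-smooth derived scheme with the predicted cotangent complex, and that the Grothendieck-Messing comparison map is formally étale at the derived level. Everything else—the cotangent complex computation for $\widetilde{\mathbb{H}}(X,Y)$, the de Rham/generic-fiber lift, and the Beauville-Laszlo gluing—is either quoted directly from earlier in the paper or is a routine localization of it. I expect the proof to consist almost entirely of the remark that the constructions of Sections~\ref{sec:infcrys}, \ref{sec:fcrys}, \ref{sec:abvar_crystals} and~\ref{sec:proofs} were set up to work over an arbitrary base and do not use the Shimura-variety structure in any essential way, the $G$-equivariance being merely used to cut out a \emph{sub}bundle of $H^1$; dropping it and taking the full $H^1$ gives precisely the statement of Theorem~\ref{introthm:end}.
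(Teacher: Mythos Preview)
Your proposal is correct and follows essentially the same approach as the paper: construct local lifts over $\Rat$ and over $R/{}^{\mathbb{L}}p^n$ by pulling back the crystalline/infinitesimal spaces $\mathbb{H}_\infty(X,Y)$ and $\mathbb{H}^{\form}_{p,crys}(X,Y)$ along the formally \'etale realization maps from $\widetilde{\mathbb{H}}(X,Y)$, then glue via the Beauville--Laszlo argument (packaged in the paper as the ``$u$-marked scheme'' formalism of Corollary~\ref{cor:exist_Y}). One small expository correction: in the paper the logical order is reversed---Section~\ref{sec:abvar_crystals} proves Theorem~\ref{introthm:end} first, over an arbitrary base, and the Shimura-variety construction in Section~\ref{sec:shimura} then \emph{uses} it, rather than the other way around.
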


Except in the case where $X$ is an elliptic curve, where it agrees with $\widetilde{\mathbb{H}}(X,Y)$, I have been unable to find a direct moduli-theoretic construction for $\mathbb{H}(X,Y)$.

A natural guess is to consider the functor $\underline{\mathbb{H}om}(X,Y)$ of maps between $X$ and $Y$ viewed as fppf sheaves of \emph{animated} abelian groups on the $\infty$-category $\mathrm{CRing}_{R/}$ of animated commutative rings over $R$. The problem is that this is a bit too complicated for our purposes: Its tangent spaces will see higher Ext groups of the form $\Ext^i(X,\mathbb{G}_a)$, which vanish in characteristic $0$ for $i>1$, but which can in characteristic $p$ be non-zero for arbitrarily large values of $i$. This is related to the non-vanishing of the cohomology of Eilenberg-MacLane spaces; see~\cite{Breen1969-qn}.

The construction I give here---just like the one for $\mathcal{Z}_K(W_{\Int})$---is obtained by gluing together various local incarnations into a closed derived subscheme of the mapping scheme $\widetilde{\mathbb{H}}(X,Y)$ via Proposition~\ref{introprop:gluing}, and sidesteps the issue of giving a geometric description for it. These local incarnations are obtained as spaces of maps between the degree $1$ cohomological realizations of $Y$ and $X$.

Before I wrap up here, let me remark that the functor $\mathbb{H}(X,Y)$ does behave like the space of maps in some stable $\infty$-category in an important way: It is additive in both variables; see Corollary~\ref{cor:hom_additive}. Moreover, if $X=Y$, then $\mathbb{H}(X,X)$ can be endowed with the structure of an associative algebra object in an appropriate $\infty$-category. 

Here's a wildly speculative interpretation of this phenomenon: abelian varieties are supposed to provide---under Scholze's general philosophy---instances of (the as of yet elusive notion of) global shtukas. What we have defined here should the space of derived maps between the shtukas underlying the abelian schemes $X$ and $Y$. The `forgetful' functor from abelian varieties to shtukas shuts the door on the pesky Eilenberg-MacLane cohomology classes, leaving us with a nice (quasi-)smooth space.

This speculation comports with the actual construction of $\mathbb{H}(X,Y)$, if one admits that derived filtered ($F$-)crystals can be viewed as families of local (integral) shtukas.

\subsection{A word (or two) on the technical details}
\label{introsubsec:technical}

The concepts and methods of proof used in Theorem~\ref{introthm:filcrys} form the technical heart of this paper. To begin, finding the correct definition of a filtered $F$-crystal turns out to be a bit subtle, and most of Section~\ref{sec:fcrys} is dedicated to getting there. What I arrive at is essentially the notion of a strongly divisible filtered $F$-crystal as defined for instance by Faltings~\cite{Faltings1989-af}. This theory is only well-behaved when the filtration is bounded between certain degrees (depending on $p$). It turns out, in any case, that we only need the Frobenius structure to be partially defined in two consecutive degrees, so this is not an issue here. The correct framework for such objects no doubt involves prismatic methods, but that's a story for another time.

As for the actual argument for the theorem (realized by Theorems~\ref{thm:FilFCrys_functorial_props} and~\ref{thm:FilFCrys_discrete} with proofs in Section~\ref{sec:proofs}), as mentioned above, the main point is to verify the criteria in Lurie's representability theorem for the derived mod-$p$ quotient of the functor $\mathcal{Z}_{K,crys}(W_{\Int})^{\form}_p$. For this, the assiduous reader might find it useful to know that the ideas from~\cite[\S 7]{Bhatt2022-sb} involved in the study of the crystalline Chern class played a key role in the development of the proofs here. The arguments in \emph{loc. cit.} can be interpreted as constituting the study of the space of sections of the crystalline realization of the Tate twist: After quotienting out by $p$, we get something equivalent to the \emph{scheme} $\mu_p$, which is of course quasi-smooth. Something similar happens in general: Working with the mod-$p$ quotient, I make use of a `conjugate' filtration provided by the $F$-crystal structure to break up the space of sections into manageable pieces, which can then be related to a much simpler space where representability---and hence the validity of Lurie's criteria---is more or less immediate. 

\subsection{What remains to be done (foundationally speaking)}

Outside of proving the modularity conjecture (!), there is still some basic foundational work that remains outstanding:

\begin{enumerate}[label=(\Alph*)]
	\item To get a fully robust theory, one would like to upgrade the cycle classes---and the conjecture---to those valued in an appropriate \emph{arithmetic} Chow group of a compactification of $\Ss_K$. Lifting to the Gillet-Soul\'e arithmetic Chow group of $\Ss_K$ is possible: In ongoing work with L. Garcia, we employ the description from~\cite{Gillet1990-ps} of the arithmetic Chow groups in terms of Hermitian vector bundles to upgrade the cycle classes here to arithmetic ones. But finding a suitable extension to the boundary appears to be a subtle question even for classical Chow groups; see~\cite{MR453649},\cite{MR4183376},\cite{MR4476226}.

	\item The constructions given here can be generalized to certain resolutions of Kisin-Pappas integral models for Shimura varieties from~\cite{Kisin2018-nm}. To do this in general would require a robust theory of filtered crystalline (or, better, prismatic) realizations over such models, as found for the unramified case in~\cite{Lovering2017-fy}. 

    Of course, even if one is able to construct quasi-smooth derived cycles over the models of Kisin-Pappas, it is only when we are working with a \emph{regular} integral model that one can extract cycle classes out of them. In this generality, one might be better off sticking with classes in $K$-theory.

	\item Along these lines, the methods used in Theorem~\ref{introthm:filcrys} can also be used to construct derived special cycles on (certain resolutions of) Rapoport-Zink spaces. This would for instance recover the `linear invariance' results of~\cite{Howard2019-td} for Rapoport-Zink spaces of unitary type in a moduli theoretic way.

   \item One should also extend the results here to the case of Shimura varieties of abelian type by incorporating the methods of~\cite{kisin:abelian} and~\cite{Lovering2017-me}. 
\end{enumerate}




\subsection{Structure of the paper}

\begin{itemize}
  \item In Section~\ref{sec:infcrys}, I lay out the apparatus necessary for setting up the story in the generic fiber: derived infinitesimal envelopes (which turn out to be classical objects under some mild Noetherian assumptions) and derived filtered infinitesimal crystals. This culminates in the main results, Propositions~\ref{prop:FIC_functorial_props} and~\ref{prop:FIC_discrete}.

  \item In Section~\ref{sec:fcrys}, we get introduced to derived PD thickenings following~\cite{Mao2021-jt}, and I use them to define (certain) derived filtered $F$-crystals and state the main Theorems~\ref{thm:FilFCrys_functorial_props} and~\ref{thm:FilFCrys_discrete}. The proofs of these theorems are the subject of Section~\ref{sec:proofs}.

  \item In Section~\ref{sec:abvar_crystals}, I introduce the derived mapping scheme $\widetilde{\mathbb{H}}(X,Y)$, and study its relationship with the local spaces constructed out of the cohomological realizations of $X$ and $Y$ via the the methods of Sections~\ref{sec:infcrys} and~\ref{sec:fcrys}. Denoting these local avatars by $\widetilde{\mathbb{H}}_\infty(X,Y)$ and $\widetilde{\mathbb{H}}^{\form}_{p,crys}(X,Y)$, I show that homological realization gives formally \'etale maps
  \[
    \widetilde{\mathbb{H}}(X,Y)\to \widetilde{\mathbb{H}}_\infty(X,Y)\;;\; \widetilde{\mathbb{H}}(X,Y)\to \widetilde{\mathbb{H}}^{\form}_{p,crys}(X,Y).
  \]
  This should be viewed as a derived analogue of Grothendieck-Messing theory. I then prove the gluing result in Proposition~\ref{introprop:gluing} (this is a consequence of the more general Corollary~\ref{cor:exist_Y}), and use it to construct the locally quasi-smooth scheme $\mathbb{H}(X,Y)$ in Theorem~\ref{introthm:end}.
 
  \item In Section~\ref{sec:shimura}, I apply the methods of the preceding sections to derived filtered crystals obtained from automorphic bundles on smooth integral canonical models of Shimura varieties. This gives us Theorem~\ref{thm:W_representability}, which recovers Theorem~\ref{introthm:derived_cycles} as a special case.

  \item In Section~\ref{sec:cycle_classes}, results quoted in Appendix~\ref{app:cycle_classes} are used to go from quasi-smooth maps to the cycle classes in Theorem~\ref{introthm:main}.

  \item Section~\ref{sec:modularity} is dedicated to the formulation and discussion of Conjecture~\ref{introconj}.
\end{itemize}

\subsection{The appendices}

There are several appendices. The first three contain definitions and results about $\infty$-categories that are well-known and `standard', but for which I couldn't find satisfactory references: In Appendix~\ref{sec:fibrations}, our goal is to know that sections of certain fibrations such as those that give rise to derived filtered crystals can be studied in terms of sections over the cosimplicial \u{C}ech nerve of a weakly final object. This is used in the paper to lift to the derived world the usual description of crystals in terms of vector bundles with integrable connection over infinitesimal or divided power envelopes in smooth schemes. Appendix~\ref{sec:completions} recollects a couple of facts about derived $p$-completions and is here for my psychological comfort. Appendix~\ref{sec:filtrations} covers filtrations and gradings from an $\infty$-category perspective. In particular, it studies dualizability of filtered modules over a filtered animated commutative ring and its relationship with dualizability in the category of graded modules over the associated graded animated commutative ring.

Appendix~\ref{sec:modules} recalls a convenient device from~\cite{lurie:spec_AB} to keep track of module objects in an $\infty$-category in terms of the associated Serre tensor construction.

Appendices~\ref{sec:lurie} and~\ref{sec:qsmooth} cover some background material on derived algebraic geometry for the convenience of the reader: In the former, I review the basics of the functorial approach, culminating in the statement of Lurie's representation theorems, and their use in obtaining representable derived mapping stacks between certain schemes. The latter appendix is dedicated to the definition and recollection of properties of quasi-smooth schemes from references like~\cite{Khan2018-dk}.

Finally, in Appendix~\ref{app:cycle_classes}, I recall results of Gillet-Soul\'e and Gillet on the relationship between $K$-groups and Chow groups following the treatment in~\cite{HMP:mod_codim}.

\subsection{Additional remarks}

\begin{introrem}
The idea of using derived algebraic geometry to construct cycle classes has already been employed by Feng-Yun-Zhang over global fields in finite characteristic~\cite{FYY2}, where they work with certain moduli spaces of shtukas. Of course the actual constructions here are quite a bit different in their technical manifestation, though they are certainly closely related in spirit.

As mentioned in the introduction of \emph{loc. cit.}, the results here confirm in our specific context the `hidden smoothness' philosophy of Deligne, Drinfeld and Kontsevich, which posits that singularities of naturally arising moduli problems can be resolved most transparently in the derived direction.
\end{introrem}

\section*{Acknowledgments}
I thank Ben Howard for posing the question of whether the construction of~\cite{HMP:mod_codim} had a derived moduli interpretation, David Treumann for patiently listening to my early half-baked ideas,  Adeel Khan for discussions about his work and about derived algebraic geometry in general, Mark Kisin for comments on an initial draft, and Michael Rapoport and Andreas Mihatsch for their remarks on the applicability of the framework used here. In particular, the impetus to precisely formulate the modularity conjecture here came from Rapoport, and this led to a substantial reworking of the presentation of the paper. Finally, it is a pleasure to acknowledge the obvious intellectual debt this paper owes to the work of Steve Kudla.

This work was partially supported by NSF grants DMS-1802169 and DMS-2200804.

\section*{Notation and other conventions}

\begin{enumerate}
  \item I adopt a resolutely $\infty$-categorical approach. This means that all operations, including (but not limited to) limits, colimits, tensor products, exterior powers etc. are always to be understood in a derived sense, unless otherwise stated.

  \item We will use $\mathrm{Spc}$ to denote the $\infty$-category of spaces or homotopy types: roughly speaking, this is the localization of the Quillen model category of simplicial sets with respect to homotopy equivalences. 

  \item At a first approximation, an $\infty$-category $\mathcal{C}$ is something that has a space of objects, and for any two objects $X,Y$ a space of morphisms $\Map_{\mathcal{C}}(X,Y)$.

  \item For any $\infty$-category $\mathcal{C}$ and an object $c$ of $\mathcal{C}$, we will write $\mathcal{C}_{c/}$ (resp. $\mathcal{C}_{/c}$) for the comma $\infty$-categories of arrows $c\to d$ (resp. $d\to c$).

  \item Given any two $\infty$-categories $\mathcal{C}$, $\mathcal{D}$, we have an $\infty$-category $\mathrm{Fun}(\mathcal{C},\mathcal{D})$ of functors with the arrows being the $\infty$-category analogues of natural transformations.

  \item This will apply in particular with $\mathcal{C}$ a discrete or classical category, including in particular the category $[1]$ with two objects $0,1$ and a single non-identity map $0\to 1$. In this case, $\mathrm{Fun}([1],\mathcal{C})$ is the $\infty$-category of arrows $X\to Y$ for objects $X$ and $Y$ in $\mathcal{C}$.
 
  \item We will often make reference to the process of \defnword{animation}, as described say in~\cite[Appendix A]{Mao2021-jt}. This is a systematic way to get well-behaved $\infty$-categories and functors between them, starting from `nice' classical categories with a set of compact, projective generators.

  \item An important example is $\mathrm{CRing}$, the $\infty$-category of \defnword{animated commutative rings}, obtained via the process of animation from the usual category of commutative rings. Objects here can be viewed as being simplicial commutative rings up to homotopy equivalence.

  \item We will follow homological notation for $\mathrm{CRing}$: For any $n\in \Int_{\ge 0}$, $\mathrm{CRing}_{\leq n}$ will be the subcategory of $\mathrm{CRing}$ spanned by those objects $R$ with $\pi_k(R) = 0$ for $k>n$; that is, by the \defnword{$n$-truncated objects}. If $n = 0$, we will write $\mathrm{CRing}_{\heartsuit}$ instead of $\mathrm{CRing}_{\leq 0}$: its objects are the \defnword{discrete} or classical commutative rings, and the category can be identified with the usual category of commutative rings. 

  \item Any animated commutative ring $R$ admits a \defnword{Postnikov tower} $\{\tau_{\leq n}R\}_{n\in \Int_{\ge 0}}$ where $R\to \tau_{\leq n}R$ is the universal arrow from $R$ into $\mathrm{CRing}_{\leq n}$ and the natural map $R\to \varprojlim_n \tau_{\leq n}R$ is an equivalence.

  \item We will also need the notion of a \defnword{stable $\infty$-category} from~\cite{Lurie2017-oh}: this is the $\infty$-category analogue of a triangulated category. The basic example is the $\infty$-category $\Mod{R}$, the derived $\infty$-category of $R$-modules. I will use \emph{cohomological} conventions for these objects.

  \item An important feature of a stable $\infty$-category $\mathcal{C}$ is that it has an initial and final object $0$, and, for any map $f:X\to Y$ in $\mathcal{C}$, we have the \defnword{homotopy cokernel} $\hcoker(f)$ defined as the pushout of $0\to Y$ along $f$. I will sometimes abuse notation and write $Y/X$ for this object.

  \item If $R\in \mathrm{CRing}_{\heartsuit}$ is a classical commutative ring, $M\in \Mod{R}$ is a complex of $R$-modules, and $a\in R$ is a non-zero divisor, we will write $M/{}^{\mathbb{L}}a$ for the derived tensor product
  \[
    M\otimes^{\mathbb{L}}_RR/aR.
  \]
  This will apply mostly in the situation where $R = \Int$.


  \item For any $M\in \Mod{\Int}$, we will write $\widehat{M}_p$ for the derived $p$-completion $\varprojlim_n M/{}^{\mathbb{L}}p^n$.

  \item In any stable $\infty$-category $\mathcal{C}$ and an object $X$ in $\mathcal{C}$, we set $X[1]=\hcoker(X\to 0)$: this gives a shift functor $\mathcal{C}\to \mathcal{C}$ with inverse $X\mapsto X[-1]$, and we set $\hker(f:X\to Y)  = \hcoker(f)[-1]$. 

  \item Given an animated commutative ring $R$, we will write $\Mod[\mathrm{cn}]{R}$ for the sub $\infty$-category spanned by the connective objects (that is, objects  with no cohomology in positive degrees), and $\Mod[\mathrm{perf}]{R}$ for the sub $\infty$-category spaned by the perfect complexes. 

  \item We have a truncation operator $\tau^{\leq 0}:\Mod{R}\to \Mod[\mathrm{cn}]{R}$ defined as the right adjoint to the natural functor in the other direction. This leads to truncation operators $\tau^{\leq n}$ and cotruncation operators $\tau^{\geq n}$ for any $n\in \Int$ in the usual way.

  \item If $f:X\to Y$ is a map in $\Mod[\mathrm{cn}]{R}$, we set $\hker^{\mathrm{cn}}(f) = \tau^{\leq 0}\hker(f)$: this is the \defnword{connective (homotopy) kernel}.

  \item For any stable $\infty$-category $\mathcal{C}$, the mapping spaces $\Map_{\mathcal{C}}(X,Y)$ between any two objects have canonical lifts to the $\infty$-category of connective spectra. We will be interested in stable $\infty$-categories like $\Mod{R}$, which are $\Mod[\mathrm{cn}]{\Int}$-modules, in the sense that the mapping spaces have canonical lifts to $\Mod[\mathrm{cn}]{\Int}$. In this case, we can extend the mapping spaces $\Map_{\mathcal{C}}(X,Y)$ from $\Mod[\mathrm{cn}]{\Int}$ to objects $\underline{\Map}_{\mathcal{C}}(X,Y)$ in $\Mod{\Int}$ by taking 
  \[
    \underline{\Map}_{\mathcal{C}}(X,Y) = \colim_{k\geq 0}\Map_{\mathcal{C}}(X,Y[k])[-k]\in \Mod{\Int}.
  \]
  When $\mathcal{C} = \Mod{R}$ for an animated commutative ring $R$, this lifts to the \defnword{internal Hom} in $\Mod{R}$.

  \item We will write $\bm{\Delta}$ for the usual \defnword{simplex} category with objects the sets $\{0,1,\ldots,n\}$ and morphisms given by the non-decreasing functions between them.

  \item A \defnword{cosimplicial object} $S^{(\bullet)}$ in an $\infty$-category  $\mathcal{C}$ is a functor
  \begin{align*}
  \bm{\Delta}&\mapsto \mathcal{C}\\
  [n]&\mapsto S^{(n)}.
  \end{align*}
  If $\mathcal{C}$ admits limits, we will write $\mathrm{Tot}S^{(\bullet)}$ for the limit of the corresponding functor: this is the \defnword{totalization} of $S^{(\bullet)}$.

  \item Given any $\infty$-category $\mathcal{C}$ with finite coproducts, and any object $S$ in $\mathcal{C}$ there is a canonical cosimplicial object $S^{(\bullet)}$ in $\mathcal{C}$, the \defnword{\u{C}ech conerve} with $S^{(n)} = \bigsqcup_{i\in [n]}S$. 
\end{enumerate}

For all other conventions, including those to do with filtrations and gradings, as well as terminology from derived algebraic geometry, please see the various appendices.

\section{Spaces of sections of derived filtered crystals: the infinitesimal context}\label{sec:infcrys}

The goal of this section is to formulate and prove a weaker version of Theorem~\ref{introthm:filcrys} that applies to derived filtered infinitesimal crystals, which is nevertheless sufficient for global applications. This turns out to be a matter of putting together the formalism of animation with the results of Bhatt from~\cite{Bhatt2012-kp}.

\subsection{}\label{subsec:anipair}
We begin by recalling some useful constructions from~\cite[\S 3.2]{Mao2021-jt}. Consider the (classical) category $\mathcal{D}$ of maps $R\to S$ of commutative rings. Within this, we find the full subcategory $\mathcal{D}^0$ consisting of all possible coproducts of the pair of maps $\Int[x]\xrightarrow{\mathrm{id}}\Int[x]$ and $\Int[x,y]\xrightarrow[y\mapsto 0]{x\mapsto x}\Int[x]$. 

Then $\mathcal{D}^0$, by construction, admits a set of compact projective generators, and so by the process of animation from~\cite[Appendix A]{Mao2021-jt} we obtain the $\infty$-category $\mathrm{Ani}(\mathcal{D}^0)$, which, by~\cite[Theorem 3.23]{Mao2021-jt}, is equivalent to the $\infty$-category $\mathrm{AniPair}$ spanned within $\mathrm{Fun}(\Delta[1],\mathrm{CRing})$ by those arrows $f:A\to A'$ such that $\pi_0(f):\pi_0(A)\to \pi_0(A')$ is a surjective map of classical commutative rings. We will use the notation $A\twoheadrightarrow A'$ to denote an object in $\mathrm{AniPair}$: Concretely, such a map can be represented by a map of simplicial commutative rings that is surjective on $\pi_0$.

Note that we have functors
\[
s:\mathrm{AniPair}\xrightarrow{(A'\twoheadrightarrow A)\mapsto A'} \mathrm{CRing}\;;\;t:\mathrm{AniPair}\xrightarrow{(A'\twoheadrightarrow A)\mapsto A} \mathrm{CRing}.
\]

\subsection{}\label{subsec:aniinfpair}
 There is a canonical functor
\[
\mathrm{AniPair}\xrightarrow{(A'\twoheadrightarrow A)\mapsto \mathrm{Fil}^\bullet_{(A'\twoheadrightarrow A)}A'} \mathrm{FilCRing}
\]
obtained by animating the functor sending a surjection $A'\twoheadrightarrow A$ with kernel $I$ to the ring $A'$ equipped with its $I$-adic filtration. 

We take $\mathrm{AniInfPair}$ to be the $\infty$-subcategory of $\mathrm{AniPair}$ spanned by the objects $A'\twoheadrightarrow A$ such that $\mathrm{Fil}^i_{(A'\twoheadrightarrow A)}A'\simeq 0$ for all $i$ sufficiently large. Write
\[
\mathrm{AniInfPair}^{[n]}\to \mathrm{AniInfPair}
\]
for the $\infty$-category spanned by those objects with $\Fil^i = 0$ for $i\geq n$.

Let $\mathrm{AniPair}_{\Rat/}$ be the $\infty$-subcategory of $\mathrm{AniPair}$ spanned by $A'\twoheadrightarrow A$ with $A'\in \mathrm{CRing}_{\Rat/}$. Then we analogously obtain $\infty$-subcategories $\mathrm{AniInfPair}_{\Rat/}$ and $\mathrm{AniInfPair}^{[n]}_{\Rat/}$ of $\mathrm{AniPair}_{\Rat/}$.

\begin{lemma}
\label{lem:adic_filtration}
Suppose that $A'\twoheadrightarrow A$ is in $\mathrm{AniInfPair}_{\Rat/}$. Then there is a canonical equivalence
\[
\gr^i_{(A'\twoheadrightarrow A)}A'\xrightarrow{\simeq}(\wedge^i_A\mathbb{L}_{A'/A})[-i],
\]
where on the right hand side, we are employing the \emph{derived} or animated exterior power functor. 
\end{lemma}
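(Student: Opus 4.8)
The plan is to reduce the statement to the universal case — the free pairs — and then invoke the décalage identity for derived exterior powers. Both sides of the claimed equivalence are the values of functors $\mathrm{AniPair}_{\Rat/}\to\Mod{\Rat}$ that preserve sifted colimits. On the left, the animated $I$-adic filtration $(A'\twoheadrightarrow A)\mapsto \mathrm{Fil}^\bullet_{(A'\twoheadrightarrow A)}A'$ preserves sifted colimits by construction (it is defined by animation), and $\gr^i(-) = \hcoker(\mathrm{Fil}^{i+1}\to\mathrm{Fil}^i)$ is a finite colimit of evaluation functors, hence colimit-preserving. On the right, both $(A'\twoheadrightarrow A)\mapsto \mathbb{L}_{A'/A}$ and, for a fixed base, $M\mapsto(\wedge^i_AM)[-i]$ are animated from their classical restrictions to the polynomial pairs, so preserve sifted colimits as well. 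Since $\mathrm{AniPair}_{\Rat/}$ is generated under sifted colimits by the finite coproducts of the two polynomial pairs $\Rat[x]\xrightarrow{\mathrm{id}}\Rat[x]$ and $\Rat[x,y]\xrightarrow{y\mapsto 0}\Rat[x]$, it suffices to produce a natural equivalence between the two restrictions to this generating subcategory.

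So fix a free pair $P = (\Rat[\underline x,\underline y]\twoheadrightarrow\Rat[\underline x])$ with $A = \Rat[\underline x]$ and $I = (\underline y)$. On the left everything is classical: since $\underline y$ is a regular sequence, $\gr^i_{(P)}A' = I^i/I^{i+1}\cong \Sym^i_A(I/I^2)$, and $N := I/I^2 = \bigoplus_j A\,\bar y_j$ is a finite free $A$-module. On the right, $\mathbb{L}_{A'/A}$ — the cotangent complex of the pair $P$, i.e. the shifted conormal complex of the closed immersion $\Spec A\hookrightarrow\Spec A'$ — is again canonically $N[1]$, because $\underline y$ is regular. The décalage identity $\wedge^i_A(N[1])\simeq (\Gamma^i_AN)[i]$ for derived exterior powers of a shifted flat module then yields $(\wedge^i_A\mathbb{L}_{A'/A})[-i]\simeq \Gamma^i_AN$, and because we are over $\Rat$ the norm map $\Sym^i_AN\to\Gamma^i_AN$ is an isomorphism. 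Chaining these, $\gr^i_{(P)}A'\simeq(\wedge^i_A\mathbb{L}_{A'/A})[-i]$ on every free pair.

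The step I expect to be genuinely delicate is upgrading this from an objectwise comparison to a natural equivalence of functors on the generating subcategory — that is, checking compatibility with the morphisms of free pairs. This reduces to the naturality of three standard constructions: the identification of $\mathbb{L}_{A'/A}$ on a free pair with the shifted conormal module, the décalage isomorphism $\wedge^i(-\,[1])\simeq\Gamma^i(-)[i]$, and the norm isomorphism $\Sym^i\xrightarrow{\sim}\Gamma^i$ over $\Rat$; one assembles these into a natural transformation valued in graded modules over the levelwise pair (cf. Illusie's treatment of derived symmetric, exterior, and divided powers, and~\cite{Bhatt2012-kp}, where the analogous computation of the graded pieces of the derived de Rham filtration is carried out, and from which this lemma can alternatively be deduced). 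I would close with two remarks: the hypothesis $A'\in\mathrm{CRing}_{\Rat/}$ is essential and enters exactly at the norm isomorphism — over $\Int$ the associated graded is governed by divided rather than symmetric powers and the formula must be corrected accordingly — and the $\mathrm{AniInfPair}$ condition (eventual vanishing of the filtration) is not needed for the comparison itself, serving only to fix the context in which the lemma will be used.
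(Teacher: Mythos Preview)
Your proposal is correct and follows essentially the same approach as the paper: reduce by animation to the free polynomial pairs, then chain together the identifications $I^i/I^{i+1}\simeq\Sym^i(I/I^2)$, $\mathbb{L}_{A/A'}\simeq (I/I^2)[1]$, the d\'ecalage equivalence $\wedge^i(N[1])\simeq\Gamma^i(N)[i]$, and the norm isomorphism $\Sym^i\simeq\Gamma^i$ over $\Rat$. Your remark that the $\mathrm{AniInfPair}$ hypothesis plays no role in the argument is also correct; the paper's proof does not use it either.
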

\begin{proof}
This is an observation from~\cite[Remark 4.13]{Bhatt2012-kp}. By the definition of the derived adic filtration, the object
\[
\gr^i_{(A'\twoheadrightarrow A)}A'\in \Mod[\mathrm{cn}]{A},
\]
is obtained by animating the functor
\begin{align*}
\mathcal{D}^0_{\Rat}&\to \mathrm{Mod}\\
(R'_{\Rat}\overset{g}{\twoheadrightarrow} R_{\Rat})&\to (R_{\Rat},(\ker g)^i/(\ker g)^{i+1}).
\end{align*}

Now, we have natural equivalences
\[
(\ker g)^i/(\ker g)^{i+1} \simeq \Sym^i_{R_{\Rat}}(\ker g/(\ker g)^2) \simeq \Gamma^i_{R_{\Rat}}(\mathbb{L}_{R_{\Rat}/R'_{\Rat}}[-1])\simeq \wedge^i\mathbb{L}_{R_{\Rat}/R'_{\Rat}}[-i].
\]
Here, the first equivalence uses the the regularity of the ideal $\ker g$, while the second combines the identification between symmetric and divided powers in characteristic $0$ with the natural identitication
\[
\mathbb{L}_{R_{\Rat}/R'_{\Rat}} \simeq (\ker g)/(\ker g)^2[1],
\]
which once again uses the regularity of $\ker g$. The final equivalence arises from an observation of Illusie, which is recalled for instance in~\cite[Prop. 25.2.4.2]{Lurie2018-kh}.
\end{proof}

\subsection{}
For any $g:A'\twoheadrightarrow A$ in $\mathrm{AniPair}_{\Rat}$, set
\[
\mathcal{I}^{[n]}(g) = \mathrm{hcoker}(\Fil^n_{(A'\twoheadrightarrow A)}A\to A').
\]
We will refer to the object
\[
\mathcal{I}^{[n]}(g)\twoheadrightarrow A\in \mathrm{AniInfPair}^{[n]}_{\Rat/}
\]
as the \defnword{$n$-th infinitesimal envelope of $g$}. This can also be viewed as the animation of the functor
\begin{align*}
\mathcal{D}^0_{\Rat}&\to \mathrm{AniInfPair}^{[n]}_{\Rat}\\
(g:R'_{\Rat}\to R_{\Rat})&\to (R'_{\Rat}/(\ker g)^n\twoheadrightarrow R_{\Rat}).
\end{align*}

\begin{lemma}
\label{lem:inf_env_discrete}
Suppose that we have $f:S\twoheadrightarrow R$ with $S,R\in \mathrm{CAlg}_{\heartsuit,\Rat/}$ and with $\ker f\subset S$ generated by a regular sequence. Then, for every $n\geq 1$, the natural map
\[
\mathcal{I}^{[n]}(f) \to S/(\ker f)^n
\]
is an equivalence.
\end{lemma}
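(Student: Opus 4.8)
The plan is to show that the animated (derived) construction $\mathcal{I}^{[n]}(f)$ agrees with the classical construction $S/(\ker f)^n$ when $\ker f$ is generated by a regular sequence, by reducing everything to the level of the associated graded. Recall that $\mathcal{I}^{[n]}(f) = \mathrm{hcoker}(\Fil^n_{(S\twoheadrightarrow R)}S\to S)$, so it suffices to show that the derived adic filtration $\Fil^\bullet_{(S\twoheadrightarrow R)}S$ agrees in degree $\geq n$ with the classical $(\ker f)^\bullet$-adic filtration; equivalently, since both are (co)connective and the maps are compatible, it suffices to show that $\Fil^i_{(S\twoheadrightarrow R)}S$ is discrete and equals $(\ker f)^i$ for all $i$, or at least that the induced map on $i$-th graded pieces is an equivalence for every $i\geq n$ together with control of $\Fil^n$ itself.

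The key input is Lemma~\ref{lem:adic_filtration}: for any object of $\mathrm{AniInfPair}_{\Rat/}$ (and the argument there in fact works without the truncation hypothesis, at the level of graded pieces of the derived adic filtration) we have
\[
\gr^i_{(S\twoheadrightarrow R)}S \simeq (\wedge^i_R \mathbb{L}_{S/R})[-i].
\]
So the first step is to compute $\mathbb{L}_{S/R}$. Since $\ker f$ is generated by a regular sequence, the closed immersion $\Spec R\hookrightarrow \Spec S$ is a regular immersion, hence $\mathbb{L}_{S/R}\simeq (\ker f)/(\ker f)^2[1]$, a free $R$-module placed in cohomological degree $-1$. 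Therefore $\wedge^i_R\mathbb{L}_{S/R}\simeq \Gamma^i_R((\ker f)/(\ker f)^2)[i]$ (using the Illusie décalage identity relating animated exterior powers of a shifted module to divided powers, exactly as in the proof of Lemma~\ref{lem:adic_filtration}), which is a free $R$-module placed in degree $0$. Hence $\gr^i_{(S\twoheadrightarrow R)}S$ is discrete, and by the regularity of $\ker f$ it is canonically identified with the classical $(\ker f)^i/(\ker f)^{i+1}$.

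Now I would assemble these graded computations. Because each $\gr^i$ is discrete and concentrated in degree $0$, an induction on the (finite, in each truncation, or rather term-by-term) filtration shows that each $\Fil^i_{(S\twoheadrightarrow R)}S$ is itself discrete: one uses the fiber sequences $\Fil^{i+1}\to \Fil^i\to \gr^i$ together with the fact that $\Fil^i$ stabilizes to $\Fil^0 = S$ — more precisely, one argues by descending induction that the connective cover exhausts $\Fil^i$ and there is no higher homotopy, since at each stage one extends a discrete object by a discrete object concentrated in degree $0$, and the connecting maps land where they must. Having shown $\Fil^i_{(S\twoheadrightarrow R)}S$ is discrete with the correct graded pieces, and noting that the natural map to the classical $(\ker f)$-adic filtration induces an isomorphism on $\gr^i$ for all $i$, a standard filtered five-lemma / completeness argument identifies $\Fil^n_{(S\twoheadrightarrow R)}S$ with $(\ker f)^n\subset S$. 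Taking homotopy cokernels of the inclusion into $S$ then yields $\mathcal{I}^{[n]}(f)\xrightarrow{\simeq} S/(\ker f)^n$ as claimed. The main obstacle I anticipate is the bookkeeping in this last assembly step: one must be careful that the derived adic filtration is complete (so that knowing all graded pieces really pins down each $\Fil^i$), and that the comparison map with the classical adic filtration is genuinely compatible with the graded identifications — this is where the hypothesis that we are over $\Rat$ and that $\ker f$ is regular is doing real work, via Lemma~\ref{lem:adic_filtration}.
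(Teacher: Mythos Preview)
Your approach is essentially the same as the paper's: compute the graded pieces of the derived adic filtration via Lemma~\ref{lem:adic_filtration}, identify them with the classical ones using the regularity of $\ker f$ (so that $\mathbb{L}_{R/S}[-1]\simeq (\ker f)/(\ker f)^2$ is finite free), and conclude. Two small remarks: your cotangent complex should be $\mathbb{L}_{R/S}$, not $\mathbb{L}_{S/R}$ (the paper's statement of Lemma~\ref{lem:adic_filtration} has the same slip, but its proof and the proof of the present lemma use the correct direction); and your worry about completeness is unnecessary, since to compare $\mathcal{I}^{[n]}(f)=S/\Fil^n$ with $S/(\ker f)^n$ you only need the finite $n$-step filtration, which is automatically complete.
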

\begin{proof}
This follows from Lemma~\ref{lem:adic_filtration} and the fact that we have equivalences
\[
\mathbb{L}_{R/S}[-1] \simeq (\ker f)/(\ker f)^2
\]
by the regularity of $\ker f$.
\end{proof}

\begin{proposition}
\label{prop:inf_env_trivial_quillen}
Suppose that we have a diagram
\[
\begin{diagram}
S\\
\dOnto^{g'}&\rdOnto^{g}\\
A'&\rOnto_f&A
\end{diagram}
\]
with $g',g,f$ in $\mathrm{AniPair}_{\Rat/}$. Suppose that one of the following conditions holds:
\begin{enumerate}
  \item $f$ belongs to $\mathrm{AniInfPair}_{\Rat/}$.
  \item $\pi_0(f):\pi_0(A')\to \pi_0(A)$ is an isomorphism
\end{enumerate}
Then the natural map of (derived) inverse limits
\[
\varprojlim_n\mathcal{I}^{[n]}(g') \to \varprojlim_n\mathcal{I}^{[n]}(g)
\]
is an equivalence.
\end{proposition}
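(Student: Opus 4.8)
The plan is to reduce to a statement about the adic filtrations on $S$ and then exploit a convolution formula furnished by Lemma~\ref{lem:adic_filtration}. Write $\widehat{S}_h := \varprojlim_n\mathcal{I}^{[n]}(h)$ for the derived adic completion attached to a surjection $h$, and write $\Fil^\bullet_{g'}S$, $\Fil^\bullet_g S$, $\Fil^\bullet_f A'$ for the respective derived adic filtrations. Since $S$ maps compatibly to every $\mathcal{I}^{[n]}(g')$ and $\mathcal{I}^{[n]}(g)$, and $\mathcal{I}^{[n]}(h) = \hcoker(\Fil^n_h S\to S)$, the assertion is equivalent to saying that the map $\widehat{S}_{g'}\to\widehat{S}_g$ induced by $g = fg'$ is an equivalence, and hence — taking cofibers and using that $\varprojlim_n$ commutes with the finite colimit $\hcoker$ — to showing that the tower $\{\hcoker(\Fil^n_{g'}S\to\Fil^n_{g}S)\}_n$ has vanishing inverse limit, for which it suffices that it is pro-zero.

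The main structural input is a transitivity/convolution formula for the derived adic filtration under composition. The proof of Lemma~\ref{lem:adic_filtration} uses only the regularity of the kernels of the objects of $\mathcal{D}^0_\Rat$, so it applies verbatim to any object of $\mathrm{AniPair}_{\Rat/}$, giving $\gr^i_{(S\twoheadrightarrow A)}S \simeq \wedge^i_A\mathbb{L}_{A/S}[-i]$ and likewise for $g'$ and $f$. Feeding the transitivity cofiber sequence $A\otimes_{A'}\mathbb{L}_{A'/S}\to\mathbb{L}_{A/S}\to\mathbb{L}_{A/A'}$ into the (Koszul) filtration on derived exterior powers of a cofiber sequence then yields, functorially,
\[
\gr^i_{(S\twoheadrightarrow A)}S\ \simeq\ \bigoplus_{p+q=i}\bigl(A\otimes_{A'}\gr^p_{(S\twoheadrightarrow A')}S\bigr)\otimes_A\gr^q_{(A'\twoheadrightarrow A)}A',
\]
i.e.\ $\Fil^\bullet_g S$ is the convolution of $\Fil^\bullet_{g'}S$ (extended along $g'$) with $\Fil^\bullet_f A'$. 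To obtain this compatibly with all the natural maps it is cleanest to construct the convolution functorially on $\mathcal{D}^0_\Rat$, where it is the elementary identity $(\ker(S\to A))^n = \sum_{p+q=n}(\ker(S\to A'))^p\cdot(\ker(A'\to A))^q$, and then animate. Consequently the difference between the towers $\{\Fil^n_{g}S\}_n$ and $\{\Fil^n_{g'}S\}_n$ is governed entirely by $\Fil^\bullet_f A'$, and the desired pro-zero statement should reduce to the claim that $\varprojlim_n\mathcal{I}^{[n]}(f)\simeq A'$ with the transition maps $\mathcal{I}^{[n+1]}(f)\to\mathcal{I}^{[n]}(f)$ pro-zero in positive homotopical degree.

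This last claim I would verify in each case. In case~(1), $\Fil^n_f A'\simeq 0$ for $n$ large, so $\mathcal{I}^{[n]}(f)\simeq A'$ for $n$ large and the tower is eventually constant; the pro-zero-ness of $\{\hcoker(\Fil^n_{g'}S\to\Fil^n_g S)\}_n$ then follows because convolution with a bounded filtration only involves a window of bounded width in the $g'$-filtration, and sliding windows down an adic filtration form a pro-zero tower. In case~(2), $\pi_0(f)$ is an isomorphism, so $\gr^q_f A'\simeq\wedge^q_A\mathbb{L}_{A/A'}[-q]$ with $\mathbb{L}_{A/A'}$ concentrated in homotopical degrees $\ge 1$; running the filtration spectral sequence for $\{\mathcal{I}^{[n]}(f)\}_n$ and using strictness of the adic filtration (injectivity of the relevant connecting maps on homotopy) should show that every positive homotopy group of $\mathcal{I}^{[n]}(f)$ is annihilated by a bounded composite of transition maps, while on $\pi_0$ the tower is the constant tower $\pi_0(A')$ because $\pi_0(f)$ is an isomorphism; hence $\varprojlim_n\mathcal{I}^{[n]}(f)\simeq A'$ with pro-zero corrections, and one concludes as before.

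The hard part is case~(2) of the last step: it is genuinely a derived phenomenon, since after left Kan extension to $\mathcal{D}^0_\Rat$ the hypothesis ``$\pi_0(f)$ an isomorphism'' forces $f$ itself to be an isomorphism, so the reduction to the polynomial case — which disposes of case~(1) cleanly — sees nothing there, and one must instead run the associated-graded argument above directly, invoking the results of Bhatt from~\cite{Bhatt2012-kp} on derived completions to guarantee that the limits and the pro-zero statements are well-behaved. Formulating the pro-zero claim correctly, and checking that it is preserved by convolution with $\Fil^\bullet_{g'}S$, is the technical heart of the argument; by contrast, the first two paragraphs are essentially formal once Lemma~\ref{lem:adic_filtration} and transitivity of the cotangent complex are in hand.
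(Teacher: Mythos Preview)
Your strategy shares all the right ingredients with the paper's proof---Lemma~\ref{lem:adic_filtration}, transitivity of the cotangent complex, Quillen's result via Bhatt, and connectivity estimates---but the organization differs in a way that matters. For case~(1) your convolution argument is essentially the paper's observation in different clothes: since $\Fil^\bullet_f A'$ is bounded, say $\Fil^m_f A'\simeq 0$, one sees directly that $\Fil^{nm}_g S\to S\to\mathcal{I}^{[n]}(g')$ is null, so the difference tower $\{K_n\}$ satisfies $K_{nm}\to K_n$ is zero and is pro-zero. (A small point: your displayed convolution formula is a filtration, not a direct sum, and animating on $\mathcal{D}^0_\Rat$ does not see composable triples---you would need to animate from a category of factored surjections.)

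For case~(2), the paper sidesteps exactly the gap you flag. Rather than attempting to show that strict-essential-zero of $\{\Fil^n_f A'\}_n$ is preserved under convolution with $\Fil^\bullet_{g'}S$, the paper introduces the intermediate surjections $g'_m:S\twoheadrightarrow\mathcal{I}^{[m]}(f)$, each of which lies in $\mathrm{AniInfPair}$, applies case~(1) to get $\varprojlim_n\mathcal{I}^{[n]}(g'_m)\simeq\varprojlim_n\mathcal{I}^{[n]}(g)$ for every $m$, and then takes the further limit over $m$. The remaining task is to show $\mathcal{I}^{[n]}(g')\to\varprojlim_m\mathcal{I}^{[n]}(g'_m)$ is an equivalence for each fixed $n$; via Lemma~\ref{lem:adic_filtration} this becomes $\wedge^i\mathbb{L}_{A'/S}\to\varprojlim_m\wedge^i\mathbb{L}_{\mathcal{I}^{[m]}(f)/S}$, which the paper handles with the transitivity sequence, Quillen's $A'\simeq\varprojlim_m\mathcal{I}^{[m]}(f)$, and the connectivity bound that $\mathbb{L}_{\mathcal{I}^{[m]}(f)/A'}$ is $m$-connective (hence $\{\wedge^k\mathbb{L}_{\mathcal{I}^{[m]}(f)/A'}\}_m$ is strict essentially zero for $k\geq 1$, so~\cite[Lemma 3.10]{Bhatt2012-kp} applies). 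This reduction-to-case-(1) trick is the main structural difference; it replaces your hard convolution-preservation step with a double-limit manipulation whose pieces are each straightforward.
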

\begin{proof}
Assume first that $f$ belongs to $\mathrm{AniInfPair}^{[m]}_{\Rat/}$. Then, for every $n\geq 1$, the composition
\[
\Fil^{nm}_{(S\twoheadrightarrow A)}S\to S \twoheadrightarrow \mathcal{I}^{[n]}(g')
\]
is equivalent to $0$.

Therefore, we see that
\[
K_n = \mathrm{hcoker}(\Fil^n_{(S\twoheadrightarrow A')}S\to \Fil^n_{(S\twoheadrightarrow A)}S)\simeq \mathrm{hker}(\mathcal{I}^{[n]}(g')\to \mathcal{I}^{[n]}(g))
\]
forms an inverse system $\{K_n\}_n$ that is essentially zero; that is, for every $n\geq 1$, the map
\[
K_{mn}\to K_n
\]
is equivalent to $0$, showing that we have a cofiber sequence
\[
0\simeq\varprojlim_n K_n \to \varprojlim_n\mathcal{I}^{[n]}(g')\to \varprojlim_n\mathcal{I}^{[n]}(g).
\]
This shows the proposition under assumption (1).

Now assume that $\pi_0(f)$ is an isomorphism. For $m\geq 1$, write $g'_m$ for the composition $S\to A'\to \mathcal{I}^{[m]}(f)$; then by the first case considered above, the natural map
\[
\varprojlim_n\mathcal{I}^{[n]}(g'_m) \to \varprojlim_n\mathcal{I}^{[n]}(g)
\]
is an equivalence. This implies therefore that the map from the double limit
\[
\varprojlim_m\varprojlim_n\mathcal{I}^{[n]}(g'_m)\to \varprojlim_n\mathcal{I}^{[n]}(g)
\]
is an equivalence.

To finish, we must show that the natural map
\[
\varprojlim_n\mathcal{I}^{[n]}(g') \to \varprojlim_n\varprojlim_m\mathcal{I}^{[n]}(g'_m)\simeq \varprojlim_m\varprojlim_n\mathcal{I}^{[n]}(g'_m)
\]
is an equivalence. This would follow if we knew that, for each $n\geq 1$, the map
\[
\mathcal{I}^{[n]}(g')\to \varprojlim_m\mathcal{I}^{[n]}(g'_m)
\]
is an equivalence. Using Lemma~\ref{lem:adic_filtration}, this amounts to showing that, for every $i\geq 0$, the natural map
\[
\wedge^i\mathbb{L}_{A'/S} \to \varprojlim_m\wedge^i\mathbb{L}_{\mathcal{I}^{[m]}(f)/S}
\]
is an equivalence. 

Consideration of the cofiber sequence
\[
\mathcal{I}^{[m]}(f)\otimes_{A'}\wedge^i\mathbb{L}_{A'/S}\to \mathbb{L}_{\mathcal{I}^{[m]}(f)/S}\to \mathbb{L}_{\mathcal{I}^{[m]}(f)/A'}
\]
shows that $\wedge^i\mathbb{L}_{\mathcal{I}^{[n]}(f)/A'}$ admits a natural finite filtration whose associated graded pieces are equivalent to
\[
\left(\mathcal{I}^{[m]}(f)\otimes_{A'}\wedge^j\mathbb{L}_{A'/S}\right)\otimes_{\mathcal{I}^{[m]}(f)}\wedge^k\mathbb{L}_{\mathcal{I}^{[m]}(f)/A'}
\]
for $j+k = i$.

Now, a result of Quillen shows that the natural map
\[
A' \to \varprojlim_m \mathcal{I}^{[m]}(f)
\]
is an equivalence; see~\cite[Prop. 4.11, Corollary 4.14]{Bhatt2012-kp}. In fact, the proof there actually shows that the inverse system
\[
\{\Fil^m_{A'\twoheadrightarrow A}A'\}_m
\] 
is \emph{strict essentially zero} in the terminology of~\cite[Defn. 3.7]{Bhatt2012-kp}; more precisely it shows that for any $m\geq 1$, we have
\[
H^i(\Fil^m_{A'\twoheadrightarrow A}A') = 0
\]
for $i>-m$ (that is, $\Fil^m_{A'\twoheadrightarrow A}A'$ is $(m-1)$-connective). Therefore, by~\cite[Lemma 3.10]{Bhatt2012-kp}, we find that the map
\[
\wedge^i\mathbb{L}_{A'/S}\to \varprojlim_m \left(\mathcal{I}^{[m]}(f)\otimes_{A'}\wedge^i\mathbb{L}_{A'/S}\right)
\]
is an equivalence. 

To finish, it suffices to show that
\[
\varprojlim_m \left[\left(\mathcal{I}^{[m]}(f)\otimes_{A'}\wedge^j\mathbb{L}_{A'/S}\right)\otimes_{\mathcal{I}^{[m]}(f)}\wedge^k\mathbb{L}_{\mathcal{I}^{[m]}(f)/A'}\right] \simeq 0
\]
whenever $k\ge 1$.

But once again by~\cite[Lemma 3.10]{Bhatt2012-kp}, it is enough to see that, for every $k\geq 1$, the inverse system
\[
\{\wedge^k\mathbb{L}_{\mathcal{I}^{[m]}(f)/A'}\}_{m}
\]
is strict essentially $0$. This follows from  two facts: First, that $\mathbb{L}_{\mathcal{I}^{[m]}(f)/A'}$ is $m$-connective---see~\cite[Corollary 25.3.6.4]{Lurie2018-kh}---and second, that the derived exterior power functor preserves $m$-connective objects, which follows from~\cite[Prop. 25.2.4.2]{Lurie2018-kh} (or more formally from the argument in Lemma 25.2.4.3 of \emph{loc. cit.}).
\end{proof}

While the derived infinitesimal envelopes of a surjective map of discrete rings are not usually discrete in general, under Noetherian assumptions, they are so in the limit.
\begin{proposition}
\label{prop:inf_env_carlsson}
Suppose that $S\in \mathrm{CRing}_{\heartsuit,\Rat/}$ is Noetherian, $J\subset S$ is an ideal, and that $g:S\twoheadrightarrow S/J$ is the quotient map. Then the natural map of inverse limits
\[
\varprojlim_n\mathcal{I}^{[n]}(g)\to \varprojlim_nS/J^n
\]
is an equivalence, and the right hand side is equivalent to the classical completion of $S$ along the ideal $J$.
\end{proposition}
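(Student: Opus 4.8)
The plan is to reduce the claim to a statement about the derived completion of the module $M=S$ (or more precisely the inverse system $\{S/J^n\}$) together with an identification of the left-hand inverse limit with this completion. First I would dispose of the easy half: since each $\ker(g)^n = J^n$ is a discrete ideal of the discrete Noetherian ring $S$, the quotients $S/J^n$ are discrete, and the natural comparison map $\mathcal{I}^{[n]}(g)\to S/J^n$ fits into the cofiber sequence coming from the definition $\mathcal{I}^{[n]}(g)=\mathrm{hcoker}(\Fil^n_{(S\twoheadrightarrow S/J)}S\to S)$, whose cofiber with respect to $S/J^n$ is controlled by the gap between the \emph{derived} adic filtration $\Fil^\bullet_{(S\twoheadrightarrow S/J)}S$ and the classical one $\{J^n\}$. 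By Lemma~\ref{lem:adic_filtration}, $\gr^i_{(S\twoheadrightarrow S/J)}S\simeq(\wedge^i_{S/J}\mathbb{L}_{(S/J)/S})[-i]$, so the comparison of the two filtrations amounts to comparing $\wedge^i\mathbb{L}_{(S/J)/S}[-i]$ with $J^i/J^{i+1}$. These agree in degree $0$ (both give $\mathrm{Sym}^i$ of $J/J^2$ up to the natural surjection), so the discrepancy lies entirely in higher-connective pieces; the key input is that, by a theorem of Quillen (cited here via~\cite[Prop.~4.11, Cor.~4.14]{Bhatt2012-kp}), the map $S\to\varprojlim_n\mathcal{I}^{[n]}(g)$ is an equivalence, and more precisely that $\{\Fil^n_{(S\twoheadrightarrow S/J)}S\}_n$ is strict essentially zero, so taking the limit kills all of these higher pieces.

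Concretely, I would argue as follows. Write $\Fil^\bullet$ for the derived adic filtration of $S$ along $g$. For each $n$ there is a natural map $\Fil^n\to J^n$ (the latter viewed as a submodule of $S$), inducing on associated graded the natural map $\gr^i\to J^i/J^{i+1}$, which by the previous paragraph is an equivalence on $H^0$. Hence $\hker(\Fil^n\to J^n)$ and $\hcoker(\Fil^n\to J^n)$, assembled over $n$, are built out of the strictly higher-connective pieces of the $\gr^i$, and the Quillen/Bhatt strictness statement (invoked exactly as in the proof of Proposition~\ref{prop:inf_env_trivial_quillen}, via~\cite[Lemma~3.10]{Bhatt2012-kp}) gives that $\varprojlim_n\hker(\Fil^n\to J^n)\simeq 0\simeq\varprojlim_n\hcoker(\Fil^n\to J^n)$. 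Passing to the cofiber sequences defining $\mathcal{I}^{[n]}(g)$ and $S/J^n$ respectively out of $\Fil^n\hookrightarrow S$ and $J^n\hookrightarrow S$, this yields that $\varprojlim_n\mathcal{I}^{[n]}(g)\to\varprojlim_nS/J^n$ is an equivalence.

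Finally, for the last clause, I would invoke Noetherianness of $S$ directly: since each term $S/J^n$ is discrete and the transition maps are surjections of finitely generated $S$-modules, the tower $\{S/J^n\}$ is Mittag-Leffler, so $\varprojlim^1$ vanishes and the derived inverse limit $\varprojlim_n S/J^n$ coincides with the ordinary inverse limit, which is by definition the classical $J$-adic completion $\widehat{S}_J$. (Equivalently, one can note that the derived $J$-completion of the Noetherian ring $S$ agrees with the classical one.) I expect the main obstacle to be the bookkeeping in the second step — namely producing the comparison of the derived adic filtration with the classical powers $\{J^n\}$ at the level of inverse systems with enough precision to apply the strict-essentially-zero machinery of~\cite[\S 3]{Bhatt2012-kp}, rather than merely the connectivity of individual graded pieces; the rest is formal once Quillen's theorem is in hand.
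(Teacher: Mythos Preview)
Your invocation of Quillen's theorem is incorrect, and this breaks the argument. Quillen's result (as used in the paper via~\cite[Prop.~4.11, Cor.~4.14]{Bhatt2012-kp}) applies to a surjection $f:A'\twoheadrightarrow A$ with $\pi_0(f)$ an \emph{isomorphism}; it says that in that case $A'\to\varprojlim_m\mathcal{I}^{[m]}(f)$ is an equivalence, equivalently that $\{\Fil^m_{(A'\twoheadrightarrow A)}A'\}_m$ is strict essentially zero. The map $g:S\twoheadrightarrow S/J$ does \emph{not} satisfy this hypothesis unless $J=0$. Indeed, if your claim that $S\to\varprojlim_n\mathcal{I}^{[n]}(g)$ were an equivalence held, the proposition would say $S\simeq\widehat{S}_J$, which is false for non-$J$-complete $S$. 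Consequently the system $\{\Fil^n_{(S\twoheadrightarrow S/J)}S\}_n$ is \emph{not} strict essentially zero: its $\pi_0$ surjects onto $J^n$, which need not vanish. Your second paragraph then has no justification for $\varprojlim_n\hker(\Fil^n\to J^n)\simeq 0$, since the ``higher-connective pieces'' of $\gr^i\simeq\Sym^i(\mathbb{L}_{(S/J)/S}[-1])$ do not have increasing connectivity in $i$, and you have no pro-zero statement available.

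The paper's proof avoids this by \emph{not} comparing the derived and classical adic filtrations on $S$ directly. Instead it chooses generators $J=(a_1,\ldots,a_r)$, factors through the polynomial ring $S[X_1,\ldots,X_r]\twoheadrightarrow S$ (via $X_i\mapsto a_i$), and sets $A'=S\otimes^{\mathbb{L}}_{S[X]}S[X]/(X)$ with the induced $g':S\twoheadrightarrow A'$. Now $\pi_0(A')\simeq S/J=A$, so Proposition~\ref{prop:inf_env_trivial_quillen} (this is where Quillen's theorem is legitimately applied) gives $\varprojlim_n\mathcal{I}^{[n]}(g')\simeq\varprojlim_n\mathcal{I}^{[n]}(g)$. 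Since $(X)\subset S[X]$ is regular, Lemma~\ref{lem:adic_filtration} identifies $\mathcal{I}^{[n]}(g')\simeq S\otimes^{\mathbb{L}}_{S[X]}S[X]/(X)^n$ on the nose, and~\cite[Lemma~3.11]{Bhatt2012-kp} (using Noetherianity) shows $\varprojlim_n S\otimes^{\mathbb{L}}_{S[X]}S[X]/(X)^n\simeq\varprojlim_n S/J^n\simeq\widehat{S}_J$. The passage through the polynomial ring is the missing idea in your approach.
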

\begin{proof}
Suppose that $J = (a_1,\ldots,a_r)$, and consider the surjection
\[
S[X_1,\ldots,X_r]\twoheadrightarrow S
\]
satisfying $X_i\mapsto a_i$. To conserve notation, for any $n\geq 1$, set
\[
S[X] = S[X_1,\ldots,X_r]\;;\; (X)^n = (X_1,\ldots,X_r)^n.
\]

Set 
\[
A = S/J\;; \;A' = S\otimes^{\mathbb{L}}_{S[X]}S[X]/(X).
\]
Then we have a map $f:A'\twoheadrightarrow A$ such that $\pi_0(f)$ is isomorphic to the identity map of $A$, and $g$ lifts to a map $g':S\twoheadrightarrow A'$ satisfying $f\circ g' = g$.

By Proposition~\ref{prop:inf_env_trivial_quillen}, there is an equivalence
\[
\varprojlim_n\mathcal{I}^{[n]}(g') \xrightarrow{\simeq}\varprojlim_n\mathcal{I}^{[n]}(g)
\]

By~\cite[Lemma 3.11]{Bhatt2012-kp}, the natural map
\[
\varprojlim_n S\otimes^{\mathbb{L}}_{S[X]}S[X]/(X)^n \to \varprojlim_n S/J^n
\]
is an equivalence, and the right hand side is actually the classical completion of $S$ along $J$.

Therefore, to finish the proof it is sufficient to know that the natural map
\[
\mathcal{I}^{[n]}(g') \to S\otimes^{\mathbb{L}}_{S[X]}S[X]/(X)^n 
\]
is an equivalence for every $n\geq 1$. This can be verified using Lemma~\ref{lem:adic_filtration}, which shows that both sides admit compatible, finite filtrations where the $i$-th associated graded piece is equivalent to 
\[
(\wedge^i\mathbb{L}_{A'/S})[-i]\simeq S\otimes^{\mathbb{L}}_{S[X]}(\wedge^i\mathbb{L}_{S/S[X]})[-i].
\]
\end{proof}

\subsection{}\label{subsec:infcrystals}
In what follows we will use the language of fibrations from Appendix~\ref{sec:fibrations}, and also the $\infty$-category theoretic treatment of filtered rings and modules from Appendix~\ref{sec:filtrations}. We now consider the two functors
\[
s:\mathrm{AniInfPair}^{[n]}\xrightarrow{(A'\twoheadrightarrow A)\mapsto A'}\mathrm{CRing}\;;\; t:\mathrm{AniInfPair}^{[n]}\xrightarrow{(A'\twoheadrightarrow A)\mapsto A}\mathrm{CRing},
\]
and for $R\in \mathrm{CRing}_{\Rat/}$, we set
\[
\mathrm{AniInfPair}^{[n]}_{R/} = \mathrm{AniInfPair}^{[n]}_{\Rat/}\times_{t,\mathrm{CRing}_{\Rat/}}\mathrm{CRing}_{R/}.
\]

Note that we also have a functor
\[
\tilde{s}:\mathrm{AniInfPair}^{[n]}_{R/}\xrightarrow{(A'\twoheadrightarrow A\leftarrow R)\mapsto \Fil^\bullet_{(A'\twoheadrightarrow A)}A'}\mathrm{FilCRing}.
\]

 So we can define two coCartesian fibrations
\begin{align*}
\mathrm{AniInfPair}^{[n]}_{R/}\times_{s,\mathrm{CRing}}\mathrm{Mod}&\to \mathrm{AniInfPair}^{[n]}_{R/}\\
\mathrm{AniInfPair}^{[n]}_{R/}\times_{\tilde{s},\mathrm{FilCRing}}\mathrm{FilMod}&\to \mathrm{AniInfPair}^{[n]}_{R/}.
\end{align*}

An \defnword{infinitesimal crystal} (resp \defnword{transversally filtered infinitesimal crystal}) over $R$ \defnword{of level $n$} is a coCartesian section $\mathcal{M}$ of the first fibration (resp. $\Fil^\bullet \mathcal{M}$ of the second), where in the second case, we require that 
\[
\mathcal{M} = \underset{n}{\colim}\Fil^n\mathcal{M}
\]
is a coCartesian section of the first fibration.

In this way, we obtain the categories $\mathrm{InfCrys}^{[n]}_{R/\Rat}$ and $\mathrm{TFilInfCrys}^{[n]}_{R/\Rat}$ of infinitesimal crystals and transversally filtered infinitesimal crystals over $R$ of level $n$, respectively.

An object $\Fil^\bullet \mathcal{M}$ in $\mathrm{TFilInfCrys}^{[n]}_{R/\Rat}$ is a \defnword{transversally filtered infinitesimal crystal of vector bundles (resp. perfect complexes)} if the filtration $\Fil^\bullet M_R$ is bounded and if the objects
\[
\mathcal{M}(R\xrightarrow{\mathrm{id}}R)\;;\; \gr^i\mathcal{M}(R\xrightarrow{\mathrm{id}}R)\in \Mod{R}
\]
are finite locally free (resp. perfect). Write $\mathrm{TFilInfCrys}^{[n],\mathrm{lf}}_{R/\Rat}$ and $\mathrm{TFilInfCrys}^{[n],\mathrm{perf}}_{R/\Rat}$ for the corresponding $\infty$-category.


Repeating all the definitions above without restricting to a particular level $n$ gives us the $\infty$-categories $\mathrm{InfCrys}_{R/\Rat}$, $\mathrm{TFilInfCrys}_{R/\Rat}$, $\mathrm{TFilInfCrys}^{\mathrm{lf}}_{R/\Rat}$ and $\mathrm{TFilInfCrys}^{\mathrm{perf}}_{R/\Rat}$. Any object in one of these categories induces objects of level $n$ for every $n\geq 1$, and we will denote this by adorning the original object with a superscript $[n]$.

We have the \defnword{structure sheaf} $\mathcal{O}$ in $\mathrm{TFilInfCrys}^{\mathrm{lf}}_{R/\Rat}$, which associates with $A'\twoheadrightarrow A$ the object $\Fil^\bullet_{(A'\twoheadrightarrow A)}A'$.

\begin{remark}
\label{rem:inf_pullback}
Any arrow $f:R\to S$ induces functors
\[
\mathrm{AniInfPair}_{S/} \to \mathrm{AniInfPair}_{R/}
\]
and hence base-change functors
\begin{align*}
f^*:\mathrm{InfCrys}_{R/\Rat}\to \mathrm{InfCrys}_{S/\Rat}\;;\; \mathrm{TFilInfCrys}_{R/\Rat}\to \mathrm{TFilInfCrys}_{S/\Rat},
\end{align*}
as well as for their level $[n]$ versions.
\end{remark}

\begin{remark}
\label{rem:inf_crys_over_stacks}
For future reference, we can use the pullback functoriality in the previous remark to define the $\infty$-categories $\mathrm{InfCrys}_{X/\Rat}$ and $\mathrm{TFilInfCrys}_{X/\Rat}$ for any prestack over $\Rat$: these will simply be the limits over the corresponding $\infty$-categories of infinitesimal crystals and transversally filtered infinitesimal crystals over $\mathrm{Aff}^{\op}_{/X}$, the $\infty$-category of pairs $(C,x)$ with $C\in \mathrm{CRing}_{\Rat/}$ and $x\in X(C)$. Of course, when $X$ is a quasi-compact derived Deligne-Mumford stack then it suffices to take this limit over the cosimplicial object in $\mathrm{Aff}^{\op}_{/X}$ determined by an \'etale cover $\Spec C\to X$,
\end{remark}

\subsection{}\label{subsec:infcoh}
Transversally filtered infinitesimal crystals are obtained from the infinitesimal cohomology of derived prestacks. Instead of doing this site-theoretically, we will give a construction following the lines of that of derived crystalline cohomology in~\cite[\S 4]{Mao2021-jt} in terms of Illusie's completed de Rham cohomology; see also~\cite[\S 4]{Bhatt2012-kp}. 

To begin, given $A'\twoheadrightarrow A$ in $\mathrm{AniPair}$ and $B\in \mathrm{CRing}_{A/}$, we define an object
\begin{align}\label{eqn:hfilt_dinf}
\Fil^\bullet_{\mathrm{Hdg}}\mathrm{InfCoh}(S/(A'\twoheadrightarrow A))\in \mathrm{FilMod}_{\Fil^\bullet_{(A'\twoheadrightarrow A)}A'},
\end{align}
the \defnword{Hodge-filtered (derived) infinitesimal cohomology of $S$ with respect to $A'\twoheadrightarrow A$}.

This is obtained as follows: We take the $\infty$-category $\Fun([1],\mathrm{AniPair})$ of arrows in $\mathrm{AniInfPair}$, which admits a set of compact projective generators 
\[
(\Int[X,Y]\twoheadrightarrow \Int[Y])\to (\Int[X,Y,X',Y']\twoheadrightarrow \Int[Y,Y'])
\]
for finite sets of variables $X,Y,X',Y'$ (see for instance the notation in the proof of Proposition~\ref{prop:inf_env_carlsson}). For each generator, we have the relative de Rham complex
\[
\Omega^\bullet_{\Int[X,Y,X',Y']/\Int[X,Y]}
\]
equipped with the filtration given by
\[
\Fil^i_{\mathrm{Hdg}}\left(\Int[X,Y]\otimes_{\Int[X,Y]}\Omega^n_{\Int[X,Y,X',Y']/\Int[X,Y]}\right) = (X)^{i-n}\Omega^n_{\Int[X,Y,X',Y']/\Int[X,Y]}
\]

Animating this assignment gives a functor
\begin{align*}
\Fun([1],\mathrm{AniPair})&\to \mathrm{FilMod}\\
\left((A'\twoheadrightarrow A)\to (S'\twoheadrightarrow S)\right)&\mapsto (\Fil^\bullet_{(A'\twoheadrightarrow A)}A',\Fil^\bullet_{\mathrm{Hdg}}\mathrm{dR}_{(S'\twoheadrightarrow S)/(A'\twoheadrightarrow A)})
\end{align*}
Completing the resulting filtered object gives us the \defnword{Hodge completed relative (derived) de Rham cohomology}
\[
\Fil^\bullet_{\mathrm{Hdg}}\widehat{\mathrm{dR}}_{(S'\twoheadrightarrow S)/(A'\twoheadrightarrow A)}.
\]
If $f$ is the arrow $A'\twoheadrightarrow A$, then this is a filtered module over the inverse limit $\varprojlim_n\mathcal{I}^{[n]}(f)$. In particular, if $A'\twoheadrightarrow A$ is in $\mathrm{AniInfPair}$, then it is a filtered module over $\Fil^\bullet_{(A'\twoheadrightarrow A)}A'$.

The Poincar\'e lemma for formal power series rings in characteristic $0$ now shows (see the proof of~\cite[Prop. 4.16]{Mao2021-jt} where the argument is presented for the divided power analogue):
\begin{lemma}
\label{lem:completed_de_rham}
Suppose that $f$ belongs to $\Fun([1],\mathrm{AniPair}_{\Rat/})$; then the natural map
\[
\Fil^\bullet_{\mathrm{Hdg}}\widehat{\mathrm{dR}}_{(S'\twoheadrightarrow S)/(A'\twoheadrightarrow A)}\to \Fil^\bullet_{\mathrm{Hdg}}\widehat{\mathrm{dR}}_{(S\xrightarrow{\mathrm{id}} S)/(A'\twoheadrightarrow A)}
\]
is a filtered equivalence.
\end{lemma}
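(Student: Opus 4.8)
The plan is to reduce the assertion to the classical Poincar\'e lemma for the de Rham complex of a polynomial (and hence, after completion, formal power series) algebra over a $\Rat$-algebra, exactly as is done for the divided-power analogue in the proof of~\cite[Prop. 4.16]{Mao2021-jt}. The two manoeuvres are: first pass to associated graded objects, which is harmless because both sides are complete filtered modules; and then reduce, via the animation presentation of the relevant functors, to the compact projective generators, where the statement becomes a Koszul-complex computation.

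Concretely, I would argue as follows. Both $\Fil^\bullet_{\mathrm{Hdg}}\widehat{\mathrm{dR}}_{(S'\twoheadrightarrow S)/(A'\twoheadrightarrow A)}$ and $\Fil^\bullet_{\mathrm{Hdg}}\widehat{\mathrm{dR}}_{(S\xrightarrow{\mathrm{id}}S)/(A'\twoheadrightarrow A)}$ are, by construction, obtained by applying the Hodge-completion functor of Appendix~\ref{sec:filtrations} to the corresponding \emph{uncompleted} filtered de Rham complexes. Since a map of complete filtered objects is an equivalence if and only if it induces an equivalence on associated gradeds, and since $\gr^i$ of a completion agrees with $\gr^i$ of the original filtered object, it suffices to check that the comparison map is an equivalence on $\gr^\bullet$ of the \emph{uncompleted} filtered de Rham complexes. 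Now the assignment sending an arrow $\bigl((A'\twoheadrightarrow A)\to (S'\twoheadrightarrow S)\bigr)$ to the uncompleted $\Fil^\bullet_{\mathrm{Hdg}}\mathrm{dR}_{(S'\twoheadrightarrow S)/(A'\twoheadrightarrow A)}$ is an animated functor on $\mathrm{Fun}([1],\mathrm{AniPair}_{\Rat/})$, and the target of the comparison is this same functor precomposed with the sifted-colimit-preserving endofunctor $\bigl((A'\twoheadrightarrow A)\to(S'\twoheadrightarrow S)\bigr)\mapsto \bigl((A'\twoheadrightarrow A)\to (S\xrightarrow{\mathrm{id}}S)\bigr)$; since $\gr^\bullet$ commutes with the sifted colimits computing the animation, it is enough to verify the graded equivalence on the generators $(\Int[X,Y]\twoheadrightarrow\Int[Y])\to(\Int[X,Y,X',Y']\twoheadrightarrow\Int[Y,Y'])$. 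On such a generator, adjoining the free variables $X'$ (which map to $0$) and $Y'$, the left-hand complex is the Hodge-filtered relative de Rham complex $\Omega^\bullet_{\Rat[X,Y,X',Y']/\Rat[X,Y]}$; by the definition of the Hodge filtration its weight-$i$ associated graded is a Koszul complex on the classes $dX',dY'$ over the Rees algebra of the kernel ideal, and quotienting by these directions returns exactly $\gr^\bullet$ of the right-hand complex. Exactness of this Koszul complex away from its bottom term is the Poincar\'e lemma; it holds because we are in characteristic $0$, where (as already exploited in Lemma~\ref{lem:adic_filtration}) divided and ordinary powers coincide, $\mathbb{L}\simeq I/I^2[1]$ for the regular ideals in sight, and the argument of~\cite[Prop. 4.16]{Mao2021-jt} applies \emph{verbatim} with the adic (infinitesimal) filtration in place of the PD one.

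The step I expect to be the main obstacle is the filtration bookkeeping in this last reduction: one must check that the comparison is a \emph{filtered}, equivalently graded, equivalence on the generators, which amounts to verifying that the Hodge filtration on the de Rham complex of the (completed) polynomial ring restricts in each cohomological degree to the $I$-adic filtration on the structure sheaf in the correct shifted degree. Once the generators are written out explicitly this is a direct check, but it is where the actual content sits; the remaining ingredients --- completeness of both sides, commutation of $\gr^\bullet$ with animation, and the reduction to generators --- are formal.
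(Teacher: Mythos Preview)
Your proposal is correct and follows essentially the same approach as the paper. The paper's proof is in fact nothing more than a pointer to the Poincar\'e lemma for formal power series rings in characteristic $0$ together with a reference to the argument of~\cite[Prop.~4.16]{Mao2021-jt} for the divided-power analogue; your write-up supplies exactly the details that this pointer is gesturing at---completeness reduces to associated gradeds, animation reduces to the polynomial generators, and there the classical Poincar\'e lemma finishes the job.
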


Therefore, we define~\eqref{eqn:hfilt_dinf} to by
\[
\Fil^\bullet_{\mathrm{Hdg}}\mathrm{InfCoh}(S/(A'\twoheadrightarrow A)) = \Fil^\bullet_{\mathrm{Hdg}}\widehat{\mathrm{dR}}_{(B\xrightarrow{\mathrm{id}} B)/(A'\twoheadrightarrow A)}.
\]

This satisfies:
\begin{itemize}
  \item When $A$ and $A'$ are discrete, and $S$ is a smooth $A$-algebra lifting to a smooth $A'$-algebra $S'$, then this filtered object is equivalent to the one underlying the classical de Rham complex $\Omega^\bullet_{S'/A'}$ equipped with the tensor product of the Hodge filtration with the adic filtration:
  \[
   \Fil^i_{(A'\twoheadrightarrow A)}\Omega^n_{S'/A'} = \Fil^{i-n}_{(A'\twoheadrightarrow A)}A'\otimes_{A'}\Omega^n_{S'/A'}.
  \]

  \item For any arrow $(A'\twoheadrightarrow A)\to (B'\twoheadrightarrow B)$ in $\mathrm{AniInfPair}$, there is a canonical equivalence (see the argument for~\cite[Cor. 4.20]{Mao2021-jt})
  \[
   \Fil^\bullet_{\mathrm{Hdg}}\mathrm{InfCoh}_{S/(A'\twoheadrightarrow A)}\otimes_{\Fil^\bullet_{(A'\twoheadrightarrow A)}A}\Fil^\bullet_{(B'\twoheadrightarrow B)}\xrightarrow{\simeq}\Fil^\bullet_{\mathrm{Hdg}}\mathrm{InfCoh}_{(B\otimes_AS)/(B'\twoheadrightarrow B)}.
  \]
\end{itemize}

The second point says that, if $R\in \mathrm{CRing}_{\Rat/}$, then we can attach to $S\in \mathrm{CRing}_{R/}$, an object
\[
\Fil^\bullet_{\mathrm{Hdg}}\mathrm{InfCoh}_{S/R}\in \mathrm{TFilInfCrys}_{R/\Rat}
\]
that associates with each $(A'\twoheadrightarrow A\leftarrow R)$ in $\mathrm{AniInfPair}_{R/}$ the Hodge completed relative de Rham cohomology of $A\otimes_RS$ over $A'\twoheadrightarrow A$.

This gives us a functor
\[
\Fil^\bullet_{\mathrm{Hdg}}\mathrm{InfCoh}_{/R}:\mathrm{CRing}_{R/} = \mathrm{Aff}_R^{\mathrm{op}}\to \mathrm{TFilInfCrys}_{R/\Rat},
\]
which we can extend to a functor on prestacks via left Kan extension to obtain the \defnword{relative (derived) infinitesimal cohomology} functor
\[
\Fil^\bullet_{\mathrm{Hdg}}\mathrm{InfCoh}_{/R}:\mathrm{PStk}^{\mathrm{op}}_R\to  \mathrm{TFilInfCrys}_{R/\Rat}.
\]
Concretely, for $X\in \mathrm{PStk}^{\mathrm{op}}_R$ we have
\[
\Fil^\bullet_{\mathrm{Hdg}}\mathrm{InfCoh}_{X/R} = \varprojlim_{\Spec S \to X}\Fil^\bullet_{\mathrm{Hdg}}\mathrm{CrysCoh}_{S/R}.
\]
When $X$ is a derived Deligne-Mumford stack over $R$ with an affine \'etale cover $U = \Spec S \to X$, with associated cosimplicial object $S^{(\bullet)}\in \mathrm{Fun}(\bm{\Delta},\mathrm{CRing}_{R/})$ where
\[
\Spec S^{(n)} = \underbrace{U\times_X\cdots\times_XU}_n,
\]
then we have
\[
\Fil^\bullet_{\mathrm{Hdg}}\mathrm{InfCoh}_{X/R} = \mathrm{Tot}\left(\Fil^\bullet_{\mathrm{Hdg}}\mathrm{InfCoh}_{S^{(\bullet)}/R}\right)
\]

\subsection{}\label{subsec:inf_crys_envelopes}
We can describe infinitesimal crystals in terms of (derived) infinitesimal envelopes.  $f:S\twoheadrightarrow R$ in $\mathrm{AniPair}$, giving us in turn a cosimplicial object $f^{(\bullet)}$ in $\mathrm{CRing}_{/R}$; taking its animated infinitesimal envelope of level $n$ gives us a cosimplicial object $(\mathcal{I}^{[n]}(f^{(\bullet)})\twoheadrightarrow R)$ in $\mathrm{AniInfPair}^{[n]}_{R/}$.

Let $\mathrm{Mod}^{\Delta}_n(f)$ be the $\infty$-category of coCartesian lifts
\[
M^{(\bullet)}:\bm{\Delta}\to \mathrm{Mod}
\]
of the cosimplicial animated commutative ring $\mathcal{I}^{[n]}(f^{(\bullet)})$. 

There is a filtered variant of this involving coCartesian lifts 
\[
\Fil^\bullet M^{(\bullet)}:\bm{\Delta}\to \mathrm{FilMod}
\]
of the cosimplicial filtered animated commmutative ring 
\[
\Fil^\bullet_{(\mathcal{I}^{[n]}(f^{(\bullet)})\twoheadrightarrow R)}\mathcal{I}^{[n]}(f^{(\bullet)}).
\]
There organize into an $\infty$-category $\mathrm{FilMod}_n^\Delta(f)$.

We omit the proofs of the next three results, which are obtained by combining Proposition~\ref{prop:cocartesian_sections} with Lemma~\ref{lem:inf_env_discrete} and Proposition~\ref{prop:inf_env_carlsson}. Some details will be given when we consider their crystalline analogues in Proposition~\ref{prop:crystals_explicit} and Corollary~\ref{cor:crystals_explicit_classical}.

\begin{proposition}
\label{prop:crystals_explicit_inf}
Suppose that $S$ is formally smooth over $\Rat$. Then there exist natural equivalences
\begin{align*}
\mathrm{InfCrys}^{[n]}_{R/\Rat}&\xrightarrow{\mathcal{M}\mapsto \mathcal{M}(f_n^{(\bullet)})} \mathrm{Mod}_n^\Delta(f); \\
\mathrm{TFilInfCrys}^{[n]}_{R/\Rat}&\xrightarrow{\Fil^\bullet\mathcal{M}\mapsto \Fil^\bullet\mathcal{M}(f_n^{(\bullet)})} \mathrm{FilMod}_n^\Delta(f);
\end{align*}
\end{proposition}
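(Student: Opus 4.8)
\textit{Proof strategy.} The plan is to deduce both equivalences from the general descent principle of Proposition~\ref{prop:cocartesian_sections}: for an $\infty$-category $\mathcal{C}$ with finite coproducts and a weakly final object $\omega\in\mathcal{C}$, the $\infty$-category of coCartesian sections of a coCartesian fibration over $\mathcal{C}$ is computed as the totalization over $\bm{\Delta}$ of the coCartesian sections along the \u{C}ech conerve $\omega^{(\bullet)}$. Here we apply this to the two fibrations $\mathrm{AniInfPair}^{[n]}_{R/}\times_{s,\mathrm{CRing}}\mathrm{Mod}\to\mathrm{AniInfPair}^{[n]}_{R/}$ and $\mathrm{AniInfPair}^{[n]}_{R/}\times_{\tilde{s},\mathrm{FilCRing}}\mathrm{FilMod}\to\mathrm{AniInfPair}^{[n]}_{R/}$, whose coCartesian sections are, \emph{by definition}, the categories $\mathrm{InfCrys}^{[n]}_{R/\Rat}$ and $\mathrm{TFilInfCrys}^{[n]}_{R/\Rat}$. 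Once a suitable weakly final object is in place, the asserted equivalences $\mathcal{M}\mapsto\mathcal{M}(f_n^{(\bullet)})$ and $\Fil^\bullet\mathcal{M}\mapsto\Fil^\bullet\mathcal{M}(f_n^{(\bullet)})$ are just the restriction-to-$f_n^{(\bullet)}$ functors, and their target categories are precisely $\mathrm{Mod}_n^\Delta(f)$ and $\mathrm{FilMod}_n^\Delta(f)$ as these were defined immediately above the statement.

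\textit{The weakly final object.} Write $f:S\twoheadrightarrow R$ with $S$ formally smooth over $\Rat$, and let $f^{(\bullet)}$ be its \u{C}ech conerve in $\mathrm{CRing}_{/R}$, so that each $f^{(m)}$ is again a surjection with formally smooth source. Applying the level-$n$ infinitesimal envelope termwise produces the cosimplicial object $f_n^{(\bullet)}=(\mathcal{I}^{[n]}(f^{(\bullet)})\twoheadrightarrow R)$ in $\mathrm{AniInfPair}^{[n]}_{R/}$, and the claim to verify is that this is the \u{C}ech conerve of the weakly final object $(\mathcal{I}^{[n]}(f)\twoheadrightarrow R)$. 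Weak finality: given any $(A'\twoheadrightarrow A\leftarrow R)$ in $\mathrm{AniInfPair}^{[n]}_{R/}$, the filtration on $A'$ is bounded, so $A'$ is an infinitesimal thickening of $A$ in the animated sense; formal smoothness of $S$ over $\Rat$ lets us lift the composite $S\to R\to A$ to a map $S\to A'$, and since the $n$-th power of the augmentation ideal of $A'\twoheadrightarrow A$ vanishes this factors through $\mathcal{I}^{[n]}(f)$, providing the required arrow. That the \u{C}ech conerve of $\mathcal{I}^{[n]}(f)$ is $\mathcal{I}^{[n]}(f^{(\bullet)})$ uses that $\mathcal{I}^{[n]}$ is the reflector onto $\mathrm{AniInfPair}^{[n]}$ and hence preserves coproducts; Lemma~\ref{lem:inf_env_discrete}, Proposition~\ref{prop:inf_env_carlsson} and Proposition~\ref{prop:inf_env_trivial_quillen} are invoked here to control these envelopes concretely and, in particular, to show that the construction does not depend on the chosen formally smooth presentation of $R$. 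Feeding this into Proposition~\ref{prop:cocartesian_sections} identifies $\mathrm{InfCrys}^{[n]}_{R/\Rat}$ with the coCartesian lifts $M^{(\bullet)}:\bm{\Delta}\to\mathrm{Mod}$ of $\mathcal{I}^{[n]}(f^{(\bullet)})$ and $\mathrm{TFilInfCrys}^{[n]}_{R/\Rat}$ with the coCartesian lifts to $\mathrm{FilMod}$ of $\Fil^\bullet_{(\mathcal{I}^{[n]}(f^{(\bullet)})\twoheadrightarrow R)}\mathcal{I}^{[n]}(f^{(\bullet)})$, which are exactly $\mathrm{Mod}^\Delta_n(f)$ and $\mathrm{FilMod}^\Delta_n(f)$. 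Naturality in $S$ is inherited from functoriality of the \u{C}ech-conerve construction.

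\textit{Main obstacle.} The delicate point is the cofinality input, namely checking that $f_n^{(\bullet)}$ genuinely satisfies the hypotheses of Proposition~\ref{prop:cocartesian_sections} inside $\mathrm{AniInfPair}^{[n]}_{R/}$: this comprises both the lifting-across-thickenings step, which must be carried out at the level of animated (not merely discrete) pairs and is exactly where formal smoothness of $S$ over $\Rat$ is used, and the compatibility of the \u{C}ech conerve with passage to level-$n$ infinitesimal envelopes. The remainder is bookkeeping: transporting coCartesian sections through the localization $\mathcal{I}^{[n]}$ and matching the resulting cosimplicial (filtered) modules with the defining diagrams of $\mathrm{Mod}^\Delta_n(f)$ and $\mathrm{FilMod}^\Delta_n(f)$. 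As indicated in the excerpt, the fully detailed version of this argument is the one to be written out for the crystalline analogue in Proposition~\ref{prop:crystals_explicit}.
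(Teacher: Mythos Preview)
Your proposal is correct and follows the same approach as the paper: the paper explicitly says the proof is omitted and obtained by combining Proposition~\ref{prop:cocartesian_sections} with the lemmas on infinitesimal envelopes, with details deferred to the crystalline analogue Proposition~\ref{prop:crystals_explicit}, whose proof is exactly the weak-initiality argument you describe. One small overreach: your invocation of Lemma~\ref{lem:inf_env_discrete}, Proposition~\ref{prop:inf_env_carlsson}, and Proposition~\ref{prop:inf_env_trivial_quillen} to show independence of the presentation is unnecessary for this proposition itself---independence is automatic once Proposition~\ref{prop:cocartesian_sections} gives an equivalence, and those results are really needed for the two corollaries that follow.
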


\begin{corollary}
\label{cor:crystals_explicit_classical_inf}
Suppose that $S\twoheadrightarrow R$ is as in Lemma~\ref{lem:inf_env_discrete} with $S$ formally smooth over $\Rat$. Suppose that we have $g:T\twoheadrightarrow R$ in $\mathrm{AniPair}$ with $T$ a polynomial algebra over $\Rat$ (possibly in infinitely many variables). Then giving an object in $\mathrm{TFilInfCrys}^{[n],\mathrm{lf}}_{R/\Rat}$ is equivalent to giving a tuple $(\bm{M}_n,\bm{\nabla}_n,\Fil^\bullet\bm{M})$, where:
\begin{enumerate}
  \item $\bm{M}_n$ is a vector bundle over $T/(\ker g)^n$;
  \item $\bm{\nabla}_n:\bm{M}_n \to \bm{M}_n\otimes_T\Omega^1_{T/\Rat}$ is an integrable connection;
  \item $\Fil^\bullet\bm{M}_n$ is a filtration by $T/(\ker g)^n$-submodules whose image in $M = R\otimes_{T/(\ker g)^n}\bm{M}_n$ is a filtration by vector sub-bundles, and which satisfies Griffiths transversality with respect to $\bm{\nabla}_n$:
  \[
   \bm{\nabla}_n(\Fil^i\bm{M}_n)\subset \Fil^{i-1}\bm{M}_n\otimes_T\Omega^1_{T/\Rat}.
  \]
\end{enumerate}
\end{corollary}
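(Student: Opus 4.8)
The plan is to deduce the corollary from Proposition~\ref{prop:crystals_explicit_inf} by showing that, under the stated hypotheses, the cosimplicial animated ring appearing there is in fact discrete, so that the abstract descent description collapses to the classical Grothendieck dictionary between crystals and modules with integrable connection. Concretely, the first step is to apply Proposition~\ref{prop:crystals_explicit_inf} with the pair $g\colon T\twoheadrightarrow R$ in place of $f\colon S\twoheadrightarrow R$ --- legitimate since a polynomial algebra over $\Rat$ is formally smooth over $\Rat$. Writing $g^{(\bullet)}$ for the \u{C}ech conerve of $T$ in $\mathrm{CRing}_{/R}$, this identifies $\mathrm{TFilInfCrys}^{[n],\mathrm{lf}}_{R/\Rat}$ with the $\infty$-category of coCartesian filtered lifts (with the appropriate local freeness conditions) of the cosimplicial filtered animated ring $\Fil^\bullet_{(\mathcal{I}^{[n]}(g^{(\bullet)})\twoheadrightarrow R)}\mathcal{I}^{[n]}(g^{(\bullet)})$, via Proposition~\ref{prop:cocartesian_sections}.

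The second --- and key --- step is to use the auxiliary ring $S$ to prove that each term $\mathcal{I}^{[n]}(g^{(m)})$ is discrete, equal to the classical quotient $T^{(m)}/(\ker g^{(m)})^n$. Here one observes that the existence of a formally smooth $S\twoheadrightarrow R$ with regular kernel forces $R$ to be a local complete intersection over $\Rat$: the transitivity triangle for $\Rat\to S\to R$, together with $\mathbb{L}_{S/\Rat}$ being a projective module in degree $0$ and $\mathbb{L}_{R/S}\simeq(\ker)/(\ker)^2[1]$ (regularity of the kernel, as in the proof of Lemma~\ref{lem:inf_env_discrete}), shows that $\mathbb{L}_{R/\Rat}$ is perfect and concentrated in cohomological degrees $-1,0$. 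Since each term $T^{(m)}$ of the \u{C}ech conerve is again a polynomial $\Rat$-algebra, the transitivity triangle for $\Rat\to T^{(m)}\to R$ then gives $\mathbb{L}_{R/T^{(m)}}\simeq(\ker g^{(m)})/(\ker g^{(m)})^2[1]$, so that $\ker g^{(m)}$ is a regular ideal (immediate in the finite-variable case; for infinitely many variables one filters $T$ by its finitely generated sub-polynomial algebras and invokes Lemma~\ref{lem:inf_env_discrete}, Proposition~\ref{prop:inf_env_trivial_quillen} and Proposition~\ref{prop:inf_env_carlsson} to control the envelopes). Lemma~\ref{lem:inf_env_discrete} then yields the discreteness, and by Lemma~\ref{lem:adic_filtration} the induced filtration on $\mathcal{I}^{[n]}(g^{(m)})$ is the classical $(\ker g^{(m)})$-adic one.

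Finally one unwinds the resulting descent problem over a cosimplicial \emph{discrete} filtered ring into the classical data. A coCartesian lift assigns to $[0]$ a filtered module $\Fil^\bullet\bm{M}_n$ over $\Fil^\bullet_{(T/(\ker g)^n\twoheadrightarrow R)}(T/(\ker g)^n)$ --- that is, a vector bundle $\bm{M}_n$ over $T/(\ker g)^n$ together with a filtration whose image in $M=R\otimes_{T/(\ker g)^n}\bm{M}_n$ is by vector sub-bundles --- together with a descent datum relative to $\mathcal{I}^{[n]}(g^{(1)})$, the level-$n$ infinitesimal neighbourhood of the diagonal of $T$. Because $T$ is smooth over the characteristic-zero ring $\Rat$, this descent datum is precisely a stratification, which in the classical dictionary amounts to an integrable connection $\bm{\nabla}_n\colon\bm{M}_n\to\bm{M}_n\otimes_T\Omega^1_{T/\Rat}$; here one uses decisively that $\Omega^1_{T/\Rat}$ is \emph{free}, so no cotangent-complex corrections intervene. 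The fact that the two face maps $\mathcal{I}^{[n]}(g^{(0)})\rightrightarrows\mathcal{I}^{[n]}(g^{(1)})$ are filtered maps differing by one in adic degree then translates the filtered compatibility of the descent datum exactly into Griffiths transversality $\bm{\nabla}_n(\Fil^i\bm{M}_n)\subset\Fil^{i-1}\bm{M}_n\otimes_T\Omega^1_{T/\Rat}$, and the local freeness conditions match on both sides.

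I expect the main obstacle --- and the reason the author defers the details to the crystalline analogue Corollary~\ref{cor:crystals_explicit_classical} --- to be twofold. First, establishing discreteness of the higher envelopes $\mathcal{I}^{[n]}(g^{(m)})$ uniformly, in particular in the non-Noetherian / infinitely-many-variables situation, requires combining the local complete intersection property of $R$ with a colimit argument and the envelope-comparison results of Propositions~\ref{prop:inf_env_trivial_quillen} and~\ref{prop:inf_env_carlsson}. Second, one must verify that the coCartesian-section formalism of the appendices genuinely reproduces the classical equivalence between stratifications and integrable connections, along with the simultaneous bookkeeping of the adic and Hodge filtrations that forces Griffiths transversality; once this classical input is granted, the remainder is formal.
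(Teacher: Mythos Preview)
Your approach differs from the paper's in one key organizational choice: you apply Proposition~\ref{prop:crystals_explicit_inf} with the surjection $g:T\twoheadrightarrow R$, whereas the paper (as indicated by the sentence preceding Proposition~\ref{prop:crystals_explicit_inf} and by the crystalline analogue in Corollary~\ref{cor:crystals_explicit_classical}) applies it with $f:S\twoheadrightarrow R$. The advantage of the paper's route is that regularity of $\ker f$ is part of the \emph{hypothesis}, so Lemma~\ref{lem:inf_env_discrete} applies directly to each $\mathcal{I}^{[n]}(f^{(m)})$ and one lands immediately in the classical world of stratifications. The passage to the description over $T/(\ker g)^n$ is then a purely classical maneuver (the infinitesimal analogue of the argument cited from Ogus in the proof of Corollary~\ref{cor:crystals_explicit_classical}), which only uses formal smoothness of $T$ and requires no regularity of $\ker g$.

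By contrast, your route forces you to \emph{deduce} that the derived envelopes $\mathcal{I}^{[n]}(g^{(m)})$ coincide with the classical quotients $T^{(m)}/(\ker g^{(m)})^n$. Your argument for this---that $\mathbb{L}_{R/T^{(m)}}$ is concentrated in degree $-1$, hence $\ker g^{(m)}$ is regular---is fine in the finitely-generated case, but in the infinite-variable case (which the corollary explicitly allows) it has a gap: the propositions you invoke, Proposition~\ref{prop:inf_env_trivial_quillen} and Proposition~\ref{prop:inf_env_carlsson}, concern the \emph{completed} inverse limits $\varprojlim_n\mathcal{I}^{[n]}$, not the individual level-$n$ envelopes, so they do not directly yield the discreteness you need; and filtering $T$ by finitely generated sub-polynomial algebras does not obviously produce surjections onto $R$. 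The paper's approach sidesteps this entirely by keeping the derived-to-classical comparison over $S$, where the needed regularity is given for free.
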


\begin{corollary}
\label{cor:crystals_explicit_classical_inf_limit}
Suppose that $S\twoheadrightarrow R = S/J$ is as in Proposition~\ref{prop:inf_env_carlsson} with $S$ a polynomial algebra in finitely many variables over $\Rat$. Then giving an object in $\mathrm{TFilInfCrys}^{\mathrm{lf}}_{R/\Rat}$ is equivalent to giving a tuple $(\widehat{\bm{M}},\widehat{\bm{\nabla}},\Fil^\bullet\widehat{\bm{M}}_n)$, where:
\begin{enumerate}
 \item $\widehat{\bm{M}}$ is a finite projective module over $\widehat{S}_J = \varprojlim_nS/J^n$, the classical completion of $S$ along $J$.
  \item $\widehat{\bm{\nabla}}:\widehat{\bm{M}} \to \widehat{\bm{M}}\otimes_S\Omega^1_{S/\Rat}$ is an integrable connection;
  \item $\Fil^\bullet\widehat{\bm{M}}$ is a filtration by $\widehat{S}_J$-submodules whose image in $R\otimes_{\widehat{S}_I}\widehat{M}$ is a filtration by vector sub-bundles, and which satisfies Griffiths transversality with respect to $\widehat{\bm{\nabla}}$:
  \[
   \widehat{\bm{\nabla}}(\Fil^i\widehat{\bm{M}})\subset \Fil^{i-1}\widehat{\bm{M}}\otimes_S\Omega^1_{S/\Rat}.
  \]
\end{enumerate}
\end{corollary}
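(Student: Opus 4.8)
The plan is to run the same argument as for the $p$-adic/crystalline analogues (Proposition~\ref{prop:crystals_explicit} and Corollary~\ref{cor:crystals_explicit_classical}), but passing to the limit over the level $n$ and feeding in Proposition~\ref{prop:inf_env_carlsson} in place of Lemma~\ref{lem:inf_env_discrete}. The first observation is that $\mathrm{AniInfPair}_{R/}$ is the filtered union of the subcategories $\mathrm{AniInfPair}^{[n]}_{R/}$, so that coCartesian sections over it compute the limit:
\[
\mathrm{TFilInfCrys}^{\mathrm{lf}}_{R/\Rat}\simeq \varprojlim_n \mathrm{TFilInfCrys}^{[n],\mathrm{lf}}_{R/\Rat},
\]
and the local freeness condition is unaffected by the level, since it only involves the value and graded pieces of the crystal at $R\xrightarrow{\mathrm{id}}R$, which is common to all levels. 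So it suffices to describe each $\mathrm{TFilInfCrys}^{[n],\mathrm{lf}}_{R/\Rat}$ functorially in $n$ and then take a limit.

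Next, fix the surjection $f\colon S\twoheadrightarrow R=S/J$ of the statement and let $f^{(\bullet)}$ be its \u{C}ech conerve in $\mathrm{CRing}_{/R}$, with $S^{(k)}=S^{\otimes_{\Rat}(k+1)}$. Since $S$ is a polynomial algebra it is formally smooth over $\Rat$, so Proposition~\ref{prop:crystals_explicit_inf} applies and identifies $\mathrm{TFilInfCrys}^{[n]}_{R/\Rat}$ with $\mathrm{FilMod}^{\Delta}_n(f)$, the $\infty$-category of coCartesian lifts of the cosimplicial filtered animated ring $\Fil^\bullet_{(\mathcal{I}^{[n]}(f^{(\bullet)})\twoheadrightarrow R)}\mathcal{I}^{[n]}(f^{(\bullet)})$ --- note that here we do \emph{not} need $J$ to be regular, as the individual envelopes $\mathcal{I}^{[n]}(f^{(k)})$ are allowed to be non-discrete. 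Each $S^{(k)}$ is again a polynomial algebra in finitely many variables over $\Rat$, hence Noetherian, so Proposition~\ref{prop:inf_env_carlsson} gives a filtered equivalence $\varprojlim_n \mathcal{I}^{[n]}(f^{(k)})\simeq \widehat{S^{(k)}}$, the classical $J^{(k)}$-adic completion of $S^{(k)}$ (where $J^{(k)}=\ker(S^{(k)}\to R)$) with its adic filtration. Passing to the limit over $n$, an object of $\mathrm{TFilInfCrys}^{\mathrm{lf}}_{R/\Rat}$ is the same as a coCartesian lift $\Fil^\bullet\widehat{\bm{M}}^{(\bullet)}$ of the cosimplicial filtered ring $\widehat{S^{(\bullet)}}$, subject to the local freeness condition at $R\xrightarrow{\mathrm{id}}R$.

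It then remains to unwind this cosimplicial descent datum, exactly as in the classical theory of modules with integrable connection. The $0$-th term is a filtered $\widehat{S}_J$-module $\Fil^\bullet\widehat{\bm{M}}$; since $\widehat{S^{(1)}}$ is the completion of $S\otimes_{\Rat}S$ along $\ker(S\otimes_\Rat S\to R)$, which is identified with $\widehat{S}_J[[t_1,\dots,t_m]]$ for $t_i=x_i\otimes 1-1\otimes x_i$ and $m$ the number of variables, the $1$-cocycle identifying the two pullbacks of $\widehat{\bm{M}}$ to $\widehat{S^{(1)}}$ is, by the formal Poincar\'e lemma (Lemma~\ref{lem:completed_de_rham}) and the standard stratification-versus-connection dictionary, the same datum as an integrable connection $\widehat{\bm{\nabla}}\colon\widehat{\bm{M}}\to\widehat{\bm{M}}\otimes_S\Omega^1_{S/\Rat}$, the $2$-cocycle condition being precisely integrability. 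Compatibility of the lift with the filtrations translates into the requirement that each $\Fil^i\widehat{\bm{M}}$ be a $\widehat{S}_J$-submodule whose image in $M=R\otimes_{\widehat{S}_J}\widehat{\bm{M}}$ is a subbundle and that $\widehat{\bm{\nabla}}(\Fil^i\widehat{\bm{M}})\subset\Fil^{i-1}\widehat{\bm{M}}\otimes_S\Omega^1_{S/\Rat}$, i.e.\ Griffiths transversality. Finally, the local freeness hypothesis says that $M$ and its graded pieces are finite locally free over $R$; since $\widehat{S}_J$ is $J$-adically, hence derived $J$-adically, complete and $\widehat{\bm{M}}\otimes^{\mathbb{L}}_{\widehat{S}_J}R\simeq M$ is a finite projective $R$-module concentrated in degree $0$, the derived Nakayama lemma forces $\widehat{\bm{M}}$ to be a finite projective $\widehat{S}_J$-module, completing the identification.

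I expect the main technical obstacle to be the passage to the limit over $n$: one must check that a compatible family of level-$n$ coCartesian lifts really assembles into a coCartesian lift over the limiting \emph{classical} cosimplicial ring $\widehat{S^{(\bullet)}}$ --- i.e.\ that the application of Proposition~\ref{prop:cocartesian_sections} is compatible with the limit, and that base change along $\widehat{S}_J\to R$ commutes with the limit over $n$ on the relevant finite-rank objects --- together with the last step of upgrading local freeness over $R$ to finite projectivity over $\widehat{S}_J$. Both are controlled using the boundedness of the filtrations and the (derived) completeness statements already built into Proposition~\ref{prop:inf_env_carlsson}.
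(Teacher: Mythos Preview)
Your approach is correct and matches the paper's own indication: the paper omits this proof but states that it is obtained by combining Proposition~\ref{prop:cocartesian_sections} with Proposition~\ref{prop:inf_env_carlsson}, with details as in the crystalline analogues Proposition~\ref{prop:crystals_explicit} and Corollary~\ref{cor:crystals_explicit_classical}. Your outline---passing to the limit over levels via $\mathrm{TFilInfCrys}^{\mathrm{lf}}_{R/\Rat}\simeq\varprojlim_n\mathrm{TFilInfCrys}^{[n],\mathrm{lf}}_{R/\Rat}$, invoking Proposition~\ref{prop:crystals_explicit_inf} at each level, applying Proposition~\ref{prop:inf_env_carlsson} to each $S^{(k)}$ to replace the derived envelopes by classical completions, and then unwinding the resulting cosimplicial datum as a module with integrable connection satisfying Griffiths transversality---is exactly this strategy, and your identification of the limit-compatibility step as the one point requiring care is apt.
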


\begin{remark}
\label{rem:finite_extensions}
Suppose that $k=R$ is a finite extension of $\Rat$. Then we can take $S = k$ in Proposition~\ref{prop:crystals_explicit_inf} and $S\twoheadrightarrow k$ to be the identity. Now, the cosimplicial object $\mathcal{I}^{[n]}(f^{(\bullet)})$ is the \emph{constant} cosimplicial $\Rat$-algebra attached to $k$. So we find that $\mathrm{InfCrys}^{[n]}_{k/\Rat}$ is independent of $n$ and is equivalent to the $\infty$-category $\Mod{k}$.
\end{remark}


\subsection{}
Suppose that we have $\Fil^\bullet_{\mathrm{Hdg}}\mathcal{M}\in \mathrm{TFilInfCrys}^{\mathrm{perf}}_{R/\Rat}$ with $\gr^i_{\mathrm{Hdg}}\mathcal{M}\simeq 0$ for $i<-1$. Let $\mathcal{Z}(\Fil^\bullet_{\mathrm{Hdg}}\mathcal{M})$ be the $\Mod[\mathrm{cn}]{\Rat}$-valued prestack over $R$ such that, for each $(R\xrightarrow{f}C)\in \mathrm{CRing}_{R/}$, we have
\begin{align*}
 \mathcal{Z}(\Fil^\bullet_{\mathrm{Hdg}}\mathcal{M})(C) &= \varprojlim_n\Map_{\mathrm{TFilInfCrys}^{[n]}_{C/\Rat}}(\Fil^\bullet\mathcal{O}^{[n]},f^*\Fil^\bullet_{\mathrm{Hdg}}\mathcal{M}^{[n]})
\end{align*}


Set
\[
\Fil^\bullet_{\mathrm{Hdg}}M_R = \Fil^\bullet_{\mathrm{Hdg}} \mathcal{M}(R\xrightarrow{\mathrm{id}}R).
\]

\begin{proposition}
\label{prop:FIC_functorial_props}
\begin{enumerate}
  \item $\mathcal{Z}(\Fil^\bullet_{\mathrm{Hdg}}\mathcal{M})$ is an infinitesimally cohesive and nilcomplete  \'etale sheaf.
  \item $\mathcal{Z}(\Fil^\bullet_{\mathrm{Hdg}}\mathcal{M})$ admits a cotangent complex equivalent to $\Reg{\mathcal{Z}(\Fil^\bullet_{\mathrm{Hdg}}\mathcal{M})}\otimes_R(\gr^{-1}_{\mathrm{Hdg}}M_R)^\vee[1]$.
\end{enumerate}
\end{proposition}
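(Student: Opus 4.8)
The goal is to prove that $\mathcal{Z}(\Fil^\bullet_{\mathrm{Hdg}}\mathcal{M})$ is an infinitesimally cohesive, nilcomplete \'etale sheaf with cotangent complex $\Reg{\mathcal{Z}}\otimes_R(\gr^{-1}_{\mathrm{Hdg}}M_R)^\vee[1]$. The basic strategy is to analyze the functor of points $C\mapsto \varprojlim_n\Map_{\mathrm{TFilInfCrys}^{[n]}_{C/\Rat}}(\Fil^\bullet\mathcal{O}^{[n]},f^*\Fil^\bullet_{\mathrm{Hdg}}\mathcal{M}^{[n]})$ by pushing all three properties through the (homotopy) limit over $n$, and then, for each fixed level $n$, expressing the mapping space concretely using the explicit description of transversally filtered infinitesimal crystals from Corollary~\ref{cor:crystals_explicit_classical_inf} (or its limit form, Corollary~\ref{cor:crystals_explicit_classical_inf_limit}). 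Since infinitesimal cohesiveness, nilcompleteness, the \'etale sheaf property, and existence of a cotangent complex are all closed under limits of prestacks, it suffices to establish them after pulling back along a formally smooth presentation $S\twoheadrightarrow R$ and working level by level.

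\emph{\'Etale descent and nilcompleteness.} For the sheaf property, I would first reduce to the affine case: the functors $(\cdot)\mapsto \mathrm{InfCrys}^{[n]}_{(\cdot)/\Rat}$ and their filtered variants satisfy \'etale descent in $C$ (this is already implicit in Remark~\ref{rem:inf_crys_over_stacks}), and $\Map$ into a fixed object commutes with the relevant limits, so $\mathcal{Z}(\Fil^\bullet_{\mathrm{Hdg}}\mathcal{M})$ inherits the \'etale sheaf property; the limit over $n$ preserves it. Nilcompleteness amounts to checking that $\mathcal{Z}(\Fil^\bullet_{\mathrm{Hdg}}\mathcal{M})(C)\to \varprojlim_m \mathcal{Z}(\Fil^\bullet_{\mathrm{Hdg}}\mathcal{M})(\tau_{\leq m}C)$ is an equivalence; using the description via de Rham complexes over the infinitesimal envelopes and the fact that the cotangent complex (hence the whole crystal structure) is built from $\Omega^1$-type data that is compatible with Postnikov towers, this follows from nilcompleteness of the target category $\Mod[\mathrm{cn}]{\Rat}$ together with the compatibility of animated infinitesimal envelopes with truncation. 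Infinitesimal cohesiveness (preservation of pullbacks along surjections with nilpotent kernel in $\pi_0$) is similar: for a square-zero extension $\tilde{C}\to C$, the filtered infinitesimal envelopes glue accordingly, and $\Map$ preserves the resulting pullback square.

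\emph{The cotangent complex — the main point.} This is where the real work lies. For a point $x\in \mathcal{Z}(\Fil^\bullet_{\mathrm{Hdg}}\mathcal{M})(C)$ and a square-zero extension of $C$ by a connective module $I$, deformation theory of the mapping space $\Map_{\mathrm{TFilInfCrys}^{[n]}_{C/\Rat}}(\Fil^\bullet\mathcal{O}^{[n]},f^*\Fil^\bullet_{\mathrm{Hdg}}\mathcal{M}^{[n]})$ is governed by the filtered internal Hom $\underline{\Map}$ of the two filtered crystals, evaluated on the extension. The key computation is that a map of filtered objects $\Fil^\bullet\mathcal{O}\to\Fil^\bullet_{\mathrm{Hdg}}\mathcal{M}$ (respecting the structure-sheaf-to-$\Fil^0$ normalization) deforms, and the obstruction/tangent data lives in the graded piece matching $\mathcal{O}$ in filtration degree $0$ against $\gr_{\mathrm{Hdg}}\mathcal{M}$; the hypothesis $\gr^i_{\mathrm{Hdg}}\mathcal{M}\simeq 0$ for $i<-1$ forces the only nontrivial contribution to come from $\gr^{-1}_{\mathrm{Hdg}}\mathcal{M}$ (degree $0$ maps to $\mathrm{Fil}^0$, and the "quotient direction" is exactly $\gr^{-1}$), while Griffiths transversality ensures the connection does not introduce further terms — all of this is really a derived repackaging of the classical fact that deforming a flat section modulo $\Fil^0$ sees only the Kodaira--Spencer-type quotient $\gr^{-1}$. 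I would carry this out first at level $n$ using Corollary~\ref{cor:crystals_explicit_classical_inf}, identifying the relative cotangent complex of the level-$n$ functor with $\Reg{}\otimes(\gr^{-1}_{\mathrm{Hdg}}M_R)^\vee[1]$ compatibly in $n$ (the connection terms cancel in the transition maps), and then pass to the limit, noting that the pro-system of cotangent complexes is essentially constant because the level-$n$ answer is already independent of $n$ up to canonical equivalence — this last stabilization, analogous to Proposition~\ref{prop:inf_env_trivial_quillen}, is the technical heart. The expected main obstacle is precisely verifying that the tangent/obstruction modules at different levels $n$ are identified compatibly, i.e.\ that the inverse system $\{\mathbb{L}_{\mathcal{Z}^{[n]}/R}\}_n$ is essentially constant; this should follow from the fact that the ideal-adic filtrations on the infinitesimal envelopes become strict essentially zero in high filtration degree, exactly as exploited in the proof of Proposition~\ref{prop:inf_env_trivial_quillen}.
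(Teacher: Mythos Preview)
The paper takes a different and cleaner route. Rather than working level by level, it introduces the auxiliary prestack
\[
\mathcal{M}^{\nabla}(C)=\varprojlim_n\Map_{\mathrm{InfCrys}^{[n]}_{C/\Rat}}(\mathcal{O}^{[n]},f^*\mathcal{M}^{[n]})
\]
of horizontal sections with the Hodge filtration forgotten, and proves directly from Proposition~\ref{prop:inf_env_trivial_quillen} that $\mathcal{M}^{\nabla}(A)\xrightarrow{\simeq}\mathcal{M}^{\nabla}(\pi_0(A))$ and $\mathcal{M}^{\nabla}(B)\xrightarrow{\simeq}\mathcal{M}^{\nabla}(A)$ for any $B\twoheadrightarrow A$ in $\mathrm{AniInfPair}$. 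Thus $\mathcal{M}^{\nabla}$ is \emph{formally \'etale} over $R$, and infinitesimal cohesiveness, nilcompleteness, and the \'etale sheaf condition are trivially satisfied. A short computation, using only the hypothesis $\gr^i_{\mathrm{Hdg}}\mathcal{M}\simeq 0$ for $i<-1$, then exhibits a fiber sequence
\[
\mathcal{Z}(\Fil^\bullet_{\mathrm{Hdg}}\mathcal{M})\to \mathcal{M}^{\nabla}\to \mathbf{V}\bigl((\gr^{-1}_{\mathrm{Hdg}}M_R)^\vee\bigr),
\]
from which both (1) and (2) follow immediately.

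Your level-by-level strategy has a genuine gap in part~(1). The properties you need do not obviously hold for the individual level-$n$ functors: at fixed $n$ the envelope $\mathcal{I}^{[n]}(g)$ depends on the higher homotopy of $C$ and is not invariant under square-zero thickenings or Postnikov truncation, so your claims that ``the filtered infinitesimal envelopes glue accordingly'' and that nilcompleteness follows from ``compatibility of animated infinitesimal envelopes with truncation'' are not justified. The invariance you want only emerges \emph{after} passing to the limit over $n$, and that is exactly the content of Proposition~\ref{prop:inf_env_trivial_quillen}. For part~(2), your plan to show that the pro-system $\{\mathbb{L}_{\mathcal{Z}^{[n]}/R}\}_n$ is essentially constant is morally on the right track, but it amounts to redoing the Quillen argument in a more laborious form; the paper's trick of first taking the limit (so that $\mathcal{M}^{\nabla}$ becomes formally \'etale) and then reading off the cotangent complex from the fiber sequence with $\mathbf{V}\bigl((\gr^{-1}_{\mathrm{Hdg}}M_R)^\vee\bigr)$ is both shorter and avoids the bookkeeping entirely.
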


\begin{proof}
Let $\mathcal{M}^{\nabla}$ be the $\Mod[\mathrm{cn}]{\Rat}$-valued prestack over $R$, given, for any $C\in \mathrm{CRing}_{R/}$, by
\[
\mathcal{M}^{\nabla}(C) =\varprojlim_n \Map_{\mathrm{InfCrys}^{[n]}_{C/\Rat}}(\mathcal{O}^{[n]},f^*\mathcal{M}^{[n]}).
\]

\begin{lemma}
\label{lem:M'toM}
There exists a fiber sequence 
\[
\mathcal{Z}(\Fil^\bullet_{\mathrm{Hdg}}\mathcal{M})\to \mathcal{M}^{\nabla}\to \mathbf{V}((\gr^{-1}_{\mathrm{Hdg}}M_R)^\vee)
\]
\end{lemma}
\begin{proof}
For any $A'\twoheadrightarrow A\leftarrow R$ in $\mathrm{AniInfPair}_{R/\Rat}$, Lemma~\ref{lem:graded_weight_filtration} shows that we have
\[
\gr^{-1}_{\mathrm{Hdg}}\mathcal{M}((A'\twoheadrightarrow A\leftarrow R))\simeq \gr^{-1}_{\mathrm{Hdg}}\mathcal{M}((A\xrightarrow{\mathrm{id}} A\leftarrow R))\simeq A\otimes_R \gr^{-1}_{\mathrm{Hdg}}M_R
\]

In particular, we obtain canonical arrows:
\[
\mathcal{M}^{\nabla}(C) \to \mathcal{M}((C\xrightarrow{\mathrm{id}}C\leftarrow R))\to \gr^{-1}_{\mathrm{Hdg}}\mathcal{M}((A'\twoheadrightarrow A\leftarrow R))\xrightarrow{\simeq}C\otimes_R\gr^{-1}_{\mathrm{Hdg}}M_R,
\]
whose composition gives us a map
\[
\mathcal{M}^{\nabla}(C)\to C\otimes_R\gr^{-1}_{\mathrm{Hdg}}M_R.
\]

We now claim that the natural map $\mathcal{Z}(\Fil^\bullet_{\mathrm{Hdg}}\mathcal{M})(C)\to \mathcal{M}^{\nabla}(C)$ identifies the source with the connective fiber of the above map. To check this, choose $g:S\twoheadrightarrow C$ with $S$ formally smooth (say a polynomial algebra) over $\Rat$, and note that, by Proposition~\ref{prop:crystals_explicit_inf}, we have 
\begin{align}\label{eqn:M'_expl_form}
\mathcal{M}^{\nabla}(C) &\simeq \tau^{\leq 0}\varprojlim_n\mathrm{Tot}\left(\mathcal{M}((\mathcal{I}^{[n]}(g^{(\bullet)})\twoheadrightarrow C))\right)
\end{align}
\begin{align*}
\mathcal{Z}(\Fil^\bullet_{\mathrm{Hdg}}\mathcal{M})(C) &\simeq  \tau^{\leq 0}\varprojlim_n\mathrm{Tot}\left(\Fil^0_{\mathrm{Hdg}}\mathcal{M}((\mathcal{I}^{[n]}(g^{(\bullet)})\twoheadrightarrow C))\right),
\end{align*}
and that, by the first paragaraph, we have cofiber sequences
\[
\Fil^0_{\mathrm{Hdg}}\mathcal{M}((\mathcal{I}^{[n]}(g^{(\bullet)})\twoheadrightarrow C))\to \mathcal{M}((\mathcal{I}^{[n]}(g^{(\bullet)})\twoheadrightarrow C))\to C\otimes_R\gr^{-1}_{\mathrm{Hdg}}M_R,
\]
where we view the last object here as a constant cosimplicial module. Totalizing and then taking the limit over $n$ now proves the lemma.
\end{proof}

\begin{lemma}
\label{lem:M'_cohesive}
$\mathcal{M}^{\nabla}$ is an infinitesimally cohesive and nilcomplete \'etale sheaf. In fact, it is formally \'etale over $R$ (equivalently, it admits $0$ as a cotangent complex over $R$).
\end{lemma}
\begin{proof}
Putting Proposition~\ref{prop:inf_env_trivial_quillen} and~\eqref{eqn:M'_expl_form} together shows that in fact we have
\[
\mathcal{M}^{\nabla}(A) \xrightarrow{\simeq}\mathcal{M}^{\nabla}(\pi_0(A))
\]
for any $A\in \mathrm{CRing}_{R/}$, and also that
\[
\mathcal{M}^{\nabla}(B)\xrightarrow{\simeq}\mathcal{M}^{\nabla}(A)
\]
whenever we have $B\twoheadrightarrow A$ in $\mathrm{AniInfPair}_{(R\xrightarrow{\mathrm{id}} R)/}$.

So we see that $\mathcal{M}^{\nabla}$ is formally \'etale and hence trivially nilcomplete and infinitesimally cohesive. 

To see that $\mathcal{M}^{\nabla}$ is also an \'etale sheaf, observe that, given an  \'etale map $f:C\to C'$, we can choose $g:S\twoheadrightarrow C$ such that we have an \'etale map $S\to S'$ lifting $f$ with $C' \simeq S'\otimes_SC$. Moreover, if $g':S'\twoheadrightarrow C'$ is the associated map in $\mathrm{AniPair}$, we have
\[
\mathcal{I}^{[n]}(g^{',(\bullet)})\simeq S^{',(\bullet)}\otimes_{S^{(\bullet)}}\mathcal{I}^{[n]}(g^{(\bullet)}).
\]
\end{proof}

Lemma~\ref{lem:M'toM} above shows that $\mathcal{Z}(\Fil^\bullet_{\mathrm{Hdg}}\mathcal{M})$ is the pullback of the zero section under a map of prestacks $\mathcal{M}^{\nabla}\to \mathbf{V}\left((\gr^{-1}_{\mathrm{Hdg}}M_R)^\vee\right)$ over $R$. Combined with Lemma~\ref{lem:M'_cohesive}, this proves the proposition.
\end{proof}

Our source of applications of the previous theorem will be the following lemma, which is immediate from Lemma~\ref{lem:fil_int_hom}. We will be applying it with $\mathcal{N}_1$ and $\mathcal{N}_2$ direct summands of the infinitesimal cohomology of abelian schemes.
\begin{lemma}
\label{lem:filinfcrys_internal_hom}
Suppose that we have $\Fil^\bullet\mathcal{N}_1,\Fil^\bullet\mathcal{N}_2\in \mathrm{TFilInfCrys}_{R/\Rat}$, and suppose that there exists $a\in \Int$ such that $\gr^k\mathcal{N}_1 \simeq \gr^k\mathcal{N}_2 \simeq 0$ for $k<a$. Suppose in addition that $\mathcal{N}_1$ satisfies the following additional conditions:
\begin{enumerate}
   \item $\Fil^\bullet\mathcal{N}_1$ is in $\mathrm{TFilCrys}^{\mathrm{lf}}_{R/\Rat}$;
   \item For $i\neq a,a+1$, we have $\gr^i_{\mathrm{Hdg}}N_{1,R} \simeq 0$
 \end{enumerate}
Then there exists an `internal Hom' object 
\[
\Fil^\bullet\mathcal{H} = \Fil^\bullet\mathcal{H}(\Fil^\bullet\mathcal{N}_1,\Fil^\bullet\mathcal{N}_2) \in \mathrm{TFilFCrys}_{R/\Rat}
\]
such that $\gr^k\mathcal{H}\simeq 0$ for $k<-1$, and such that, for all $(R\xrightarrow{f}C)\in \mathrm{CRing}_{R/}$, we have a canonical equivalence
\[
\Map_{\mathrm{TFilFCrys}_{C/\Rat}}(\Fil^\bullet\mathcal{O},f^*\Fil^\bullet\mathcal{H}) \simeq \Map_{\mathrm{TFilFCrys}_{C/\Rat}}(f^*\Fil^\bullet\mathcal{N}_1,f^*\Fil^\bullet\mathcal{N}_2).
\]
\end{lemma}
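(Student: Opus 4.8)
The plan is to construct $\Fil^\bullet\mathcal{H}$ sectionwise over $\mathrm{AniInfPair}_{R/}$ by taking the filtered internal Hom, to deduce that it assembles into a transversally filtered infinitesimal crystal from the base-change compatibility supplied by Lemma~\ref{lem:fil_int_hom}, and then to read off the two stated properties. First I would note that hypothesis~(1), together with the standing vanishing of $\gr^k\mathcal{N}_1$ for $k<a$, makes $\Fil^\bullet\mathcal{N}_1$ a \emph{dualizable} object of the filtered module $\infty$-category over $\Fil^\bullet_{(A'\twoheadrightarrow A)}A'$ for every $(A'\twoheadrightarrow A\leftarrow R)$ in $\mathrm{AniInfPair}_{R/}$: its associated graded is a bounded, locally free graded module over the associated graded ring, hence dualizable there, and the comparison between dualizability in the filtered and the graded settings established in Appendix~\ref{sec:filtrations} promotes this to dualizability of $\Fil^\bullet\mathcal{N}_1$ itself. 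Consequently Lemma~\ref{lem:fil_int_hom} yields a filtered internal Hom $\underline{\Map}_{\Fil^\bullet_{(A'\twoheadrightarrow A)}A'}(\Fil^\bullet\mathcal{N}_1,\Fil^\bullet\mathcal{N}_2)\simeq(\Fil^\bullet\mathcal{N}_1)^\vee\otimes\Fil^\bullet\mathcal{N}_2$, which I would take as the value of $\Fil^\bullet\mathcal{H}$ on that object.

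The step that I expect to require the most care is checking that this sectionwise assignment is a coCartesian section, i.e. a genuine object of $\mathrm{TFilInfCrys}_{R/\Rat}$. This amounts to the formation of the internal Hom commuting with every restriction functor attached to a morphism in $\mathrm{AniInfPair}_{R/}$, which will follow because a dualizable filtered module remains dualizable after base change (so $(\Fil^\bullet\mathcal{N}_1)^\vee$ is computed compatibly) and tensor products always commute with base change; here it is essential that only the \emph{source} $\mathcal{N}_1$ need be dualizable, so no hypothesis on $\mathcal{N}_2$ beyond membership in $\mathrm{TFilInfCrys}_{R/\Rat}$ is used. The transversality condition $\mathcal{H}\simeq\colim_n\Fil^n\mathcal{H}$ then follows from the corresponding conditions for $\mathcal{N}_1$ and $\mathcal{N}_2$ together with the boundedness of the filtration on $(\Fil^\bullet\mathcal{N}_1)^\vee$.

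For the grading bound I would invoke Lemma~\ref{lem:graded_weight_filtration}, exactly as in the proof of Proposition~\ref{prop:FIC_functorial_props}, to reduce $\gr^k\mathcal{H}$ to its value at $R\xrightarrow{\mathrm{id}}R$, where Lemma~\ref{lem:fil_int_hom} gives $\gr^k\mathcal{H}_R\simeq\bigoplus_{i+j=k}\gr^i\!\bigl((\mathcal{N}_1)_R^\vee\bigr)\otimes_R\gr^j(\mathcal{N}_2)_R$. Hypothesis~(2) forces $\gr^i_{\mathrm{Hdg}}N_{1,R}\simeq 0$ outside $i\in\{a,a+1\}$, hence $\gr^i\!\bigl((\mathcal{N}_1)_R^\vee\bigr)\simeq 0$ outside $i\in\{-a-1,-a\}$, while $\gr^j(\mathcal{N}_2)_R\simeq 0$ for $j<a$; therefore every nonzero summand has $k=i+j\ge(-a-1)+a=-1$, giving $\gr^k\mathcal{H}\simeq 0$ for $k<-1$. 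Finally, for the mapping-space identity I would present $\Map_{\mathrm{TFilInfCrys}_{C/\Rat}}(-,-)$ via Proposition~\ref{prop:crystals_explicit_inf} as a totalization over the \u{C}ech nerve of an infinitesimal envelope resolving $C$; the left-hand side then becomes the totalization of the spaces of maps out of the filtered structure sheaf into the levelwise internal Hom $f^*\Fil^\bullet\mathcal{H}$, which by the defining adjunction of Lemma~\ref{lem:fil_int_hom} and the compatibility of $f^*$ with internal Hom from the second paragraph is levelwise $\Map(f^*\Fil^\bullet\mathcal{N}_1,f^*\Fil^\bullet\mathcal{N}_2)$; totalizing yields the asserted equivalence, and this last step is purely formal once the base-change compatibility is in hand.
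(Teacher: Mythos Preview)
Your proposal is correct and follows the same route as the paper: the paper records this lemma as ``immediate from Lemma~\ref{lem:fil_int_hom}'', and your argument is precisely a careful unpacking of that reference---using part~(4) for the dualizability and tensor description of the internal Hom, part~(5) for base-change compatibility (hence the coCartesian property), and part~(3) (or the explicit graded computation you give, which is the same calculation) for the vanishing $\gr^k\mathcal{H}\simeq 0$ for $k<-1$. Nothing further is needed.
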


\begin{proposition}
\label{prop:FIC_discrete} 
Suppose that in Proposition~\ref{prop:FIC_functorial_props} $\Fil^\bullet_{\mathrm{Hdg}}\mathcal{M}$ is a filtered crystal of \emph{vector bundles}, that $\pi_0(R)$ is a finitely generated $\Rat$-algebra, and that $C\in \mathrm{CRing}_{R/}$ is such that $\pi_0(C)$ is also a finitely generated $\Rat$-algebra. Then:
\begin{enumerate}
\item\label{fic:discrete} $\mathcal{Z}(\Fil^\bullet_{\mathrm{Hdg}}\mathcal{M})(C)$ is discrete.

  \item\label{fic:separated}For $y\in \mathcal{Z}(\Fil^\bullet_{\mathrm{Hdg}}\mathcal{M})(C)$, the map
  \[
   \Spec C\times_{y,\mathcal{Z}(\Fil^\bullet_{\mathrm{Hdg}}\mathcal{M}),0}\Spec R\to \Spec C
  \]
  is a closed immersion that induces an open immersion on the underlying classical truncations.
\end{enumerate}
\end{proposition}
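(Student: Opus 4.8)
The plan is to read off both statements from the fiber sequence $\mathcal{Z}\to\mathcal{M}^{\nabla}\to\mathbf{V}((\gr^{-1}_{\mathrm{Hdg}}M_R)^{\vee})$ of Lemma~\ref{lem:M'toM} (abbreviating $\mathcal{Z}=\mathcal{Z}(\Fil^\bullet_{\mathrm{Hdg}}\mathcal{M})$ and writing $\mathcal{M}^{\nabla}$ for the auxiliary prestack from the proof of Proposition~\ref{prop:FIC_functorial_props}), once $\mathcal{M}^{\nabla}(C)$ has been made explicit. The one substantive step is to show $\mathcal{M}^{\nabla}(C)$ is a discrete, finite-dimensional $\Rat$-vector space. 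For this I would first use that $\mathcal{M}^{\nabla}$ is formally \'etale over $R$ (Lemma~\ref{lem:M'_cohesive}) to reduce to $\mathcal{M}^{\nabla}(\pi_0 C)$; then, since $\pi_0(C)$ is finitely generated over $\Rat$ --- hence Noetherian and of the form $S/J$ with $S$ a polynomial $\Rat$-algebra in \emph{finitely many} variables --- Corollary~\ref{cor:crystals_explicit_classical_inf_limit} describes $f^{*}\Fil^\bullet_{\mathrm{Hdg}}\mathcal{M}$ by a finite projective module $\widehat{\bm M}$ over $\widehat{S}_J$ with integrable connection $\widehat{\bm\nabla}$ and a Griffiths-transverse filtration, and identifies $\mathcal{M}^{\nabla}(\pi_0 C)$ with the horizontal sections, i.e. with $H^{0}$ of the de Rham complex $\widehat{\bm M}\otimes_S\Omega^{\bullet}_{S/\Rat}$. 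Because $\dim_{\Rat}S<\infty$ this complex is coconnective, so $\mathcal{M}^{\nabla}(C)\simeq\widehat{\bm M}^{\widehat{\bm\nabla}=0}$ is discrete.

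Granting this, part~\eqref{fic:discrete} is immediate: by Lemma~\ref{lem:M'toM}, $\mathcal{Z}(C)$ is the connective homotopy kernel of a map out of the discrete module $\mathcal{M}^{\nabla}(C)$ into the finite locally free --- hence discrete --- $C$-module $C\otimes_R\gr^{-1}_{\mathrm{Hdg}}M_R$, and the connective kernel of a map of discrete abelian groups is the ordinary kernel, which is discrete.

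For part~\eqref{fic:separated}, let $\bar y\in\mathcal{M}^{\nabla}(C)\simeq\mathcal{M}^{\nabla}(\pi_0 C)$ be the image of $y$ under $\mathcal{Z}\to\mathcal{M}^{\nabla}$. Classical truncation commutes with fiber products of prestacks, and over classical rings $\mathcal{Z}\hookrightarrow\mathcal{M}^{\nabla}$ is the inclusion of the subgroup $\ker(\mathcal{M}^{\nabla}(-)\to(-)\otimes_R\gr^{-1}_{\mathrm{Hdg}}M_R)$ by part~\eqref{fic:discrete}; hence the classical truncation of $W:=\Spec C\times_{y,\mathcal{Z},0}\Spec R$ is exactly the subfunctor of $\Spec\pi_0(C)$ cut out by the vanishing of $\bar y$, regarded via the first step as a horizontal section of $(\widehat{\bm M},\widehat{\bm\nabla})$. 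Here I would invoke the rigidity of flat connections in characteristic $0$: a nonzero horizontal section of a vector bundle with integrable connection over a connected base is nowhere vanishing, since vanishing at a point forces (iterating $\widehat{\bm\nabla}\bar y=0$ in local coordinates) vanishing to infinite order there, hence on the formal neighbourhood and, by faithful flatness of Noetherian completion, on a Zariski neighbourhood. Thus the vanishing locus of $\bar y$ is a union of connected components of $\Spec\pi_0(C)$, i.e. $\Spec\pi_0(C)[e^{-1}]$ for an idempotent $e\in\pi_0(C)$; this both identifies the classical truncation of $W$ and shows $W\to\Spec C$ induces an open immersion on classical truncations. Finally $W\to\Spec C$ is affine with $\pi_0\mathcal{O}_W=\pi_0(C)[e^{-1}]$ a quotient of $\pi_0(C)$, hence a closed immersion; its relative cotangent complex, which by Proposition~\ref{prop:FIC_functorial_props}(2) and transitivity is the pullback of $(\gr^{-1}_{\mathrm{Hdg}}M_R)^{\vee}[2]$, merely records the (irrelevant) derived structure on $W$.

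The main obstacle I anticipate is the first step --- pushing the animated/infinitesimal-crystal formalism of Section~\ref{sec:infcrys} through to the assertion that, over a finitely generated $\Rat$-algebra, $\mathcal{M}^{\nabla}$ genuinely computes classical horizontal sections; this is precisely where Noetherianity and the finiteness of the number of variables are used, to keep the de Rham cohomology coconnective and the relevant completions well-behaved. A secondary subtlety in part~\eqref{fic:separated} is checking that the ``vanishing of a horizontal section'' subfunctor is represented \emph{on the nose} by the clopen subscheme, with no spurious nilpotent thickening --- which is exactly what the characteristic-zero rigidity of $\widehat{\bm\nabla}$ rules out.
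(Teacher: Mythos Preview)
Your overall strategy matches the paper's: both parts are read off from the fiber sequence of Lemma~\ref{lem:M'toM}, with the discreteness of $\mathcal{M}^{\nabla}(C)$ as the essential input. For part~\eqref{fic:discrete}, the paper argues directly from the cosimplicial formula~\eqref{eqn:M'_expl_form} combined with Proposition~\ref{prop:inf_env_carlsson}: the totalization is over discrete (classical) rings, hence coconnective, and $\tau^{\leq 0}$ extracts the discrete $H^0$. Your reformulation via Corollary~\ref{cor:crystals_explicit_classical_inf_limit} and horizontal sections of $(\widehat{\bm M},\widehat{\bm\nabla})$ is equivalent. (The phrase ``$\dim_{\Rat}S<\infty$'' is a slip---you mean finitely many variables, and boundedness of the de Rham complex is not what makes it coconnective; starting in degree $0$ is.)

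The genuine divergence is in part~\eqref{fic:separated}. The paper reduces the clopen claim to the injectivity statement of Lemma~\ref{lem:constancy_M}, namely that $\mathcal{Z}(C)\to\mathcal{Z}(C')$ is injective for any $C\to C'$ in $\mathrm{CRing}_{\heartsuit,R/}$ with both finitely generated; it proves this by Nullstellensatz reduction to $C'=k$ a finite extension of $\Rat$, then for $C$ an integral domain shows $\widehat{S}_J$ is $\mathfrak{n}$-adically separated via Artin--Rees plus Nakayama, and finally handles general connected $C$ by passage to minimal primes. Your argument instead exploits the connection directly: horizontality plus vanishing at a point forces vanishing to all orders (Taylor expansion in characteristic $0$), then Krull intersection/faithful flatness of completion gives vanishing on a Zariski neighbourhood. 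Both routes are valid. Yours is perhaps more conceptual but is tacitly using that $\widehat{\bm M}$ is finite projective over the Noetherian ring $\widehat{S}_J$ to pass from the localization back to an actual open; the paper's purely ring-theoretic route handles the non-reduced and reducible cases more explicitly. Note also that the paper invokes easy Lurie representability (Theorem~\ref{thm:lurie_representability_easy}) to conclude the derived fiber product is a closed immersion once its classical truncation is known to be one; your final sentence (``affine with $\pi_0\mathcal{O}_W$ a quotient, hence a closed immersion'') is correct but presupposes representability, so you should make that invocation explicit.
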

\begin{proof}
Fix $g:S\twoheadrightarrow C$ as above with $S$ a polynomial algebra in finitely many variables over $\Rat$. Let $g^{(\bullet)}:S^{(\bullet)}\twoheadrightarrow C$ be the associated cosimplicial object, and set $J^{(\bullet)} = \ker g^{(\bullet)}$. From~\eqref{eqn:M'_expl_form} and Proposition~\ref{prop:inf_env_carlsson}, we see that we have
\[
\mathcal{M}^{\nabla}(C) \simeq \tau^{\leq 0}\varprojlim_n \mathrm{Tot}\left(\mathcal{M}(S^{(\bullet)}/(J^{(\bullet)})^n)\right),
\]
which, as the connective limit of $0$-truncated objects, is itself $0$-truncated, and is thus discrete. Now, Lemma~\ref{lem:M'toM} implies that $\mathcal{Z}(\Fil^\bullet_{\mathrm{Hdg}}\mathcal{M})(C)$ is also discrete. 

Proposition~\ref{prop:FIC_functorial_props} implies that the fiber product in (2) is infinitesimally cohesive, nilcomplete and admits a cotangent complex over $C$. Therefore, by `easy' Lurie representability, Theorem~\ref{thm:lurie_representability}, is is enough to show that the underlying classical truncation is represented by a closed immersion, which would of course follow if we knew that it is an open and closed immersion.

Unraveling the definitions, this last point comes down to the following lemma:
\begin{lemma}
\label{lem:constancy_M}
For any map $f:C\to C'$in $\mathrm{CRing}_{\heartsuit,R/}$ with $C'$ also finitely presented over $\pi_0(R)$, the map $\mathcal{Z}(\Fil^\bullet_{\mathrm{Hdg}}\mathcal{M})(C)\to \mathcal{Z}(\Fil^\bullet_{\mathrm{Hdg}}\mathcal{M})(C')$ is injective.
\end{lemma}
\begin{proof}
By the Nullstellensatz, it is enough to prove the lemma when $C' = k$ is a finite extension of $\Rat$. 

First, suppose that $C$ is an integral domain. Let $\mathfrak{m}\subset C$ be the maximal ideal such that $C\to C'$ factors through $C/\mathfrak{m}$. Then the map $\mathcal{Z}(\Fil^\bullet_{\mathrm{Hdg}}\mathcal{M})(C/\mathfrak{m})\to \mathcal{Z}(\Fil^\bullet_{\mathrm{Hdg}}\mathcal{M})(k)$ is injective by Remark~\ref{rem:finite_extensions}. So we can further assume that $k=C/\mathfrak{m}$. 

Now, choose as usual $g:S\twoheadrightarrow C$ with $S$ a polynomial algebra over $\Rat$; then $\mathcal{Z}(\Fil^\bullet_{\mathrm{Hdg}}\mathcal{M})(C)$ maps injectively into
\[
\varprojlim_n\mathcal{M}(\mathcal{I}^{[n]}(g)\twoheadrightarrow C) \simeq \varprojlim_n\mathcal{M}(S/(\ker g)^n\twoheadrightarrow C),
\]
where we have used Lemma~\ref{prop:inf_env_carlsson}. 

Similarly, $\mathcal{Z}(\Fil^\bullet_{\mathrm{Hdg}}\mathcal{M})(C/\mathfrak{m})$ maps injectively into
\[
\varprojlim_n\mathcal{M}(S/(\ker \bar{g})^n\twoheadrightarrow C/\mathfrak{m}),
\]
where $\bar{g}:S\twoheadrightarrow C/\mathfrak{m}$ is the surjection induced from $g$.

Set $J = \ker g$ and $\mathfrak{n} = \ker\overline{g}$. We are now reduced to showing that the map $\widehat{S}_J \to \widehat{S}_{\mathfrak{n}}$ is injective; or equivalently, that $\widehat{S}_J$ is separated in the $\mathfrak{n}$-adic topology. By the Artin-Rees lemma~\cite[\href{https://stacks.math.columbia.edu/tag/00IN}{Tag 00IN}]{stacks-project} and Nakayama's lemma\cite[\href{https://stacks.math.columbia.edu/tag/00DV}{Tag 00DV}]{stacks-project}, we find that there exists $x\in \mathfrak{n}\widehat{S}_J$ such that
\[
(1+x)\cdot \bigcap_{k=1}^\infty \mathfrak{n}^k\widehat{S}_J = 0.
\] 
But $\widehat{S}_J$ is an integral domain, since $C = S/J$ is one, so we are now done under the assumption that $C$ is a domain.

For the general situation, let $P_1,\ldots,P_r$ be the minimal primes in $C$; choosing maximal ideals $\mathfrak{m}_i\subset C$ above each $P_i$, we get maps
\[
\mathcal{Z}(\Fil^\bullet_{\mathrm{Hdg}}\mathcal{M})(C)\to \prod_{i=1}^r\mathcal{Z}(\Fil^\bullet_{\mathrm{Hdg}}\mathcal{M})(C/P_i)\to \prod_{i=1}^r\mathcal{Z}(\Fil^\bullet_{\mathrm{Hdg}}\mathcal{M})(C/\mathfrak{m}_i),
\]
where the second map is injective by what we have just seen. In fact, the first map is also injective: If in the above argument, we choose $g:S\twoheadrightarrow C$ and take $J_i\subset S$ to be the pre-image of $P_i$, then this comes down to the fact that
\[
\widehat{S}_J \to \prod_{i=1}^r\widehat{S}_{J_i}
\]
is injective.

To finish it is now enough to see that the kernel of $\mathcal{Z}(\Fil^\bullet_{\mathrm{Hdg}}\mathcal{M})(C)\to \mathcal{Z}(\Fil^\bullet_{\mathrm{Hdg}}\mathcal{M})(C/\mathfrak{m}_i)$ is independent of $i$. For this, we will use the connectedness of $\Spec C$, which guarantees that for any $i\neq j$ we can find a map $C\to D$ to a finitely generated domain over $\Rat$, and maximal ideals $\mathfrak{n}_i$ and $\mathfrak{n}_j$ in $D$ inducing the maximal ideals $\mathfrak{m}_i,\mathfrak{m}_j$ of $C$. By the known case of integral domains (and by Remark~\ref{rem:finite_extensions}), we then see that the kernels of the maps associated with both $i$ and $j$ are both equal to that of $\mathcal{Z}(\Fil^\bullet_{\mathrm{Hdg}}\mathcal{M})(C)\to \mathcal{Z}(\Fil^\bullet_{\mathrm{Hdg}}\mathcal{M})(D)$.

\end{proof}

\end{proof}

\begin{remark}
\label{rem:derham_stack}
Take the example where $R = \Rat$: then giving $\Fil^\bullet\mathcal{M}$ amounts to simply specifying a filtered vector space $\Fil^\bullet \bm{M}$ over $\Rat$, and the prestack $\mathcal{M}^{\nabla}$ attaches to every $C\in \mathrm{CRing}_{\Rat/}$ the space $\pi_0(C)_{\mathrm{red}}\otimes_{\Rat}\bm{M}$. One cannot therefore expect this to be representable by a scheme. However, if one works over $\Comp$ instead, we could refine this into a representable object by realizing $\Fil^\bullet\bm{M}$ as an object underlying a $\Rat$-Hodge structure, and defining a prestack that accounts for this additional structure.
\end{remark}

\subsection{}\label{subsec:W0_condition_inf}
We will have use for the following condition on $\Fil^\bullet_{\mathrm{Hdg}}\mathcal{M}$ in Proposition~\ref{prop:FIC_functorial_props}: There is a direct sum decomposition 
\[
\Fil^\bullet_{\mathrm{Hdg}} \mathcal{M}\simeq \bigoplus_{i=0}^d\Fil^\bullet_{\mathrm{Hdg}}\mathcal{M}^{(i)},
\]
where, for each $i$, $\Fil^\bullet_{\mathrm{Hdg}}\mathcal{M}^{(i)}[i]$ is an object in $\mathrm{TFilInfCrys}^{\mathrm{lf}}_{R/\Rat}$. The main example of such an object for us will be obtained by combining the infinitesimal cohomology of abelian schemes with Lemma~\ref{lem:filinfcrys_internal_hom}; see Section~\ref{sec:abvar_crystals}.

Associated with this is a direct sum decomposition of prestacks $\mathcal{Z}(\Fil^\bullet_{\mathrm{Hdg}}\mathcal{M}) \simeq \bigoplus_{i=0}^d\mathcal{Z}(\Fil^\bullet_{\mathrm{Hdg}}\mathcal{M}^{(i)})$.

\begin{proposition}
\label{prop:W0_discrete_FIC}
 The map $\mathcal{Z}(\Fil^\bullet_{\mathrm{Hdg}}\mathcal{M}^{(0)})\to \mathcal{Z}(\Fil^\bullet_{\mathrm{Hdg}}\mathcal{M})$ is a closed immersion of prestacks that is an equivalence on the underlying classical prestacks. More precisely, for every $C\in \mathrm{CRing}_{R/}$, and every map $\Spec C\to \mathcal{Z}(\Fil^\bullet_{\mathrm{Hdg}}\mathcal{M})$ of prestacks over $R$, the map
 \[
\Spec C\times_{\mathcal{Z}(\Fil^\bullet_{\mathrm{Hdg}}\mathcal{M})}\mathcal{Z}(\Fil^\bullet_{\mathrm{Hdg}}\mathcal{M}^{(0)})\to \Spec C
 \]
 is represented by a derived closed subscheme, and is an isomorphism of the underlying classical schemes.
\end{proposition}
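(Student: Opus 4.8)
The plan is to deduce this from the direct-sum decomposition together with the same ``easy'' Lurie representability argument used in the proof of Proposition~\ref{prop:FIC_discrete}(2); the one genuinely new ingredient will be a cohomological-degree computation showing that the higher summands $\mathcal{M}^{(i)}$, $i\geq 1$, contribute nothing over classical rings.

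First I would record the elementary reductions. Since all the prestacks here are $\Mod[\mathrm{cn}]{\Rat}$-valued, the decomposition $\Fil^\bullet_{\mathrm{Hdg}}\mathcal{M}\simeq\bigoplus_{i=0}^{d}\Fil^\bullet_{\mathrm{Hdg}}\mathcal{M}^{(i)}$ gives $\mathcal{Z}(\Fil^\bullet_{\mathrm{Hdg}}\mathcal{M})\simeq\prod_{i=0}^{d}\mathcal{Z}(\Fil^\bullet_{\mathrm{Hdg}}\mathcal{M}^{(i)})$ over $R$, under which the map of the proposition is the inclusion of the $i=0$ factor (split by the projection). Writing $P=\prod_{i=1}^{d}\mathcal{Z}(\Fil^\bullet_{\mathrm{Hdg}}\mathcal{M}^{(i)})$ and letting $0\colon\Spec R\to P$ be the zero section, for any $\Spec C\to\mathcal{Z}(\Fil^\bullet_{\mathrm{Hdg}}\mathcal{M})$ over $R$ the fiber product in the statement is identified with $W:=\Spec C\times_{P,0}\Spec R$, and it suffices to show $W\to\Spec C$ is a derived closed immersion inducing an isomorphism on classical truncations (the non-pointwise statement then follows, as both properties are checked after pulling back to affines). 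Each $\Fil^\bullet_{\mathrm{Hdg}}\mathcal{M}^{(i)}$ still lies in $\mathrm{TFilInfCrys}^{\mathrm{perf}}_{R/\Rat}$ with $\gr^{j}_{\mathrm{Hdg}}\mathcal{M}^{(i)}\simeq 0$ for $j<-1$, so Proposition~\ref{prop:FIC_functorial_props} applies to each; hence $P$, and therefore $W\to\Spec C$, is infinitesimally cohesive, nilcomplete, and admits a relative cotangent complex.

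Next I would compute $\mathbb{L}_{W/C}$ and observe it is connective. For $i\geq 1$ the hypothesis that $\Fil^\bullet_{\mathrm{Hdg}}\mathcal{M}^{(i)}[i]$ is a filtered infinitesimal crystal of vector bundles says $\gr^{-1}_{\mathrm{Hdg}}\mathcal{M}^{(i)}(R\xrightarrow{\mathrm{id}}R)\simeq V_i[-i]$ for a finite locally free $R$-module $V_i$; so by Proposition~\ref{prop:FIC_functorial_props}(2), $\mathbb{L}_{\mathcal{Z}(\Fil^\bullet_{\mathrm{Hdg}}\mathcal{M}^{(i)})/R}\simeq\Reg{}\otimes_R V_i^\vee[i+1]$, concentrated in cohomological degree $-i-1$. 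Using the cofiber sequence attached to the section $0$ of $P\to\Spec R$ and additivity of cotangent complexes for products over $R$, one gets $\mathbb{L}_{\Spec R/P}\simeq\bigoplus_{i=1}^{d}V_i^\vee[i+2]$, concentrated in cohomological degrees $\leq -3$; pulling back along $W\to\Spec R$ shows $\mathbb{L}_{W/C}$ is a perfect complex in cohomological degrees $\leq -3$, in particular connective. Hence, exactly as in the proof of Proposition~\ref{prop:FIC_discrete}(2), by ``easy'' Lurie representability (Theorem~\ref{thm:lurie_representability}) it is enough to show $W^{\mathrm{cl}}\to(\Spec C)^{\mathrm{cl}}$ is a closed immersion; I will in fact show it is an isomorphism, which simultaneously gives the last clause of the proposition.

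Finally, the heart of the argument: I claim $\mathcal{Z}(\Fil^\bullet_{\mathrm{Hdg}}\mathcal{M}^{(i)})(C')\simeq\ast$ for every classical $C'\in\mathrm{CRing}_{\heartsuit,R/}$ and every $i\geq 1$, so that $P^{\mathrm{cl}}\simeq(\Spec R)^{\mathrm{cl}}$ and hence $W^{\mathrm{cl}}\simeq(\Spec C)^{\mathrm{cl}}$. By the usual limiting reduction (as in the proof of Lemma~\ref{lem:constancy_M}) I may assume $C'$ is finitely generated over $\Rat$. Choosing $g\colon S\twoheadrightarrow C'$ with $S$ a polynomial $\Rat$-algebra in finitely many variables, with associated cosimplicial object $g^{(\bullet)}$ and $m$-th term $g^{(m)}\colon S^{(m)}\twoheadrightarrow C'$, the explicit description from the proof of Proposition~\ref{prop:FIC_functorial_props} gives
\[
\mathcal{Z}(\Fil^\bullet_{\mathrm{Hdg}}\mathcal{M}^{(i)})(C')\simeq\tau^{\leq 0}\varprojlim_n\mathrm{Tot}\big(\Fil^0_{\mathrm{Hdg}}\mathcal{M}^{(i)}(\mathcal{I}^{[n]}(g^{(\bullet)})\twoheadrightarrow C')\big).
\]
Since $\gr^{j}_{\mathrm{Hdg}}\mathcal{M}^{(i)}\simeq 0$ for $j<-1$ and $\gr^{-1}_{\mathrm{Hdg}}$ does not see the thickening (Lemma~\ref{lem:graded_weight_filtration}), $\Fil^0_{\mathrm{Hdg}}\mathcal{M}^{(i)}(\mathcal{I}^{[n]}(g^{(m)})\twoheadrightarrow C')$ is the fiber of a map of $\mathcal{I}^{[n]}(g^{(m)})$-modules both of the form $(\text{finite locally free})[-i]$. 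Passing to the limit over $n$ \emph{before} totalizing (the two limits commute), Proposition~\ref{prop:inf_env_carlsson} identifies $\varprojlim_n\mathcal{I}^{[n]}(g^{(m)})$ with the discrete $\ker g^{(m)}$-adic completion of the Noetherian ring $S^{(m)}$, so $\varprojlim_n\Fil^0_{\mathrm{Hdg}}\mathcal{M}^{(i)}(\mathcal{I}^{[n]}(g^{(m)})\twoheadrightarrow C')$ is a complex of abelian groups concentrated in cohomological degrees $\geq i$; since $\mathrm{Tot}$ only raises connectivity, the whole expression lies in cohomological degrees $\geq i\geq 1$ and its connective truncation vanishes. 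This proves the claim, and with it the proposition. The main (and essentially the only) obstacle is this last degree bookkeeping, where it is crucial that one completes in $n$ first so that the positive shift $[-i]$ takes effect; carrying the finite-generation reduction through and checking the Lurie representability hypotheses are routine once this vanishing is in hand.
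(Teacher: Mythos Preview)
Your overall strategy is the same as the paper's: use the direct-sum decomposition to reduce to showing that $\mathcal{Z}(\Fil^\bullet_{\mathrm{Hdg}}\mathcal{M}^{(i)})(C')\simeq 0$ for every discrete $C'$ and every $i\geq 1$, and then invoke easy Lurie representability to upgrade ``isomorphism on classical points'' to ``closed immersion''. The difference is in how the vanishing is obtained.

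The paper does this in one line: by hypothesis $\Fil^\bullet_{\mathrm{Hdg}}\mathcal{M}^{(i)}[i]\in\mathrm{TFilInfCrys}^{\mathrm{lf}}_{R/\Rat}$, so Proposition~\ref{prop:FIC_discrete}\,(1) applies directly to it and says that the associated section space on a discrete $C'$ is discrete; hence
\[
\mathcal{Z}(\Fil^\bullet_{\mathrm{Hdg}}\mathcal{M}^{(i)})(C')\simeq\tau^{\leq 0}\bigl(\text{(discrete)}[-i]\bigr)\simeq 0\quad\text{for }i\geq 1.
\]
You instead unpack this by hand via the cosimplicial formula and Proposition~\ref{prop:inf_env_carlsson}, which is essentially re-proving the relevant part of Proposition~\ref{prop:FIC_discrete}\,(1) in situ. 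That is correct, but longer than necessary. Your paragraph computing that $\mathbb{L}_{W/C}$ sits in degrees $\leq -3$ is also unneeded: easy Lurie representability only asks that a cotangent complex \emph{exists}, which already comes from Proposition~\ref{prop:FIC_functorial_props}.

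Two small points to clean up. First, the ``limiting reduction as in Lemma~\ref{lem:constancy_M}'' does not do what you say---that lemma is about injectivity along maps, not about reducing to finitely generated rings; in any case, the paper's own proof (via Proposition~\ref{prop:FIC_discrete}) also operates under those finiteness hypotheses, so no extra reduction is needed beyond what the paper assumes. Second, ``$\mathrm{Tot}$ only raises connectivity'' is the wrong direction: totalization is a limit and preserves lower bounds on cohomological degree (coconnectivity), which is exactly what you use; it does \emph{not} preserve connectivity in general.
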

\begin{proof}
That the map is an equivalence on classical points amounts to saying that
\[
\mathcal{Z}(\Fil^\bullet_{\mathrm{Hdg}}\mathcal{M}^{(i)})(C) \simeq 0
\]
for $i>0$ and $C\in \mathrm{CRing}_{\heartsuit,R/}$. To see this, let $\mathcal{Z}(\Fil^\bullet_{\mathrm{Hdg}}\mathcal{M}^{(i)})[i]$ be the prestack associated with $\Fil^\bullet_{\mathrm{Hdg}}\mathcal{M}^{(i)}$; then we have
\[
\mathcal{Z}(\Fil^\bullet_{\mathrm{Hdg}}\mathcal{M}^{(i)})(C) \simeq \tau^{\leq 0}(\mathcal{Z}(\Fil^\bullet_{\mathrm{Hdg}}\mathcal{M}^{(i)})[i](C)[-i])\simeq 0
\]
since $\mathcal{Z}(\Fil^\bullet_{\mathrm{Hdg}}\mathcal{M}^{(i)})[i](C)$ is discrete by Proposition~\ref{prop:FIC_discrete}.

That the map is a closed immersion now follows from Theorem~\ref{thm:lurie_representability_easy}: For any  $\Spec C\to \mathcal{Z}(\Fil^\bullet_{\mathrm{Hdg}}\mathcal{M})$ with $C\in \mathrm{CRing}_{R/}$, the base-change
\[
\Spec C\times_{\mathcal{Z}(\Fil^\bullet_{\mathrm{Hdg}}\mathcal{M})}\mathcal{Z}(\Fil^\bullet_{\mathrm{Hdg}}\mathcal{M}^{(0)})\to \Spec C
\]
is an isomorphism on classical points (and hence has representable underlying classical prestack), is infinitesimally cohesive and nilcomplete, and admits a cotangent complex.
\end{proof}

\section{Spaces of sections of derived filtered crystals: the $p$-adic context}\label{sec:fcrys}

The goal here is to formulate a version of Theorem~\ref{introthm:filcrys}. Proofs will appear in the next section. We will be using the setup from~\cite{Mao2021-jt} and repeating some of the constructions from the previous section in this context. The main complication here is Frobenius and how best to keep track of it. I do so in a somewhat \emph{ad hoc} fashion. The advantage over the infinitesimal context is that---in contrast to the situation in Remark~\ref{rem:derham_stack}---Frobenius now imposes enough constraints that we can obtain a formal derived \emph{scheme} out of the spaces of sections of filtered $F$-crystals

A remark on notation: towards the latter part of this section, for any $N\in \Mod{\Int}$, we will often write $\overline{N}$ for the mod-$p$ fiber $N/{}^{\mathbb{L}}p$.


\subsection{}\label{subsec:anipdpair}
We will begin with a variant of~\eqref{subsec:anipair} that involves divided powers. Let $\mathrm{PDPair}$ be the category of tuples $(R\twoheadrightarrow S,\gamma)$, where $R\twoheadrightarrow S$ is a surjection of classical commutative rings whose kernel is equipped with a divided power structure $\gamma$; the morphisms in this category respect divided powers in the obvious sense.

Within $\mathrm{PDPair}$ we have the subcategory $\mathcal{E}^0$ obtained by taking the divided power envelopes\footnote{See for instance~\cite[Theorem 3.19]{berthelot_ogus:notes}.} of the arrows in $\mathcal{D}^0$: this is once again a category with a set of compact projective generators, and animating it gives us the $\infty$-category $\mathrm{AniPDPair}$, whose objects can be represented as pairs $(A'\twoheadrightarrow A,\gamma)$, where $f:A'\twoheadrightarrow A$ is in $\mathrm{AniPair}$ and $\gamma$ is viewed as a derived divided power structure on the homotopy kernel of $f$.

\subsection{}\label{subsec:pd_filtration}
For every $(R\to S,\gamma)$ in $\mathrm{PDPair}$, we obtain an object in $(R,\Fil^\bullet_{\mathrm{PD}}R)$ in $\mathrm{FilMod}$, where
\[
\Fil^i_{\mathrm{PD}}R = \mathrm{hker}(R\to S)^{[i]}\subset R
\]
is the ideal generated by the $i$-th divided powers of the elements of $\mathrm{hker}(R\to S)$. In fact, this makes $R$ a filtered commutative ring, which in somewhat more exalted terms gives us an $\mathcal{Z}_K(E_{\Int})_{\infty}\Int$-algebra in the symmetric monoidal $\infty$-category $\mathrm{FilMod}_{\Int}$. In the terminology of Section~\ref{sec:filtrations}, it gives a filtered animated commutative ring.

Animating the restriction of this functor to $\mathcal{E}^0$ gives us a functor
\[
\mathrm{AniPDPair}\xrightarrow{(A'\twoheadrightarrow A,\gamma)\mapsto \Fil^\bullet_{(A'\twoheadrightarrow A,\gamma)}A'}\mathrm{FilCRing}.
\]

\subsection{}\label{subsec:ani_pd_envelope}
The forgetful functor
\[
\mathrm{AniPDPair}\xrightarrow{(A'\twoheadrightarrow A,\gamma)\mapsto (A'\twoheadrightarrow A)}\mathrm{AniPair}
\]
has a left adjoint 
\[
\mathrm{AniPair}\xrightarrow{(f:A'\twoheadrightarrow A)\mapsto (D_{\mathrm{abs}}(f)\twoheadrightarrow A,\gamma_{\mathrm{abs}}(f))}\mathrm{AniPDPair},
\]
which we call the \defnword{absolute (derived) divided power (or PD) envelope}. This is obtained by animating the composition $\mathcal{D}^0\to \mathcal{E}^0\to \mathrm{AniPDPair}$; see~\cite[Def. 3.15]{Mao2021-jt}.

For any $(A'\twoheadrightarrow A,\gamma)\in \mathrm{AniPDPair}$, we also have a \defnword{relative (derived) PD envelope functor}
\begin{align*}
\mathrm{AniPair}_{(A'\twoheadrightarrow A)}&\to \mathrm{AniPDPair}_{(A'\twoheadrightarrow A,\gamma)/}\\
\left(\begin{diagram}A'&\rTo&B'\\\dTo^f&&\dOnto_g\\A&\rTo&B\end{diagram}\right)&\mapsto (D_{\mathrm{rel}(f)}(g)\twoheadrightarrow B,\gamma_{\mathrm{rel}(f)}(g))
\end{align*}
described in~\cite[Def. 4.49]{Mao2021-jt}. The underlying object $D_{\mathrm{rel}(f)}(g)$ is given explicitly by
\[
D_{\mathrm{rel}(f)}(g) \simeq D_{\mathrm{abs}}(g)\otimes_{D_{\mathrm{abs}}(f)}A'.
\]

The next result is immediate from~\cite[Prop. 3.72]{Mao2021-jt}.
\begin{lemma}
\label{lem:divided_power_classical}
Suppose that we have $f:S\twoheadrightarrow R$ in $\mathrm{AniPair}$ with $S$ and $R$ discrete, flat over $\Int$, and with $\ker f\subset S$ locally generated by a regular sequence. Then the divided power envelope $D_{\mathrm{abs}}(f)$ along with its divided power filtration, is also flat over $\Int$ and agrees with the classical divided power envelope.
\end{lemma}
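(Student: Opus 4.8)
The statement to prove is Lemma~\ref{lem:divided_power_classical}: for $f\colon S\twoheadrightarrow R$ in $\mathrm{AniPair}$ with $S,R$ discrete, $\Int$-flat, and $\ker f$ locally generated by a regular sequence, the absolute derived divided power envelope $D_{\mathrm{abs}}(f)$ is $\Int$-flat, discrete, agrees with the classical PD envelope, and its PD filtration agrees with the classical one.

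\medskip

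The plan is to reduce to the cited statement~\cite[Prop. 3.72]{Mao2021-jt}, which I would expect says precisely that when one applies the derived PD envelope functor to a surjection of $\Int$-flat discrete rings with regular kernel, the result is discrete, $\Int$-flat, and coincides with its classical counterpart (together with a matching statement for the filtration, since the derived PD filtration is built by animating the classical one). So the first step is to check that the hypotheses of the present lemma match those of \emph{loc. cit.}: $S$ and $R$ are discrete, they are flat over $\Int$, and $\ker f$ is locally on $\Spec S$ generated by a regular sequence. The one genuine content here is the \emph{locally}: the classical PD envelope is compatible with Zariski (indeed \'etale) localization on the base $S$ (see~\cite[\S 3]{berthelot_ogus:notes}), and the derived PD envelope $D_{\mathrm{abs}}$, being built by animation and hence commuting with the relevant colimits, is also compatible with localization. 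Therefore one may check the discreteness and flatness claims Zariski-locally on $\Spec S$, where $\ker f$ becomes an honest regular ideal, and apply the cited proposition directly.

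\medskip

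Concretely, I would argue as follows. Cover $\Spec S$ by affine opens $\Spec S_i$ on which $\ker f$ is generated by a regular sequence $a_{i,1},\ldots,a_{i,r_i}$. On each such open, writing $R_i = S_i/(a_{i,1},\ldots,a_{i,r_i})$, the derived PD envelope $D_{\mathrm{abs}}(S_i\twoheadrightarrow R_i)$ can be computed by base change from the universal example: present $S_i \twoheadrightarrow R_i$ as a pushout of $\Int[x_1,\ldots,x_{r_i}]\twoheadrightarrow \Int$ along $x_j\mapsto a_{i,j}$, i.e. $S_i\otimes^{\mathbb L}_{\Int[x_1,\ldots,x_{r_i}]}\Int = R_i$ using regularity (which makes the derived tensor product discrete). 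Since $D_{\mathrm{abs}}$ is a left adjoint it commutes with this pushout, so $D_{\mathrm{abs}}(S_i\twoheadrightarrow R_i)\simeq S_i\otimes^{\mathbb L}_{\Int[x_1,\ldots,x_{r_i}]}\langle x_1,\ldots,x_{r_i}\rangle$, the derived base change of the divided power polynomial algebra. Now $\langle x_1,\ldots,x_{r_i}\rangle$ is flat (indeed free) over $\Int[x_1,\ldots,x_{r_i}]$, so this derived tensor product is discrete and computes the classical PD envelope; $\Int$-flatness is then inherited from $S_i$ together with $\Int$-flatness of the divided power polynomial algebra (a free $\Int$-module). The same base-change description gives that each $\Fil^i_{\mathrm{PD}}$ is the classical $i$-th divided power ideal and that these are $\Int$-flat. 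Finally, flatness and discreteness being local properties, the globally defined $D_{\mathrm{abs}}(f)$ and its filtration inherit the conclusion, and the gluing of the local classical PD envelopes is exactly the classical PD envelope of $f$.

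\medskip

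I do not anticipate a serious obstacle: the essential input is the already-cited~\cite[Prop. 3.72]{Mao2021-jt}, and the only thing requiring care is bookkeeping about localization—namely that both the classical PD envelope construction and the animated functor $D_{\mathrm{abs}}$ are Zariski-local on $\Spec S$, so that the hypothesis "$\ker f$ \emph{locally} generated by a regular sequence" suffices. One should also be mild\-ly careful that the identification "derived PD filtration = animation of classical PD filtration" is exactly how $\Fil^\bullet_{(A'\twoheadrightarrow A,\gamma)}A'$ was defined in~\eqref{subsec:pd_filtration}, so that the flatness of the associated graded pieces (again free $\Int$-modules in the universal case) transports along the base change; this is where one invokes the regularity hypothesis a second time, to ensure the relevant $\Tor$-vanishing. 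With these observations in place the proof is a one-paragraph reduction, which is presumably why the paper states it as "immediate" from the cited proposition.
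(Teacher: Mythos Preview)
Your proposal is correct and follows the same approach as the paper, which simply cites \cite[Prop.~3.72]{Mao2021-jt} as making the result immediate. Your additional care with the Zariski localization and the explicit base-change computation from the universal example $\Int[x_1,\ldots,x_r]\twoheadrightarrow\Int$ is a reasonable unpacking of what that citation contains, though the paper does not spell this out.
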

\qed

\begin{lemma}
\label{lem:relative_to_p}
Consider $(\Int_{(p)}\overset{f}{\twoheadrightarrow}\Field_p,\delta_p)$, where $\delta_p$ is the standard divided power structure on $p\Int_{(p)}$; then for any $g:B'\twoheadrightarrow B$ with $B'\in \mathrm{CRing}_{\Int_{(p)}/}$ with $\overline{g}:B'\twoheadrightarrow B/{}^{\mathbb{L}}p$ the induced map, there is a natural equivalence
\[
D_{\mathrm{abs}}(g)\xrightarrow{\simeq}D_{\mathrm{rel}(f)}(\overline{g}).
\]
\end{lemma}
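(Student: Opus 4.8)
The plan is to unwind the definition of the relative divided power envelope and reduce to a formal colimit computation in $\mathrm{AniPair}$. By the explicit formula recalled above, $D_{\mathrm{rel}(f)}(\overline{g}) \simeq D_{\mathrm{abs}}(\overline{g}) \otimes_{D_{\mathrm{abs}}(f)} \Int_{(p)}$, so it suffices to produce a natural equivalence of underlying animated rings $D_{\mathrm{abs}}(\overline{g}) \otimes_{D_{\mathrm{abs}}(f)} \Int_{(p)} \simeq D_{\mathrm{abs}}(g)$. The key point is that $\overline{g}$ arises as a pushout in $\mathrm{AniPair}$: since $p$ is a nonzerodivisor in $\Int_{(p)}$ we have $\Field_p \simeq \Int_{(p)}/{}^{\mathbb{L}}p$, hence $B/{}^{\mathbb{L}}p \simeq B \otimes_{\Int_{(p)}} \Field_p$, and because $\mathrm{AniPair}$ is closed under colimits in $\mathrm{Fun}(\Delta[1],\mathrm{CRing})$, where they are computed levelwise, the object $\overline{g}$ is the pushout of the span $(g\colon B' \twoheadrightarrow B) \leftarrow (\Int_{(p)}\xrightarrow{\mathrm{id}}\Int_{(p)}) \to (f\colon \Int_{(p)}\twoheadrightarrow \Field_p)$: on sources this pushout is $B' \otimes_{\Int_{(p)}}\Int_{(p)} \simeq B'$, on targets it is $B \otimes_{\Int_{(p)}}\Field_p \simeq B/{}^{\mathbb{L}}p$, and the induced map of sources to targets is precisely $\overline{g}$.

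I would then apply $D_{\mathrm{abs}}$. As a left adjoint to the forgetful functor $\mathrm{AniPDPair}\to \mathrm{AniPair}$ it preserves colimits, and $D_{\mathrm{abs}}(\Int_{(p)}\xrightarrow{\mathrm{id}}\Int_{(p)})$ is the trivial PD pair $(\Int_{(p)}\xrightarrow{\mathrm{id}}\Int_{(p)},0)$ (adjoining divided powers to the zero ideal does nothing; this is already visible on the generators $\mathcal{D}^0 \to \mathcal{E}^0$). Hence $D_{\mathrm{abs}}(\overline{g}) \simeq D_{\mathrm{abs}}(g) \sqcup_{(\Int_{(p)},0)} D_{\mathrm{abs}}(f)$ in $\mathrm{AniPDPair}$. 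On underlying animated rings, a pushout along the zero-ideal PD pair $(\Int_{(p)},0)$ is computed by the relative tensor product over $\Int_{(p)}$: this holds on the compact projective generators $\mathcal{E}^0$ — the divided power envelope of a coproduct of generators of $\mathcal{D}^0$ is the coproduct of the envelopes, with underlying ring the corresponding tensor product of PD-polynomial algebras — and therefore passes to the animation. So the underlying ring of $D_{\mathrm{abs}}(\overline{g})$ is $D_{\mathrm{abs}}(g)\otimes_{\Int_{(p)}} D_{\mathrm{abs}}(f)$, with $\Int_{(p)}\to D_{\mathrm{abs}}(g)$ induced by $B'$ being an $\Int_{(p)}$-algebra.

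Combining these, and using the canonical map $D_{\mathrm{abs}}(f) \to \Int_{(p)}$ classifying the PD pair $(\Int_{(p)}\twoheadrightarrow \Field_p,\delta_p)$ itself (so that $\Int_{(p)}$ is a $D_{\mathrm{abs}}(f)$-algebra), I obtain
\[
D_{\mathrm{rel}(f)}(\overline{g}) \simeq D_{\mathrm{abs}}(\overline{g})\otimes_{D_{\mathrm{abs}}(f)}\Int_{(p)} \simeq \bigl(D_{\mathrm{abs}}(g)\otimes_{\Int_{(p)}}D_{\mathrm{abs}}(f)\bigr)\otimes_{D_{\mathrm{abs}}(f)}\Int_{(p)} \simeq D_{\mathrm{abs}}(g),
\]
the last equivalence being the cancellation $(M\otimes_R S)\otimes_S R \simeq M$. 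Naturality in $g$ (equivalently, in the $\Int_{(p)}$-algebra $B'$) is automatic since every step is functorial.

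The step I expect to be the main obstacle is the PD-bookkeeping invoked above: verifying that the underlying-ring functor $\mathrm{AniPDPair}\to \mathrm{CRing}$ genuinely carries a pushout along $(\Int_{(p)},0)$ to the plain derived tensor product over $\Int_{(p)}$, and that the map $D_{\mathrm{abs}}(f)\to \Int_{(p)}$ is an equivalence on underlying rings. Classically this is precisely the point where $n!$-torsion obstructs a naive argument — one is implicitly using that $p^n/n!$ already lies in $\Int_{(p)}$ — but in the animated framework it is forced by resolving by the generators of $\mathcal{E}^0$; I would either carry this out by hand via such a resolution or invoke the corresponding structural statements about $D_{\mathrm{abs}}$ and relative PD envelopes from \cite{Mao2021-jt}. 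Everything else is formal manipulation of adjoints and colimits.
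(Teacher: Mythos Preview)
Your argument is correct and takes a cleaner, more conceptual route than the paper's. The paper argues by animation-reduction to the generating surjections $g:\Int_{(p)}[X,Y]\twoheadrightarrow\Int_{(p)}[X]$ and then computes all three PD envelopes $D_{\mathrm{abs}}(g)$, $D_{\mathrm{abs}}(\overline{g})$, $D_{\mathrm{abs}}(f)$ explicitly as PD-polynomial algebras (invoking Lemma~\ref{lem:divided_power_classical}) before tensoring by hand. You instead recognize $\overline{g}$ as the pushout $g\sqcup_{\mathrm{id}_{\Int_{(p)}}}f$ in $\mathrm{AniPair}$, push this through the colimit-preserving composite $s\circ D_{\mathrm{abs}}$, and cancel. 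Your approach avoids any explicit computation and makes the naturality transparent; the paper's has the virtue of being entirely concrete.

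Two remarks on the step you flag. First, your cancellation $(D_{\mathrm{abs}}(g)\otimes_{\Int_{(p)}}D_{\mathrm{abs}}(f))\otimes_{D_{\mathrm{abs}}(f)}\Int_{(p)}\simeq D_{\mathrm{abs}}(g)$ does \emph{not} require $D_{\mathrm{abs}}(f)\to\Int_{(p)}$ to be an equivalence: you only need the composite $\Int_{(p)}\to D_{\mathrm{abs}}(f)\to\Int_{(p)}$ to be the identity, which is immediate from the universal property (the second map is a morphism of PD pairs under $\Int_{(p)}$). That said, the map \emph{is} an equivalence---$(p)\subset\Int_{(p)}$ already carries the PD structure $\delta_p$---and this is what the paper's explicit computation ultimately encodes. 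Second, the claim that $s:\mathrm{AniPDPair}\to\mathrm{CRing}$ preserves the relevant pushout is genuine but easy: since $D_{\mathrm{abs}}$ is a left adjoint, it suffices that $s\circ D_{\mathrm{abs}}:\mathrm{AniPair}\to\mathrm{CRing}$ preserves colimits, and this follows because on generators it sends coproducts in $\mathcal{D}^0$ to tensor products of PD-polynomial algebras, and animation supplies sifted colimits.
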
 
\begin{proof}
Arguing via animation, we can reduce to considering $g$ of the form $\Int_{(p)}[X,Y]\overset{Y\mapsto 0}{\twoheadrightarrow}\Int_{(p)}[X]$ for sets of variables $X$ and $Y$. In this case, by Lemma~\ref{lem:divided_power_classical}, we have
\[
D_{\mathrm{abs}}(g) \simeq \Int_{(p)}[X]\langle Y\rangle\;;\;D_{\mathrm{abs}}(\overline{g})\simeq \Int_{(p)}[X]\langle Y,Z\rangle/(Z-p),
\]
where $\langle W\rangle$ denotes the free divided power algebra generated by the set of variables $W$. 

Since $D_{\mathrm{abs}}(f)\simeq \Int_{(p)}\langle Z\rangle/(Z-p)$ mapping to $\Int_{(p)}$ via $Z^{[i]} \mapsto p^{[i]}$, we now conclude that 
\begin{align*}
D_{\mathrm{rel}(f)}(\overline{g})&\simeq D_{\mathrm{abs}}(\overline{g})\otimes_{D_{\mathrm{abs}}(f)}\Int_{(p)}\\
&\simeq  \Int_{(p)}[X]\langle Y,Z\rangle/(Z-p)\otimes_{\Int_{(p)}\langle Z\rangle/(Z-p)}\Int_{(p)}\\
&\simeq \Int_{(p)}[X]\langle Y\rangle \simeq D_{\mathrm{abs}}(g).
\end{align*}
\end{proof}

\subsection{}\label{subsec:square_zero_pd}
Square-zero thickenings are a useful source of objects in $\mathrm{AniPDPair}$. Suppose that we have such a thickening $\tilde{C}\twoheadrightarrow C$ in $\mathrm{AniPair}$; the adic filtration $\Fil_{\tilde{C}\twoheadrightarrow C}\tilde{C}$ is given explicitly by:
\begin{align}
\label{eqn:fil_trivial_sqzero}
\Fil^i_{\tilde{C}\twoheadrightarrow C}\tilde{C} & \simeq \begin{cases}
\tilde{C}&\text{if $i\leq 0$}\\
\hker(\tilde{C}\twoheadrightarrow C)&\text{if $i = 1$}\\
0&\text{otherwise}.
\end{cases}
\end{align}

\begin{lemma}
\label{lem:square_zero_pd}
There is a canonical lift of $\tilde{C}\to C$ to an object 
\[
(\tilde{C}\to C,\gamma_{\mathrm{triv}})\in \mathrm{AniPDPair}
\]
such that the associated PD filtration on $\tilde{C}$ agrees with the adic filtration defined above.
\end{lemma}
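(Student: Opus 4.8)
The plan is to first construct the trivial divided power structure on \emph{split} square-zero extensions by animation, and then transport it to an arbitrary square-zero thickening via the pullback presentation of the latter. For the split case, consider the category of pairs $(R,R^{\oplus n})$ with $R$ a polynomial $\Int$-algebra and $R^{\oplus n}$ a finite free module; these are compact projective generators of the $\infty$-category of pairs $(C,M)$ with $C$ animated and $M$ a connective $C$-module. The split square-zero extension $R\oplus R^{\oplus n}\simeq R[x_1,\dots,x_n]/(x_ix_j)_{i,j}$ carries the trivial divided power structure on $(x_1,\dots,x_n)$, namely $\gamma_k=0$ for $k\geq 2$; all the PD axioms reduce to $0=0$ since $(x_1,\dots,x_n)^2=0$, and morphisms in this category (the $R$-linear maps $R^{\oplus m}\to R^{\oplus n}$ over ring maps $R\to R'$) manifestly respect it. Animating produces a functor $\Psi\colon (C,M)\mapsto (C\oplus M\twoheadrightarrow C,\gamma_{\mathrm{triv}})$ into $\mathrm{AniPDPair}$ lifting the split square-zero extension functor into $\mathrm{AniPair}$; since $\Fil^{\geq 2}_{\mathrm{PD}}$ vanishes on the generators and both $\Psi$ and $\Fil^{\geq 2}_{\mathrm{PD}}$ preserve sifted colimits, $\Fil^{\geq 2}_{\mathrm{PD}}\circ\Psi\simeq 0$, so the PD filtration of $\Psi(C,M)$ is the two-step filtration $[\,C\oplus M,\,M,\,0,\,0,\dots\,]$, agreeing with the adic filtration of~\eqref{eqn:fil_trivial_sqzero}.

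For a general square-zero thickening $\tilde C\twoheadrightarrow C$ with homotopy kernel $M$, recall that it is classified by a derivation $\eta\colon\mathbb{L}_C\to M[1]$ and is the pullback in $\mathrm{CRing}$, hence in $\mathrm{AniPair}$, of the cospan
\[
(C\xrightarrow{\ \mathrm{id}\ }C)\ \xrightarrow{\ \eta\ }\ \bigl(C\oplus M[1]\twoheadrightarrow C\bigr)\ \xleftarrow{\ d_0\ }\ (C\xrightarrow{\ \mathrm{id}\ }C),
\]
with $d_0$ the trivial derivation. The two maps $d_0,\eta$ lift to $\mathrm{AniPDPair}$ with target $\Psi(C,M[1])$: their common source $(C\xrightarrow{\mathrm{id}}C)$ has zero homotopy kernel, so it equals $D_{\mathrm{abs}}$ of itself, and the adjunction between the absolute PD envelope and the forgetful functor gives that the space of such lifts is contractible. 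The forgetful functor $u\colon\mathrm{AniPDPair}\to\mathrm{AniPair}$ is a right adjoint and hence preserves limits, so one defines $(\tilde C\twoheadrightarrow C,\gamma_{\mathrm{triv}})$ to be the pullback in $\mathrm{AniPDPair}$ of the corresponding cospan $\Psi(C,M[1])$; applying $u$ returns $\tilde C\twoheadrightarrow C$. Since the triple $(C,M,\eta)$, the unique lifts, and the pullback are all functorial, this yields a section of $u$ over the square-zero thickenings, which is the asserted canonical lift.

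It remains to identify the PD filtration on $\tilde C$. As for any object of $\mathrm{AniPDPair}$ one has $\Fil^{\leq 0}_{\mathrm{PD}}\tilde C=\tilde C$ and $\Fil^{1}_{\mathrm{PD}}\tilde C=\hker(\tilde C\to C)$, so by comparison with~\eqref{eqn:fil_trivial_sqzero} the only remaining point is $\Fil^{\geq 2}_{\mathrm{PD}}\tilde C\simeq 0$. This is where square-zero-ness is used: the divided power structure on $\tilde C$ is, by the pullback construction, restricted from the trivial structure on $\Psi(C,M[1])$, so $\gamma_k$ kills $\hker(\tilde C\to C)$ for $k\geq 2$, while the multiplication $\hker(\tilde C\to C)\otimes_{\tilde C}\hker(\tilde C\to C)\to\tilde C$ is nullhomotopic; hence the ideal generated by all $k$-fold divided-power products with $k\geq 2$ vanishes. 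I expect this verification to be the main obstacle, since it requires reconciling the colimit-defined PD-filtration functor with the limit defining $\tilde C$; I would handle it by the dévissage already set up in the preceding subsections, reducing by animation to the split case over polynomial generators treated above, together with the fact recorded in~\eqref{eqn:fil_trivial_sqzero} that the derived adic filtration of a square-zero thickening is itself concentrated in degrees $0$ and $1$.
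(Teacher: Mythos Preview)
Your strategy is sound for the split case and for producing \emph{some} lift in the general case, but the gap you flag in step~3 is the crux, and your proposed d\'evissage does not close it. The PD-filtration functor is defined by animation and commutes with sifted colimits, whereas your passage to the non-split case is via a pullback; a general square-zero thickening is not a sifted colimit of split ones, so there is no reduction to the generators you already handled. The sentence ``the ideal generated by all $k$-fold divided-power products with $k\geq 2$ vanishes'' is a classical heuristic that does not literally compute the animated $\Fil^2_{\mathrm{PD}}$ of your pullback.

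The paper's proof sidesteps this entirely by never leaving the realm of animated functors on $\mathrm{AniPair}$, so that every step is a sifted-colimit check on generators. The key (and only) observation is that for any $g:S\twoheadrightarrow R$ in $\mathrm{AniPair}$ the natural map
\[
S/\Fil^2_{(S\twoheadrightarrow R)}S \;\longrightarrow\; D_{\mathrm{abs}}(g)\big/\Fil^2_{(D_{\mathrm{abs}}(g)\twoheadrightarrow R,\gamma_{\mathrm{abs}}(g))}D_{\mathrm{abs}}(g)
\]
is an equivalence in $\mathrm{CRing}$: both sides commute with sifted colimits, and on the generators $\Int[X,Y]\twoheadrightarrow\Int[X]$ each is $\Int[X,Y]/(Y)^2$. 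The right-hand side, being obtained by animating the classical construction ``quotient a PD pair by its second divided-power ideal,'' is an object of $\mathrm{AniPDPair}$ whose PD filtration is concentrated in degrees $\leq 1$ by the same generator check. Specializing to $g=(\tilde C\twoheadrightarrow C)$, where the left-hand side is $\tilde C$ itself by~\eqref{eqn:fil_trivial_sqzero}, yields the lift and the filtration identification simultaneously---no split/non-split dichotomy, and no limit--colimit mismatch to reconcile.
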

\begin{proof}
This is a consequence of the fact that, for any $g:S\twoheadrightarrow R$ in $\mathrm{AniPair}$, the natural map
\[
S/\Fil^2_{S\twoheadrightarrow R}S \to D_{\mathrm{abs}}(g)/\Fil^2_{(D_{\mathrm{abs}}(g)\twoheadrightarrow R,\gamma_{\mathrm{abs}}(g))}D_{\mathrm{abs}}(g)
\]
is an equivalence in $\mathrm{CRing}$.
\end{proof}

\subsection{}\label{subsec:crys}
For $R\in \mathrm{CRing}$, we set
\begin{align*}
\mathrm{AniPDPair}_{R/} = \mathrm{AniPDPair}\times_{t,\mathrm{CRing}}\mathrm{CRing}_{R/}.
\end{align*}
Informally, this is the category of pairs $(A'\twoheadrightarrow A\leftarrow R,\gamma)$, where  $(A'\twoheadrightarrow A,\gamma)$ is an object in $\mathrm{AniPDPair}$. 

For any prime $p$, let $\mathrm{AniPDPair}_{p,R/}$ be the $\infty$-subcategory of $\mathrm{AniPDPair}_{R/}$ spanned by the objects $(A'\twoheadrightarrow A\leftarrow R,\gamma)$ where $p$ is nilpotent in $\pi_0(A')$. For a fixed $n\geq 1$, we let $\mathrm{AniPDPair}_{p,n,R/}$ be the $\infty$-subcategory spanned by those objects where $p^n = 0\in \pi_0(A')$.

We now consider the coCartesian fibration
\begin{equation}\label{eqn:cocart crys fib}
\mathrm{AniPDPair}_{p,n,R/}\times_{s,\mathrm{CRing}}\mathrm{Mod} \to \mathrm{AniPDPair}_{p,n,R/}
\end{equation}

A \defnword{$\Int/p^n\Int$-crystal} over $R$ is a coCartesian section $\mathcal{M}$ of this fibration. In particular, given such a crystal $\mathcal{M}$, we obtain, for every object $(A'\twoheadrightarrow A\leftarrow R,\gamma)$, $\mathcal{M}((A'\twoheadrightarrow A,\gamma))\in \Mod{A'}$, and if we have an arrow
\[
(A'\twoheadrightarrow A \leftarrow R,\gamma)\to (B'\twoheadrightarrow B\leftarrow R,\delta),
\]
then there is an equivalence
\[
B'\otimes_{A'}\mathcal{M}((A'\twoheadrightarrow A,\gamma)) \simeq \mathcal{M}((B'\twoheadrightarrow B,\delta))
\]
in $\Mod{B'}$. Write $\mathrm{Crys}_{R/(\Int/p^n\Int)}$ for the $\infty$-category of $\Int/p^n\Int$-crystals over $R$. We have the $\infty$-subcategories
\[
\mathrm{Crys}^{\mathrm{lf}}_{R/(\Int/p^n\Int)}\;;\;\mathrm{Crys}^{\mathrm{perf}}_{R/(\Int/p^n\Int)}
\]
spanned by those sections that take values in $\Mod[\mathrm{lf}]{}$ and $\Mod[\mathrm{perf}]{}$, respectively.

In particular, we have the \defnword{structure sheaf} crystal $\mathcal{O}_n\in \mathrm{Crys}^{\mathrm{lf}}_{R/(\Int/p^n\Int)}$ corresponding to the coCartesian section
\[
(A'\twoheadrightarrow A\leftarrow R,\gamma)\mapsto (A',A')\in \Mod.
\]

Repeating the constructions by using $\mathrm{AniPDPair}_{p,R/}$ instead (which we can view as letting $n$ go to infinity), we obtain the $\infty$-category $\mathrm{Crys}_{R/\Int_p}$ (resp. $\mathrm{Crys}^{\mathrm{lf}}_{R/\Int_p}$, $\mathrm{Crys}^{\mathrm{perf}}_{R/\Int_p}$) of $p$-adic crystals (resp. crystals of vector bundles, crystals of perfect complexes) over $R$, as well as the structure sheaf $\mathcal{O}\in \mathrm{Crys}^{\mathrm{lf}}_{R/\Int_p}$.


\subsection{}\label{subsec:filtered_pd}
From~\eqref{subsec:pd_filtration}, we find that there is a functor
\[
\mathrm{AniPDPair}_{p,n} \xrightarrow{(A'\twoheadrightarrow A,\gamma)\mapsto \Fil^\bullet_{(A'\twoheadrightarrow A,\gamma)}A'}\mathrm{FilCRing}
\]
endowing every object on the left with its divided power or PD filtration.

Pulling the coCartesian fibration $\mathrm{FilMod}\to \mathrm{FilCRing}$ back along the functor gives us a coCartesian fibration
\[
\mathrm{FilMod}_{p,n,\mathrm{PD}}\to \mathrm{AniPDPair}_{p,n},
\]
inducing for each $R\in \mathrm{CRing}_{{\Field_p}/}$ a coCartesian fibration
\begin{align}
\label{eqn:filmod_pd_fibration}
\mathrm{FilMod}_{p,n,\mathrm{PD}}\times_{\mathrm{AniPDPair}_{p,n}}\mathrm{AniPDPair}_{p,n,R/}\to \mathrm{AniPDPair}_{p,n,R/}.
\end{align}


\subsection{}\label{subsec:transversal_pd}
We write $\mathrm{TFilCrys}_{R/(\Int/p^n\Int)}$ for the $\infty$-category of \emph{coCartesian} sections of~\eqref{eqn:filmod_pd_fibration}. Informally, an object in this category is a filtered $\Int/p^n\Int$-crystal $\Fil^\bullet \mathcal{M}$ over $R$ such that:
\begin{itemize}
  \item For each $(A'\twoheadrightarrow A\leftarrow R,\gamma)$ in $\mathrm{AniPDPair}_{p,n,R/}$, $\Fil^\bullet \mathcal{M}((A'\twoheadrightarrow A,\gamma))$ is a filtered module over $\Fil^\bullet_{(A'\twoheadrightarrow A,\gamma)}A'$.
  \item Given an arrow
  \[
(A'\twoheadrightarrow A \leftarrow R,\gamma)\to (B'\twoheadrightarrow B\leftarrow R,\delta),
\]
the natural arrow
\[
\Fil^\bullet_{(B'\twoheadrightarrow B,\delta)}B'\otimes_{\Fil^\bullet_{(A'\twoheadrightarrow A,\gamma)}A'}\Fil^\bullet \mathcal{M}((A'\twoheadrightarrow A,\gamma))\xrightarrow{\simeq}\Fil^\bullet \mathcal{M}((B'\twoheadrightarrow B,\delta))
\]
is an equivalence of filtered modules over $\Fil^\bullet_{(B'\twoheadrightarrow B,\delta)}B$
\end{itemize}

Write $\mathrm{TFilCrys}_{R/(\Int/p^n\Int)}^{\mathrm{lf}}$ (resp. $\mathrm{TFilCrys}^{\mathrm{perf}}_{R/(\Int/p^n\Int)}$) for the subcategory of $\mathrm{TFilCrys}_{R/(\Int/p^n\Int)}$ spanned by objects $\Fil^\bullet \mathcal{M}$ such that
\[
\mathcal{M}(R\xrightarrow{\mathrm{id}}R)\;;\;\gr^i\mathcal{M}(R\xrightarrow{\mathrm{id}}R)\in \Mod{R}
\]
are all finite locally free (resp. perfect) and are nullhomotopic for all but finitely many $i$.

Just as in~\eqref{subsec:crys}, we also obtain $\infty$-categories
\[
\mathrm{TFilCrys}_{R/\Int_p}\;;\;\mathrm{TFilCrys}^{\mathrm{lf}}_{R/\Int_p}\;;\; \mathrm{TFilCrys}^{\mathrm{perf}}_{R/\Int_p}
\]
that we can view as obtained by letting $n$ go to infinity. We will call the objects in any of these categories \defnword{transversally filtered} $p$-\defnword{adic crystals} over $R$.

Note that the structure sheaf $\mathcal{O}$ lifts canonically to an object $\Fil^\bullet\mathcal{O}$ in $\mathrm{TFilCrys}^{\mathrm{lf}}_{R/\Int_p}$ with $\gr^i\mathcal{O}\simeq 0$ for $i<0$.

\begin{remark}
\label{rem:filcrys_over_stacks}
Just as Remark~\ref{rem:inf_crys_over_stacks}, there is a natural pullback functoriality 
\[
f^*:\mathrm{FilCrys}_{R/\Int_p}\to \mathrm{FilCrys}_{C/\Int_p}\;;\; \mathrm{TFilCrys}_{R/\Int_p}\to \mathrm{TFilCrys}_{R/\Int_p}
\]
and this lets us speak about the $\infty$-category of $p$-adic crystals $\mathrm{FilCrys}_{X/\Int_p}$ as well as that of transversal $p$-adic crystals $\mathrm{TFilCrys}_{X/\Int_p}$ for any prestack $X$.
\end{remark}

\subsection{}
\label{subsec:ffc_tensor_category}
The $\infty$-categories $\mathrm{Crys}_{R/\Int_p}$ and $\mathrm{TFilCrys}_{R/\Int_p}$ are symmetric monoidal stable $\infty$-categories with the additional structure being induced by that on $\mathrm{Mod}$ and $\mathrm{FilMod}$, respectively. We will denote the corresponding tensor products by $\otimes_{\mathcal{O}}\;;\; \otimes_{\Fil^\bullet\mathcal{O}}$.

If $\mathcal{N}\in \mathrm{Crys}^{\mathrm{perf}}_{R/\Int_p}$, then $\mathcal{N}$ is dualizable with dual $\mathcal{N}^\vee$ (determined by taking the dual pointwise over $\mathrm{AniPDPair}_{p,R/}$), and for any other $\mathcal{M}\in \mathrm{Crys}_{R/\Int_p}$, the tensor product $\mathcal{N}^\vee\otimes_{\mathcal{O}} \mathcal{M}$ is an \defnword{internal Hom} object in the sense that we have
\[
\Map_{\mathrm{Crys}_{R/\Int_p}}\left(\mathcal{P},\mathcal{N}^\vee\otimes_{\mathcal{O}}\mathcal{M}\right)\simeq \Map_{\mathrm{Crys}_{R/\Int_p}}\left(\mathcal{P}\otimes_{\mathcal{O}}\mathcal{N},\mathcal{M}\right)
\]
for any $\mathcal{P}\in \mathrm{Crys}_{R/\Int_p}$. 

There is of course also a compatible symmetric monoidal structure $\otimes_{\mathcal{O}_1}$ on $\mathrm{Crys}_{R/\Field_p}$ with similar assertions holding for the existence of internal Hom objects $\mathcal{N}_1^\vee\otimes_{\mathcal{O}_1}\mathcal{M}_1$ for $\mathcal{N}_1$ an $\Field_p$-crystal of perfect complexes over $R$. 

Similarly, if we have $\Fil^\bullet \mathcal{N}\in \mathrm{TFilCrys}^{\mathrm{perf}}_{R/\Int_p}$, then Lemma~\ref{lem:fil_int_hom} shows that we have a dual object $\Fil^\bullet \mathcal{N}^\vee$ such that, for all $\Fil^\bullet \mathcal{P}$ and $\Fil^\bullet \mathcal{M}$ in $\mathrm{TFilCrys}_{R/\Int_p}$, we have
\[
\Map_{\mathrm{TFilCrys}_{R/\Int_p}}\left(\Fil^\bullet\mathcal{P},\Fil^\bullet\mathcal{N}^\vee\otimes_{\Fil^\bullet\mathcal{O}}\Fil^\bullet\mathcal{M}\right)\simeq \Map_{\mathrm{TFilCrys}_{R/\Int_p}}\left(\Fil^\bullet\mathcal{P}\otimes_{\Fil^\bullet\mathcal{O}}\Fil^\bullet\mathcal{N},\Fil^\bullet\mathcal{M}\right)
\]


\subsection{}\label{subsec:pullback_functors}
Any arrow $f:R\to C$ in $\mathrm{CRing}$ induces a functor
\[
\mathrm{AniPDPair}_{p,C/}\xrightarrow{(A'\twoheadrightarrow A\xleftarrow{g}C,\gamma)\mapsto (A'\twoheadrightarrow A\xleftarrow{g\circ f}R )} \mathrm{AniPDPair}_{p,R/},
\]
and pre-composition with it gives us symmetric monoidal base-change functors
\begin{align*}{}
\mathrm{Crys}_{R/\Int_p}&\xrightarrow{\mathcal{M}\mapsto f^*\mathcal{M}} \mathrm{Crys}_{C/\Int_p}\\
\mathrm{TFilCrys}_{R/\Int_p}&\xrightarrow{\Fil^\bullet\mathcal{M}\mapsto \Fil^\bullet f^*\mathcal{M}} \mathrm{TFilCrys}_{C/\Int_p}.
\end{align*}

\subsection{}\label{subsec:cryscoh}
A natural source of transversally filtered $p$-adic crystals is derived crystalline cohomology of derived prestacks. Just as we did in~\eqref{subsec:infcoh}, we choose to present this in terms of the derived de Rham complex as in~\cite{Mao2021-jt}. 

Suppose that we have $(A'\twoheadrightarrow A,\gamma)$ in $\mathrm{AniPDPair}$ and $C\in \mathrm{CRing}_{A/}$. Then associated with this is a filtered object
\[
\Fil^\bullet_{\mathrm{Hdg}}\mathrm{CrysCoh}_{C/(A'\twoheadrightarrow A,\gamma)}\in \mathrm{FilMod}_{\Fil^\bullet_{(A'\twoheadrightarrow A,\gamma)}A'},
\]
the \defnword{Hodge filtered (derived) crystalline cohomology} of $C$ with respect to $(A'\twoheadrightarrow A,\gamma)$; see~\cite[Defn. 4.33]{Mao2021-jt}. It is constructed by animating the divided power de Rham complex, and has the following properties: 
\begin{itemize}
  \item When $A$ and $A'$ are discrete and $C$ is a smooth $A$-algebra lifting to a smooth $A'$-algebra $C'$, then this filtered object is equivalent to the one underlying the classical de Rham complex $\Omega^\bullet_{C'/A'}$ equipped with the tensor product of the Hodge filtration with the PD filtration:
  \[
   \Fil^i_{(A'\twoheadrightarrow A,\gamma)}\Omega^n_{C'/A'} = \Fil^{i-n}_{(A'\twoheadrightarrow A,\gamma)}A'\otimes_{A'}\Omega^n_{C'/A'}.
  \]

  \item If $(A'\twoheadrightarrow A)\simeq (A\xrightarrow{\mathrm{id}}A)$ then there is an equivalence
  \[  
    \Fil^\bullet_{\mathrm{Hdg}}\mathrm{CrysCoh}_{C/(A'\twoheadrightarrow A,\gamma)}\simeq \Fil^\bullet_{\mathrm{Hdg}}\mathrm{dR}_{C/A},
  \]
  where the right hand side is the \defnword{derived de Rham cohomology} of $C$ over $A$ as in~\cite{Bhatt2012-kp} (see also~\eqref{subsec:infcoh}: in the notation there, we have $\mathrm{dR}_{C/A}\simeq \mathrm{dR}_{(C\xrightarrow{\mathrm{id}}C)/(A\xrightarrow{\mathrm{id}}A)}$, and we have canonical equivalences
  \[
    \gr^i_{\mathrm{Hdg}}\mathrm{dR}_{C/A}\simeq (\wedge^i\mathbb{L}_{C/A})[-i]
  \]
  for every $i\in \Int$.

  \item Suppose that the arrows are $f:A'\twoheadrightarrow A$ and $h:A\twoheadrightarrow S$. Set $g=h\circ f$; then if $\pi_0(h)$ is surjective, we have a natural equivalence
  \[
   \Fil^\bullet_{\mathrm{Hdg}}\mathrm{CrysCoh}_{C/(A'\twoheadrightarrow A,\gamma)}\xrightarrow{\simeq}\Fil^\bullet_{(D_{\mathrm{rel}(f)}(g)\twoheadrightarrow C,\gamma_{\mathrm{rel}}(f)(g))}D_{\mathrm{rel}(f)}(g).
  \]

  \item For any arrow $(A'\twoheadrightarrow A,\gamma)\to (B'\twoheadrightarrow B,\delta)$ in $\mathrm{AniPDPair}$, there is a canonical equivalence~\cite[Corollary 4.31]{Mao2021-jt}
  \[
   \Fil^\bullet_{\mathrm{Hdg}}\mathrm{CrysCoh}_{S/(A'\twoheadrightarrow A,\gamma)}\otimes_{\Fil^\bullet_{(A'\twoheadrightarrow A,\gamma)}A'}\Fil^\bullet_{(B'\twoheadrightarrow B,\delta)}B'\xrightarrow{\simeq}\Fil^\bullet_{\mathrm{Hdg}}\mathrm{CrysCoh}_{(B\otimes_AS)/(B'\twoheadrightarrow B,\delta)}.
  \]
\end{itemize}

The last point says that, if $R\in \mathrm{CRing}$, then we can attach to $S\in \mathrm{CRing}_{R/}$, an object
\[
\Fil^\bullet_{\mathrm{Hdg}}\mathrm{CrysCoh}_{S/R}\in \mathrm{TFilCrys}_{R/\Int_p}
\]
that associates with each $(A'\twoheadrightarrow A\leftarrow R,\gamma)$ in $\mathrm{AniPDPair}_{p,R/}$ the Hodge filtered relative crystalline cohomology of $A\otimes_RS$ over $(A'\twoheadrightarrow A,\gamma)$.

This gives us a functor (depending of course on the choice of the prime $p$)
\[
\Fil^\bullet_{\mathrm{Hdg}}\mathrm{CrysCoh}_{/R}:\mathrm{CRing}_{R/} = \mathrm{Aff}_R^{\mathrm{op}}\to \mathrm{TFilCrys}_{R/\Int_p},
\]
which we can extend to a functor on prestacks via left Kan extension to obtain the \defnword{relative (derived) $p$-adic crystalline cohomology} functor
\[
\Fil^\bullet_{\mathrm{Hdg}}\mathrm{CrysCoh}_{/R}:\mathrm{PStk}^{\mathrm{op}}_R\to  \mathrm{TFilCrys}_{R/\Int_p}.
\]
Concretely, for $X\in \mathrm{PStk}^{\mathrm{op}}_R$ we have
\[
\Fil^\bullet_{\mathrm{Hdg}}\mathrm{CrysCoh}_{X/R} = \varprojlim_{\Spec S \to X}\Fil^\bullet_{\mathrm{Hdg}}\mathrm{CrysCoh}_{S/R}.
\]
When $X$ is a derived Deligne-Mumford stack over $R$ with an affine \'etale cover $U = \Spec S \to X$, with associated cosimplicial object $S^{(\bullet)}\in \mathrm{Fun}(\bm{\Delta},\mathrm{CRing}_{R/})$ with 
\[
\Spec S^{(n)} = \underbrace{U\times_X\cdots\times_XU}_n,
\]
then we have
\[
\Fil^\bullet_{\mathrm{Hdg}}\mathrm{CrysCoh}_{X/R} = \mathrm{Tot}\left(\Fil^\bullet_{\mathrm{Hdg}}\mathrm{CrysCoh}_{S^{(\bullet)}/R}\right)
\]

\subsection{}\label{subsec:crystals_explicit}
The $\infty$-categories $\mathrm{Crys}_{R/(\Int/p^n\Int)}$ and $\mathrm{TFilCrys}_{R/(\Int/p^n\Int)}$ above have a description in terms of modules over divided power envelopes.

To see this, suppose that we have an object $f:S\twoheadrightarrow R$ in $\mathrm{AniPair}$, giving us in turn a cosimplicial object $f^{(\bullet)}$ in $\mathrm{CRing}_{/R}$; taking its animated PD envelope as in~\eqref{subsec:ani_pd_envelope} and taking the derived quotient by $p^n$ gives us a cosimplicial object $(D_n(f^{(\bullet)})\twoheadrightarrow R/{}^{\mathbb{L}}p^n,\gamma_n^{(\bullet)}(f))$ in $\mathrm{AniPDPair}_{p,n,R/}$.

Let $\mathrm{Mod}^{\Delta}_n(f)$ be the $\infty$-category of coCartesian lifts
\[
M^{(\bullet)}_n:\bm{\Delta}\to \mathrm{Mod}
\]
of the cosimplicial animated commutative ring $D_n(f^{(\bullet)})$. 

There is a filtered variant of this involving coCartesian lifts 
\[
\Fil^\bullet M^{(\bullet)}_n:\bm{\Delta}\to \mathrm{FilMod}
\]
of the cosimplicial filtered animated commmutative ring 
\[
\Fil^\bullet_{(D_n(f^{(\bullet)})\twoheadrightarrow R/{}^{\mathbb{L}}p^n,\gamma_n^{(\bullet)}(f))}D_n(f^{(\bullet)}).
\]
There organize into an $\infty$-category $\mathrm{FilMod}_n^\Delta(f)$.

Given an object $\mathcal{M}_n$ in $\mathrm{Crys}_{R/(\Int/p^n\Int)}$, for each $m\in \Int_{\geq 0}$, we obtain 
\[
\mathcal{M}_n(f_n^{(m)}) \define \mathcal{M}_n((D_n(f^{(m)})\twoheadrightarrow R/{}^{\mathbb{L}}p^n,\gamma_n^{(m)}(f)))\in \Mod{D_n(f^{(m)})}.
\]
These organize into a cosimplicial object 
\[
\mathcal{M}_n(f_n^{(\bullet)}):\bm{\Delta}\to \mathrm{Mod}
\]
lifting $D_n(f^{(\bullet)})$, and the structure of a crystal on $\mathcal{M}_n$ implies that this is an object in the $\infty$-category $\mathrm{Mod}^\Delta_n(f)$.

Similarly, if $\Fil^\bullet\mathcal{M}_n$ is in $\mathrm{TFilCrys}_{R/(\Int/p^n\Int)}$, then we obtain a functorial assignment to $\Fil^\bullet\mathcal{M}_n(f_n^{(\bullet)})\in \mathrm{FilMod}^\Delta_n(f)$.

\begin{proposition}
\label{prop:crystals_explicit}
With the notation as above, suppose that one of the following holds:
\begin{enumerate}[label=(\roman*)]
	\item $S$ is a polynomial algebra over $\Int_{(p)}$ (in possibly infinitely many variables);
	\item $S$ is the $p$-completion of a smooth $\Int_{(p)}$-algebra $T$;
	\item $R$ is a perfect $\Field_p$-algebra, and $f:S=W(R)\twoheadrightarrow R$ the natural quotient map.
\end{enumerate}
Then the functors
\begin{align*}
\mathrm{Crys}_{R/(\Int/p^n\Int)}&\xrightarrow{\mathcal{M}_n\mapsto \mathcal{M}_n(f_n^{(\bullet)})} \mathrm{Mod}_n^\Delta(f); \\
\mathrm{TFilCrys}_{R/(\Int/p^n\Int)}&\xrightarrow{\Fil^\bullet\mathcal{M}_n\mapsto \Fil^\bullet\mathcal{M}_n(f_n^{(\bullet)})} \mathrm{FilMod}_n^\Delta(f);
\end{align*}
constructed above are equivalences.

\end{proposition}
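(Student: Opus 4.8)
The plan is to deduce this from the general principle---packaged as Proposition~\ref{prop:cocartesian_sections}---that coCartesian sections of a fibration over an $\infty$-category can be computed as a totalization over the \v{C}ech conerve of a suitably universal object. It is enough to treat the unfiltered equivalence $\mathrm{Crys}_{R/(\Int/p^n\Int)}\simeq \mathrm{Mod}_n^\Delta(f)$: the filtered statement is the identical argument carried out in $\mathrm{FilMod}$ rather than $\mathrm{Mod}$, using the cosimplicial \emph{filtered} ring $\Fil^\bullet_{(D_n(f^{(\bullet)})\twoheadrightarrow R/{}^{\mathbb{L}}p^n,\gamma_n^{(\bullet)}(f))}D_n(f^{(\bullet)})$, the compatibility of the PD filtrations along the structure maps being precisely what guarantees that $\Fil^\bullet\mathcal{M}\mapsto \Fil^\bullet\mathcal{M}(f_n^{(\bullet)})$ lands in $\mathrm{FilMod}_n^\Delta(f)$. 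Write $\mathfrak{D}^{(\bullet)} = (D_n(f^{(\bullet)})\twoheadrightarrow R/{}^{\mathbb{L}}p^n,\gamma_n^{(\bullet)}(f))$, a cosimplicial object of $\mathrm{AniPDPair}_{p,n,R/}$; since the absolute PD envelope of \S\ref{subsec:ani_pd_envelope} is a left adjoint, it preserves coproducts, so $\mathfrak{D}^{(\bullet)}$ is the \v{C}ech conerve of $\mathfrak{D}^{(0)} = (D_n(f)\twoheadrightarrow R/{}^{\mathbb{L}}p^n,\gamma_n(f))$ and the functor $\mathcal{M}\mapsto \mathcal{M}(f_n^{(\bullet)})$ is exactly restriction of a coCartesian section along this conerve.

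The hypothesis of Proposition~\ref{prop:cocartesian_sections} that remains to be verified is that $\mathfrak{D}^{(0)}$ is weakly initial in $\mathrm{AniPDPair}_{p,n,R/}$ (weak \emph{initiality}, not finality, being what is relevant since crystals are \emph{co}Cartesian, hence covariant, sections), with the \v{C}ech conerve direction accounting for the space of such maps. Unwinding the universal property of the PD envelope, this amounts to the lifting assertion: for every $(A'\twoheadrightarrow A\leftarrow R,\gamma)$ in $\mathrm{AniPDPair}_{p,n,R/}$, any ring map $S\to A$ compatible with $S\twoheadrightarrow R\to A$ lifts to $S\to A'$. The point is that $A'\twoheadrightarrow A$ is a \emph{bounded} thickening: by reduction to the generators $\mathcal{E}^0\otimes_{\Int}\Int/p^n$ and animation, $\Fil^N_{(A'\twoheadrightarrow A,\gamma)}A'\simeq 0$ for $N\gg 0$ (depending only on $p,n$), because the relation $x^p = p!\,x^{[p]}$ forces high powers of the PD ideal to be divisible by high powers of $p$, which vanish modulo $p^n$; hence $A'\twoheadrightarrow A$ is a finite composite of square-zero extensions. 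The lifting against square-zero---hence against such bounded PD---thickenings then holds in each of the three cases: in (i), $S$ is a polynomial $\Int_{(p)}$-algebra, so free, and any map out of it lifts along a map that is surjective on $\pi_0$; in (ii), $S$ is the $p$-completion of a smooth $\Int_{(p)}$-algebra, hence formally smooth $p$-adically, and maps lift along square-zero thickenings of rings on which $p$ is nilpotent; in (iii), $S=W(R)$ with $R$ perfect is the canonical $p$-torsion-free $p$-complete lift of $R$, so by rigidity of Witt vectors of perfect rings a lift $S\to A'$ of $R\to\pi_0(A)$ exists and is essentially unique (here Lemmas~\ref{lem:divided_power_classical} and~\ref{lem:relative_to_p} are used to identify $D_n(f^{(m)})$ and its PD filtration with the classical crystalline ones). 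Given such a lift, the universal property of $D_{\mathrm{abs}}(f)$ together with the PD structure $\gamma$ on $\hker(A'\to A)$ and compatibility with $S\twoheadrightarrow R$ produces a map $D_{\mathrm{abs}}(f)\to A'$ of PD pairs, which factors through $D_n(f)=D_{\mathrm{abs}}(f)/{}^{\mathbb{L}}p^n$, giving the desired morphism $\mathfrak{D}^{(0)}\to(A'\twoheadrightarrow A\leftarrow R,\gamma)$.

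With weak initiality established, Proposition~\ref{prop:cocartesian_sections} identifies coCartesian sections of~\eqref{eqn:cocart crys fib} over $\mathrm{AniPDPair}_{p,n,R/}$ with the totalization over $\mathfrak{D}^{(\bullet)}$ of coCartesian sections, i.e.\ with $\mathrm{Mod}_n^\Delta(f)$; a quasi-inverse assigns to a cosimplicial $D_n(f^{(\bullet)})$-module the crystal whose value at $(A'\twoheadrightarrow A\leftarrow R,\gamma)$ is the base change along any chosen morphism $\mathfrak{D}^{(0)}\to(A'\twoheadrightarrow A,\gamma)$, which is well-defined by descent along the conerve, and the two constructions are mutually inverse by the universal property. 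I expect the main obstacle to be the lifting assertion in cases (ii) and (iii)---in particular the boundedness of PD thickenings modulo $p^n$ and the resulting reduction to iterated square-zero extensions---together with the bookkeeping needed to match the precise form of Proposition~\ref{prop:cocartesian_sections}: one must be careful about the variance (so that it is weak initiality of $\mathfrak{D}^{(0)}$ that is checked), about the handling of the target $R/{}^{\mathbb{L}}p^n$, and about verifying that $D_{\mathrm{abs}}$ genuinely commutes with formation of the \v{C}ech conerve, which is where its left-adjointness enters.
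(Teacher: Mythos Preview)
Your overall strategy---reduce to Proposition~\ref{prop:cocartesian_sections} by showing that $\mathfrak{D}^{(0)}$ is weakly initial, using that the PD envelope functor preserves coproducts so that $\mathfrak{D}^{(\bullet)}$ is its \v{C}ech conerve---is exactly the paper's approach, and your handling of cases (i) and (iii) is essentially correct.

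There is, however, a genuine error in your boundedness claim, and it breaks case (ii). You assert that for $(A'\twoheadrightarrow A,\gamma)$ in $\mathrm{AniPDPair}_{p,n,R/}$ one has $\Fil^N_{(A'\twoheadrightarrow A,\gamma)}A'\simeq 0$ for $N\gg 0$, arguing via $x^p = p!\,x^{[p]}$. But this relation tells you that \emph{ordinary powers} of PD-ideal elements become $p$-divisible; it says nothing about the divided powers $x^{[N]}$ themselves, and it is exactly these that generate $\Fil^N$. Already for the generator $(\Int/p^n\Int)[X]\langle Y\rangle\twoheadrightarrow (\Int/p^n\Int)[X]$ of $\mathcal{E}^0\otimes\Int/p^n\Int$ the element $Y^{[N]}$ is nonzero for every $N$, so the PD filtration is unbounded and $A'\twoheadrightarrow A$ is \emph{not} a finite composite of square-zero extensions. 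What is true is the weaker fact that every element of the PD ideal is nilpotent (since $x^{p^n}\in p^nA'=0$), but this alone does not produce a lift of a formally smooth algebra.

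The paper fixes case (ii) with a two-step argument you should supply instead. First, reduce to discrete $A',A$ by climbing the Postnikov tower: given a lift to $\tau_{\le m}A'$, the map $\tau_{\le m+1}A'\to \tau_{\le m}A'\times_{\tau_{\le m}A}\tau_{\le m+1}A$ is a genuine square-zero thickening, so formal smoothness of $S$ applies. Second, in the discrete case, use finite presentation of $T$ to factor $T\to A$ through a stage of a filtered colimit presentation of $(A'\twoheadrightarrow A,\gamma)$ by PD thickenings with \emph{finitely generated} kernel; a finitely generated ideal all of whose elements are nilpotent is nilpotent, and formal smoothness then gives the lift. Your claimed shortcut via a uniform bound on the PD filtration does not exist.
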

\begin{proof}
The point is that, under each assumption, $(D_n(f)\twoheadrightarrow R/{}^{\mathbb{L}}p^n,\gamma_n(f))$ is a weakly initial object of $\mathrm{AniPDPair}_{p,n,R/}$ in the terminology of Definition~\ref{defn:quasi_final}. Indeed, since the animated divided power envelope functor preserves colimits, it is compatible with the formation of \u{C}ech conerves, and we so we only need to verify that, for any $(A'\twoheadrightarrow A\leftarrow R,\gamma)$ in $\mathrm{AniPDPair}_{p,n,R/}$, there is an arrow $(S\twoheadrightarrow R)\to (A'\twoheadrightarrow A)$ in $\mathrm{AniPair}$ lifting $R\to A$. 

When $S$ is a polynomial algebra over $\Int_{(p)}$, this is clear: It comes down to $A^{',I}\to A^I$ having non-empty fibers for any set $I$. 

When $S$ is a smooth $\Int_{(p)}$-algebra and $A',A$ are discrete, $A'\twoheadrightarrow A$ is a filtered colimit of divided power thickenings of finitely generated $\Int_{(p)}$-algebras, and so we reduce---by the finite presentation of $T$ over $\Int_{(p)}$---to the case where the kernel of $A'\twoheadrightarrow A$ is finitely generated and hence nilpotent, where we conclude by the formal smoothness of $S$ over $\Int_{(p)}$. For the general case, it suffices to inductively produce compatible liftings along $\tau_{\leq n}A'\to \tau_{\leq n}A$ for every $n\geq 0$. We have already done so for $n = 0$, and for the inductive step, assuming we have a lift for $n$, note that we have a factoring
\[
\tau_{\leq n+1}A'\twoheadrightarrow \tau_{\leq n}A\times_{\tau_{\leq n}A}\tau_{\leq n+1}A\twoheadrightarrow \tau_{\leq n+1}A,
\]
where the first arrow is an isomorphism after applying $\tau_{\leq n}$. In particular, it is a square-zero thickening, and so, by the formal smoothness of $S$ over $\Int_{(p)}$, we can once again produce the desired lifting to $\tau_{\leq n+1}A'$.

When we have $f:W(R)\twoheadrightarrow R$ with $R$ perfect, I claim that there is in fact a \emph{unique} arrow $W(R)\to A'$ lifting $R\to A$. When $A'$ and $A$ are discrete, this is~\cite[Lemma 1.4]{lau:displays}. The general assertion follows as in the previous case, now using the formal \'etaleness of $W(R)$ over $\Int_{(p)}$.

With the three previous paragraphs in hand, the proposition is now a special case of Proposition~\ref{prop:cocartesian_sections}.
\end{proof}

Under some conditions, we recover a classical notion:
\begin{corollary}
\label{cor:crystals_explicit_classical}
Suppose that $f:S\twoheadrightarrow R$ satisfies the hypotheses of Lemma~\ref{lem:divided_power_classical} and Proposition~\ref{prop:crystals_explicit}. Suppose also that we have $g:\widehat{T}\twoheadrightarrow R$ with $\widehat{T}$ the $p$-completion of a filtered colimit $T$ of smooth $\Int_{(p)}$-algebras (for instance, $T$ could be a polynomial algebra over $\Int_{(p)}$ for a set of variables). Let $D_n^{\mathrm{cl}}(g)\twoheadrightarrow R/{}^{\mathbb{L}}p^n \simeq R/p^nR$ be the \emph{classical} divided power envelope of $g$.\footnote{I don't know if this agrees with the derived envelope in this setting.} Then giving an object in $\mathrm{TFilCrys}^{\mathrm{lf}}_{R/(\Int/p^n\Int)}$ is equivalent to giving a tuple $(\bm{M}_n,\bm{\nabla}_n,\Fil^\bullet\bm{M}_n)$, where:
\begin{enumerate}
  \item $\bm{M}_n$ is a vector bundle over $D^{\mathrm{cl}}_n(g)$;
  \item $\bm{\nabla}_n:\bm{M}_n \to \bm{M}_n\otimes_S\Omega^1_{T/\Int_p}$ is a quasi-nilpotent integrable\footnote{Recall that this means that there are only finitely many differential operators $D$ for $T$ over $\Int_{(p)}$ such that $\bm{\nabla}_n(D)$ is a non-zero operator on $\bm{M}_n$.} connection compatible with the natural one on $D_n^{\mathrm{cl}}(g)$;
  \item $\Fil^\bullet\bm{M}_n$ is a filtration by $D_n^{\mathrm{cl}}(g)$-submodules whose image in $M = R\otimes_{D_n^{\mathrm{cl}}(g)}\bm{M}_n$ is a filtration by vector sub-bundles, and which satisfies Griffiths transversality with respect to $\bm{\nabla}_n$:
  \[
   \bm{\nabla}_n(\Fil^i\bm{M}_n)\subset \Fil^{i-1}\bm{M}_n\otimes_T\Omega^1_{T/\Int_p}.
  \]
\end{enumerate}
\end{corollary}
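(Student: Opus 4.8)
The plan is to follow the template of Corollary~\ref{cor:crystals_explicit_classical_inf}: first use Proposition~\ref{prop:crystals_explicit} to trade $\mathrm{TFilCrys}^{\mathrm{lf}}_{R/(\Int/p^n\Int)}$ for a category of filtered modules over a cosimplicial divided power ring, then invoke Lemma~\ref{lem:divided_power_classical} to make everything classical, and finally invoke the classical crystalline formalism to identify the result with the asserted tuples. Concretely, I would apply Proposition~\ref{prop:crystals_explicit} to the presentation $f:S\twoheadrightarrow R$ itself, which by hypothesis is a polynomial $\Int_{(p)}$-algebra and hence of the form in case~(i); this gives an equivalence $\mathrm{TFilCrys}^{\mathrm{lf}}_{R/(\Int/p^n\Int)}\simeq \mathrm{FilMod}^{\Delta,\mathrm{lf}}_n(f)$ of coCartesian filtered lifts of the cosimplicial filtered animated ring $\Fil^\bullet_{\mathrm{PD}}D_n(f^{(\bullet)})$, where $f^{(\bullet)}$ is the \v{C}ech conerve of $S$ in $\mathrm{CRing}_{/R}$, so $f^{(m)}:S^{(m)}\twoheadrightarrow R$ with $S^{(m)}$ the $(m+1)$-fold tensor power of $S$ over $\Int_{(p)}$.

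The next step is to observe that this cosimplicial object is in fact discrete and classical. For each $m$ the ring $S^{(m)}$ is again a polynomial $\Int_{(p)}$-algebra, flat over $\Int$, and $\ker f^{(m)}$ is generated by a regular sequence: it is the preimage, under the split (hence flat) surjection $S^{(m)}\twoheadrightarrow S$ with regular kernel, of the regular ideal $\ker f$, so that a regular sequence generating $\ker(S^{(m)}\twoheadrightarrow S)$ together with lifts of one generating $\ker f$ is again regular. Hence Lemma~\ref{lem:divided_power_classical} applies to each $f^{(m)}$, so the derived PD envelope $D_{\mathrm{abs}}(f^{(m)})$ is discrete, flat over $\Int$, and agrees with the classical one, with classical PD filtration. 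Consequently $D_n(f^{(m)})\define D_{\mathrm{abs}}(f^{(m)})/{}^{\mathbb{L}}p^n = D_{\mathrm{abs}}(f^{(m)})/p^n = D_n^{\mathrm{cl}}(f^{(m)})$, and $\mathrm{FilMod}^{\Delta,\mathrm{lf}}_n(f)$ becomes the ordinary category of finite locally free modules over $D_n^{\mathrm{cl}}(f)$, equipped with a bounded filtration that is by submodules and restricts over $R/p^nR$ to a filtration by subbundles, together with a stratification compatible with the filtrations relative to the cosimplicial PD ring $D_n^{\mathrm{cl}}(f^{(\bullet)})$ — in other words, the classical category of Hodge filtered crystals of vector bundles on the crystalline site of $R/p^nR$ over $(\Int/p^n\Int,\delta_p)$.

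Finally I would pass from the presentation $f$ to the presentation $g$. Both $S/p^nS$ and $\widehat{T}/p^n\widehat{T}$ are formally smooth $\Int/p^n\Int$-algebras surjecting onto $R/p^nR$, so by the classical crystalline comparison (see \cite{berthelot_ogus:notes}) the category of crystals just described is equivalently computed, via the embedding $R/p^nR\hookrightarrow \Spec(\widehat{T}/p^n\widehat{T})$, as the category of finite locally free $D_n^{\mathrm{cl}}(g)$-modules $\bm{M}_n$ equipped with a quasi-nilpotent integrable connection $\bm{\nabla}_n:\bm{M}_n\to \bm{M}_n\otimes_S\Omega^1_{T/\Int_p}$ compatible with the canonical one on $D_n^{\mathrm{cl}}(g)$; carrying the Hodge filtration through this equivalence produces a filtration $\Fil^\bullet\bm{M}_n$ by $D_n^{\mathrm{cl}}(g)$-submodules, the compatibility of the stratification with the PD filtration translating precisely into the Griffiths transversality $\bm{\nabla}_n(\Fil^i\bm{M}_n)\subset \Fil^{i-1}\bm{M}_n\otimes_T\Omega^1_{T/\Int_p}$. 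Composing the three equivalences proves the corollary. The main obstacle is this last step: because Lemma~\ref{lem:divided_power_classical} is unavailable for the possibly non-regular $g$ (this is exactly the point of the footnote), one cannot work with $g$ directly and must instead reconcile the \v{C}ech-descent description over $D_n^{\mathrm{cl}}(f^{(\bullet)})$ with the single-thickening-plus-connection description over $D_n^{\mathrm{cl}}(g)$, keeping careful track of the filtrations and of why the resulting connection is quasi-nilpotent — automatic in the divided power setting, but it must be spelled out. A subsidiary point is verifying that the finite local freeness built into $\mathrm{TFilCrys}^{\mathrm{lf}}$ does force $\bm{M}_n$ to be a genuine vector bundle over $D_n^{\mathrm{cl}}(g)$, using that $D_n^{\mathrm{cl}}(g)\twoheadrightarrow R/p^nR$ has topologically PD-nilpotent kernel.
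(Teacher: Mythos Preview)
Your proposal is correct and follows essentially the same route as the paper: apply Proposition~\ref{prop:crystals_explicit} to $f$, use Lemma~\ref{lem:divided_power_classical} on each $f^{(m)}$ to make the cosimplicial PD envelope classical (i.e.\ obtain an HPD stratification in the sense of \cite[Def.~4.3H]{berthelot_ogus:notes}), identify this with the classical crystalline site via \cite[Thm.~6.6]{berthelot_ogus:notes}, and then pass to the $(g,\bm{\nabla}_n,\Fil^\bullet)$ description. The paper compresses your final paragraph into a citation of \cite[Theorem~3.1.2]{Ogus_Trans}; note also that the hypotheses do not literally force $S$ to be a polynomial algebra (case~(ii) of Proposition~\ref{prop:crystals_explicit} is also allowed), but your regularity argument for $\ker f^{(m)}$ adapts without difficulty.
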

\begin{proof}
Let $f:S\twoheadrightarrow R$ be as in Lemma~\ref{lem:divided_power_classical}. Then that result combined with Proposition~\ref{prop:crystals_explicit} tells us that the objects of the $\infty$-category $\mathrm{Mod}^{\Delta}(f)$ are classical in nature, and are in fact objects in the derived category of modules over the discrete ring $D_n(f^{(0)})$ equipped with an HPD stratification as in~\cite[Def. 4.3H]{berthelot_ogus:notes}. Restricting to vector bundles of finite rank and using~\cite[Thm. 6.6]{berthelot_ogus:notes} shows that the objects of $\mathrm{Crys}^{\mathrm{lf}}_{R/(\Int/p^n\Int)}$ are the same as crystals of vector bundles over the \emph{classical} mod-$p^n$ crystalline site for $R$.

The corollary is now a consequence of the argument in~\cite[Theorem 3.1.2]{Ogus_Trans}. 

\end{proof}

\subsection{}\label{subsec:f-zips}
We now introduce Frobenius into the picture. To begin, for any object $(A'\twoheadrightarrow A,\gamma)$ with $A'\in \mathrm{CRing}_{\Field_p/}$, the $p$-power Frobenius endomorphism $\varphi_{A'}$ of $A'$ factors through a map $\varphi_{(A'\twoheadrightarrow A,\gamma)}:A\to A'$; see~\cite[Lemma 4.36]{Mao2021-jt}. This gives us a functor
\begin{align*}
\Mod{R}&\to \mathrm{Crys}_{R/\Field_p}\\
M&\mapsto \mathcal{O}_1\otimes_{\varphi}M
\end{align*}
where the right hand side is the coCartesian section of~\eqref{eqn:cocart crys fib} (with $n=1$) given by
\[
(A'\twoheadrightarrow A\xleftarrow{f}R) \mapsto A'\otimes_{\varphi\circ f,R}M,
\]
where we have abbreviated $\varphi_{(A'\twoheadrightarrow A,\gamma)}$ to $\varphi$. The objects obtained in this way are precisely those with \defnword{$p$-curvature zero} in the language of~\cite[\S 5]{Katz1970-qp}.

Suppose that we are given $\Fil^\bullet \mathcal{M}$ in $\mathrm{TFilCrys}_{R/\Int_p}$. Then in particular evaluating on the object $(R\xrightarrow{\mathrm{id}}R,\gamma_{\mathrm{triv}})$, we obtain a filtered object  $\Fil^\bullet M_R\in \mathrm{FilMod}_R$.

A \defnword{crystalline $F$-zip} over $R$ is a tuple $(\Fil^\bullet_{\mathrm{Hdg}}\mathcal{M},\Fil^\bullet_{\mathrm{hor}}\mathcal{M}_1,\Phi)$ where:
\begin{itemize}
  \item $\Fil^\bullet_{\mathrm{Hdg}}\mathcal{M}$ is an object in $\mathrm{TFilCrys}_{R/\Int_p}$ with underlying object $\mathcal{M}_1\in \mathrm{Crys}_{R/\Field_p}$;
  \item $\Fil^\bullet_{\mathrm{hor}}\mathcal{M}_1$ is an exhaustive filtration of $\mathcal{M}_1$ in $\mathrm{Crys}_{R/\Field_p}$\footnote{The subscript `hor' is short for `horizontal' and is supposed to harken to the fact that this filtration would correspond to one by horizontal vector bundles when the crystals in question arise from vector bundles equipped with connection.};
  \item and
  \[
    \Phi:\mathcal{O}_1\otimes_{\varphi}\gr^{-\bullet}_{\mathrm{Hdg}}M_R\xrightarrow{\simeq}\gr^\bullet_{\mathrm{hor}}\mathcal{M}_1\footnote{The indexing on the left hand side is the \emph{negative} of that on the right.}
  \]
  is an equivalence of graded objects in $\mathrm{Crys}_{R/\Field_p}$.
\end{itemize}

These objects organize into an $\infty$-category $\Fzip_{R/\Int_p}$.

The filtered structure sheaf $\Fil^\bullet\mathcal{O}$ has a canonical lift to $\Fzip_{R/\Int_p}$, with $\Fil^\bullet_{\mathrm{hor}}\mathcal{O}_1$ the trivial filtration concentrated in degree $0$ and with $\Phi$ given by the identity.

As before, for every $n\ge 1$, we can define an analogous $\infty$-category $\Fzip_{R/(\Int/p^n\Int)}$ where we start with $\Fil^\bullet_{\mathrm{Hdg}}\mathcal{M}_n$ in $\mathrm{TFilCrys}_{R/(\Int/p^n\Int)}$.

\subsection{}\label{subsec:fzips_tensor_category}
The $\infty$-category $\Fzip_{R/\Int_p}$ is a symmetric monoidal stable $\infty$-category with the structure arising from that on $\mathrm{TFilCrys}_{R/\Int_p}$ from~\eqref{subsec:ffc_tensor_category}, and for $\Fil^\bullet_{\mathrm{hor}}$ given pointwise via the filtered tensor product in $\mathrm{FilMod}_{A'}$ for every $(A'\twoheadrightarrow A,\gamma)$ in $\mathrm{AniPDPair}_{p,1,R/}$. The tensor product on the equivalence $\Phi$ is obtained by noting that the associated graded functor with respect to both $\Fil_{\mathrm{Hdg}}$ and $\Fil_{\mathrm{hor}}$ is symmetric monoidal.

Suppose in addition that $(\Fil^\bullet_{\mathrm{Hdg}}\mathcal{N},\Fil^\bullet_{\mathrm{hor}}\mathcal{N}_1,\Phi)$ in $\Fzip_{R/\Int_p}$ is such that $\Fil^\bullet_{\mathrm{Hdg}}\mathcal{N}$ belongs to $\mathrm{TFilCrys}^{\mathrm{perf}}_{R/\Int_p}$; then we obtain a dual object
\[
(\Fil^\bullet_{\mathrm{Hdg}}\mathcal{N}^\vee,\Fil^\bullet_{\mathrm{hor}}\mathcal{N}^\vee_1,(\Phi^\vee)^{-1})
\]
Here, the first object is the transversally filtered dual from~\eqref{subsec:ffc_tensor_category}, the second is the filtered dual defined---using Lemma~\ref{lem:fil_int_hom}---pointwise for $(A'\twoheadrightarrow A,\gamma)$ in $\mathrm{AniPDPair}_{p,1,R/}$ by
\[
(\Fil^\bullet_{\mathrm{hor}}\mathcal{N}^\vee_1)((A'\twoheadrightarrow A,\gamma)) = \Fil^\bullet_{\mathrm{hor}}\mathcal{N}_1((A'\twoheadrightarrow A,\gamma))^\vee\in \mathrm{FilMod}_{A'}.
\]
Note that this is sensible since the equivalence $\Phi$ shows that we are dealing with a dualizable object in $\mathrm{FilMod}_{A'}$.

Finally, $(\Phi^\vee)^{-1}$ is the inverse of the equivalence
\[
\Phi^\vee:\gr^\bullet_{\mathrm{hor}}\mathcal{M}_1^\vee\xrightarrow{\simeq}\mathcal{O}_1\otimes_{\varphi}\gr^{-\bullet}_{\mathrm{Hdg}}M_R^\vee
\]
obtained by dualizing $\Phi$.

One now sees that tensor product with this dual object gives rise to an internal Hom object much as we saw already in~\eqref{subsec:ffc_tensor_category}.

\subsection{}\label{subsec:f-zip_splitting}
Suppose that we have a crystalline $F$-zip as above, and suppose that there is an integer $a\in \Int$, $\gr^i_{\mathrm{Hdg}}\mathcal{M}\simeq 0$ for $i<a$. We will be interested in the object $\Fil^{-(a+1)}_{\mathrm{hor}}\mathcal{M}_1$ sitting in the fiber sequence
\[
(\gr^{-a}_{\mathrm{hor}}\mathcal{M}_1\simeq\Fil^{-a}_{\mathrm{hor}}\mathcal{M}_1)\to \Fil^{-(a+1)}_{\mathrm{hor}}\mathcal{M}_1 \to \gr^{-(a+1)}_{\mathrm{hor}}\mathcal{M}_1.
\]

For the additional structure that concerns us, we will need to make some choices. Let $g:S\twoheadrightarrow R$ in $\mathrm{AniPair}$ be a map satisfying the conditions of Proposition~\ref{prop:crystals_explicit}. For any $n\geq 1$, write $\Fil^\bullet D_n^{(\bullet)}$ for the cosimplicial filtered animated commutative ring
\[
\Fil^\bullet_{(D_n(g^{(\bullet)})\twoheadrightarrow R/{}^{\mathbb{L}}p^n,\gamma_n(g^{(\bullet)}))}D_n(g^{(\bullet)}).
\]
Evaluating $\Fil^\bullet_{\mathrm{Hdg}}\mathcal{M}$ on $(D_n(g^{(\bullet)})\twoheadrightarrow R/{}^{\mathbb{L}}p^n,\gamma_n(g^{(\bullet)}))$ gives us a cosimplicial object
\[
\Fil^\bullet_{\mathrm{Hdg}} M_n^{(\bullet)}\in \mathrm{FilMod}_{\Fil^\bullet D_n^{(\bullet)}}.
\]

Write $\overline{B} = B/{}^{\mathbb{L}}p$ for any $B\in \mathrm{CRing}$. By Lemma~\ref{lem:associated_graded}, we have
\[
\gr^a_{\mathrm{Hdg}}M_1^{(\bullet)}\xrightarrow{\simeq}\gr^a_{\mathrm{Hdg}}M_{\overline{R}},
\]
and we also have a fiber sequence
\begin{align*}
\gr^1D_1^{(\bullet)}\otimes_{\overline{R}}\gr^a_{\mathrm{Hdg}}M_{\overline{R}}\to \gr^{a+1}_{\mathrm{Hdg}}M_1^{(\bullet)}\to \gr^{a+1}_{\mathrm{Hdg}}M_{\overline{R}},
\end{align*}
which rotates to an arrow
\begin{align}
\label{eqn:fib_sequence_gr0_Hdg}
\gr^{a+1}_{\mathrm{Hdg}}M_{\overline{R}}\to \gr^1D_1^{(\bullet)}\otimes_{\overline{R}}\gr^a_{\mathrm{Hdg}}M_{\overline{R}}[1]
\end{align}
 
\begin{lemma}
\label{lem:frob_lift_splitting}
Fix a Frobenius lift $\varphi_S:S\to S$. This induces a Frobenius lift $\varphi_{S^{(\bullet)}}$ on $S^{(\bullet)}$, and, associated with this choice, we obtain maps 
\[
  \varphi_0:D^{(\bullet)}\to D^{(\bullet)}\;;\; \varphi_1:\Fil^1D^{(\bullet)}\to D^{(\bullet)}
\]
such that $\varphi_0\vert_{\Fil^1D^{(\bullet)}} \simeq \varphi_1\circ p$, and such that $\varphi_0$ reduces mod $p$ to the $p$-power Frobenius, and hence factors through the map
\begin{align}\label{eqn:frob_factoring}
\varphi_{(D_1^{(\bullet)}\to \overline{R},\gamma_1(g^{(\bullet)}))}:\overline{R}\to D_1^{(\bullet)}.
\end{align}
Moreover, $\varphi_1$ induces an $\overline{R}$-linear map $\gr^1D_1^{(\bullet)}\to D_1^{(\bullet)}$ which we once again denote by $\varphi_1$. Here the $\overline{R}$-module structure on the right hand side is via the map~\eqref{eqn:frob_factoring}.
\end{lemma}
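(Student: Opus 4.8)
The plan is to produce everything by animation, starting from the free case and then propagating to the general setting by left Kan extension, exactly as the rest of Section~\ref{sec:fcrys} is organized. First I would fix the chosen map $g\colon S\twoheadrightarrow R$ satisfying the hypotheses of Proposition~\ref{prop:crystals_explicit}, together with its \u{C}ech conerve $g^{(\bullet)}\colon S^{(\bullet)}\twoheadrightarrow R$; a Frobenius lift $\varphi_S$ on $S$ extends canonically along the coproduct maps to a Frobenius lift $\varphi_{S^{(\bullet)}}$ on the cosimplicial ring $S^{(\bullet)}$, since each face and degeneracy is a map of $\Int_{(p)}$-algebras compatible with the chosen lifts (this is where one uses that $S^{(m)}=\bigsqcup_{i\in[m]}S$ with $\varphi$ acting factorwise). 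So it is enough to construct $\varphi_0,\varphi_1$ in the single-object case $g\colon S\twoheadrightarrow R$ functorially in the pair $(S\twoheadrightarrow R,\varphi_S)$, and then evaluate on the cosimplicial object.

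Next, for the single-object case I would reduce by animation to the universal example $S = \Int_{(p)}[X,Y]\twoheadrightarrow \Int_{(p)}[X] = R$ with $Y\mapsto 0$, equipped with the Frobenius lift $X_i\mapsto X_i^p$, $Y_j\mapsto Y_j^p$. By Lemma~\ref{lem:divided_power_classical} the (derived) PD envelope here is the classical one, $D_{\mathrm{abs}}(g) \simeq \Int_{(p)}[X]\langle Y\rangle$, with $\Fil^1 D = (Y_j)\langle Y\rangle$ the PD ideal; the chosen Frobenius lift on the polynomial ring extends uniquely to a map $\varphi_0\colon D\to D$ of PD rings, sending $Y_j^{[k]}\mapsto (Y_j^p)^{[k]}$, because $\varphi_S$ carries the kernel of $g$ into itself and $D$ is $\Int$-flat so divided powers behave well. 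That $\varphi_0$ reduces mod $p$ to the $p$-power Frobenius on $D/pD$, and hence (by~\cite[Lemma 4.36]{Mao2021-jt}) factors through $\varphi_{(D_1\twoheadrightarrow\overline R,\gamma_1)}\colon \overline R\to D_1$, is then immediate from the universal case and passes through animation. For $\varphi_1$: since $\varphi_0(Y_j) = Y_j^p = p\cdot\bigl((p-1)!\,Y_j^{[p]}\bigr)$ (so $\varphi_0(Y_j)$ is divisible by $p$ in $D$, using flatness), and more generally $\varphi_0$ carries $\Fil^1 D$ into $pD$ — indeed $\varphi_0(Y_j^{[k]}) = (Y_j^p)^{[k]}$ lies in $p D$ for $k\geq 1$ — one can define $\varphi_1\colon \Fil^1 D\to D$ as ``$\varphi_0$ divided by $p$'', i.e. the unique map with $p\cdot\varphi_1 = \varphi_0|_{\Fil^1 D}$, which exists and is unique because $D$ is $p$-torsion-free. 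This is manifestly functorial in $(S\twoheadrightarrow R,\varphi_S)$, so it animates. Then $\varphi_0|_{\Fil^1 D}\simeq \varphi_1\circ p$ (in the animated world: a choice of homotopy witnessing $p\cdot\varphi_1\simeq \varphi_0|_{\Fil^1 D}$, which at the level of compact projective generators is the honest equality just recorded) is built into the construction.

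Finally, reducing mod $p$ and passing to the associated graded: $\varphi_1$ kills $\Fil^2 D\subset\Fil^1 D$ after reduction mod $p$ — on generators, $\varphi_1(Y_i^{[k]}Y_j^{[\ell]}) = \tfrac1p(Y_i^p)^{[k]}(Y_j^p)^{[\ell]}$ lies in $pD$ whenever $k+\ell\geq 2$, since then $(Y_i^p)^{[k]}(Y_j^p)^{[\ell]}$ is divisible by $p^2$ — so $\varphi_1$ descends to an additive map $\gr^1 D_1^{(\bullet)}\to D_1^{(\bullet)}$, and the verification that it is $\overline R$-linear for the module structure via~\eqref{eqn:frob_factoring} is again a computation on the universal generators that is stable under animation. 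The main obstacle, to my eye, is purely bookkeeping: making sure that ``dividing $\varphi_0$ by $p$'' is performed at the level of the compact projective generators of $\mathrm{AniPair}$ in a way that is strictly functorial (not just functorial up to coherent homotopy after the fact), so that animation produces the stated natural maps with the stated coherences $\varphi_0|_{\Fil^1}\simeq\varphi_1\circ p$ and the factoring~\eqref{eqn:frob_factoring}; the $p$-torsion-freeness of the classical PD envelopes in the free case (Lemma~\ref{lem:divided_power_classical}) is exactly what makes this division well-defined and canonical before animating, so there is no genuine difficulty beyond carefully threading these compatibilities through Proposition~\ref{prop:crystals_explicit}.
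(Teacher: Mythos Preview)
Your explicit computations on the free generator $\Int_{(p)}[X,Y]\twoheadrightarrow\Int_{(p)}[X]$ with the standard lift $x\mapsto x^p$ are correct, but the animation step does not go through as written, and this is where the paper's argument diverges essentially from yours.

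The gap is this: the lemma is stated for an \emph{arbitrary} Frobenius lift $\varphi_S$ on $S$, and such a lift need not preserve $\ker g$; hence there is no direct functoriality sending $(S\twoheadrightarrow R,\varphi_S)$ to an endomorphism of the absolute envelope $D_{\mathrm{abs}}(g)$. Your universal example happens to have $\varphi_S(Y_j)=Y_j^p\in\ker g$, so the extension to the PD envelope is immediate there, but this is a coincidence of that example and not a structural feature one can propagate by sifted colimits. Animation would require the Frobenius lift itself to be part of the generating data---essentially passing to animated $\delta$-rings---and then the compact projective generators are free $\delta$-rings, not polynomial rings with the naive $p$-th-power lift. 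So ``reduce by animation to the universal example'' does not produce $\varphi_0$ in general.

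The paper's replacement for this step is Lemma~\ref{lem:relative_to_p}: one identifies $D^{(\bullet)}$ with the \emph{relative} PD envelope of $S^{(\bullet)}\twoheadrightarrow\overline{R}$ over $(\Int_{(p)}\twoheadrightarrow\Field_p,\delta_p)$. The kernel of $S^{(\bullet)}\twoheadrightarrow\overline{R}$ contains $pS^{(\bullet)}$, and for any $x$ in it one has $\varphi_{S^{(\bullet)}}(x)\equiv x^p\pmod{p}$, which lies back in the kernel; so $\varphi_{S^{(\bullet)}}$ \emph{does} give an endomorphism of the pair $(S^{(\bullet)}\twoheadrightarrow\overline{R})$ (over the Frobenius on $\overline{R}$), and $\varphi_0$ arises by plain functoriality of the relative envelope---no animation bookkeeping required. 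Similarly, the paper gets $\varphi_1$ not by dividing by $p$ on torsion-free generators and animating, but by observing directly in the animated world that $\Fil^1D^{(\bullet)}\to D^{(\bullet)}\xrightarrow{\varphi_0}D^{(\bullet)}\to D_1^{(\bullet)}$ is nullhomotopic (Frobenius on $D_1^{(\bullet)}$ factors through $\overline{R}$), and a nullhomotopy is exactly the datum of a factoring through $p$. The $\gr^1$ statement likewise comes from a filtered lift of $\varphi_0$ (adic on the source, $p$-adic on the target) rather than from a generator-level divisibility count. Your computations would verify these facts on generators, but they do not substitute for the functoriality argument that makes them hold on the animated envelope for an arbitrary $\varphi_S$.
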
 
\begin{proof}
The existence of $\varphi_0$ is a consequence of the fact that we can identify $D^{(\bullet)}$ with the animated divided power envelope of the composition $S^{(\bullet)}\twoheadrightarrow R\twoheadrightarrow \overline{R}$ relative to $(\Int_{(p)}\twoheadrightarrow \Field_p,\delta_p)$; see Lemma~\ref{lem:relative_to_p}.

As for $\varphi_1$, its existence amounts to saying that the composition $\Fil^1D^{(\bullet)}\to D^{(\bullet)}\xrightarrow{\varphi_0}D^{(\bullet)}\to D_1^{(\bullet)}$ is nullhomotopic. This is of course because the Frobenius endomorphism of $D_1^{(\bullet)}$ factors through $\overline{R}$ as noted at the beginning of~\eqref{subsec:f-zips}.

The last thing to show is that the induced map $\Fil^1D_1^{(\bullet)}\to D_1^{(\bullet)}$
factors through $\gr^1D_1^{(\bullet)}$. Since we have an equivalence
\[
\gr^1_{(\overline{S}^{(\bullet)}\twoheadrightarrow \overline{R})}\overline{S}^{(\bullet)}\xrightarrow{\simeq}\gr^1D_1^{(\bullet)},
\]
it suffices to know that the restriction of $\varphi_1$ to $\Fil^2_{(S^{(\bullet)}\twoheadrightarrow R)}S^{(\bullet)}$ is divisible by $p$. For this, note that the composition
\[
S^{(\bullet)}\to D^{(\bullet)}\xrightarrow{\varphi_0}D^{(\bullet)}
\]
lifts to a map in $\mathrm{FilCRing}$ if we equip the left hand side with the adic filtration $\Fil^\bullet_{(S^{(\bullet)}\twoheadrightarrow R)}S$ and the right with the $p$-adic filtration.
\end{proof}

\subsection{}\label{subsec:partial_splitting}
Using Lemma~\ref{lem:frob_lift_splitting}, the arrow~\eqref{eqn:fib_sequence_gr0_Hdg} leads to an $\overline{R}$-linear map
\[
\gr^{a+1}_{\mathrm{Hdg}}M_{\overline{R}}\to D_1^{(\bullet)}\otimes_{\varphi,\overline{R}}\gr^a_{\mathrm{Hdg}}M_{\overline{R}}[1],
\]
which classifies an object $\widetilde{M}_1^{(\bullet)}$ sitting in a fiber sequence
\[
D_1^{(\bullet)}\otimes_{\varphi,\overline{R}}\gr^a_{\mathrm{Hdg}}M_{\overline{R}}\to \widetilde{M}_1^{(\bullet)}\to D_1^{(\bullet)}\otimes_{\varphi,\overline{R}}\gr^{a+1}_{\mathrm{Hdg}}M_{\overline{R}}.
\]
Here, we have abbreviated the arrow~\eqref{eqn:frob_factoring} to $\varphi:\overline{R}\to D_1^{(\bullet)}$.

By Proposition~\ref{prop:crystals_explicit}, this corresponds to an object $\widetilde{\mathcal{M}}_1(g,\varphi_S)$ in $\mathrm{Crys}_{R/\Field_p}$ that is an extension
\[
\mathcal{O}_1\otimes_{\varphi}\gr^a_{\mathrm{Hdg}}M_{R}\to \widetilde{\mathcal{M}}_1(g,\varphi_S)\to \mathcal{O}_1\otimes_{\varphi}\gr^{a+1}_{\mathrm{Hdg}}M_{R}
\]
Note that, by definition, this extension depends only on $\Fil^\bullet_{\mathrm{Hdg}}\mathcal{M}_1$, along with the pair $(g,\varphi_S)$. In fact, this last choice doesn't matter:

\begin{lemma}
\label{lem:tilde_M_1_ind}
Suppose that we have a map $f:R\to R'$, and $g':S'\twoheadrightarrow R'$ in $\mathrm{AniPair}$ with $S'$ discrete and formally smooth over $\Int_{(p)}$, as well as a Frobenius lift $\varphi_{S'}:S'\to S'$. Then there is a canonical equivalence
\[
f^*\left(\widetilde{\mathcal{M}}_1(g,\varphi_S)\right)\xrightarrow{\simeq}\widetilde{f^*\mathcal{M}}_1(g',\varphi_{S'})
\]
in $\mathrm{Crys}_{R'/\Field_p}$. In particular, the $\Field_p$-crystal $\widetilde{\mathcal{M}}_1 = \widetilde{\mathcal{M}}_1(g,\varphi_S)$ is independent of the choice of $g$ and $\varphi_S$.
\end{lemma}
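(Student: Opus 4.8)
The key point is that the extension $\widetilde{\mathcal{M}}_1(g,\varphi_S)$ was constructed as a \emph{section}-level object, and so the functoriality we want should follow once we track through how each ingredient in its construction depends on the auxiliary data $(g,\varphi_S)$. The plan is as follows. First I would set up the comparison diagram: given $f:R\to R'$ and $g':S'\twoheadrightarrow R'$ as in the statement, I would pick a compatible lift, i.e.\ a map of pairs $(S\twoheadrightarrow R)\to (S'\twoheadrightarrow R')$ covering $f$; this exists by the formal smoothness of $S'$ over $\Int_{(p)}$ together with the fact (used already in the proof of Proposition~\ref{prop:crystals_explicit}) that $(S\twoheadrightarrow R)$ admits arrows into weakly initial objects. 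Having fixed such a lift, and using that the Frobenius lifts $\varphi_S,\varphi_{S'}$ need only be compatible up to homotopy after passing to the relevant divided power envelopes (which are the only places Frobenius is used), I would obtain a map of cosimplicial filtered PD envelopes $D_n^{(\bullet)}\to D_n^{',(\bullet)}$ respecting the maps $\varphi_0,\varphi_1$ from Lemma~\ref{lem:frob_lift_splitting}.

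Next, I would observe that all the objects entering the construction of $\widetilde{\mathcal{M}}_1(g,\varphi_S)$ — the evaluations $\gr^a_{\mathrm{Hdg}}M_{\overline{R}}$, $\gr^{a+1}_{\mathrm{Hdg}}M_{\overline{R}}$, the graded piece $\gr^1 D_1^{(\bullet)}$, the connecting arrow~\eqref{eqn:fib_sequence_gr0_Hdg}, and the $\overline{R}$-linear map produced via $\varphi_1$ — are all obtained by evaluating the given transversally filtered $F$-zip $\Fil^\bullet_{\mathrm{Hdg}}\mathcal{M}$ (or rather its crystalline structure) on the cosimplicial object $(D_n(g^{(\bullet)})\twoheadrightarrow \overline{R},\gamma_n(g^{(\bullet)}))$, and that pullback along $f$ commutes with all these evaluations because $f^*\mathcal{M}$ is by definition the crystal obtained by precomposition, and $f^*\Fil^\bullet_{\mathrm{Hdg}}\mathcal{M}$ restricted to $(D_n(g^{',(\bullet)})\twoheadrightarrow \overline{R}',\gamma)$ agrees, via the chosen map of PD envelopes, with base change of the evaluation of $\Fil^\bullet_{\mathrm{Hdg}}\mathcal{M}$ on $(D_n(g^{(\bullet)})\twoheadrightarrow \overline{R},\gamma)$. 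Chasing these compatibilities through the fiber sequence defining $\widetilde{M}_1^{(\bullet)}$, and then applying the equivalence of Proposition~\ref{prop:crystals_explicit}, yields the desired equivalence $f^*(\widetilde{\mathcal{M}}_1(g,\varphi_S))\xrightarrow{\simeq}\widetilde{f^*\mathcal{M}}_1(g',\varphi_{S'})$ in $\mathrm{Crys}_{R'/\Field_p}$.

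For the independence statement, I would take $R'=R$ and $f=\id$: if $(g,\varphi_S)$ and $(g',\varphi_{S'})$ are two choices, form a third $g'':S''\twoheadrightarrow R$ with $S''$ a polynomial algebra (hence formally smooth over $\Int_{(p)}$) admitting compatible maps to both $S\twoheadrightarrow R$ and $S'\twoheadrightarrow R$ — one can take $S'' = S\otimes_{\Int_{(p)}} S'$, or a polynomial algebra surjecting onto both — equip it with any Frobenius lift, and apply the functoriality established above twice to get $\widetilde{\mathcal{M}}_1(g,\varphi_S)\simeq \widetilde{\mathcal{M}}_1(g'',\varphi_{S''})\simeq \widetilde{\mathcal{M}}_1(g',\varphi_{S'})$.

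\textbf{Main obstacle.} The subtle point is not the diagram-chasing but verifying that the comparison equivalence is \emph{canonical and coherent} — i.e.\ that one genuinely gets a contractible space of identifications rather than a mere abstract isomorphism — and in particular that the choice of compatible lift $(S\twoheadrightarrow R)\to (S'\twoheadrightarrow R')$ and of compatible Frobenius lifts does not affect the result. This is exactly where the weak-initiality argument of Proposition~\ref{prop:crystals_explicit} does the real work: because $(D_n(g^{(\bullet)})\twoheadrightarrow \overline{R},\gamma_n)$ is weakly initial in $\mathrm{AniPDPair}_{p,n,\overline{R}/}$, any two such lifts are connected by a (coherently) contractible space of homotopies, so the resulting objects in $\mathrm{Crys}_{R/\Field_p}$ — which by Proposition~\ref{prop:crystals_explicit} only depend on the cosimplicial module over the PD envelope up to coherent equivalence — agree functorially. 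The Frobenius-lift independence is handled the same way, using that on the mod-$p$ PD envelope $D_1^{(\bullet)}$ the map $\varphi_0$ factors canonically through $\overline{R}$ (as recorded at the start of~\eqref{subsec:f-zips} and in Lemma~\ref{lem:frob_lift_splitting}), so the operator $\varphi_1$ on $\gr^1 D_1^{(\bullet)}$, and hence the classifying map of the extension, is independent of the chosen lift.
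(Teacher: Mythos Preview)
Your overall shape matches the paper's: view both objects as extensions of $\mathcal{O}_1\otimes_\varphi\gr^{a+1}_{\mathrm{Hdg}}M_{R'}$ by $\mathcal{O}_1\otimes_\varphi\gr^{a}_{\mathrm{Hdg}}M_{R'}$, choose a lift relating the two cosimplicial setups, and compare the classifying maps. But there is a genuine gap in how you handle the Frobenius lifts.

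You assert that ``the operator $\varphi_1$ on $\gr^1 D_1^{(\bullet)}$ \ldots\ is independent of the chosen lift''. This is false. Take $S=\Int_{(p)}[x]$, $g:x\mapsto 0$, and the two Frobenius lifts $\varphi_S(x)=x^p$ and $\varphi'_S(x)=x^p+px$. On the PD envelope $D=\Int_{(p)}\langle x\rangle$ one has $\varphi_0(x)=p!\,x^{[p]}$ versus $\varphi'_0(x)=p!\,x^{[p]}+px$, hence $\varphi_1(x)=(p-1)!\,x^{[p]}$ while $\varphi'_1(x)=(p-1)!\,x^{[p]}+x$; these are distinct maps $\gr^1 D_1\to D_1$. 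The factoring of $\varphi_0$ through $\overline{R}$ is a mod-$p$ statement and says nothing about $\varphi_1$, which is obtained by dividing the \emph{integral} $\varphi_0$ by $p$. Since the equivalence of Proposition~\ref{prop:crystals_explicit} depends only on $g$ (not on $\varphi_S$), different $\varphi_1$'s really do produce, a priori, different classifying arrows in $\mathrm{Crys}_{R/\Field_p}$.

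The paper closes this gap differently. Rather than lifting $S\to S'$, it chooses a lift $F:S\to D_2(g')$ of $R\to R'/{}^{\mathbb{L}}p^2$ (working mod $p^2$ is the point: this is where the integral Frobenius data lives), passes to cosimplicial maps $F^{(\bullet)}:S^{(\bullet)}\to D_2(g^{\prime,(\bullet)})$, and then compares the two compositions $F^{(\bullet)}\circ\varphi_S$ and $\varphi'_0\circ F^{(\bullet)}$ together with their $\varphi_1$-analogues $\overline{F}^{(\bullet)}\circ\varphi_1$ and $\varphi'_1\circ\gr^1\overline{F}^{(\bullet)}$. These pairs are then shown to be compatibly homotopic as maps of cosimplicial objects, and \emph{that} is what forces the two classifying arrows $\beta_1,\beta_2$ to be homotopic. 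Your argument needs this comparison-of-compositions step (or an equivalent one) in place of the incorrect independence claim for $\varphi_1$.
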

\begin{proof}
Both the objects involved in the asserted equivalence are extensions of $\mathcal{O}_1\otimes_\varphi\gr^{a+1}_{\mathrm{Hdg}}M_{R'}$ by $\mathcal{O}_1\otimes_\varphi\gr^{a}_{\mathrm{Hdg}}M_{R'}$ in $\mathrm{Crys}_{R'/\Field_p}$. So we can proceed by trying to construct a natural homotopy between the two classifying maps
\[
\beta_1,\beta_2:\mathcal{O}_1\otimes_\varphi\gr^{a+1}_{\mathrm{Hdg}}M_{R'}\to \mathcal{O}_1\otimes_\varphi\gr^{a}_{\mathrm{Hdg}}M_{R'}[1]
\]
in $\mathrm{Crys}_{R'/\Field_p}$, the first for $f^*\left(\widetilde{\mathcal{M}}_1(g,\varphi_S)\right)$ and the second for $\widetilde{f^*\mathcal{M}}_1(g',\varphi_{S'})$.

Let
\[
\varphi'_0:D_2(g^{',(\bullet)})\to D_2(g^{',(\bullet)})
\]
and
\[
\varphi_1:\gr^1_{(\overline{S}^{(\bullet)}\twoheadrightarrow R)}\overline{S}^{(\bullet)}\to D_1(g^{(\bullet)})\;;\; \varphi'_1:\gr^1_{(\overline{S}^{',(\bullet)}\twoheadrightarrow R)}\overline{S}^{,(\bullet)}\simeq \gr^1D_1(g^{',(\bullet)})\to D_1(g^{',(\bullet)})
\]
be the maps from Lemma~\ref{lem:frob_lift_splitting} associated with $\varphi_S$ and $\varphi_{S'}$, respectively.

Choose a lift $F:S\to D_2(g)$ of the map $R\to R'\to R'/{}^{\mathbb{L}}p^2$, and let $\overline{F}:\overline{S}\to D_1(g)$ be the map induced from $F$. This lift gives us a  map of cosimplicial objects
\[
F^{(\bullet)}:S^{(\bullet)}\to D_2(g^{',(\bullet)}),
\]
which in turn induces a map
\[
\gr^1\overline{F}^{(\bullet)}:\gr^1_{(\overline{S}^{(\bullet)}\twoheadrightarrow R)}\overline{S}^{(\bullet)}\to \gr^1D_1(g^{',(\bullet)})
\]

Therefore, we get a pair of maps $F^{(\bullet)}\circ \varphi_S,\varphi'_0\circ F^{(\bullet)}:S^{(\bullet)}\to D_2(g^{(\bullet)})$ as well as another pair of maps
\begin{align*}
\overline{F}^{(\bullet)}\circ \varphi_1,\varphi'_1\circ \gr^1\overline{F}^{(\bullet)}:\gr^1_{(\overline{S}^{(\bullet)}\twoheadrightarrow R)}\overline{S}^{(\bullet)}\to D_1(g^{',(\bullet)}),
\end{align*}
which are linear over the first pair. The lemma now comes down to the fact that both these pairs of maps are compatibly homotopic as maps between cosimplicial objects.
\end{proof}

\begin{definition}
\label{defn:partial_splitting}
A \defnword{partial splitting} of the crystalline $F$-zip is now an equivalence
\[
\alpha:\widetilde{\mathcal{M}}_1\xrightarrow{\simeq}\Fil^{-(a+1)}_{\mathrm{hor}}\mathcal{M}_1
\]
of extensions of $\mathcal{O}_1\otimes_{\varphi}\gr^{a+1}_{\mathrm{Hdg}}M_{R}$ by $\mathcal{O}_1\otimes_{\varphi}\gr^a_{\mathrm{Hdg}}M_{R}$. 
\end{definition}

\begin{remark}
\label{rem:bounded_partial_splitting}
In the situation where $\gr^i_{\mathrm{Hdg}}M_R\simeq 0$ for $i\neq a,a+1$, the notion of a crystalline $F$-zip with a partial splitting becomes equivalent to that of a \defnword{Fontaine module} as in~\cite{MR2373230} or~\cite{MR4013068}: In this case, $\Fil^{-(a+1)}_{\mathrm{hor}}\mathcal{M}_1 \simeq \mathcal{M}_1$, and the datum of the partially split crystalline $F$-zip structure on $\Fil^\bullet_{\mathrm{Hdg}}\mathcal{M}$ becomes equivalent to giving the equivalence $\alpha:\widetilde{\mathcal{M}}_1\xrightarrow{\simeq}\mathcal{M}$. 
\end{remark}

\begin{lemma}
\label{lem:partial_splitting}
Write $\varphi_a$ for the composition
\[
M_1^{(\bullet)}\to \gr^a_{\mathrm{Hdg}}M_1^{(\bullet)}\simeq \gr^a_{\mathrm{Hdg}}M_{\overline{R}}\xrightarrow{1\otimes \mathrm{id}} D_1^{(\bullet)}\otimes_{\varphi,\overline{R}}\gr^a_{\mathrm{Hdg}}M_{\overline{R}}\xrightarrow{\simeq}\Fil^{-a}_{\mathrm{hor}}M_1^{(\bullet)}.
\]

Also, let $\overline{\Phi}_{a+1}$ be the composition
\[
\gr^{a+1}_{\mathrm{Hdg}}M_{\overline{R}}\xrightarrow{1\otimes \mathrm{id}} D_1^{(\bullet)}\otimes_{\varphi,\overline{R}}\gr^{a+1}_{\mathrm{Hdg}}M_{\overline{R}}\xrightarrow[\Phi]{\simeq}\gr^{-(a+1)}_{\mathrm{hor}}M_1^{(\bullet)}.
\]

Then giving a partial splitting $\alpha$ as above is equivalent to specifying a map
\[
\varphi_{a+1}:\Fil^{a+1}_{\mathrm{Hdg}}M_1^{(\bullet)}\to \Fil^{-(a+1)}_{\mathrm{hor}} M_1^{(\bullet)}
\]
with the following properties:
\begin{enumerate}
  \item $\varphi_{a+1}$ is $\varphi_{S^{(\bullet)}}$-semilinear;
  \item $\varphi_{a+1}\vert_{\Fil^{a+2}_{\mathrm{Hdg}}M_1^{(\bullet)}}\simeq 0$;
  \item $\varphi_{a+1}\vert_{\Fil^1D_1^{(\bullet)}\otimes_{D_1^{(\bullet)}}M_1^{(\bullet)}} \simeq \varphi_1\otimes\varphi_a$.
  \item The induced map
  \[
    \gr^{a+1}_{\mathrm{Hdg}}M_{\overline{R}}\to \gr^{-(a+1)}_{\mathrm{hor}}M_1^{(\bullet)}
  \]
  obtained from properties (2) and (3) is equivalent to $\overline{\Phi}_{a+1}$.
\end{enumerate}
\end{lemma}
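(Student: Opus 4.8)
The plan is to reduce everything to the explicit cosimplicial model supplied by Proposition~\ref{prop:crystals_explicit}. Fixing $g:S\twoheadrightarrow R$ as in~\S\ref{subsec:f-zip_splitting}, an object of $\mathrm{Crys}_{R/\Field_p}$ is the same as a coCartesian section of the module fibration over the cosimplicial animated PD ring $D_1(g^{(\bullet)})$, and a map of crystals is a map of such cosimplicial $D_1^{(\bullet)}$-modules compatible with the stratification. Under this dictionary the sub and quotient of $\widetilde{\mathcal{M}}_1$ become $D_1^{(\bullet)}\otimes_{\varphi,\overline{R}}\gr^a_{\mathrm{Hdg}}M_{\overline{R}}$ and $D_1^{(\bullet)}\otimes_{\varphi,\overline{R}}\gr^{a+1}_{\mathrm{Hdg}}M_{\overline{R}}$, and a partial splitting $\alpha$ becomes a map $\widetilde{M}_1^{(\bullet)}\to \Fil^{-(a+1)}_{\mathrm{hor}}M_1^{(\bullet)}$ of cosimplicial $D_1^{(\bullet)}$-modules that is an equivalence of extensions with prescribed ends.

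The \textbf{heart of the argument} is a representability statement for $\widetilde{\mathcal{M}}_1$: for every crystal $\mathcal{N}$ with cosimplicial module $N^{(\bullet)}$, there is a natural equivalence between $\Map_{\mathrm{Crys}_{R/\Field_p}}(\widetilde{\mathcal{M}}_1,\mathcal{N})$ and the space of $\varphi_{S^{(\bullet)}}$-semilinear maps $\psi:\Fil^{a+1}_{\mathrm{Hdg}}M_1^{(\bullet)}\to N^{(\bullet)}$ satisfying conditions (2) and (3). To prove this I would unwind the construction of $\widetilde{M}_1^{(\bullet)}$ in~\S\ref{subsec:partial_splitting}: it is assembled from the fiber sequence $\gr^1D_1^{(\bullet)}\otimes_{\overline{R}}\gr^a_{\mathrm{Hdg}}M_{\overline{R}}\to \gr^{a+1}_{\mathrm{Hdg}}M_1^{(\bullet)}\to \gr^{a+1}_{\mathrm{Hdg}}M_{\overline{R}}$ of Lemma~\ref{lem:associated_graded} by pushing the sub forward along $\varphi_1$ and base-changing the quotient along~\eqref{eqn:frob_factoring}. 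On the other hand, a $\varphi_{S^{(\bullet)}}$-semilinear map out of the $D_1^{(\bullet)}$-module $\Fil^{a+1}_{\mathrm{Hdg}}M_1^{(\bullet)}$ is a $D_1^{(\bullet)}$-linear map out of $D_1^{(\bullet)}\otimes_{\varphi_0,D_1^{(\bullet)}}\Fil^{a+1}_{\mathrm{Hdg}}M_1^{(\bullet)}$; since $\varphi_0$ reduces to the $p$-power Frobenius and factors through $\overline{R}$, and since $\varphi_0|_{\Fil^1D_1^{(\bullet)}}\simeq \varphi_1\circ p$ by Lemma~\ref{lem:frob_lift_splitting}, imposing (2) and (3) identifies the relevant quotient of this twisted module precisely with $\widetilde{M}_1^{(\bullet)}$ --- the $p$-divisibility statements in Lemma~\ref{lem:frob_lift_splitting} being exactly what makes the ``push-forward along $\varphi_1$'' in the construction of $\widetilde{M}_1^{(\bullet)}$ agree with the Frobenius twist on the $\Fil^1$-part. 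This has to be carried out functorially in $\bm{\Delta}$, i.e.\ at the level of coCartesian sections, which is again handled by Proposition~\ref{prop:cocartesian_sections}.

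Finally I would apply this with $\mathcal{N}=\Fil^{-(a+1)}_{\mathrm{hor}}\mathcal{M}_1$ (a crystal since $\Fil^\bullet_{\mathrm{hor}}\mathcal{M}_1$ is a filtration in $\mathrm{Crys}_{R/\Field_p}$): a map $\alpha$ corresponds to a $\varphi_{a+1}$ with (1),(2),(3). Such an $\alpha$ automatically respects the sub-inclusion $\mathcal{O}_1\otimes_\varphi\gr^a_{\mathrm{Hdg}}M_R\hookrightarrow \Fil^{-(a+1)}_{\mathrm{hor}}\mathcal{M}_1$ --- which in the cosimplicial model is $\varphi_1\otimes\varphi_a$ on $\Fil^1D_1^{(\bullet)}\otimes_{D_1^{(\bullet)}}M_1^{(\bullet)}$ --- by property (3), and it is compatible with the projection onto $\gr^{-(a+1)}_{\mathrm{hor}}\mathcal{M}_1$ via $\overline{\Phi}_{a+1}$ exactly when (4) holds; any map of extensions inducing the identity on both ends is then an equivalence by the fiber-sequence comparison, so $\alpha$ is automatically a partial splitting. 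This gives the asserted equivalence between partial splittings and maps $\varphi_{a+1}$ satisfying (1)--(4). The \textbf{main obstacle} is the representability step: precisely matching the pushout-and-twist construction of $\widetilde{M}_1^{(\bullet)}$ with the ``divided Frobenius on $\Fil^{a+1}$'' data while tracking the full cosimplicial (stratification) structure rather than a merely levelwise statement, with the $p$-divisibility identities of Lemma~\ref{lem:frob_lift_splitting} as the technical linchpin.
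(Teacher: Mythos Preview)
Your approach is essentially the paper's: pass to the cosimplicial model via Proposition~\ref{prop:crystals_explicit}, recognize $\widetilde{M}_1^{(\bullet)}$ as built from $\gr^{a+1}_{\mathrm{Hdg}}M_1^{(\bullet)}$ by pushing out along $\varphi_1$, and translate a map of extensions into the data $\varphi_{a+1}$. The paper's proof is two sentences (``unravelling the definitions'' yields an $\overline{R}$-linear map out of $\gr^{a+1}_{\mathrm{Hdg}}M_1^{(\bullet)}$ with prescribed values on the sub and quotient, then pull back to $\Fil^{a+1}_{\mathrm{Hdg}}M_1^{(\bullet)}$); you are reconstructing the same computation with more scaffolding.

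One imprecision in your framing: the general representability claim ``$\Map(\widetilde{\mathcal{M}}_1,\mathcal{N})\simeq\{\varphi_{S^{(\bullet)}}$-semilinear $\psi$ satisfying (2),(3)$\}$'' is not well-posed for arbitrary $\mathcal{N}$, because condition~(3) pins the restriction to $\Fil^1D_1^{(\bullet)}\otimes M_1^{(\bullet)}$ to be $\varphi_1\otimes\varphi_a$, and $\varphi_a$ lands specifically in $\Fil^{-a}_{\mathrm{hor}}M_1^{(\bullet)}$, not in a general $N^{(\bullet)}$. What is true for general $\mathcal{N}$ is that a map out of $\widetilde{\mathcal{M}}_1$ is a pair consisting of a map on the sub $\mathcal{O}_1\otimes_\varphi\gr^a M_R$ together with a compatible $\overline{R}$-linear map out of $\gr^{a+1}_{\mathrm{Hdg}}M_1^{(\bullet)}$; only after fixing the former to be the canonical inclusion into $\Fil^{-(a+1)}_{\mathrm{hor}}M_1^{(\bullet)}$ does condition~(3) acquire meaning. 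The paper sidesteps this by never leaving the specific target. Your argument still goes through once you drop the pretense of a general $\mathcal{N}$, and the ``main obstacle'' you flag (cosimplicial naturality) is not really an obstacle here---all constructions are levelwise and manifestly functorial in $\bm{\Delta}$.
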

\begin{proof}
Unraveling the definitions, one sees that giving the partial splitting $\alpha$ is equivalent to giving an $\overline{R}$-linear map
\[
\gr^{a+1}_{\mathrm{Hdg}}M_1^{(\bullet)}\to \Fil^{-(a+1)}_{\mathrm{hor}}M_1^{(\bullet)}
\]
such that it lifts the natural map
\[
\gr^{a+1}_{\mathrm{Hdg}}M_{\overline{R}}\to D_1^{(\bullet)}\otimes_{\varphi,\overline{R}}\gr^{a+1}_{\mathrm{Hdg}}M_{\overline{R}}
\]
and such that its restriction to $\gr^1D_1^{(\bullet)}\otimes_{\overline{R}}\gr^a_{\mathrm{Hdg}}M_{\overline{R}}$ is the composition
\[
\gr^1D_1^{(\bullet)}\otimes_{\overline{R}}\gr^a_{\mathrm{Hdg}}M_{\overline{R}}\xrightarrow{\varphi_1\otimes\mathrm{id}}D_1^{(\bullet)}\otimes_{\varphi,\overline{R}}\gr^a_{\mathrm{Hdg}}M_1^{(\bullet)}\to \Fil^{-(a+1)}_{\mathrm{hor}}M_1^{(\bullet)}.
\]

Considering the induced map $\Fil^{a+1}_{\mathrm{Hdg}}M_1^{(\bullet)}\to \Fil^{-(a+1)}_{\mathrm{hor}}M_1^{(\bullet)}$ now verifes the lemma.
\end{proof}

\subsection{}\label{subsec:fzips_partial_tensor}
Let $\Fzip^{\ge a}_{R/\Int_p}$ be the $\infty$-category of crystalline $F$-zips over $R$ with $\gr^{-i}_{\mathrm{Hdg}}\mathcal{M}\simeq 0$ for $i<a$, equipped with a partial splitting. For brevity we will often refer to the whole apparatus of an object here by the symbol $\mathcal{M}$. 

The structure sheaf $\Fil^\bullet\mathcal{O}$ with its canonical $F$-zip structure also admits a lift to $\Fzip^{\ge -1}_{R/\Int_p}$, obtained trivially since $\gr^{-1}\mathcal{O}_1\simeq 0$, so that $\widetilde{\mathcal{O}}_1\simeq \mathcal{O}_1\simeq \Fil^0_{\mathrm{hor}}\mathcal{O}_1$.

The symmetric monoidal structure on $\Fzip_{R/\Int_p}$ from~\eqref{subsec:fzips_tensor_category} lifts to one on the $\infty$-categories $\Fzip^{\ge a}_{R/\Int_p}$ for varying $a$, in the following sense: Given $\mathcal{M}$ in $\Fzip^{\ge a}_{R/\Int_p}$ and $\mathcal{M}'\in \Fzip^{\ge a'}_{R/\Int_p}$ the tensor product of the underlying objects in $\Fzip_{R/\Int_p}$ lifts naturally to an object $\mathcal{M}\otimes_{\mathcal{O}}\mathcal{M}'$ in $\Fzip^{\geq (a+a')}_{R/\Int_p}$. 

The additional datum of the partial splitting is provided by noting that we have
\begin{align*}
\gr^{-(a+a'+1)}_{\mathrm{hor}}(\mathcal{M}_1\otimes_{\mathcal{O}_1}\mathcal{M}'_1) &\simeq (\gr^{-(a+1)}_{\mathrm{hor}}\mathcal{M}_1\otimes_{\mathcal{O}_1}\gr^{-a'}_{\mathrm{hor}}\mathcal{M}'_1)\oplus (\gr^{-a}_{\mathrm{hor}}\mathcal{M}_1\otimes_{\mathcal{O}_1}\gr^{-(a'+1)}_{\mathrm{hor}}\mathcal{M}'_1)\\
\gr^{-(a+a')}_{\mathrm{hor}}(\mathcal{M}_1\otimes_{\mathcal{O}_1}\mathcal{M}'_1)  &\simeq \gr^{-a}_{\mathrm{hor}}\mathcal{M}_1\otimes_{\mathcal{O}_1}\gr^{-a'}_{\mathrm{hor}}\mathcal{M}'_1
\end{align*}
and the extensions
\[
\widetilde{\mathcal{M}_1\otimes_{\mathcal{O}_1}\mathcal{M}'_1}\;;\; \Fil^{-(a+a'+1)}_{\mathrm{hor}}(\mathcal{M}_1\otimes_{\mathcal{O}_1}\mathcal{M}'_1)
\]
are both constructed via the same process: Starting with extensions classified by arrows
\[
\lambda:\gr^{-(a+1)}_{\mathrm{hor}}\mathcal{M}_1\to \gr^{-a}_{\mathrm{hor}}\mathcal{M}_1[1]\;;\; \lambda:\gr^{-(a'+1)}_{\mathrm{hor}}\mathcal{M}'_1\to \gr^{-a'}_{\mathrm{hor}}\mathcal{M}'_1[1],
\]
we now obtain another extension classified by the arrow
\[
\lambda\otimes\mathrm{id} + \mathrm{id}\otimes\lambda':\gr^{-(a+a'+1)}_{\mathrm{hor}}(\mathcal{M}_1\otimes_{\mathcal{O}_1}\mathcal{M}'_1)\to \gr^{-(a+a')}_{\mathrm{hor}}(\mathcal{M}_1\otimes_{\mathcal{O}_1}\mathcal{M}'_1)[1].
\]

\subsection{}\label{subsec:fzips_partial_dual}
Dualizability here is somewhat restrictive. Let $\mathcal{N}$ be as in Remark~\ref{rem:bounded_partial_splitting} so that $\gr^iN_R\simeq 0$ if $i\neq a,a+1$, and suppose also that $\Fil^\bullet_{\mathrm{Hdg}}\mathcal{N}$ belongs to $\mathrm{TFilCrys}_{R/\Int_p}^{\mathrm{perf}}$, so that the underlying object in $\Fzip_{R/\Int_p}$ is dualizable with dual $\mathcal{N}^\vee$. Then taking the inverse dual of the equivalence $\alpha:\widetilde{\mathcal{N}}_1\xrightarrow{\simeq}\mathcal{N}_1$ gives a partial splitting on $\mathcal{N}^\vee$, endowing it with the structure of an object in $\Fzip^{\ge -(a+1)}_{R/\Int_p}$. We now find that, for any $\mathcal{M}\in \Fzip^{\ge a}_{R/\Int_p}$, there is a canonical equivalence
\begin{align*}
\Map_{\Fzip^{\ge -1}_{R/\Int_p}}(\mathcal{O},\mathcal{N}^\vee\otimes_{\mathcal{O}}\mathcal{M})\xrightarrow{\simeq}\Map_{\Fzip^{\ge a}_{R/\Int_p}}(\mathcal{N},\mathcal{M}).
\end{align*}

\subsection{}\label{subsec:ffilcrys_defn}
We now come to the objects of interest to us. Suppose that we have
\[
(\Fil^\bullet_{\mathrm{Hdg}}\mathcal{M},\Fil^\bullet_{\mathrm{hor}}\mathcal{M}_1,\Phi,\alpha)\in \Fzip^{\ge a}_{R/\Int_p}. 
\]
For any $n\geq 1$ set 
\[
\Fil^\bullet_{\mathrm{hor}}M_n^{(\bullet)} = \Fil^\bullet_{\mathrm{hor}}M_1^{(\bullet)}\times_{M_1^{(\bullet)}}M_n^{(\bullet)}\to M_n^{(\bullet)}.
\]

For any integer $a\in\Int$, we define an $\infty$-category $\mathrm{FilFCrys}^{\geq a}_{R/\Int_p}$ to be that of tuples
\[
((\Fil^\bullet_{\mathrm{Hdg}}\mathcal{M},\Fil^\bullet_{\mathrm{hor}}\mathcal{M}_1,\Phi,\alpha),\varphi_a,\varphi_{a+1})
\]
where the first sub-tuple is an object in $\Fzip^{\ge a}_{R/\Int_p}$. The remaining data $\varphi_a,\varphi_{a+1}$ classifies compatible sequences (with respect to $n$) of $\varphi_{S^{(\bullet)}}$-semilinear maps
\[
\varphi_a:M_n^{(\bullet)}\to \Fil^{-a}_{\mathrm{hor}}M_n^{(\bullet)}\;;\;\varphi_{a+1}:\Fil^{a+1}_{\mathrm{Hdg}}M^{(\bullet)}_n\to \Fil^{-(a+1)}_{\mathrm{hor}}M^{(\bullet)}_n
\]
such that $\varphi_a\vert_{\Fil^{a+1}_{\mathrm{Hdg}}M^{(\bullet)}_n}\simeq \varphi_{a+1}\circ p$
and such that, for $n=1$, the maps correspond to the partial splitting $\alpha$ as in Lemma~\ref{lem:partial_splitting}.   

Once again, for brevity we will often refer to this whole apparatus by the symbol $\mathcal{M}$. 

For any $n\geq 1$, starting with $\Fil^\bullet_{\mathrm{Hdg}}\mathcal{M}_n$ in $\mathrm{TFilCrys}_{R/(\Int/p^n\Int)}$ instead gives us an analogous $\infty$-category $\mathrm{FilFCrys}^{\ge a}_{R/(\Int/p^n\Int)}$, and we can informally view an object of $\mathrm{FilFCrys}^{\ge a}_{R/\Int_p}$ as being a compatible sequence of objects $\mathcal{M}_n\in \mathrm{FilFCrys}^{\ge a}_{R/(\Int/p^n\Int)}$.

\subsection{}\label{subsec:ffilcrys_ind}
To see that the $\infty$-category $\mathrm{FilFCrys}^{\geq a}_{R/\Int_p}$ is independent of the choice of $(g,\varphi_S)$, we will need to reframe the definition we just gave. For any $\mathcal{M}\in \Fzip^{\ge a}_{R/\Int_p}$, let
\[
\lambda_1:\gr^{-(a+1)}_{\mathrm{hor}}M_1^{(\bullet)} \simeq D_1^{(\bullet)}\otimes_{\varphi,\overline{R}}\gr^{a+1}_{\mathrm{Hdg}}M_{\overline{R}} \to D_1^{(\bullet)}\otimes_{\varphi,\overline{R}}\gr^a_{\mathrm{Hdg}}M_{\overline{R}}[1] \simeq \Fil^{-a}_{\mathrm{hor}}M^{(\bullet)}_1
\]
be the map classifying the extension $\widetilde{M}_1^{(\bullet)}$. The existence of the partial splitting $\alpha$ gives a nullhomotopy for the induced map to $M^{(\bullet)}_1[1]$. Therefore, we can consider the map
\[
\lambda_n = (\lambda_1,0):\gr^{-(a+1)}_{\mathrm{hor}}M_1^{(\bullet)} \to (D_1^{(\bullet)}\otimes_{\varphi,\overline{R}}\gr^a_{\mathrm{Hdg}}M_{\overline{R}}[1])\times_{M_1^{(\bullet)}}M_n^{(\bullet)} \simeq \Fil^{-a}_{\mathrm{hor}}M^{(\bullet)}_n,
\]
which classifies an extension
\[
\Fil^{-a}_{\mathrm{hor}}M^{(\bullet)}_n\to \widetilde{M}_n^{(\bullet)}\to \gr^{-(a+1)}_{\mathrm{hor}}M_1^{(\bullet)}.
\]

\begin{lemma}
\label{lem:filfcrys_ind}
\begin{enumerate}
  \item Giving a lift of $\mathcal{M}$ to an object in $\mathrm{FilFCrys}^{\ge a}_{R/\Int_p}$ now amounts to specifying a compatible sequence of equivalences $\{\alpha_n:\widetilde{M}^{(\bullet)}_n\xrightarrow{\simeq}\Fil^{-(a+1)}_{\mathrm{hor}}M^{(\bullet)}_n\}$ of extensions of $\gr^{-(a+1)}_{\mathrm{hor}}\mathcal{M}_1^{(\bullet)}$ by $\Fil^{-a}_{\mathrm{hor}}M^{(\bullet)}_n$, subject to the condition that $\alpha_1$ recovers the original partial splitting.

  \item The $\infty$-category $\mathrm{FilFCrys}^{\geq a}_{R/\Int_p}$ is independent of the choice of $g$ and the Frobenius lift $\varphi_S$.
\end{enumerate}
\end{lemma}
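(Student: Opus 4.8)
The plan is to prove (1) by unwinding the definitions of $\mathrm{FilFCrys}^{\ge a}_{R/\Int_p}$ and of $\widetilde{M}_n^{(\bullet)}$, and then to deduce (2) from (1) together with the level-$1$ independence statement of Lemma~\ref{lem:tilde_M_1_ind}.

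For (1), I would fix the chosen $g:S\twoheadrightarrow R$ and Frobenius lift $\varphi_S$, and recall that a lift of $\mathcal{M}\in\Fzip^{\ge a}_{R/\Int_p}$ to $\mathrm{FilFCrys}^{\ge a}_{R/\Int_p}$ is, by definition, a compatible system (in $n$) of $\varphi_{S^{(\bullet)}}$-semilinear maps $\varphi_a:M_n^{(\bullet)}\to\Fil^{-a}_{\mathrm{hor}}M_n^{(\bullet)}$ and $\varphi_{a+1}:\Fil^{a+1}_{\mathrm{Hdg}}M_n^{(\bullet)}\to\Fil^{-(a+1)}_{\mathrm{hor}}M_n^{(\bullet)}$ with $\varphi_a\vert_{\Fil^{a+1}_{\mathrm{Hdg}}M_n^{(\bullet)}}\simeq\varphi_{a+1}\circ p$, normalized at $n=1$ so as to recover the partial splitting $\alpha$ via Lemma~\ref{lem:partial_splitting}. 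The first step is to linearize: using the Frobenius $\varphi_0$ on $D_n^{(\bullet)}$ and the divided Frobenius $\varphi_1$ on $\gr^1 D_n^{(\bullet)}$ from Lemma~\ref{lem:frob_lift_splitting}, the map $\varphi_a$ corresponds to a $D_n^{(\bullet)}$-linear map out of the Frobenius pullback $D_n^{(\bullet)}\otimes_{\varphi_0}M_n^{(\bullet)}$, and $\varphi_{a+1}$ to a linear map out of the Frobenius pullback of $\Fil^{a+1}_{\mathrm{Hdg}}M_n^{(\bullet)}$. The compatibility $\varphi_a\vert_{\Fil^{a+1}_{\mathrm{Hdg}}M_n^{(\bullet)}}\simeq\varphi_{a+1}\circ p$ is exactly the homotopy needed to glue these two linear maps along the homotopy pushout that defines $\widetilde{M}_n^{(\bullet)}$ --- this is why $\widetilde{M}_n^{(\bullet)}$ was built as the extension classified by $\lambda_n=(\lambda_1,0)$ --- producing a single map $\widetilde{M}_n^{(\bullet)}\to\Fil^{-(a+1)}_{\mathrm{hor}}M_n^{(\bullet)}$. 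The graded-piece conditions (2)--(4) of Lemma~\ref{lem:partial_splitting}, now imposed over $D_n^{(\bullet)}$, are precisely what forces this map to be an equivalence of extensions of $\gr^{-(a+1)}_{\mathrm{hor}}M_1^{(\bullet)}$ by $\Fil^{-a}_{\mathrm{hor}}M_n^{(\bullet)}$, i.e. the desired $\alpha_n$. Conversely, a given $\alpha_n$ restricts on the sub-term $\Fil^{-a}_{\mathrm{hor}}M_n^{(\bullet)}$ and on a lift of the quotient term to recover $\varphi_a$ and $\varphi_{a+1}$, and one checks that the two packages of data match up, compatibly in $n$ and with the normalization at $n=1$.

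For (2), I would first promote the cosimplicial objects $\widetilde{M}_n^{(\bullet)}$ to genuine objects $\widetilde{\mathcal{M}}_n\in\mathrm{Crys}_{R/(\Int/p^n\Int)}$ via Proposition~\ref{prop:crystals_explicit} --- these fit, compatibly in $n$, in an extension of $\gr^{-(a+1)}_{\mathrm{hor}}\mathcal{M}_1$ by $\Fil^{-a}_{\mathrm{hor}}\mathcal{M}_n$ --- and then check that the proof of Lemma~\ref{lem:tilde_M_1_ind} goes through verbatim for every $n$; it uses only that the animated divided power envelope preserves colimits, the existence of Frobenius lifts, and a cosimplicial homotopy between the two comparison maps, all of which are insensitive to $n$. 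This shows that $\widetilde{\mathcal{M}}_n$ is canonically independent of $(g,\varphi_S)$, compatibly in $n$. Since the remaining ingredients in the reformulation from (1) --- the crystals $\gr^{-(a+1)}_{\mathrm{hor}}\mathcal{M}_1$, $\Fil^{-a}_{\mathrm{hor}}\mathcal{M}_n$, $\Fil^{-(a+1)}_{\mathrm{hor}}\mathcal{M}_n$ and the partial splitting $\alpha$ --- are built from the underlying $\Fzip^{\ge a}_{R/\Int_p}$-datum with no reference to $(g,\varphi_S)$, the $\infty$-category of compatible systems $\{\alpha_n:\widetilde{\mathcal{M}}_n\xrightarrow{\simeq}\Fil^{-(a+1)}_{\mathrm{hor}}\mathcal{M}_n\}$ of equivalences of extensions (with $\alpha_1$ the partial splitting) is manifestly independent of $(g,\varphi_S)$; combining this with (1), and checking that the evident comparison functor between any two incarnations of $\mathrm{FilFCrys}^{\ge a}_{R/\Int_p}$ --- built, as in the proof of Proposition~\ref{prop:crystals_explicit}, by passing through a common refinement --- respects these identifications, yields the claim.

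The step I expect to be the main obstacle is making the gluing in (1) homotopy-coherent: one must verify that the pair $(\varphi_a,\varphi_{a+1})$ together with its relation genuinely assembles into a map out of $\widetilde{M}_n^{(\bullet)}$ at the level of cosimplicial objects (not merely on homotopy categories), and that conditions (2)--(4) of Lemma~\ref{lem:partial_splitting} pin down exactly when the assembled map is invertible. Once (1) is set up cleanly, (2) is essentially bookkeeping on top of Lemma~\ref{lem:tilde_M_1_ind}.
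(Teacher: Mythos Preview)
Your approach matches the paper's, which is extremely terse: it dispatches (1) as ``a direct check'' and (2) as ``shown as in Lemma~\ref{lem:tilde_M_1_ind}''. Your expansion of both parts follows exactly this strategy, and your identification of the homotopy-coherence of the gluing in (1) as the main technical point is apt.
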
 
\begin{proof}
The first part is a direct check, and the second is now shown as in Lemma~\ref{lem:tilde_M_1_ind}.
\end{proof}

The next lemma is shown just as in the case of $\Fzip^{\ge a}_{R/\Int_p}$, considered in~\eqref{subsec:fzips_partial_tensor} and~\eqref{subsec:fzips_partial_dual}.
\begin{lemma}
\label{lem:filfcrys_internal_hom}
Suppose that we have $\mathcal{N},\mathcal{M}\in \mathrm{FilFCrys}^{\geq a}_{R/\Int_p}$, and suppose in addition that $\mathcal{N}$ satisfies the following additional conditions:
\begin{enumerate}
   \item The underlying transversally filtered $p$-adic crystal $\Fil^\bullet_{\mathrm{Hdg}}\mathcal{N}$ is in $\mathrm{TFilCrys}^{\mathrm{lf}}_{R/\Int_p}$;
   \item For $i\neq a,a+1$, we have $\gr^i_{\mathrm{Hdg}}N_{R} \simeq 0$
 \end{enumerate}
Then there exists an `internal Hom' object 
\[
\mathcal{H} = \mathcal{H}(\mathcal{N},\mathcal{M}) \in \mathrm{FilFCrys}^{\geq -1}_{R/\Int_p}
\]
such that, for all $(R\xrightarrow{f}C)\in \mathrm{CRing}_{R/}$, we have a canonical equivalence
\[
\Map_{\mathrm{FilFCrys}^{\geq -1}_{C/\Int_p}}(\mathcal{O},f^*\mathcal{H}) \simeq \Map_{\mathrm{FilFCrys}^{\geq a}_{C/\Int_p}}(f^*\mathcal{N},f^*\mathcal{M}).
\]
\end{lemma}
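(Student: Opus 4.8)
The plan is to construct $\mathcal{H}(\mathcal{N},\mathcal{M})$ as a tensor product $\mathcal{N}^\vee\otimes_{\mathcal{O}}\mathcal{M}$, where $\mathcal{N}^\vee$ is a dual of $\mathcal{N}$ relative to the graded symmetric monoidal structure on the collection $\{\mathrm{FilFCrys}^{\ge a}_{R/\Int_p}\}_a$ built in~\eqref{subsec:fzips_partial_tensor} and~\eqref{subsec:fzips_partial_dual}, extended in the evident way to carry the Frobenius data. The first step is to observe that hypotheses (1) and (2) place $\mathcal{N}$ in the Fontaine-module situation of Remark~\ref{rem:bounded_partial_splitting}: by (2) we have $\Fil^{-(a+1)}_{\mathrm{hor}}\mathcal{N}_1\simeq \mathcal{N}_1$, so the partial splitting is an honest equivalence $\alpha\colon\widetilde{\mathcal{N}}_1\xrightarrow{\simeq}\mathcal{N}_1$, and by (1) the underlying transversally filtered $p$-adic crystal $\Fil^\bullet_{\mathrm{Hdg}}\mathcal{N}$ lies in $\mathrm{TFilCrys}^{\mathrm{perf}}_{R/\Int_p}$, hence is dualizable there via Lemma~\ref{lem:fil_int_hom}.

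Next I would assemble $\mathcal{N}^\vee$ piece by piece. The transversally filtered dual $\Fil^\bullet_{\mathrm{Hdg}}\mathcal{N}^\vee$ is supplied by~\eqref{subsec:ffc_tensor_category}; the datum $\Fil^\bullet_{\mathrm{hor}}\mathcal{N}_1^\vee$ together with $(\Phi^\vee)^{-1}$ and the inverse-dual partial splitting $(\alpha^\vee)^{-1}$ are produced exactly as in~\eqref{subsec:fzips_partial_dual}, using that the equivalence $\Phi$ exhibits $\mathcal{N}_1$ pointwise as a dualizable object of $\mathrm{FilMod}_{A'}$ and that $\widetilde{\mathcal{N}}_1$ is built out of $\gr$-pieces, which dualize under $\Phi^\vee$. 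The only genuinely new ingredient is dualizing the Frobenius pair $(\varphi_a,\varphi_{a+1})$: after choosing $(g,\varphi_S)$ as in Proposition~\ref{prop:crystals_explicit} and passing to the cosimplicial description of~\eqref{subsec:ffilcrys_defn}, $N_n^{(\bullet)}$ is an extension with just the two graded pieces $\gr^a_{\mathrm{Hdg}}N_{\overline R}$ and $\gr^{a+1}_{\mathrm{Hdg}}N_{\overline R}$, so $(\varphi_a,\varphi_{a+1})$ is finite semilinear data on dualizable modules; transposing and inverting it, and using that $p$ is a non-zero-divisor on the relevant $D_n^{(\bullet)}$, yields a pair of $\varphi_S$-semilinear maps of the shape required for an object of $\mathrm{FilFCrys}^{\ge -(a+1)}_{R/\Int_p}$, with the divisibility $\varphi_a|_{\Fil^{a+1}}\simeq\varphi_{a+1}\circ p$ preserved. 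By Lemma~\ref{lem:filfcrys_ind}(2) the result is independent of $(g,\varphi_S)$, so $\mathcal{N}^\vee$ is well defined, along with its unit and counit maps.

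Setting $\mathcal{H}=\mathcal{H}(\mathcal{N},\mathcal{M})\define \mathcal{N}^\vee\otimes_{\mathcal{O}}\mathcal{M}$, the tensor structure of~\eqref{subsec:fzips_partial_tensor}, extended to carry $\varphi_a,\varphi_{a+1}$ by the Leibniz-type formula $\varphi_\bullet\otimes\mathrm{id}+\mathrm{id}\otimes\varphi_\bullet$, places $\mathcal{H}$ in $\mathrm{FilFCrys}^{\ge -(a+1)+a}_{R/\Int_p}=\mathrm{FilFCrys}^{\ge -1}_{R/\Int_p}$. For the asserted equivalence, I would use that for any $f\colon R\to C$ the base-change functors of~\eqref{subsec:pullback_functors} on $\mathrm{TFilCrys}$, on $\Fzip$, and hence on $\mathrm{FilFCrys}$ are compatibly symmetric monoidal, so they preserve dualizable objects and their duals: $f^*\mathcal{N}^\vee\simeq(f^*\mathcal{N})^\vee$ and $f^*\mathcal{H}\simeq(f^*\mathcal{N})^\vee\otimes_{\mathcal{O}}f^*\mathcal{M}$. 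The equivalence
\[
\Map_{\mathrm{FilFCrys}^{\ge -1}_{C/\Int_p}}(\mathcal{O},f^*\mathcal{H})\simeq \Map_{\mathrm{FilFCrys}^{\ge a}_{C/\Int_p}}(f^*\mathcal{N},f^*\mathcal{M})
\]
is then the universal property of the dual, established precisely as the corresponding statement for $\Fzip^{\ge a}_{R/\Int_p}$ at the end of~\eqref{subsec:fzips_partial_dual}; naturality in $f$ is automatic since every construction above is natural in the base. I expect the main obstacle to be the middle step — verifying that the asymmetric Frobenius data $(\varphi_a,\varphi_{a+1})$ dualizes to data of the same shape and that the divisibility relation survives — but in the Fontaine-module case this reduces to a direct computation with the two-step extension $N_n^{(\bullet)}$, which is exactly why hypothesis (2) is imposed.
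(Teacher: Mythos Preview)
Your overall strategy---form the dual $\mathcal{N}^\vee\in\mathrm{FilFCrys}^{\ge -(a+1)}_{R/\Int_p}$ and set $\mathcal{H}=\mathcal{N}^\vee\otimes_{\mathcal{O}}\mathcal{M}$---is exactly the paper's approach, which simply says the lemma ``is shown just as in the case of $\Fzip^{\ge a}_{R/\Int_p}$, considered in~\eqref{subsec:fzips_partial_tensor} and~\eqref{subsec:fzips_partial_dual}.'' Your expansion of that one-line reference is largely sound.

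There is one imprecision worth flagging in your treatment of the Frobenius data. The claim that ``$p$ is a non-zero-divisor on the relevant $D_n^{(\bullet)}$'' is false (we are working modulo $p^n$), and the description of $N_n^{(\bullet)}$ as an extension of two $\overline{R}$-modules is not quite right either---it is a locally free $D_n^{(\bullet)}$-module whose Hodge \emph{associated graded} has those two pieces. The cleaner route, and the one implicit in the paper's reference, is to invoke Lemma~\ref{lem:filfcrys_ind}(1): the Frobenius data $(\varphi_a,\varphi_{a+1})$ is equivalent to a compatible sequence of \emph{equivalences} $\alpha_n:\widetilde{N}_n^{(\bullet)}\xrightarrow{\simeq}\Fil^{-(a+1)}_{\mathrm{hor}}N_n^{(\bullet)}$, which in the Fontaine-module situation is $\alpha_n:\widetilde{N}_n^{(\bullet)}\xrightarrow{\simeq}N_n^{(\bullet)}$. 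These are genuine equivalences and dualize by taking inverse duals, exactly as the partial splitting $\alpha=\alpha_1$ did in~\eqref{subsec:fzips_partial_dual}; no divisibility argument is needed. With this replacement, your argument goes through.
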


\subsection{}\label{subsec:smooth_filfcrys}
Suppose that $\hat{R}$ is the $p$-completion of a smooth $\Int_p$-algebra $R$, and that it is equipped with a Frobenius lift $\varphi:\hat{R}\to \hat{R}$. Now, Corollary~\ref{cor:crystals_explicit_classical} (or even Proposition~\ref{prop:crystals_explicit}) shows that giving an object $\Fil^\bullet_{\mathrm{Hdg}}\mathcal{M}$ in $\mathrm{TFilCrys}^{\mathrm{lf}}_{\hat{R}/\Int_p}$ is equivalent to giving a tuple $(\hat{\bm{M}},\hat{\bm{\nabla}},\Fil^\bullet\hat{\bm{M}})$, where $\hat{\bm{M}}$ is a locally free $\hat{R}$-module, $\hat{\bm{\nabla}}$ is a compatible system of quasi-nilpotent integrable connections $\nabla_n$ on $\bm{M}_n = \hat{\bm{M}}/p^n\hat{\bm{M}}$ for every $n$, and $\Fil^\bullet\hat{\bm{M}}$ is a filtration by vector sub-bundles satisfying Griffiths transversality with respect to $\hat{\bm{\nabla}}$. In fact, this can be upgraded to an equivalence of the underlying classical $1$-categories.

Following Faltings~\cite{Faltings1989-af} (see also~\cite[(2.4.6)]{Lovering2017-fy}), we define a \defnword{strongly divisible $F$-crystal structure} on $\Fil^\bullet_{\mathrm{Hdg}}\mathcal{M}$ (with respect to the lift $\varphi$) to be a collection of $\hat{R}$-linear maps $\bm{F}_i:\varphi^*\Fil^i\hat{\bm{M}}\to \bm{M}$ satisfying the following conditions:
\begin{enumerate}
	\item For all $i$, $\bm{F}_i\vert_{\Fil^{i+1}\hat{\bm{M}}} = p\bm{F}_{i+1}$.
	\item The map
	\begin{align}\label{eqn:strong_div_map}
     \varphi^*\hat{\bm{M}}[1/p]\supset\sum_{i\in\Int}p^{-i}\varphi^*\Fil^i\hat{\bm{M}} \to \hat{\bm{M}}
	\end{align}
	given on $p^{-i}\Fil^i\hat{\bm{M}}$ by $p^i\bm{F}_i$ is an isomorphism that is compatible with the integrable connections on both sides. Here, the left hand side is equipped with the integrable connection induced by $p\varphi^*\hat{\bm{\nabla}}$.
\end{enumerate}

\begin{proposition}
\label{prop:strong_div}
Let $a\in \Int$ be the minimal integer such that $\gr^a\hat{\bm{M}} \neq 0$. Then every strongly divisible $F$-crystal structure on $\Fil^\bullet_{\mathrm{Hdg}}\mathcal{M}$ canonically gives rise to a lift of $\Fil^\bullet_{\mathrm{Hdg}}\mathcal{M}$ to $\mathrm{FilFCrys}^{\ge a}_{\hat{R}/\Int_p}$.
\end{proposition}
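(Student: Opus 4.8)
The plan is to translate the classical linear-algebra data of a strongly divisible $F$-crystal structure into the cosimplicial/animated language used to define $\mathrm{FilFCrys}^{\ge a}_{\hat{R}/\Int_p}$ in \eqref{subsec:ffilcrys_defn}. Fix $g:S\twoheadrightarrow \hat R$ with $S$ the $p$-completion of a smooth $\Int_{(p)}$-algebra (say a polynomial algebra) lifting $\hat R$, which satisfies the hypotheses of Proposition~\ref{prop:crystals_explicit}; since $\hat R$ is already a $p$-complete smooth $\Int_p$-algebra we may in fact take $S = \hat R$ and $g = \mathrm{id}$, with $\varphi_S = \varphi$ the given Frobenius lift. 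Then $D_n(g^{(\bullet)})$ is the constant cosimplicial ring $\hat R/p^n$, the cosimplicial structure is trivial, and by Proposition~\ref{prop:crystals_explicit} and Corollary~\ref{cor:crystals_explicit_classical} evaluating $\Fil^\bullet_{\mathrm{Hdg}}\mathcal{M}$ on this object recovers exactly the tuple $(\hat{\bm M},\hat{\bm\nabla},\Fil^\bullet\hat{\bm M})$, with $M_n^{(\bullet)} = \hat{\bm M}/p^n$ a constant cosimplicial object. So the task reduces to producing, out of the maps $\bm F_i$, the data $(\Fil^\bullet_{\mathrm{hor}}\mathcal{M}_1,\Phi,\alpha,\varphi_a,\varphi_{a+1})$.

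First I would construct the two Frobenius maps $\varphi_a,\varphi_{a+1}$. Set $\varphi_a := p^{a}\bm F_a \bmod p^n : M_n^{(\bullet)} = \hat{\bm M}/p^n \to \hat{\bm M}/p^n$, where I use condition (1) $\bm F_i|_{\Fil^{i+1}} = p\bm F_{i+1}$ together with minimality of $a$ (so $\bm F_i$ for $i<a$ is determined by $\bm F_a$ via $\bm F_i = p^{\,a-i}\bm F_a$ on all of $\hat{\bm M}$) to see this is well-defined and $\varphi$-semilinear; and set $\varphi_{a+1} := p^{a+1}\bm F_{a+1}|_{\Fil^{a+1}\hat{\bm M}} \bmod p^n : \Fil^{a+1}_{\mathrm{Hdg}}M_n^{(\bullet)}\to \hat{\bm M}/p^n$. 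Condition (1) gives precisely the relation $\varphi_a|_{\Fil^{a+1}_{\mathrm{Hdg}}} \simeq \varphi_{a+1}\circ p$ required in \eqref{subsec:ffilcrys_defn}. The horizontal filtration $\Fil^\bullet_{\mathrm{hor}}\mathcal{M}_1$ and the target refinements $\Fil^{-a}_{\mathrm{hor}}, \Fil^{-(a+1)}_{\mathrm{hor}}$ must be read off from the image of the strong-divisibility isomorphism \eqref{eqn:strong_div_map} reduced mod $p$: concretely $\Fil^{-j}_{\mathrm{hor}}\overline{\bm M} := \sum_{i\le j} \mathrm{image}(p^i\bm F_i \bmod p)$, i.e. the conjugate filtration associated to the $F$-crystal structure, which automatically lies in $\mathrm{Crys}_{\hat R/\Field_p}$ because \eqref{eqn:strong_div_map} is horizontal hence these subsheaves have $p$-curvature zero. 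The equivalence $\Phi$ on associated gradeds is exactly the statement that \eqref{eqn:strong_div_map} is an isomorphism, taken mod $p$ and passed to $\gr$; and the partial splitting $\alpha$ is then the instance of Lemma~\ref{lem:partial_splitting} applied to these $\varphi_a,\varphi_{a+1}$ — its four conditions are: $\varphi$-semilinearity (clear), $\varphi_{a+1}|_{\Fil^{a+2}} \simeq 0$ (this uses that mod $p$, $p^{a+1}\bm F_{a+1}|_{\Fil^{a+2}} = p^{a+1}\cdot p\bm F_{a+2} \equiv 0$), compatibility on $\Fil^1 D_1^{(\bullet)}\otimes M_1^{(\bullet)}$ (which in the constant-cosimplicial case with $S = \hat R$ is vacuous/immediate since $\gr^1 D_1^{(\bullet)}\simeq 0$), and that the induced map on $\gr^{a+1}_{\mathrm{Hdg}}$ is $\overline\Phi_{a+1}$ (which is the mod-$p$ reduction of the isomorphism property of \eqref{eqn:strong_div_map} on the degree-$(a+1)$ graded piece).

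The main obstacle, and the one deserving real care, is functoriality and the comparison when $S\ne \hat R$ — that is, verifying that the construction descends to a genuine object of $\mathrm{FilFCrys}^{\ge a}_{\hat R/\Int_p}$ independent of $(g,\varphi_S)$, using Lemma~\ref{lem:filfcrys_ind}, and that the compatible sequence over varying $n$ glues to a $p$-adic object. When $S$ is a polynomial lift rather than $\hat R$ itself, the cosimplicial rings $D_n(g^{(\bullet)})$ are no longer constant and one must check that the reductions of $p^i\bm F_i$, together with the given integrable connection, assemble into cosimplicial maps $\varphi_a,\varphi_{a+1}$ on $M_n^{(\bullet)}$; this is where the quasi-nilpotence of $\bm\nabla$ and the compatibility of $\bm F_i$ with connections in condition (2) enter, guaranteeing the crystal (descent) property. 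I would handle this exactly as in the proof of Lemma~\ref{lem:tilde_M_1_ind}: choose lifts $F:S\to D_2(g)$ of Frobenius, compare the two semilinear structures, and produce the requisite homotopies, then conclude independence of choices from Lemma~\ref{lem:filfcrys_ind}(2). Once this is in place, the remaining verifications — that the tuple satisfies all the axioms of $\Fzip^{\ge a}$ and then of $\mathrm{FilFCrys}^{\ge a}$ — are the routine unwindings indicated above, and the assignment is manifestly functorial in strongly divisible structures.
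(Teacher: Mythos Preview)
Your approach is essentially the paper's: take $g=\mathrm{id}$ on $\hat R$, read off $\Fil^\bullet_{\mathrm{hor}}$ from the mod-$p$ reduction of~\eqref{eqn:strong_div_map}, and use the $\bm F_i$ to supply $\varphi_a,\varphi_{a+1}$. Two execution issues are worth flagging.

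First, your normalization $\varphi_a := p^a\bm F_a$, $\varphi_{a+1} := p^{a+1}\bm F_{a+1}$ does \emph{not} yield the required relation $\varphi_a|_{\Fil^{a+1}} \simeq p\cdot\varphi_{a+1}$: with your choice one gets $\varphi_a|_{\Fil^{a+1}} = p^{a+1}\bm F_{a+1} = \varphi_{a+1}$, off by a factor of $p$. The correct assignment is simply $\varphi_a = \bm F_a$, $\varphi_{a+1} = \bm F_{a+1}$ (viewed as $\varphi$-semilinear maps), for which $\bm F_a|_{\Fil^{a+1}} = p\bm F_{a+1}$ is exactly condition~(1). Second, your justification that $\Fil^i_{\mathrm{hor}}\bm M_1$ lies in $\mathrm{Crys}_{\hat R/\Field_p}$ is garbled: the issue is not ``$p$-curvature zero'' (that is a property of the \emph{associated gradeds}, not of the filtration steps themselves) but rather that each $\Fil^i_{\mathrm{hor}}\bm M_1$ is a $\nabla$-stable submodule. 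The paper handles this by computing that the connection on $\bigoplus_i\varphi^*\gr^i\bm M_1$ differs from the canonical Cartier connection by the operators $\Theta_i:\varphi^*\gr^i\bm M_1\to\varphi^*\gr^{i-1}\bm M_1\otimes\Omega^1$ arising from Griffiths transversality, which lower degree and hence preserve $\bigoplus_{j\le i}\varphi^*\gr^{-j}\bm M_1$; horizontality of~\eqref{eqn:strong_div_map} then transports this stability to $\Fil^i_{\mathrm{hor}}\bm M_1$. Finally, your ``main obstacle'' paragraph is unnecessary: once the object is constructed for \emph{one} choice $(g,\varphi_S)$, Lemma~\ref{lem:filfcrys_ind}(2) already guarantees independence---there is nothing further to verify for other $S$.
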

\begin{proof}
In the situation of~\eqref{subsec:f-zip_splitting}, we can take $g$ to be the identity $\hat{R}\twoheadrightarrow \hat{R}$. In particular, $\Fil^\bullet D_n^{(\bullet)}$ is just the constant cosimplicial object $R/p^nR$ with the trivial filtration.

Reducing the isomorphism~\eqref{eqn:strong_div_map} mod $p$ gives us a horizontal isomorphism
\begin{align}\label{eqn:strong_div_mod_p}
\bigoplus_{i\in\Int}\varphi^*\gr^i\bm{M}_1\xrightarrow{\simeq}\bm{M}_1,
\end{align}
where the connection on the left hand side is described as follows: By Griffith's transversality, we have an $R/p^2R$-linear map
\[
\gr^i\bm{M}_2 \xrightarrow{\nabla_2} \gr^{i-1}\bm{M}_2\otimes_R\Omega^1_{R/\Int_{(p)}},
\]
and the pullback of this map along $\varphi$ factors through a map
\[
\Theta_i:\varphi^*\gr^i\bm{M}_1\to \varphi^*\gr^{i-1}\bm{M}_1\otimes_{R}\Omega^1_{R/\Int_{(p)}}\xrightarrow{\simeq}p\varphi^*\gr^{i-1}\bm{M}_2\otimes_R\Omega^1_{R/\Int_{(p)}}.
\]
Classical Cartier theory endows $\varphi^*\gr^i\bm{M}_1$ with a canonical integrable connection: this corresponds to the object $\mathcal{O}_1\otimes_{\varphi}\gr^i\bm{M}$ in $\mathrm{Crys}_{R/\Field_p}$. The difference between the connection on the left hand side of~\eqref{eqn:strong_div_mod_p} and the one arising from Cartier theory is now given by $\oplus_i\Theta_i$.

Note in particular that, for every $i$, the submodule
\[
\bigoplus_{j\leq i}\varphi^*\gr^{-j}\bm{M}_1\subset \bigoplus_{j\in\Int}\varphi^*\gr^{-j}\bm{M}_1
\]
is stable for the connection, and so maps to a horizontal direct summand $\Fil^i_{\mathrm{hor}}\bm{M}_1$ of $\bm{M}_1$ under~\eqref{eqn:strong_div_mod_p}. It is now clear that this endows $\Fil^\bullet\mathcal{M}$ with the structure of an $F$-zip, and it is not so difficult now to also obtain a structure of an object of $\mathrm{FilFCrys}^{\geq a}_{R/\Int_p}$ using~\eqref{eqn:strong_div_map}.
\end{proof}

\subsection{}\label{subsec:ffilcrys_funct}
The functoriality of $\mathrm{FilFCrys}^{\geq a}_{R/\Int_p}$ under pullbacks along maps $f:R\to C$ is a little complicated because of the choices made in the course of its definition. However, Lemma~\ref{lem:filfcrys_ind} shows that these choices are inessential, and so one could with a bit more care obtain good pullback functoriality that accounts for this flexibility in choice.

We will instead assume that the choices of $g$ and $\varphi_{S}$ have also been made functorially for all objects in $\mathrm{CRing}$, following a construction from~\cite[\S 4]{De_Jong1999-yz}. For any $C\in \mathrm{CRing}$, we first consider the natural map
\[
g'_0:\Int_{(p)}[\pi_0(C)]\twoheadrightarrow \pi_0(C)
\]
from the free $\Int_{(p)}$-algebra in the set $\pi_0(C)$. This corresponds to a canonical arrow (up to homotopy)
\[
g':S' = \Int_{(p)}[\pi_0(C)]\twoheadrightarrow C,
\]
and we equip $S'$ with the Frobenius lift $\varphi_{S'}$ that acts as the $p$-power map on the generating set $\pi_0(C)$. With this convention, the functoriality of $\mathrm{FilFCrys}^{\ge a}_{R/\Int_p}$ is immediate on the nose.

In particular, for any prestack $X$, we have a good notion of the $\infty$-category $\mathrm{FilFCrys}^{\ge a}_{X/\Int_p}$, following the reasoning in Remarks~\ref{rem:inf_crys_over_stacks} and~\ref{rem:filcrys_over_stacks}.

\subsection{}\label{subsec:crys_coH_f-zip}
For any prestack $X$ over $R$, its Hodge filtered derived crystalline cohomology $\Fil^\bullet_{\mathrm{Hdg}}\mathrm{CrysCoh}_{X/R}$ underlies a canonical object in $\mathrm{FilFCrys}^{\geq 0}_{R/\Int_p}$. It suffices to show this when $X = \Spec A$ for $A\in \mathrm{CRing}_{R/}$.

In this case, evaluation on $(R\xrightarrow{\mathrm{id}}R,\gamma_{\mathrm{triv}})$ is the derived de Rham cohomology $\Fil^\bullet_{\mathrm{Hdg}}\mathrm{dR}_{A/R}\in \mathrm{FilMod}_{R}$ and we have $\gr^i_{\mathrm{Hdg}}\mathrm{dR}_{A/R}\simeq (\wedge^i\mathbb{L}_{A/R})[-i]$.

Now, the underlying object in $\mathrm{Crys}_{R/\Field_p}$ is the derived crystalline cohomology $\mathrm{CrysCoh}_{\overline{A}/\overline{R}}$,where we write $\overline{B} = B/{}^{\mathbb{L}}p$ for any $B\in \mathrm{CRing}$. By~\cite[Def. 4.36]{Mao2021-jt}, this is equipped with a canonical \defnword{conjugate filtration} $\Fil^\bullet_{\mathrm{conj}}\mathrm{CrysCoh}_{\overline{A}/\overline{R}}$ in $\mathrm{Crys}_{R/\Field_p}$ obtained by animating the canonical (or tautological) filtration on the divided powers de Rham complex. Moreover, by~\cite[Prop. 4.46]{Mao2021-jt}, there is a canonical equivalence
\[
\mathcal{O}_1\otimes_{\varphi}\gr^{-\bullet}_{\mathrm{Hdg}}\mathrm{dR}_{A/R}\xrightarrow{\simeq}\gr^{\bullet}_{\mathrm{conj}}\mathrm{CrysCoh}_{\overline{A}/\overline{R}}
\]
of graded objects in $\mathrm{Crys}_{R/\Field_p}$.

Thus, we have shown that $\Fil^\bullet_{\mathrm{Hdg}}\mathrm{CrysCoh}_{A/R}$ underlies a crystalline $F$-zip. 

\begin{lemma}
\label{lem:crys_coh_partially_split}
In the setting of~\eqref{subsec:partial_splitting}, the crystalline $F$-zip defined above lifts to an object in $\mathrm{FilFCrys}^{\ge 0}_{R/\Int_p}$.
\end{lemma}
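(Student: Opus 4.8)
The plan is to exhibit, functorially in $A \in \mathrm{CRing}_{R/}$, the maps $\varphi_0$ and $\varphi_1$ on the (filtered) divided power de Rham complex and check that they descend to the crystalline realization, and that they are compatible with the partial splitting constructed in~\eqref{subsec:partial_splitting}. It suffices to do this for $X = \Spec A$ with $A$ a polynomial $R$-algebra, since both sides satisfy descent along the \v{C}ech conerve of an affine \'etale cover, and then animate. So first I would fix, following~\eqref{subsec:ffilcrys_funct} and~\cite[\S 4]{De_Jong1999-yz}, the functorial choice of $g': S' = \Int_{(p)}[\pi_0(A)] \twoheadrightarrow A$ together with the $p$-power Frobenius lift $\varphi_{S'}$; for a polynomial algebra this $g'$ is itself (a base change of) a smooth lift, so Lemma~\ref{lem:divided_power_classical} applies and all divided power envelopes appearing are classical and flat over $\Int$.

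Next I would produce the Frobenius structure maps $\varphi_0, \varphi_1$ directly on $\Fil^\bullet_{\mathrm{Hdg}}\mathrm{dR}$. The point is that $\varphi_{S'}$ induces a map of divided power de Rham complexes $\varphi^*\Omega^\bullet_{S'/\Int_{(p)}} \to \Omega^\bullet_{S'/\Int_{(p)}}$ which is divisible by $p^i$ on $\Fil^i_{\mathrm{Hdg}}$: for a degree-$n$ form $\omega$ with coefficient in $\Fil^{i-n}_{\mathrm{PD}}$, the pullback $\varphi^*(dx_{j_1}\wedge\cdots\wedge dx_{j_n})$ is divisible by $p^n$ (since $\varphi$ is a Frobenius lift), and $\varphi$ carries $\Fil^{i-n}_{\mathrm{PD}}$ into $\Fil^{p(i-n)}_{\mathrm{PD}} \subset \Fil^{i-n}_{\mathrm{PD}}$ with the extra divisibility coming from the standard estimate $\varphi(\Fil^m_{\mathrm{PD}}) \subset p^m \cdot (\text{PD envelope})$ valid once the filtration is bounded in the relevant range --- which is exactly the hypothesis $\gr^i_{\mathrm{Hdg}}\simeq 0$ for $i<0$ that puts us in $\Fzip^{\ge 0}$ and bounds everything below $0$. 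Animating this assignment over $\mathcal{E}^0$ and then over $\Fun([1],\mathrm{AniPair})$ (as in the construction of $\Fil^\bullet_{\mathrm{Hdg}}\mathrm{CrysCoh}$ in~\eqref{subsec:cryscoh}) yields the compatible sequence of $\varphi_{S^{(\bullet)}}$-semilinear maps $\varphi_0 = \varphi_a$ and $\varphi_1 = \varphi_{a+1}$ with $a=0$, together with the identity $\varphi_a|_{\Fil^{a+1}_{\mathrm{Hdg}}} \simeq p\,\varphi_{a+1}$.

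Then I would check that these maps match the partial splitting: by Lemma~\ref{lem:partial_splitting}, a partial splitting is equivalent to specifying $\varphi_{a+1}$ subject to the four listed conditions, and conditions (1)--(3) hold by the construction just given, while condition (4) --- that the induced map $\gr^{a+1}_{\mathrm{Hdg}} \to \gr^{-(a+1)}_{\mathrm{hor}}$ recovers $\overline{\Phi}_{a+1}$ --- follows from~\cite[Prop. 4.46]{Mao2021-jt}, which identifies the mod-$p$ reduction of the Frobenius on $\gr_{\mathrm{Hdg}}$ with the conjugate-filtration comparison $\Phi$; this is precisely the content that the $F$-zip structure on $\Fil^\bullet_{\mathrm{Hdg}}\mathrm{CrysCoh}$ established at the end of~\eqref{subsec:crys_coH_f-zip} is the one coming from the conjugate filtration. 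Finally, independence of the choices of $g$ and $\varphi_S$ follows from Lemma~\ref{lem:filfcrys_ind}(2), so the lift to $\mathrm{FilFCrys}^{\ge 0}_{R/\Int_p}$ is canonical and functorial in $R$.

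The main obstacle I anticipate is the divisibility estimate controlling $\varphi$ on the divided power filtration --- making precise that $\varphi_0$ restricted to $\Fil^{i}_{\mathrm{PD}}$ is divisible by $p^i$ in the animated setting, uniformly enough to animate the construction. Classically this is the strong divisibility that underlies Faltings' theory and is why Proposition~\ref{prop:strong_div} needs the filtration bounded in a range depending on $p$; here the saving grace is that we only need $\varphi$ in the two consecutive degrees $0$ and $1$ (i.e. $\varphi_a$ on all of $\mathcal{M}$ and $\varphi_{a+1}$ on $\Fil^1_{\mathrm{Hdg}}$), where the estimate $\varphi(\Fil^1) \subset p\cdot(\cdot)$ is elementary and needs no boundedness hypothesis beyond $\gr^{<0}_{\mathrm{Hdg}} \simeq 0$. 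So the real work is bookkeeping: verifying that the animation of the divided power de Rham construction carries the divisibility through compatibly with the conjugate filtration comparison of~\cite{Mao2021-jt}, which I would organize by reducing to the free generators of $\mathcal{E}^0$ where everything is an explicit computation with $\Int_{(p)}[X]\langle Y\rangle$.
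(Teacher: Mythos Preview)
Your approach is essentially the same as the paper's: use a smooth lift $S'\twoheadrightarrow A$ with Frobenius lift $\varphi_{S'}$, exploit the $p$-divisibility of $\varphi_{S'}^*$ on $\Omega^{\ge 1}$, and combine this with the Frobenius on the PD envelope to produce $\varphi_0,\varphi_1$.

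Two points of organization where the paper is sharper. First, the intermediate claim that $\varphi(\Fil^m_{\mathrm{PD}})\subset p^m\cdot(\text{PD envelope})$ is false in general (already for $m=p$: $\varphi(x^{[p]})=\varphi(x)^{[p]}$ need not be divisible by $p^p$), and you correctly retreat from it in your obstacles paragraph; the paper never makes this claim. Second, and more importantly, the paper works directly with the \emph{conjugate filtration} as the target: it decomposes $\Fil^\bullet_{\mathrm{Hdg}}M_n^{(\bullet)}$ as a tensor product of the PD envelope $D_n(g^{',(\bullet)})$ (with its PD filtration) and the relative de Rham complex $\Omega^\bullet_{S^{',(\bullet)}/S^{(\bullet)}}$ (with its Hodge filtration), and defines maps
\[
\varphi'_0:D_n(g^{',(\bullet)})\to \Fil^0_{\mathrm{conj}}D_n(g^{',(\bullet)}),\quad \varphi'_1:\Fil^1D_n(g^{',(\bullet)})\to \Fil^{-1}_{\mathrm{conj}}D_n(g^{',(\bullet)})
\]
and
\[
\varphi'_1:\Fil^1_{\mathrm{Hdg}}\Omega^\bullet\to \tau_{\le 1}\Omega^\bullet + p\Omega^\bullet = \Fil^{-1}_{\mathrm{conj}}\Omega^\bullet
\]
separately, landing in the conjugate filtration on each factor. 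Since $\Fil^\bullet_{\mathrm{hor}}$ for crystalline cohomology \emph{is} the conjugate filtration, this makes the compatibility with the partial splitting and with $\Phi$ immediate rather than something to be checked against Lemma~\ref{lem:partial_splitting}. Your route through that lemma works too, but the paper's is more direct.
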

\begin{proof}
If $\Fil^\bullet_{\mathrm{Hdg}}\mathcal{M} = \Fil^\bullet_{\mathrm{Hdg}}\mathrm{CrysCoh}_{A/R}$, then its associated object in $\mathrm{FilMod}_{\Fil^\bullet D_n^{(\bullet)}}$ is simply
\[
\Fil^\bullet_{\mathrm{Hdg}}M^{(\bullet)}_n = \Fil^\bullet_{\mathrm{Hdg}}\mathrm{CrysCoh}_{A/(D_n^{(\bullet)}\twoheadrightarrow R,\gamma_n(g^{(\bullet)}))}
\]
The right hand side can be computed as follows: Lift $R\to A$ to a smooth map $S\to S'$ of polynomial rings compatible with $g':S'\twoheadrightarrow A$, and consider the relative (crystalline) derived de Rham cohomology
\[
 \Fil^\bullet_{(D_n(g^{',(\bullet)})\twoheadrightarrow A,\gamma_n(g^{',(\bullet)}))}D_n(g^{',(\bullet)})\otimes_{S^{',(\bullet)}}\Fil^\bullet_{\mathrm{Hdg}}\Omega^\bullet_{S^{',(\bullet)}/S^{(\bullet)}}.
\]
Now, suppose that we have a Frobenius lift $\varphi_{S'}:S'\to S'$ compatible with $\varphi_S$. As in Lemma~\ref{lem:partial_splitting}, this induces maps
\[
\varphi'_0:D_n(g^{',(\bullet)})\to \Fil^0_{\mathrm{conj}}D_n(g^{',(\bullet)})\;;\;\varphi'_1:\Fil^1D_n(g^{',(\bullet)})\to \Fil^{-1}_{\mathrm{conj}}D_n(g^{',(\bullet)})
\]
Here, $\Fil^\bullet_{\mathrm{conj}}D_n(g^{',(\bullet)})$ is the pullback of the conjugate filtration on $D_1(g^{',(\bullet)})$; see~\cite[Defn. 4.58]{Mao2021-jt}.

The Frobenius lift also induces a map $\varphi'_0 = \varphi_{S'}^*:\Omega^i_{S^{',(\bullet)}/S^{(\bullet)}}\to \Omega^i_{S^{',(\bullet)}/S^{(\bullet)}}$ that is zero mod $p$ for $i>0$, and so dividing it by $p$ we obtain a map
\[
\varphi'_1:\Fil^1_{\mathrm{Hdg}}\Omega^\bullet_{S^{',(\bullet)}/S^{(\bullet)}}\to \Fil^{-1}_{\mathrm{conj}}\Omega^\bullet_{S^{',(\bullet)}/S^{(\bullet)}} = \tau_{\leq 1}\Omega^\bullet_{S^{',(\bullet)}/S^{(\bullet)}} + p\Omega^\bullet_{S^{',(\bullet)}/S^{(\bullet)}}\subset \Omega^\bullet_{S^{',(\bullet)}/S^{(\bullet)}}.
\]
that factors through $\gr^1_{\mathrm{Hdg}}\Omega^\bullet_{S^{',(\bullet)}/S^{(\bullet)}}$ mod $p$. 

Combining these maps in the obvious way now gives the desired lift to $\mathrm{FilFCrys}^{\ge 0}_{R/\Int_p}$.
\end{proof}

\subsection{}\label{subsec:formal_stacks}
Let $\mathrm{CRing}^{p-\text{nilp}}_{R/}$ be the $\infty$-subcategory of $\mathrm{CRing}_{R/}$ spanned by the objects $R\to C$ with $p$ nilpotent in $\pi_0(C)$.

A \defnword{$p$-adic formal prestack} over $R$ valued in an $\infty$-category $\mathcal{C}$ is a functor
\[
\mathrm{CRing}^{p-\text{nilp}}_{R/}\to \mathcal{C}.
\]

Any prestack $X$ over $R$ can be viewed as a \defnword{$p$-adic formal prestack} over $R$ by restricting it to $\mathrm{CRing}^{p-\text{nilp}}_{R/}$. We will denote this restriction by $X^{\form}_p$. 

We will say that a $p$-adic formal prestack $X^{\form}_p$ over $R$ is \defnword{representable by a $p$-adic formal derived scheme} over $R$ if its restriction to $\mathrm{CRing}_{(R/{}^{\mathbb{L}}p^n)/}$ is represented by a derived scheme over $R/{}^{\mathbb{L}}p^n$ for every $n\geq 1$. We will say that such a formal prestack is \defnword{locally quasi-smooth} if the corresponding derived scheme over $R/{}^{\mathbb{L}}p^n$ is locally quasi-smooth for every $n\geq 1$. 


\subsection{}
Note that the structure sheaf $\Fil^\bullet\mathcal{O}$ underlies an object $\mathcal{O}$ in $\mathrm{FilFCrys}^{\geq -1}_{R/\Int_p}$ with $\varphi_0 = \mathrm{id}$ and $\varphi_{-1} = p\cdot \mathrm{id}$. The underlying partially split crystalline $F$-zip is the canonical lift of $\Fil^\bullet\mathcal{O}$ to $\Fzip^{\geq -1}_{R/\Int_p}$ considered in~\eqref{subsec:fzips_partial_tensor}.

Suppose that we have an object $\mathcal{M}$ in $\mathrm{FilFCrys}^{\geq -1}_{R/\Int_p}$. Consider the prestacks
\[
\mathcal{Z}(\mathcal{M}),\mathcal{Z}(\mathcal{M}_n):\mathrm{CRing}_{R/} \to \Mod[\mathrm{cn}]{\Int}
\]
over $R$ given for each $(R\xrightarrow{f}C)\in \mathrm{CRing}_{R/}$ by
\begin{align*}
\mathcal{Z}(\mathcal{M}_n)(C) = \Map_{\mathrm{FilFCrys}^{\geq -1}_{C/(\Int/p^n\Int)}}(\mathcal{O}_n,f^*\mathcal{M}_n)\;;&\; \mathcal{Z}(\mathcal{M})(C) = \varprojlim_n\mathcal{Z}(\mathcal{M}_n)(C).
\end{align*}

\begin{theorem}
\label{thm:FilFCrys_functorial_props}
Suppose that $\gr^i_{\mathrm{Hdg}}M_R$ is \emph{perfect} for $i = 0,-1$. Then:
\begin{enumerate}
  \item\label{ffc:cohesive} $\mathcal{Z}(\mathcal{M})$ and $\mathcal{Z}(\mathcal{M}_n)$ are infinitesimally cohesive, nilcomplete and integrable \'etale sheaves.
  \item\label{ffc:cotangent} For every $C\in \mathrm{CRing}_{R/}$ and $P\in \Mod[\mathrm{cn}]{C}$, there is a natural equivalence
  \[
    \hker(\mathcal{Z}(\mathcal{M}_n)(C\oplus P)\to \mathcal{Z}(\mathcal{M}_n)(C))\xrightarrow{\simeq}\tau^{\leq 0}\left(P[-1]\otimes_R(\gr^{-1}_{\mathrm{Hdg}}M_R/{}^{\mathbb{L}}p^n)\right).
  \]
\end{enumerate}
\end{theorem}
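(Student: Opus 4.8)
The plan is to establish both parts by reducing everything to the explicit cosimplicial model of filtered $F$-crystals provided by Proposition~\ref{prop:crystals_explicit} and Lemma~\ref{lem:filfcrys_ind}, and then to exploit the fact that $\mathcal{O}$ is the unit object so that the mapping space $\Map_{\mathrm{FilFCrys}^{\geq -1}_{C/(\Int/p^n\Int)}}(\mathcal{O}_n, f^*\mathcal{M}_n)$ can be computed as the totalization of a cosimplicial space built from the data $(\Fil^\bullet_{\mathrm{Hdg}}M_n^{(\bullet)}, \varphi_0, \varphi_{-1}, \ldots)$. Concretely, I would first fix the functorial choice of $(g', \varphi_{S'})$ from~\eqref{subsec:ffilcrys_funct} and write out what a map from the unit object amounts to: it is a compatible system consisting of an element of $\Fil^0_{\mathrm{Hdg}}M_n^{(\bullet)}$ together with the data expressing that $\varphi_0$ and $\varphi_{-1}$ fix this element in the appropriate sense (an element $x$ with $\varphi_0(x) = x$, plus a witness that $\varphi_{-1}$ on the $\Fil^1$-part is compatible, etc.). This identifies $\mathcal{Z}(\mathcal{M}_n)(C)$ with the connective cover of a totalization of a cosimplicial object in $\Mod{\Int}$ whose terms are built functorially from $\Fil^\bullet_{\mathrm{Hdg}}M_n^{(\bullet)}$ and the Frobenius maps.

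For part~\eqref{ffc:cohesive}, the three properties — infinitesimal cohesiveness, nilcompleteness, and being an integrable \'etale sheaf — should follow formally: each is preserved by limits (totalizations and the limit over $n$), and each holds for the building blocks. The \'etale sheaf property reduces, as in Lemma~\ref{lem:M'_cohesive}, to the fact that for an \'etale map $C \to C'$ one can choose the functorial presentations compatibly so that the divided power envelopes base-change correctly, and that $\Fil^\bullet_{\mathrm{Hdg}}\mathrm{CrysCoh}$ and the internal-Hom constructions commute with such base change. Nilcompleteness and infinitesimal cohesiveness reduce to the corresponding statements for $\Mod{\Int}$-valued functors of the form $C \mapsto \Fil^j_{\mathrm{Hdg}}M_n^{(\bullet)}(C)$, which hold because $\Fil^\bullet_{\mathrm{Hdg}}\mathrm{CrysCoh}_{/R}$ is a left Kan extension of a functor on affines and the perfectness hypothesis on $\gr^0_{\mathrm{Hdg}}M_R$, $\gr^{-1}_{\mathrm{Hdg}}M_R$ controls the behavior of the relevant pieces. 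Integrability I would handle the same way, invoking that filtered colimits of the relevant complexes behave well. The key point throughout is that nothing here is representability — only preservation of limits and descent — so these are soft arguments once the cosimplicial description is in place.

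Part~\eqref{ffc:cotangent} is the substantive computation and the one I expect to be the main obstacle. The strategy is to linearize: for a square-zero extension $C \oplus P$ with $P \in \Mod[\mathrm{cn}]{C}$, the relative term $\hker(\mathcal{Z}(\mathcal{M}_n)(C \oplus P) \to \mathcal{Z}(\mathcal{M}_n)(C))$ is computed by the same totalization but with all the crystalline/de Rham building blocks replaced by their $P$-linear parts. The crucial input is that for a square-zero thickening the divided power envelope is trivial (Lemma~\ref{lem:square_zero_pd}), so the cosimplicial object degenerates: $\Fil^\bullet_{\mathrm{Hdg}}M^{(\bullet)}_n$ evaluated on the trivial square-zero extension reduces to $\Fil^\bullet_{\mathrm{Hdg}}M_{C\oplus P}$, and the linear part is $P \otimes_C \Fil^\bullet_{\mathrm{Hdg}}M_C$. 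Now the condition defining $\mathcal{Z}(\mathcal{M}_n)$ — being fixed by $\varphi_0$ in $\Fil^0$, compatibly with $\varphi_{-1}$ — linearizes to a two-term complex: one lands in $P \otimes \Fil^0_{\mathrm{Hdg}}M$, the Frobenius $\varphi_0$ contributes, and the $\varphi_{-1}/p$ structure combined with the $F$-zip equivalence $\Phi$ forces the Frobenius to vanish mod $p$ on everything except the $\gr^{-1}$-piece (this is exactly the mechanism, sketched in~\eqref{introsubsec:technical}, by which mod $p$ one recovers something quasi-smooth, as in the $\mu_p$ example). The upshot should be that the linearized complex is equivalent to $P[-1] \otimes_R (\gr^{-1}_{\mathrm{Hdg}}M_R / {}^{\mathbb{L}} p^n)$ after applying $\tau^{\leq 0}$: the shift $[-1]$ comes from the fact that an \emph{obstruction} to lifting a section is what survives, the factor $\gr^{-1}_{\mathrm{Hdg}}$ is what the partial Frobenius structure does not constrain, and the $/{}^{\mathbb{L}}p^n$ arises because we are working over $\Int/p^n\Int$. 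I would carry this out by first treating $n = 1$, where the conjugate filtration and $\Phi$ make the vanishing of Frobenius transparent, then bootstrapping to general $n$ via the compatibility $\varphi_0|_{\Fil^1} \simeq \varphi_{-1} \circ p$ and dévissage along $p^n$. The delicate bookkeeping — making sure the totalization over the cosimplicial index collapses correctly and that the Griffiths-transversality and Frobenius-compatibility conditions interact to leave exactly the claimed complex — is where the real work lies.
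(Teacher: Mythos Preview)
Your proposal has the right shape --- cosimplicial model, linearize over square-zero extensions, exploit that Frobenius kills nilpotents --- but there are two concrete gaps where the ``soft'' arguments you sketch do not go through, and the paper's actual proof takes a rather different route to fill them.

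\textbf{Part~\eqref{ffc:cohesive} is not soft.} You claim the limit-preservation properties reduce to those of the building blocks $C \mapsto \Fil^j_{\mathrm{Hdg}}M_n^{(k)}(C)$. But these are evaluations of $\Fil^\bullet_{\mathrm{Hdg}}\mathcal{M}$ on the derived PD envelopes $D_n(g'^{(k)})$ of $S'^{(k)} \twoheadrightarrow C$, where $S' = \Int_{(p)}[\pi_0(C)]$ depends on $C$; derived PD envelopes do not obviously commute with Postnikov towers or with the pullback squares defining infinitesimal cohesiveness, so you cannot simply say each term is well-behaved and pass to the totalization. The paper instead introduces (for $n=1$) an auxiliary functor $\mathcal{Z}^{\heartsuit}$ which is the fiber of a $\varphi_{\overline{C}}$-semilinear endomorphism of $\gr^{-1}_{\mathrm{Hdg}}M_{\overline{C}}$; since $\gr^{-1}_{\mathrm{Hdg}}M_R$ is perfect, this is of the form $C \mapsto \underline{\Map}_{\Mod{R}}(N,C)$ and is visibly limit-preserving. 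The real work is then a comparison (Lemma~\ref{lem:exts_equivalent_restr}) showing that the difference between $\mathcal{Z}^{\heartsuit}$ and the functor $\mathcal{Z}^{\diamond}$ sitting between $\mathcal{Z}^+_1$ and $\gr^{-1}_{\mathrm{Hdg}}M_{\overline{C}}$ is controlled by a map that factors through $\hker(\varphi_{\overline{C}}) \otimes \gr^0_{\mathrm{Hdg}}M_{\overline{C}}$. This is the missing idea in your sketch: rather than proving the cosimplicial totalization itself preserves limits, one replaces it up to a controllable error by something that manifestly does.

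\textbf{Part~\eqref{ffc:cotangent}: the cosimplicial object does not degenerate.} Your invocation of Lemma~\ref{lem:square_zero_pd} is a misapplication: that lemma equips the thickening $C \oplus P \twoheadrightarrow C$ itself with a trivial PD structure, but the cosimplicial object computing $\mathcal{Z}(\mathcal{M}_n)(C\oplus P)$ involves PD envelopes of $S'^{(\bullet)} \twoheadrightarrow C \oplus P$ with $S'$ a polynomial ring, which are \emph{not} square-zero and do not collapse. The paper computes the tangent space in two steps: the $n=1$ case follows from the same $\mathcal{Z}^{\heartsuit}/\mathcal{Z}^{\diamond}$ comparison (the nullhomotopy of the difference on $\hker(\varphi_{\overline{C}\oplus\overline{P}}) \supset \overline{P}$ is exactly where ``Frobenius vanishes on nilpotents'' enters), and for general $n$ one constructs an explicit comparison square (Lemma~\ref{lem:canonical_arrow_P_kernel}) using the alternate presentation $\tilde{S} = \Int[\pi_0(C)\oplus\pi_0(P)]$ and evaluating $\Fil^0_{\mathrm{Hdg}}\mathcal{M}$ on both $(\tilde{C}_n \xrightarrow{\mathrm{id}} \tilde{C}_n)$ and $(\tilde{C}_n \twoheadrightarrow C_n)$. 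Your intuition about what should survive is correct, but the mechanism you propose does not produce it.
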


\subsection{}\label{subsec:perfect_fields}
Suppose that $C\in \mathrm{CRing}_{\heartsuit,\overline{R}/}$ is a perfect ring in characteristic $p$. Suppose that $\Fil^\bullet_{\mathrm{Hdg}}\mathcal{M}$ is actually in $\mathrm{TFilCrys}^{\mathrm{lf}}_{R/\Int_p}$. We will now see that $\mathcal{Z}(\mathcal{M})(C)$ can be understood in more familiar terms: namely as Frobenius-invariant elements of certain $W(C)$-modules. 

Set $\widehat{D}(C) = W(C)\langle Z \rangle^{\wedge}_p/(Z-p)$, where $W(C)\langle Z\rangle^{\wedge}_p$ is the $p$-adic completion of the classical divided power algebra over $W(C)$ in the variable $Z$. We equip $\widehat{D}(C)$ with the divided power filtration $\Fil^\bullet\widehat{D}(C)$, where $\Fil^i\widehat{D}(C)\subset \widehat{D}(C)$ is the closure of the ideal generated by the images of $Z^{[k]}$ for $k\geq i$. It also has a Frobenius lift $\widehat{\varphi}$ agreeing with the canonical Frobenius lift on $W(C)$ and satisfying $Z^{[i]}\mapsto \frac{(pi)!}{i!}Z^{[pi]}$.

\begin{lemma}
\label{lem:W_PD_thickening}
Consider $g:W(C)\twoheadrightarrow C$. For every $n\geq 1$, there is a canonical equivalence of $\Fil^\bullet D_n(g^{(\bullet)})$ with the constant cosimplicial object $\Fil^\bullet \widehat{D}(C)/{}^{\mathbb{L}}p^n$. Moreover, the  Frobenius lift on $D_n(g^{(\bullet)})$ associated with the canonical Frobenius lift on $W(C)$ corresponds under this equivalence to the map induced from $\widehat{\varphi}$.
\end{lemma}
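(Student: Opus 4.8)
The plan is to reduce everything to a computation of the (derived) divided power envelope of the free cosimplicial resolution attached to $g\colon W(C)\twoheadrightarrow C$, exploiting that $C$ is a perfect $\Field_p$-algebra so that $W(C)$ is formally \'etale over $\Int_{(p)}$. First I would set up the cosimplicial object $g^{(\bullet)}$ as the \u{C}ech conerve of $g$ in $\mathrm{AniPair}_{/C}$, i.e. $g^{(m)}\colon W(C)^{\otimes(m+1)}\to C$ where the tensor product is derived over $\Int_{(p)}$; by Proposition~\ref{prop:crystals_explicit}(iii) this is a legitimate choice, since $W(C)\twoheadrightarrow C$ is weakly initial in $\mathrm{AniPDPair}_{p,n,C/}$. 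The key input is the uniqueness clause proved in that proposition: there is a \emph{unique} map $W(C)\to A'$ lifting $C\to A$ for any object $(A'\twoheadrightarrow A\leftarrow C,\gamma)$, by Lau's lemma \cite[Lemma 1.4]{lau:displays} together with formal \'etaleness of $W(C)$ over $\Int_{(p)}$. Applied to $A' = D_n(g^{(0)})$, $A = C$, this forces each $W(C)^{\otimes(m+1)}\to D_n(g^{(m)})$ to factor uniquely through the multiplication map back to $W(C)$, so that the cosimplicial ring $D_n(g^{(\bullet)})$ is the constant one on the single PD envelope $D_n\bigl(W(C)\twoheadrightarrow C\bigr)$.

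Next I would identify that single envelope. Using Lemma~\ref{lem:relative_to_p}, the absolute derived PD envelope $D_{\mathrm{abs}}(g)$ is equivalent to the relative PD envelope $D_{\mathrm{rel}(f)}(\overline g)$ over $(\Int_{(p)}\twoheadrightarrow \Field_p,\delta_p)$; but $\overline g\colon W(C)\to C$ is an isomorphism mod $p$ since $C$ is reduced of characteristic $p$, so $D_{\mathrm{rel}(f)}(\overline g)\simeq W(C)\otimes_{\Int_{(p)}}D_{\mathrm{abs}}(\Int_{(p)}\twoheadrightarrow\Field_p)$. By Lemma~\ref{lem:divided_power_classical} (the sequence $(p)$ being regular in $\Int_{(p)}$) the latter is the classical object $\Int_{(p)}\langle Z\rangle/(Z-p)$, flat over $\Int_{(p)}$, with its classical PD filtration. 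Base changing and then taking the derived quotient by $p^n$ — which is harmless by the flatness just recorded — gives $D_n(g)\simeq \widehat D(C)/{}^{\mathbb{L}}p^n$ as a filtered object, where the $p$-completion in the definition of $\widehat D(C)$ is absorbed by working mod $p^n$. Throughout, the compatibility of the filtrations is immediate once one knows the envelope is the classical flat one, since the derived PD filtration of a regular PD pair agrees with the classical divided-power filtration (Lemma~\ref{lem:divided_power_classical} again).

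Finally, for the Frobenius statement: the Frobenius lift on $D_n(g^{(\bullet)})$ is, by construction in~\eqref{subsec:f-zips} and Lemma~\ref{lem:relative_to_p}, the one induced by identifying $D_n(g^{(\bullet)})$ with the animated PD envelope of $S^{(\bullet)}\twoheadrightarrow\overline R$ relative to $(\Int_{(p)}\twoheadrightarrow\Field_p,\delta_p)$, hence determined by the canonical Frobenius lift on $W(C)$ together with the formula $Z\mapsto Z^p$ on the PD variable. Unwinding the standard computation in $\Int_{(p)}\langle Z\rangle$ gives $Z^{[i]}\mapsto \frac{(pi)!}{i!}Z^{[pi]}$, which is exactly the definition of $\widehat\varphi$; by the uniqueness of the cosimplicial structure established in the first step, this is the constant cosimplicial map on $\widehat\varphi$. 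The main obstacle I expect is purely bookkeeping: making the "constant cosimplicial object" identification in the first step genuinely functorial (at the level of $\infty$-categories, not just on $\pi_0$), which is where one must lean hard on the uniqueness of lifts in Proposition~\ref{prop:crystals_explicit}(iii) rather than just existence; everything after that is a flat classical computation.
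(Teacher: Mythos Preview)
Your proposal is correct and follows essentially the same approach as the paper. The only noteworthy difference is in the identification of $D_n(g)$: the paper observes directly that $g$ is the base change of $g_0\colon\Int_{(p)}[Z]\twoheadrightarrow\Int_{(p)}$ (with $Z\mapsto 0$) along $\Int_{(p)}[Z]\xrightarrow{Z\mapsto p}W(C)$ and computes $D_n(g_0)$ by hand, whereas you route through Lemma~\ref{lem:relative_to_p} and then invoke base change from $D_{\mathrm{abs}}(\Int_{(p)}\twoheadrightarrow\Field_p)$. These are equivalent—both amount to recognizing that the pair $(W(C)\twoheadrightarrow C)$ is flat-pulled-back from a universal one-variable situation—though the paper's version is slightly more direct. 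One small imprecision: in the notation of Lemma~\ref{lem:relative_to_p}, $\overline g$ is the map $W(C)\to C/{}^{\mathbb L}p$, not $W(C)\to C$; what you actually use is that $(W(C)\twoheadrightarrow C)$ is the base change of $(\Int_{(p)}\twoheadrightarrow\Field_p)$ along the flat map $\Int_{(p)}\to W(C)$, which is the correct statement and yields your conclusion. The constancy argument via formal \'etaleness and uniqueness of lifts, and the Frobenius computation, match the paper's.
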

\begin{proof}
Note that $g$ is the base-change of $g_0:\Int_{(p)}[Z]\overset{Z\mapsto 0}{\twoheadrightarrow} \Int_{(p)}$ along $\Int_{(p)}[Z]\xrightarrow{Z\mapsto p}W(C)$. Therefore, the statements for $\Fil^\bullet D_n(g)$ are obtained via base-change from the corresponding assertions for $\Fil^\bullet D_n(g_0)$, and these are easy to show directly.

To see that $\Fil^\bullet D_n(g^{(\bullet)})$ is constant, now simply note that $W(C)$ is formally \'etale over $\Int_{(p)}$, and so any arrow of the form
\[
(W(C)^{\otimes_{\Int_{(p)}}i}\overset{g^{(i)}}{\twoheadrightarrow}R)\to (A'\twoheadrightarrow A\leftarrow C,\gamma)
\]
with the target in $\mathrm{AniPDPair}_{p,n,C/}$ has to factor through $W(C)\overset{g}{\twoheadrightarrow}C$.
\end{proof}

\subsection{}\label{subsec:M_on_perfect}
The reason for the intervention of the persnickety $Z$ here is because we are working throughout this section with absolute divided power envelopes and not the ones relative to $(\Int_{(p)}\twoheadrightarrow \Field_p,\delta_p)$, which are more commonly encountered in studies of crystalline cohomology. This, however, turns out to be a red herring in this particular situation; see assertion~\eqref{ffc:perfect} in Theorem~\ref{thm:FilFCrys_discrete} below.

Evaluating $\Fil^\bullet_{\mathrm{Hdg}}\mathcal{M}$ on $(\widehat{D}(C)/{}^{\mathbb{L}}p^n\twoheadrightarrow C,\gamma_n(g))$ and taking the limit over $n$ gives us a filtered module $\Fil^\bullet_{\mathrm{Hdg}}\mathcal{M}(\widehat{D}(C))$ over $\Fil^\bullet\widehat{D}(C)$ with $\mathcal{M}(\widehat{D}(C))$ a finite locally free $\widehat{D}(C)$-module. 

Write 
\[
\Fil^\bullet_{\mathrm{Hdg}}\mathcal{M}(W(C)) = \Fil^\bullet W(C)\otimes_{\Fil^\bullet\widehat{D}}\Fil^\bullet_{\mathrm{Hdg}}\mathcal{M}(\widehat{D}(C)), 
\]
where $\Fil^i W(C)\subset W(C)$ is the ideal generated by $\frac{p^k}{k!}$ for $k\geq i$, and $W(C)$ is being viewed as a $\widehat{D}(C)$-algebra via $Z^{[i]}\mapsto \frac{p^i}{i!}$. This is of course the evaluation of $\Fil^\bullet_{\mathrm{Hdg}}\mathcal{M}$ on $(W(C)\twoheadrightarrow C,\delta_p)$.

Set $\Fil^\bullet_{\mathrm{Hdg}}M_C = \Fil^\bullet_{\mathrm{Hdg}}\mathcal{M}(W(C))/{}^{\mathbb{L}}p$. The lift of $\Fil^\bullet_{\mathrm{Hdg}}\mathcal{M}$ to $\mathrm{FilFCrys}^{\geq -1}_{R/\Int_p}$ gives us a filtration $\Fil^\bullet_{\mathrm{hor}}M_C$ on $M_C$, and if we set
\[
\Fil^\bullet_{\mathrm{hor}}\mathcal{M}(W(C)) = \Fil^\bullet_{\mathrm{hor}}M_C\times_{M_C}\mathcal{M}(W(C)),
\]
then we also have a $W(C)$-Frobenius semi-linear map $\varphi_0:\Fil^0_{\mathrm{Hdg}}\mathcal{M}(W(C))\to\Fil^0_{\mathrm{hor}}\mathcal{M}(W(C))$, which in turn induces a map $\varphi_0:\Fil^0_{\mathrm{Hdg}}\mathcal{M}(W(C)) \to \mathcal{M}(W(C))$.

\begin{theorem}\label{thm:FilFCrys_discrete}
With the notation and hypotheses of Theorem~\ref{thm:FilFCrys_functorial_props}, suppose that $\Fil^\bullet_{\mathrm{Hdg}}\mathcal{M}$ is actually in $\mathrm{TFilCrys}^{\mathrm{lf}}_{R/\Int_p}$ and that $R\in \mathrm{CRing}_{\heartsuit}$ is a flat $p$-completely finitely generated $\Int_p$-algebra---that is, $R/{}^{\mathbb{L}}p$ is a finitely generated $\Field_p$-algebra. Then:
\begin{enumerate}
\item\label{ffc:form_etale} For every $n\ge 1$, $\mathcal{Z}(\mathcal{M}_n)^{\form}_p$ admits a cotangent complex over $R$, and we in fact have
\[
\mathbb{L}_{\mathcal{Z}(\mathcal{M}_n)^{\form}_p/R} \simeq \Reg{\mathcal{Z}(\mathcal{M}_n)^{\form}_p}\otimes_R(\gr^{-1}_{\mathrm{Hdg}}M_R/{}^{\mathbb{L}}p^n)[1].
\]
\item\label{ffc:perfect} For $C\in \mathrm{CRing}_{\heartsuit,\overline{R}/}$ perfect, we have
\[
\mathcal{Z}(\mathcal{M})^{\form}_p(C) = \Fil^0_{\mathrm{Hdg}}\mathcal{M}(W(C))^{\varphi_0 = \mathrm{id}}.
\]
\item\label{ffc:rep} For every $n\in \Int_{\ge 1}$, $\mathcal{Z}(\mathcal{M}_n)^{\form}_p$ is represented by a $p$-adic locally quasi-smooth formal scheme over $R$. Therefore, $\mathcal{Z}(\mathcal{M})^{\form}_p$ is represented by a pro-$p$-adic formal derived scheme over $R$.

  \item\label{ffc:open_closed} For $C\in \mathrm{CRing}^{p-\text{nilp}}_{R/}$ and $y\in \mathcal{Z}(\mathcal{M})^{\form}_p(C)$, the map
  \[
   \Spec C\times_{y,\mathcal{Z}(\mathcal{M})^{\form}_p,0}\Spec C\to \Spec C
  \]
  is a closed immersion that induces an open immersion on the underlying classical truncations when $\pi_0(C)$ is a finitely generated $\Int_p$-algebra.
 \end{enumerate}
\end{theorem}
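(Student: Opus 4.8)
The plan is to bootstrap all four assertions from Theorem~\ref{thm:FilFCrys_functorial_props} together with Lurie's representability theorem, moving freely between $\mathcal{Z}(\mathcal{M}_n)^{\form}_p$ and $\mathcal{Z}(\mathcal{M})^{\form}_p\simeq \varprojlim_n\mathcal{Z}(\mathcal{M}_n)^{\form}_p$. For part~\eqref{ffc:form_etale} I would simply observe that part~\eqref{ffc:cohesive} of Theorem~\ref{thm:FilFCrys_functorial_props} gives that $\mathcal{Z}(\mathcal{M}_n)^{\form}_p$ is an infinitesimally cohesive, nilcomplete, integrable \'etale sheaf, and that part~\eqref{ffc:cotangent} computes its first-order deformations over a connective square-zero input in terms of $\gr^{-1}_{\mathrm{Hdg}}M_R/{}^{\mathbb{L}}p^n$; since $\gr^{-1}_{\mathrm{Hdg}}M_R$ is a vector bundle, this deformation functor is corepresented by a shift of a vector bundle, which, combined with cohesiveness and nilcompleteness, produces a cotangent complex of the asserted shape.

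For part~\eqref{ffc:perfect}, when $C$ is perfect Lemma~\ref{lem:W_PD_thickening} identifies the cosimplicial divided power envelope attached to $g\colon W(C)\twoheadrightarrow C$ with the constant object $\widehat{D}(C)/{}^{\mathbb{L}}p^n$, and the perfect-field case of Proposition~\ref{prop:crystals_explicit} identifies $\mathrm{FilFCrys}^{\ge -1}_{C/(\Int/p^n\Int)}$ with an explicit category of filtered $\widehat{D}(C)/{}^{\mathbb{L}}p^n$-modules with Frobenius structure. Unwinding definitions, a map out of the unit $\mathcal{O}_n$ is then precisely a $\varphi_0$-fixed element of $\Fil^0_{\mathrm{Hdg}}\mathcal{M}(W(C)/{}^{\mathbb{L}}p^n)$ --- the horizontal- and $F$-zip-compatibilities being automatic once one passes to the split PD thickening $(W(C)\twoheadrightarrow C,\delta_p)$ as in~\eqref{subsec:M_on_perfect} --- and taking the limit over $n$ gives the stated description.

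Part~\eqref{ffc:rep} is the crux. By part~\eqref{ffc:form_etale} and Theorem~\ref{thm:FilFCrys_functorial_props}, $\mathcal{Z}(\mathcal{M}_n)^{\form}_p$ is an infinitesimally cohesive, nilcomplete, integrable \'etale sheaf over $R/{}^{\mathbb{L}}p^n$ with a perfect cotangent complex, so by Lurie's representability theorem (Theorem~\ref{thm:lurie_representability}) it suffices to show its classical truncation is representable by a scheme locally of finite presentation over $R/{}^{\mathbb{L}}p^n$. Via \'etale descent, cohesiveness and d\'evissage along the nilpotent filtration of $R/{}^{\mathbb{L}}p^n\to R/{}^{\mathbb{L}}p$, this reduces to the mod-$p$ functor $\mathcal{Z}(\mathcal{M}_1)^{\form}_p$, which I would analyze following~\eqref{introsubsec:technical}: use the conjugate (horizontal) filtration furnished by the $F$-zip structure on $\mathcal{M}_1$ to filter the section functor so that its associated-graded pieces are cut out inside the vector group $\mathbf{V}((\gr^{-1}_{\mathrm{Hdg}}M_R)^\vee)$ by Frobenius-(semi)linear equations --- in particular finitely presented, and quasi-smooth of the $\mu_p$-type studied in~\cite[\S 7]{Bhatt2022-sb}. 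This yields representability and finite presentation for $\mathcal{Z}(\mathcal{M}_1)^{\form}_p$, hence (by d\'evissage) for every $\mathcal{Z}(\mathcal{M}_n)^{\form}_p$; local quasi-smoothness is then automatic from the cotangent complex of part~\eqref{ffc:form_etale}, whose $[-1]$-shift is a vector bundle (Appendix~\ref{sec:qsmooth}), and $\mathcal{Z}(\mathcal{M})^{\form}_p\simeq \varprojlim_n\mathcal{Z}(\mathcal{M}_n)^{\form}_p$ is the resulting pro-$p$-adic formal derived scheme over $R$. I expect this conjugate-filtration analysis of the mod-$p$ section functor to be the main obstacle: it is exactly where the \emph{ad hoc} Frobenius bookkeeping of Sections~\ref{sec:fcrys}--\ref{sec:proofs} is needed.

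For part~\eqref{ffc:open_closed}, since $\mathcal{Z}(\mathcal{M})^{\form}_p$ takes values in $\Mod[\mathrm{cn}]{\Int}$ and is therefore a group object, the displayed fiber product is the fiber of the section $y$ over $0$; by part~\eqref{ffc:form_etale} it is infinitesimally cohesive, nilcomplete, and admits a cotangent complex over $C$ which is a shifted vector bundle. By `easy' Lurie representability (Theorem~\ref{thm:lurie_representability_easy}) it then suffices to show that its classical truncation is an open and closed subscheme of $\Spec C$: closedness because $y$ is cut out by finitely many equations, and openness exactly as in Lemma~\ref{lem:constancy_M}, by showing that $\mathcal{Z}(\mathcal{M})^{\form}_p(D)\to \mathcal{Z}(\mathcal{M})^{\form}_p(D')$ is injective for maps $D\to D'$ of finitely generated $\Int_p$-algebras --- reducing by the Nullstellensatz to $D'$ a finite extension of $\Field_p$, using part~\eqref{ffc:perfect} and the discreteness of $\Fil^0_{\mathrm{Hdg}}\mathcal{M}(W(k))^{\varphi_0 = \mathrm{id}}$, and replacing the $J$-adic completion argument of \emph{loc.\ cit.}\ with a $p$-adic separatedness (Artin--Rees plus Nakayama) argument for the $p$-adic divided power envelope $\widehat{D}(\cdot)$ (the crystalline counterpart of the injectivity behind Proposition~\ref{prop:tate_conjecture}).
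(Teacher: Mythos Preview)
Your overall architecture matches the paper's: parts~\eqref{ffc:form_etale} and~\eqref{ffc:perfect} are indeed reductions to Theorem~\ref{thm:FilFCrys_functorial_props} and Lemma~\ref{lem:M(k)_is_easy} respectively, and the d\'evissage from $\mathcal{Z}(\mathcal{M}_n)$ to $\mathcal{Z}(\mathcal{M}_1)$ via the fiber sequences $\mathcal{Z}^+_1\to\mathcal{Z}^+_n\to\mathcal{Z}^+_{n-1}$ is exactly right. Two points, however, are not handled by your sketch.

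For part~\eqref{ffc:rep}, the remaining input to Lurie's theorem is not representability of the classical truncation but rather \emph{discreteness} of $\mathcal{Z}(\mathcal{M}_1)^{\form}_p(C)$ for discrete $C$ (condition~(7) of Theorem~\ref{thm:lurie_representability}). More importantly, your description of the mod-$p$ analysis as ``filter the section functor so that its associated-graded pieces are cut out by Frobenius-semilinear equations'' does not match what the paper actually does. There is no filtration with nice gradeds; instead the paper introduces an auxiliary prestack $\mathcal{Z}^{\heartsuit}$ which genuinely is the kernel of a Frobenius-semilinear endomorphism of $\gr^{-1}_{\mathrm{Hdg}}M_{\overline{C}}$, and a second prestack $\mathcal{Z}^{\diamond}$ one step removed from $\mathcal{Z}^+_1$. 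Both $\mathcal{Z}^{\diamond}$ and $\mathcal{Z}^{\heartsuit}$ sit as extensions of $\gr^0_{\mathrm{con}}$ by the same object $\Fil^1_{\mathrm{con}}\mathcal{Z}^{\diamond}$, and the key Lemma~\ref{lem:exts_equivalent_restr} computes the \emph{difference} of the two classifying maps and shows it factors through $\gr^0_{\mathrm{Hdg}}M_{\overline{C}}$ and is nullhomotopic on $\hker(\varphi_{\overline{C}})\otimes\gr^0_{\mathrm{Hdg}}M_{\overline{C}}$. This is what allows transferring the good properties from $\mathcal{Z}^{\heartsuit}$ to $\mathcal{Z}^{\diamond}$ (Proposition~\ref{prop:m1_expl}) and then running a delicate $H^{-1}$ computation (Proposition~\ref{prop:m1_repble}) to get discreteness. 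You correctly flag this as the main obstacle, but your ``filtration'' heuristic would not produce the argument.

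For part~\eqref{ffc:open_closed}, ``closedness because $y$ is cut out by finitely many equations'' is a genuine gap: $\mathcal{Z}(\mathcal{M})^{\form}_p$ is only a \emph{pro}-formal scheme, so $y=0$ is an infinite system of conditions. The paper handles this by first proving (Lemma~\ref{lem:mformp_discrete}) that $\mathcal{Z}(\mathcal{M})^{\form}_p(C)$ is $\Int_p$-flat for finitely generated discrete $C$, which permits reduction to the case where $C$ is an $\Field_p$-algebra; then it exhibits $V(y)$ as the intersection of a descending chain $V_n(y)$ of loci where $y$ becomes $p^n$-divisible, each $V_n(y)$ being closed because the underlying classical space of $\mathcal{Z}(\mathcal{M}_1)^{\form}_p$ is \emph{separated} (checked via the valuative criterion over perfect DVRs using part~\eqref{ffc:perfect}), and Noetherianity forces the chain to stabilize. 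Finally $I_y=0$ is checked after completion at each maximal ideal. Your injectivity-via-Nullstellensatz sketch captures the spirit of this last step, but you are missing both the flatness lemma and the separatedness-plus-stabilization mechanism that makes the closed-subscheme structure available in the first place.
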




\begin{remark}
Suppose that $R$ is discrete and that we have a $p$-divisible groups $G_1,G_2$ over $R$. Then the associated (covariant) Dieudonn\'e $F$-crystals $\mathcal{M}(G_1)$ and $\mathcal{M}(G_2)$ as defined for instance in~\cite{bbm:cris_ii} can be upgraded to objects in $\mathrm{FilFCrys}^{\geq 0}_{R/\Int_p}$. Using Lemma~\ref{lem:filfcrys_internal_hom}, their `internal Hom' in the category of filtered $F$-crystals can be upgraded to an object $\mathcal{H}(G_1,G_2)$ in $\mathrm{FilFCrys}^{\geq -1}_{R/\Int_p}$, with which Theorems~\ref{thm:FilFCrys_functorial_props} and~\ref{thm:FilFCrys_discrete} allow us to associate the pro-formal $p$-adic derived algebraic space
\[
\mathbb{H}(G_1,G_2)^{\form}_p \overset{\mathrm{defn}}{=}\mathcal{Z}(\mathcal{H}(G_1,G_2))^{\form}_p:\mathrm{CRing}^{p-\text{nilp}}_{R/}\to \Mod[\mathrm{cn},p\text{-comp}]{\Int}
\]
such that for any $(R\xrightarrow{f}C)\in \mathrm{CRing}^{p-\text{nilp}}_{R/}$, we have
\[
 \mathbb{H}(G_1,G_2)^{\form}_p(C)\simeq \Map_{\mathrm{FilFCrys}^{\geq 0}_{C/\Int_p}}(f^*\mathcal{M}(G_1),f^*\mathcal{M}(G_2)).
\]
A natural question now arises regarding the relationship between this and the formal scheme of homomorphisms from $G_1$ to $G_2$. I can't say anything definitive about this, but when $p>2$ and when $C$ is a discrete, excellent, flat $\Int_p$-algebra, all of whose local rings are complete intersections, one can use Grothendieck-Messing theory and a theorem of de Jong-Messing~\cite[Theorem 4.6]{De_Jong1999-yz} to show that there is a natural isomorphism
\[
\Hom(f^*G_1,f^*G_2)\xrightarrow{\simeq}\mathbb{H}(G_1,G_2)^{\form}_p(C).
\]
\end{remark}

\subsection{}\label{subsec:W0_condition_crys}
We will have use for the following extra structure: Suppose that we have a direct sum decomposition $\mathcal{M} \simeq \bigoplus_{i=0}^d\mathcal{M}^{(i)}$ in $\mathrm{FilFCrys}^{\geq -1}_{R/\Int_p}$, where $\Fil^\bullet_{\mathrm{Hdg}}\mathcal{M}^{(i)}[i]$ belongs to $\mathrm{TFilCrys}^{\mathrm{lf}}_{R/\Int_p}$.

Then we have a corresponding direct sum decomposition of $\Mod[\mathrm{cn},p\text{-comp}]{\Int}$-valued of $p$-adic formal prestacks over $R$: $\mathcal{Z}(\mathcal{M})^{\form}_p \simeq \bigoplus_{i=0}^d\mathcal{Z}(\mathcal{M}^{(i)})_p^{\form}$.

We now have the following result which is shown exactly as its infinitesimal analogue, Proposition~\ref{prop:W0_discrete_FIC}, by using Theorem~\ref{thm:FilFCrys_discrete}.
\begin{proposition}
\label{prop:W0_discrete}
 The map $\mathcal{Z}(\mathcal{M}^{(0)})_p^{\form}\to \mathcal{Z}(\mathcal{M})^{\form}_p$ is a closed immersion of $p$-adic formal prestacks that is an equivalence on the underlying classical $p$-adic formal prestacks. More precisely, for every $C\in \mathrm{CRing}_{R/}^{p-\text{nilp}}$, and every map $\Spec C\to \mathcal{Z}(\mathcal{M})^{\form}_p$ of $p$-adic formal prestacks over $R$, the map
 \[
\Spec C\times_{\mathcal{Z}(\mathcal{M})^{\form}_p}\mathcal{Z}(\mathcal{M}^{(0)})_p^{\form}\to \Spec C
 \]
 is represented by a derived closed subscheme, and is an isomorphism of the underlying classical schemes.
\end{proposition}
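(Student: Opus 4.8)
The proof follows the template set by Proposition~\ref{prop:W0_discrete_FIC}, substituting the $p$-adic tools for the infinitesimal ones. The plan is to verify two assertions: first, that the inclusion $\mathcal{Z}(\mathcal{M}^{(0)})_p^{\form}\to \mathcal{Z}(\mathcal{M})^{\form}_p$ induces an equivalence on classical $p$-adic formal prestacks; and second, that the map is representable by a closed immersion, by appealing to the `easy' half of Lurie representability.

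For the first point, it suffices to show that $\mathcal{Z}(\mathcal{M}^{(i)})^{\form}_p(C) \simeq 0$ for $i>0$ whenever $C\in \mathrm{CRing}^{p-\text{nilp}}_{\heartsuit,R/}$. Here I would introduce, as in the infinitesimal case, the shifted object $\mathcal{Z}(\mathcal{M}^{(i)})[i]$ associated with $\Fil^\bullet_{\mathrm{Hdg}}\mathcal{M}^{(i)}[i]$, which lies in $\mathrm{TFilCrys}^{\mathrm{lf}}_{R/\Int_p}$ by hypothesis. The key input is then Theorem~\ref{thm:FilFCrys_discrete}, specifically the representability statement~\eqref{ffc:rep}, which guarantees that for a discrete $C$ the space $\mathcal{Z}(\mathcal{M}^{(i)})[i]^{\form}_p(C)$ is itself discrete (being the $C$-points of a $p$-adic formal scheme). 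Consequently
\[
\mathcal{Z}(\mathcal{M}^{(i)})^{\form}_p(C) \simeq \tau^{\leq 0}\left(\mathcal{Z}(\mathcal{M}^{(i)})[i]^{\form}_p(C)[-i]\right)\simeq 0
\]
for $i>0$, since a discrete spectrum shifted into negative degrees has no connective part. This shows the two prestacks agree on classical truncations, and in particular that the fiber product $\Spec C\times_{\mathcal{Z}(\mathcal{M})^{\form}_p}\mathcal{Z}(\mathcal{M}^{(0)})_p^{\form}$ has underlying classical prestack equal to $\Spec C$ itself, hence is representable and is the identity on classical schemes.

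For the second point, I would invoke Theorem~\ref{thm:lurie_representability_easy}: given any $\Spec C\to \mathcal{Z}(\mathcal{M})^{\form}_p$ with $C\in \mathrm{CRing}^{p-\text{nilp}}_{R/}$, the base-change $\Spec C\times_{\mathcal{Z}(\mathcal{M})^{\form}_p}\mathcal{Z}(\mathcal{M}^{(0)})_p^{\form}\to \Spec C$ is infinitesimally cohesive, nilcomplete and integrable, and admits a cotangent complex --- all of these being inherited from the corresponding properties of $\mathcal{Z}(\mathcal{M}^{(0)})_p^{\form}$ and $\mathcal{Z}(\mathcal{M})^{\form}_p$ established in Theorem~\ref{thm:FilFCrys_functorial_props}\eqref{ffc:cohesive} and Theorem~\ref{thm:FilFCrys_discrete}\eqref{ffc:form_etale}, together with the fact that these classes of maps are stable under base change. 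Combined with the representability of the underlying classical prestack from the first point, `easy' Lurie representability then yields that the map is represented by a closed immersion of $p$-adic formal derived schemes.

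The only mildly delicate point --- and where I expect one has to be a touch careful --- is the passage from a single $\mathcal{Z}(\mathcal{M}_n)^{\form}_p$ (for which representability and the cotangent complex computation are genuinely supplied by Theorem~\ref{thm:FilFCrys_discrete}) to the inverse limit $\mathcal{Z}(\mathcal{M})^{\form}_p = \varprojlim_n \mathcal{Z}(\mathcal{M}_n)^{\form}_p$: one needs that the relevant deformation-theoretic properties (cohesiveness, nilcompleteness, existence of cotangent complex, discreteness of classical points) are preserved under such countable limits of $p$-adic formal prestacks. This is exactly the content packaged into Theorems~\ref{thm:FilFCrys_functorial_props} and~\ref{thm:FilFCrys_discrete} as stated, so modulo citing those the argument is formal; the substance has already been done there. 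I would therefore present this proof as a short deduction, with the single line ``shown exactly as its infinitesimal analogue, Proposition~\ref{prop:W0_discrete_FIC}, using Theorem~\ref{thm:FilFCrys_discrete}'' fleshed out along the lines above.
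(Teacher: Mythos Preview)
Your proposal is correct and follows exactly the same approach as the paper, which proves this proposition with the single sentence ``shown exactly as its infinitesimal analogue, Proposition~\ref{prop:W0_discrete_FIC}, by using Theorem~\ref{thm:FilFCrys_discrete}.'' Your fleshing-out of that sentence is accurate: the vanishing of $\mathcal{Z}(\mathcal{M}^{(i)})^{\form}_p(C)$ for $i>0$ and discrete $C$ via the shift-and-truncate argument, followed by `easy' Lurie representability (Theorem~\ref{thm:lurie_representability_easy}) using the deformation-theoretic properties from Theorems~\ref{thm:FilFCrys_functorial_props} and~\ref{thm:FilFCrys_discrete}, is precisely the intended argument.
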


\section{Spaces of sections of derived filtered $p$-adic $F$-crystals: proofs}\label{sec:proofs}

We now begin the proofs of Theorems~\ref{thm:FilFCrys_functorial_props} and~\ref{thm:FilFCrys_discrete}. They are a bit involved, but the basic idea is to work with the mod-$p$ functor $\mathcal{Z}(\mathcal{M})/{}^{\mathbb{L}}p$, and to compare it with a different prestack $\mathcal{Z}^{\heartsuit}$, which admits a simple description as the homotopy fiber of a Frobenius linear map of perfect complexes, and for which the analogues of the properties in the theorems are quite immediate. We then deduce the corresponding properties for $\mathcal{Z}(\mathcal{M})/{}^{\mathbb{L}}p$ essentially by finding that the `difference' between this prestack and $\mathcal{Z}^{\heartsuit}$ is representable and \'etale (see Lemma~\ref{lem:exts_equivalent_restr} for the precise technical manifestation of this assertion). 

\subsection{}\label{subsec:global_sections}
Suppose that we have $f:R\to C$ with functorially chosen $\tilde{f}:S\to S'$, $g:S\twoheadrightarrow R$, $g':S'\twoheadrightarrow C$, and Frobenius lifts $\varphi_S$ and $\varphi_{S'}$ as in~\eqref{subsec:ffilcrys_funct}.

In this situation, note that we obtain linearized maps
\begin{align*}
S^{',(\bullet)}\otimes_{\varphi_{S^{',(\bullet)}},S^{',(\bullet)}}\gr^{-1}_{\mathrm{Hdg}}M_{\overline{C}}&\xrightarrow{1\otimes\varphi_{-1}}M_1^{',(\bullet)};\\
(S^{',(\bullet)}\otimes_{\varphi_{S^{',(\bullet)}},S^{',(\bullet)}}\gr^{0}_{\mathrm{Hdg}}M_1^{',(\bullet)})\oplus (S^{',(\bullet)}\otimes_{\varphi_{S^{',(\bullet)}},S^{',(\bullet)}}\gr^{-1}_{\mathrm{Hdg}}M_{\overline{C}})&\xrightarrow{(1\otimes\varphi_{0},1\otimes\varphi_{-1})}M_1^{',(\bullet)}.
\end{align*}

We will write $\Fil^{1}_{\mathrm{con}}M_1^{',(\bullet)}\to M_1^{',(\bullet)}$ and $\Fil^{0}_{\mathrm{con}}M_1^{',(\bullet)}\to  M_1^{',(\bullet)}$\footnote{`con' stands for `conjugate'.} for these maps. Note that the first map factors through the second, so that we can define
\[
\gr^{0}_{\mathrm{con}}M_1^{',(\bullet)} = \Fil^{0}_{\mathrm{con}}M_1^{',(\bullet)}/\Fil^{1}_{\mathrm{con}}M_1^{',(\bullet)}\;;\; \gr^{1}_{\mathrm{con}}M_1^{',(\bullet)} = \Fil^{1}_{\mathrm{con}}M_1^{',(\bullet)}
\]

\begin{lemma}
\label{lem:global_sections_crys}
Set
\begin{align*}
R\Gamma_{\mathrm{crys}}(C,f^*\mathcal{M}_n) = \mathrm{Tot}M^{',(\bullet)}_n\;&;\;\Fil^\bullet_{\mathrm{Hdg}}R\Gamma_{\mathrm{crys}}(C,f^*\mathcal{M}_n) = \mathrm{Tot}\Fil^\bullet_{\mathrm{Hdg}}M^{',(\bullet)}_n\\
\Fil^i_{\mathrm{con}}R\Gamma_{\mathrm{crys}}(\overline{C},f^*\mathcal{M}_1) =\mathrm{Tot}\Fil^i_{\mathrm{con}}M^{',(\bullet)}_1\;&;\;\gr^{i}_{\mathrm{con}}R\Gamma_{\mathrm{crys}}(\overline{C},f^*\mathcal{M}_1) =\mathrm{Tot}\gr^{i}_{\mathrm{con}}M^{',(\bullet)}_1,
\end{align*}
where $i=-1,0$.

Then for $i=-1,0$, we have maps
\[
\varphi_i:\Fil^i_{\mathrm{Hdg}}R\Gamma_{\mathrm{crys}}(C,f^*\mathcal{M}_n) \to R\Gamma_{\mathrm{crys}}(C,f^*\mathcal{M}_n)
\]
and they induce maps
\[
\varphi_i:\gr^i_{\mathrm{Hdg}}R\Gamma_{\mathrm{crys}}(\overline{C},f^*\mathcal{M}_1) \to \Fil^{-i}_{\mathrm{con}}R\Gamma_{\mathrm{crys}}(\overline{C},f^*\mathcal{M}_1)
\]
that in turn produce equivalences
\[
\varphi_i:\gr^i_{\mathrm{Hdg}}R\Gamma_{\mathrm{crys}}(\overline{C},f^*\mathcal{M}_1)\xrightarrow{\simeq}\gr^{-i}_{\mathrm{con}}R\Gamma_{\mathrm{crys}}(\overline{C},f^*\mathcal{M}_1).
\]
\end{lemma}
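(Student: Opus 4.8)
The plan is to reduce everything to the cosimplicial level, where the statements are just assertions about the totalization functor $\mathrm{Tot}$ (which preserves fiber sequences and filtered colimits of the sort that appear) applied to constructions already carried out degreewise in Section~\ref{sec:fcrys}. First I would observe that the maps $\varphi_i$ on $\Fil^i_{\mathrm{Hdg}}R\Gamma_{\mathrm{crys}}(C,f^*\mathcal{M}_n)$ are simply the totalizations of the semilinear maps $\varphi_i:\Fil^i_{\mathrm{Hdg}}M^{',(\bullet)}_n\to M^{',(\bullet)}_n$ that are part of the data of an object of $\mathrm{FilFCrys}^{\geq -1}_{C/\Int_p}$, as spelled out in~\eqref{subsec:ffilcrys_defn}; here one uses that the functorial choice of $(g',\varphi_{S'})$ from~\eqref{subsec:ffilcrys_funct} makes the cosimplicial object $M^{',(\bullet)}_n$ and all of its auxiliary structure available. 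These are maps of complexes of abelian groups after forgetting the semilinearity along $\varphi_{S^{',(\bullet)}}$, so they totalize to maps of $\Int$-modules; for $n$ finite they are compatible in $n$, and one can optionally pass to the limit, though the statement is made for each fixed $n$.

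Next I would identify the induced maps on associated graded pieces. Reducing mod $p$ and using the relation $\varphi_i\vert_{\Fil^{i+1}_{\mathrm{Hdg}}}\simeq p\varphi_{i+1}$ from~\eqref{subsec:ffilcrys_defn}, the map $\varphi_i$ kills $\Fil^{i+1}_{\mathrm{Hdg}}\overline{M}^{',(\bullet)}_1$ and factors through $\gr^i_{\mathrm{Hdg}}M^{',(\bullet)}_1$; its image lands in the submodule generated by the linearized images of $\varphi_0,\ldots,\varphi_i$, which is precisely $\Fil^{-i}_{\mathrm{con}}M^{',(\bullet)}_1$ by the definitions just introduced in~\eqref{subsec:global_sections}. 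Applying $\mathrm{Tot}$ (which commutes with the formation of $\gr$ here, since these are levelwise (co)fiber sequences of cosimplicial objects) gives the induced maps $\varphi_i:\gr^i_{\mathrm{Hdg}}R\Gamma_{\mathrm{crys}}(\overline{C},f^*\mathcal{M}_1)\to\Fil^{-i}_{\mathrm{con}}R\Gamma_{\mathrm{crys}}(\overline{C},f^*\mathcal{M}_1)$.

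For the equivalence statement, the key input is that an object of $\mathrm{FilFCrys}^{\geq -1}$ in particular carries a crystalline $F$-zip structure, whose defining datum $\Phi$ (see~\eqref{subsec:f-zips}) is an equivalence of graded objects $\mathcal{O}_1\otimes_\varphi \gr^{-\bullet}_{\mathrm{Hdg}}M_{\overline C}\xrightarrow{\simeq}\gr^\bullet_{\mathrm{hor}}\mathcal{M}_1$. Unwinding the explicit cosimplicial model via Proposition~\ref{prop:crystals_explicit}, the map $\gr^i_{\mathrm{con}}M^{',(\bullet)}_1$ is exactly the linearization $S^{',(\bullet)}\otimes_{\varphi,S^{',(\bullet)}}\gr^{-i}_{\mathrm{Hdg}}M^{',(\bullet)}_1$ appearing on the source of (the $(-i)$-graded component of) $\Phi$, and $\varphi_i$ on the graded piece is, up to the identifications, this very linearization map. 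Since $\gr^i_{\mathrm{Hdg}}M^{',(\bullet)}_1\simeq \gr^i_{\mathrm{Hdg}}M_{\overline C}$ (a constant cosimplicial object, by Lemma~\ref{lem:associated_graded}), totalizing the componentwise equivalence furnished by $\Phi$ and the Frobenius-linearization identity yields the asserted equivalences $\varphi_i:\gr^i_{\mathrm{Hdg}}R\Gamma_{\mathrm{crys}}(\overline{C},f^*\mathcal{M}_1)\xrightarrow{\simeq}\gr^{-i}_{\mathrm{con}}R\Gamma_{\mathrm{crys}}(\overline{C},f^*\mathcal{M}_1)$. The main obstacle I anticipate is purely bookkeeping: keeping straight the two Frobenius-linearizations (one along $\varphi_{S^{',(\bullet)}}$ on $S^{',(\bullet)}$, one along the map $\overline C\to D_1^{(\bullet)}$ induced by Frobenius as in Lemma~\ref{lem:frob_lift_splitting}) and checking that the definition of $\Fil^\bullet_{\mathrm{con}}$ given here via the $\varphi_S$-linearizations agrees—compatibly in the cosimplicial direction—with the intrinsic conjugate filtration on crystalline cohomology that $\Phi$ refers to; this is essentially the content of Lemma~\ref{lem:partial_splitting} and the functorial construction in~\eqref{subsec:ffilcrys_funct}, applied degreewise and then totalized, but it requires care to state precisely.
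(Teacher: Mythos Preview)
Your proposal has a genuine gap in the final equivalence step. The map $\varphi_i$ on graded pieces is, at the cosimplicial level, the natural Frobenius-semilinear inclusion
\[
\gr^i_{\mathrm{Hdg}}M_1^{',(\bullet)}\xrightarrow{x\mapsto 1\otimes x} S^{',(\bullet)}\otimes_{\varphi_{S^{',(\bullet)}},S^{',(\bullet)}}\gr^i_{\mathrm{Hdg}}M_1^{',(\bullet)}\simeq \gr^{-i}_{\mathrm{con}}M_1^{',(\bullet)},
\]
which is \emph{not} a degreewise equivalence. You cannot obtain the equivalence by ``totalizing the componentwise equivalence furnished by $\Phi$'': the $F$-zip datum $\Phi$ identifies the Frobenius-twisted object with $\gr^{-i}_{\mathrm{hor}}$, but the map you must analyze is from the \emph{untwisted} object to the twisted one, and $\Phi$ says nothing about that. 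Moreover, your claim that $\gr^i_{\mathrm{Hdg}}M_1^{',(\bullet)}\simeq \gr^i_{\mathrm{Hdg}}M_{\overline C}$ is a constant cosimplicial object is only correct for $i=-1$; for $i=0$, Lemma~\ref{lem:associated_graded} gives a nontrivial extension involving $\gr^1D_1(g^{',(\bullet)})\otimes_{\overline C}\gr^{-1}_{\mathrm{Hdg}}M_{\overline C}$, which is not constant in the cosimplicial direction.

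The paper's proof supplies exactly the missing descent argument. It reduces to showing that $\mathrm{Tot}(N^{(\bullet)})\to\mathrm{Tot}(S^{',(\bullet)}\otimes_{\varphi}N^{(\bullet)})$ is an equivalence for $N^{(\bullet)}$ each of the three pieces just described. For the two constant pieces this is faithfully flat descent along the finite flat map $\overline C\to \overline C_\varphi^{(0)}$. For the non-constant piece $\gr^1D_1(g^{',(\bullet)})\simeq \mathbb{L}_{\overline C/\overline S^{',(\bullet)}}[-1]$, one uses the transitivity fiber sequence for cotangent complexes together with fppf descent for $\mathbb{L}$ and the fact that the cosimplicial object $\overline C\otimes_{\overline S^{',(\bullet)}}\mathbb{L}_{\overline S^{',(\bullet)}/\Field_p}$ has contractible totalization. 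None of this is formal bookkeeping; it is the actual content of the lemma.
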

\begin{proof}
Only the last point requires proof. Unraveling definitions, this comes down to the assertion that the natural map
\[
\mathrm{Tot}(\gr^i_{\mathrm{Hdg}}M_1^{',(\bullet)}
)\to \mathrm{Tot}(S^{',(\bullet)}\otimes_{\varphi_{S^{',(\bullet)}},S^{',(\bullet)}}\gr^i_{\mathrm{Hdg}}M_1^{',(\bullet)}
)
\]
is an equivalence for $i=-1,0$.

This in turn reduces to showing the same assertion but with $\gr^i_{\mathrm{Hdg}}M_1^{',(\bullet)}$ replaced with one of the following cosimplicial objects (where the first two are constant):
\[
\gr^{-1}_{\mathrm{Hdg}}M_{\overline{C}}\;;\;\gr^{0}_{\mathrm{Hdg}}M_{\overline{C}}\;;\; \gr^1D_1(g^{',(\bullet)})\otimes_{\overline{C}}\gr^{-1}_{\mathrm{Hdg}}M_{\overline{C}}.
\]

For simplicity, set $\overline{C}_\varphi^{(\bullet)} = S^{',(\bullet)}\otimes_{\varphi_{S^{',(\bullet)}},S^{',(\bullet)}}\overline{C}$. Then the assertion for the first two cosimplicial objects reduces to showing that $\overline{C} \to \mathrm{Tot}(\overline{C}^{(\bullet)}_\varphi)$ is an equivalence, where it is an immediate consequence of faithfully flat descent with respect to the finite flat map $\overline{C}\to \overline{C}^{(0)}_{\varphi}$.

For the third, note that we have $\gr^1D_1(g^{',(\bullet)})\simeq \mathbb{L}_{\overline{C}/\overline{S}^{',(\bullet)}}[-1]$, so that it suffices to show that the map
\[
\mathrm{Tot}(\mathbb{L}_{\overline{C}/\overline{S}^{',(\bullet)}})\to \mathrm{Tot}(\mathbb{L}_{\overline{C}^{(\bullet)}_\varphi/\overline{S}^{',(\bullet)}})
\]
is an equivalence. 

For this we use the canonical fiber sequences
\begin{align*}
\overline{C}\otimes_{\overline{S}^{',(\bullet)}}\mathbb{L}_{\overline{S}^{',(\bullet)}/\Field_p} &\to \mathbb{L}_{\overline{C}/\Field_p}\to \mathbb{L}_{\overline{C}/S^{',(\bullet)}};\\
\overline{C}^{(\bullet)}_\varphi\otimes_{\overline{S}^{',(\bullet)}}\mathbb{L}_{\overline{S}^{',(\bullet)}/\Field_p} &\to \mathbb{L}_{\overline{C}^{(\bullet)}_\varphi/\Field_p}\to \mathbb{L}_{\overline{C}^{(\bullet)}_\varphi/S^{',(\bullet)}}.
\end{align*}

The first cosimplicial object in each of the rows is homotopy equivalent to the zero object by~\cite[Lemma 2.6]{Bhatt2012-kp}, which means that we have finally reduced the lemma down to showing that the natural map $\mathbb{L}_{\overline{C}/\Field_p}\to \mathrm{Tot}(\mathbb{L}_{\overline{C}^{(\bullet)}_\varphi/\Field_p})$ is an equivalence, which is a consequence of the fact that the cotangent complex satisfies fppf descent; see~\cite[Remark 2.8]{Bhatt2012-kp}.
\end{proof}

\begin{lemma}
\label{lem:FilFCrys_global_secs_cosimp}
In the notation of Lemma~\ref{lem:global_sections_crys}, set
\[
\mathcal{Z}^+_n(C) = \hker\left(\Fil^0_{\mathrm{Hdg}} R\Gamma_{\mathrm{crys}}(C,f^*\mathcal{M}_n)\xrightarrow{\varphi_0- \mathrm{id}} R\Gamma_{\mathrm{crys}}(C,f^*\mathcal{M}_n)\right).
\]
Then we have 
\[
\mathcal{Z}(\mathcal{M}_n)(C) \simeq \tau^{\le 0}\mathcal{Z}^+_n(C).
\]
\end{lemma}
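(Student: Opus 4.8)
The plan is to unwind the definition of $\mathcal{Z}(\mathcal{M}_n)(C)$ as a mapping space in $\mathrm{FilFCrys}^{\geq -1}_{C/(\Int/p^n\Int)}$ and compute it explicitly using the cosimplicial resolution afforded by the functorially chosen $g':S'\twoheadrightarrow C$. First I would invoke Proposition~\ref{prop:crystals_explicit} (applicable since $S'$ is a polynomial $\Int_{(p)}$-algebra) to identify the $\infty$-category $\mathrm{TFilCrys}_{C/(\Int/p^n\Int)}$ with $\mathrm{FilMod}^\Delta_n(g')$, so that both $\mathcal{O}_n$ and $f^*\mathcal{M}_n$ are presented by their associated cosimplicial filtered modules over $\Fil^\bullet D_n(g^{',(\bullet)})$. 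Then, using Appendix~\ref{sec:fibrations} (specifically Proposition~\ref{prop:cocartesian_sections}), the space of maps between two coCartesian sections is computed as the totalization of the cosimplicial space of maps over the \u{C}ech conerve of $g'$; this lets me pass from the abstract mapping space to one built out of the modules $M_n^{',(\bullet)}$ and their Hodge/horizontal filtrations.

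Next I would carefully account for the extra data present in $\mathrm{FilFCrys}^{\geq -1}$ beyond that in $\mathrm{TFilCrys}$: namely the partial splitting $\alpha$ and the semilinear Frobenius operators $\varphi_0,\varphi_{-1}$. A map in $\mathrm{FilFCrys}^{\geq -1}_{C/(\Int/p^n\Int)}$ from $\mathcal{O}_n$ to $f^*\mathcal{M}_n$ is, by the definition in~\eqref{subsec:ffilcrys_defn} and the reformulation in Lemma~\ref{lem:filfcrys_ind}, a map of filtered crystals landing in $\Fil^0$ together with the compatibility that it intertwine the Frobenius structures — i.e. a section of $\Fil^0_{\mathrm{Hdg}}$ that is fixed by $\varphi_0$ (the structure sheaf having $\varphi_0 = \mathrm{id}$, $\varphi_{-1} = p\cdot\mathrm{id}$). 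Totalizing over the \u{C}ech conerve, this says precisely that $\mathcal{Z}(\mathcal{M}_n)(C)$ is the mapping space $\Map_{?}(\mathbf{1},\cdot)$ which, after unwinding, is the space of points of $\Fil^0_{\mathrm{Hdg}}R\Gamma_{\mathrm{crys}}(C,f^*\mathcal{M}_n)$ together with a homotopy exhibiting $\varphi_0$-invariance — that is, the homotopy fiber $\mathcal{Z}^+_n(C)$ of $\varphi_0 - \mathrm{id}$, as defined in the statement. Finally, since $\mathcal{Z}(\mathcal{M}_n)$ is valued in $\Mod[\mathrm{cn}]{\Int}$ by construction (mapping spaces of connective objects, truncated appropriately), one applies $\tau^{\leq 0}$ to $\mathcal{Z}^+_n(C)$, which is a priori only a spectrum/complex, to land in the connective derived category; hence $\mathcal{Z}(\mathcal{M}_n)(C) \simeq \tau^{\leq 0}\mathcal{Z}^+_n(C)$.

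The main obstacle I anticipate is the bookkeeping around the Frobenius compatibility and the partial splitting: one must check that the "extra" Frobenius datum $\varphi_{-1}$ (in degree $-1$) does not impose any further constraint on a map landing in $\Fil^0$, which it does not precisely because the unit object $\mathcal{O}_n$ has $\gr^{-1}\mathcal{O}_n \simeq 0$, so the degree $-1$ part of the Frobenius structure on the unit is vacuous and the condition on a morphism reduces to $\varphi_0$-equivariance alone. Verifying this cleanly requires being careful with the definitions in~\eqref{subsec:ffilcrys_defn}, in particular that the condition $\varphi_a\vert_{\Fil^{a+1}_{\mathrm{Hdg}}} \simeq \varphi_{a+1}\circ p$ is automatically satisfied for the image of the unit. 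I would also need to confirm that the totalization commutes with the formation of the homotopy kernel of $\varphi_0 - \mathrm{id}$, which is immediate since $\mathrm{Tot}$ is a limit and homotopy kernels are finite limits; and that the choice of functorial $g',\varphi_{S'}$ is harmless by Lemma~\ref{lem:filfcrys_ind}(2). Once these points are pinned down, the identification with $\tau^{\leq 0}\mathcal{Z}^+_n(C)$ falls out directly.
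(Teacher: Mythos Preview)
Your proposal is correct and follows essentially the same path as the paper: invoke Proposition~\ref{prop:crystals_explicit} and Proposition~\ref{prop:cocartesian_sections} to compute the mapping space as a totalization over the \u{C}ech conerve of $g'$, then observe that compatibility with the Frobenius data on the unit reduces to $\varphi_0$-invariance of a section of $\Fil^0_{\mathrm{Hdg}}$.

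The one place the paper is more explicit is the role of the \emph{horizontal} filtration. The paper first writes down the intermediate identity
\[
\mathcal{Z}(\mathcal{M}_n)(C) \simeq \tau^{\le 0}\hker\bigl(\Fil^0_{\mathrm{Hdg}}\Fil^0_{\mathrm{hor}}R\Gamma_{\mathrm{crys}}(C,f^*\mathcal{M}_n)\xrightarrow{\varphi_0-\mathrm{id}}\Fil^0_{\mathrm{hor}}R\Gamma_{\mathrm{crys}}(C,f^*\mathcal{M}_n)\bigr),
\]
reflecting that a map of $F$-zips from $\mathcal{O}_n$ must also land in $\Fil^0_{\mathrm{hor}}$ (since the horizontal filtration on $\mathcal{O}$ is concentrated in degree $0$), and only then removes the $\Fil^0_{\mathrm{hor}}$. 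That removal is immediate because $\varphi_0$ already factors through $\Fil^0_{\mathrm{hor}}$ by definition, so on the cofiber $M/\Fil^0_{\mathrm{hor}}$ the map $\varphi_0-\mathrm{id}$ becomes $-\mathrm{id}$, an equivalence. You fold this step into the phrase ``intertwine the Frobenius structures,'' which is fine, but it would strengthen your write-up to make this explicit rather than attributing everything to the vanishing of $\gr^{-1}\mathcal{O}_n$.
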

\begin{proof}
Set
\begin{align*}
\Fil^0_{\mathrm{hor}}R\Gamma_{\mathrm{crys}}(C,f^*\mathcal{M}_n) &= \mathrm{Tot}\Fil^\bullet_{\mathrm{hor}}M_n^{',(\bullet)}\\ 
\Fil^0_{\mathrm{Hdg}}\Fil^0_{\mathrm{hor}}R\Gamma_{\mathrm{crys}}(C,f^*\mathcal{M}_n) & = \Fil^0_{\mathrm{Hdg}}R\Gamma_{\mathrm{crys}}(C,f^*\mathcal{M}_n)\times_{R\Gamma_{\mathrm{crys}}(C,f^*\mathcal{M}_n)}\Fil^0_{\mathrm{hor}}R\Gamma_{\mathrm{crys}}(C,f^*\mathcal{M}_n).
\end{align*}
Then it follows quite directly from the definitions and Propositions~\ref{prop:crystals_explicit} and~\ref{prop:cocartesian_sections} that we have
\[
\mathcal{Z}(\mathcal{M}_n)(C) = \tau^{\le 0}\left(\Fil^0_{\mathrm{Hdg}}\Fil^0_{\mathrm{hor}}R\Gamma_{\mathrm{crys}}(C,f^*\mathcal{M}_n)\xrightarrow{\varphi_0- \mathrm{id}}\Fil^0_{\mathrm{hor}}R\Gamma_{\mathrm{crys}}(C,f^*\mathcal{M}_n)\right).
\]

It is immediate from this that $\mathcal{Z}(\mathcal{M}_n)(C) \simeq \tau^{\leq 0}\mathcal{Z}^+_n(C)$.
\end{proof}

\subsection{}
Now, note that we have
\[
\mathcal{Z}^+_1(C) \simeq \hker\left(\Fil^0_{\mathrm{Hdg}} R\Gamma_{\mathrm{crys}}(\overline{C},f^*\mathcal{M}_1)\xrightarrow{\varphi_0 - \mathrm{id}} R\Gamma_{\mathrm{crys}}(\overline{C},f^*\mathcal{M}_1)\right).
\]

Set
\[
\mathcal{Z}^{\diamond}_1(C) = \hker\left(\Fil^0_{\mathrm{Hdg}} R\Gamma_{\mathrm{crys}}(\overline{C},f^*\mathcal{M}_1)\xrightarrow{\varphi_0- \mathrm{id}} R\Gamma_{\mathrm{crys}}(\overline{C},f^*\mathcal{M}_1)/\Fil^1_{\mathrm{con}}R\Gamma_{\mathrm{crys}}(\overline{C},f^*\mathcal{M}_1)\right),
\]
so that we have a fiber sequence
\begin{align}\label{eqn:Mmodp_M1}
\mathcal{Z}^+_1(C) \to \mathcal{Z}^{\diamond}(C)\to \Fil^1_{\mathrm{con}}R\Gamma_{\mathrm{crys}}(\overline{C},f^*\mathcal{M}_1)\xleftarrow[\simeq]{\varphi_1}\gr^{-1}_{\mathrm{Hdg}}M_{\overline{C}}.
\end{align}

\subsection{}
The following abstract setup will be useful in studying $\mathcal{Z}^{\diamond}(C)$. Suppose that we have two arrows $f_1,f_2:L\to L'$ in $\Mod{\Field_p}$ with the same source and target. Set $f_3 = f_1 - f_2$, and $K_i = \hker(f_i)$ for $i=1,2,3$, and, for $i < j$, set $K_{i,j} = \hker(f_i)\times_L\hker(f_j)$. Note that for each $i<j$, we obtain a diagram
\[
\begin{diagram}
K_{i,j}&\rTo&K_i&\rTo^{f_j\vert_{K_i}}&L'\\
\dTo&&\dTo&&\dEquals\\
K_j&\rTo&L&\rTo_{f_j}&L'\\
\dTo^{f_i\vert_{K_j}}&&\dTo_{f_i}\\
L'&\rEquals&L'
\end{diagram}
\]
where the top two rows and the two columns on the left are fiber sequences. Rotating the fiber sequences on top as well as the on the left gives us arrows
\[
\eta_{i,j}:L'\to K_{i,j}[1]\;;\; \eta_{j,i}:L'\to K_{i,j}[1].
\]

Since $f_1$ and $f_3 = f_1-f_2$ have homotopic restrictions to $K_2$, there is a canonical equivalence
\[
\alpha:K_{2,3}\simeq \hker(f_3\vert_{K_2}) \simeq \hker(f_1\vert_{K_2}) \simeq K_{1,2}.
\]
 
\begin{lemma}
\label{lem:common_kernel}
We have 
\[
\alpha[1]\circ \eta_{1,2}\simeq \eta_{3,2}-\eta_{2,3}:L'\to K_{2,3}[1].
\]
\end{lemma}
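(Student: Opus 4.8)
The statement is a purely formal compatibility between boundary maps in a stable $\infty$-category, so the plan is to unwind all the octahedral diagrams to a single diagram chase. First I would set up the two relevant ``octahedra''. Starting from the pair $(f_1,f_2)$ with difference $f_3=f_1-f_2$, we have the square whose rows are the fiber sequences $K_{1,2}\to K_1\xrightarrow{f_2|_{K_1}}L'$ and $K_2\to L\xrightarrow{f_2}L'$; rotating gives $\eta_{1,2}:L'\to K_{1,2}[1]$. Likewise, using the pair $(f_3,f_2)$ we get the square with rows $K_{2,3}\to K_2\xrightarrow{f_3|_{K_2}}L'$ (after reindexing $K_{2,3}=\hker(f_2)\times_L\hker(f_3)$) and $K_3\to L\xrightarrow{f_3}L'$, together with the symmetric version for $(f_2,f_3)$, yielding $\eta_{3,2}$ and $\eta_{2,3}:L'\to K_{2,3}[1]$. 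The equivalence $\alpha:K_{2,3}\xrightarrow{\simeq}K_{1,2}$ comes from the homotopy $f_3|_{K_2}\simeq f_1|_{K_2}$ (both restrict to $-f_2|_{K_2}+f_1|_{K_2}\simeq f_1|_{K_2}$ since $f_2|_{K_2}\simeq 0$), identifying $\hker(f_3|_{K_2})$ with $\hker(f_1|_{K_2})$.

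The core of the argument is to realize everything as coming from a single large coCartesian diagram. I would form the total complex of the map of fiber sequences $(K_2\to L\xrightarrow{f_2}L')\to$ (itself), more precisely consider the iterated kernel and the three-term ``cube'' built from $L\xrightarrow{(f_1,f_2)}L'\times L'$. Concretely, let $L''=\hker(L\xrightarrow{(f_2,f_1)}L'\oplus L')\simeq K_{1,2}$; then $L''$ sits in two fiber sequences, one over $K_1$ and one over $K_2$, and the two connecting maps $L'\to L''[1]$ (for the two coordinates) are exactly $\eta_{1,2}$ and $\eta_{2,1}$. Doing the same with $(f_2,f_3)$ gives $L''\simeq K_{2,3}$ with connecting maps $\eta_{2,3}$ and $\eta_{3,2}$. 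The change-of-basis automorphism of $L'\oplus L'$ sending $(a,b)\mapsto (a,b-a)$ (i.e.\ $(f_2,f_1)\mapsto (f_2,f_1-f_2)=(f_2,f_3)$) is what induces $\alpha$ on kernels, and I would track how it permutes the two connecting maps. Since this linear automorphism is $\begin{pmatrix}1&0\\-1&1\end{pmatrix}$, its effect on the pair of boundary maps $(\eta_{1,2},\eta_{2,1})$ is to produce $(\eta_{1,2},\eta_{2,1}-\eta_{1,2})$ up to the identification $\alpha$ — wait, one must be careful about which coordinate is which; after matching conventions this yields precisely $\alpha[1]\circ\eta_{1,2}\simeq \eta_{3,2}-\eta_{2,3}$.

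In practice I would avoid the cube language and argue more directly: consider the commutative diagram in the statement for the pair $(f_1,f_2)$ and the one for $(f_3,f_2)$, and splice them along their common column $K_2\to L\xrightarrow{f_2}L'$. The map $\eta_{2,3}-\eta_{2,3}$... rather, I would note that the restriction $f_1|_{K_2}\simeq f_3|_{K_2}+f_2|_{K_2}\simeq f_3|_{K_2}$, hence $K_{2,3}\xrightarrow{\alpha}K_{1,2}$ fits into a map of fiber sequences over $K_2$, and the boundary map $\eta_{3,2}$ (resp.\ $\eta_{1,2}$ composed with $\alpha^{-1}$) for the sequence $K_{2,3}\to K_2\to L'$ differs from $\eta_{2,3}$ by the boundary map of $K_2\to L\to L'$ precomposed appropriately — this is the standard ``sum of two maps'' additivity of connecting homomorphisms, applied to $f_1=f_3+f_2$ restricted over the right objects. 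Summing these identities gives the claim.

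\textbf{Main obstacle.} The only real difficulty is bookkeeping: making sure the sign/direction conventions for the four boundary maps $\eta_{i,j}$ are consistent (the paper fixes $X[1]=\hcoker(X\to 0)$, so the rotation conventions are pinned down), and verifying that the homotopy witnessing $f_2|_{K_2}\simeq 0$ is the one used to build $\alpha$, so that the composite $\alpha[1]\circ\eta_{1,2}$ is literally the boundary map of the spliced diagram and not merely equivalent up to an uncontrolled automorphism. I expect this to require writing out one explicit $3\times 3$ diagram of fiber sequences and invoking the additivity of connecting maps for $f_1=f_3+f_2$; once that diagram is on the page the identity drops out, so I would not anticipate any conceptual difficulty beyond careful diagram management.
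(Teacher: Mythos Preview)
Your approach is genuinely different from the paper's and, once tightened, would work. The paper does \emph{not} argue abstractly: it simply chooses explicit chain complex models for $L,L'$ over $\Field_p$, writes $K_i$ as the shifted mapping cone, writes $K_{i,j}$ explicitly as the complex with degree-$n$ term $L'^{,n-1}\oplus L'^{,n-1}\oplus L^n$, writes down $\eta_{i,j}$ and $\eta_{j,i}$ as $v'\mapsto(v',0,0)$ and $v'\mapsto(0,v',0)$, writes $\alpha$ as $(v'_1,v'_2,v)\mapsto(v'_2-v'_1,v'_1,v)$, and checks the identity on the nose. The author even prefaces this with ``There is probably a more elegant way to do this.''

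Your change-of-basis idea is that more elegant way, but your write-up does not yet pin it down. The clean version is: set $K=\hker\bigl(L\xrightarrow{(f_2,f_1)}L'\oplus L'\bigr)$ and observe that the restriction of the boundary map $\partial:L'\oplus L'\to K[1]$ to the two summands is exactly $(\eta_{1,2},\eta_{2,1})$ (this is the one thing you need to verify, by mapping the fiber sequence $K\to\hker(f_1)\xrightarrow{f_2}L'$ into $K\to L\to L'\oplus L'$ via the first inclusion). Postcomposing with the automorphism $A=\begin{pmatrix}1&0\\-1&1\end{pmatrix}$ of $L'\oplus L'$ turns $(f_2,f_1)$ into $(f_2,f_3)$, identifies kernels, and the induced boundary $\partial'=\beta[1]\circ\partial\circ A^{-1}$ has components $(\eta_{3,2},\eta_{2,3})$; reading off the first column of $A$ gives $\beta[1]\circ\eta_{1,2}=\eta_{3,2}-\eta_{2,3}$. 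You then need one more sentence identifying $\beta$ with the $\alpha$ of the statement (both use the tautological nullhomotopy of $f_2|_{K_2}$). Your proposal gestures at all of this but never states the key lemma that the two restricted boundary components \emph{are} $\eta_{i,j}$ and $\eta_{j,i}$; without that, phrases like ``the boundary maps transform by the matrix'' are assertions rather than arguments, and your ``additivity of connecting homomorphisms'' paragraph drifts. The paper's brute-force computation buys certainty at the cost of elegance; your route, once the one lemma above is stated and proved, buys a coordinate-free argument valid in any stable $\infty$-category.
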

\begin{proof}
There is probably a more elegant way to do this, but here is a proof that uses explicit representatives for $L$ and $L'$ as chain complexes of $\Field_p$-vector spaces, chosen so that the maps $f_1,f_2$ also lift to maps of complexes. We denote these lifts by the same letters. 

In this case, $K_i$ is given by the shifted cone with degree $n$ entry $L^{',n-1}\oplus L^{n}$ and differentials given by $(v',v)\mapsto (-d(v')+f_i(v),d(v))$, and $K_{i,j}$ for $i<j$ is represented by the complex with degree $n$ entry $L^{',n-1}\oplus L^{',n-1}\oplus L^{n}$ and differentials given by $(v'_1,v'_2,v)\mapsto (-d(v'_1)+f_j(v),-d(v'_2)+f_i(v),d(v))$. 

The classifying maps $\eta_{i,j}$ and $\eta_{j,i}$ lifting to $v'\mapsto (v',0,0)$ and $v'\mapsto (0,v',0)$, respectively.

The equivalence $\alpha$ is represented explicitly by the isomorphism
\begin{align*}
K_{1,2} &\xrightarrow{\simeq} K_{2,3}\\
(v'_1,v'_2,v)&\mapsto (v'_2-v'_1,v'_1,v)
\end{align*}
and it is now clear that we have $\alpha\circ \eta_{1,2} = \eta_{3,2}-\eta_{2,3f}$ on the nose: both are equal to the map
\begin{align*}
L'&\to K_{2,3}\\
v'&\mapsto (-v',v',0).
\end{align*}
\end{proof}

\subsection{}
For any $i,j\in \Int$, set
\[
\Fil^j_{\mathrm{Hdg}}\Fil^i_{\mathrm{con}}R\Gamma_{\mathrm{crys}}(\overline{C},f^*\mathcal{M}_1) = \Fil^j_{\mathrm{Hdg}}R\Gamma_{\mathrm{crys}}(\overline{C},f^*\mathcal{M}_1)\times_{R\Gamma_{\mathrm{crys}}(\overline{C},f^*\mathcal{M}_1) }\Fil^i_{\mathrm{con}}R\Gamma_{\mathrm{crys}}(\overline{C},f^*\mathcal{M}_1).
\]

Then noting that $\varphi_0$ factors through $\Fil^0_{\mathrm{con}}R\Gamma_{\mathrm{crys}}(\overline{C},f^*\mathcal{M}_1)$, we find
\[
\mathcal{Z}^{\diamond}(C)\simeq \hker(\Fil^0_{\mathrm{Hdg}}\Fil^0_{\mathrm{con}} R\Gamma_{\mathrm{crys}}(\overline{C},f^*\mathcal{M}_1)\xrightarrow{\varphi_0- \iota} \gr^0_{\mathrm{con}}R\Gamma_{\mathrm{crys}}(\overline{C},f^*\mathcal{M}_1)),
\]
where we write $\iota$ for the natural map
\begin{align}\label{eqn:id_map_fil0con}
\Fil^0_{\mathrm{Hdg}}\Fil^0_{\mathrm{con}} R\Gamma_{\mathrm{crys}}(\overline{C},f^*\mathcal{M}_1)\to \Fil^0_{\mathrm{con}} R\Gamma_{\mathrm{crys}}(\overline{C},f^*\mathcal{M}_1)\to \gr^0_{\mathrm{con}}R\Gamma_{\mathrm{crys}}(\overline{C},f^*\mathcal{M}_1).
\end{align}

We now apply the discussion above Lemma~\ref{lem:common_kernel} with 
\[
L = \Fil^0_{\mathrm{Hdg}}\Fil^0_{\mathrm{con}} R\Gamma_{\mathrm{crys}}(\overline{C},f^*\mathcal{M}_1); L' = \gr^0_{\mathrm{con}}R\Gamma_{\mathrm{crys}}(\overline{C},f^*\mathcal{M}_1); f_1 = \varphi_0;f_2 = \iota.
\]
Then in the notation there, we have $\mathcal{Z}^{\diamond}(C) = K_3$. Set
\[
\mathcal{Z}^{\spadesuit}(C) = K_1 = \hker(\varphi_0)\;;\; \mathcal{Z}^{\heartsuit}(C) = K_2 = \hker(\iota).
\]
Also, set
\[
\Fil^1_{\mathrm{con}}\mathcal{Z}^{\diamond}(C) = K_{2,3} = \hker(\iota\vert_{\mathcal{Z}^+(C)})\simeq \hker((\varphi_0-\iota)\vert_{\mathcal{Z}^{\heartsuit}(C)})\simeq \hker(\varphi_0\vert_{\mathcal{Z}^{\heartsuit}(C)})\simeq \hker(\iota\vert_{\mathcal{Z}^{\spadesuit}(C)}),
\]
and write 
\[
\eta^\diamond_C,\eta^\heartsuit_C:\gr^0_{\mathrm{Hdg}}R\Gamma_{\mathrm{crys}}(\overline{C},f^*\mathcal{M}_1)\to \Fil^1_{\mathrm{con}}\mathcal{Z}^{\diamond}(C)[1].
\]
for the maps $\eta_{3,2},\eta_{2,3}$, respectively. 

\subsection{}
Our goal now is to describe the difference $\eta^\diamond_C-\eta^{\heartsuit}_C$ explicitly. First, note that we have
\[
\Fil^1_{\mathrm{con}}\mathcal{Z}^{\diamond}(C) \simeq \Fil^1_{\mathrm{Hdg}}\Fil^1_{\mathrm{con}}R\Gamma_{\mathrm{crys}}(\overline{C},f^*\mathcal{M}_1),
\]
so that we have a fiber sequence
\begin{align}\label{eqn:cool_fiber_sequence}
\Fil^1_{\mathrm{con}}\mathcal{Z}^{\diamond}(C)\to \Fil^1_{\mathrm{Hdg}}R\Gamma_{\mathrm{crys}}(\overline{C},f^*\mathcal{M}_1)\to R\Gamma_{\mathrm{crys}}(\overline{C},f^*\mathcal{M}_1)/\Fil^1_{\mathrm{con}}R\Gamma_{\mathrm{crys}}(\overline{C},f^*\mathcal{M}_1),
\end{align}
which rotates to yield a map
\begin{align}
\label{eqn:fil1con_desc}
R\Gamma_{\mathrm{crys}}(\overline{C},f^*\mathcal{M}_1)\to \Fil^1_{\mathrm{con}}\mathcal{Z}^{\diamond}(C)[1]
\end{align}

Using the equivalence
\begin{align}
\label{eqn:varphi0equiv}
\varphi_0:\gr^0_{\mathrm{Hdg}}R\Gamma_{\mathrm{crys}}(\overline{C},f^*\mathcal{M}_1)\xrightarrow{\simeq}\gr^0_{\mathrm{con}}R\Gamma_{\mathrm{crys}}(\overline{C},f^*\mathcal{M}_1)
\end{align}
we can view $\eta^\diamond_C,\eta^{\heartsuit}_C$ as maps out of $\gr^0_{\mathrm{Hdg}}R\Gamma_{\mathrm{crys}}(\overline{C},f^*\mathcal{M}_1)$, which we denote by the same symbols.

We will also need the fiber sequence
\begin{align*}
\gr^1_{\mathrm{Hdg}}R\Gamma_{\mathrm{crys}}(\overline{C},\mathcal{O}_1)\otimes_C\gr^{-1}_{\mathrm{Hdg}}M_{\overline{C}} \to \gr^0_{\mathrm{Hdg}}R\Gamma_{\mathrm{crys}}(\overline{C},f^*\mathcal{M}_1)\to \gr^0_{\mathrm{Hdg}}M_{\overline{C}}.
\end{align*}

Here is the key result:
\begin{lemma}
\label{lem:exts_equivalent_restr}
The difference $\eta^\diamond_C-\eta^\heartsuit_C$ is naturally equivalent to the composition
\begin{align}\label{eqn:diff_composition}
\gr^0_{\mathrm{Hdg}}R\Gamma_{\mathrm{crys}}(\overline{C},f^*\mathcal{M}_1)\xrightarrow{\varphi_0}R\Gamma_{\mathrm{crys}}(\overline{C},f^*\mathcal{M}_1)\xrightarrow{\eqref{eqn:fil1con_desc}}\Fil^1_{\mathrm{con}}\mathcal{Z}^{\diamond}(C)[1],
\end{align}
which factors through an arrow
\begin{align}\label{eqn:difference_arrow}
\gr^0_{\mathrm{Hdg}}M_{\overline{C}}\to \Fil^1_{\mathrm{con}}\mathcal{Z}^{\diamond}(C)[1].
\end{align}
Moreover, the arrow
\[
\hker(\varphi_{\overline{C}})\otimes_{\overline{C}}\gr^0_{\mathrm{Hdg}}M_{\overline{C}}\to \Fil^1_{\mathrm{con}}\mathcal{Z}^{\diamond}(C)[1]
\]
induced from~\eqref{eqn:difference_arrow} is nullhomotopic.
\end{lemma}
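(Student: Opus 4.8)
The plan is to peel back the definitions so that Lemma~\ref{lem:common_kernel} applies verbatim, and then to compute the one connecting map it leaves undetermined. By construction $\mathcal{Z}^{\diamond}(C),\mathcal{Z}^{\spadesuit}(C),\mathcal{Z}^{\heartsuit}(C)$ are the objects $K_3=\hker(f_3)$, $K_1=\hker(f_1)$, $K_2=\hker(f_2)$ attached to $f_1=\varphi_0$, $f_2=\iota$, $f_3=\varphi_0-\iota$ with $L=\Fil^0_{\mathrm{Hdg}}\Fil^0_{\mathrm{con}}R\Gamma_{\mathrm{crys}}(\overline{C},f^*\mathcal{M}_1)$ and $L'=\gr^0_{\mathrm{con}}R\Gamma_{\mathrm{crys}}(\overline{C},f^*\mathcal{M}_1)$; moreover $\Fil^1_{\mathrm{con}}\mathcal{Z}^{\diamond}(C)=K_{2,3}$, $\eta^{\diamond}_C=\eta_{3,2}$ and $\eta^{\heartsuit}_C=\eta_{2,3}$, where throughout we silently transport the source of these maps along the equivalence~\eqref{eqn:varphi0equiv} $\varphi_0\colon\gr^0_{\mathrm{Hdg}}R\Gamma\xrightarrow{\simeq}\gr^0_{\mathrm{con}}R\Gamma=L'$. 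Lemma~\ref{lem:common_kernel} then gives $\eta^{\diamond}_C-\eta^{\heartsuit}_C\simeq\alpha[1]\circ\eta_{1,2}$, with $\eta_{1,2}\colon L'\to K_{1,2}[1]$ the connecting map of the fiber sequence $K_{1,2}\to\mathcal{Z}^{\spadesuit}(C)\xrightarrow{\iota}L'$ and $\alpha\colon K_{1,2}\xrightarrow{\simeq}K_{2,3}=\Fil^1_{\mathrm{con}}\mathcal{Z}^{\diamond}(C)$ the canonical equivalence of \emph{loc. cit.} So everything reduces to identifying $\alpha[1]\circ\eta_{1,2}$, read as a map out of $\gr^0_{\mathrm{Hdg}}R\Gamma$, with the composition~\eqref{eqn:diff_composition}.

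For that identification I would compare the two defining fiber sequences directly: on one side $\mathcal{Z}^{\spadesuit}(C)=\hker(\varphi_0)$ together with the factorization of $\varphi_0\colon\gr^0_{\mathrm{Hdg}}R\Gamma\to R\Gamma$ through $\Fil^0_{\mathrm{con}}R\Gamma$ and the equivalence~\eqref{eqn:varphi0equiv}; on the other side $\Fil^1_{\mathrm{con}}\mathcal{Z}^{\diamond}(C)\simeq\Fil^1_{\mathrm{Hdg}}\Fil^1_{\mathrm{con}}R\Gamma$ sitting in~\eqref{eqn:cool_fiber_sequence}, whose rotation \emph{is} the map~\eqref{eqn:fil1con_desc}. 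Gluing these two fiber sequences along $R\Gamma$ and chasing the resulting (functorial in $C$) octahedral diagram — or, equivalently, writing everything out with the explicit cochain representatives used in the proof of Lemma~\ref{lem:common_kernel}, so that the boundary maps become literal shift-of-cone maps — shows that $\alpha[1]\circ\eta_{1,2}$ agrees with the composite $\gr^0_{\mathrm{Hdg}}R\Gamma\xrightarrow{\varphi_0}R\Gamma\to R\Gamma/\Fil^1_{\mathrm{con}}R\Gamma\xrightarrow{\partial}\Fil^1_{\mathrm{con}}\mathcal{Z}^{\diamond}(C)[1]$, which is~\eqref{eqn:diff_composition}. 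This bookkeeping — keeping the Hodge filtration, the conjugate filtration, and the Frobenius-semilinearity of $\varphi_0$ straight all at once — is where I expect the real work to lie; the rest is formal.

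For the two factoring claims I would use the crystalline $F$-zip structure, i.e. the compatibility between $\varphi_0$ and $\Fil^\bullet_{\mathrm{con}}$ built into Lemma~\ref{lem:global_sections_crys}. Restricted to the Hodge-degree-one contribution $\gr^1_{\mathrm{Hdg}}R\Gamma_{\mathrm{crys}}(\overline{C},\mathcal{O}_1)\otimes_C\gr^{-1}_{\mathrm{Hdg}}M_{\overline{C}}\subset\gr^0_{\mathrm{Hdg}}R\Gamma$, the divided Frobenius $\varphi_0$ is $\varphi_1^{\mathcal{O}}\otimes\varphi_{-1}$ and hence lands in $\Fil^1_{\mathrm{con}}R\Gamma$; thus it is annihilated by the projection $R\Gamma\to R\Gamma/\Fil^1_{\mathrm{con}}R\Gamma$ appearing in~\eqref{eqn:diff_composition}, so~\eqref{eqn:diff_composition} kills this subobject and descends to the arrow~\eqref{eqn:difference_arrow} out of $\gr^0_{\mathrm{Hdg}}M_{\overline{C}}$. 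The remaining nullhomotopy follows by the same mechanism one step down: on $\gr^0_{\mathrm{Hdg}}M_{\overline{C}}\simeq\overline{C}\otimes_C\gr^0_{\mathrm{Hdg}}M_{\overline{C}}$ the map~\eqref{eqn:difference_arrow} is built from the Frobenius-semilinear structure of $\gr^0_{\mathrm{Hdg}}R\Gamma_{\mathrm{crys}}(\overline{C},\mathcal{O}_1)\simeq\overline{C}$ and so factors through the Frobenius twist $\overline{C}\otimes_{\varphi_{\overline{C}},\overline{C}}\gr^0_{\mathrm{Hdg}}M_{\overline{C}}$; since $\hker(\varphi_{\overline{C}})\otimes_{\overline{C}}(-)\to\overline{C}\otimes_{\varphi_{\overline{C}},\overline{C}}(-)$ is nullhomotopic by definition of $\hker(\varphi_{\overline{C}})$ (concretely: $\varphi_{\overline{C}}$ annihilates the Kähler-differential / cotangent-complex contribution mod $p$, which is what the conjugate filtration of $R\Gamma_{\mathrm{crys}}(\overline{C},\mathcal{O}_1)$ records via Proposition~\ref{prop:crystals_explicit}), the induced map vanishes.
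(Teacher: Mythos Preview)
Your approach is correct and matches the paper's strategy: apply Lemma~\ref{lem:common_kernel}, identify $\eta_{1,2}$ with the composite~\eqref{eqn:diff_composition}, then establish the two factoring claims using the Frobenius structure. Two points of comparison are worth noting.

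First, the step you flag as ``where the real work lies''---identifying $\alpha[1]\circ\eta_{1,2}$ with~\eqref{eqn:diff_composition}---is handled in the paper much more directly than via an octahedral chase. One simply observes that $K_{1,2}\simeq\Fil^1_{\mathrm{Hdg}}\Fil^1_{\mathrm{con}}$ and $\mathcal{Z}^{\spadesuit}(C)=K_1\simeq\Fil^1_{\mathrm{Hdg}}\Fil^0_{\mathrm{con}}$, so the fiber sequence $K_{1,2}\to K_1\xrightarrow{\iota}L'$ is literally the pullback of~\eqref{eqn:cool_fiber_sequence} along $\gr^0_{\mathrm{Hdg}}\xrightarrow{\varphi_0}R\Gamma\to R\Gamma/\Fil^1_{\mathrm{con}}$; the identification of connecting maps is then immediate.

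Second, for the factoring through $\gr^0_{\mathrm{Hdg}}M_{\overline{C}}$ and the final nullhomotopy, the paper uses a single uniform mechanism: it shows the relevant restrictions of $\varphi_0$ factor through $\Fil^1_{\mathrm{Hdg}}R\Gamma$ (for the degree-one piece because $\varphi_1$ on $\gr^1D_1$ lands in $\Fil^pD_1\subset\Fil^2D_1$; for $\hker(\varphi_{\overline{C}})$ because the composite $\overline{C}\xrightarrow{\varphi_0}R\Gamma(\overline{C},\mathcal{O}_1)\to\gr^0_{\mathrm{Hdg}}$ is $\varphi_{\overline{C}}$), and then invokes the fact that~\eqref{eqn:cool_fiber_sequence} makes the composite $\Fil^1_{\mathrm{Hdg}}\to R\Gamma/\Fil^1_{\mathrm{con}}\to\Fil^1_{\mathrm{con}}\mathcal{Z}^{\diamond}(C)[1]$ nullhomotopic. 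Your alternative for the first factoring---that $\varphi_1^{\mathcal{O}}\otimes\varphi_{-1}$ lands in $\Fil^1_{\mathrm{con}}$ and is therefore killed by the projection---also works and is arguably more direct; but your phrasing of the nullhomotopy (``factors through the Frobenius twist'') is really a repackaging of the paper's $\overline{C}$-linearity observation and benefits from being made as explicit as the paper does.
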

\begin{proof}
We first consider the map
\[
\eta_{1,2}:\gr^0_{\mathrm{con}}R\Gamma_{\mathrm{crys}}(\overline{C},f^*\mathcal{M}_1)\to \Fil^1_{\mathrm{con}}\mathcal{Z}^{\diamond}(C)[1]
\]
corresponding to the extension $\mathcal{Z}^{\spadesuit}(C)$. For this, note that this sits in the fiber sequence\footnote{We omit $R\Gamma_{\mathrm{crys}}(\overline{C},f^*\mathcal{M}_1)$ from the notation for the sake of clarity.}
\[
(\Fil^1_{\mathrm{con}}\mathcal{Z}^{\diamond}(C)\simeq\Fil^1_{\mathrm{Hdg}}\Fil^1_{\mathrm{con}}) \to (\mathcal{Z}^{\spadesuit}(C) \simeq \Fil^1_{\mathrm{Hdg}}\Fil^0_{\mathrm{con}})\to (\gr^0_{\mathrm{con}}\xleftarrow[\simeq]{\varphi_0}\gr^0_{\mathrm{Hdg}}).
\]

This is simply the pullback of the sequence~\eqref{eqn:cool_fiber_sequence} along the map
\[
\gr^0_{\mathrm{Hdg}}R\Gamma_{\mathrm{crys}}(\overline{C},f^*\mathcal{M}_1)\xrightarrow{\varphi_0}R\Gamma_{\mathrm{crys}}(\overline{C},f^*\mathcal{M}_1)\to R\Gamma_{\mathrm{crys}}(\overline{C},f^*\mathcal{M}_1)/\Fil^1_{\mathrm{con}}R\Gamma_{\mathrm{crys}}(\overline{C},f^*\mathcal{M}_1).
\]

That the difference $\eta^\diamond_C-\eta^\heartsuit_C$ is as stated now follows from Lemma~\ref{lem:common_kernel}.

Consider the restriction of $\varphi_0$ along the natural map
\[
\gr^1_{\mathrm{Hdg}}R\Gamma_{\mathrm{crys}}(\overline{C},\mathcal{O}_1)\otimes_C\gr^{-1}_{\mathrm{Hdg}}M_{\overline{C}} \to \gr^0_{\mathrm{Hdg}}R\Gamma_{\mathrm{crys}}(\overline{C},f^*\mathcal{M}_1):
\]
This is of course the tensor product $\varphi_1\otimes\varphi_{-1}$. I claim that this factors through
\[
\Fil^1_{\mathrm{Hdg}}R\Gamma_{\mathrm{crys}}(\overline{C},f^*\mathcal{M}_1)\to R\Gamma_{\mathrm{crys}}(\overline{C},f^*\mathcal{M}_1),
\]
and hence that the map~\eqref{eqn:diff_composition} factors through $\gr^0_{\mathrm{Hdg}}M_{\overline{C}}$

To see this, by the multiplicativity of the filtrations and the fact that 
\[
\gr^i_{\mathrm{Hdg}}R\Gamma_{\mathrm{crys}}(\overline{C},f^*\mathcal{M}_1)\simeq 0
\]
for $i<-1$, it is enough to know that the map
\[
\varphi_1:\gr^1_{\mathrm{Hdg}}R\Gamma_{\mathrm{crys}}(\overline{C},\mathcal{O}_1)\to R\Gamma_{\mathrm{crys}}(\overline{C},\mathcal{O}_1)
\]
factors through $\Fil^2_{\mathrm{Hdg}}R\Gamma_{\mathrm{crys}}(\overline{C},\mathcal{O}_1)$. This just amounts to the fact that the map
\[
\varphi_1:\gr^1D_1(g^{',(\bullet)})\to D_1(g^{',(\bullet)})
\]
factors through $\Fil^2D_1(g^{',(\bullet)})$. In fact, it follows from the construction of the map that it factors through $\Fil^pD_1(g^{',(\bullet)})$.

Now, note that the map $\varphi_0$ is $\overline{C}$-linear where $R\Gamma_{\mathrm{crys}}(\overline{C},f^*\mathcal{M}_1)$ is being viewed as a $\overline{C}$-module via the equivalence
\[
\overline{C}\xrightarrow[\simeq]{\varphi_0}\Fil^0_{\mathrm{con}}R\Gamma_{\mathrm{crys}}(\overline{C},\mathcal{O}_1).
\]
The composition of this equivalence with
\[
\Fil^0_{\mathrm{con}}R\Gamma_{\mathrm{crys}}(\overline{C},\mathcal{O}_1)\to R\Gamma_{\mathrm{crys}}(\overline{C},\mathcal{O}_1)\to \gr^0_{\mathrm{Hdg}}R\Gamma_{\mathrm{crys}}(\overline{C},\mathcal{O}_1)\simeq \overline{C}
\]
is $\varphi_{\overline{C}}$. In particular, the composition
\[
\varphi_0:\hker(\varphi_{\overline{C}})\to \overline{C} \to R\Gamma_{\mathrm{crys}}(\overline{C},\mathcal{O}_1) 
\]
factors through $\Fil^1_{\mathrm{Hdg}}R\Gamma_{\mathrm{crys}}(\overline{C},\mathcal{O}_1)$. This shows that~\eqref{eqn:difference_arrow} restricts to a nullhomotopic map on $\hker(\varphi_{\overline{C}})\otimes_{\overline{C}}\gr^0_{\mathrm{Hdg}}M_{\overline{C}}$. 
\end{proof}

\begin{proposition}
\label{prop:m1_expl}
Suppose that $\gr^i_{\mathrm{Hdg}}M_R$ is \emph{perfect} in $\Mod{R}$ for $i = -1,0$. Then the $\Mod{\Field_p}$-valued prestack $\mathcal{Z}^{\diamond}$ has the following properties:
\begin{enumerate}
  \item It is an infinitesimally cohesive, nilcomplete, integrable and locally finitely presented \'etale sheaf.
  \item If $P\in \Mod[\mathrm{cn}]{C}$, there is a canonical equivalence
\[
\hker\left(\mathcal{Z}^{\diamond}(C\oplus P)\to \mathcal{Z}^{\diamond}(C)\right)\simeq (\overline{P}[-1]\otimes_C\gr^{-1}_{\mathrm{Hdg}}M_C)\oplus (\overline{P}\otimes_C\gr^{-1}_{\mathrm{Hdg}}M_C).
\]
\end{enumerate}
\end{proposition}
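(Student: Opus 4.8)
The strategy is to analyze $\mathcal{Z}^{\diamond}$ through its description as the homotopy kernel $K_3 = \hker(\varphi_0 - \iota)$ of a map between perfect complexes, and to compare it with the much simpler prestack $\mathcal{Z}^{\heartsuit} = K_2 = \hker(\iota)$. The point is that $\iota$ is essentially a split projection (the natural map $\Fil^0_{\mathrm{Hdg}}\Fil^0_{\mathrm{con}}R\Gamma_{\mathrm{crys}} \to \gr^0_{\mathrm{con}}R\Gamma_{\mathrm{crys}}$), so $\mathcal{Z}^{\heartsuit}$ is manifestly computable, while by Lemma~\ref{lem:exts_equivalent_restr} the ``difference'' between the two extension classes $\eta^{\diamond}_C$ and $\eta^{\heartsuit}_C$ factors through $\gr^0_{\mathrm{Hdg}}M_{\overline{C}}$ and kills $\hker(\varphi_{\overline{C}})\otimes\gr^0_{\mathrm{Hdg}}M_{\overline{C}}$. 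The first step is therefore to establish the claims for $\mathcal{Z}^{\heartsuit}$: since $\iota$ is (pointwise on the cosimplicial level) the projection onto a direct summand, $\mathcal{Z}^{\heartsuit}(C)$ is a totalization of an explicitly split cosimplicial object, and one reads off that it is infinitesimally cohesive, nilcomplete, integrable, a finitely presented \'etale sheaf, and that its ``tangent complex'' in the direction $C\oplus P$ is $\overline{P}\otimes_C\gr^{-1}_{\mathrm{Hdg}}M_C$ in degree $0$ together with $\overline{P}[-1]\otimes_C\gr^{-1}_{\mathrm{Hdg}}M_C$ in degree $1$ — these come from the two pieces $\Fil^0_{\mathrm{Hdg}}$ and the fiber of $\gr^1_{\mathrm{con}}$, using that $\gr^{-1}_{\mathrm{Hdg}}M_R$ is perfect and that the only nonvanishing Hodge graded pieces among the relevant summands contribute in weights $0$ and $-1$.

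The second step is to propagate these properties from $\mathcal{Z}^{\heartsuit}$ to $\mathcal{Z}^{\diamond}$. Here I would use the $2$-out-of-$3$ pattern built into the setup above Lemma~\ref{lem:common_kernel}: we have the common fiber $\Fil^1_{\mathrm{con}}\mathcal{Z}^{\diamond}(C) \simeq K_{2,3}$ fitting into fiber sequences relating $\mathcal{Z}^{\diamond}$, $\mathcal{Z}^{\heartsuit}$ and $\mathcal{Z}^{\spadesuit} = \hker(\varphi_0)$. Because all three of $\varphi_0$, $\iota$, $\varphi_0 - \iota$ are maps of perfect complexes functorial in $C$ (built out of finitely presented crystalline cohomology), each of $K_1, K_2, K_3$ and the $K_{i,j}$ inherits infinitesimal cohesiveness, nilcompleteness, integrability, the \'etale sheaf property, and local finite presentation — these are all closed under the formation of homotopy kernels of maps of such functors and under finite limits. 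So property~(1) for $\mathcal{Z}^{\diamond}$ follows once property~(1) is known for $\mathcal{Z}^{\heartsuit}$, which is the content of the first step; the finite presentation follows from perfectness of $\gr^i_{\mathrm{Hdg}}M_R$ for $i = -1, 0$ and the finite presentation of derived crystalline cohomology of finitely presented inputs.

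For property~(2), the tangent complex computation, I would evaluate everything on $C\oplus P$ and subtract off the value at $C$. Since $R\Gamma_{\mathrm{crys}}(\overline{C\oplus P},-)$ is computed by derived crystalline cohomology, its ``variation'' in the $\overline{P}$-direction is controlled by $\overline{P}$ tensored with the relevant Hodge graded pieces — and all Hodge weights outside $\{-1,0\}$ vanish by hypothesis (the summand structure forces $\gr^i_{\mathrm{Hdg}}M_{\overline{C}}\simeq 0$ for $i\neq -1,0$ on the relevant piece, or more precisely $\gr^i_{\mathrm{Hdg}}M_R$ perfect for $i=-1,0$ together with the structure of $\mathcal{M}\in\mathrm{FilFCrys}^{\geq -1}$). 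The key cancellation: the Frobenius-twisted terms coming from $\varphi_0$ contribute nothing to the linearization because $\varphi_0$ factors through the conjugate filtration and its variation lands in the part killed by passage to $\gr^0_{\mathrm{con}}$, so only the $\iota$ term survives. Combining with the $\mathcal{Z}^{\heartsuit}$ computation from Step~1 and the fiber sequence for $\Fil^1_{\mathrm{con}}\mathcal{Z}^{\diamond}$ yields the asserted direct sum $(\overline{P}[-1]\otimes_C\gr^{-1}_{\mathrm{Hdg}}M_C)\oplus(\overline{P}\otimes_C\gr^{-1}_{\mathrm{Hdg}}M_C)$, where the degree-$1$ part comes from the $\Fil^1_{\mathrm{con}}$ fiber (which via $\varphi_1$ is identified with $\gr^{-1}_{\mathrm{Hdg}}M_{\overline{C}}$ as in~\eqref{eqn:Mmodp_M1}) and the degree-$0$ part from the Hodge-filtered piece.

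\textbf{Main obstacle.} The delicate point is the tangent complex computation in property~(2), specifically pinning down that the Frobenius-linear contribution of $\varphi_0$ does not perturb the linearization — i.e., that the splitting into the two copies of $\gr^{-1}_{\mathrm{Hdg}}M_C$ is genuinely a direct sum and not a nontrivial extension. This is exactly where Lemma~\ref{lem:exts_equivalent_restr} is designed to be used: the difference of extension classes factors through $\gr^0_{\mathrm{Hdg}}M_{\overline{C}}$ and is nullhomotopic on $\hker(\varphi_{\overline{C}})\otimes\gr^0_{\mathrm{Hdg}}M_{\overline{C}}$, and in the square-zero deformation direction $C\oplus P$ the relevant piece of $\gr^0_{\mathrm{Hdg}}$ of the crystalline cohomology of $\overline{C}\oplus\overline{P}$ over $\overline{C}\oplus\overline{P}$ degenerates (the cotangent complex $\mathbb{L}_{(\overline{C}\oplus\overline{P})/(\overline{C}\oplus\overline{P})}$ relative considerations make the obstruction term land in the nullhomotopic range), so the extension splits. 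Carrying this through carefully — keeping track of which conjugate-filtration pieces are being Frobenius-twisted and verifying the nullhomotopy survives after tensoring with $\overline{P}$ — is the technical heart of the argument.
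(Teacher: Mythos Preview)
Your overall strategy---first handle $\mathcal{Z}^{\heartsuit}$, then transfer to $\mathcal{Z}^{\diamond}$ via Lemma~\ref{lem:exts_equivalent_restr}---matches the paper's. But two of your intermediate steps are not right as stated, and fixing them is where the actual content lies.

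First, your description of $\mathcal{Z}^{\heartsuit}$ is off. You call $\iota$ ``essentially a split projection'' and say $\mathcal{Z}^{\heartsuit}(C)$ is ``a totalization of an explicitly split cosimplicial object''. That is not how the paper (or the structure) works. The key identification is that $\mathcal{Z}^{\heartsuit}(C) = \Fil^0_{\mathrm{Hdg}}\Fil^1_{\mathrm{con}}R\Gamma_{\mathrm{crys}}$ sits in a fiber sequence with $\Fil^1_{\mathrm{con}}R\Gamma_{\mathrm{crys}}$ and $\gr^{-1}_{\mathrm{Hdg}}M_{\overline{C}}$, and since $\varphi_{-1}$ gives an equivalence $\gr^{-1}_{\mathrm{Hdg}}M_{\overline{C}}\xrightarrow{\simeq}\Fil^1_{\mathrm{con}}R\Gamma_{\mathrm{crys}}$, the upshot is that $\mathcal{Z}^{\heartsuit}(C)$ is the fiber of a \emph{Frobenius-semilinear} self-map $f_C:\gr^{-1}_{\mathrm{Hdg}}M_{\overline{C}}\to \gr^{-1}_{\mathrm{Hdg}}M_{\overline{C}}$. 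This is what makes property~(1) immediate (it's a fiber of a map between functors $C\mapsto \gr^{-1}_{\mathrm{Hdg}}M_{\overline{C}}$, which is limit-preserving and finitely presented by perfectness), and it is also the mechanism for~(2): on the square-zero direction $\overline{P}$, the map $f_{C\oplus P}$ is nullhomotopic because $\varphi_{\overline{C}\oplus\overline{P}}$ kills $\overline{P}$, so the fiber splits as domain~$\oplus$~target$[-1]$. Your explanation (``two pieces $\Fil^0_{\mathrm{Hdg}}$ and the fiber of $\gr^1_{\mathrm{con}}$'') does not capture this.

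Second, your property~(1) argument for $\mathcal{Z}^{\diamond}$ asserts that $K_1,K_2,K_3,K_{i,j}$ all inherit the desired properties because they are ``maps of perfect complexes functorial in $C$''. But the ambient objects $L = \Fil^0_{\mathrm{Hdg}}\Fil^0_{\mathrm{con}}R\Gamma_{\mathrm{crys}}$ and $L' = \gr^0_{\mathrm{con}}R\Gamma_{\mathrm{crys}}$ are \emph{not} a priori perfect or limit-preserving as functors of $C$---crystalline cohomology is a totalization and need not commute with filtered colimits. The paper instead introduces $\mathbb{K}(C) = \hker(\mathcal{Z}^{\heartsuit}(C)\to \gr^0_{\mathrm{Hdg}}M_{\overline{C}})$ and uses the equivalence $\mathcal{Z}^{\diamond}(C)\simeq \hker(\overline{\eta}^{\heartsuit}_C+\delta_C:\gr^0_{\mathrm{Hdg}}M_{\overline{C}}\to \mathbb{K}(C)[1])$; since $\mathcal{Z}^{\heartsuit}$ and the perfect-complex functors $C\mapsto \gr^i_{\mathrm{Hdg}}M_{\overline{C}}$ have property~(1), so does $\mathbb{K}$, and then so does $\mathcal{Z}^{\diamond}$. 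Your ``main obstacle'' paragraph correctly identifies that the nullhomotopy of $\delta$ on $\hker(\varphi)\otimes\gr^0_{\mathrm{Hdg}}$ is what makes $F_{\mathcal{Z}^{\diamond}}(C,P)\simeq F_{\mathcal{Z}^{\heartsuit}}(C,P)$---the point being simply that $\overline{P}\hookrightarrow\overline{C}\oplus\overline{P}$ factors through $\hker(\varphi_{\overline{C}\oplus\overline{P}})$---but you should route the argument through $\mathbb{K}$ rather than through unjustified perfectness claims.
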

\begin{proof}
We first claim that the proposition holds with $\mathcal{Z}^{\diamond}$ replaced by $\mathcal{Z}^{\heartsuit}$. By construction, we have a fiber sequence
\[
\mathcal{Z}^{\heartsuit}(C)=\Fil^0_{\mathrm{Hdg}}\Fil^1_{\mathrm{con}}R\Gamma_{\mathrm{crys}}(\overline{C},f^*\mathcal{M}_1)\to \Fil^1_{\mathrm{con}}R\Gamma_{\mathrm{crys}}(\overline{C},f^*\mathcal{M}_1)\to \gr^{-1}_{\mathrm{Hdg}}M_{\overline{C}},
\]
where the object in the middle admits an equivalence
\[
\gr^{-1}_{\mathrm{Hdg}}M_{\overline{C}}\xrightarrow{\simeq}\Fil^1_{\mathrm{con}}R\Gamma_{\mathrm{crys}}(\overline{C},f^*\mathcal{M}_1).
\]

Therefore, $\mathcal{Z}^{\heartsuit}(C)$ is equivalent to the fiber of a functorial-in-$C$ $\varphi_{\overline{C}}$-semilinear map
\begin{align}\label{eqn:f_C_funct}
f_C:\gr^{-1}_{\mathrm{Hdg}}M_{\overline{C}} \to \gr^{-1}_{\mathrm{Hdg}}M_{\overline{C}}.
\end{align}

From this, and the perfectness of $\gr^{-1}_{\mathrm{Hdg}}M_{R}$, we see that $\mathcal{Z}^{\heartsuit}$ enjoys property (1). Indeed, it is clear that it preseves filtered colimits in $C$, and our hypothesis implies that $\gr^{-1}_{\mathrm{Hdg}}M_{\overline{R}}$ is also perfect in $\Mod{R}$ and hence dualizable. Write $N\in \Mod{R}$ for its dual. Then the functor $C\mapsto \gr^{-1}_{\mathrm{Hdg}}M_{\overline{C}}$ is equivalent to the limit preserving functor $C\mapsto \underline{\Map}_{\Mod{R}}(N,C)$. 

For the analogue of property (2), note that we can identify the kernel in question with that of the map
\[
\overline{P}\otimes_C\gr^{-1}_{\mathrm{Hdg}}M_C\to \overline{P}\otimes_C\gr^{-1}_{\mathrm{Hdg}}M_C
\]
induced by the $\varphi$-semilinear map $f_{C\oplus P}$. Since $\varphi_{\overline{C}\oplus \overline{P}}$ restricts to a nullhomotopic map on $\overline{P}$, the map above is also nullhomotopic, and so its kernel is canonically identified with
\[
(\overline{P}[-1]\otimes_C\gr^{-1}_{\mathrm{Hdg}}M_C) \oplus (\overline{P}\otimes_C\gr^{-1}_{\mathrm{Hdg}}M_C),
\] 
as desired.

To go from $\mathcal{Z}^{\heartsuit}$ to $\mathcal{Z}^{\diamond}$, we will use Lemma~\ref{lem:exts_equivalent_restr}. Consider the prestack $\mathbb{K}$ over $R$ given by
\[
\mathbb{K}(C) = \hker\left(\mathcal{Z}^{\heartsuit}(C)\to \gr^0_{\mathrm{Hdg}}M_{\overline{C}}\right)
\]
where the map in question is the composition
\[
\mathcal{Z}^{\heartsuit}(C)\to \gr^0_{\mathrm{con}}R\Gamma_{\mathrm{crys}}(\overline{C},f^*\mathcal{M}_1)\to \gr^0_{\mathrm{Hdg}}M_{\overline{C}},
\]
where the second arrow is obtained via the inverse to the equivalence~\eqref{eqn:varphi0equiv}. Note that there is a natural map
\[
j_C:\Fil^1_{\mathrm{con}}\mathcal{Z}^{\diamond}(C)\to \mathbb{K}(C)
\]
such that we have a diagram of fiber sequences
\[
\begin{diagram}
\Fil^1_{\mathrm{con}}\mathcal{Z}^{\diamond}(C)&\rTo&\mathcal{Z}^{\heartsuit}(C)&\rTo&\gr^0_{\mathrm{con}}R\Gamma_{\mathrm{crys}}(\overline{C},f^*\mathcal{M}_1)\\
\dTo^{j_C}&&\dEquals&&\dTo\\
\mathbb{K}(C)&\rTo&\mathcal{Z}^{\heartsuit}(C)&\rTo& \gr^0_{\mathrm{Hdg}}M_{\overline{C}}
\end{diagram}
\]

Set $\overline{\eta}^\heartsuit_C = j_C[1]\circ \eta^\heartsuit_C$, and consider the composition
\[
\delta_C:\gr^0_{\mathrm{Hdg}}M_{\overline{C}}\xrightarrow{\eqref{eqn:difference_arrow}}\Fil^1_{\mathrm{con}}\mathcal{Z}^{\diamond}(C)\to \mathbb{K}(C)[1].
\]
Then Lemma~\ref{lem:exts_equivalent_restr} shows that we have a natural equivalence
\begin{equation}\label{eqn:M1_K_fiber}
\mathcal{Z}^{\diamond}(C)\simeq \hker(\overline{\eta}^\heartsuit_C+\delta_C:\gr^0_{\mathrm{Hdg}}M_{\overline{C}}\to \mathbb{K}(C)).
\end{equation}

Given that $\mathcal{Z}^{\heartsuit}$ enjoys the properties in (1) and that $\gr^0_{\mathrm{Hdg}}M_R$ is perfect, one finds that the properties in (1) are also true for $\mathbb{K}$. Combining this with~\eqref{eqn:M1_K_fiber} and using the perfectness of $\gr^0_{\mathrm{Hdg}}M_R$ once again then implies the same for $\mathcal{Z}^{\diamond}$.

As for (2), for any prestack $X$ over $R$ valued in $\Mod{\Int}$, set
\[
F_X(C,P) = \hker(X(C\oplus P)\to X(C)).
\]
Then by the last assertion of Lemma~\ref{lem:exts_equivalent_restr} the map
\[
\overline{P}\otimes_{\overline{C}}\gr^0_{\mathrm{Hdg}}M_{\overline{C}} \to F_{\mathbb{K}}(C,P)
\]
induced from $\delta_{C\oplus P}$ is nullhomotopic: indeed, $\overline{P}\to \overline{C}\oplus \overline{P}$ factors through $\hker(\varphi_{\overline{C}\oplus \overline{P}})$.

This gives us natural equivalences
\begin{align*}
F_{\mathcal{Z}^{\diamond}}(C,P)&\simeq \hker(\overline{\eta}^\heartsuit_{C\oplus P} + \delta_{C\oplus P}:\overline{P}\otimes_{\overline{C}}\gr^0_{\mathrm{Hdg}}M_{\overline{C}} \to F_{\mathbb{K}}(C,P))\\
&\simeq\hker(\overline{\eta}^\heartsuit_{C\oplus P}:\overline{P}\otimes_{\overline{C}}\gr^0_{\mathrm{Hdg}}M_{\overline{C}} \to F_{\mathbb{K}}(C,P))\\
&\simeq F_{\mathcal{Z}^{\heartsuit}}(C,P).
\end{align*}
This reduces assertion (2) to its analogue for $\mathcal{Z}^{\heartsuit}$, which we have already verified.
\end{proof}


\begin{proof}[Proof of Theorem~\ref{thm:FilFCrys_functorial_props}]
Assertion~\eqref{ffc:cohesive} amounts to saying that, for every $n$, $\mathcal{Z}(\mathcal{M}_n)$ respects certain limit diagrams in $\mathrm{CRing}_{R/}$.  Since $\mathcal{Z}(\mathcal{M}_n) \simeq\tau^{\leq 0}\mathcal{Z}^+_n$, this would follow if we knew that $\mathcal{Z}^+_n$ respected such diagrams for each $n$ (in the $\infty$-category $\Mod{\Int/p^n\Int}$). Using d\'evissage and fiber sequences of the form $\mathcal{Z}^+_{n-1}\to \mathcal{Z}^+_n\to \mathcal{Z}^+_1$, one reduces to the case $n=1$. Using the fiber sequence~\eqref{eqn:Mmodp_M1}, it is sufficient to check that $\mathcal{Z}^{\diamond}$ and $C\mapsto \gr^{-1}_{\mathrm{Hdg}}M_{\overline{C}}$ each respects the limit diagrams in question. For the former, this is (1) of Proposition~\ref{prop:m1_expl}, and for the latter, it follows from the perfectness of $\gr^{-1}_{\mathrm{Hdg}}M_R$.

We still have to show assertion~\eqref{ffc:cotangent}. For this, first note that it suffices to construct, for every $n\geq 1$, a canonical equivalence
\[
F_{\mathcal{Z}^+_n}(C,P)\xrightarrow{\simeq}P[-1]\otimes_R(\gr^{-1}_{\mathrm{Hdg}}M_R/{}^{\mathbb{L}}p^n).
\]

Now, note that we have a canonical equivalence
\begin{align}\label{eqn:Mmodp_tangent_space}
F_{\mathcal{Z}^+_1}(C,P)\xrightarrow{\simeq} \overline{P}[-1]\otimes_R\gr^{-1}_{\mathrm{Hdg}}M_R.
\end{align}
This follows from assertion (2) of Proposition~\ref{prop:m1_expl} and the fact that the projection
\[
\hker(\mathcal{Z}^{\diamond}(C\oplus P)\to \mathcal{Z}^{\diamond}(C))\to \overline{P}\otimes_C\gr^{-1}_{\mathrm{Hdg}}M_C
\]
given by that assertion is precisely the induced map on tangent spaces associated with the map
\[
\mathcal{Z}^{\diamond}(C)\to\gr^{-1}_{\mathrm{Hdg}}M_{\overline{C}} 
\]
involved in~\eqref{eqn:Mmodp_M1}.

Set
\[
\tilde{C}_n = C/{}^{\mathbb{L}}p^n \oplus (P/{}^{\mathbb{L}}p^n )\;;\; C_n = C/{}^{\mathbb{L}}p^n\;;\;
\]

By Lemma~\ref{lem:square_zero_pd}, for every $n\geq 1$, the arrow $\tilde{C}_n\twoheadrightarrow C_n$, admits a canonical lift to $\mathrm{AniPDPair}_{p,n,R/}$, and so we can evaluate $\Fil^0_{\mathrm{Hdg}}\mathcal{M}$ on it to obtain an object
\[
\widetilde{\Fil}^0\tilde{\mathsf{M}}_n = \Fil^0_{\mathrm{Hdg}}\mathcal{M}(\tilde{C}_n\twoheadrightarrow C_n)\in \Mod{\tilde{C}_n}\in \Mod{\tilde{C}_n}.
\]
On the other hand we can also evaluate it on the trivial thickening $\tilde{C}_n\xrightarrow{\mathrm{id}}\tilde{C}_n$ and obtain another object
\[
\Fil^0\tilde{\mathsf{M}}_n = \Fil^0_{\mathrm{Hdg}}\mathcal{M}(\tilde{C}_n\xrightarrow{\mathrm{id}}\tilde{C}_n)\in \Mod{\tilde{C}_n}\in \Mod{\tilde{C}_n}.
\]

Note that we have a canonical map $\Fil^0\tilde{\mathsf{M}}_n\to \widetilde{\Fil}^0\tilde{\mathsf{M}}_n$ obtained from the map of pairs $(\tilde{C}_n\xrightarrow{\mathrm{id}}\tilde{C}_n)\to (\tilde{C}_n\twoheadrightarrow C_n)$. We claim that we have a fiber sequence
\[
(P/{}^\mathbb{L}p^n)[-1]\otimes_R\gr^{-1}_{\mathrm{Hdg}}M_R \to \Fil^0\tilde{\mathsf{M}}_n\to \widetilde{\Fil}^0\tilde{\mathsf{M}}_n.
\]
Indeed, the middle object is just $\tilde{C}_n\otimes_R\Fil^0M_R$, while the third is equivalent to $(C_n\otimes_R\Fil^0M_R)\oplus (P/{}^{\mathbb{L}}p^n\otimes_RM_R)$. Here, of course we are viewing $\tilde{C}$ as an $R$-algebra via the trivial lifting $C\to \tilde{C}$. Denote the corresponding arrow by $\tilde{f}:R\to \tilde{C}$.

The proof of the theorem is now completed by the following lemma.
\begin{lemma}
\label{lem:canonical_arrow_P_kernel}
There exists a canonical commuting square 
\[
\begin{diagram}
\mathcal{Z}^+_n(\tilde{C})&\rTo&\mathcal{Z}^+_n(C)\\
\dTo&&\dTo\\
\Fil^0\widetilde{\mathsf{M}}_n&\rTo&\widetilde{\Fil}^0\tilde{\mathsf{M}}_n
\end{diagram}
\]
compatible for varying $n\geq 1$ such that the ensuing arrow
\[
F_{\mathcal{Z}^+_n}(C,P)\to \hker(\Fil^0\widetilde{\mathsf{M}}_n\to\widetilde{\Fil}^0\tilde{\mathsf{M}}_n)\simeq (P/{}^{\mathbb{L}}p^n)[-1]\otimes_R\gr^1_{\mathrm{Hdg}}M_R
\]
is equivalent to~\eqref{eqn:Mmodp_tangent_space} when $n=1$.
\end{lemma}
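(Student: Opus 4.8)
The plan is to construct the square by directly unwinding the cosimplicial description of $\mathcal{Z}^+_n$ from Lemma~\ref{lem:FilFCrys_global_secs_cosimp}, and then to identify the fiber of its right vertical map with the simpler object $\widetilde{\Fil}^0\tilde{\mathsf{M}}_n$ evaluated at the \emph{trivial} thickening $\tilde C\xrightarrow{\id}\tilde C$. Concretely, for the functorially chosen $g':S'\twoheadrightarrow C$ with Frobenius lift $\varphi_{S'}$, base change along $C\to \tilde C$ produces $\tilde g':\tilde S'\twoheadrightarrow \tilde C$ with a compatible Frobenius lift, and the object $(\tilde C\twoheadrightarrow C,\gamma_{\mathrm{triv}})$ from Lemma~\ref{lem:square_zero_pd} receives a canonical map from $(D_n(g^{',(\bullet)})\twoheadrightarrow C/{}^{\mathbb L}p^n,\gamma_n)$ in $\mathrm{AniPDPair}_{p,n,C/}$ (it is a PD thickening, so it is a target of the weakly initial object). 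Evaluating $\Fil^0_{\mathrm{Hdg}}\mathcal{M}$ and totalizing then gives a natural arrow $\Fil^0_{\mathrm{Hdg}}R\Gamma_{\mathrm{crys}}(C,f^*\mathcal{M}_n)\to \widetilde{\Fil}^0\tilde{\mathsf{M}}_n$; restricting to $\mathcal{Z}^+_n(\tilde C)$ (where one is also recording a Frobenius invariance, which maps forward harmlessly) yields the left column, and the comparison along $(\tilde C\xrightarrow{\id}\tilde C)\to (\tilde C\twoheadrightarrow C)$ yields $\Fil^0\widetilde{\mathsf{M}}_n\to \widetilde{\Fil}^0\tilde{\mathsf{M}}_n$ as the bottom row. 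Commutativity is then a formal consequence of the fact that both composites are induced by the same map of PD pairs.

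Next I would compute the horizontal fibers. On the bottom, the fiber of $\Fil^0\widetilde{\mathsf{M}}_n\to \widetilde{\Fil}^0\tilde{\mathsf{M}}_n$ is $(P/{}^{\mathbb L}p^n)[-1]\otimes_R\gr^{-1}_{\mathrm{Hdg}}M_R$: one uses that $\widetilde{\mathsf{M}}_n$ at the trivial thickening is $\tilde C_n\otimes_R\Fil^\bullet_{\mathrm{Hdg}}M_R$ (base change of derived de Rham cohomology of a trivial thickening, which collapses to $M_R$ with its Hodge filtration) while at $(\tilde C_n\twoheadrightarrow C_n)$ the $\Fil^1$-part of the PD filtration multiplies by the square-zero ideal $P/{}^{\mathbb L}p^n$, and $\gr^1_{(\tilde C_n\twoheadrightarrow C_n)}\tilde C_n\simeq (P/{}^{\mathbb L}p^n)[\,?\,]$ — here the shift is exactly the $[-1]$ coming from $\wedge^1\mathbb{L}_{C_n/\tilde C_n}[-1]$, cf.\ Lemma~\ref{lem:adic_filtration} (adapted to the PD/square-zero setting via Lemma~\ref{lem:square_zero_pd}), and since $\gr^i_{\mathrm{Hdg}}M_R\simeq 0$ for $i<-1$ only the $\gr^{-1}$ piece survives in $\Fil^0$. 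On the top, the fiber $F_{\mathcal{Z}^+_n}(C,P)=\hker(\mathcal{Z}^+_n(\tilde C)\to \mathcal{Z}^+_n(C))$ is by definition what feeds the cotangent complex computation in Theorem~\ref{thm:FilFCrys_functorial_props}\eqref{ffc:cotangent}. So the lemma asserts that the square realizes that fiber as a map into the bottom fiber, and the claim is that for $n=1$ this recovers~\eqref{eqn:Mmodp_tangent_space}.

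For the $n=1$ identification I would trace through the chain $\mathcal{Z}^+_1\to \mathcal{Z}^\diamond \to \gr^{-1}_{\mathrm{Hdg}}M_{\overline C}$ of~\eqref{eqn:Mmodp_M1} together with assertion (2) of Proposition~\ref{prop:m1_expl}: the latter already produces $F_{\mathcal{Z}^\diamond}(C,P)\simeq(\overline P[-1]\otimes\gr^{-1})\oplus(\overline P\otimes\gr^{-1})$, and the projection to the second summand is exactly the tangent map of $\mathcal{Z}^\diamond\to \gr^{-1}_{\mathrm{Hdg}}M_{\overline C}$, hence its kernel is $\overline P[-1]\otimes_R\gr^{-1}_{\mathrm{Hdg}}M_R \simeq F_{\mathcal{Z}^+_1}(C,P)$. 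The content is that the square above picks out precisely this kernel; concretely one checks that the map to $\widetilde{\Fil}^0\tilde{\mathsf{M}}_1$ built from the PD-thickening $(\tilde C_1\twoheadrightarrow C_1)$ is compatible with the map $\mathcal{Z}^\diamond(C)\to \gr^{-1}_{\mathrm{Hdg}}M_{\overline C}$ used in~\eqref{eqn:Mmodp_M1}, which itself factors through $\Fil^1_{\mathrm{con}}R\Gamma_{\mathrm{crys}}\simeq \gr^{-1}_{\mathrm{Hdg}}M_{\overline C}$ via $\varphi_1$. This compatibility is where I expect the real work to be: reconciling the ``conjugate-filtration'' bookkeeping ($\varphi_1$, $\Fil^1_{\mathrm{con}}$) used to prove Proposition~\ref{prop:m1_expl} with the ``square-zero PD ideal'' bookkeeping used to define $\widetilde{\mathsf{M}}_n$, i.e.\ showing the two a priori different descriptions of the tangent space agree on the nose and compatibly in $n$. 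The remaining inputs — compatibility for varying $n$ (from the compatible system of square-zero PD lifts) and the collapse of $\widetilde{\mathsf{M}}_n$ at trivial thickenings — are routine given Lemmas~\ref{lem:square_zero_pd}, \ref{lem:adic_filtration} and the base-change property of $\Fil^\bullet_{\mathrm{Hdg}}\mathrm{CrysCoh}$ recorded in~\eqref{subsec:cryscoh}.
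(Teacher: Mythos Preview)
Your strategy is essentially the same as the paper's, and the fiber computations and the $n=1$ identification are handled correctly. The one place where you diverge, and where your construction is slightly muddled, is in producing the commuting square itself.

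You propose to compute $\mathcal{Z}^+_n(C)$ via the functorial $g':S'\twoheadrightarrow C$ and $\mathcal{Z}^+_n(\tilde C)$ via some $\tilde g':\tilde S'\twoheadrightarrow\tilde C$ obtained by ``base change along $C\to\tilde C$''. This phrase is unclear (base change of what along what?), and in any case using two different polynomial sources means you then need a separate argument to show the square commutes. The paper avoids this by taking the single functorial choice $\tilde S=\Int_{(p)}[\pi_0(C)\oplus\pi_0(P)]$ for $\tilde C$, which surjects onto $\tilde C$ via $\tilde g'$ \emph{and} onto $C$ via the composite $\tilde g'_1:\tilde S\twoheadrightarrow\tilde C\twoheadrightarrow C$. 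Proposition~\ref{prop:crystals_explicit} (recorded as~\eqref{eqn:RGamma_crys_alt}) then identifies $\Fil^0_{\mathrm{Hdg}}R\Gamma_{\mathrm{crys}}(C,f^*\mathcal{M}_n)$ with the totalization over $D_n(\tilde g_1^{',(\bullet)})$. With both columns built from the \emph{same} cosimplicial PD envelope, the maps to $\Fil^0\tilde{\mathsf{M}}_n$ and $\widetilde{\Fil}^0\tilde{\mathsf{M}}_n$ arise from a single lift $\tilde S^{(\bullet)}\to\tilde C_n$, and commutativity is automatic. Your approach can be made to work via weak initiality comparisons, but the paper's single-source trick is cleaner.
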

\begin{proof}
We of course proceed by looking at $\tilde{g}':\tilde{S}\twoheadrightarrow \tilde{C}$ with $\tilde{S} = \Int[\pi_0(C)\oplus \pi_0(P)]$. Write $\tilde{g}'_1:\tilde{S}\twoheadrightarrow C$ for the composition of this map with the projection onto $C$. Then by Proposition~\ref{prop:crystals_explicit}, the canonical map $D_n(\tilde{g}_1^{',(\bullet)})\to D_n(g^{',(\bullet)})$ induces an equivalence
\begin{align}\label{eqn:RGamma_crys_alt}
\mathrm{Tot}\Fil^0_{\mathrm{Hdg}}\mathcal{M}\left((D_n(\tilde{g}_1^{',(\bullet)})\twoheadrightarrow C_n,\gamma_n(\tilde{g}_1^{',(\bullet)}))\right)\xrightarrow{\simeq} \Fil^0_{\mathrm{Hdg}}R\Gamma_{\mathrm{crys}}(C,f^*\mathcal{M}_n).
\end{align}

Now, via $\tilde{g}'$, we obtain canonical maps
\begin{align*}
\Fil^0_{\mathrm{Hdg}}\mathcal{M}\left((D_n(\tilde{g}^{',(\bullet)})\twoheadrightarrow \tilde{C}_n,\gamma_n(\tilde{g}^{',(\bullet)}))\right)&\to \Fil^0\tilde{\mathsf{M}}_n\\
\Fil^0_{\mathrm{Hdg}}\mathcal{M}\left((D_n(\tilde{g}_1^{',(\bullet)})\twoheadrightarrow C,\gamma_n(\tilde{g}_1^{',(\bullet)}))\right)&\to \widetilde{\Fil}^0\tilde{\mathsf{M}}_n.
\end{align*}

Combining these with with the natural maps
\begin{align*}
\mathcal{Z}^+_n(\tilde{C})\to \Fil^0_{\mathrm{Hdg}}R\Gamma_{\mathrm{crys}}(\tilde{C},\tilde{f}^*\mathcal{M}_n)&\simeq\mathrm{Tot}\Fil^0_{\mathrm{Hdg}}\mathcal{M}\left((D_n(\tilde{g}^{',(\bullet)})\twoheadrightarrow \tilde{C}_n,\gamma_n(\tilde{g}^{',(\bullet)}))\right)\\
\mathcal{Z}^+_n(C)\to \Fil^0_{\mathrm{Hdg}}R\Gamma_{\mathrm{crys}}(C,f^*\mathcal{M}_n)&\xrightarrow[\eqref{eqn:RGamma_crys_alt}]{\simeq}\mathrm{Tot}\Fil^0_{\mathrm{Hdg}}\mathcal{M}\left((D_n(\tilde{g}_1^{',(\bullet)})\twoheadrightarrow C_n,\gamma_n(\tilde{g}_1^{',(\bullet)}))\right)
\end{align*}
now gives us the desired commuting square. That the ensuing arrow for $n=1$ is equivalent to~\eqref{eqn:Mmodp_tangent_space} is a matter of unraveling the definitions.
\end{proof}

\end{proof}

The rest of this section will be devoted to the proof of Theorem~\ref{thm:FilFCrys_discrete}, so we will assume from now on that $\mathcal{M}$ satisfies the hypotheses of the theorem.

\begin{proposition}
\label{prop:m1_repble}
The $p$-adic formal prestack $\mathcal{Z}(\mathcal{M}_1)^{\form}_p$ underlying $\mathcal{Z}(\mathcal{M}_1)$ is represented by a locally quasi-smooth $p$-adic formal derived scheme over $R$. 
\end{proposition}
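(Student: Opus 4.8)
\textbf{Proof plan for Proposition~\ref{prop:m1_repble}.}
The strategy is to apply Lurie's representability theorem (Theorem~\ref{thm:lurie_representability}) to the $\Mod{\Field_p}$-valued prestack $\mathcal{Z}(\mathcal{M}_1)$ over $\overline{R}$, or rather to its truncation $\tau^{\le 0}\mathcal{Z}(\mathcal{M}_1) = \mathcal{Z}(\mathcal{M}_1)$ viewed as a space-valued functor, and then observe that the answer is locally quasi-smooth by inspecting the cotangent complex. First I would record that by Theorem~\ref{thm:FilFCrys_functorial_props}\eqref{ffc:cohesive} the prestack $\mathcal{Z}(\mathcal{M}_1)$ is an infinitesimally cohesive, nilcomplete, integrable \'etale sheaf, and it is locally of finite presentation over $\overline{R}$: the latter follows because $\mathcal{Z}(\mathcal{M}_1) \simeq \tau^{\le 0}\mathcal{Z}^+_1$, and via the fiber sequence~\eqref{eqn:Mmodp_M1} this is built from $\mathcal{Z}^{\diamond}$, which is locally finitely presented by Proposition~\ref{prop:m1_expl}(1), together with the functor $C\mapsto \gr^{-1}_{\mathrm{Hdg}}M_{\overline{C}}$, which preserves filtered colimits since $\gr^{-1}_{\mathrm{Hdg}}M_R$ is perfect. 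The existence of a connective cotangent complex is exactly Theorem~\ref{thm:FilFCrys_functorial_props}\eqref{ffc:cotangent}, which gives $F_{\mathcal{Z}(\mathcal{M}_1)}(C,P) \simeq \tau^{\le 0}(\overline{P}[-1]\otimes_C \gr^{-1}_{\mathrm{Hdg}}M_C)$, so that $\mathbb{L}_{\mathcal{Z}(\mathcal{M}_1)/\overline{R}}$ is the pullback of $\gr^{-1}_{\mathrm{Hdg}}M_{\overline{R}}[1]$ (a perfect complex concentrated in degree $-1$ after we check local freeness — see below).

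Next I would verify the remaining hypothesis of Lurie representability, namely that $\mathcal{Z}(\mathcal{M}_1)$ has a ``good deformation theory'' relative to a representable base with the right truncation properties — concretely, that for $C$ a discrete finitely generated $\Field_p$-algebra the value $\mathcal{Z}(\mathcal{M}_1)(C)$ is discrete (which is part~\eqref{ffc:perfect}-type input in the $n=1$ case, or can be read off directly from the explicit fiber description in Proposition~\ref{prop:m1_expl} combined with $\tau^{\le 0}$), and that the classical truncation $\pi_0\mathcal{Z}(\mathcal{M}_1)$ is locally of finite type over $\overline{R}$. Here I expect to use the explicit presentation from Proposition~\ref{prop:m1_expl}: $\mathcal{Z}^{\diamond}$ is, up to the correction term analyzed in Lemma~\ref{lem:exts_equivalent_restr}, the fiber of a Frobenius-semilinear map of perfect complexes, whose $\tau^{\le 0}$ is visibly a (derived) affine scheme of finite type over $\overline{R}$; the passage from $\mathcal{Z}^{\diamond}$ to $\mathcal{Z}^+_1$ via~\eqref{eqn:Mmodp_M1} only adds the perfect-complex term $\gr^{-1}_{\mathrm{Hdg}}M_{\overline{C}}$, which is representable. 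With all hypotheses in hand, Theorem~\ref{thm:lurie_representability} yields that $\mathcal{Z}(\mathcal{M}_1)^{\form}_p$ is represented by a derived scheme locally of finite presentation over $\overline{R}$, hence (as $\overline{R}$ is a finitely generated $\Field_p$-algebra) a $p$-adic formal derived scheme over $R$ in the sense of~\eqref{subsec:formal_stacks}.

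Finally, local quasi-smoothness: by Theorem~\ref{thm:FilFCrys_functorial_props}\eqref{ffc:cotangent} the relative cotangent complex $\mathbb{L}_{\mathcal{Z}(\mathcal{M}_1)/\overline{R}}$ is the pullback of $\gr^{-1}_{\mathrm{Hdg}}M_{\overline{R}}[1]$, so $\mathbb{L}_{\mathcal{Z}(\mathcal{M}_1)/\overline{R}}[-1]$ is the pullback of the perfect complex $\gr^{-1}_{\mathrm{Hdg}}M_{\overline{R}}$. Under the standing hypothesis of Theorem~\ref{thm:FilFCrys_functorial_props} this is perfect, and to conclude quasi-smoothness I must know it has Tor-amplitude $[0,0]$, i.e.\ is a vector bundle; this is where I would invoke that $\gr^{-1}_{\mathrm{Hdg}}M_R$ is in fact finite locally free in the cases at hand (it is when $\Fil^\bullet_{\mathrm{Hdg}}\mathcal{M}\in \mathrm{TFilCrys}^{\mathrm{lf}}_{R/\Int_p}$, which is the situation of Theorem~\ref{thm:FilFCrys_discrete}), so that $\mathbb{L}_{\mathcal{Z}(\mathcal{M}_1)/\overline{R}}[-1]$ is a finite locally free sheaf on $\mathcal{Z}(\mathcal{M}_1)$ and the morphism is quasi-smooth of virtual codimension $\mathrm{rank}\,\gr^{-1}_{\mathrm{Hdg}}M_R$ by the criterion recalled in Appendix~\ref{sec:qsmooth}. \textbf{The main obstacle} is the verification of discreteness of $\mathcal{Z}(\mathcal{M}_1)$ on finitely generated discrete algebras and the finite-type property of its classical truncation: the formal machinery of Lurie representability applies once these are known, but extracting them requires carefully tracking the $\tau^{\le 0}$ through the fiber sequences~\eqref{eqn:Mmodp_M1} and~\eqref{eqn:M1_K_fiber} and using that the Frobenius-semilinear structure rigidifies the space of sections — essentially the mod-$p$ incarnation of the ``$\mu_p$ is quasi-smooth'' phenomenon flagged in~\eqref{introsubsec:technical}. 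I would handle this by reducing, via Proposition~\ref{prop:m1_expl}, to the explicit fiber-of-a-semilinear-map description, where discreteness and finite presentation of the truncation are manifest.
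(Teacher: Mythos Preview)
Your overall architecture matches the paper's: verify the hypotheses of Lurie representability (Theorem~\ref{thm:lurie_representability}) and read off quasi-smoothness from the cotangent complex identification in Theorem~\ref{thm:FilFCrys_functorial_props}\eqref{ffc:cotangent}. The preliminary reductions (local finite presentation via~\eqref{eqn:Mmodp_M1} and Proposition~\ref{prop:m1_expl}, the other Lurie hypotheses from Theorem~\ref{thm:FilFCrys_functorial_props}\eqref{ffc:cohesive}) are correct and essentially identical to the paper's.

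The genuine gap is in your treatment of discreteness, which you flag as the obstacle but then dismiss as ``manifest'' from the fiber-of-a-semilinear-map description. It is not. First, you restrict to discrete $\Field_p$-algebras $C$, but the required discreteness is for all $C\in\mathrm{CRing}_{\heartsuit,R/}$ with $p$ nilpotent; moreover, even for $C$ an $\Field_p$-algebra, $\overline{C}=C/{}^{\mathbb{L}}p$ is \emph{not} discrete: it has $H^{-1}(\overline{C})=C[p]$ (in the $\Field_p$-algebra case, $H^{-1}(\overline{C})=C$). Hence $\gr^{-1}_{\mathrm{Hdg}}M_{\overline{C}}$ has nonzero $H^{-1}$, and so do $\mathcal{Z}^{\heartsuit}(C)$ and $\mathcal{Z}^{\diamond}(C)$. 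The fiber sequence~\eqref{eqn:Mmodp_M1} then only gives discreteness of $\mathcal{Z}^+_1(C)$ once you know the map
\[
H^{-1}(\mathcal{Z}^{\diamond}(C))\to H^{-1}(\gr^{-1}_{\mathrm{Hdg}}M_{\overline{C}})\simeq C[p]\otimes_C\gr^{-1}_{\mathrm{Hdg}}M_C
\]
is \emph{injective}. This is the actual content of the paper's proof: one first checks the analogous statement for $\mathcal{Z}^{\heartsuit}(C)$ by using that the Frobenius-semilinear map $f_C$ induces the zero map on $H^{-1}$ (since $\varphi_{\overline{C}}$ is null on $C[p]$), and then transfers this to $\mathcal{Z}^{\diamond}(C)$ via a careful comparison of the two classifying maps $\overline{\eta}^{\heartsuit}_C$ and $\overline{\eta}^{\heartsuit}_C+\delta_C$ in the fiber sequences~\eqref{eqn:mheart_K_seq} and~\eqref{eqn:m+1_K_seq}, invoking the nullhomotopy of $\delta_C$ on $\hker(\varphi_{\overline{C}})\otimes\gr^0_{\mathrm{Hdg}}M_{\overline{C}}$ from Lemma~\ref{lem:exts_equivalent_restr}. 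None of this is automatic from the semilinear presentation alone; it is exactly the ``$\mu_p$ is quasi-smooth'' heuristic made precise, and it occupies the bulk of the argument.
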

\begin{proof}
The fiber sequence~\eqref{eqn:Mmodp_M1} combined with assertion (1) of Proposition~\ref{prop:m1_expl} shows that $\mathcal{Z}^+_1$, as an $\Mod{\Field_p}$-valued prestack, is a locally finitely presented \'etale sheaf. By exactness of filtered colimits, $\mathcal{Z}(\mathcal{M}_1)^{\form}_p$ is also locally finitely presented.

Next, assertion (2) of Theorem~\ref{thm:FilFCrys_functorial_props} shows that we have
\[
F_{\mathcal{Z}(\mathcal{M}_1)^{\form}_p}(C,P)\simeq \tau^{\leq 0}(\overline{P}[-1]\otimes_C\gr^{-1}_{\mathrm{Hdg}}M_C).
\]
Since $\gr^{-1}_{\mathrm{Hdg}}M_R$ is a finite projective object in $\Mod{R}$ under our hypothese, this shows that $\mathcal{Z}(\mathcal{M}_1)^{\form}_p$ admits a cotangent complex over $R$ of Tor amplitude $[-1,0]$; in fact, we have shown
\[
\mathbb{L}_{\mathcal{Z}(\mathcal{M}_1)^{\form}_p/R}\simeq \Reg{\mathcal{Z}(\mathcal{M}_1)^{\form}_p}\otimes_R(\gr^{-1}_{\mathrm{Hdg}}M_{\overline{R}})^\vee[1].
\]

Given the two previous paragraphs, assertion (1) of Theorem~\ref{thm:FilFCrys_functorial_props}, Lurie's representability criterion Theorem~\ref{thm:lurie_representability}, and~\cite[\href{https://stacks.math.columbia.edu/tag/03XX}{Tag 03XX}]{stacks-project}, knowing that $\mathcal{Z}(\mathcal{M}_1)^{\form}_p$ is representable by a locally quasi-smooth $p$-adic formal derived scheme over $R$ amounts to knowing that $\mathcal{Z}(\mathcal{M}_1)^{\form}_p(C)$ is discrete for all $C\in \mathrm{CRing}_{\heartsuit,R/}$; or, equivalently, that
\[
H^i(\mathcal{Z}^+_1(C)) = 0.
\] 
for $i\leq -1$.

We will repeatedly use the fact that, for $C\in \mathrm{CRing}_{\heartsuit}$, $\overline{C} = C/{}^{\mathbb{L}}p$ is a square-zero thickening of the discrete ring $C/pC$ by $(C[p])[1]$, where $C[p]\subset C$ is the ideal of $p$-torsion elements. In particular, for every locally free module $N$ over $C$, we have
\[
H^i(N_{\overline{C}}) = \begin{cases}
C[p]\otimes_CN&\text{if $i=-1$};\\
N_{C/pC}&\text{if $i=0$};\\
0&\text{otherwise}.
\end{cases}
\]

The fiber sequence~\eqref{eqn:Mmodp_M1} reduces us to showing that the homotopy kernel of the associated map
\begin{align}\label{eqn:M1_to_gr-1}
\mathcal{Z}^{\diamond}(C)\to \gr^{-1}_{\mathrm{Hdg}}M_{\overline{C}}.
\end{align}
is $0$-coconnective. This amounts to saying that $H^i(\mathcal{Z}^{\diamond}(C)) = 0$ for $i<-1$ and that the map
\begin{align}\label{eqn:H-1_map_inj}
H^{-1}(\mathcal{Z}^{\diamond}(C)) \to H^{-1}(\gr^{-1}_{\mathrm{Hdg}}M_{\overline{C}}) \simeq C[p]\otimes_C\gr^{-1}_{\mathrm{Hdg}}M_C
\end{align}
is injective. 

Just as in Proposition~\ref{prop:m1_expl}, we will first consider the analogues of these claims for $\mathcal{Z}^{\heartsuit}(C)$. Here, the counterpart to~\eqref{eqn:M1_to_gr-1} is the first map in the fiber sequence
\[
\mathcal{Z}^{\heartsuit}(C)\to \gr^{-1}_{\mathrm{Hdg}}M_{\overline{C}}\xrightarrow{f_C}\gr^{-1}_{\mathrm{Hdg}}M_{\overline{C}},
\]
Since $H^i(\gr^{-1}_{\mathrm{Hdg}}M_{\overline{C}}) = 0$ for $i<-1$, we see that $H^i(\mathcal{Z}^{\heartsuit}(C)) = 0$ for $i<-1$. Furthermore, the map
\[
H^1(f_C):C[p]\otimes_C\gr^{-1}_{\mathrm{Hdg}}M_{C}\to C[p]\otimes_C\gr^{-1}_{\mathrm{Hdg}}M_{C}
\]
is $0$ by virtue of the fact that $f_C$ is $\varphi_{\overline{C}}$-semilinear. This shows that the map
\[
H^{-1}(\mathcal{Z}^{\heartsuit}(C)) \to H^{-1}(\gr^{-1}_{\mathrm{Hdg}}M_{\overline{C}}) \simeq C[p]\otimes_C\gr^{-1}_{\mathrm{Hdg}}M_C
\]
is in fact an isomorphism.

Next, consideration of the fiber sequence
\begin{align}\label{eqn:mheart_K_seq}
\mathcal{Z}^{\heartsuit}(C)\to \gr^0_{\mathrm{Hdg}}M_{\overline{C}}\xrightarrow{\overline{\eta}_C^\heartsuit}\mathbb{K}(C)[1]
\end{align}
now shows that we have $H^i(\mathbb{K}(C)) = 0$ for $i<-1$ and that the first map in the sequence
\[
H^{-1}(\mathbb{K}(C))\to H^{-1}(\mathcal{Z}^{\heartsuit}(C))\to H^{-1}(\gr^{0}_{\mathrm{Hdg}}M_{\overline{C}}) \xrightarrow{H^{-1}(\overline{\eta}^\heartsuit_C)} H^0(\mathbb{K}(C))
\]
is injective. We claim that the second arrow in the sequence is zero, which of course implies that the first arrow is actually an isomorphism, and that the last arrow is injective. To prove the claim, consider the diagram
\[
\begin{diagram}
H^{-1}(\mathcal{Z}^{\heartsuit}(C))&\rTo& H^{-1}(\gr^{-1}_{\mathrm{Hdg}}M_{\overline{C}}) = C[p]\otimes_C\gr^{-1}_{\mathrm{Hdg}}M_C\\
\dTo&&\dTo_{\varphi_{-1}}\\
H^{-1}(\gr^0_{\mathrm{Hdg}}M_{\overline{C}})&\rTo&H^{-1}(M_{\overline{C}}/\Fil^1_{\mathrm{Hdg}}M_{\overline{C}})
\end{diagram}
\]
where the top and bottom arrows are both injective and the right arrow---being $\varphi_{\overline{C}}$-semilinear---is zero.

We can now use the fiber sequence (see the proof of Proposition~\ref{prop:m1_expl})
\begin{align}\label{eqn:m+1_K_seq}
\mathcal{Z}^{\diamond}(C)\to \gr^0_{\mathrm{Hdg}}M_{\overline{C}}\xrightarrow{\overline{\eta}_C^\heartsuit+\delta_C}\mathbb{K}(C)[1]
\end{align}
to conclude that $H^i(\mathcal{Z}^{\diamond}(C)) = 0$ for $i<-1$, and that we have an exact sequence
\[
0\to H^{-1}(\mathbb{K}(C))\to H^{-1}(\mathcal{Z}^{\diamond}(C))\to H^{-1}(\gr^{0}_{\mathrm{Hdg}}M_{\overline{C}}) \xrightarrow{H^{-1}(\overline{\eta}^\heartsuit_C+\delta_C)} H^0(\mathbb{K}(C)).
\]

Since $\delta_C$ restricts to a nullhomotopic map on $\hker(\varphi_{\overline{C}})\otimes_C\gr^0_{\mathrm{Hdg}}M_C$, and since 
\[
H^{-1}(\gr^0_{\mathrm{Hdg}}M_{\overline{C}}) = C[p]\otimes_C\gr^0_{\mathrm{Hdg}}M_C,
\]
we see that the last map in the exact sequence above is equal to $H^{-1}(\overline{\eta}^\heartsuit_C)$, and is therefore injective. This shows that we have isomorphisms
\[
C[p]\otimes_C\gr^{-1}_{\mathrm{Hdg}}M_C\xleftarrow\simeq H^{-1}(\mathcal{Z}^{\heartsuit}(C))\xleftarrow{\simeq}H^{-1}(\mathbb{K}(C))\xrightarrow{\simeq}H^{-1}(\mathcal{Z}^{\diamond}(C)).
\]

To finish the proof of the discreteness of $\mathcal{Z}(\mathcal{M}_1)^+(C)$, note now that the composition
\[
C[p]\otimes_C\gr^{-1}_{\mathrm{Hdg}}M_C\xrightarrow{\simeq}H^{-1}(\mathcal{Z}^{\diamond}(C))\xrightarrow{\eqref{eqn:H-1_map_inj}}C[p]\otimes_C\gr^{-1}_{\mathrm{Hdg}}M_C
\]
is the identity.
\end{proof}

\begin{lemma}
\label{lem:M(k)_is_easy}
Suppose that $C\in \mathrm{CRing}_{\heartsuit,\overline{R}/}$ is perfect. Then with the notation as in~\eqref{subsec:M_on_perfect}, we have
\[ 
\mathcal{Z}(\mathcal{M}_n)(C) \simeq \left(\Fil^0_{\mathrm{Hdg}}\mathcal{M}(W(C))/p^n\right)^{\varphi_0 = \mathrm{id}}\;;\; \mathcal{Z}(\mathcal{M})(C) \simeq \Fil^0_{\mathrm{Hdg}}\mathcal{M}(W(C))^{\varphi_0 = \mathrm{id}}.
\]
\end{lemma}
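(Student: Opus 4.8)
The plan is to reduce everything to an explicit computation of the totalization $R\Gamma_{\mathrm{crys}}$ using the weakly initial object provided by the Witt vectors, and then to observe that in the perfect case the cosimplicial complications collapse entirely. First I would invoke Lemma~\ref{lem:W_PD_thickening}: since $C$ is perfect, the map $g:W(C)\twoheadrightarrow C$ satisfies hypothesis (iii) of Proposition~\ref{prop:crystals_explicit}, so $(D_n(g^{(\bullet)})\twoheadrightarrow C,\gamma_n(g^{(\bullet)}))$ is a weakly initial object of $\mathrm{AniPDPair}_{p,n,C/}$, and moreover the cosimplicial filtered ring $\Fil^\bullet D_n(g^{(\bullet)})$ is \emph{constant}, equal to $\Fil^\bullet\widehat{D}(C)/{}^{\mathbb{L}}p^n$. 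Hence by Proposition~\ref{prop:crystals_explicit} the cosimplicial object $\Fil^\bullet_{\mathrm{Hdg}}M_n^{(\bullet)}$ computing $\Fil^\bullet_{\mathrm{Hdg}}R\Gamma_{\mathrm{crys}}(C,\mathcal{M}_n)$ is itself constant, so its totalization is just $\Fil^\bullet_{\mathrm{Hdg}}\mathcal{M}(\widehat{D}(C)/{}^{\mathbb{L}}p^n\twoheadrightarrow C,\gamma_n)$; base-changing along $\widehat{D}(C)\to W(C)$ via $Z^{[i]}\mapsto p^i/i!$ as in~\eqref{subsec:M_on_perfect} rewrites this in terms of $\Fil^\bullet_{\mathrm{Hdg}}\mathcal{M}(W(C))/{}^{\mathbb{L}}p^n$.

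Next I would feed this into Lemma~\ref{lem:FilFCrys_global_secs_cosimp}, which gives $\mathcal{Z}(\mathcal{M}_n)(C) \simeq \tau^{\leq 0}\mathcal{Z}^+_n(C)$ where $\mathcal{Z}^+_n(C) = \hker(\Fil^0_{\mathrm{Hdg}}R\Gamma_{\mathrm{crys}}(C,\mathcal{M}_n)\xrightarrow{\varphi_0-\mathrm{id}}R\Gamma_{\mathrm{crys}}(C,\mathcal{M}_n))$. With the identification of the totalization above, $\mathcal{Z}^+_n(C)$ becomes the honest fiber
\[
\hker\left(\Fil^0_{\mathrm{Hdg}}\mathcal{M}(W(C))/p^n \xrightarrow{\varphi_0-\mathrm{id}} \mathcal{M}(W(C))/p^n\right),
\]
and I must argue this is already $0$-coconnective (so that the $\tau^{\leq 0}$ is harmless) and that it computes the \emph{module} of $\varphi_0$-invariants. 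The point is that $\Fil^0_{\mathrm{Hdg}}\mathcal{M}(W(C))$ and $\mathcal{M}(W(C))$ are both discrete (indeed finite locally free over $\widehat{D}(C)$, resp. $W(C)$, by hypothesis $\Fil^\bullet_{\mathrm{Hdg}}\mathcal{M}\in\mathrm{TFilCrys}^{\mathrm{lf}}$), so $\mathcal{M}(W(C))/p^n$ is discrete as well ($W(C)$ is $p$-torsion-free since $C$ is reduced), and the homotopy kernel of a map of discrete modules is concentrated in degrees $\leq 0$, so taking $\tau^{\leq 0}$ has no effect and the result is just $H^0$, i.e.\ the submodule of elements killed by $\varphi_0-\mathrm{id}$. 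Taking the inverse limit over $n$ and using that $W(C)$ is $p$-adically separated yields the statement for $\mathcal{M}$ itself; here I would note $\varprojlim_n$ commutes with the kernel since the transition maps are surjective (the modules being finite free mod $p^n$), so there is no $\varprojlim^1$ term.

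The main obstacle I anticipate is the bookkeeping around the distinction between the absolute PD envelope $\widehat{D}(C) = W(C)\langle Z\rangle^\wedge_p/(Z-p)$ and the ``expected'' ring $W(C)$ with its standard divided powers on $(p)$ — i.e.\ making the base-change $Z^{[i]}\mapsto p^i/i!$ compatible with the filtration, the Frobenius $\widehat\varphi$ (which sends $Z^{[i]}\mapsto \frac{(pi)!}{i!}Z^{[pi]}$), and the horizontal filtration entering the $F$-zip structure, so that $\varphi_0$ on $\Fil^0_{\mathrm{Hdg}}\mathcal{M}(W(C))$ is literally the map appearing in~\eqref{subsec:M_on_perfect}. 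This is essentially the content flagged in~\eqref{subsec:M_on_perfect} as a ``red herring,'' and the clean way through is to observe that $\Fil^i W(C)$ (generated by $p^k/k!$, $k\geq i$) receives $\Fil^i\widehat{D}(C)$ compatibly with Frobenius, so that $\varphi_0$ on $W(C)$-coefficients is induced from $\widehat\varphi$ on $\widehat{D}(C)$-coefficients; since both sides are flat over $\Int_p$ modulo $p^n$ this base-change is exact and commutes with the formation of $\hker(\varphi_0-\mathrm{id})$. Everything else — discreteness, the vanishing of higher cohomology, the interchange of $\tau^{\leq 0}$ and $\varprojlim$ — is then formal.
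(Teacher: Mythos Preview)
Your overall strategy matches the paper's: use Lemma~\ref{lem:W_PD_thickening} so the cosimplicial object is constant, collapse the totalization, then invoke Lemma~\ref{lem:FilFCrys_global_secs_cosimp}. But the step you flag as ``the main obstacle'' is a genuine gap, and your proposed resolution does not work.

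The issue is passing from $\widehat{D}(C)$ to $W(C)$. After the cosimplicial collapse you have
\[
\mathcal{Z}(\mathcal{M}_n)(C)\simeq \hker^{\mathrm{cn}}\bigl(\Fil^0_{\mathrm{Hdg}}\mathcal{M}(\widehat{D}(C))/{}^{\mathbb{L}}p^n\xrightarrow{\varphi_0-\mathrm{id}}\mathcal{M}(\widehat{D}(C))/{}^{\mathbb{L}}p^n\bigr),
\]
and you want to replace $\widehat{D}(C)$ by $W(C)$. Your justification---``both sides are flat over $\Int_p$ modulo $p^n$, so this base-change is exact and commutes with $\hker(\varphi_0-\mathrm{id})$''---is a non sequitur: flatness over $\Int_p$ says nothing about whether base-change along the (non-flat) surjection $\widehat{D}(C)\to W(C)$ commutes with the kernel of $\varphi_0-\mathrm{id}$. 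The paper handles this by setting $J=\ker(\widehat{D}(C)\to W(C))$ and showing that $\varphi_0-\mathrm{id}$ is an \emph{automorphism} of $J\otimes_{\widehat{D}(C)}\mathcal{M}(\widehat{D}(C))$. This is checked after reducing mod $p$: the Frobenius of $\widehat{D}(C)/p$ factors through $C$ (divided power ideals have trivial $p$-th powers), so $\varphi_0$ vanishes on $(J\otimes\mathcal{M})/{}^{\mathbb{L}}p$, whence $\varphi_0-\mathrm{id}=-\mathrm{id}$ there. This is the missing idea; without it the reduction to $W(C)$ is unjustified.

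A smaller point: your claim that the homotopy kernel of a map of discrete modules ``is concentrated in degrees $\leq 0$, so $\tau^{\leq 0}$ is harmless'' is not literally correct---$\hker$ of a map of discrete modules sits in degrees $[0,1]$ with $H^1$ the cokernel---but $\tau^{\leq 0}$ still returns the classical kernel, so the conclusion survives. For the second identity, the paper does a bit more work controlling the $p^n$-torsion in $H^1$ of the integral fiber via derived $p$-completeness; your argument via left-exactness of $\varprojlim$ is essentially adequate once the first identity is in hand, since $\mathcal{Z}(\mathcal{M})(C)=\tau^{\leq 0}\varprojlim_n\mathcal{Z}(\mathcal{M}_n)(C)$ and the limit of discrete groups is discrete after $\tau^{\leq 0}$.
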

\begin{proof}
First, note that both $\Fil^0_{\mathrm{Hdg}}\mathcal{M}(W(C))$ and $\mathcal{M}(W(C))$ are $p$-torsion free, so that the derived quotient mod-$p^n$ agrees with the na\"ive one for them both.

Let $\Fil^\bullet_{\mathrm{Hdg}}M_C$ be the filtered base change of $\Fil^\bullet_{\mathrm{Hdg}}\mathcal{M}(\widehat{D}(C))$ over $k$. Then by Lemma~\ref{lem:associated_graded} we have
\[
\gr^{-1}_{\mathrm{Hdg}}\mathcal{M}(W(C)) \simeq \gr^{-1}_{\mathrm{Hdg}}\mathcal{M}(\widehat{D}(C))\simeq \gr^{-1}_{\mathrm{Hdg}}M_C.
\]

Let $J = \ker(\widehat{D}(C)\to W(C))$; then we have short exact sequences
\begin{align*}
0\to J\otimes_{\widehat{D}(C)}\mathcal{M}(\widehat{D}(C))&\to \Fil^0_{\mathrm{Hdg}}\mathcal{M}(\widehat{D}(C))\to \Fil^0_{\mathrm{Hdg}}\mathcal{M}(W(C))\to 0\\
0\to J\otimes_{\widehat{D}(C)}\mathcal{M}(\widehat{D}(C))&\to \mathcal{M}(\widehat{D}(C))\to \mathcal{M}(W(C))\to 0.
\end{align*}

Taking the derived quotient mod $p^n$ gives us fiber sequences
\begin{align*}
(J\otimes_{\widehat{D}(C)}\mathcal{M}(\widehat{D}(C)))/{}^{\mathbb{L}}p^n&\to (\Fil^0_{\mathrm{Hdg}}\mathcal{M}(\widehat{D}(C)))/{}^{\mathbb{L}}p^n\to \Fil^0_{\mathrm{Hdg}}\mathcal{M}(W(C))/p^n\\
(J\otimes_{\widehat{D}(C)}\mathcal{M}(\widehat{D}(C)))/{}^{\mathbb{L}}p^n&\to \mathcal{M}(\widehat{D}(C))/{}^{\mathbb{L}}p^n\to \mathcal{M}(W(C))/p^n.
\end{align*}

As in the proof of Lemma~\ref{lem:FilFCrys_global_secs_cosimp}, we have
\[
\mathcal{Z}(\mathcal{M}_n)(C) \simeq \hker^{\mathrm{cn}}\left((\Fil^0_{\mathrm{Hdg}}\mathcal{M}(\widehat{D}(C)))/{}^{\mathbb{L}}p^n\xrightarrow{\varphi_0-\mathrm{id}}\mathcal{M}(\widehat{D}(C))/{}^{\mathbb{L}}p^n\right).
\]

So to prove the first identity, it is enough to know that $\varphi_0- \mathrm{id}$ induces an isomorphism from $J\otimes_{\widehat{D}(C)}\mathcal{M}(\widehat{D}(C))$ to itself. This can be checked after taking the derived quotient mod $p$, where in fact it is equal to $-\mathrm{id}$ and hence is an isomorphism.

For the second identity, set 
\[
\mathcal{Z}^+(C) = \hker(\Fil^0_{\mathrm{Hdg}}\mathcal{M}(W(C))\xrightarrow{\varphi_0-\mathrm{id}}\mathcal{M}(W(C)))\in \Mod[p\text{-comp}]{\Int}.
\]
Note that we have
\[
H^0(\mathcal{Z}^+(C))\simeq \Fil^0_{\mathrm{Hdg}}\mathcal{M}(W(C))^{\varphi_0 = \mathrm{id}}
\]

Then $\mathcal{Z}(\mathcal{M}_n)(C) \simeq H^0(\mathcal{Z}^+(C)/{}^{\mathbb{L}}p^n)$, and we have a fiber sequence
\[
H^0(\mathcal{Z}^+(C))/{}^{\mathbb{L}}p^n\to \mathcal{Z}(\mathcal{M}_n)(C)\to H^1(\mathcal{Z}^+(C))[p^n],
\]
where on the right, we have the $p^n$-torsion in $H^1(\mathcal{Z}^+(C))$ placed in degree $0$. 

Taking the connective limit over $n$ gives us a fiber sequence
\[
H^0(\mathcal{Z}^+(C))\to \mathcal{Z}(\mathcal{M})(C)\to \tau^{\leq 0}\varprojlim_n H^1(\mathcal{Z}^+(C))[p^n],
\]
where the transition maps on the right are given by multiplication by $p$. 

To finish the proof, we need to know that the limit on the right is nullhomotopic, which follows from the derived $p$-completeness of $H^1(\mathcal{Z}^+(C))$. In more detail: the $p$-completeness implies that we have a fiber sequence
\[
\varprojlim_n (H^1(\mathcal{Z}^+(C))[p^n])[1]\to \bigl(H^1(\mathcal{Z}^+(C))\simeq\varprojlim_n H^1(\mathcal{Z}^+(C))/{}^{\mathbb{L}}p^n\bigr)\to \varprojlim_n H^1(\mathcal{Z}^+(C))/p^nH^1(\mathcal{Z}^+(C)).
\]
The middle term here is discrete and the right-most term is coconnective, which shows that we have
\[
H^i\left(\varprojlim_n H^1(\mathcal{Z}^+(C))[p^n]\right) \simeq H^{i-1}\left(\varprojlim_n (H^1(\mathcal{Z}^+(C))[p^n])[1]\right) = 0
\]
for $i-1\leq -1\Leftrightarrow i\leq 0$.
\end{proof}

\begin{proof}[Proof of Theorem~\ref{thm:FilFCrys_discrete}]
For simplicity, set $\mathcal{Z} = \mathcal{Z}(\mathcal{M})$.

Assertion~\eqref{ffc:form_etale} was shown during the proof of assertion~\eqref{ffc:cotangent} of Theorem~\ref{thm:FilFCrys_functorial_props}.

Assertion~\eqref{ffc:perfect} is immediate from Lemma~\ref{lem:M(k)_is_easy}.

Assertion~\eqref{ffc:rep} for the case $n=1$ is Proposition~\ref{prop:m1_repble}. The general case can be deduced via the same argument: The only thing to check is the discreteness of $\mathcal{Z}(\mathcal{M}_n)(C)$ for $C\in \mathrm{CRing}_{\heartsuit,R/}$, and this can be reduced to the $n=1$ case by d\'evissage via the canonical fiber sequence
\[
\mathcal{Z}^+_1(C)\xrightarrow{p^{n-1}} \mathcal{Z}^+_n(C)\to \mathcal{Z}^+_{n-1}(C).
\]

Before we go further, we need the following lemma:
\begin{lemma}
\label{lem:mformp_discrete}
Suppose that $C\in \mathrm{CRing}_{\heartsuit,R/}$ is a finitely generated discrete $\Int_p$-algebra. Then $\mathcal{Z}(\mathcal{M})^{\form}_p(C)$ is flat over $\Int_p$---and in particular discrete.
\end{lemma}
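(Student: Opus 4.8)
The plan is to reduce the computation of $\mathcal{Z}(\mathcal{M})^\form_p(C) = \varprojlim_n \mathcal{Z}(\mathcal{M}_n)(C)$ to the explicit description over perfect rings furnished by assertion~\eqref{ffc:perfect} of Theorem~\ref{thm:FilFCrys_discrete}, where flatness is manifest. By the representability statement~\eqref{ffc:rep} of that theorem --- whose proof for general $n$ has just been given --- each $\mathcal{Z}(\mathcal{M}_n)(C)$ is a discrete $\Int/p^n\Int$-module, so $\mathcal{Z}(\mathcal{M})^\form_p(C)$ is concentrated in cohomological degrees $-1$ and $0$, with $H^{-1} = \varprojlim^1_n \mathcal{Z}(\mathcal{M}_n)(C)$ and $H^0 = \varprojlim_n \mathcal{Z}(\mathcal{M}_n)(C)$. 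Since $\Int_p$ is a discrete valuation ring, it therefore suffices to show that $\varprojlim_n \mathcal{Z}(\mathcal{M}_n)(C)$ is $p$-torsion free and that the tower $\{\mathcal{Z}(\mathcal{M}_n)(C)\}_n$ satisfies the Mittag-Leffler condition.

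First I would make the limit explicit. Combining Lemmas~\ref{lem:global_sections_crys} and~\ref{lem:FilFCrys_global_secs_cosimp} with the divided-power presentation of Proposition~\ref{prop:crystals_explicit}, applied to a smooth lift of $\bar C = C/pC$ over $\Int_{(p)}$, and passing to the limit in $n$, one identifies $\mathcal{Z}(\mathcal{M})^\form_p(C)$ with the $\varphi_0 = \mathrm{id}$-part of $\Fil^0_{\mathrm{Hdg}}$ of the evaluation of $\Fil^\bullet_{\mathrm{Hdg}}\mathcal{M}$ on a $p$-completed divided-power envelope --- the analogue, for a general $C$, of the module $\Fil^0_{\mathrm{Hdg}}\mathcal{M}(W(C))$ appearing in~\eqref{subsec:M_on_perfect}. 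Base change along any map $\bar C \to k$ with $k$ perfect recovers, via~\eqref{ffc:perfect} of Theorem~\ref{thm:FilFCrys_discrete}, the module $\Fil^0_{\mathrm{Hdg}}\mathcal{M}(W(k))^{\varphi_0 = \mathrm{id}}$, which, being a submodule of the $p$-torsion free module $\Fil^0_{\mathrm{Hdg}}\mathcal{M}(W(k))$, is itself discrete and $p$-torsion free. Running this over the perfections of the residue fields at the minimal primes of $\bar C$ --- for $C$ of mixed characteristic one also uses that $\mathcal{Z}(\mathcal{M})^\form_p$ only sees the mod-$p$ fibre, so the characteristic-$0$ points contribute nothing --- produces a comparison map from $\mathcal{Z}(\mathcal{M})^\form_p(C)$ to a product of discrete, $p$-torsion free $\Int_p$-modules.

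The main obstacle is to show that this comparison map is injective on $H^0$: equivalently, that a crystalline section of $\Fil^0_{\mathrm{Hdg}}\mathcal{M}$ over $C$ which is fixed by $\varphi_0$ and which dies after base change to the perfection of $(\bar C)_{\mathrm{red}}$ must vanish. The idea is that any such section is concentrated in the divided-power directions of the crystalline site, on which $\varphi_0$ strictly raises the conjugate (PD) filtration degree and is hence topologically nilpotent, forcing a $\varphi_0$-fixed element there to be zero. This is the same phenomenon that controls $\mathcal{Z}^\diamond$ and $\mathcal{Z}^\heartsuit$ in Section~\ref{sec:proofs}, and making it precise will again use the conjugate filtration on $R\Gamma_{\mathrm{crys}}(\bar C,-)$ together with the Noetherian hypothesis on $C$ (boundedness of the nilradical, Artin-Rees), which is what makes the requisite iteration terminate; this is parallel to the separatedness argument of Lemma~\ref{lem:constancy_M} in the infinitesimal setting. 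The very same mechanism shows that the images of the transition maps in $\{\mathcal{Z}(\mathcal{M}_n)(C)\}_n$ stabilise, eventually coinciding with the $p$-adic filtration of $\varprojlim_n \mathcal{Z}(\mathcal{M}_n)(C)$, which yields the Mittag-Leffler condition and hence the discreteness, completing the proof.
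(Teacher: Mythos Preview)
Your plan diverges from the paper's and leaves the two hardest steps as heuristics. The paper's argument is short and uses only structural results already in hand. First, the fiber sequence $\mathcal{Z}(C')\to\mathcal{Z}(C)\to I\otimes_R\gr^{-1}_{\mathrm{Hdg}}M_R$ coming from the cotangent complex (assertion~\eqref{ffc:form_etale}) shows that for a square-zero thickening $C'\twoheadrightarrow C$ with kernel $I$, flatness of $\mathcal{Z}(C)$ forces flatness of $\mathcal{Z}(C')$; iterating reduces to $C$ a finitely generated $\Field_p$-algebra. Then---and this is the move you are missing---one invokes \emph{fpqc descent} for $\mathcal{Z}$ (from representability~\eqref{ffc:rep}) along the faithfully flat map $C\to C\otimes_{\Field_p[x]}\Field_p[x^{1/p^\infty}]$ to replace $C$ by a quotient $C'/I$ with $C'$ the perfection of a polynomial algebra and $I$ finitely generated. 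Infinitesimal cohesiveness gives $\mathcal{Z}(\tilde C)\simeq\varprojlim_n\mathcal{Z}(\tilde C/I^n)$ for the $I$-adic completion $\tilde C$, and the first step shows $H^i(\mathcal{Z}(\tilde C/I^n))\cong H^i(\mathcal{Z}(C'/I))$ for $i<0$, so flatness for $C'/I$ follows from flatness for the perfect ring $\tilde C$, where~\eqref{ffc:perfect} applies directly.

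Your route instead tries to inject $H^0$ into values over perfect residue fields and separately to establish Mittag-Leffler for the tower $\{\mathcal{Z}(\mathcal{M}_n)(C)\}_n$. Both are only sketched: the assertion that a $\varphi_0$-fixed section dying on $(\bar C)_{\mathrm{red}}^{\mathrm{perf}}$ must vanish because ``$\varphi_0$ strictly raises the conjugate filtration degree on the divided-power directions'' is a heuristic that does not obviously survive the full $p$-adic tower and the non-reduced structure of $C$, and the Mittag-Leffler claim is simply asserted. Note also that $C_{\mathrm{red}}\to C_{\mathrm{red}}^{\mathrm{perf}}$ is typically not flat, so descent is unavailable for that comparison; this is precisely why the paper perfects a \emph{polynomial} cover rather than $C$ itself. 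Finally, your aside about ``mixed characteristic'' is confused: $p$ is nilpotent in $C$ by hypothesis (else $\mathcal{Z}(\mathcal{M})^\form_p(C)$ is undefined), so there are no characteristic-$0$ points to discard---the reduction to $\Field_p$-algebras proceeds via the cotangent complex, not by throwing away a generic fibre.
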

\begin{proof}
First, note that, for any surjection $C'\twoheadrightarrow C$ in $\mathrm{CRing}^{p\text{-nilp}}_{\heartsuit,R/}$ with square-zero kernel $I$, assertion~\eqref{ffc:form_etale} shows that we have a fiber sequence
\[
\mathcal{Z}(C')\to \mathcal{Z}(C)\to I\otimes_R\gr^{-1}_{\mathrm{Hdg}}M_R.
\]
Therefore, $\mathcal{Z}(C')$ is flat over $\Int_p$ if and only if $\mathcal{Z}(C)$ is so. Using this observation, we can reduce the proof of the lemma to the case where $C\in \mathrm{CRing}_{\heartsuit,\overline{R}/}$ is a finitely generated $\Field_p$-algebra.

By fpqc descent for $\mathcal{Z}$---which is a consequence of~\eqref{ffc:rep}, but can also be deduced as in the proof of assertion~\eqref{ffc:cohesive} of Theorem~\ref{thm:FilFCrys_functorial_props}---we can reduce the proof of this assertion to the case where $C$ is the quotient of the perfection $C'$ of a finitely generated polynomial algebra by a finitely generated ideal $I$. Let $\tilde{C}$ be the completion of $C'$ along $I$. The proof of assertion~\eqref{ffc:cohesive} of Theorem~\ref{thm:FilFCrys_functorial_props} shows that we have
\[
\mathcal{Z}(\tilde{C}) \simeq \varprojlim_{n}\mathcal{Z}(\tilde{C}/I^n\tilde{C}),
\]
and the argument from the first paragraph shows that the map
\[
H^i(\mathcal{Z}(\tilde{C}/I^nC))\to H^i(\mathcal{Z}(C'/I))
\]
is an isomorphism for all $i<0$. Therefore, it is enough to show that $\mathcal{Z}(\tilde{C})$ is flat over $\Int_p$, which reduces us to the case of perfect $\Field_p$-algebras, where we are done by assertion~\eqref{ffc:perfect}.
\end{proof}

We now move on to~\eqref{ffc:open_closed}: Just as in the proof of~\eqref{fic:separated} of Proposition~\ref{prop:FIC_discrete}, we reduce to the claim that the map in question is an open and closed immersion on classical truncations. This amounts to showing that $\mathcal{Z}^{\form}_p(C)\to \mathcal{Z}^{\form}_p(C')$ is injective for all maps $C\to C'$ in $\mathrm{CRing}^{p-\text{nilp}}_{\heartsuit,R}$ with $\Spec C$ connected and $C$ finitely generated over $\Int_p$.

Using Lemma~\ref{lem:mformp_discrete} and the observation from the first paragraph of its proof, we see that $\mathcal{Z}^{\form}_p(C)\to \mathcal{Z}^{\form}_p(C/pC)$ is an injective map of abelian groups for any $C\in \mathrm{CRing}^{p-\text{nilp}}_{\heartsuit,R/}$. Therefore, we can first reduce the desired injectivity statement to the case where $C$ is an $\Field_p$-algebra. 

Here, we will actually show that
\[
   V(y) = (\Spec C\times_{y,\mathcal{Z}^{\form}_p,0}\Spec C)^{\mathrm{cl}}
\]
maps isomorphically onto $\Spec C$. 

We first claim that it is represented by a closed subscheme of $\Spec C$ and is hence equal to $\Spec C/I_y$ for some ideal $I_y\subset R$.  For this, we will begin by showing that, for any $C\in \mathrm{CRing}_{\heartsuit,\overline{R}/}$, and $y\in \mathcal{Z}^{\form}_p(C)$ inducing $\overline{y}\in \mathcal{Z}(\mathcal{M}_1)^{\form}_p(C)$, the locus
\[
V_1(y) = (\Spec C\times_{\overline{y},\mathcal{Z}(\mathcal{M}_1)^{\form}_p,0}\Spec C)^{\mathrm{cl}}
\]
is represented by a closed subscheme of $\Spec C$. Indeed, iterating this claim, we find a sequence of closed subschemes
\[
\cdots \subset V_n(y)\subset V_{n-1}(y)\subset \cdots \subset V_1(y)\subset \Spec C
\] 
such that $V_n(y)$ parameterizes those $D\in \mathrm{CRing}_{\heartsuit,C/}$ where $y$ maps to $p^n\mathcal{Z}^{\form}_p(D)$.

We have $V(y) = \bigcap_{n\ge 1}V_n(y)$, and the noetherianity of $C$ ensures that $V(y) = V_n(y)$ for $n$ sufficiently large. 

The claim that $V_1(y)$ is represented by a closed subscheme of $\Spec C$ comes down to knowing that the underlying classical algebraic space of $\mathcal{Z}(\mathcal{M}_1)^{\form}_p$---that is, its restriction to $\mathrm{CRing}_{\heartsuit,\overline{R}/}$---is \emph{separated}.

Since everything here is locally Noetherian, we can check this using the valuative criterion for separatedness~\cite[\href{https://stacks.math.columbia.edu/tag/03KV}{Tag 03KV}]{stacks-project}. That is, we have to check that, for any valuation ring (or actually a \emph{discrete} valuation ring) $C$ in $\mathrm{CRing}_{\heartsuit,\overline{R}/}$ with fraction field $F$, the map
\[
\mathcal{Z}(\mathcal{M}_1)^{\form}_p(C)\to \mathcal{Z}(\mathcal{M}_1)^{\form}_p(F)
\]
is injective. We can reduce to the case where $C$ is perfect, where this follows immediately from Lemma~\ref{lem:M(k)_is_easy}. 

To finish, we need to show that $I_y = 0$. For this, it is enough to show that $I_y\otimes_C\widehat{C}_{\mathfrak{m}} = 0$ for all maximal ideals $\mathfrak{m}\subset C$. 

Now, for any maximal ideal $\mathfrak{m}\subset C$, the natural map (of abelian groups) $\mathcal{Z}^{\form}_p(\widehat{C}_{\mathfrak{m}})\to \mathcal{Z}^{\form}_p(C/\mathfrak{m})$ is injective. This shows that the map $\widehat{C}_{\mathfrak{m}}\to (C/I_y)^{\wedge}_{\mathfrak{m}}$ is an isomorphism. By the Noetherianity of $C$ and Nakayama's lemma, this shows that $I_y\otimes_C\widehat{C}_{\mathfrak{m}} = 0$, as desired.
\end{proof}

\section{Abelian schemes, crystals and derived homomorphisms}\label{sec:abvar_crystals}

Suppose that we are given two abelian schemes $X,Y$ over $R\in \mathrm{CRing}_{\heartsuit}$. The goal of this section is to prove Theorem~\ref{introthm:end} about the existence of a locally quasi-smooth derived scheme of maps $\mathbb{H}(X,Y)$ between them. Along the way we will prove a useful technical result on integrating quotients of cotangent complexes on certain derived schemes to closed derived subschemes via a Beauville-Laszlo gluing argument.

\subsection{}\label{subsec:H0defn}
We can think of $X$ and $Y$ as abelian group objects in $\mathrm{PStk}_R$, that is, as presheaves
\[
\mathrm{CRing}_{R/}\to \Mod[\mathrm{cn}]{\Int}.
\]
See Appendix~\ref{sec:modules} for an explanation of this. 

Associated with them is the $\Mod[\mathrm{cn}]{\Int}$-valued prestack
\begin{align*}
\widetilde{\mathbb{H}}(X,Y):\mathrm{Aff}_{R}^{\mathrm{op}}&\to \Mod[\mathrm{cn}]{\Int}\\
C&\mapsto \mathrm{hker}^{\mathrm{cn}}\left(\Map(X_C,Y_C)\xrightarrow{\circ [0]} Y(C)\right),
\end{align*}
where the map here is composition with the zero section of $X$. Note that we are actually only using the abelian group object structure on $Y$ here; see~\eqref{subsec:module_objects_hom_spaces}.

Write $\omega_Y\in \Mod{R}$ for the module of invariant differential forms on $Y$.

\begin{proposition}
\label{prop:hom_stack}
The prestack $\widetilde{\mathbb{H}}(X,Y)$ is a derived scheme over $R$ with the following properties:
\begin{enumerate}
  \item Its underlying classical scheme
\[
\widetilde{\mathbb{H}}(X,Y)^{\mathrm{cl}} = \underline{\Hom}(X,Y)
\]
is the scheme of homomorphisms from $X$ to $Y$.
\item We have a canonical equivalence
\[
\mathbb{L}_{\widetilde{\mathbb{H}}(X,Y)/R}\simeq \Reg{\widetilde{\mathbb{H}}(X,Y)}\otimes_R(\omega_Y\otimes_R(\tau^{\ge 1}R\Gamma(X,\Reg{X}))^\vee).
\]
\end{enumerate}
\end{proposition}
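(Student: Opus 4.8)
The plan is to recognize $\widetilde{\mathbb{H}}(X,Y)$ as a mapping stack in the sense of Appendix~\ref{sec:lurie} and then to compute its cotangent complex by the standard formula for mapping stacks, adapted to the pointed (zero-section-preserving) setting. First I would observe that, since $X$ is a proper flat family of abelian varieties and $Y$ is projective over $R$, the mapping stack $\shvMap_R(X,Y)$ of all maps of $R$-schemes is representable by a derived scheme over $R$; the locus of maps preserving the zero section is then cut out as the fiber of the restriction map $\shvMap_R(X,Y)\to \shvMap_R(\Spec R, Y) = Y$ along the section $[0]_X\colon \Spec R\hookrightarrow X$, which is exactly the homotopy fiber defining $\widetilde{\mathbb{H}}(X,Y)$. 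Representability of the fiber by a derived scheme follows since closed immersions and fiber products of derived schemes stay within derived schemes. For part (1), the rigidity lemma for abelian schemes says that a zero-section-preserving morphism between abelian schemes over a \emph{classical} base is automatically a homomorphism, so the classical truncation of $\widetilde{\mathbb{H}}(X,Y)$ is $\underline{\Hom}(X,Y)$; one should note here that passing to the classical truncation is compatible with the formation of $\shvMap$ and of the fiber, because $\pi_0$ commutes with the relevant (finite) limits in this situation.

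For part (2), I would use the general cotangent-complex formula for mapping stacks: if $\pi\colon X\to \Spec R$ is the (cohomologically proper, flat) structure map and $\mathrm{ev}\colon X\times_R \shvMap_R(X,Y)\to Y$ is the evaluation morphism, then
\[
\mathbb{L}_{\shvMap_R(X,Y)/R}\simeq \pi_*'\bigl(\mathrm{ev}^*\mathbb{L}_{Y/R}\bigr)^{\vee}
\]
where $\pi'$ is the projection $X\times_R\shvMap_R(X,Y)\to \shvMap_R(X,Y)$ and the dual is the $R$-linear (Grothendieck--Serre) dual, valid because $R\pi_*$ of a perfect complex on a cohomologically proper flat $X$ is perfect. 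Restricting to $\widetilde{\mathbb{H}}(X,Y)$ and using that $\mathbb{L}_{Y/R}$ pulled back along the zero section is $\omega_Y^{\vee}$ up to the appropriate shift — more precisely, for a point of $\widetilde{\mathbb{H}}(X,Y)$ corresponding to a (derived) homomorphism, translation-invariance identifies $\mathrm{ev}^*\mathbb{L}_{Y/R}$ with the pullback of $\omega_Y^{\vee}$ from $\widetilde{\mathbb{H}}(X,Y)$ — one gets $\mathbb{L}_{\widetilde{\mathbb{H}}(X,Y)/R}\simeq \Reg{}\otimes_R\bigl(\omega_Y\otimes_R(R\Gamma(X,\Reg{X}))^{\vee}\bigr)$ \emph{after} accounting for the zero-section condition, which removes the $H^0(X,\Reg{X})\simeq R$ summand and leaves $(\tau^{\ge 1}R\Gamma(X,\Reg{X}))^{\vee}$. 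Concretely: the defining fiber sequence $\widetilde{\mathbb{H}}(X,Y)\to \shvMap_R(X,Y)\to Y$ gives a cofiber sequence of cotangent complexes, and the map $\mathbb{L}_{Y/R}\to \mathbb{L}_{\shvMap_R(X,Y)/R}$ (pulled back to $\widetilde{\mathbb{H}}$) corresponds under the formula above to the inclusion of the unit summand $\Reg{}\otimes_R(\omega_Y\otimes_R H^0(X,\Reg{X})^{\vee})\hookrightarrow \Reg{}\otimes_R(\omega_Y\otimes_R R\Gamma(X,\Reg{X})^{\vee})$ given by the $R$-point $[0]_X$ of $X$; taking the cofiber yields exactly the asserted complex, since $\tau^{\ge 1}R\Gamma(X,\Reg{X})$ is the cofiber of $R\simeq H^0(X,\Reg{X})\to R\Gamma(X,\Reg{X})$.

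The main obstacle I anticipate is not the cotangent-complex bookkeeping but getting the representability and the evaluation-map formalism cleanly in place in the derived setting: one needs $X$ to be \emph{cohomologically proper} (not merely proper) so that $R\pi_*$ preserves perfect complexes and commutes with base change, and one needs the mapping-stack representability theorem from Appendix~\ref{sec:lurie} (Lurie's criteria, or the projectivity-based construction) to apply to the pair $(X,Y)$ over a general animated base $R$ — here $R$ is discrete, which helps, but $\shvMap_R(X,Y)$ and its tangent complex must be controlled over arbitrary animated $R$-algebras $C$. The identification of $\mathrm{ev}^*\mathbb{L}_{Y/R}$ with a pullback from $\widetilde{\mathbb{H}}(X,Y)$ also uses translation-invariance of $\mathbb{L}_{Y/R}$ and the fact that on $\widetilde{\mathbb{H}}(X,Y)$ the evaluation morphism factors through a homomorphism up to the derived structure; making this precise (rather than just pointwise) is where one must be careful, and is essentially the content of ``glorified Kodaira--Spencer theory'' alluded to in the introduction. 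Once these foundational points are granted, both assertions follow formally.
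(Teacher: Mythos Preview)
Your proposal is correct and follows essentially the same route as the paper: realize $\widetilde{\mathbb{H}}(X,Y)$ as the fiber of $\shvMap_R(X,Y)\to Y$ over the zero section, invoke rigidity for part (1), and for part (2) compute the cotangent complex of the mapping stack via Grothendieck--Serre duality and then pass to the fiber. The paper carries out part (2) by directly computing the tangent space $\hker(\widetilde{\mathbb{H}}(X,Y)(C\oplus P)\to \widetilde{\mathbb{H}}(X,Y)(C))$ as a $\hker$ of the corresponding thing for $\shvMap$, which is the functor-of-points version of your cofiber sequence of cotangent complexes.

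One clarification that removes your anticipated obstacle: you worry about making the identification $\mathrm{ev}^*\mathbb{L}_{Y/R}\simeq$ (pullback of $\omega_Y$) precise ``rather than just pointwise'' and suggest this needs the evaluation map to factor through a homomorphism. It does not. The cotangent sheaf of an abelian scheme is \emph{globally} trivial on $Y$, i.e.\ $\mathbb{L}_{Y/R}\simeq \Reg{Y}\otimes_R\omega_Y$ as objects of $\mathrm{QCoh}_Y$ (this is Remark~\ref{rem:abelian_scheme_dualizing}), so pulling back along \emph{any} map---in particular along $\mathrm{ev}$ over all of $\shvMap_R(X,Y)$, not just over $\widetilde{\mathbb{H}}(X,Y)$---already gives the pullback of $\omega_Y$ from the base. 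No homomorphism structure is needed for this step; the paper simply cites the remark and moves on.
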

\begin{proof}
We see from Proposition~\ref{prop:deform_maps} and Remark~\ref{rem:mapping_stack_criteria} that the mapping prestack $\shvMap(X,Y)$ is represented by a derived scheme over $R$. Now, by construction $\widetilde{\mathbb{H}}(X,Y)$ is a closed derived subscheme of $\shvMap(X,Y)$.

That
\[
\widetilde{\mathbb{H}}(X,Y)(C) \simeq \Hom(X_C,Y_C)
\]
when $C$ is a discrete ring is a consequence of  the rigidity property for abelian schemes; see~\cite[Corollary 6.4]{mumford:git}.

To check that it admits the cotangent complex as stated, set, for any $C\in \mathrm{CRing}_{R/}$ and $P\in\Mod[\mathrm{cn}]{C}$,
\begin{align*}
H(C,P) &= \mathrm{hker}\left(\widetilde{\mathbb{H}}(X,Y)(C\oplus P)\to \widetilde{\mathbb{H}}(X,Y)(C)\right);\\
M(C,P) &= \mathrm{hker}\left(\shvMap(X_{C\oplus P},Y_{C\oplus P[1]})\to \shvMap(X_{C},Y_{C})\right).
\end{align*}
Then, by Proposition~\ref{prop:deform_maps} once again, and Remark~\ref{rem:abelian_scheme_dualizing}, we have
\begin{align*}
H(C,P)&\simeq \mathrm{hker}\left(M(C,P)\xrightarrow{\circ [0]}\mathrm{hker}(Y(C\oplus P)\to Y(C))\right)\\
&\simeq \mathrm{hker}\left(\Map_{\Mod{R}}(\omega_Y,P\otimes_CR\Gamma(X,\Reg{X}))\to \Map_{\Mod{R}}(\omega_Y,P)\right)\\
&\simeq \Map_{\Mod{R}}(\omega_Y,P\otimes_C\tau^{\geq 1}R\Gamma(X,\Reg{X})).
\end{align*}
Here, in the last step, we have used the fact that the induced map $[0]^*:H^0(X,\Reg{X})\to R$ is an isomorphism.
\end{proof}

\begin{corollary}
\label{cor:elliptic_curve_hom}
Suppose that $X$ is an elliptic curve. Then $\mathbb{H}(X,Y) = \widetilde{\mathbb{H}}(X,Y)$ is a locally quasi-smooth derived scheme over $R$ with cotangent complex 
\[
\mathbb{L}_{\mathbb{H}(X,Y)/R}\simeq \Reg{\mathbb{H}(X,Y)}\otimes_R(\omega_Y\otimes_R\omega_{X^\vee})[1],
\]
where $X^\vee$ is the dual elliptic curve.\footnote{Of course, we have a canonical isomorphism $X\simeq X^\vee$, but we maintain the distinction here because first, it emphasizes the covariance of $\omega_{X^\vee}$, and second, because this is the version that we will look to generalize in Section~\ref{sec:abvar_crystals}.}
\end{corollary}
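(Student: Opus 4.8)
The statement is an immediate specialization of Proposition~\ref{prop:hom_stack}, so the only work is to simplify the cotangent complex formula in the case $\dim X = 1$. First I would invoke Proposition~\ref{prop:hom_stack}: it already tells us that $\widetilde{\mathbb{H}}(X,Y)$ is a derived scheme over $R$ with underlying classical scheme $\underline{\Hom}(X,Y)$ and with
\[
\mathbb{L}_{\widetilde{\mathbb{H}}(X,Y)/R}\simeq \Reg{\widetilde{\mathbb{H}}(X,Y)}\otimes_R\bigl(\omega_Y\otimes_R(\tau^{\ge 1}R\Gamma(X,\Reg{X}))^\vee\bigr).
\]
When $X$ is an elliptic curve, $R\Gamma(X,\Reg{X})$ is concentrated in cohomological degrees $0$ and $1$, with $H^0(X,\Reg{X})\simeq R$ and $H^1(X,\Reg{X})$ a line bundle over $R$; hence $\tau^{\ge 1}R\Gamma(X,\Reg{X})\simeq H^1(X,\Reg{X})[-1]$, and its dual is $H^1(X,\Reg{X})^\vee[1]$. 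Substituting, the cotangent complex becomes $\Reg{}\otimes_R(\omega_Y\otimes_R H^1(X,\Reg{X})^\vee)[1]$. Finally, Serre duality (or the standard identification of $H^1(X,\Reg{X})$ with the tangent space of the dual abelian scheme at the origin) gives $H^1(X,\Reg{X})^\vee\simeq \omega_{X^\vee}$, yielding the asserted formula
\[
\mathbb{L}_{\mathbb{H}(X,Y)/R}\simeq \Reg{\mathbb{H}(X,Y)}\otimes_R(\omega_Y\otimes_R\omega_{X^\vee})[1].
\]

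To conclude that $\widetilde{\mathbb{H}}(X,Y)$ is locally quasi-smooth, I would appeal to the intrinsic characterization of quasi-smoothness recalled in the introduction (and in Appendix~\ref{sec:qsmooth}): a finitely presented morphism $Z\to X$ is quasi-smooth and unramified of virtual codimension $d$ precisely when $\mathbb{L}_{Z/X}[-1]$ is a vector bundle of rank $d$ over $Z$. Here $\mathbb{L}_{\mathbb{H}(X,Y)/R}[-1]\simeq \Reg{\mathbb{H}(X,Y)}\otimes_R(\omega_Y\otimes_R\omega_{X^\vee})$, which is a vector bundle over $\mathbb{H}(X,Y)$ of rank $\rank_R(\omega_Y)\cdot\rank_R(\omega_{X^\vee}) = \dim(Y)\cdot 1 = \dim(Y)$; note finite presentation of $\widetilde{\mathbb{H}}(X,Y)$ over $R$ follows from its being a closed derived subscheme of the mapping scheme $\shvMap(X,Y)$ as in the proof of Proposition~\ref{prop:hom_stack}. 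This gives virtual codimension $\dim(X)\dim(Y) = \dim(Y)$, consistent with Theorem~\ref{introthm:end}, and shows that in this case $\mathbb{H}(X,Y)$ may simply be taken to equal $\widetilde{\mathbb{H}}(X,Y)$ with no further construction needed.

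There is essentially no obstacle here: the corollary is a bookkeeping consequence of Proposition~\ref{prop:hom_stack} together with the elementary fact that an elliptic curve has one-dimensional structure sheaf cohomology in degree $1$. The only point requiring a sentence of justification is the identification $H^1(X,\Reg{X})^\vee\simeq \omega_{X^\vee}$, which is the standard description of the Lie algebra of the dual abelian scheme; I would simply cite this. The contrast with the higher-dimensional case — where $\tau^{\ge 1}R\Gamma(X,\Reg{X})$ has cohomology in degrees $1$ through $\dim X$, so that $\mathbb{L}_{\widetilde{\mathbb{H}}(X,Y)/R}[-1]$ fails to be a vector bundle and a genuine gluing construction (via Proposition~\ref{introprop:gluing}) is required — is exactly what makes the elliptic-curve case trivial, and is worth flagging in the proof as the reason the footnote in the statement distinguishes $X$ from $X^\vee$.
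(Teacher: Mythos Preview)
Your proposal is correct and follows essentially the same approach as the paper: invoke Proposition~\ref{prop:hom_stack}, observe that for an elliptic curve $\tau^{\ge 1}R\Gamma(X,\Reg{X}) \simeq H^1(X,\Reg{X})[-1]$, and use the identification $\omega_{X^\vee} \simeq H^1(X,\Reg{X})^\vee$. The paper's proof is terser, omitting the explicit verification of quasi-smoothness via the vector-bundle criterion that you spell out, but the substance is identical.
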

\begin{proof}
Note that $\tau^{\geq 1}R\Gamma(X,\Reg{X}) = H^1(X,\Reg{X})[-1]$ in this case, and that we have a canonical isomorphism
\[
\omega_{X^\vee} \simeq H^1(X,\Reg{X})^\vee.
\]
Therefore, the corollary is immediate from Proposition~\ref{prop:hom_stack}.
\end{proof}

\subsection{}\label{subsec:absch_cohomology}
Let $\Fil^\bullet R\Gamma_{\mathrm{dR}}(X/R)\in \mathrm{FilMod}_R$ be the Hodge-filtered derived de Rham cohomology of $X$: This is defined for any prestack over $R$ via right Kan extension from $\mathrm{Aff}_R$ of the Hodge-filtered derived de Rham cohomology functor considered in~\eqref{subsec:infcoh}, but in this case can be obtained quite easily. Choose a smooth $\Int$-algebra $R'$ and an abelian scheme $X'$ over $R'$ such that 
\[
X \simeq X'\times_{\Spec R'}\Spec R
\]
for some map $R'\to R$; moduli considerations show that this is always possible. Now take
\[
\Fil^\bullet R\Gamma_{\mathrm{dR}}(X/R) \simeq R'\otimes_R\Fil^\bullet_{\mathrm{Hdg}}R\Gamma_{\mathrm{dR}}(X'/R'),
\]
where 
\[
\Fil^\bullet_{\mathrm{Hdg}}R\Gamma_{\mathrm{dR}}(X'/R') = R\Gamma(X',\Fil^\bullet_{\mathrm{Hdg}}\Omega^\bullet_{X'/R'}).
\]

The right hand side is a filtered commutative differential graded algebra over $R'$ via cup product, and pullback along the identity section of $X'$ gives rise to a natural map $R'\to R\Gamma(X',\Fil^\bullet_{\mathrm{Hdg}}\Omega^\bullet_{X'/R'})$. 

If $\Fil^\bullet_{\mathrm{Hdg}}H^1_{dR}(X'R')$ is the first cohomology of $R\Gamma(X',\Fil^\bullet_{\mathrm{Hdg}}\Omega^\bullet_{X'/R'})$, cup product now leads to a canonical quasi-isomorphism
\begin{align}\label{eqn:formality_derham}
\bigoplus_{i=0}^{\dim_{R'}X'}(\wedge^i_{R'}H^1_{dR}(X'/R'))[-i]\xrightarrow{\simeq}R\Gamma(X',\Fil^\bullet_{\mathrm{Hdg}}\Omega^\bullet_{X'/R'})
\end{align}
of filtered commutative differential graded algebras over $R'$.

Note that the right hand side is equipped with a canonical integrable connection over $\Int$, giving rise, after base-change along $R'\to R$, to the Hodge-filtered (derived) infinitesimal cohomology 
\[
\Fil^\bullet_{\mathrm{Hdg}}\mathrm{InfCoh}_{X_{\Rat}/R_{\Rat}}\in \mathrm{TFilInfCrys}_{R_{\Rat}/\Rat},
\]
of~\eqref{subsec:infcoh}, as well as for every prime $p$, the Hodge-filtered (derived) $p$-adic crystalline cohomology
\[
\mathrm{CrysCoh}_{p,X/R}\in \mathrm{FilFCrys}^{\geq 0}_{R/\Int_p}
\]
from~\eqref{subsec:crys_coH_f-zip}.

\begin{lemma}
\label{lem:ab_coh_split}
Let $g$ be the relative dimension of $X$ over $R$. Then there are canonical direct sum decompositions, functorial in $X$ and $R$,
\begin{align*}
\Fil^\bullet_{\mathrm{Hdg}}\mathrm{InfCoh}_{X_{\Rat}/R_{\Rat}}&\simeq \bigoplus_{i=0}^g\Fil^\bullet_{\mathrm{Hdg}}\mathrm{InfCoh}^{(i)}_{X_{\Rat}/R_{\Rat}}[-i];\\
\mathrm{CrysCoh}_{p,X/R}&\simeq \bigoplus_{i=0}^g\mathrm{CrysCoh}^{(i)}_{p,X/R}[-i]
\end{align*}
in $\mathrm{TFilInfCrys}_{R_{\Rat}/\Rat}$ and $\mathrm{FilFCrys}^{\geq 0}_{R/\Int_p}$, respectively, characterized by the fact that, for $R$ smooth over $\Int$, they arise from the decomposition on the left hand side of the isomorphism~\eqref{eqn:formality_derham}.
\end{lemma}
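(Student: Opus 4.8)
\textbf{Proof plan for Lemma~\ref{lem:ab_coh_split}.}
The strategy is to first construct the decomposition in the universal smooth case and then transport it along base change, exploiting the functoriality of all the constructions involved. In more detail, I would begin with the observation that it suffices to produce the decomposition functorially for pairs $(X'/R')$ with $R'$ smooth over $\Int$ and $X'/R'$ an abelian scheme, since an arbitrary pair $(X/R)$ is the base change of such a universal pair along some (non-unique) map $R'\to R$, and any two choices are related by further base change; the claimed characterization forces the decomposition to be the one obtained by base change, so uniqueness (hence independence of the choice) is automatic once we know the smooth case is functorial in $(X',R')$. Concretely, the functorial source is the stack $\M_g^{\mathrm{ab}}$ classifying abelian schemes of relative dimension $g$, which is smooth over $\Int$, and one only needs the decomposition over its (smooth affine) charts compatibly with the groupoid structure.

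In the smooth case, the quasi-isomorphism~\eqref{eqn:formality_derham} gives a filtered CDGA decomposition
\[
R\Gamma(X',\Fil^\bullet_{\mathrm{Hdg}}\Omega^\bullet_{X'/R'}) \simeq \bigoplus_{i=0}^{g}(\wedge^i_{R'}\Fil^\bullet_{\mathrm{Hdg}}H^1_{dR}(X'/R'))[-i],
\]
and the point is that this equivalence is \emph{horizontal}: the Gauss--Manin connection on de Rham cohomology is compatible with cup product and with the Hodge filtration's Griffiths transversality, and the exterior-power/cup-product maps are defined over $\Int$ (i.e.\ functorially in the data), so they commute with the connection. Thus the decomposition lifts from the underlying filtered complexes to a decomposition in $\mathrm{TFilInfCrys}_{R'_{\Rat}/\Rat}$ after inverting primes, and — since $H^1_{dR}(X'/R')$ carries the $F$-zip and strongly divisible $F$-crystal structure coming from the crystalline realization, and cup product respects Frobenius and the conjugate filtration (by the multiplicativity statements cited from~\cite{Mao2021-jt}, e.g.\ the compatibility of $\gr_{\mathrm{conj}}$ with tensor products and the canonical equivalence $\mathcal{O}_1\otimes_\varphi\gr^{-\bullet}_{\mathrm{Hdg}}\mathrm{dR}\simeq \gr^\bullet_{\mathrm{conj}}\mathrm{CrysCoh}$) — it also lifts to a decomposition in $\mathrm{FilFCrys}^{\geq 0}_{R'/\Int_p}$. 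Here one defines $\mathrm{CrysCoh}^{(i)}_{p,X'/R'}$ to be $\wedge^i$ of the weight-one piece with its induced $F$-crystal structure, placed in the appropriate degree; the summand $\mathrm{CrysCoh}^{(i)}_{p,X/R}[-i]$ indeed lands in $\mathrm{FilFCrys}^{\geq 0}$ because $\gr^j_{\mathrm{Hdg}}$ of $\wedge^iH^1_{dR}$ is concentrated in degrees $0\le j\le i$, so after the shift $[-i]$ the Hodge weights are $\ge 0$ as required, with the partial splitting and $\varphi_0,\varphi_1$ data obtained from those on $H^1_{dR}$ via the symmetric-monoidal structures of~\eqref{subsec:fzips_partial_tensor} and~\eqref{subsec:ffilcrys_defn}.

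Finally I would base-change along $R'\to R$ and along $R\to R_{\Rat}$ (resp.\ $R\to R/{}^{\mathbb{L}}p^n$), using that $\Fil^\bullet_{\mathrm{Hdg}}\mathrm{InfCoh}_{-/-}$ and $\mathrm{CrysCoh}_{p,-/-}$ satisfy the base-change formulas recalled in~\eqref{subsec:infcoh} and~\eqref{subsec:cryscoh}, so that the decomposition over $R'$ produces the asserted one over $R$; functoriality in $(X,R)$ then follows because any morphism of pairs is covered by a morphism of universal pairs over $\M_g^{\mathrm{ab}}$. The main obstacle — and the only genuinely non-formal point — is checking that the formality equivalence~\eqref{eqn:formality_derham} can be promoted to an equivalence of \emph{crystals}, i.e.\ that it is simultaneously horizontal for the connection, strictly filtered, and Frobenius-equivariant; this reduces to the classical facts that the Künneth/cup-product decomposition of the de Rham (and crystalline) cohomology of an abelian scheme is compatible with Gauss--Manin, Hodge filtration, conjugate filtration and the divided Frobenius, which one extracts from the multiplicative structure of derived de Rham and crystalline cohomology as developed in~\cite{Bhatt2012-kp} and~\cite{Mao2021-jt}, together with the elementary structure of $H^\bullet$ of an abelian variety as $\wedge^\bullet H^1$.
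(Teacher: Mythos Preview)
Your proposal is correct and aligns with the paper's intended argument: the paper actually gives no explicit proof of this lemma, treating it as a direct consequence of the discussion in~\eqref{subsec:absch_cohomology}, where the formality isomorphism~\eqref{eqn:formality_derham} is set up over a smooth $\Int$-algebra $R'$ together with its integrable connection, and the general case is obtained by base change along $R'\to R$. Your elaboration---reducing to smooth affine charts of the moduli stack, checking horizontality and Frobenius-compatibility of the cup-product decomposition, and then base-changing---is exactly the content the paper leaves implicit.

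One small slip worth flagging: the cohomological shift $[-i]$ does not interact with the Hodge grading, so your sentence ``after the shift $[-i]$ the Hodge weights are $\ge 0$'' is not quite the right justification. The correct reason $\mathrm{CrysCoh}^{(i)}_{p,X/R}[-i]$ lies in $\mathrm{FilFCrys}^{\geq 0}$ is simply that $\wedge^i H^1_{dR}$ already has $\gr^j_{\mathrm{Hdg}} = 0$ for $j<0$ (its Hodge weights lie in $[0,i]$), and the cohomological shift leaves this unchanged. This does not affect the validity of your argument.
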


\subsection{}
Set 
\[
\Fil^\bullet_{\mathrm{Hdg}}\mathrm{InfCoh}^{\ge 1}_{X_{\Rat}/R_{\Rat}} = \bigoplus_{i=1}^g\Fil^\bullet_{\mathrm{Hdg}}\mathrm{InfCoh}^{(i)}_{X_{\Rat}/R_{\Rat}}[-i]\;;\; \mathrm{CrysCoh}^{\ge 1}_{p,X/R} = \bigoplus_{i=1}^g\mathrm{CrysCoh}^{(i)}_{p,X/R}[-i].
\]

Then applying Lemma~\ref{lem:filinfcrys_internal_hom} with $a = 0$ gives us `internal Hom' objects
\begin{align*}
\Fil^\bullet_{\mathrm{Hdg}}\widetilde{\mathcal{H}}_\infty(X,Y) & = \mathcal{H}\left(\Fil^\bullet_{\mathrm{Hdg}}\mathrm{InfCoh}^{(1)}_{Y_{\Rat}/R_{\Rat}},\Fil^\bullet_{\mathrm{Hdg}}\mathrm{InfCoh}^{\ge 1}_{X_{\Rat}/R_{\Rat}}[1]\right);\\
\mathcal{H}_\infty(X,Y) & = \mathcal{H}\left(\Fil^\bullet_{\mathrm{Hdg}}\mathrm{InfCoh}^{(1)}_{X_{\Rat}/R_{\Rat}},\Fil^\bullet_{\mathrm{Hdg}}\mathrm{InfCoh}^{(1)}_{Y_{\Rat}/R_{\Rat}}\right)
\end{align*}
in $\mathrm{TFilInfCrys}_{R_{\Rat}/\Rat}$ satisfying the hypotheses of Proposition~\ref{prop:FIC_functorial_props}. Moreover, the

Similarly, applying Lemma~\ref{lem:filfcrys_internal_hom} with $a=0$ gives us `internal Hom' objects
\begin{align*}
\widetilde{\mathcal{H}}_{p,crys}(X,Y) & = \mathcal{H}\left(\mathrm{CrysCoh}^{(1)}_{p,Y/R},\mathrm{CrysCoh}^{\ge 1}_{p,X/R}[1]\right);\\
\mathcal{H}_{p,crys}(X,Y) & = \mathcal{H}\left(\mathrm{CrysCoh}^{(1)}_{p,Y/R},\mathrm{CrysCoh}^{{(1)}}_{p,X/R}\right)
\end{align*}
in $\mathrm{FilFCrys}^{\geq -1}_{R/\Int_p}$ satisfying the hypotheses of Theorem~\ref{thm:FilFCrys_functorial_props}. Moreover, the second object also satisfies the hypotheses of Theorem~\ref{thm:FilFCrys_discrete}.

From Proposition~\ref{prop:FIC_functorial_props}, we now obtain a infinitesimally cohesive, nilcomplete prestacks $\widetilde{\mathbb{H}}_\infty(X,Y)$ and $\mathbb{H}_\infty(X,Y)$ over $R_\Rat$ given by
\[
\widetilde{\mathbb{H}}_\infty(X,Y) \overset{\mathrm{defn}}{=}\mathcal{Z}(\Fil^\bullet_{\mathrm{Hdg}}\widetilde{\mathcal{H}}_\infty(X,Y))\;;\; \mathbb{H}_\infty(X,Y) \overset{\mathrm{defn}}{=}\mathcal{Z}(\Fil^\bullet_{\mathrm{Hdg}}\mathcal{H}_\infty(X,Y))
\]

Similarly, from Theorem~\ref{thm:FilFCrys_functorial_props}, we obtain $p$-adic \emph{formal} prestacks $\widetilde{\mathbb{H}}^{\form}_{p,crys}(X,Y)$, $\mathbb{H}^{\form}_{p,crys}(X,Y)$
over $R$ given by
\[
\widetilde{\mathbb{H}}^{\form}_{p,crys}(X,Y) \overset{\mathrm{defn}}{=}\mathcal{Z}(\widetilde{\mathcal{H}}_{p,crys}(X,Y))^{\form}_p\;;\; \mathbb{H}^{\form}_{p,crys}(X,Y) \overset{\mathrm{defn}}{=}\mathcal{Z}(\mathcal{H}_{p,crys}(X,Y))^{\form}_p
\]

\begin{lemma}
\label{lem:tilde classical same}
The maps
\[
\mathbb{H}^{\form}_{p,crys}(X,Y)\to\widetilde{\mathbb{H}}^{\form}_{p,crys}(X,Y)\;;\;\mathbb{H}_\infty(X,Y)\to\widetilde{\mathbb{H}}_\infty(X,Y)
\]
are closed immersions of (formal) prestacks that are equivalences on classical points.
\end{lemma}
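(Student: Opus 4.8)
The plan is to treat both cases uniformly by noting that each map is of the shape $\mathcal{Z}(\mathcal{H}(\mathcal{N}_1,\mathcal{N}_2))\to \mathcal{Z}(\mathcal{H}(\mathcal{N}_1,\mathcal{N}_2'))$ induced by a closed immersion of crystals $\mathcal{N}_2\hookrightarrow \mathcal{N}_2'$ (here $\mathcal{N}_2 = \mathrm{InfCoh}^{(1)}_{X_\Rat/R_\Rat}$, $\mathcal{N}_2' = \mathrm{InfCoh}^{\ge 1}_{X_\Rat/R_\Rat}[1]$, and similarly in the crystalline case), which on associated gradeds in Hodge degree $-1$ and $0$ becomes an equivalence: both $\gr^{-1}_{\mathrm{Hdg}}$ and $\gr^0_{\mathrm{Hdg}}$ of the internal Hom only see the summand $\mathrm{InfCoh}^{(1)}$ (resp. $\mathrm{CrysCoh}^{(1)}_{p}$), since the higher summands $\mathrm{InfCoh}^{(i)}_{X_\Rat/R_\Rat}[-i]$ for $i\ge 2$ contribute to the internal Hom only in Hodge degrees $\le -2$ after the shift. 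So the cofiber of $\mathcal{H}(\mathcal{N}_1,\mathcal{N}_2)\to \mathcal{H}(\mathcal{N}_1,\mathcal{N}_2')$ is a filtered (F-)crystal $\mathcal{Q}$ with $\gr^i_{\mathrm{Hdg}}Q_R\simeq 0$ for $i\ge -1$; equivalently, after the shift by $1$ as in~\eqref{subsec:W0_condition_inf} / \eqref{subsec:W0_condition_crys}, it is a sum of shifts $\mathcal{Q}^{(j)}[-j]$ with $j\ge 1$ of objects in $\mathrm{TFilInfCrys}^{\mathrm{lf}}$ (resp. $\mathrm{TFilCrys}^{\mathrm{lf}}$). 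This is precisely the situation covered by the direct-sum decomposition discussions preceding Proposition~\ref{prop:W0_discrete_FIC} and Proposition~\ref{prop:W0_discrete}.

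First I would make the identification of the cofiber precise: using the internal Hom formalism of Lemma~\ref{lem:filinfcrys_internal_hom} (resp. Lemma~\ref{lem:filfcrys_internal_hom}) and the direct sum decompositions of Lemma~\ref{lem:ab_coh_split}, one writes
\[
\mathcal{H}\left(\mathrm{InfCoh}^{(1)}_{Y_\Rat/R_\Rat},\mathrm{InfCoh}^{\ge 1}_{X_\Rat/R_\Rat}[1]\right)\simeq \bigoplus_{i=1}^{g}\mathcal{H}\left(\mathrm{InfCoh}^{(1)}_{Y_\Rat/R_\Rat},\mathrm{InfCoh}^{(i)}_{X_\Rat/R_\Rat}\right)[1-i],
\]
and the summand $i=1$ is exactly $\Fil^\bullet_{\mathrm{Hdg}}\mathcal{H}_\infty(X,Y)$ by definition. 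Hence $\widetilde{\mathcal{H}}_\infty(X,Y) \simeq \bigoplus_{i=0}^{g-1}\mathcal{H}_\infty^{(i)}[-i]$ where $\mathcal{H}_\infty^{(0)} = \mathcal{H}_\infty(X,Y)$ and $\mathcal{H}_\infty^{(i)}[i]$ for $i\ge 1$ lies in $\mathrm{TFilInfCrys}^{\mathrm{lf}}_{R_\Rat/\Rat}$ (one checks the local-freeness and the vanishing of $\gr^k$ for $k<0$ after the shift, using that $\mathrm{InfCoh}^{(i)}$ is a filtered crystal of vector bundles concentrated in Hodge degrees $\{0,1\}$ and $\mathrm{InfCoh}^{(1)}_{Y}$ likewise). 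The same computation with $\mathrm{CrysCoh}^{(i)}_{p,X/R}$ in place of $\mathrm{InfCoh}^{(i)}_{X_\Rat/R_\Rat}$ gives $\widetilde{\mathcal{H}}_{p,crys}(X,Y)\simeq \bigoplus_{i=0}^{g-1}\mathcal{H}_{p,crys}^{(i)}[-i]$ with $\mathcal{H}_{p,crys}^{(0)} = \mathcal{H}_{p,crys}(X,Y)$ and $\mathcal{H}_{p,crys}^{(i)}[i]\in\mathrm{TFilCrys}^{\mathrm{lf}}_{R/\Int_p}$ for $i\ge 1$.

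With these decompositions in hand, the lemma is an immediate application of Proposition~\ref{prop:W0_discrete_FIC} in the infinitesimal case and Proposition~\ref{prop:W0_discrete} in the $p$-adic formal case: the inclusion of the degree-$0$ summand $\mathcal{Z}(\mathcal{H}^{(0)})\hookrightarrow \mathcal{Z}(\bigoplus_i \mathcal{H}^{(i)}[-i])$ is a closed immersion of (formal) prestacks that is an equivalence on underlying classical prestacks, since $\mathcal{Z}(\mathcal{H}^{(i)}[-i])(C)\simeq \tau^{\le 0}(\mathcal{Z}(\mathcal{H}^{(i)})[i](C)[-i])\simeq 0$ for $i\ge 1$ and $C\in\mathrm{CRing}_{\heartsuit}$ by the discreteness assertions of Propositions~\ref{prop:FIC_discrete} and~\ref{thm:FilFCrys_discrete}. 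The main obstacle is the bookkeeping in the first step — verifying that after the shift by $1$ the non-principal summands of the internal Hom genuinely land in $\mathrm{TFilInfCrys}^{\mathrm{lf}}$ (resp. $\mathrm{TFilCrys}^{\mathrm{lf}}$), i.e. that local freeness of the relevant associated graded pieces is preserved — but this follows mechanically from Lemma~\ref{lem:fil_int_hom} together with the fact that all the crystals $\mathrm{InfCoh}^{(i)}_{X_\Rat/R_\Rat}$, $\mathrm{CrysCoh}^{(i)}_{p,X/R}$ are filtered crystals of vector bundles with Hodge filtration concentrated in two consecutive degrees.
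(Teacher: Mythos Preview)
Your approach is correct and matches the paper's proof, which simply reads ``Use Propositions~\ref{prop:W0_discrete} and~\ref{prop:W0_discrete_FIC}''; you have filled in the verification that the direct-sum hypotheses of \S\ref{subsec:W0_condition_inf} and \S\ref{subsec:W0_condition_crys} are satisfied via the decomposition of Lemma~\ref{lem:ab_coh_split}. One minor slip: $\mathrm{InfCoh}^{(i)}_X$ and $\mathrm{CrysCoh}^{(i)}_{p,X}$ for $i\ge 2$ are \emph{not} concentrated in two consecutive Hodge degrees (being $\wedge^i$ of a two-step object, they have nonzero gradeds in degrees $0,\ldots,i$), but this is harmless---all you actually need is that $\mathrm{InfCoh}^{(1)}_Y$ is dualizable (it is, having filtration in two degrees) and that each $\mathrm{InfCoh}^{(i)}_X$ is a filtered crystal of vector bundles, so the internal Hom summands $\mathcal{H}^{(j)}$ land in $\mathrm{TFilInfCrys}^{\mathrm{lf}}$ as required.
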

\begin{proof}
Use Propositions~\ref{prop:W0_discrete} and~\ref{prop:W0_discrete_FIC}.
\end{proof}

\subsection{}
Before presenting our next result, we need a little bit of a detour into derived de Rham cohomology. Suppose that we have $A,B\in \mathrm{CRing}_{C/}$ and $P\in \Mod[\mathrm{cn}]{C}$. Set $\tilde{C} = C\oplus P$ and $\tilde{B} = \tilde{C}\otimes_CB$. View $\tilde{C}$ as an object in $\mathrm{CRing}_{C/}$ via the trivial section. Derived de Rham cohomology (see~\eqref{subsec:infcoh}) now gives us a commuting diagram of maps
\begin{align*}
\begin{diagram}
\Map_{\mathrm{CRing}_{C/}}(A,\tilde{B})&\rTo& \Map_{\mathrm{FilMod}_C}(\Fil^\bullet_{\mathrm{Hdg}}\mathrm{dR}_{A/C},\Fil^\bullet_{\mathrm{Hdg}}\mathrm{dR}_{\tilde{B}/\tilde{C}});\\
\dTo&&\dTo\\
\Map_{\mathrm{CRing}_{C/}}(A,B)&\rTo &\Map_{\mathrm{FilMod}_C}(\Fil^\bullet_{\mathrm{Hdg}}\mathrm{dR}_{A/C},\Fil^\bullet_{\mathrm{Hdg}}\mathrm{dR}_{(\tilde{B}\twoheadrightarrow B)/(\tilde{C}\to C)})
\end{diagram}
\end{align*}
Here, for $T\in \mathrm{CRing}_{U/}$, we write $\mathrm{dR}_{T/U}$ instead of $\mathrm{dR}_{(T\xrightarrow{\mathrm{id}}T)/(U\xrightarrow{id}U)}$. The vertical map on the left is composition with the projection $\tilde{B}\to B$ and the right vertical one from the natural map of pairs
\[
(\tilde{B}\xrightarrow{\mathrm{id}}\tilde{B})\to (\tilde{B}\twoheadrightarrow B).
\]
The top arrow arises from the obvious functoriality of $\mathrm{dR}$, and the bottom one from the `crystalline' property of derived de Rham cohomology, which gives an equivalence
\[
\Fil^\bullet_{\mathrm{Hdg}}\mathrm{dR}_{(\tilde{B}\twoheadrightarrow B)/(\tilde{C}\to C)}\xrightarrow{\simeq}\Fil^\bullet_{\mathrm{Hdg}}\mathrm{dR}_{(B\xrightarrow{\mathrm{id}}B)/(\tilde{C}\to C)}
\]
For Hodge-completed derived de Rham cohomology, this was Lemma~\ref{lem:completed_de_rham}; but an argument with divided powers (see~\cite[Prop. 4.16]{Mao2021-jt}), combined with Lemma~\ref{lem:square_zero_pd}, shows one doesn't need the Hodge completion here.

\begin{lemma}
We have a canonical equivalence
\[
\hker(\Fil^\bullet_{\mathrm{Hdg}}\mathrm{dR}_{\tilde{B}/\tilde{C}}\to \Fil^\bullet_{\mathrm{Hdg}}\mathrm{dR}_{(\tilde{B}\twoheadrightarrow B)/(\tilde{C}\to C)} )\simeq P[-1]\otimes_C\gr^{\bullet - 1}_{\mathrm{Hdg}}\mathrm{dR}_{B/C}
\]
in $\mathrm{FilMod}_C$.
\end{lemma}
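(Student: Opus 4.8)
The statement compares Hodge-filtered derived de Rham cohomology of the trivial square-zero thickening $\tilde{B} = \tilde{C}\otimes_C B$ with that of the associated object in $\mathrm{AniPDPair}$, namely $(\tilde{B}\twoheadrightarrow B)$ over $(\tilde{C}\to C)$. The first move is to reduce to the associated graded level: since everything in sight is a filtered module over $C$ (viewed with the trivial filtration), and since taking homotopy fibers commutes with the associated graded functor $\gr^\bullet$, it suffices to produce a canonical equivalence $\gr^i_{\mathrm{Hdg}}\mathrm{dR}_{\tilde{B}/\tilde{C}} \to \gr^i_{\mathrm{Hdg}}\mathrm{dR}_{(\tilde{B}\twoheadrightarrow B)/(\tilde{C}\to C)}$-fiber that matches the claimed answer $P[-1]\otimes_C\gr^{i-1}_{\mathrm{Hdg}}\mathrm{dR}_{B/C}$. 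Using the identification $\gr^i_{\mathrm{Hdg}}\mathrm{dR}_{A/B} \simeq (\wedge^i\mathbb{L}_{A/B})[-i]$ recalled in \eqref{subsec:cryscoh}, and the equivalence $\Fil^\bullet_{\mathrm{Hdg}}\mathrm{dR}_{(\tilde{B}\twoheadrightarrow B)/(\tilde{C}\to C)}\xrightarrow{\simeq}\Fil^\bullet_{\mathrm{Hdg}}\mathrm{dR}_{B/(\tilde{C}\to C)}$ cited just above the lemma (the crystalline/base-change property, combined with Lemma~\ref{lem:square_zero_pd}), the problem becomes one about cotangent complexes: I need a fiber sequence
\[
P[-1]\otimes_C\wedge^{i-1}\mathbb{L}_{B/C}[-(i-1)] \to \wedge^i\mathbb{L}_{\tilde{B}/\tilde{C}}[-i] \to \wedge^i\mathbb{L}_{B/\tilde{C}}[-i],
\]
where on the right I am using that $\mathbb{L}_{B/\tilde{C}}\simeq \mathbb{L}_{(B\xrightarrow{\mathrm{id}}B)/(\tilde{C}\to C)}$ computed via the PD envelope.

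\textbf{Key steps.}
The heart of the matter is the transitivity triangle for $C\to \tilde{C}\to \tilde{B}$ together with the base change $\mathbb{L}_{\tilde{B}/\tilde{C}} \simeq \tilde{B}\otimes_B\mathbb{L}_{B/C}$ (flat base change along $C\to \tilde{C}$, since $\tilde{B} = B\otimes_C\tilde{C}$). Meanwhile $\mathbb{L}_{B/\tilde{C}}$ fits into the transitivity sequence $B\otimes_{\tilde{C}}\mathbb{L}_{\tilde{C}/C} \to \mathbb{L}_{B/C}\to \mathbb{L}_{B/\tilde{C}}$, and $\mathbb{L}_{\tilde{C}/C}\simeq P$ because $\tilde{C} = C\oplus P$ is the trivial square-zero extension. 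So after base change to $B$, the map $\mathbb{L}_{\tilde{B}/\tilde{C}}\to \mathbb{L}_{B/\tilde{C}}$ — more precisely, $\tilde{B}\otimes_B\mathbb{L}_{B/C}\to \mathbb{L}_{B/\tilde{C}}$ — has homotopy fiber identified with $B\otimes_C P$ sitting in the appropriate triangle. Then I take derived exterior powers: for a cofiber sequence $N' \to N \to N''$ of connective modules with $N' \simeq (\text{shift-}0)$-module (here $N'$ is the pullback of $P$, which is connective and in degree $0$ after the relevant shifts), the derived $\wedge^i$ carries a finite filtration with graded pieces $\wedge^a N' \otimes \wedge^b N''$, $a+b=i$, by the décalage/Illusie formula (referenced in the excerpt via \cite[Prop. 25.2.4.2]{Lurie2018-kh} and the argument in Lemma~\ref{lem:adic_filtration}). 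Since $N'$ is a square-zero-type direct summand contribution with $\wedge^{\geq 2}$ of it vanishing in the relevant first-order sense — i.e. only the $a=0$ and $a=1$ pieces survive when we track the fiber of $\wedge^i N \to \wedge^i N''$ — the fiber of $\wedge^i\mathbb{L}_{\tilde{B}/\tilde{C}} \to \wedge^i\mathbb{L}_{B/\tilde{C}}$ is $N'\otimes \wedge^{i-1}N''$, i.e. $(B\otimes_C P)\otimes_B\wedge^{i-1}\mathbb{L}_{B/C}$. After the shift by $[-i]$ and reindexing $\gr^{i-1}\mathrm{dR}_{B/C} \simeq \wedge^{i-1}\mathbb{L}_{B/C}[-(i-1)]$, this is exactly $P[-1]\otimes_C\gr^{i-1}_{\mathrm{Hdg}}\mathrm{dR}_{B/C}$. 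Finally I reassemble over all $i$ and upgrade to a filtered equivalence using that all the maps constructed are $C$-linear and compatible with the filtration inclusions.

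\textbf{The main obstacle.}
The genuinely delicate point is the exterior-power computation: one must be careful that the derived $\wedge^i$ of the cofiber sequence really does truncate to just the $a\in\{0,1\}$ pieces of the fiber. This is where the square-zero hypothesis enters — the module $N' = B\otimes_C P$ (suitably shifted) contributes only linearly, because the extra structure comes from a \emph{trivial} (split) square-zero thickening, so the higher divided/exterior powers of the corresponding summand, while not literally zero, only feed into $\wedge^i N$ through the Koszul-type filtration in a way that, upon passing to the \emph{relative} cotangent complex and its fiber, contributes nothing beyond first order. Making this rigorous is cleanest via animation: reduce to $B$ a polynomial $C$-algebra and $P$ free, where $\tilde{B}\to \tilde{C}$ and $B\to \tilde{C}$ are maps of honest polynomial-type algebras and the de Rham complexes and their PD variants are explicit; there the fiber computation is the classical statement that for $\Omega^\bullet$ of a trivial square-zero thickening, the $n$-forms split as $\Omega^n_{\tilde{B}/\tilde{C}} \simeq \Omega^n_{B/C}\oplus (P\otimes_C\Omega^{n-1}_{B/C})$ compatibly with the Hodge filtration, and the PD-relative complex picks out exactly the first summand. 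One then invokes that both sides of the desired equivalence are left Kan extended from polynomial algebras (all the relevant functors — $\mathbb{L}$, $\wedge^i$, $\mathrm{dR}$, and the formation of $\tilde{B}$, $\tilde{C}$ — commute with the sifted colimits used in animation), so the equivalence on generators propagates. I expect no serious difficulty beyond bookkeeping once this reduction is in place; the one thing to double-check is naturality of the splitting in $B$, $P$, and $C$ simultaneously, which is automatic from the animation argument since all constructions are functorial on the category of polynomial generators.
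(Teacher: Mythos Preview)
Your approach has a genuine misidentification that derails the computation. You write that the target's graded piece is $\wedge^i\mathbb{L}_{B/\tilde{C}}[-i]$, but the target is the Hodge-filtered de Rham cohomology of $B$ relative to the \emph{pair} $(\tilde{C}\to C)$, not relative to $\tilde{C}$ alone. Its Hodge filtration is by definition the tensor product of the adic filtration $\Fil^\bullet_{(\tilde{C}\to C)}\tilde{C}$ with the Hodge filtration on $\mathrm{dR}_{B/C}$, so in fact
\[
\gr^i(\text{target})\;\simeq\;\gr^i_{\mathrm{Hdg}}\mathrm{dR}_{B/C}\;\oplus\;P\otimes_C\gr^{i-1}_{\mathrm{Hdg}}\mathrm{dR}_{B/C},
\]
which is not $\wedge^i\mathbb{L}_{B/\tilde{C}}[-i]$ (the latter involves derived tensor products over $\tilde{C}$ and picks up higher contributions). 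Your polynomial check inherits the same error: by flat base change $\Omega^n_{\tilde{B}/\tilde{C}}\simeq\tilde{B}\otimes_B\Omega^n_{B/C}\simeq\Omega^n_{B/C}\oplus(P\otimes_C\Omega^n_{B/C})$, not $\oplus(P\otimes_C\Omega^{n-1}_{B/C})$; and the PD-relative target is \emph{not} just the first summand. There is also a secondary gap: reducing to associated graded and then ``reassembling'' is not automatic, since two filtered objects with equivalent graded pieces need not be filtered-equivalent.

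The paper's argument sidesteps all of this by observing that the base-change property of derived de Rham/crystalline cohomology identifies both source and target with $\Fil^\bullet_{\mathrm{Hdg}}\mathrm{dR}_{B/C}$ tensored (in $\mathrm{FilMod}_C$) with two different filtrations on $\tilde{C}$: the trivial one for the source and the adic one $\Fil^\bullet_{(\tilde{C}\to C)}\tilde{C}$ for the target. The whole fiber then factors as $\hker(\tilde{C}\to\Fil^\bullet_{(\tilde{C}\to C)}\tilde{C})\otimes_C\Fil^\bullet_{\mathrm{Hdg}}\mathrm{dR}_{B/C}$, and the first factor is immediately seen to be $P[-1]$ placed in filtration degree $1$. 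No passage to graded pieces, no exterior-power filtrations, no animation-on-generators argument is needed.
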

\begin{proof}
By the crystalline property of derived de Rham cohomology, the source of the map in question is equivalent to $\tilde{C}\otimes_C\Fil^\bullet_{\mathrm{Hdg}}\mathrm{dR}_{B/C}$ while the target is equivalent to the filtered base change
\[
\Fil^\bullet_{\tilde{C}\twoheadrightarrow C}\tilde{C}\otimes_C\Fil^\bullet_{\mathrm{Hdg}}\mathrm{dR}_{B/C}.
\]
So we can identify the homotopy kernel here with 
\[
\hker(\tilde{C}\to \Fil^\bullet_{\tilde{C}\twoheadrightarrow C}\tilde{C})\otimes_C\Fil^\bullet_{\mathrm{Hdg}}\mathrm{dR}_{B/C}.
\]
The first factor here, as an object of $\mathrm{FilMod}_C$ reduces to the filtered module associated with the graded module $P[-1]\{-1\}$ showing that what we have is simply the graded tensor product
\[
P[-1]\{-1\}\otimes_C\gr^\bullet_{\mathrm{Hdg}}\mathrm{dR}_{B/C}\simeq P[-1]\otimes_C\gr^{\bullet - 1}_{\mathrm{Hdg}}\mathrm{dR}_{B/C}.
\]
\end{proof}

Suppose now that we have $f:A\to B$, inducing the trivial lift $\tilde{f}_0:A\to \tilde{B}$. Then subtracting the derived de Rham realization of $\tilde{f}_0$, and using the lemma and the commuting diagram above gives us a a map
\begin{align*}
\Map_{\mathrm{CRing}_{C/\backslash B}}(A,\tilde{B})&\to \Map_{\mathrm{FilMod}_C}(\Fil^\bullet_{\mathrm{Hdg}}\mathrm{dR}_{A/C},P[-1]\otimes_C\gr^{\bullet -1}_{\mathrm{Hdg}}\mathrm{dR}_{B/C})\\
&\to \Map_{\mathrm{GrMod}_C}(\gr^\bullet_{\mathrm{Hdg}}\mathrm{dR}_{A/C},P[-1]\otimes_C\gr^{\bullet -1}_{\mathrm{Hdg}}\mathrm{dR}_{B/C})\\
&\to \Map_{\Mod{C}}(\gr^1_{\mathrm{Hdg}}\mathrm{dR}_{A/C},P[-1]\otimes_C\gr^0_{\mathrm{Hdg}}\mathrm{dR}_{B/C})\\
&\xrightarrow{\simeq} \Map_{\Mod{C}}(\mathbb{L}_{A/C}[-1],P[-1]\otimes_CB)\\
&\xrightarrow{\simeq} \Map_{\Mod{C}}(\mathbb{L}_{A/C},P\otimes_CB).
\end{align*}
Here, we have written $\Map_{\mathrm{CRing}_{C/\backslash B}}(A,\tilde{B})$ for the fiber of $\Map_{\mathrm{CRing}_{C/}}(A,\tilde{B})$ over $f$.

Therefore, we have constructed an arrow
\begin{align}\label{eqn:alpha_dR}
\mathrm{fib}_f(\Map_{\mathrm{CRing}_{C/}}(A,\tilde{B})\to\Map_{\mathrm{CRing}_{C/}}(A,B))&\to \Map_{\Mod{C}}(\mathbb{L}_{A/C},P\otimes_CB).
\end{align}

\begin{lemma}\label{lem:cohom_compat_local}
The map~\eqref{eqn:alpha_dR} factors through
\[
\Map_{\Mod{A}}(\mathbb{L}_{A/C},P\otimes_CB)
\]
and this factoring yields an inverse to the canonical equivalence
\begin{align*}
 \Map_{\Mod{A}}(\mathbb{L}_{A/C},P\otimes_CB)&\xrightarrow{\simeq}\mathrm{fib}_f(\Map_{\mathrm{CRing}_{C/}}(A,\tilde{B})\to\Map_{\mathrm{CRing}_{C/}}(A,B))
\end{align*}
arising from the definition of the cotangent complex $\mathbb{L}_{A/C}$.
\end{lemma}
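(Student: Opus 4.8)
\textbf{Proof plan for Lemma~\ref{lem:cohom_compat_local}.}
The essential point is to verify that the composite construction~\eqref{eqn:alpha_dR}, which was assembled out of derived de Rham cohomology, graded truncation, and the standard identification $\gr^1_{\mathrm{Hdg}}\mathrm{dR}_{A/C}\simeq \mathbb{L}_{A/C}[-1]$, recovers the tautological deformation-theoretic description of the cotangent complex. The plan is to reduce to a universal case and then to a direct cocycle-level check. First I would observe that both sides of the asserted factoring are, by construction, functors (of the pair $(A,P)$, with $f$ and $B$ fixed, or even of all of $(A,B,C,f,P)$) that preserve enough limits: the target $\Map_{\Mod{A}}(\mathbb{L}_{A/C},P\otimes_C B)$ sends colimits in $A$ to limits, and filtered colimits in $P$ to filtered colimits; similarly for the fiber of the mapping space, using that $\mathbb{L}$ and $\mathrm{dR}$ both commute with the relevant (co)limits. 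Therefore it suffices to prove the statement when $A = C[x_1,\ldots,x_r]$ is a polynomial $C$-algebra on a finite set of variables, and in fact, by naturality and the Yoneda-style argument of animation, the case of a single free generator $A = C[x]$ and $P = C$ (or $P$ an arbitrary connective module, treated by passing to $A\oplus P$) controls everything.

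In the free case $A = C[x]$, the module $\mathbb{L}_{A/C}$ is the free $A$-module $A\cdot dx$, and a map $A\to \tilde B = B\oplus (P\otimes_C B)$ over $C$ lying over $f$ is the data of an element $f(x) + \epsilon$ with $\epsilon\in P\otimes_C B$; the canonical equivalence $\Map_{\Mod{A}}(\mathbb{L}_{A/C},P\otimes_C B)\xrightarrow{\simeq}\mathrm{fib}_f(\cdots)$ sends a derivation $\partial$ to the point $x\mapsto f(x) + \partial(x)$. The task is then to compute the image of this point under~\eqref{eqn:alpha_dR}. I would unwind each arrow in the chain: the derived de Rham realization of $x\mapsto f(x)+\epsilon$ versus that of the trivial lift $\tilde f_0 \colon x\mapsto f(x)$ differ, in the associated-graded piece $\gr^1_{\mathrm{Hdg}}$, precisely by the class of $d\epsilon = \epsilon\otimes \text{(image of $dx$)}$ — this is the classical statement that de Rham $d$ applied to an infinitesimal deformation of a ring map reads off its derivation — so after identifying $\gr^1_{\mathrm{Hdg}}\mathrm{dR}_{A/C}\simeq \mathbb{L}_{A/C}[-1] = (A\cdot dx)[-1]$ and $\gr^0_{\mathrm{Hdg}}\mathrm{dR}_{B/C}\simeq B$, the resulting element of $\Map_{\Mod{C}}(A\cdot dx, P\otimes_C B)$ is $dx\mapsto \epsilon$, which is visibly $A$-linear (it extends to the $A$-linear map $\mathbb{L}_{A/C} = A\cdot dx\to P\otimes_C B$ sending $dx$ to $\epsilon$) and agrees with the derivation $\partial$ we started from under $\epsilon = \partial(x)$. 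This gives both the factoring through $\Map_{\Mod{A}}$ and the fact that the factoring is inverse to the canonical equivalence.

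Finally I would promote the free-case computation to the general statement. The factoring through $\Map_{\Mod{A}}(\mathbb{L}_{A/C},-)$ is a \emph{property} of the map~\eqref{eqn:alpha_dR}, detected after precomposition with the forgetful functor $\Map_{\Mod{A}}\to \Map_{\Mod{C}}$, which is conservative on the relevant fibers; so it is enough to check it on a set of compact projective generators of $\mathrm{CRing}_{C/}$, i.e.\ polynomial algebras, where it follows from the computation above by the multiplicativity/derivation property of de Rham cohomology. Once the factoring exists, the assertion that it inverts the canonical equivalence is again a statement natural in $A$, so it too reduces to the free case. The main obstacle, as I see it, is purely bookkeeping: one must check that the several ``crystalline property'' and ``square-zero'' equivalences invoked in defining~\eqref{eqn:alpha_dR} (via Lemma~\ref{lem:completed_de_rham}, Lemma~\ref{lem:square_zero_pd}, and the preceding lemma of this subsection) are compatible with the filtered structures in a way that makes the $\gr^\bullet$ manipulations legitimate, and that the sign conventions in passing from $\mathbb{L}_{A/C}[-1]$ in degree $1$ of the Hodge filtration to $\mathbb{L}_{A/C}$ match those used to define the canonical equivalence; none of this is deep, but it is where an error could hide, so I would carry out the free-generator cocycle check explicitly to pin it down.
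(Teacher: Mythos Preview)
Your approach is sound but genuinely different from the paper's. You reduce to the free case $A=C[x]$ by animation and check the identity by an explicit cocycle computation; the paper instead works directly with the universal square-zero thickening $A'=A\oplus\mathbb{L}_{A/C}\twoheadrightarrow A$, rewrites the fiber as $\Map_{\mathrm{AniPair}_{(A\to A)/}}(A'\twoheadrightarrow A,\tilde B\twoheadrightarrow B)$, and then applies the Hodge-filtered de Rham realization at the level of $\mathrm{AniPair}$. After computing $\gr^1_{\mathrm{Hdg}}\mathrm{dR}_{(A'\twoheadrightarrow A)/(C\to C)}$ and its analogue for $(\tilde B\twoheadrightarrow B)/(\tilde C\twoheadrightarrow C)$, the $A$-linearity of the resulting map is structural (the summand $\mathbb{L}_{A/C}$ already carries its $A$-module structure), and the agreement with~\eqref{eqn:alpha_dR} is then a compatibility of two functorial constructions out of the same source. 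The paper's route avoids both the reduction step and the sign/filtration bookkeeping you flag as the main hazard, at the cost of invoking the relative de Rham formalism for pairs; your route is more elementary and makes the content of the statement transparent in the generating case, but you must be careful that the ``factoring through $\Map_{\Mod{A}}$'' property is stable under the sifted colimits used in animation (it is, since both the source and target presheaves on $\mathrm{CRing}_{C/}$ are left-Kan-extended from polynomial algebras and the inclusion $\Map_{\Mod{A}}\hookrightarrow\Map_{\Mod{C}}$ is a full subspace varying functorially in $A$).
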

\begin{proof}
Let $A'\twoheadrightarrow A$ be the trivial square-zero thickening with fiber $\mathbb{L}_{A/C}$; then we have
\begin{align}\label{eqn:fib_f_alt_descp}
\mathrm{fib}_f(\Map_{\mathrm{CRing}_{C/}}(A,\tilde{B})\to\Map_{\mathrm{CRing}_{C/}}(A,B))&\xrightarrow{\simeq}\mathrm{Map}_{\mathrm{AniPair}_{(A\xrightarrow{\mathrm{id}}A)/}}(A'\twoheadrightarrow A,\tilde{B}\twoheadrightarrow B).
\end{align}
Now, we have canonical equivalences
\begin{align*}
\gr^\bullet_{\mathrm{Hdg}}\mathrm{dR}_{(A'\twoheadrightarrow A)/(C\xrightarrow{\mathrm{id}}C)}&\simeq \gr^\bullet_{(A'\twoheadrightarrow A)}A'\otimes_{A'}\gr^\bullet_{\mathrm{Hdg}}\mathrm{dR}_{A'/C}\\
\gr^\bullet_{\mathrm{Hdg}}\mathrm{dR}_{(\tilde{B}\twoheadrightarrow B)/(\tilde{C}\twoheadrightarrow C)}&\simeq \gr^\bullet_{(\tilde{B}\twoheadrightarrow B)}\tilde{B}\otimes_{\tilde{B}}\gr^\bullet_{\mathrm{Hdg}}\mathrm{dR}_{\tilde{B}/\tilde{C}},
\end{align*}
which in particular give us:
\begin{align*}
\gr^1_{\mathrm{Hdg}}\mathrm{dR}_{(A'\twoheadrightarrow A)/(C\xrightarrow{\mathrm{id}}C)}&\xrightarrow{\simeq} (\mathbb{L}_{A/C}\otimes_{A'}A')\oplus (A\otimes_{A'}\mathbb{L}_{A'/C}[-1])\\
\gr^1_{\mathrm{Hdg}}\mathrm{dR}_{(\tilde{B}\twoheadrightarrow A)/(\tilde{C}\twoheadrightarrow C)}&\xrightarrow{\simeq} ((P\otimes_CB)\otimes_{\tilde{B}}\tilde{B})\oplus (B\otimes_{\tilde{B}}\mathbb{L}_{\tilde{B}/\tilde{C}}[-1]).
\end{align*}

Combining these with the equivalence~\eqref{eqn:fib_f_alt_descp} now gives us a map
\begin{align*}
\mathrm{fib}_f(\Map_{\mathrm{CRing}_{C/}}(A,\tilde{B})\to\Map_{\mathrm{CRing}_{C/}}(A,B))&\to \Map_{\Mod{C}}(\gr^1_{\mathrm{Hdg}}\mathrm{dR}_{(A'\twoheadrightarrow A)/(C\xrightarrow{\mathrm{id}}C)},\gr^1_{\mathrm{Hdg}}\mathrm{dR}_{(\tilde{B}\twoheadrightarrow B)/(\tilde{C}\twoheadrightarrow C)})\\
&\to \Map_{\Mod{C}}(\mathbb{L}_{A/C},P\otimes_CB)
\end{align*}
which evidently has the properties asserted in the lemma, but is also equivalent to the arrow~\eqref{eqn:alpha_dR}.
\end{proof}

\begin{remark}
The argument used above is simply a translation to our context of that used in~\cite[Theorem (3.21)]{berthelot_ogus:f_isoc}.
\end{remark}

\begin{proposition}
\label{prop:hom_stack_tangent_space}
Consider the derived scheme $\widetilde{\mathbb{H}}(X,Y)$ over $R$ from Proposition~\ref{prop:hom_stack}. Then cohomological realization gives rise to natural formally \'etale maps
\[
\widetilde{\mathbb{H}}(X,Y)_{\Rat}\to \widetilde{\mathbb{H}}_\infty(X,Y)\;;\; \widetilde{\mathbb{H}}(X,Y)^{\form}_p\to \widetilde{\mathbb{H}}^{\form}_{p,crys}(X,Y)
\]
More precisely, we have such maps, and, for any $C\in \mathrm{CRing}_{R_{\Rat}/}$ (resp. $C\in \mathrm{CRing}_{R/}^{p-\text{nilp}}$) and $P\in \Mod[\mathrm{cn}]{C}$, the maps
\begin{align*}
\hker\left(\widetilde{\mathbb{H}}(X,Y)(C\oplus P)\to \widetilde{\mathbb{H}}(X,Y)(C)\right)&\to\hker\left(\widetilde{\mathbb{H}}_{\infty}(X,Y)(C\oplus P)\to \widetilde{\mathbb{H}}_{\infty}(X,Y)(C)\right)\\
(\text{resp. }\hker\left(\widetilde{\mathbb{H}}(X,Y)(C\oplus P)\to \widetilde{\mathbb{H}}(X,Y)(C)\right)&\to\hker\left(\widetilde{\mathbb{H}}^{\form}_{p,crys}(X,Y)(C\oplus P)\to \widetilde{\mathbb{H}}^{\form}_{p,crys}(X,Y)(C)\right))
\end{align*}
are equivalences.
\end{proposition}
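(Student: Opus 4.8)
The plan is to reduce the statement to a purely local deformation-theoretic computation on affine pieces, using the descriptions of the various objects provided in Sections~\ref{sec:infcrys} and~\ref{sec:fcrys}. First I would construct the realization maps themselves. For the infinitesimal (generic fiber) case, recall that $\widetilde{\mathbb{H}}(X,Y)(C)$ classifies zero-section preserving maps $X_C\to Y_C$, while $\widetilde{\mathbb{H}}_\infty(X,Y)(C) = \mathcal{Z}(\Fil^\bullet_{\mathrm{Hdg}}\widetilde{\mathcal{H}}_\infty(X,Y))(C)$ classifies maps of transversally filtered infinitesimal crystals $\Fil^\bullet_{\mathrm{Hdg}}\mathrm{InfCoh}^{(1)}_{Y_{\Rat}/R_{\Rat}}\to \Fil^\bullet_{\mathrm{Hdg}}\mathrm{InfCoh}^{\ge 1}_{X_{\Rat}/R_{\Rat}}[1]$ (via the internal Hom description and Lemma~\ref{lem:filinfcrys_internal_hom}). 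A zero-section preserving map $f:X_C\to Y_C$ induces by functoriality of derived de Rham/infinitesimal cohomology a map $\Fil^\bullet_{\mathrm{Hdg}}\mathrm{InfCoh}_{Y_C/C}\to \Fil^\bullet_{\mathrm{Hdg}}\mathrm{InfCoh}_{X_C/C}$; projecting onto the degree-$1$ summand of the source (using Lemma~\ref{lem:ab_coh_split}) and onto the $\ge 1$ summands of the target gives the desired map of crystals, which after the shift lands in the internal-Hom space. The $p$-adic case is entirely parallel, using $\mathrm{CrysCoh}$ in place of $\mathrm{InfCoh}$ and Theorem~\ref{thm:FilFCrys_functorial_props}/Lemma~\ref{lem:filfcrys_internal_hom}.

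The second, and main, step is the computation of the induced map on relative tangent spaces $\hker(\cdot(C\oplus P)\to\cdot(C))$. By Proposition~\ref{prop:hom_stack}(2), the source tangent space is $\Map_{\Mod{R}}(\omega_Y, P\otimes_C\tau^{\ge 1}R\Gamma(X,\Reg{X}))$. On the target side, Proposition~\ref{prop:FIC_functorial_props}(2) (resp. Theorem~\ref{thm:FilFCrys_functorial_props}(2)) together with the internal-Hom formula expresses the tangent space as $\Map$ out of $\gr^{-1}_{\mathrm{Hdg}}$ of the internal Hom crystal, which unwinds to $\Map_{\Mod{R}}(\gr^0_{\mathrm{Hdg}}\mathrm{InfCoh}^{(1)}_{Y/R}, P\otimes_C\gr^{-1}_{\mathrm{Hdg}}\mathrm{InfCoh}^{\ge 1}_{X/R}[1])$. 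Now $\gr^0_{\mathrm{Hdg}}\mathrm{InfCoh}^{(1)}_{Y/R}\simeq \omega_Y$ and the Hodge filtration on $H^i_{\mathrm{dR}}(X/R)\simeq \wedge^i H^1_{\mathrm{dR}}(X/R)$ has bottom graded piece $\wedge^i\omega_X$, so that after accounting for the shift $[-i]$ applied to the $i$-th summand and the overall $[1]$, the target tangent space becomes $\bigoplus_{i\ge 1}\Map_{\Mod{R}}(\omega_Y, P\otimes_C(\wedge^i\omega_{X^\vee}))$ — and $\wedge^i\omega_{X^\vee}\simeq \wedge^i H^1(X,\Reg{X})\simeq H^i(X,\Reg{X})$ by the formality of $R\Gamma(X,\Reg{X})$ for an abelian scheme. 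Thus both tangent spaces are canonically $\Map_{\Mod{R}}(\omega_Y, P\otimes_C\tau^{\ge 1}R\Gamma(X,\Reg{X}))$. The content is to check that the realization map induces the \emph{identity} (or a canonical isomorphism) under these identifications; this is where I would invoke Lemma~\ref{lem:cohom_compat_local}, which is precisely the statement that the de Rham realization of a first-order deformation of a map recovers, compatibly, the cotangent-complex description of the deformation. One applies it with $A$, $B$ the (local) coordinate rings of $X$, $Y$ and uses the projection formula comparing $\mathbb{L}_{Y/R}$-valued deformations with $\omega_Y$-valued ones, together with the Künneth/formality identification of $\gr^\bullet_{\mathrm{Hdg}}\mathrm{dR}$ of an abelian scheme.

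Once the tangent-space equivalence is established, the conclusion that the realization maps are formally étale is immediate: both source and target are infinitesimally cohesive and nilcomplete (Proposition~\ref{prop:FIC_functorial_props}(1), Theorem~\ref{thm:FilFCrys_functorial_props}(1), and Proposition~\ref{prop:hom_stack} for the source), and a map between such prestacks inducing an equivalence on all relative tangent spaces $\hker(F(C\oplus P)\to F(C))$ has vanishing relative cotangent complex, hence is formally étale. I would phrase this cleanly: the relative cotangent complex $\mathbb{L}_{\widetilde{\mathbb{H}}(X,Y)_{\Rat}/\widetilde{\mathbb{H}}_\infty(X,Y)}$ sits in a cofiber sequence with the two absolute cotangent complexes, both of which are (by the computations above and Proposition~\ref{prop:hom_stack}(2)) $\omega_Y\otimes(\tau^{\ge 1}R\Gamma(X,\Reg{X}))^\vee$ pulled back, and the connecting map is the identity, so the relative term vanishes.

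The main obstacle I anticipate is bookkeeping the several layers of shifts and the passage between $\mathbb{L}_{A/C}$-valued and $\omega_Y$-valued deformation data: the internal-Hom construction introduces a shift $[1]$, each cohomological summand $\mathrm{InfCoh}^{(i)}$ carries a shift $[-i]$, and the identification $\gr^{-1}_{\mathrm{Hdg}}$ of the Hom crystal with $(\gr^0 \otimes \gr^{-1})$-type pieces must be matched term-by-term with the decomposition $\tau^{\ge 1}R\Gamma(X,\Reg{X})\simeq\bigoplus_{i\ge 1}H^i(X,\Reg{X})[-i]$. Getting the signs and degrees to line up so that Lemma~\ref{lem:cohom_compat_local} applies verbatim (rather than up to an a priori uncontrolled automorphism) is the delicate point; the formality isomorphism~\eqref{eqn:formality_derham}, which is functorial and multiplicative, is the tool that makes this manageable, since it reduces everything to the degree-$1$ statement where Lemma~\ref{lem:cohom_compat_local} is exactly the classical Grothendieck--Messing-type compatibility.
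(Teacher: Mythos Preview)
Your proposal is correct and follows essentially the same approach as the paper: construct the realization maps via functoriality of derived de Rham/crystalline cohomology, identify the tangent spaces on both sides as $\Map_{\Mod{R}}(\omega_Y, P\otimes_R\tau^{\ge 1}R\Gamma(X,\Reg{X}))$, and then invoke Lemma~\ref{lem:cohom_compat_local} to see that the realization map induces the canonical identification. The paper is slightly more economical in that it cites Proposition~\ref{prop:FIC_functorial_props}(2) and Theorem~\ref{thm:FilFCrys_functorial_props}(2) directly for the target-side tangent space and then explicitly recalls the construction of those equivalences (via the kernel of $\Fil^0\mathcal{H}(\tilde{C}\xrightarrow{\mathrm{id}}\tilde{C})\to \Fil^0\mathcal{H}(\tilde{C}\twoheadrightarrow C)$), which makes the match with Lemma~\ref{lem:cohom_compat_local} transparent; by contrast you unwind the graded pieces of the internal Hom by hand, and there your intermediate indexing slips (it is $\gr^1_{\mathrm{Hdg}}\mathrm{InfCoh}^{(1)}_{Y/R}\simeq\omega_Y$, not $\gr^0$, and the relevant piece on the $X$-side is $\gr^0$ rather than $\gr^{-1}$), though your final answer is correct and you yourself flagged this bookkeeping as the delicate point.
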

\begin{proof}
For brevity, we will omit the $(X,Y)$ from our notation in what follows.

There are functorial-in-$C$ cohomological realization maps
\begin{align*}
\widetilde{\mathbb{H}}(C)&\to \Map_{\mathrm{TFilInfCrys}_{C_{\Rat}/\Rat}}\left(\Fil^\bullet_{\mathrm{Hdg}}\mathrm{InfCoh}^{\ge 1}_{Y_{\Rat,C}/C_{\Rat}},\Fil^\bullet_{\mathrm{Hdg}}\mathrm{InfCoh}^{\ge 1}_{X_{\Rat,C}/C_{\Rat}}\right)\\
\widetilde{\mathbb{H}}(C)&\to \Map_{\mathrm{FilFCrys}^{\geq 0}_{C/Int_p}}\left(\mathrm{CrysCoh}^{\ge 1}_{p,Y/C},\mathrm{CrysCoh}^{\ge 1}_{p,X/C}\right).
\end{align*}

Restricting along the natural maps
\[
\Fil^\bullet_{\mathrm{Hdg}}\mathrm{InfCoh}^{(1)}_{Y_{\Rat,C}/C_{\Rat}}\to \Fil^\bullet_{\mathrm{Hdg}}\mathrm{InfCoh}^{\ge 1}_{Y_{\Rat,C}/C_{\Rat}}\;;\; \mathrm{CrysCoh}^{(1)}_{p,Y/C}\to \mathrm{CrysCoh}^{\ge 1}_{p,Y/C}
\]
now gives us the maps that we need.

We need to show that these map are formally \'etale. Write
\[
F_{\widetilde{\mathbb{H}}}(C,P)\;;\; F_{\widetilde{\mathbb{H}}_\infty}(C,P)\;;\; F_{\widetilde{\mathbb{H}}^{\form}_{p,crys}}(C,P)
\]
for each of the homotopy kernels involved here. 

First, note that all of them are canonically equivalent to $\Map_{\Mod{R}}\left(\omega_Y,P\otimes_R\tau^{\ge 1}R\Gamma(X,\Reg{X})\right)$. For $\widetilde{\mathbb{H}}$, this follows from (2) of Proposition~\ref{prop:hom_stack}. For $\widetilde{\mathbb{H}}_\infty$ (resp. $\widetilde{\mathbb{H}}^{\form}_{p,crys}$), this follows from (2) of Proposition~\ref{prop:FIC_functorial_props} (resp. (2) of Theorem~\ref{thm:FilFCrys_functorial_props}).



Let us briefly recall the construction of the map inducing the last two equivalnces.

Suppose that we have $C\in \mathrm{CRing}_{R/}$ and $P\in \Mod[\mathrm{cn}]{C}$. Write $\tilde{C}\twoheadrightarrow C$ for the trivial square-zero thickening with kernel $P$. The kernel of the map
\begin{align}\label{eqn:square_zero_kernel}
\begin{diagram}
\underline{\Map}_{\mathrm{FilMod}_{R}}\left(\Fil^\bullet_{\mathrm{Hdg}}H^1_{dR}(Y/R),\tilde{C}\otimes_R\Fil^\bullet_{\mathrm{Hdg}}\tau^{\ge 1}R\Gamma_{dR}(X/R)\right)\\
\dTo\\ 
\underline{\Map}_{\mathrm{FilMod}_{R}}\left(\Fil^\bullet_{\mathrm{Hdg}}H^1_{\dR}(Y/R),\Fil^\bullet_{\tilde{C}\twoheadrightarrow C}\tilde{C}\otimes_R\Fil^\bullet_{\mathrm{Hdg}}\tau^{\ge 1}R\Gamma_{dR}(X/R)\right)
\end{diagram}
\end{align}
can be identified with
\[
\underline{\Map}_{\mathrm{FilMod}_R}\left(\Fil^\bullet_{\mathrm{Hdg}}H^1_{dR}(Y/R),P[-1]\otimes_R\gr^{\bullet-1}_{\mathrm{Hdg}}\tau^{\ge 1}R\Gamma_{\dR}(X/R)\right),
\]
and thus with
\[
\underline{\Map}_{\Mod{R}}\left(\omega_Y,P\otimes_R\tau^{\ge 1}R\Gamma(X,\Reg{X})\right).
\] 

Note that when $C$ is in $\mathrm{CRing}_{R_{\Rat}/}$ the map~\eqref{eqn:square_zero_kernel} can be identified with
\[
\Fil^0\mathcal{H}_\infty(\tilde{C}\xrightarrow{\mathrm{id}}\tilde{C})\to \Fil^0\mathcal{H}_\infty(\tilde{C}\twoheadrightarrow C).
\]
We now use the natural maps 
\[
\widetilde{\mathbb{H}}_\infty(\tilde{C})\to \Fil^0\mathcal{H}_\infty(\tilde{C}\xrightarrow{\mathrm{id}}\tilde{C})\;;\; \widetilde{\mathbb{H}}_\infty(C)\to \Fil^0\mathcal{H}_\infty(\tilde{C}\twoheadrightarrow C)
\]
to obtain maps
\begin{align}\label{eqn:FHinfty}
F_{\widetilde{\mathbb{H}}_\infty}(C,P)&\to\hker(\Fil^0\mathcal{H}_\infty(\tilde{C}\xrightarrow{\mathrm{id}}\tilde{C})\to \Fil^0\mathcal{H}_\infty(\tilde{C}\twoheadrightarrow C))\\
&\xrightarrow{\simeq} \Map_{\Mod{R}}\left(\omega_Y,P\otimes_R\tau^{\ge 1}R\Gamma(X,\Reg{X})\right)\nonumber
\end{align}

Similarly, when $C$ is in $\mathrm{CRing}^{p-\text{nilp}}_{R/}$, the map~\eqref{eqn:square_zero_kernel} can be identified with
\[
 \Fil^0\mathcal{H}_{p,crys}(\tilde{C}\xrightarrow{\mathrm{id}}\tilde{C})\to \Fil^0\mathcal{H}_{p,crys}(\tilde{C}\twoheadrightarrow C)
\]
and the same process gives a map
\begin{align}\label{eqn:FHpcris}
F_{\widetilde{\mathbb{H}}^{\form}_{p,crys}}(C,P)&\to \Map_{\Mod{R}}\left(\omega_Y,P\otimes_R\tau^{\ge 1}R\Gamma(X,\Reg{X})\right).
\end{align}

That~\eqref{eqn:FHinfty} is an equivalence can be extracted from the proof of Proposition~\ref{prop:FIC_functorial_props}, while the fact that~\eqref{eqn:FHpcris} is one was explicitly stated in the proof of Theorem~\ref{thm:FilFCrys_functorial_props}; see Lemma~\ref{lem:canonical_arrow_P_kernel}.

To finish it suffices to show that the maps
\[
F_{\widetilde{\mathbb{H}}}(C,P)\to \Map_{\Mod{R}}\left(\omega_Y,P\otimes_R\tau^{\ge 1}R\Gamma(X,\Reg{X})\right)
\]
induced by cohomological realization is equivalent to the one given by (2) of Proposition~\ref{prop:hom_stack}. This is simply a globalization of Lemma~\ref{lem:cohom_compat_local}.
\end{proof}

\begin{theorem}\label{thm:deform_end}
There exists $\mathbb{H}(X,Y)\in \mathrm{Ab}(\mathrm{PStk}_R)$ represented by a locally finite unramified and quasi-smooth derived scheme over $R$, and equipped with a closed immersion $\mathbb{H}(X,Y)\hookrightarrow \widetilde{\mathbb{H}}(X,Y)$ in $\mathrm{Ab}(\mathrm{PStk}_R)$, satisying the following properties:
\begin{enumerate}
  \item \label{hom:classical} If $C\in \mathrm{CRing}_{\heartsuit,R/}$ is discrete, then we have
  \[
    \mathbb{H}(X,Y)(C)\simeq \Hom(X_C,Y_C),
  \]
  where the right hand side is the module of homomorphisms of classical abelian schemes.

 \item \label{hom:cotangent} We have
 \[
   \mathbb{L}_{\mathbb{H}(X,Y)/R}\simeq \Reg{\mathbb{H}(X,Y)}\otimes_R(\omega_Y\otimes\omega_{X^\vee})[1].
 \]

 \item \label{hom:generic} We have
 \[
   \mathbb{H}(X,Y)_{\Rat} \simeq \widetilde{\mathbb{H}}(X,Y)_{\Rat}\times_{\widetilde{\mathbb{H}}_{\infty}(X,Y)}\mathbb{H}_\infty(X,Y).
 \]

 \item \label{hom:p-adic} For any prime $p$, we have
 \[
  \mathbb{H}(X,Y)^{\form}_p \simeq \widetilde{\mathbb{H}}(X,Y)^{\form}_p\times_{\widetilde{\mathbb{H}}^{\form}_{p,crys}(X,Y)}\mathbb{H}^{\form}_{p,crys}(X,Y).
 \]

\end{enumerate}
\end{theorem}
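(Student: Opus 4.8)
The plan is to deduce Theorem~\ref{thm:deform_end} as an application of a general Beauville--Laszlo gluing result (the more general ``Corollary~\ref{cor:exist_Y}'' referenced in the introduction), whose hypotheses amount exactly to the three inputs we have now assembled: a surjection of cotangent complexes over a mapping scheme, an integration of that quotient over the generic fiber, and an integration over each $p$-adic formal completion. Concretely, I would take the ambient derived scheme to be $\widetilde{\mathbb{H}}(X,Y)$ over $R$, and the quotient of cotangent complexes to be the one obtained from
\[
\mathbb{L}_{\widetilde{\mathbb{H}}(X,Y)/R}\simeq \Reg{\widetilde{\mathbb{H}}(X,Y)}\otimes_R\left(\omega_Y\otimes_R(\tau^{\ge 1}R\Gamma(X,\Reg{X}))^\vee\right)
\]
of Proposition~\ref{prop:hom_stack} by projecting onto the $i=1$ summand $\omega_Y\otimes_R H^1(X,\Reg{X})^\vee\simeq \omega_Y\otimes_R\omega_{X^\vee}$, shifted appropriately; this gives a surjection $\mathbb{L}_{\widetilde{\mathbb{H}}(X,Y)/R}\twoheadrightarrow \Reg{}\otimes_R(\omega_Y\otimes_R\omega_{X^\vee})[1]$ of perfect complexes, since the complementary summands $\tau^{\ge 2}$ sit in the appropriate range of Tor amplitude.

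\textbf{The two local integrations.} For the generic fiber, I would set $\mathbb{H}(X,Y)_{\Rat}$ to be the fiber product $\widetilde{\mathbb{H}}(X,Y)_{\Rat}\times_{\widetilde{\mathbb{H}}_\infty(X,Y)}\mathbb{H}_\infty(X,Y)$. By Proposition~\ref{prop:hom_stack_tangent_space}, the map $\widetilde{\mathbb{H}}(X,Y)_{\Rat}\to\widetilde{\mathbb{H}}_\infty(X,Y)$ is formally \'etale, so this fiber product has the same relative cotangent complex over $\widetilde{\mathbb{H}}(X,Y)_{\Rat}$ as $\mathbb{H}_\infty(X,Y)$ has over $\widetilde{\mathbb{H}}_\infty(X,Y)$; by Proposition~\ref{prop:FIC_functorial_props}(2) applied to $\mathcal{H}_\infty(X,Y)$ this is $\Reg{}\otimes_R(\gr^{-1}_{\mathrm{Hdg}}H_R)^\vee[1]$, and one identifies $\gr^{-1}_{\mathrm{Hdg}}$ of the internal Hom object $\mathcal{H}_\infty(X,Y)$ with $\omega_Y\otimes_R\omega_{X^\vee}$ via the Hodge decomposition of $\Fil^\bullet_{\mathrm{Hdg}}\mathrm{InfCoh}^{(1)}$ for $X$ and $Y$ from Lemma~\ref{lem:ab_coh_split}. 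Then Lemma~\ref{lem:cohom_compat_local} (globalized, as in the end of the proof of Proposition~\ref{prop:hom_stack_tangent_space}) guarantees that this identification of cotangent complexes is compatible with the chosen surjection, so $\mathbb{L}_{\mathbb{H}(X,Y)_{\Rat}/\Sh}\simeq \Reg{}\otimes_R(\omega_Y\otimes_R\omega_{X^\vee})[1]$. The identical argument with $\widetilde{\mathbb{H}}^{\form}_{p,crys}(X,Y)$, $\mathbb{H}^{\form}_{p,crys}(X,Y)$, Theorem~\ref{thm:FilFCrys_functorial_props}(2), and Theorem~\ref{thm:FilFCrys_discrete}(1) (for formal-étaleness and discreteness) yields the $p$-adic local lifts $\mathbb{H}(X,Y)_{\Int/p^n\Int}$ with the correct cotangent complex, for each $n$. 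Classical agreement---that the underlying classical truncation of each local lift is $\underline{\Hom}(X,Y)$ base-changed appropriately---follows from Lemma~\ref{lem:tilde classical same} together with Proposition~\ref{prop:hom_stack}(1); this is exactly the representability-of-$\mathcal{Z}^{\mathrm{cl}}$ hypothesis in the gluing proposition.

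\textbf{Gluing and conclusion.} With these pieces in place I would invoke Proposition~\ref{introprop:gluing} (in the form of the cited Corollary~\ref{cor:exist_Y}): the local lifts over $\Rat$ and over each $\Int/p^n\Int$ are unique, compatible with the surjection~\eqref{eqn:cotangent_cplx_integrable?}, and hence glue to a closed derived subscheme $\mathbb{H}(X,Y)\hookrightarrow\widetilde{\mathbb{H}}(X,Y)$ over $R$ with $\mathbb{L}_{\mathbb{H}(X,Y)/R}\simeq\Reg{\mathbb{H}(X,Y)}\otimes_R(\omega_Y\otimes\omega_{X^\vee})[1]$. Since this shifted cotangent complex, shifted back by $[-1]$, is a vector bundle of rank $\dim X\cdot\dim Y$, the map $\mathbb{H}(X,Y)\to R$ is locally quasi-smooth and unramified of that virtual codimension by the differential criterion recalled in the introduction. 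Property~\eqref{hom:classical} is immediate since the classical truncation of $\mathbb{H}(X,Y)$ agrees with that of $\widetilde{\mathbb{H}}(X,Y)$, which is $\underline{\Hom}(X,Y)$; properties~\eqref{hom:generic} and~\eqref{hom:p-adic} hold by construction; and the abelian group object structure is inherited as a closed subgroup-prestack of $\widetilde{\mathbb{H}}(X,Y)\in\mathrm{Ab}(\mathrm{PStk}_R)$, using that all the maps in sight ($\widetilde{\mathbb{H}}\to\widetilde{\mathbb{H}}_\infty$, $\mathbb{H}_\infty\to\widetilde{\mathbb{H}}_\infty$, and their crystalline analogues) are maps of $\Mod[\mathrm{cn}]{\Int}$-valued prestacks, so that the defining fiber products are taken in $\mathrm{Ab}(\mathrm{PStk}_R)$. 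The main obstacle is purely bookkeeping: checking that the three identifications of $\gr^{-1}_{\mathrm{Hdg}}$ of the internal Hom crystals with $\omega_Y\otimes_R\omega_{X^\vee}$, coming respectively from de Rham cohomology of $\widetilde{\mathbb{H}}$, from infinitesimal crystals, and from $p$-adic $F$-crystals, are \emph{mutually} compatible and compatible with the single surjection out of $\mathbb{L}_{\widetilde{\mathbb{H}}(X,Y)/R}$ --- this is the ``precise sense'' alluded to in the footnote to Proposition~\ref{introprop:gluing}, and it is what Lemma~\ref{lem:cohom_compat_local} and its globalization in Proposition~\ref{prop:hom_stack_tangent_space} are designed to supply.
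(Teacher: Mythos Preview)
Your proposal is correct and follows essentially the same approach as the paper: set up the quotient map $u$ from $\mathbb{L}_{\widetilde{\mathbb{H}}(X,Y)/R}[-1]$ to $\omega_Y\otimes_R\omega_{X^\vee}$, verify that the fiber products in assertions~\eqref{hom:generic} and~\eqref{hom:p-adic} give $u$-marked schemes over $\widetilde{\mathbb{H}}(X,Y)$ locally (via Lemma~\ref{lem:tilde classical same} and Proposition~\ref{prop:hom_stack_tangent_space}), and then invoke Corollary~\ref{cor:exist_Y} to glue. Your write-up is in fact more explicit than the paper's about the identification of $\gr^{-1}_{\mathrm{Hdg}}$ of the internal Hom objects and about the compatibility bookkeeping supplied by Lemma~\ref{lem:cohom_compat_local}, but the skeleton is the same.
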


The proof of this requires a little argument for gluing the various local subschemes of $\widetilde{\mathbb{H}}(X,Y)$ together, which we will consider in~\eqref{subsec:closed_immersions} below.

\begin{remark}
As mentioned in the introduction, I don't know if the derived scheme $\mathbb{H}(X,Y)$ admits a direct moduli theoretic construction in general, but we have the following observations:
\begin{itemize}
  \item When $X$ is an elliptic curve, then it is immediate that $\mathbb{H}(X,Y)\simeq \widetilde{\mathbb{H}}(X,Y)$. In higher dimensions, because of the non-vanishing of $H^i(X,\Reg{X})$ for $i>1$, this will no longer hold, and one needs to find some additional conditions to cut out the right space.

  \item In characteristic $0$, one can show that $\mathbb{H}(X,Y)$ parameterizes maps of abelian group objects from $X$ to $Y$. 

  \item The prestack of maps of abelian group objects is too large in finite characteristic: its cotangent complex is unbounded. This is related to the fact that $\mathrm{RHom}(\mathbb{G}_a,\mathbb{G}_a)$ has non-zero cohomology in arbitrarily large degrees in characteristic $p$---see~\cite{Breen1978-pq}---which in turn is related to the non-vanishing of the cohomology of Eilenberg-MacLane spaces. 
\end{itemize} 
\end{remark}

\begin{remark}
Even if we could somehow find a direct construction of $\mathbb{H}(X,Y)$, we will still need the gluing techniques used here in Section~\ref{sec:shimura} when we will prove Theorem~\ref{introthm:W_scheme}.
\end{remark}

\subsection{}\label{subsec:closed_immersions}
Suppose that we have $C\in \mathrm{CRing}$, and an abelian group object $X:\mathrm{CRing}_{C/}\to \Mod[\mathrm{cn}]{\Int}$ in $\mathrm{PStk}_C$. Suppose in addition that the underlying prestack of $X$ is represented by a locally finite unramified derived scheme over $C$, and that there exists $\kappa_X$ in $\Mod[\mathrm{perf},\mathrm{cn}]{C}$ along with an equivalence
\[
\iota:\Reg{X}\otimes_C\kappa_X\xrightarrow{\simeq}\mathbb{L}_{X/C}[-1].
\]

Choose a map $\kappa_Y^\vee\to \kappa_X^\vee$ in $\Mod[\mathrm{perf}]{C}$ with dual $u:\kappa_X\to \kappa_Y$, and suppose that $\kappa_Y$ is \emph{locally free} over $C$. For any $C'\in \mathrm{CRing}_{C/}$, a \defnword{$u$-marked scheme over $X_{C'}$} is a finite unramified map $j:Y\to X_{C'}$ equipped with the following:
\begin{enumerate}
  \item A structure of a group scheme over $\pi_0(C')$ on the classical truncation $Y^{\mathrm{cl}}$ such that the map $Y^{\mathrm{cl}}\to X^{\mathrm{cl}}_{C'}$ is a homomorphism of group schemes over $\pi_0(C')$;
  \item A factoring of the zero section $[0]:\Spec C'\to X_{C'}$ through $Y$;
  \item An equivalence
  \[
   (\Reg{Y}\otimes_C\kappa_X \xrightarrow{1\otimes u} \Reg{Y}\otimes_C\kappa_Y)\xrightarrow{\simeq}(j^*\mathbb{L}_{X_C'/C'}\to \mathbb{L}_{Y/C'})[-1]
  \]
  in $\mathrm{Ab}(\mathrm{Fun}([1],\mathrm{QCoh}_{Y}))$ extending the equivalence
  \[
   \Reg{Y}\otimes_C\kappa_X\xrightarrow{\simeq}\mathbb{L}_{X_{C'}/C'}[-1]
  \]
  induced by $\iota$.
\end{enumerate}
Note that the condition on $\mathbb{L}_{Y/C'}$---along with our hypothesis that $\kappa_Y$ is locally free---ensures that $Y$ is actually locally \emph{quasi-smooth} (and of course locally finite and unramified) over $C'$.

\begin{proposition}
\label{prop:unique_Y}
\begin{enumerate}
  \item Suppose that $j:Y\to X_{C'}$ is a $u$-marked scheme over $X_{C'}$. Then $Y$ underlies a canonical abelian group object in $\mathrm{PStk}_{C'}$ such that $j$ is a map of derived group schemes.

  \item Suppose that $j_1:Y_1\to X_{C'}$ and $j_2:Y_2\to X_{C'}$ are two $u$-marked schemes over $X_{C'}$ equipped with an isomorphism between their classical truncations. Then there exists a canonical equivalence $Y_1\xrightarrow{\simeq}Y_2$ in $\mathrm{Ab}(\mathrm{PStk})_{/X_{C'}}$.
\end{enumerate}
\end{proposition}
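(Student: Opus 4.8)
The plan is to deduce both statements from the deformation-theoretic rigidity of $u$-marked schemes, viewing them as pulled back from the universal situation and using that a $u$-marked structure is essentially equivalent data to a section of a locally quasi-smooth derived scheme relative to which $Y$ is formally \'etale. For part (1), I would first observe that the forgetful map from $u$-marked schemes over $X_{C'}$ to the underlying pointed finite unramified $X_{C'}$-scheme is a relatively affine, formally \'etale closed immersion of prestacks (in the target variable $Y$): the conditions (1)--(3) are respectively a group structure on the classical truncation, a pointing, and a rigidification of $\mathbb{L}_{Y/C'}$. Since $Y^{\mathrm{cl}}$ is already a commutative group scheme and the map $Y^{\mathrm{cl}}\to X^{\mathrm{cl}}_{C'}$ is a homomorphism, the task is to promote this to an abelian group object structure on all of $Y$. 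I would do this by an infinitesimal-lifting/obstruction argument: the abelian group object structure on $X$ (coming from its given structure in $\mathrm{Ab}(\mathrm{PStk}_C)$) together with the factoring of the zero section through $Y$ gives, on each square-zero thickening, a candidate addition map, and the identification of cotangent complexes in condition (3) together with the fact that $X$ is an abelian group object shows that the obstruction to extending the (commutative, associative, unital) multiplication vanishes at each stage. Concretely I would use that $Y$ is infinitesimally cohesive and nilcomplete (which follows from its being a locally finitely presented derived scheme), so the abelian group structure on $Y$ is determined by, and can be built from, its classical truncation together with the transitive system of square-zero extensions; at each stage the relevant mapping space of multiplication maps is a torsor under a group controlled by $\mathbb{L}_{Y/C'}$, and condition (3) pins this down compatibly with $X$.

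For part (2), I would argue that the datum of a $u$-marked scheme over $X_{C'}$ is determined by its classical truncation. Given $j_i: Y_i\to X_{C'}$ with an isomorphism $\phi: Y_1^{\mathrm{cl}}\xrightarrow{\simeq}Y_2^{\mathrm{cl}}$ over $X^{\mathrm{cl}}_{C'}$, I want to upgrade $\phi$ to an equivalence $Y_1\xrightarrow{\simeq}Y_2$ of $X_{C'}$-schemes compatible with all the structure. Since each $Y_i\to X_{C'}$ is finite unramified, hence a monomorphism onto a locally closed (in fact, after the quasi-smoothness, a nicely presented) derived subscheme after base change, the key point is that a finite unramified map to $X_{C'}$ with prescribed classical truncation and prescribed relative cotangent complex (as in condition (3)) is unique up to contractible choice. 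This is again an obstruction-theory statement: working over the tower of Postnikov truncations of $C'$ and over square-zero thickenings, the space of extensions of $\phi$ is, by infinitesimal cohesiveness of the mapping prestack $\shvMap_{X_{C'}}(Y_1,Y_2)$ (or rather its subspace of maps inducing the given map on truncations), a torsor under a group built from $\underline{\Map}(\mathbb{L}_{Y_2/X_{C'}}, \ldots)$; but condition (3) forces $\mathbb{L}_{Y_i/C'}$, and hence $\mathbb{L}_{Y_i/X_{C'}}$, to have a specific form in terms of $\Reg{Y_i}\otimes_C\mathrm{cofib}(u)$, and the rigidification compatibilities make these obstruction groups vanish or the torsors canonically trivial. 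Once the equivalence $Y_1\simeq Y_2$ of $X_{C'}$-schemes is produced, part (1) (applied to both sides, and the uniqueness within its own proof) shows it is automatically a map in $\mathrm{Ab}(\mathrm{PStk})_{/X_{C'}}$, since the abelian group structure is itself determined by the classical truncation together with the cotangent-complex data.

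The main obstacle I anticipate is bookkeeping the compatibility of \emph{all} the structures simultaneously through the deformation-theoretic argument: it is not enough to know abstractly that deformations are unobstructed; one must check that the multiplication, the unit, the associativity and commutativity constraints, the map to $X_{C'}$, and the cotangent-complex identification in condition (3) all extend \emph{coherently}, i.e. that the relevant higher obstruction spaces (in the $\infty$-categorical sense) are contractible, not merely that $\pi_0$ vanishes. The cleanest way around this is probably to package everything functorially: define a prestack $\mathcal{P}$ on $\mathrm{CRing}_{C'/}$ whose value is the space of $u$-marked schemes over $X_{C'}$ (base-changed appropriately), show $\mathcal{P}$ is infinitesimally cohesive, nilcomplete, and formally \'etale over the prestack $\mathcal{P}^{\mathrm{cl}}$ of classical pointed group-scheme-homomorphisms-with-the-right-discrete-invariants, and then both (1) and (2) fall out: (1) is the statement that the fiber of $\mathcal{P}\to\mathcal{P}^{\mathrm{cl}}$ is nonempty and that the group structure extends (which is the formal \'etaleness plus the existence of the classical group structure), and (2) is the statement that this fiber is \emph{contractible}, which is exactly formal \'etaleness of $\mathcal{P}\to\mathcal{P}^{\mathrm{cl}}$ together with the given isomorphism of classical truncations. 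I would lean on the same style of argument used in Proposition~\ref{prop:hom_stack_tangent_space} and the representability results in Appendix~\ref{sec:lurie} to carry this out, and I expect that, as there, the cotangent complex identification in condition (3) is precisely what makes the formal \'etaleness work.
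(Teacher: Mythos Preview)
Your approach is correct in outline but takes a more circuitous route than the paper, and your worry about coherence bookkeeping is a real obstacle for \emph{your} formulation that the paper's argument simply sidesteps.

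The paper does not attempt to extend a multiplication map $Y\times Y\to Y$ through the Postnikov tower and then check associativity, commutativity, unit, etc.\ at each stage. Instead it works directly with the functor of points: it shows by induction on $n$ that the restriction of $Y$ to $\mathrm{CRing}_{\leq n,C'/}$ lifts canonically to a $\Mod[\mathrm{cn}]{\Int}$-valued functor (compatibly with $X$). The inductive step reduces, via the usual Cartesian square expressing $\tau_{\leq n+1}D$ in terms of $\tau_{\leq n}D$ and $\tau_{\leq n}D\oplus \pi_{n+1}(D)[n+2]$, to computing $Y(A\oplus M[1])$ for $A$ already $n$-truncated. The cotangent complex hypothesis (condition (3)) plus the factoring of the zero section (condition (2)) give a canonical equivalence
\[
Y(A\oplus M[1])\;\simeq\; Y(A)\oplus \Map_{\Mod{C}}(\kappa_Y,M),
\]
compatible with the analogous splitting $X(A\oplus M[1])\simeq X(A)\oplus \Map_{\Mod{C}}(\kappa_X,M)$. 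The right-hand side is \emph{already} an object of $\Mod[\mathrm{cn}]{\Int}$, functorially in $(A,M)$, so no further coherence needs to be checked: the abelian group structure is built into the target category, not assembled from multiplication maps. This is the key simplification you are missing.

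For (2), you propose a separate obstruction-theory argument for the space of maps $Y_1\to Y_2$ over $X_{C'}$. The paper does less work: once you have shown in (1) that $Y$ as a $\Mod[\mathrm{cn}]{\Int}$-valued prestack is canonically reconstructed from $Y^{\mathrm{cl}}$ together with the $u$-marked data (the inductive construction is visibly functorial in the input), uniqueness is immediate. So (2) falls out of the proof of (1) rather than requiring its own deformation argument. Your packaging via a prestack $\mathcal{P}$ formally \'etale over $\mathcal{P}^{\mathrm{cl}}$ would eventually yield the same conclusion, but the paper's direct computation of $Y(A\oplus M[1])$ is both shorter and avoids the coherence issues you anticipate.
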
 

\begin{proof}
For (1), we will show by induction on $n$ that the restriction of $Y$ to $\mathrm{CRing}_{\leq n,C'/}$ admits a canonical lift to a $\Mod[\mathrm{cn}]{\Int}$-valued functor such that the map to $X$ is a map of derived group schemes.

For $n=1$ this is part of our hypothesis. So assume the inductive claim known for $n\geq 1$: Any $D\in \mathrm{CRing}_{\leq n+1,C'/}$ sits in a canonical Cartesian square
\[
\begin{diagram}
D&\rTo&\tau_{\leq n}D\\
\dTo&&\dTo\\
\tau_{\leq n}D&\rTo&\tau_{\leq n}D\oplus \pi_{n+1}(D)[n+2]
\end{diagram}
\]
which in turn gives us a Cartesian square of maps
\[
\begin{diagram}
(Y(D)\to X(D))&\rTo&(Y(\tau_{\leq n}D)\to X(\tau_{\leq n}D))\\\
\dTo&&\dTo\\
\left(Y(\tau_{\leq n}D)\to X(\tau_{\leq n}D)\right)&\rTo&\left(Y(\tau_{\leq n}D\oplus \pi_{n+1}(D)[n+2])\to X(\tau_{\leq n}D\oplus \pi_{n+1}(D)[n+2])\right)
\end{diagram}
\]

Therefore, we see that it is enough to know that the restriction of $Y$ along the functor
\[
\mathrm{CRing}_{\leq n,C'/}\times_{\mathrm{CRing}}\Mod[\mathrm{cn}]{}\xrightarrow{(A,M)\mapsto A\oplus M[1]}\mathrm{CRing}_{C'/}
\]
has a canonical lift to a $\Mod[\mathrm{cn}]{\Int}$-valued one. For this, note that for $(A,M)$ in the source of the above functor, we have a canonical equivalence
\[
X(A\oplus M[1])\simeq X(A)\oplus \Map_{\mathrm{Mod}_C}(\kappa_X,M)
\]
in $\Mod[\mathrm{cn}]{D}$. We claim that the map
\[
Y(A\oplus M[1])\to X(A\oplus M[1])\simeq X(A)\oplus \Map_{\mathrm{Mod}_C}(\kappa_X,M)
\]
factors through an equivalence
\[
Y(A\oplus M[1]) \simeq Y(A)\oplus \Map_{\mathrm{Mod}_C}(\kappa_Y,M).
\]
Indeed, the action of $\Map_{\mathrm{Mod}_C}(\kappa_X,M)$ on $X(A\oplus M[1])$ presents it as a (trivial) torsor over $X(A)$, and our conditions ensure that the inherited action of $\Map_{\mathrm{Mod}_C}(\kappa_Y,M)$ lifts to one on $Y(A\oplus M[1])$, and presents it as a trivial torsor over $Y(A)$.

This finishes the inductive step, since the right hand side has a canonical lift to a $\Mod[\mathrm{cn}]{\Int}$-valued functor compatibly with its counterpart for $X$.

The proof of (1) shows that $Y$, along with its lift to a $\Mod[\mathrm{cn}]{\Int}$-valued prestack, is uniquely determined by its classical truncation and its structure as a $(u$-marked scheme over $X$. This of course proves (2).
\end{proof}

\begin{corollary}
\label{cor:Y_determined_by_points}
Suppose that $X^{\mathrm{cl}}$ is locally of finite type over $\Int$, and suppose that we have two $u$-marked schemes $f:Y\to X$ and $\tilde{f}:\tilde{Y}\to X$ satisfying the following conditions: 
\begin{enumerate}
	\item If $k$ is $\Comp$ or a finite field, then the map $\tilde{Y}(k)\to X(k)$ factors through an isomorphism of groups $\tilde{Y}(k)\xrightarrow{\simeq} Y(k)$. 
	\item $Y^{\mathrm{cl}}\hookrightarrow X^{\mathrm{cl}}$ is a closed immersion.
\end{enumerate}
Then $\tilde{f}$ factors through a canonical equivalence $Y\xrightarrow{\simeq}\tilde{Y}$ in $\mathrm{Ab}(\mathrm{PStk})_{/X}$.
\end{corollary}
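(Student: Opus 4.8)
The strategy is to reduce the corollary to part (2) of Proposition~\ref{prop:unique_Y}: once we know that the classical truncations $Y^{\mathrm{cl}}$ and $\tilde Y^{\mathrm{cl}}$ agree as closed subschemes of $X^{\mathrm{cl}}$, the proposition produces the desired equivalence $Y\xrightarrow{\simeq}\tilde Y$ in $\mathrm{Ab}(\mathrm{PStk})_{/X}$ canonically. So the whole content is to promote the pointwise isomorphism $\tilde Y(k)\xrightarrow{\simeq}Y(k)$ (for $k$ either $\Comp$ or finite) into an equality of classical closed subschemes. First I would observe that since both $f$ and $\tilde f$ are finite unramified maps to $X$, their classical truncations $Y^{\mathrm{cl}}\to X^{\mathrm{cl}}$ and $\tilde Y^{\mathrm{cl}}\to X^{\mathrm{cl}}$ are finite unramified morphisms of schemes locally of finite type over $\Int$; in particular $\tilde Y^{\mathrm{cl}}$ is Jacobson, its points being detected by its $k$-points for $k$ running over finite fields together with $\Comp$-points on the generic fiber.

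\textbf{Key steps.} (a) Reduce to showing $\tilde Y^{\mathrm{cl}}\hookrightarrow X^{\mathrm{cl}}$ is a closed immersion whose image coincides with that of $Y^{\mathrm{cl}}$. Using that $Y^{\mathrm{cl}}\hookrightarrow X^{\mathrm{cl}}$ is already a closed immersion by hypothesis (2), it suffices to produce a map $\tilde Y^{\mathrm{cl}}\to Y^{\mathrm{cl}}$ over $X^{\mathrm{cl}}$ and check it is an isomorphism. (b) To build the map: $\tilde Y^{\mathrm{cl}}$ and $Y^{\mathrm{cl}}$ are both unramified over $X^{\mathrm{cl}}$, hence the projection $\tilde Y^{\mathrm{cl}}\times_{X^{\mathrm{cl}}}Y^{\mathrm{cl}}\to \tilde Y^{\mathrm{cl}}$ is unramified; I want to show it is an isomorphism by exhibiting a section and checking it is open and closed. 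The graph of any hypothetical map gives a candidate; alternatively, since $Y^{\mathrm{cl}}\hookrightarrow X^{\mathrm{cl}}$ is a closed immersion, $\tilde Y^{\mathrm{cl}}\times_{X^{\mathrm{cl}}}Y^{\mathrm{cl}}\hookrightarrow \tilde Y^{\mathrm{cl}}$ is a closed immersion, and it is surjective because, by hypothesis (1), every $k$-point of $\tilde Y^{\mathrm{cl}}$ (for $k=\Comp$ or finite) maps into $Y(k)\subset X(k)$, hence lies in the image of $Y^{\mathrm{cl}}\to X^{\mathrm{cl}}$; by the Jacobson property this closed immersion is then surjective, hence an isomorphism. (c) This identifies $\tilde Y^{\mathrm{cl}}$ with a closed subscheme of $X^{\mathrm{cl}}$ factoring through $Y^{\mathrm{cl}}$, i.e. we get a closed immersion $\tilde Y^{\mathrm{cl}}\hookrightarrow Y^{\mathrm{cl}}$ over $X^{\mathrm{cl}}$; running the symmetric argument with the roles of $Y$ and $\tilde Y$ reversed (here I use that the map $\tilde Y(k)\to Y(k)$ of (1) is an \emph{isomorphism}, so it is surjective too) gives a closed immersion $Y^{\mathrm{cl}}\hookrightarrow \tilde Y^{\mathrm{cl}}$; these two closed immersions compose to endomorphisms of Noetherian (locally finite type) schemes that are the identity on $k$-points for a dense set of $k$, hence are the identity, so $Y^{\mathrm{cl}}\simeq \tilde Y^{\mathrm{cl}}$ compatibly with the maps to $X^{\mathrm{cl}}$ and with the group structures and zero sections. (d) Finally, apply Proposition~\ref{prop:unique_Y}(2) with $C' = \Int$ (or rather the relevant base): since both $Y$ and $\tilde Y$ are $u$-marked schemes over $X$ and we have just produced an isomorphism of their classical truncations compatible with all the structure, we obtain the canonical equivalence $Y\xrightarrow{\simeq}\tilde Y$ in $\mathrm{Ab}(\mathrm{PStk})_{/X}$, and by construction $\tilde f$ factors through it.

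\textbf{Main obstacle.} The delicate point is step (b)--(c): making sure that detecting surjectivity of a closed immersion on $\Comp$-points and finite-field points is legitimate here, and that the group-theoretic compatibility is preserved throughout. Surjectivity is fine because $X^{\mathrm{cl}}$, hence any closed subscheme, is Jacobson and of finite type over $\Int$, so closed points (whose residue fields are finite) together with the $\Comp$-points of the generic fiber are dense; a closed subscheme containing all of them is everything. The compatibility of the isomorphism $Y^{\mathrm{cl}}\simeq\tilde Y^{\mathrm{cl}}$ with the group structures follows because both group structures are determined by their restriction to the same dense set of points and both maps to $X^{\mathrm{cl}}$ are homomorphisms; more cleanly, the isomorphism is an isomorphism of schemes over $X^{\mathrm{cl}}$ and $X^{\mathrm{cl}}$ is separated, so the two group laws, being sections of the same separated scheme that agree on a dense open, agree. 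Once the classical picture is pinned down, the derived upgrade is entirely formal via Proposition~\ref{prop:unique_Y}, which is why the essential work is concentrated in the classical, finite-type commutative algebra described above.
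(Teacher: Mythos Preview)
Your reduction to Proposition~\ref{prop:unique_Y}(2) is correct, and the overall shape is right: everything comes down to identifying $\tilde Y^{\mathrm{cl}}$ with $Y^{\mathrm{cl}}$ as subschemes of $X^{\mathrm{cl}}$. But step~(b) has a real gap. You argue that the closed immersion
\[
\tilde Y^{\mathrm{cl}}\times_{X^{\mathrm{cl}}}Y^{\mathrm{cl}}\hookrightarrow \tilde Y^{\mathrm{cl}}
\]
is surjective on $k$-points for $k$ finite or $\Comp$, hence (by Jacobson-ness) surjective, hence an isomorphism. The last implication fails: a surjective closed immersion is an isomorphism only when the target is \emph{reduced}, and there is no reason for $\tilde Y^{\mathrm{cl}}$ to be reduced. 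Concretely, writing things locally as $X^{\mathrm{cl}}=\Spec A$, $Y^{\mathrm{cl}}=\Spec A/I$, $\tilde Y^{\mathrm{cl}}=\Spec B$, your argument shows only that $IB$ lies in every maximal ideal of $B$, i.e.\ $IB$ is contained in the nilradical. That is a set-theoretic factorization, not a scheme-theoretic one. (Step~(c) has a secondary issue: even granting (b), the resulting map $\tilde Y^{\mathrm{cl}}\to Y^{\mathrm{cl}}$ is finite unramified, not obviously a closed immersion, so the ``symmetric'' argument cannot be run as written since hypothesis~(2) is asymmetric.)

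The paper closes this gap by actually using the $u$-marked structure in the classical step, which your argument never invokes. Because both $Y$ and $\tilde Y$ have relative cotangent complex over $X$ given by the \emph{same} pullback $\kappa_Y[1]$, the inductive argument from the proof of Proposition~\ref{prop:unique_Y}(1) upgrades the bijection $\tilde Y(k)\xrightarrow{\simeq}Y(k)$ to a bijection $\tilde Y(R)\xrightarrow{\simeq}Y(R)$ for every complete local Noetherian $R$ with residue field $k=\Comp$ or $k$ finite. Applied to $R=\widehat A_{\mathfrak m}$, this forces $A\to \widehat B_{\mathfrak n}$ to factor through $A/I$ for every maximal $\mathfrak n\subset B$, which kills $IB$ honestly (not just modulo nilpotents) and simultaneously shows that $\tilde Y^{\mathrm{cl}}\to Y^{\mathrm{cl}}$ is finite \'etale and bijective on closed points, hence an isomorphism. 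The moral is that matching $k$-points alone cannot see infinitesimal thickenings; the shared cotangent complex is exactly what lets you propagate the agreement to formal neighborhoods.
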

\begin{proof}
Given (2) of Proposition~\ref{prop:unique_Y}, it suffices to show that the classical truncation of $\tilde{f}$ factors canonically through an isomorphism of group schemes $\tilde{Y}^{\mathrm{cl}}\xrightarrow{\simeq} Y^{\mathrm{cl}}$. 

This question is local on $X$, so we can assume that $X^{\mathrm{cl}} = \Spec A$, $Y^{\mathrm{cl}} = \Spec A/I$ for some ideal $I\subset A$, and $\tilde{Y}^{\mathrm{cl}} = \Spec B$ for a finite unramified $A$-algebra $B$. 

Our hypotheses, along with the argument used in the proof of (1) of that proposition shows that, for any complete local ring $R$ with residue field $k$ with $k = \Comp$ or $k$ a finite field, the map $\tilde{Y}(R)\to X(R)$ factors through a bijection of groups $\tilde{Y}(R)\to Y(R)$. This means that, for any maximal ideal $\mathfrak{m}\subset A$, there is at most one maximal ideal $\mathfrak{n}\subset B$ lying over $\mathfrak{m}$, and, when the latter exists, the map $A\to \widehat{B}_{\mathfrak{n}}$ factors through $A/I$. Therefore, $\tilde{Y}^{\mathrm{cl}}\to X^{\mathrm{cl}}$ factors through a finite \'etale map $\tilde{Y}^{\mathrm{cl}}\to Y^{\mathrm{cl}}$ that is a bijection on closed points, and is hence an isomorphism.
\end{proof}

\begin{lemma}
\label{lem:Y_filt_colimit}
Suppose that $C'$ is a filtered colimit $C' = \colim_{i\in I}C_i$ in $\mathrm{CRing}_{C/}$. If there is a $u$-marked scheme over $X_{C'}$, then there is a $u$-marked scheme over $X_{C_i}$ for some $i\in I$.
\end{lemma}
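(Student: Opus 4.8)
The assertion is a standard finite-presentation (``spreading out'') statement, and the plan is to reduce each of the three structures constituting a $u$-marked scheme to a locally finitely presented condition and then invoke the standard limit formalism. First I would recall that a $u$-marked scheme $j\colon Y\to X_{C'}$ consists of (a) a finite unramified map $j$, (b) a group-scheme structure on $Y^{\mathrm{cl}}$ over $\pi_0(C')$ together with the assertion that $j^{\mathrm{cl}}$ is a homomorphism and that the zero section lifts, and (c) an equivalence of arrows of perfect complexes on $Y$ identifying $(\mathcal{O}_Y\otimes_C\kappa_X\xrightarrow{1\otimes u}\mathcal{O}_Y\otimes_C\kappa_Y)$ with $(j^*\mathbb{L}_{X_{C'}/C'}\to \mathbb{L}_{Y/C'})[-1]$, extending the given equivalence on the $X$-side. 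The key observation is that each of these is a condition that is compatible with filtered colimits in $C'$: $X$ is assumed locally of finite presentation over $C$ (it is a locally finite unramified derived scheme, and in the application an abelian scheme), $\kappa_X,\kappa_Y$ are perfect over $C$, and $u$ is a fixed map over $C$, so all the relevant objects are pulled back from $C$.

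Concretely, I would argue as follows. Since $X_{C'}$ is locally of finite presentation over $C'$ and $Y\to X_{C'}$ is a finite unramified morphism, by the standard limit theorems for morphisms (e.g.\ \cite[\href{https://stacks.math.columbia.edu/tag/01ZM}{Tag 01ZM}]{stacks-project} in the classical case, together with its derived enhancement via Lurie's finite-presentation formalism in Appendix~\ref{sec:lurie}) there is an index $i_0\in I$ and a finite unramified morphism $Y_{i_0}\to X_{C_{i_0}}$ pulling back to $j\colon Y\to X_{C'}$; after enlarging $i_0$ we may also descend the factoring of the zero section. For the group-scheme structure on $Y^{\mathrm{cl}}$: the multiplication, inverse and identity are morphisms of finitely presented $\pi_0(C')$-schemes over $\pi_0(C')$, and the group axioms and the assertion that $j^{\mathrm{cl}}$ is a homomorphism are finitely many equalities of morphisms between finitely presented schemes, hence all descend to some larger index. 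For the cotangent-complex datum (c): the two arrows of perfect complexes on $Y$ involved, and the required equivalence between them, live in $\mathrm{Ab}(\mathrm{Fun}([1],\mathrm{QCoh}_{Y}))$, and since $Y$ is of finite presentation over $C'$ and the complexes are perfect, both the arrows and an equivalence between them are finitely presented data and thus descend to some $Y_i\to X_{C_i}$; the compatibility with $\iota$ is again a finitely-presented equality and descends after a further enlargement of the index. Taking $i$ to dominate all finitely many indices produced above yields the desired $u$-marked scheme over $X_{C_i}$.

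The main obstacle — really the only non-formal point — is making precise that the third datum, the equivalence of arrows of perfect complexes identifying the cotangent-complex map with $1\otimes u$, is a finitely presented condition that spreads out; one must be slightly careful because this is a datum in a derived ($\infty$-)category rather than a discrete set, so one cannot just cite the classical stacks-project lemma but must use the derived limit formalism (perfect complexes and maps between them over a filtered colimit of rings are the filtered colimit of the corresponding categories, cf.\ the finite-presentation discussion in Appendix~\ref{sec:lurie}). Once this is granted, the remaining steps are routine applications of the limit theorems, and I would not belabor them. I expect the proof to be short, on the order of a paragraph once the conventions are fixed.
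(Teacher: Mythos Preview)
Your proposal is correct and follows essentially the same approach as the paper's proof: both argue by spreading out each piece of the $u$-marked structure using finite-presentation/limit formalism. The paper's proof is terser, invoking \cite[Proposition~1.6]{Toen2012-yy} for the key derived step (descending the quasi-smooth derived scheme $Y$ to some $Y_i$ over $C_i$) and then simply asserting that the remaining data in conditions (1)--(3) descend; your more detailed breakdown of each condition, and your identification of the cotangent-complex equivalence as the only non-formal point requiring the derived limit formalism for perfect complexes, is exactly the content behind that assertion.
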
  
\begin{proof}
Suppose that $j:Y\to X_{C'}$ is a $u$-marked scheme. To begin with, there is some $i\in I$ and a quasi-smooth derived scheme $Y_i$ over $C_i$ with underlying classical scheme $Y^{\mathrm{cl}}_{C_i}$, equipped with an equivalence $C'\otimes_{C_i}Y_i\xrightarrow{\simeq}Y$ that lifts the identity on $Y^{\mathrm{cl}}_{C'}$. This follows from~\cite[Proposition 1.6]{Toen2012-yy}.\footnote{Thanks to A. Khan for pointing me to this reference.}

With this in hand, it is not difficult to see that the rest of the required data in conditions (1), (2), (3) of the definition of a $u$-marked scheme also has to be defined over $C_j$ for some $j\in I$.
\end{proof}

\begin{corollary}
\label{cor:exist_Y}
Suppose that we have a finite map of classical group schemes $f:Y^{\mathrm{cl}}\to X^{\mathrm{cl}}$ such that the following hold:
\begin{enumerate}
  \item There exists a $u$-marked scheme over $X_{C\otimes_{\Int}\Rat}$ with underlying classical truncation $f_{\Rat}$.
  \item For every prime $p$ and $n\geq 1$, there exists a $u$-marked scheme over $X_{C/{}^{\mathbb{L}}p^n}$ with underlying classical scheme $f_{\Int/p^n\Int}$.
\end{enumerate}
Then there exists a $u$-marked scheme over $X$ with underlying classical truncation $f$.
\end{corollary}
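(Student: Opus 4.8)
The statement is a Beauville--Laszlo style gluing assertion: given local liftings of the classical finite map $f\colon Y^{\mathrm{cl}}\to X^{\mathrm{cl}}$ to $u$-marked schemes over the rationalization and over each $C/{}^{\mathbb{L}}p^n$, I want to produce a single $u$-marked scheme over $C$ restricting to all of them. The plan is to construct the sheaf $Y$ directly as a functor $\mathrm{CRing}_{C/}\to\Mod[\mathrm{cn}]{\Int}$ (or just $\mathrm{PStk}_C$), show it is represented by a locally finite, unramified, quasi-smooth derived scheme with the required marking data, and then invoke the uniqueness results (Proposition~\ref{prop:unique_Y} and Corollary~\ref{cor:Y_determined_by_points}) to see that this $Y$ is determined by $f$ together with the local liftings, so the construction is canonical.

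First I would set up the arithmetic fracture square. For $C'\in\mathrm{CRing}_{C/}$ one has the pullback square relating $C'$ to its rationalization $C'_{\Rat}$, its $p$-completions $(C')^{\wedge}_p$, and the rationalized $p$-completions; more precisely one works with the square whose terms are $C'_{\Rat}$, $\prod_p (C')^{\wedge}_p$ and $\bigl(\prod_p (C')^{\wedge}_p\bigr)_{\Rat}$. The local $u$-marked schemes over $X_{C_{\Rat}}$ and over each $X_{C/{}^{\mathbb{L}}p^n}$ (assembled by the limit over $n$, using that $u$-marked schemes over $X_{C^{\wedge}_p}$ amount to compatible families over the $X_{C/{}^{\mathbb{L}}p^n}$, which is where the nilcompleteness/integrability built into the cotangent-complex hypotheses is used) give $\Mod[\mathrm{cn}]{\Int}$-valued prestacks $Y_{\Rat}$, $Y^{\wedge}_p$. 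I would define $Y$ as the prestack obtained by gluing $Y_{\Rat}$ and the $Y^{\wedge}_p$ over their common rationalized-$p$-complete base, using the arithmetic fracture square; concretely $Y(C') = Y_{\Rat}(C'_{\Rat})\times_{(\text{rationalized } p\text{-adic part})}\bigl(\prod_p Y^{\wedge}_p\bigr)((C')^{\wedge}_p)$, formed as a limit in $\Mod[\mathrm{cn}]{\Int}$. Since the $u$-marked schemes in hypotheses (1) and (2) come with classical truncation $f_{\Rat}$ and $f_{\Int/p^n\Int}$ respectively, and all restrict compatibly from the fixed classical $f$, the glued object has classical truncation $f$. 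The cotangent complex is then glued from the local ones, all of which are $\Reg{Y}\otimes_C(\text{the marking module})$ placed in degree $[1]$ in a way compatible with the surjection coming from $u$; the local compatibility of the markings forces the glued cotangent complex to be $(\Reg{Y}\otimes_C\kappa_X\xrightarrow{1\otimes u}\Reg{Y}\otimes_C\kappa_Y)$ shifted by $-1$, as required by condition (3) of a $u$-marked scheme.

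Next I would verify that this glued $Y$ is actually representable by a derived scheme (not merely a prestack). The engine here is Lurie's representability theorem (Theorem~\ref{thm:lurie_representability}), applied to $Y$ or to its Postnikov truncations: $Y$ is an \'etale (indeed fppf) sheaf since each of $Y_{\Rat}$, $Y^{\wedge}_p$ is, it is nilcomplete and infinitesimally cohesive because each local piece is and the fracture square is built from finite limits, it is locally almost of finite presentation because the local pieces are and $X^{\mathrm{cl}}$ is locally of finite type over $\Int$ (here I would use Lemma~\ref{lem:Y_filt_colimit} to descend along filtered colimits), and it admits a connective perfect cotangent complex of Tor-amplitude $[-1,0]$ by the previous paragraph. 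The classical truncation is $f$, which is a finite unramified map of classical schemes, hence representable. Therefore $Y$ is representable by a derived scheme, and the Tor-amplitude bound on $\mathbb{L}_{Y/C}[-1]$ together with the local freeness of $\kappa_Y$ makes it locally quasi-smooth (and finite unramified). Finally, the map $Y\to X_C$ is a closed immersion on classical truncations (condition (2)-type requirements from the $u$-marked scheme definition are inherited from the local pieces), and the abelian-group-object structure is produced by Proposition~\ref{prop:unique_Y}(1); Corollary~\ref{cor:Y_determined_by_points} then upgrades the gluing to a canonical identification, showing $Y$ is the desired $u$-marked scheme over $X$ whose rationalization and $p$-completions recover the given local data.

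\textbf{Main obstacle.} The delicate point is checking that the arithmetic fracture gluing is compatible with \emph{all} of the structure demanded of a $u$-marked scheme simultaneously---the group structure on the classical truncation, the factoring of the zero section, and especially the equivalence of arrows of $\Reg{Y}$-modules in condition (3) of the definition, which must be compatible with the given surjection $\kappa_X^\vee\to\kappa_Y^\vee$ (equivalently with $u\colon\kappa_X\to\kappa_Y$). One has to arrange the local liftings to be compatible over the rationalized $p$-adic base not just as derived schemes but as $u$-marked schemes, and to know that such compatibility data is unique up to contractible choice; this is exactly where Proposition~\ref{prop:unique_Y}(2) (uniqueness of $u$-marked lifts of a given classical truncation) is essential, since it guarantees that the comparison isomorphisms over $\bigl(\prod_p C^{\wedge}_p\bigr)_{\Rat}$ between the rational lift and the $p$-adic lifts exist and are canonical, making the gluing datum canonical. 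Once that uniqueness is in hand, the representability verification is routine bookkeeping with Lurie's criteria, so the conceptual weight of the argument sits entirely in the interplay between the fracture square and the rigidity of $u$-marked schemes.
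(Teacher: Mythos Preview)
Your overall strategy---glue the local $u$-marked schemes via an arithmetic fracture mechanism, with Proposition~\ref{prop:unique_Y}(2) supplying the comparison isomorphisms over the overlaps---is correct in spirit and matches the paper's. Your identification of the ``main obstacle'' is also right: the rigidity of $u$-marked lifts is exactly what makes the gluing canonical. However, your specific implementation has a gap, and the paper proceeds differently.

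The gap is in the finite-presentation check. You define $Y(C')$ for each test ring $C'$ via the global arithmetic fracture square involving $\prod_p (C')^{\wedge}_p$, and then propose to verify Lurie's criteria. But derived $p$-completion does not commute with filtered colimits, and neither does an infinite product over all primes; so the prestack $C' \mapsto \prod_p Y^{\wedge}_p\bigl((C')^{\wedge}_p\bigr)$ is not locally almost of finite presentation, and your glued $Y$ inherits this defect. Invoking Lemma~\ref{lem:Y_filt_colimit} at that stage is circular: that lemma lets you descend an already-constructed $u$-marked scheme along a filtered colimit in the base, not check finite presentation of a prestack built from an infinite limit.

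The paper sidesteps both this issue and any appeal to Lurie's criteria. It first uses Lemma~\ref{lem:Y_filt_colimit} in the forward direction: since $\Rat = \colim_N \Int[1/N]$, the rational $u$-marked scheme descends to one over $C\otimes_{\Int}\Int[1/N]$ for some integer $N$, reducing the problem to the finitely many primes dividing $N$. For each prime $p$, the rational lift and the mod-$p^n$ lifts are glued via derived Beauville--Laszlo (Proposition~\ref{prop:beauville_laszlo}) to produce a $u$-marked scheme over $C\otimes_{\Int}\Int_{(p)}$; this gluing is performed at the level of the equivalences of categories $\Mod{A_{(p)}}\simeq \Mod{\widehat{A}_p}\times_{\Mod{\widehat{A}_p\otimes\Rat}}\Mod{A\otimes\Rat}$ (and similarly for $\mathrm{CRing}$) on an affine open cover of $X$, so the output is directly a derived scheme with no representability criterion needed. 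Finally, the scheme over $\Int[1/N]$ is glued with the $\Int_{(p)}$-pieces one prime at a time using the Zariski-type equivalence of Lemma~\ref{lem:gluing_p_pinv}. Throughout, Proposition~\ref{prop:unique_Y}(2) furnishes the compatibility over overlaps, exactly as you anticipated.
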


\begin{proof}
The first assumption  implies that there exists an integer $N\geq 1$ and a $u$-marked scheme over $X_{C\otimes_{\Int}\Int[1/N]}$ with classical truncation $f_{\Int[1/N]}$: this is immediate from Lemma~\ref{lem:Y_filt_colimit} above.

We now claim that, for every prime $p$, there exists a $u$-marked scheme over $X_{C\otimes_{\Int}\Int_{(p)}}$ with classical truncation $f_{\Int_{(p)}}$: This is a consequence of part (2) of Proposition~\ref{prop:unique_Y}, our assumptions (1) and (2) here, and the derived Beauville-Laszlo gluing of~\cite[Prop. 5.6]{Bhatt2016-ky} in the form of Proposition~\ref{prop:beauville_laszlo} below applied to an affine open cover of $X$.

Now, the proof of the corollary is completed by using (2) of Proposition~\ref{prop:unique_Y} and applying the following result to an affine open cover of $X$, and successively to each prime $p$ dividing $N$.
\begin{lemma}
\label{lem:gluing_p_pinv}
Let $p$ be a prime. For any $A\in \mathrm{CRing}$, set $A_{(p)} = A\otimes_{\Int}\Int_{(p)}$; also set $A_{\Rat} = A\otimes_{\Int}\Rat$. Then the natural functors
\[
\Mod{A}\to \Mod{A\otimes_{\Int}\Int[1/p]}\times_{\Mod{A_{\Rat}}}\Mod{A_{(p)}};
\]
\[
\mathrm{CRing}_{A/} \to \mathrm{CRing}_{A\otimes_{\Int}\Int[1/p]/}\times_{\mathrm{CRing}_{A_{\Rat}/}}\mathrm{CRing}_{A_{(p)}/}
\]
are equivalences.
\end{lemma}
\begin{proof}
This is basically faithfully flat descent, but we can argue directly as well. Denote both functors involved by $\Phi$.

 If we have $M_{(p)}\in \Mod{A_{(p)}}$, $M_{\Rat}\in \Mod{A_{\Rat}}$, $M[1/p]\in \Mod{A\otimes_{\Int}\Int[1/p]}$ equipped with equivalences
\[
\alpha:\Rat\otimes_{\Int_{(p)}}M_{(p)}\xrightarrow{\simeq}M_{\Rat}\;;\; \beta:\Rat\otimes_{\Int[1/p]}M[1/p]\xrightarrow{\simeq}M_{\Rat},
\]
and if we take $M\in \Mod{A}$ so that we have a Cartesian diagram
\begin{align}\label{diag:M_(p)_square}
\begin{diagram}
M&\rTo&M_{(p)}\\
\dTo&&\dTo_{\alpha}\\
M[1/p]&\rTo_{\beta}&M_{\Rat}
\end{diagram}
\end{align}
then the natural maps $\Int[1/p]\otimes_{\Int}M\to M[1/p]$ and $\Int_{(p)}\otimes_{\Int}M\to M_{(p)}$ are equivalences. Indeed, this comes down to the fact that the vertical arrow on the right (resp. the bottom arrow) is an equivalence after tensoring with $\Int[1/p]$ (resp. with $\Int_{(p)}$).

This construction gives us a functor
\[
\Psi:\Mod{A\otimes_{\Int}\Int[1/p]}\times_{\Mod{A_{\Rat}}}\Mod{A_{(p)}}\to \mathrm{CRing}_{A/} 
\]
which is an inverse to $\Phi$.

The forgetful functor $\mathrm{CRing}_{B/}\to \Mod{B}$ is conservative, and respects limits for any $B\in \mathrm{CRing}$, as well as base-change along any map $B\to B'$, so the argument for the invertibility of the first functor applies to show that of the second as well.
\end{proof}

\end{proof}

\begin{proposition}\label{prop:beauville_laszlo}
For $A\in \mathrm{CRing}$, the natural functors
\[
\Mod{A_{(p)}}\to \Mod{\widehat{A}_p}\times_{\Mod{\widehat{A}_p\otimes_{\Int}\Rat}}\Mod{A\otimes_{\Int}\Rat};
\]
\[
\mathrm{CRing}_{A_{(p)}/}\to \mathrm{CRing}_{\widehat{A}_p/}\times_{\mathrm{CRing}_{(\widehat{A}_p\otimes_{\Int}\Rat)/}}\mathrm{CRing}_{(A\otimes_{\Int}{\Rat})/}
\]
are equivalences. Here $\widehat{A}_p$ is the $p$-completion of $A$.
\end{proposition}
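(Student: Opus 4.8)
The statement is the Beauville--Laszlo arithmetic fracture square, so the plan is to invoke the general derived Beauville--Laszlo gluing theorem and then bootstrap from modules to animated commutative rings exactly as in the proof of Lemma~\ref{lem:gluing_p_pinv}. First I would recall that for any $A\in\mathrm{CRing}$, the derived $p$-completion $\widehat{A}_p$ is, after inverting the ideal generated by $p$, flat over $A_{(p)}$, and that the pair $(p)$ is a finitely generated ideal; hence~\cite[Prop. 5.6]{Bhatt2016-ky} (applied with the ring $A_{(p)}$ and the element $p$) directly gives that
\[
\Mod{A_{(p)}}\xrightarrow{\ \simeq\ }\Mod{\widehat{A}_p}\times_{\Mod{\widehat{A}_p\otimes_{\Int}\Rat}}\Mod{A\otimes_{\Int}\Rat}
\]
is an equivalence. (One must check that $\widehat{A}_p\otimes_{\Int}\Rat$ really is the localization of $\widehat{A}_p$ at $p$ and that $A\otimes_{\Int}\Rat=(A_{(p)})[p^{-1}]$, both of which are immediate since $\Int_{(p)}[p^{-1}]=\Rat$.) The content of the cited proposition is precisely that an $A_{(p)}$-module is recovered as the pullback of its $p$-completion and its localization away from $p$ over their common further localization, which is exactly what is asserted here.

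Next I would deduce the statement for $\mathrm{CRing}$. As in the last paragraph of the proof of Lemma~\ref{lem:gluing_p_pinv}, the forgetful functor $\mathrm{CRing}_{B/}\to\Mod{B}$ is conservative and preserves limits and base change for every $B\in\mathrm{CRing}$. Therefore the square of $\infty$-categories of animated commutative rings obtained by applying $\mathrm{CRing}_{(-)/}$ to the fracture square of base rings is again a pullback: an object of the right-hand side is a triple of animated rings over $\widehat{A}_p$, over $A\otimes_\Int\Rat$, and compatibility data over $\widehat{A}_p\otimes_\Int\Rat$, and gluing the underlying modules via the already-established module-level equivalence produces an $A_{(p)}$-module which, because the structure maps and multiplications glue compatibly over the fracture square (limits of animated rings being computed on underlying modules), canonically inherits the structure of an animated commutative $A_{(p)}$-algebra. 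Conservativity of the forgetful functor then upgrades the module-level equivalence to the ring-level one, and fully-faithfulness follows likewise since mapping spaces in $\mathrm{CRing}_{B/}$ are computed as limits over $\Map_{\Mod{B}}$ against the cosimplicial bar resolution.

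The only genuinely delicate point is the verification of the flatness hypothesis needed to apply~\cite[Prop. 5.6]{Bhatt2016-ky}: one needs that $\widehat{A}_p$ is $p$-complete, that $A_{(p)}/{}^{\mathbb{L}}p \simeq \widehat{A}_p/{}^{\mathbb{L}}p$, and that inverting $p$ on $\widehat{A}_p$ yields a flat $A_{(p)}[p^{-1}]=A_\Rat$-algebra in the appropriate sense; these are standard facts about derived completions recalled in Appendix~\ref{sec:completions}, but I would spell out the identification $\widehat{A}_p\otimes_{A_{(p)}}A_\Rat \simeq \widehat{A}_p\otimes_\Int\Rat$ carefully since the display in the proposition is written with the latter. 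Everything else is formal: filtered-colimit and finite-presentation issues do not arise here, as we are not passing through representability, only through descent of quasi-coherent sheaves along the fracture square. I expect this step to take no more than a few lines once the conventions on $\widehat{(\cdot)}_p$ from Appendix~\ref{sec:completions} are in hand.
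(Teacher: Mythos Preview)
Your approach is essentially identical to the paper's: invoke~\cite[Prop.~5.6]{Bhatt2016-ky} for the module statement and then deduce the $\mathrm{CRing}$ case by the conservativity/limit-preservation argument from Lemma~\ref{lem:gluing_p_pinv}. The paper additionally cites~\cite[Lemma~5.12(2)]{Bhatt2016-ky} to pin down the hypotheses, and one small point: the ``flatness hypothesis'' you worry about is a red herring in the derived setting---Bhatt's derived Beauville--Laszlo requires only that $\widehat{A}_p$ be the derived $p$-completion of $A_{(p)}$ (equivalently, that $A_{(p)}/{}^{\mathbb{L}}p\simeq \widehat{A}_p/{}^{\mathbb{L}}p$), which is automatic.
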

\begin{proof}
Just as in Lemma~\ref{lem:gluing_p_pinv}, it suffices to show that the first functor is an equivalence. This follows from (the arguments in)~\cite[Prop. 5.6, Lemma 5.12(2)]{Bhatt2016-ky} applied with $X = \Spec \widehat{A}_p$, $Y = \Spec A_{(p)}$, $Z = \Spec A/{}^{\mathbb{L}}p$ and $\pi:X\to Y$ the natural map.
\end{proof}

\begin{proof}
[Proof of Theorem~\ref{thm:deform_end}]
Consider the natural map
\[
u:\omega_Y\otimes_R(\tau^{\ge 1}R\Gamma(X,\Reg{X}))^\vee[-1]\to \omega_Y\otimes_RH^1(X,\Reg{X})^\vee\simeq \omega_Y\otimes_R\omega_{X^\vee}.
\]

With $\tilde{\mathbb{H}} = \widetilde{\mathbb{H}}(X,Y)$, the theorem, restated in the terminology introduced above, amounts to asking for a $u$-marked scheme over $\tilde{\mathbb{H}}$ with the identity as its underlying classical truncation. The existence of such an object over $R_{\Rat}$ and over $R/{}^{\mathbb{L}}p^n$ for any prime $p$ and $n\ge 1$ is verified by considering the objects on the right hand side of the purported equivalences in assertions~\eqref{hom:generic} and~\eqref{hom:p-adic} of the theorem, respectively, and using Lemma~\ref{lem:tilde classical same} and Proposition~\ref{prop:hom_stack_tangent_space}. We now conclude using Corollary~\ref{cor:exist_Y}.
\end{proof}

The next result shows that $\mathbb{H}(\_\_,\_\_)$ behaves like a space of homomorphisms in an important way: It respects products in the first variable.
\begin{corollary}
\label{cor:hom_additive}
Suppose that $X'$ is another abelian scheme over $R$. Write $X\times X'$ for the fiber product over $\Spec R$. Then restriction along the maps
\[
X\xrightarrow{(\mathrm{id},0)}X\times X'\;;\; X'\xrightarrow{(0,\mathrm{id})}X\times X'
\]
produces a canonical equivalence
\[
\mathbb{H}(X\times X',Y) \xrightarrow{\simeq}\mathbb{H}(X,Y)\times\mathbb{H}(X',Y).
\]
\end{corollary}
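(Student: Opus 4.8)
The statement is an additivity property of $\mathbb{H}(-,Y)$ in the first variable, and the natural strategy is to reduce it to the analogous—and essentially formal—additivity of $\widetilde{\mathbb{H}}(-,Y)$, then transport it across the closed immersion $\mathbb{H}(-,Y)\hookrightarrow \widetilde{\mathbb{H}}(-,Y)$ from Theorem~\ref{thm:deform_end} using the uniqueness part of Proposition~\ref{prop:unique_Y}. First I would observe that $\widetilde{\mathbb{H}}(X\times X',Y)\simeq \widetilde{\mathbb{H}}(X,Y)\times_{\Spec R}\widetilde{\mathbb{H}}(X',Y)$: by the very definition of $\widetilde{\mathbb{H}}$ as a connective homotopy kernel of $\Map(-_C,Y_C)\xrightarrow{\circ[0]}Y(C)$, and since mapping out of a coproduct of pointed objects is a fiber product of mapping spaces, $\Map((X\times X')_C,Y_C)$ computed relative to the two zero sections decomposes as the fiber product over $Y(C)$; this is exactly the product formula (Product identity) one expects, specialized to the abelian-group-object setting. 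Note this requires only the abelian group object structure on $Y$, which is all that enters the construction in~\eqref{subsec:H0defn}.

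Next I would promote this to an equivalence of the $u$-marked schemes that cut out $\mathbb{H}$. The relevant marking data $\kappa$ for $\widetilde{\mathbb{H}}(X\times X',Y)$ is, by Proposition~\ref{prop:hom_stack}(2), the complex $\omega_Y\otimes_R(\tau^{\ge1}R\Gamma(X\times X',\Reg{X\times X'}))^\vee$, and the Künneth formula for $R\Gamma(X\times X',\Reg{X\times X'})\simeq R\Gamma(X,\Reg{X})\otimes_R R\Gamma(X',\Reg{X'})$ identifies this with the direct sum of the corresponding complexes for $X$ and $X'$ in degree $\geq 1$ (the cross terms involving both $H^{\geq1}(X)$ and $H^{\geq1}(X')$ contribute to degree $\geq 2$ and hence, after the map $u$ to $\omega_Y\otimes H^1(-)^\v)$, are irrelevant to the quasi-smooth quotient; more precisely the map $u$ for $X\times X'$ is compatible with the direct sum decomposition and lands in $\omega_Y\otimes(\omega_{X^\vee}\oplus\omega_{X^{\prime\vee}})$). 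So the restriction maps along $(\mathrm{id},0)$ and $(0,\mathrm{id})$ produce a map $\mathbb{H}(X\times X',Y)\to \mathbb{H}(X,Y)\times\mathbb{H}(X',Y)$ of abelian group objects in $\mathrm{PStk}_R$, both sides being locally quasi-smooth derived schemes over $R$ with the prescribed cotangent complexes, and both having classical truncation $\underline{\Hom}(X\times X',Y)\simeq \underline{\Hom}(X,Y)\times_R\underline{\Hom}(X',Y)$ (the classical statement, which is standard: a homomorphism out of a product of abelian schemes is a pair of homomorphisms).

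The key step is then to invoke Corollary~\ref{cor:Y_determined_by_points} (or directly part (2) of Proposition~\ref{prop:unique_Y}): both $\mathbb{H}(X\times X',Y)$ and $\mathbb{H}(X,Y)\times\mathbb{H}(X',Y)$ are $u$-marked schemes over $\widetilde{\mathbb{H}}(X\times X',Y)\simeq\widetilde{\mathbb{H}}(X,Y)\times_R\widetilde{\mathbb{H}}(X',Y)$ with the same classical truncation, closed in it, and agreeing on $\Comp$- and finite-field-valued points (where the statement reduces to the classical additivity of $\Hom$ of abelian schemes over a field). Hence they are canonically equivalent in $\mathrm{Ab}(\mathrm{PStk})_{/\widetilde{\mathbb{H}}(X\times X',Y)}$, and in particular in $\mathrm{Ab}(\mathrm{PStk}_R)$, which is the desired equivalence. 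The main obstacle I anticipate is purely bookkeeping: checking that the Künneth decomposition of $R\Gamma(X\times X',\Reg{})$ is compatible with the maps $u$ and with the identifications of cotangent complexes in a way that exhibits $\mathbb{H}(X,Y)\times\mathbb{H}(X',Y)$ as a genuine $u$-marked scheme over the product $\widetilde{\mathbb{H}}\times_R\widetilde{\mathbb{H}}$ (rather than merely having the right cotangent complex abstractly)—i.e., verifying condition (3) in the definition of a $u$-marked scheme at the level of the arrow, not just the objects. Once that compatibility is in place, the rigidity/uniqueness machinery does the rest with no further computation.
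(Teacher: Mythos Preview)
Your overall strategy—exhibit both sides as $u$-marked schemes over a common ambient $\widetilde{\mathbb{H}}$ and invoke the uniqueness in Proposition~\ref{prop:unique_Y}—is the same as the paper's. However, your first paragraph contains a genuine error: the claimed equivalence
\[
\widetilde{\mathbb{H}}(X\times X',Y)\simeq \widetilde{\mathbb{H}}(X,Y)\times_{\Spec R}\widetilde{\mathbb{H}}(X',Y)
\]
is \emph{false}. The product $X\times_R X'$ is not the coproduct of pointed derived schemes (only of abelian group objects), and $\widetilde{\mathbb{H}}$ sees just the pointed structure on the source. Concretely, compare cotangent complexes via Proposition~\ref{prop:hom_stack}(2): by K\"unneth, $\tau^{\ge 1}R\Gamma(X\times X',\Reg{X\times X'})$ contains cross terms such as $H^1(X,\Reg{X})\otimes_R H^1(X',\Reg{X'})[-2]$ that do not appear in $\tau^{\ge 1}R\Gamma(X,\Reg{X})\oplus\tau^{\ge 1}R\Gamma(X',\Reg{X'})$. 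You yourself notice these cross terms in the second paragraph, which is inconsistent with the equivalence asserted in the first.

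The fix is exactly what the paper does: drop the equivalence claim and instead use only the restriction \emph{map}
\[
\widetilde{\mathbb{H}}(X\times X',Y)\longrightarrow \widetilde{\mathbb{H}}(X,Y)\times_{\Spec R}\widetilde{\mathbb{H}}(X',Y)\eqqcolon\tilde{\mathbb{H}}.
\]
Now take as ambient object $\tilde{\mathbb{H}}$, with $\kappa_{\tilde{\mathbb{H}}}=\omega_Y\otimes_R\bigl[(\tau^{\ge 1}R\Gamma(X,\Reg{X}))^\vee\oplus(\tau^{\ge 1}R\Gamma(X',\Reg{X'}))^\vee\bigr]$ and $u$ the direct sum of the two individual maps $u_X$, $u_{X'}$ landing in $\omega_Y\otimes_R(\omega_{X^\vee}\oplus\omega_{X^{\prime,\vee}})$. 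Then $\mathbb{H}(X,Y)\times_{\Spec R}\mathbb{H}(X',Y)$ is visibly $u$-marked over $\tilde{\mathbb{H}}$, and so is the composite $\mathbb{H}(X\times X',Y)\hookrightarrow\widetilde{\mathbb{H}}(X\times X',Y)\to\tilde{\mathbb{H}}$ (the cross terms in the cotangent complex of $\widetilde{\mathbb{H}}(X\times X',Y)$ get killed by $u$, which only sees the $H^1$-piece). Both have classical truncation $\underline{\Hom}(X,Y)\times_R\underline{\Hom}(X',Y)\simeq\underline{\Hom}(X\times X',Y)$, and Proposition~\ref{prop:unique_Y}(2) finishes. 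So your third paragraph is essentially right once you replace ``$\widetilde{\mathbb{H}}(X\times X',Y)\simeq\widetilde{\mathbb{H}}(X,Y)\times_R\widetilde{\mathbb{H}}(X',Y)$'' there by ``$\widetilde{\mathbb{H}}(X,Y)\times_R\widetilde{\mathbb{H}}(X',Y)$''.
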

\begin{proof}
Restriction along the given maps gives an arrow
\[
\widetilde{\mathbb{H}}(X\times X',Y) \to \widetilde{\mathbb{H}}(X,Y)\times\widetilde{\mathbb{H}}(X',Y).
\]
The corollary now amounts to saying that the restriction of this arrow to $\mathbb{H}(X\times X',Y)$ factors through an equivalence
\[
\mathbb{H}(X\times X',Y) \xrightarrow{\simeq}\mathbb{H}(X,Y)\times\mathbb{H}(X',Y).
\] 

Consider the natural map
\[
u:\omega_Y\otimes_R\left[(\tau^{\ge 1}R\Gamma(X,\Reg{X}))^\vee\oplus(\tau^{\ge 1}R\Gamma(X',\Reg{X'}))^\vee\right]\to \omega_Y\otimes_R(\omega_{X^\vee}\oplus\omega_{X^{',\vee}}).
\]
Then the desired result follows by noting that both $\mathbb{H}(X\times X',Y)$ and $\mathbb{H}(X,Y)\times\mathbb{H}(X',Y)$ are exhibited over $\tilde{\mathbb{H}} = \widetilde{\mathbb{H}}(X,Y)\times \widetilde{\mathbb{H}}(X',Y)$ as $u$-marked schemes of $\tilde{\mathbb{H}}$ with underlying classical truncations both isomorphic to the identity.
\end{proof}

\begin{remark}
\label{rem:hom_dual}
An analogous construction allows for the definition of a derived scheme $\mathbb{H}(X,B\Gm)$ with underlying classical truncation $X^\vee$, the dual abelian scheme, with cotangent complex the pullback of $\omega_{X^\vee}$. This is of course just the smooth $R$-scheme $X^\vee$, now viewed as a derived scheme over $R$. 

This mode of construction of $X^\vee$ makes evident the double-duality isomorphism $X\xrightarrow{\simeq}\mathbb{H}(X^\vee,B\Gm)$ and also provides a canonical equivalence $\mathbb{H}(X,Y)\xrightarrow{\simeq}\mathbb{H}(Y^\vee,X^\vee)$ of derived schemes over $R$.
\end{remark}

\section{Derived special cycles}\label{sec:shimura}

In this section, I prove Theorem~\ref{introthm:W_scheme}, following the strategy sketched in the introduction. The main result is Theorem~\ref{thm:W_representability}.

\subsection{}
Let $(G,X)$ be a Shimura datum with reflex field $E$ with a character $\nu:G\to \Gm$. For any $x\in X$, we have the Deligne cocharacter $h_x:\mathbb{S}\to G_{\Real}$ and the Shimura cocharacter
\[
\mu_x: \Gmh{\Comp}\xrightarrow{z\mapsto (z,1)}\Gmh{\Comp}\times \Gmh{\Comp}\simeq \mathbb{S}_{\Comp}\xrightarrow{h_x}G_{\Comp}.
\]

A \defnword{representation of Siegel type} for $(G,X)$ is an algebraic representation $H$ for $G$ that can be equipped with a symplectic form $\psi$ such that the action of $G$ on $H$ is via a map $G\to \GSp(H,\psi)$ such that the similitude character of $\GSp(H,\psi)$ restricts to $\nu$ on $G$ and such that it extends to a map of Shimura data
\[
(G,X)\to (\GSp(H,\psi),S^{\pm}(H,\psi)),
\]
where on the right hand side we are considering the Siegel Shimura datum associated with $(H,\psi)$. We will call $\psi$ a $G$-\defnword{polarization} of $H$.

Recall that $(G,X)$ is of Hodge type if it admits a faithful representation of Siegel type. We will only be concerned with such Shimura data from here on.\footnote{Everything that is said here applies also to Shimura data of abelian type by allowing representations of Siegel type for central covers of $G$, but the arguments get even more technical, so I've stuck with the Hodge type case here.} 

Fix a compact open subgroup $K\subset G(\Adele_f)$. By~\cites{kisin:abelian,Kim2016-fb}, the Shimura stack $\Sh_K(G,X)$ over $E$ with $\Comp$-points
\[
\Sh_K(G,X)(\Comp) = G(\Rat)\backslash (X\times G(\Adele_f)/K) 
\]
admits a smooth integral canonical model $\Ss_K$ over $\Reg{E}[N_K^{-1}]$, where, the primes $p\nmid N_K$ are precisely those such that the $p$-primary part $K_p\subset G(\Rat_p)$ of $K$ is obtained as the group of $\Int_p$-points of a reductive model for $G$ over $\Int_p$.

\subsection{}
\label{subsec:automorphic_sheaves}
The integral model $\Ss_K$ carries various `automorphic' sheaves functorially associated with a pair $(W,W_{\Int})$, where $W$ is a finite dimensional $\Rat$-vector space equipped with an algebraic representation of $G$, and $W_{\Int}\subset W$ is a $K$-\defnword{stable} lattice, so that $W_{\Int}(\widehat{\Int})\subset W(\Adele_f)$ is stabilized by $K$. What we need is summarized in~\cite{Lovering2017-me}, for instance.

First, we have the \defnword{de Rham realization} $\Fil^\bullet_{\mathrm{Hdg}}\mathbf{dR}(W_{\Int})$, which is a filtered vector bundle over $\Ss_K$ with an integrable connection satisfying Griffiths transversality. The restriction of this object over $\Sh_K$ depends only on $W$, and so we will denote it by $\Fil^\bullet_{\mathrm{Hdg}}\mathbf{dR}(W)$. By Corollary~\ref{cor:crystals_explicit_classical_inf}, the above filtered vector bundle with connection gives an object in $\mathrm{TFilInfCrys}_{\Ss_K/\Rat}$, which we will conflate with $\Fil^\bullet_{\mathrm{Hdg}}\mathbf{dR}(W)$ without further comment. In other words, we have a canonical functor
\begin{align}
\label{eqn:tfilinfcrys_funct}
\Rep(G)&\to \mathrm{TFilInfCrys}_{\Sh_K/\Rat}\\
W&\mapsto \Fil^\bullet_{\mathrm{Hdg}}\mathbf{dR}(W).\nonumber
\end{align}

Over the complex analytic fiber, $\Fil^\bullet_{\mathrm{Hdg}}\mathbf{dR}(W)_{\Comp}$ underlies a variation of $\Int$-Hodge structures, which we denote by $\mathbf{HS}(W_{\Int})$. Note that for any point $z = [(x,g)]\in \Sh_K(\Comp)$, the underlying $\Int$-module of the fiber $\mathbf{HS}_z(W_{\Int})$ is identified with $W\cap g\cdot W_{\Int}(\widehat{\Int})$, and the fiber $\Fil^\bullet_{\mathrm{Hdg}}\mathbf{dR}_z(W)$ is identified with $W(\Comp)$ equipped with the filtration split by $\mu_x^{-1}$. We will write $\mathbf{HS}^{(p,q)}_z(W_{\Int})$ for the space of $(p.q)$-tensors in the $\Int$-module underlying $\mathbf{H}_z(W_{\Int})$.

For every prime $\ell$, the $\Int$-local system underlying $\mathbf{HS}(W_{\Int})$ gives rise to a $\Int_{\ell}$-local system, which descends to a $\Int_{\ell}$-local system $\mathbf{Et}_{\ell}(W_{\Int})$ over $\Sh_K$: this can be described explicitly at any geometric point $z$ of the base as the local system attached to the representation
\[
\pi_1(\Sh_K,z)\to K_{\ell}\to \GL(W_{\Int})(\Int_\ell),
\]
where $K_{\ell}\subset G(\Rat_{\ell})$ is the image of $K$, and the first map classifies the pro-covering
\[
\Sh_{K,\ell} = \varprojlim_{K'\subset K}\Sh_{K'}\to \Sh_K
\]
where $K'$ ranges over the finite index subgroups of $K$ whose image in $G(\Adele_f^{\ell})$ agrees with that of $K$. In fact, the pro-covering and hence the local system extend canonically over $\Ss_K[\ell^{-1}]$, and we will denote this extension by the same symbol.

\subsection{}\label{subsec:realization_props}
For every prime $p$, the restriction of $\mathbf{dR}(W_{\Int})$ over the $p$-completion $\Ss^{\form}_{K,p}$ underlies a crystal $\mathbf{Crys}_p(W_{\Int})$ over the absolute crystalline site of $\Ss^{\form}_{K,p}$, which, via Corollary~\ref{cor:crystals_explicit_classical} gives an object in $\mathrm{TFilCrys}_{\Ss^{\form}_{K,p}/\Int_p}$. As we did over the generic fiber, we will conflate $\mathbf{Crys}_p(W_{\Int})$ with this object. Let $\mathrm{Latt}_{G,K}$ be the category of pairs $(W,W_{\Int})$ where $W\in \Rep(G)$ and $W_{\Int}\subset W$ is a $K$-stable lattice. Then we have a functor
\begin{align}
\label{eqn:tfilcrys_funct}
\mathrm{Latt}_{G,K}&\to \mathrm{TFilCrys}_{\Ss^{\form}_{K,p}/\Int_p}\\
(W,W_{\Int})&\mapsto \mathbf{Crys}_p(W_{\Int})\nonumber.
\end{align}

These realizations have various useful properties:

\begin{enumerate}
	\item\label{real:abelian} (Realizations of abelian schemes)
      If we have $(H,H_{\Int})$ in $\mathrm{Latt}_{G,K}$ with $H$ of Siegel type, then there exists an abelian scheme $\mathcal{A}_H\to \Ss_K$ essentially characterized by the fact that its de Rham \emph{homology} is identified with $\Fil^\bullet_{\mathrm{Hdg}}\mathbf{dR}(H_{\Int})$, and the associated variation of Hodge structures over $\Sh_K(\Comp)$ is $\mathbf{HS}(H_{\Int})$. Its $\ell$-adic Tate module over $\Ss_K[\ell^{-1}]$ is identified with $\mathbf{Et}_\ell(H_{\Int})$. Moreover, $\mathbf{Crys}_p(H_{\Int})$ is simply the dual of the first relative crystalline cohomology sheaf of $\mathcal{A}_H$ over $\Ss^{\form}_{K,p}$. In particular, by Lemma~\ref{lem:ab_coh_split}, it lifts canonically to an object in $\mathrm{FilFCrys}^{\ge 0}_{\Ss^{\form}_{K,p}}$, which we denote by $\mathbf{FFCrys}_p(H_{\Int})$.

   \item\label{real:berthelot-ogus} (Berthelot-Ogus type isomorphism)
   Suppose that $\mathcal{O}$ is a complete DVR of mixed characteristic $(0,p)$ and residue field $k$. For $z\in \Ss_K(\mathcal{O})$, let $z_0\in \Ss_K(k)$ be the special fiber. Then, for any $(W,W_{\Int})$ in $\mathrm{Latt}_{G,K}$, there is a canonical isomorphism
   \begin{align}
    \label{eqn:de rham cris}
    \mathcal{O}[p^{-1}]\otimes_{W(k)}\mathbf{Crys}_{p,z_0}(W_{\Int})\xrightarrow{\simeq}\mathbf{dR}_z(W_{\Int})[p^{-1}]
    \end{align}
      
   \item\label{real:integral_p-adic} (Integral $p$-adic comparison)
   With the notation as above, fix an algebraic closure of $\mathcal{O}[p^{-1}]$ with associated absolute Galois group $\Gamma$, and view $\mathbf{Et}_{p,z}(W_{\Int})$ as a continuous representation of $\Gamma$. Then $\mathbf{Et}_{p,z}(W_{\Int})$ is a lattice in a crystalline Galois representation of $\Gamma$. 

   We will now need some integral $p$-adic Hodge theory from~\cite[(1.3)]{kisin:abelian}. Fix a uniformizer $\pi\in \mathcal{O}$, and let $\mathcal{E}(u)\in W(k)[u]$ be the associated Eisenstein polynoimial. Let $\mathrm{Mod}^\varphi_{/W(k)\pow{u}}$ be the category of finite free $W(k)\pow{u}$-modules $\mathfrak{M}$ equipped with an isomorphism $\varphi^* \mathfrak{M}[\mathcal{E}(u)^{-1}]\xrightarrow{\simeq}\mathfrak{M}[\mathcal{E}(u)^{-1}]$. 

   Let $\Rep^{\mathrm{cris}\circ}_{\Gamma}$ be the category of $\Gamma$-stable lattices in crystalline Galois representations of $\Gamma$. Then there is a fully faithful tensor functor $\mathfrak{M}:\Rep^{\mathrm{cris}\circ}_{\Gamma}\to \mathrm{Mod}^\varphi_{/W(k)\pow{u}}$.

   Now, we have a canonical isomorphism of $W(k)$-modules
   \[
    W(k)\otimes_{u\mapsto 0,W(k)\pow{u}}\varphi^*\mathfrak{M}(\mathbf{Et}_{p,z}(W_{\Int}))\xrightarrow{\simeq}\mathbf{Crys}_{p,z_0}(W_{\Int})
   \]
   characterized by the fact that when $(W,W_{\Int}) = (H,H_{\Int})$ is of Siegel type, it is the dual of the isomorphism arising in~\cite[Theorem (1.1.6)]{Kisin2017-qa}.
\end{enumerate}

Consider the following assumption on $W\in \Rep(G)$:
\begin{assumption}
\label{assump:siegel_type}
There exist representations $H_1$ and $H_2$ of Siegel type such that $W$ embeds into $\Hom(H_1,H_2)$ as a representation of $G$.
\end{assumption}

\begin{lemma}
\label{lem:rep_0_props}
The subcategory $\Rep(G)^0_{\mu}\subset\Rep(G)$ of representations satisfying the above assumption has the following properties:
\begin{enumerate}
	\item It contains the trivial $1$-dimensional representation of $G$.

	\item It is closed under formation of subquotients, duals and direct sums.

	\item For every $W\in \Rep(G)^0_\mu$, we have
     \[
	\gr^i_{\mathrm{Hdg}}\mathbf{dR}(W) \neq 0 \Rightarrow\text{ $i=-1,0,1$}.
	\]
\end{enumerate}
\end{lemma}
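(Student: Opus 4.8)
The strategy is to verify the three assertions in order, reducing (2) and (3) to the well-understood case of Siegel-type representations via the embedding into $\Hom(H_1,H_2)$.

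For assertion (1): the trivial $1$-dimensional representation $\mathbf{1}$ embeds $G$-equivariantly into $\Hom(H,H)$ for \emph{any} faithful representation $H$ of Siegel type (which exists since $(G,X)$ is of Hodge type), realized as the span of the identity endomorphism; since $\mathbf{1} \subset \Hom(H,H)$ is $G$-stable and $H$ is of Siegel type, Assumption~\ref{assump:siegel_type} is satisfied with $H_1 = H_2 = H$. First I would record this.

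For assertion (2): the key observation is that $\Hom(H_1,H_2) \simeq \dual{H_1}\otimes H_2$ and that the class of representations of the form $\Hom(H_1,H_2)$ with $H_i$ of Siegel type is closed under the relevant operations \emph{after passing to subquotients and direct sums}. Concretely: if $W \hookrightarrow \Hom(H_1,H_2)$ and $W' \hookrightarrow \Hom(H_1',H_2')$, then $W\oplus W' \hookrightarrow \Hom(H_1,H_2)\oplus \Hom(H_1',H_2') \hookrightarrow \Hom(H_1\oplus H_1', H_2\oplus H_2')$, and direct sums of Siegel-type representations are again of Siegel type (take the orthogonal direct sum of the symplectic forms, and the Shimura datum map factors through the product). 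Closure under subquotients is immediate from the definition since if $W$ embeds $G$-equivariantly into $\Hom(H_1,H_2)$ then so does any $G$-subrepresentation, and for a quotient $W \twoheadrightarrow \bar W$ one uses complete reducibility of $G$-representations in characteristic zero to split the surjection and embed $\bar W$ as a subrepresentation of $W$, hence of $\Hom(H_1,H_2)$. Closure under duals: $\dual{\Hom(H_1,H_2)} \simeq \Hom(H_2,H_1)$ as $G$-representations (using that $H_i$ carries a $G$-polarization, so $\dual{H_i}$ is a twist of $H_i$ by the similitude character, which is itself of Siegel type — or more simply, $\dual{H_i}$ is again of Siegel type, being isomorphic to $H_i(\nu)$ with a suitably scaled symplectic form), so $\dual{W} \hookrightarrow \dual{\Hom(H_1,H_2)} \simeq \Hom(H_2,H_1)$. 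I would spell out the mild bookkeeping that $\dual H$ is of Siegel type when $H$ is, which is the only slightly delicate point here.

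For assertion (3): by assertion (2) it suffices to check the Hodge weight bound for $W = \Hom(H_1,H_2)$, and then use that the de Rham realization functor~\eqref{eqn:tfilinfcrys_funct} is compatible with tensor operations, so that $\Fil^\bullet_{\mathrm{Hdg}}\mathbf{dR}(\Hom(H_1,H_2)) \simeq \underline{\Hom}(\Fil^\bullet_{\mathrm{Hdg}}\mathbf{dR}(H_1), \Fil^\bullet_{\mathrm{Hdg}}\mathbf{dR}(H_2))$ with the tensor-product filtration. Each $\Fil^\bullet_{\mathrm{Hdg}}\mathbf{dR}(H_i)$ has associated graded concentrated in degrees $0$ and $-1$ (this is precisely the defining property of a representation of Siegel type: the corresponding variation of Hodge structures has weights $(-1,0),(0,-1)$, so after the normalization in use here the Hodge filtration jumps only at $0$ and $1$, i.e. $\gr^0$ and $\gr^{-1}$ — I would match conventions carefully against~\eqref{subsec:automorphic_sheaves}). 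Then $\gr^i$ of the internal Hom is a sum of tensor products $\gr^a(\dual{\mathbf{dR}(H_1)}) \otimes \gr^b(\mathbf{dR}(H_2))$ with $a+b=i$ and $a,b \in \{0,1\}$ for the first factor and $\{0,-1\}$ for the second (dualizing flips the range), giving $i \in \{-1,0,1\}$. The main obstacle — really the only place requiring care — is pinning down the sign/normalization conventions so that "Siegel type" translates to "$\gr^\bullet_{\mathrm{Hdg}}\mathbf{dR}$ supported in degrees $\{-1,0\}$" and the internal Hom of two such is supported in $\{-1,0,1\}$; once the conventions are fixed the computation is immediate. I would close by noting that passing from $\Hom(H_1,H_2)$ to an arbitrary subquotient $W$ can only shrink the support of $\gr^\bullet_{\mathrm{Hdg}}$, so the bound persists.
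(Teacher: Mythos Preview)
Your proposal is correct and follows essentially the same approach as the paper: embed the trivial representation via scalars in $\End(H)$, use semisimplicity for subquotients, embed $\Hom(H_1,H_2)\oplus\Hom(H_1',H_2')$ into $\Hom(H_1\oplus H_1',H_2\oplus H_2')$ for direct sums, and reduce (3) to the case $W=\Hom(H_1,H_2)$. One small cleanup: dualizing the inclusion $W\hookrightarrow \Hom(H_1,H_2)$ gives a \emph{surjection} $\Hom(H_2,H_1)\simeq\Hom(H_1,H_2)^\vee\twoheadrightarrow W^\vee$, not an injection---the paper phrases it this way and then invokes closure under quotients---and the parenthetical about $H_i^\vee$ being of Siegel type is unnecessary, since $\Hom(H_1,H_2)^\vee\simeq\Hom(H_2,H_1)$ is pure linear algebra.
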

\begin{proof}
For any Siegel type representation $H$, the scalars in $\End(H)$ span the trivial representation of $G$: this shows (1).

In (2), closure under sub-quotients is clear from the semi-simplicity of $\Rep(G)$. Now observe that, if $W\subset \Hom(H_1,H_2)$, then $W^\vee$ is a quotient of $\Hom(H_1,H_2)^\vee \simeq \Hom(H_2,H_1)$: this gives the assertion about duals. The statement about direct sums is also easy: If $H'_1,H'_2$ are two other representations of Siegel type, then $\Hom(H_1,H_2)\oplus \Hom(H'_1,H'_2)$ is a subrepresentation of $\Hom(H_1\oplus H'_1,H_2\oplus H'_2)$.

The last assertion is immediate from the fact that it is true for representations of the form $\Hom(H_1,H_2)$.
\end{proof}


As in the introduction, let $\mathrm{Latt}^0_{G,\mu,K}\subset \mathrm{Latt}_{G,K}$ be the subcategory of pairs $(W,W_{\Int})$ where $W$ is in $\Rep(G)^0_\mu$. This is an exact category in a natural way.

\begin{proposition}
\label{prop:cohom realizations}
The restriction of the functor~\eqref{eqn:tfilcrys_funct} to $\mathrm{Latt}^0_{G,\mu,K}$ lifts canonically to a functor
\begin{align*}
\mathrm{Latt}^0_{G,\mu,K}&\to \mathrm{FilFCrys}^{\ge -1}_{\Ss^{\form}_{K,p}/\Int_p}\\
(W,W_{\Int})&\mapsto \mathbf{FFCrys}_p(W_{\Int})
\end{align*}
with the following property: If $(H_1,H_{1,\Int})$ and $(H_2,H_{2,\Int})$ are two pairs with $H_1,H_2$ of Siegel type and if
  \[
    (E,E_{\Int}) = (\Hom(H_1,H_2),\Hom(H_{1,\Int},H_{2,\Int}),
  \]
then $\mathbf{FFCrys}_p(E_{\Int})$ is the internal Hom object between $\mathbf{FFCrys}_p(H_{1,\Int})$ and $\mathbf{FFCrys}_p(H_{2,\Int})$ given by Lemma~\ref{lem:filfcrys_internal_hom}.
 \end{proposition}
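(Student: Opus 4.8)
The strategy is to reduce the construction of the functor $(W,W_{\Int})\mapsto \mathbf{FFCrys}_p(W_{\Int})$ to the already-available cases: the case where $W = \Hom(H_1,H_2)$ for Siegel-type $H_1,H_2$, handled by combining property~\eqref{real:abelian} (which gives $\mathbf{FFCrys}_p(H_{i,\Int})\in\mathrm{FilFCrys}^{\ge 0}_{\Ss^{\form}_{K,p}/\Int_p}$) with the internal-Hom construction of Lemma~\ref{lem:filfcrys_internal_hom}. First I would check that for a pair of Siegel-type lattices, the object $\mathbf{FFCrys}_p(E_{\Int}) \define \mathcal{H}(\mathbf{FFCrys}_p(H_{1,\Int}),\mathbf{FFCrys}_p(H_{2,\Int}))$ produced by Lemma~\ref{lem:filfcrys_internal_hom} genuinely lies in $\mathrm{FilFCrys}^{\ge -1}_{\Ss^{\form}_{K,p}/\Int_p}$ and that its underlying transversally filtered $p$-adic crystal is $\mathbf{Crys}_p(E_{\Int})$, i.e. it is a lift of~\eqref{eqn:tfilcrys_funct} along the forgetful functor. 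The hypotheses of Lemma~\ref{lem:filfcrys_internal_hom} for $\mathcal{N} = \mathbf{FFCrys}_p(H_{1,\Int})$ are exactly that $\Fil^\bullet_{\mathrm{Hdg}}\mathbf{Crys}_p(H_{1,\Int})$ is in $\mathrm{TFilCrys}^{\mathrm{lf}}_{\Ss^{\form}_{K,p}/\Int_p}$ (clear, as it is the crystalline homology of an abelian scheme) and that $\gr^i_{\mathrm{Hdg}}$ of the Hodge filtration on $H_1$ is concentrated in degrees $a = -1, 0$ after a shift; here one takes $a = -1$ because the cotautological/Hodge filtration on the de Rham homology of an abelian scheme lives in degrees $-1, 0$ (or equivalently one normalizes so the internal Hom lands with $\gr^i = 0$ for $i < -1$, consistent with Lemma~\ref{lem:rep_0_props}(3)).

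Next I would extend to an \emph{arbitrary} $W\in\Rep(G)^0_\mu$ via functoriality on the exact category $\mathrm{Latt}^0_{G,\mu,K}$. By Lemma~\ref{lem:rep_0_props}, $\Rep(G)^0_\mu$ is closed under subquotients, duals and direct sums, and every object embeds into some $\Hom(H_1,H_2)$. The plan is: given $(W,W_{\Int})$, choose Siegel-type $H_1,H_2$ with a $G$-equivariant embedding $W\subset \Hom(H_1,H_2)$; after rescaling the symplectic forms/lattices one may arrange $W_{\Int}\subset \Hom(H_{1,\Int},H_{2,\Int}) = E_{\Int}$, though \emph{not} in general as a direct summand. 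One then needs to recognize the sub-object $\mathbf{FFCrys}_p(W_{\Int})\subset \mathbf{FFCrys}_p(E_{\Int})$ intrinsically. Over the generic fiber this is pinned down by the de Rham realization functor~\eqref{eqn:tfilinfcrys_funct} and the fact that $\Fil^\bullet_{\mathrm{Hdg}}\mathbf{dR}(W)$ is a sub-variation of $\Fil^\bullet_{\mathrm{Hdg}}\mathbf{dR}(E)$ cut out functorially; $p$-adically, the crystal $\mathbf{Crys}_p(W_{\Int})$ is already a well-defined object of $\mathrm{TFilCrys}_{\Ss^{\form}_{K,p}/\Int_p}$ via~\eqref{eqn:tfilcrys_funct}, so the content is to upgrade this fixed underlying crystal with its Hodge filtration to a \emph{partially split $F$-zip with partial Frobenius}, i.e. an object of $\mathrm{FilFCrys}^{\ge -1}$. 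One does this by restricting the $F$-zip, conjugate filtration and the two partial Frobenii $\varphi_{-1},\varphi_0$ of $\mathbf{FFCrys}_p(E_{\Int})$ to the sub-crystal $\mathbf{Crys}_p(W_{\Int})$; the key point is that the Hodge filtration on $W$ being the restriction of that on $E$ (a property of the automorphic-sheaf formalism of~\eqref{subsec:automorphic_sheaves}), together with $G$-equivariance, forces the conjugate/horizontal filtration and the Frobenius-linear maps to preserve the sub-object. Functoriality in $(W,W_{\Int})$ then follows because the whole package is defined by restriction from internal Homs of crystalline cohomology of abelian schemes, which is itself functorial by Lemma~\ref{lem:ab_coh_split} and the functoriality of $\mathcal{H}(\_\_,\_\_)$.

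The main obstacle I anticipate is \emph{well-definedness and independence of the auxiliary choices} of $H_1, H_2$ and of the embedding $W\subset \Hom(H_1,H_2)$: a priori $\mathbf{FFCrys}_p(W_{\Int})$ could depend on them. The way around this is to observe that the forgetful functor to $\mathrm{TFilCrys}_{\Ss^{\form}_{K,p}/\Int_p}$ sends all candidate constructions to the same object $\mathbf{Crys}_p(W_{\Int})$ (which depends only on $(W,W_{\Int})$ by~\eqref{eqn:tfilcrys_funct}), and then argue that the extra data — the horizontal filtration, $\Phi$, and $\varphi_{-1},\varphi_{0}$ — are themselves determined by the underlying crystal plus its Hodge filtration once one knows they are compatible with \emph{all} morphisms in $\mathrm{Latt}^0_{G,\mu,K}$, in particular with the evaluation maps coming from the tautological sections that witness $W$ as a summand of $\Hom(H_1,H_2)\otimes\Hom(H_1,H_2)^\vee\otimes\cdots$ after passing to a sufficiently large tensor construction (using semisimplicity of $\Rep(G)$ and Lemma~\ref{lem:rep_0_props}(2)). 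Concretely, two choices yield two lifts of the same object of $\mathrm{TFilCrys}$, and any $G$-equivariant isomorphism between the two ambient $\Hom$-spaces restricting to the identity on $W$ induces, via the internal-Hom functoriality, an isomorphism of the two $\mathrm{FilFCrys}^{\ge -1}$-lifts compatible with everything; a $p$-adic Tate-type rigidity (as in Proposition~\ref{prop:crystals_explicit} for perfect bases, then spread out by the representability already established in Theorem~\ref{thm:FilFCrys_discrete}) shows this isomorphism is unique, giving a canonical, functorial $\mathbf{FFCrys}_p(W_{\Int})$. The stated compatibility with Lemma~\ref{lem:filfcrys_internal_hom} for $(E,E_{\Int}) = (\Hom(H_1,H_2),\Hom(H_{1,\Int},H_{2,\Int}))$ is then true essentially by construction.
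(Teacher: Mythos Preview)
Your approach is genuinely different from the paper's, and it has a real gap.

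The paper's proof is a two-line reduction: it invokes Lovering's theorem \cite[Theorem 3.3.3]{Lovering2017-fy}, which equips $\mathbf{Crys}_p(W_{\Int})$ with a \emph{strongly divisible $F$-crystal} structure directly for every $(W,W_{\Int})\in\mathrm{Latt}^0_{G,\mu,K}$, functorially in the pair. This works because Lovering's construction is carried out via the $G_{\Int_p}$-torsor underlying the integral canonical model (using that any $K_p$-stable lattice comes from an algebraic $G_{\Int_p}$-representation by Zariski density of $G_{\Int_p}(\Int_p)$), so the Frobenius structure is automatically compatible with the tensor functor $W_{\Int}\mapsto\mathbf{Crys}_p(W_{\Int})$. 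One then applies Proposition~\ref{prop:strong_div} to pass from strongly divisible $F$-crystals to $\mathrm{FilFCrys}^{\ge -1}$, and the internal-Hom compatibility for $(E,E_{\Int})$ is immediate because both sides are built tensorially from $\mathbf{FFCrys}_p(H_{i,\Int})$.

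Your bottom-up strategy (construct $\mathbf{FFCrys}_p(E_{\Int})$ via Lemma~\ref{lem:filfcrys_internal_hom}, then restrict to the subcrystal $\mathbf{Crys}_p(W_{\Int})$) founders at exactly the step you flag as routine: ``$G$-equivariance forces the conjugate/horizontal filtration and the Frobenius-linear maps to preserve the sub-object.'' This is not automatic. The functor~\eqref{eqn:tfilcrys_funct} lands in $\mathrm{TFilCrys}$, which carries no Frobenius data, so you cannot use its functoriality to propagate the $F$-zip structure or the partial Frobenii $\varphi_{-1},\varphi_0$ to the sub-object. What you need is that the $G$-equivariant idempotent $e\in\End_G(E)$ projecting onto $W$ has a crystalline realization commuting with Frobenius; but $e$ is not induced by a map of abelian schemes, so this requires precisely the statement that the crystalline $G$-structure is Frobenius-compatible---which is Lovering's theorem. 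Your argument thus either presupposes the result or needs the same external input the paper cites.

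Separately, your independence-of-choices argument invokes Theorem~\ref{thm:FilFCrys_discrete} to rigidify the lift, but that theorem concerns spaces of sections of objects in $\mathrm{FilFCrys}^{\ge -1}$ and is logically downstream of having constructed such objects in the first place; using it here is circular.
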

\begin{proof}
Given Proposition~\ref{prop:strong_div}, this follows from~\cite[Theorem 3.3.3]{Lovering2017-fy}. Note that I'm implicitly using here (and also above in the construction of $\mathbf{dR}(W_{\Int})$) the fact that, if $K_p = G_{\Int_p}(\Int_p)$ for a reductive model $G_{\Int_p}$ of $G_{\Rat_p}$, then any $K_p$-stable lattice of a representation of $G_{\Rat_p}$ underlies an algebraic representation of $G_{\Int_p}$: this is a consequence of the Zariski density of $K_p = G_{\Int_p}(\Int_p)$ in $G_{\Int_p}$.
\end{proof}

For any ($p$-adic formal) stack $X$, write $\mathrm{qSmAb}_{/X}$ for the $\infty$-category of locally (on the source) quasi-smooth and finite unramified morphisms $Y\to X$ lifting to abelian group objects in the $\infty$-category $\mathrm{PStk}_{/X}$: The following corollary is immediate from the results of Sections~\ref{sec:infcrys} and~\ref{sec:fcrys}.
\begin{corollary}
\label{cor:funct_local_spaces}
There are canonical functors
\begin{align*}
\Rep(G)&\to \mathrm{Ab}(\mathrm{PStk}_{/\Sh_K})\\
W&\mapsto \mathcal{Z}_{K,\infty}(W) \overset{\mathrm{defn}}{=}\mathcal{Z}(\Fil^\bullet_{\mathrm{Hdg}}\mathbf{dR}(W));
\end{align*}
\begin{align*}
\mathrm{Latt}^0_{G,\mu,K}&\to \mathrm{qSmAb}_{/\Ss^{\form}_{K,p}}\\
(W,W_{\Int})&\mapsto \mathcal{Z}_{K,crys}(W_{\Int})^{\form}_p\overset{\mathrm{defn}}{=}\mathcal{Z}(\mathbf{FFCrys}_p(W_{\Int}))^{\form}_p,
\end{align*}
where we are using notation from Proposition~\ref{prop:FIC_functorial_props} and Theorem~\ref{thm:FilFCrys_functorial_props}. In particular, we have canonical equivalences
\begin{align*}
\mathbb{L}_{\mathcal{Z}_{K,\infty}(W)/\Sh_K}&\xrightarrow{\simeq}\Reg{\mathcal{Z}_{K,\infty}(W)}\otimes_{\Reg{\Ss_K}}\bm{co}(W_{\Int})[1]\\
\mathbb{L}_{\mathcal{Z}_{K,crys}(W_{\Int})^{\form}_p/\Ss^{\form}_{K,p}}&\xrightarrow{\simeq}\Reg{\mathcal{Z}_{K,crys}(W_{\Int})^{\form}_p}\otimes_{\Reg{\Ss_K}}\bm{co}(W_{\Int})[1],
\end{align*}
where
\[
\bm{co}(W_{\Int}) \overset{\mathrm{defn}}{=}(\gr^{-1}_{\mathrm{Hdg}}\mathbf{dR}(W_{\Int}))^\vee.
\]
\end{corollary}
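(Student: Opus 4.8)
The plan is to produce both functors as composites of the cohomological realization functors (recorded in~\eqref{eqn:tfilinfcrys_funct} and Proposition~\ref{prop:cohom realizations}) with the ``space of sections'' constructions $\mathcal{Z}(-)$ of Sections~\ref{sec:infcrys} and~\ref{sec:fcrys}, and then to read off every asserted property from Propositions~\ref{prop:FIC_functorial_props} and~\ref{prop:cohom realizations} and Theorems~\ref{thm:FilFCrys_functorial_props} and~\ref{thm:FilFCrys_discrete}. Since those results have already carried out the analytic work, what remains is essentially bookkeeping, so I will only indicate how the pieces fit together.

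For the first functor, compose~\eqref{eqn:tfilinfcrys_funct}, $W\mapsto \Fil^\bullet_{\mathrm{Hdg}}\mathbf{dR}(W)\in \mathrm{TFilInfCrys}_{\Sh_K/\Rat}$, with the prestack $\Fil^\bullet_{\mathrm{Hdg}}\mathcal{M}\mapsto \mathcal{Z}(\Fil^\bullet_{\mathrm{Hdg}}\mathcal{M})$, extended from affine bases to the stack $\Sh_K$ by \'etale descent as in Remark~\ref{rem:inf_crys_over_stacks}. As $\mathbf{dR}(W)$ is a filtered vector bundle, $\Fil^\bullet_{\mathrm{Hdg}}\mathbf{dR}(W)$ lies in $\mathrm{TFilInfCrys}^{\mathrm{lf}}_{\Sh_K/\Rat}$, in particular it is perfect; and Lemma~\ref{lem:rep_0_props}(3) shows that for $W\in \Rep(G)^0_\mu$ the Hodge filtration is supported in degrees $\{-1,0,1\}$, so the hypothesis $\gr^i_{\mathrm{Hdg}}\mathcal{M}\simeq 0$ for $i<-1$ of Proposition~\ref{prop:FIC_functorial_props} is met (and the prestack $\mathcal{Z}(\Fil^\bullet_{\mathrm{Hdg}}\mathbf{dR}(W))$ is in any case defined by its formula for every $W$). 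Proposition~\ref{prop:FIC_functorial_props} then yields that $\mathcal{Z}_{K,\infty}(W)$ is an infinitesimally cohesive, nilcomplete \'etale sheaf with cotangent complex $\Reg{\mathcal{Z}_{K,\infty}(W)}\otimes_{\Reg{\Ss_K}}(\gr^{-1}_{\mathrm{Hdg}}\mathbf{dR}(W))^\vee[1]$, and since $\gr^{-1}_{\mathrm{Hdg}}\mathbf{dR}(W_{\Int})$ restricts over $\Sh_K$ to $\gr^{-1}_{\mathrm{Hdg}}\mathbf{dR}(W)$, this is the claimed $\Reg{\mathcal{Z}_{K,\infty}(W)}\otimes_{\Reg{\Ss_K}}\bm{co}(W_{\Int})[1]$. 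Functoriality in $W$ and the lift to $\mathrm{Ab}(\mathrm{PStk}_{/\Sh_K})$ are immediate from the fact that $\mathcal{Z}(\mathcal{M})(C)$ is a mapping object in a stable $\infty$-category, hence canonically a connective complex of abelian groups, functorially in $\mathcal{M}$.

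For the second functor, set $\mathcal{M} = \mathbf{FFCrys}_p(W_{\Int})$ and compose the functor of Proposition~\ref{prop:cohom realizations}, $(W,W_{\Int})\mapsto \mathcal{M}\in \mathrm{FilFCrys}^{\ge -1}_{\Ss^{\form}_{K,p}/\Int_p}$, with $\mathcal{M}\mapsto \mathcal{Z}(\mathcal{M})^{\form}_p$. Its underlying transversally filtered $p$-adic crystal $\mathbf{Crys}_p(W_{\Int})$ lies in $\mathrm{TFilCrys}^{\mathrm{lf}}_{\Ss^{\form}_{K,p}/\Int_p}$, so $\gr^i_{\mathrm{Hdg}}M_R$ is a vector bundle, in particular perfect, for $i=0,-1$; thus Theorem~\ref{thm:FilFCrys_functorial_props} applies and shows that the $\mathcal{Z}(\mathcal{M}_n)^{\form}_p$ and their inverse limit $\mathcal{Z}_{K,crys}(W_{\Int})^{\form}_p$ are infinitesimally cohesive, nilcomplete, integrable \'etale sheaves, which lift functorially to $\mathrm{Ab}$ by the same mapping-object argument. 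Because $\Ss_K$ is smooth over $\Reg{E}[N_K^{-1}]$, \'etale-locally $\Ss^{\form}_{K,p}$ is the formal spectrum of a flat, $p$-completely finitely generated $\Int_p$-algebra $R$, so Theorem~\ref{thm:FilFCrys_discrete} applies over such $R$: part~\eqref{ffc:form_etale} gives $\mathbb{L}_{\mathcal{Z}(\mathcal{M}_n)^{\form}_p/R}\simeq \Reg{\mathcal{Z}(\mathcal{M}_n)^{\form}_p}\otimes_R(\gr^{-1}_{\mathrm{Hdg}}\mathbf{dR}(W_{\Int})/{}^{\mathbb{L}}p^n)[1]$ (using $\gr^{-1}_{\mathrm{Hdg}}M_R = \gr^{-1}_{\mathrm{Hdg}}\mathbf{dR}(W_{\Int})$), part~\eqref{ffc:rep} gives representability of $\mathcal{Z}(\mathcal{M}_n)^{\form}_p$ by a locally quasi-smooth $p$-adic formal derived scheme over $R$ (hence of $\mathcal{Z}_{K,crys}(W_{\Int})^{\form}_p$ by a pro-$p$-adic formal derived scheme), and part~\eqref{ffc:open_closed} shows the structure map to $\Ss^{\form}_{K,p}$ is unramified and locally finite. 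Gluing these affine-local assertions over an \'etale cover of $\Ss^{\form}_{K,p}$ and passing to the limit over $n$ gives $\mathbb{L}_{\mathcal{Z}_{K,crys}(W_{\Int})^{\form}_p/\Ss^{\form}_{K,p}}\simeq \Reg{\mathcal{Z}_{K,crys}(W_{\Int})^{\form}_p}\otimes_{\Reg{\Ss_K}}\bm{co}(W_{\Int})[1]$ and places the functor in $\mathrm{qSmAb}_{/\Ss^{\form}_{K,p}}$.

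The one step deserving care — and the one I would regard as the main point — is the passage to the limit over $n$ in the crystalline case: one must check that the compatible system $\{\mathcal{Z}(\mathcal{M}_n)^{\form}_p\}_n$ of locally quasi-smooth formal derived schemes assembles into an object that is still locally quasi-smooth and locally finite unramified over $\Ss^{\form}_{K,p}$ rather than merely pro-quasi-smooth, and that the several incarnations of the tautological bundle ($\gr^{-1}_{\mathrm{Hdg}}\mathbf{dR}(W)$ over $\Sh_K$, $\gr^{-1}_{\mathrm{Hdg}}\mathbf{dR}(W_{\Int})$ and its reductions modulo $p^n$ over $\Ss^{\form}_{K,p}$, and $\bm{co}(W_{\Int})$) are matched compatibly. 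Both are handled because $\mathbf{Crys}_p(W_{\Int})$ is a crystal of \emph{vector bundles}, so $\mathbb{L}[-1]$ is an honest vector bundle at every finite level and in the limit, together with the uniformity in $n$ built into Theorems~\ref{thm:FilFCrys_functorial_props} and~\ref{thm:FilFCrys_discrete}.
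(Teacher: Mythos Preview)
Your proposal is correct and follows the same approach as the paper: the paper states that the corollary ``is immediate from the results of Sections~\ref{sec:infcrys} and~\ref{sec:fcrys}'' without further argument, and you have faithfully unpacked this by composing the realization functors~\eqref{eqn:tfilinfcrys_funct} and Proposition~\ref{prop:cohom realizations} with the section-space constructions, checking the relevant hypotheses (vector-bundle property, the degree bound via Lemma~\ref{lem:rep_0_props}(3), and flatness/finite-generation of local charts on $\Ss^{\form}_{K,p}$), and invoking Proposition~\ref{prop:FIC_functorial_props} and Theorems~\ref{thm:FilFCrys_functorial_props},~\ref{thm:FilFCrys_discrete}. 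Your closing remark about the limit over $n$ is a reasonable elaboration of a point the paper leaves implicit.
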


\begin{remark}
\label{rem:points_of_spaces_over_fields}
The points of the above prestacks over points $z\in \Ss_K(k)$ with $k$ a perfect field can be described quite explicitly:
\begin{itemize}
\item When $k$ is a field of characteristic $0$, $\Fil^\bullet_{\mathrm{Hdg}}\mathbf{dR}_z(W)$ is a filtered vector space over $k$ equipped with an integrable connection $\nabla$ with respect to $\Omega^1_{k/\Rat}$, and we have
\[
\mathcal{Z}_{K,\infty}(W)(z) \simeq \Fil^0_{\mathrm{Hdg}}\mathbf{dR}_z(W)^{\nabla = 0}.
\]

\item When $k$ is a (perfect) field of characteristic $p$, $\mathbf{FFCrys}_{p,z}(W_{\Int})$ can be evaluated on $W(k)\twoheadrightarrow k$ to give a filtered $F$-crystal over $W(k)$, which we denote once again by the same symbol (see~\eqref{subsec:M_on_perfect}). By Theorem~\ref{thm:FilFCrys_discrete}~\eqref{ffc:perfect}, we have
\[
\mathcal{Z}_{K,crys}(W_{\Int})^{\form}_p(z) \simeq \mathbf{FFCrys}_{p,z}(W_{\Int})^{\varphi_0 = \mathrm{id}}.
\]
\end{itemize}
\end{remark}

The functors above satisfy some useful exactness properties which are immediate from the definitions.
\begin{proposition}
\label{prop:spaces_exactness_properties}
Suppose that we have a short exact sequence 
\[
0\to (W',W'_{\Int})\to (W,W_{\Int})\to (W'',W''_{\Int})\to 0
\]
in $\mathrm{Latt}^0_{G,\mu,K}$. Then we have
\begin{align*}
    \mathcal{Z}_{K,\infty}(W')&\simeq \hker^{\mathrm{cn}}(\mathcal{Z}_{K,\infty}(W)\to \mathcal{Z}_{K,\infty}(W''))\\
	\mathcal{Z}_{K,crys}(W'_{\Int})^{\form}_p&\simeq \hker^{\mathrm{cn}}(\mathcal{Z}_{K,crys}(W_{\Int})^{\form}_p\to \mathcal{Z}_{K,crys}(W''_{\Int})^{\form}_p).
\end{align*}

\end{proposition}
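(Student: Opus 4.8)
The plan is to reduce everything to the fact that the two functors producing the local spaces are built, level by level, from mapping spaces in stable $\infty$-categories (transversally filtered infinitesimal crystals, respectively filtered $F$-crystals), and that these mapping-space functors send (co)fiber sequences in the target variable to (co)fiber sequences of spectra. So first I would recall the definitions: $\mathcal{Z}_{K,\infty}(W) = \mathcal{Z}(\Fil^\bullet_{\mathrm{Hdg}}\mathbf{dR}(W))$ is, by the construction in~\eqref{subsec:infcoh} and the formula appearing in the proof of Proposition~\ref{prop:FIC_functorial_props}, the $\Mod[\mathrm{cn}]{\Rat}$-valued prestack whose value on $(R\xrightarrow{f}C)$ is $\tau^{\le 0}$ of an inverse limit over $n$ of totalizations of mapping spectra $\Map_{\mathrm{TFilInfCrys}^{[n]}_{C/\Rat}}(\Fil^\bullet\mathcal{O}^{[n]}, f^*\Fil^\bullet_{\mathrm{Hdg}}\mathbf{dR}(W)^{[n]})$; similarly $\mathcal{Z}_{K,crys}(W_{\Int})^{\form}_p(C) = \varprojlim_n \Map_{\mathrm{FilFCrys}^{\ge -1}_{C/(\Int/p^n\Int)}}(\mathcal{O}_n, f^*\mathbf{FFCrys}_p(W_{\Int})_n)$.

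Next, the key input is that the assignment $(W,W_{\Int})\mapsto \Fil^\bullet_{\mathrm{Hdg}}\mathbf{dR}(W)$ (resp.\ $\mapsto \mathbf{FFCrys}_p(W_{\Int})$) carries the given short exact sequence in $\mathrm{Latt}^0_{G,\mu,K}$ to a fiber sequence in $\mathrm{TFilInfCrys}_{\Sh_K/\Rat}$ (resp.\ in $\mathrm{FilFCrys}^{\ge -1}_{\Ss^{\form}_{K,p}/\Int_p}$). For the de Rham side this is because the functor~\eqref{eqn:tfilinfcrys_funct} factors through $\Rep(G)$, is exact there (semisimplicity of $\Rep(G)$, used in Lemma~\ref{lem:rep_0_props}), and the de Rham/automorphic-bundle realization is additive and compatible with subquotients; so a short exact sequence of representations gives a short exact sequence of filtered vector bundles with connection, hence a fiber sequence in the stable $\infty$-category of (transversally filtered) infinitesimal crystals. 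For the crystalline side the same reasoning applies to~\eqref{eqn:tfilcrys_funct} and to its lift in Proposition~\ref{prop:cohom realizations}: one must observe that the strongly divisible $F$-crystal structures (hence the $\mathrm{FilFCrys}^{\ge -1}$-enhancements) are also functorial and exact, which follows from the functoriality statement in~\cite[Theorem 3.3.3]{Lovering2017-fy} cited in the proof of Proposition~\ref{prop:cohom realizations}, together with the fact that the short exact sequence has $W''_{\Int}$ torsion-free so the sequence of filtered modules stays short exact after base change. Then, since $\Map(\mathcal{O},-)$ out of the (filtered) structure sheaf is an exact functor of the second variable in each of the stable $\infty$-categories $\mathrm{TFilInfCrys}^{[n]}_{C/\Rat}$ and $\mathrm{FilFCrys}^{\ge -1}_{C/(\Int/p^n\Int)}$, applying it to the fiber sequence $f^*\mathbf{FFCrys}_p(W'_{\Int})\to f^*\mathbf{FFCrys}_p(W_{\Int})\to f^*\mathbf{FFCrys}_p(W''_{\Int})$ yields a fiber sequence of spectra; totalization, the limit over $n$, and finally the connective truncation $\tau^{\le 0}$ (equivalently $\hker^{\mathrm{cn}} = \tau^{\le 0}\hker$) all preserve the relevant fiber sequences, giving $\mathcal{Z}_{K,crys}(W'_{\Int})^{\form}_p \simeq \hker^{\mathrm{cn}}(\mathcal{Z}_{K,crys}(W_{\Int})^{\form}_p\to \mathcal{Z}_{K,crys}(W''_{\Int})^{\form}_p)$, and likewise in the infinitesimal case.

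The main obstacle I anticipate is not the formal diagram chase but the bookkeeping needed to upgrade the \emph{pointwise} fiber sequences of mapping spectra into an honest equivalence of $\Mod[\mathrm{cn}]{\Int}$-valued prestacks compatibly with the connective truncations: one needs the homotopy kernel of connective objects to be computed via $\hker^{\mathrm{cn}}$ rather than $\hker$, and to check that $\mathcal{Z}_{K,crys}(W''_{\Int})^{\form}_p$ and the other terms are genuinely connective so that no negative homotopy leaks in when truncating. This is handled by the discreteness/connectivity results already established—Proposition~\ref{prop:FIC_discrete} and Theorem~\ref{thm:FilFCrys_discrete}~\eqref{ffc:rep} give that the relevant values are discrete on classical rings and $0$-coconnective in general—so that $\tau^{\le 0}$ applied to a fiber sequence of connective-on-the-nose spectra still gives a fiber sequence of connective spectra. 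I would also remark that the hypothesis $W''\in \Rep(G)^0_\mu$ is needed only to keep us inside $\mathrm{Latt}^0_{G,\mu,K}$ (Lemma~\ref{lem:rep_0_props}(2) guarantees this is automatic), and that the Hodge-weight constraint $\gr^i_{\mathrm{Hdg}}\mathbf{dR}(W)=0$ for $i\ne -1,0,1$ ensures all three terms land in categories where the constructions of Sections~\ref{sec:infcrys} and~\ref{sec:fcrys} apply with $a=-1$.
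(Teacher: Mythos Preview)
Your proposal is correct and is essentially a detailed unpacking of what the paper leaves implicit: the paper offers no proof beyond the sentence ``immediate from the definitions,'' and your argument---exactness of the realization functors on short exact sequences of lattices, exactness of $\Map(\mathcal{O},-)$ in the target variable of the relevant stable $\infty$-categories, preservation of fiber sequences under limits and totalizations, followed by $\tau^{\le 0}$---is exactly how one unwinds that claim.

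One minor simplification: the ``obstacle'' you anticipate about $\tau^{\le 0}$ and $\hker^{\mathrm{cn}}$ does not require the discreteness results of Proposition~\ref{prop:FIC_discrete} or Theorem~\ref{thm:FilFCrys_discrete}. Since $\tau^{\le 0}:\Mod{\Int}\to \Mod[\mathrm{cn}]{\Int}$ is a right adjoint, it preserves limits; in particular it carries the fiber sequence $\mathcal{Z}^+(W')\to \mathcal{Z}^+(W)\to \mathcal{Z}^+(W'')$ (before truncation) to a fiber diagram in $\Mod[\mathrm{cn}]{\Int}$, whose fiber is by definition $\hker^{\mathrm{cn}}$. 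So $\tau^{\le 0}\mathcal{Z}^+(W') \simeq \hker^{\mathrm{cn}}(\tau^{\le 0}\mathcal{Z}^+(W)\to \tau^{\le 0}\mathcal{Z}^+(W''))$ is automatic, with no connectivity or discreteness input needed beyond what is built into the definitions.
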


\subsection{}
\label{subsec:homomorphism_spaces}
Let $H_1,H_2$ be a representations of Siegel type, and let $H_{i,\Int}\subset H_i$ be a $K$-stable lattice for $i=1,2$. Let $\mathcal{A}_1,\mathcal{A}_2\to \Ss_K$ be the two abelian schemes over $\Ss_K$ associated with $(H_1,H_{1,\Int})$ and $(H_2,H_{2,\Int})$. 

By Theorem~\ref{thm:deform_end}, we have a locally finite quasi-smooth unramified stack 
\[
\mathbb{H}(\mathcal{A}_1,\mathcal{A}_2)\to \Ss_K
\]
of derived homomorphisms from $\mathcal{A}_1$ to $\mathcal{A}_2$, equipped with formally \'etale maps
\[
\mathbb{H}(\mathcal{A}_1,\mathcal{A}_2)_{\Rat}\to \mathcal{Z}_{K,\infty}(E)\;;\; \mathbb{H}(\mathcal{A}_1,\mathcal{A}_2)^{\form}_p \to \mathcal{Z}_{K,p}(E_{\Int})^{\form}_p.
\]
Note that we can identify the cotangent complex of $\mathbb{H}(\mathcal{A}_1,\mathcal{A}_2)$ with the pullback of the shifted vector bundle $\bm{co}(E_{\Int})[1]$.

We can now state the main result of this section, which will imply Theorem~\ref{introthm:W_scheme}.
\begin{theorem}
\label{thm:W_representability}
There is a canonical functor
\begin{align*}
\mathrm{Latt}^0_{G,\mu,K}&\to \mathrm{qSmAb}_{/\Ss_K}\\
(W,W_{\Int})&\mapsto \mathcal{Z}_K(W_{\Int})
\end{align*}
determined by the following properties:
\begin{enumerate}
\item (Cotangent complex) There is a functorial equivalence
\[
\mathbb{L}_{\mathcal{Z}_K(W_{\Int})/\Ss_K}\xrightarrow{\simeq}\Reg{\mathcal{Z}_K(W_{\Int})}\otimes_{\Reg{\Ss_K}}\bm{co}(W_{\Int})[1].
\]
\item (Local realizations) There exist natural formally \'etale maps
\[
\mathcal{Z}_K(W_{\Int})_{\Rat}\to \mathcal{Z}_{K,\infty}(W)\;;\;\mathcal{Z}_{K}(W_{\Int})^{\form}_p\to \mathcal{Z}_{K,crys}(W_{\Int})^{\form}_p
\]
compatible with the obvious identifications of cotangent complexes.
\item (Complex points) There is a functorial equivalence
\[
\mathcal{Z}_K(W_{\Int})(\Comp)\xrightarrow{\simeq}G(\Rat)\backslash X(W_{\Int})/K,
\]
where
\[
X(W_{\Int}) = \{(w,x,g)\in W(\Adele_f)\times X\times G(\Adele_f):\; w\in gW_{\Int}(\widehat{\Int})\;;h_x(i)\cdot w = w\}.
\]

\item (Mod $p$ points) For any finite field $k$ of characteristic $p$ and any $z\in \Ss_K(k)$, there is a functorial isomorphism
\[
\Int_p\otimes_{\Int}\mathcal{Z}_K(W_{\Int})(z)\xrightarrow{\simeq}\mathcal{Z}_{K,crys}(W_{\Int})^{\form}_p(z).
\]

\item (Homomorphisms) If $(E,E_{\Int})\in \mathrm{Latt}^0_{G,\mu,K}$ is as in~\eqref{subsec:homomorphism_spaces} above, then there is a canonical equivalence
\[
\mathbb{H}(\mathcal{A}_1,\mathcal{A}_2)\xrightarrow{\simeq}\mathcal{Z}_K(E_{\Int})
\]
compatible with the natural identifications of cotangent complexes.
\end{enumerate}
\end{theorem}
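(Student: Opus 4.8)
The construction follows the blueprint of the introduction: one realizes $\mathcal{Z}_K(W_{\Int})$ as a closed derived subscheme of the (non--quasi-smooth) derived mapping scheme $\widetilde{\mathbb{H}}(\mathcal{A}_1,\mathcal{A}_2)$ by integrating a quotient of its cotangent complex, taking as local input the section spaces of the infinitesimal and $F$-crystal realizations of $W_{\Int}$. Fix a $G$-equivariant embedding $W\hookrightarrow E:=\Hom(H_1,H_2)$ with $H_1,H_2$ of Siegel type (Assumption~\ref{assump:siegel_type}) and $K$-stable lattices with $W_{\Int}\subset E_{\Int}:=\Hom(H_{1,\Int},H_{2,\Int})$, giving abelian schemes $\mathcal{A}_1,\mathcal{A}_2\to\Ss_K$ as in \S\ref{subsec:homomorphism_spaces}. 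The theory of automorphic vector bundles supplies a surjection $\omega_{\mathcal{A}_2}\otimes\omega_{\mathcal{A}_1^\vee}\twoheadrightarrow\bm{co}(W_{\Int})$; composing it with the natural map $\mathbb{L}_{\widetilde{\mathbb{H}}(\mathcal{A}_1,\mathcal{A}_2)/\Ss_K}[-1]\to\omega_{\mathcal{A}_2}\otimes\omega_{\mathcal{A}_1^\vee}$ from the proof of Theorem~\ref{thm:deform_end} produces a map $u$ with locally free target. The task then becomes to exhibit a $u$-marked scheme over $\widetilde{\mathbb{H}}(\mathcal{A}_1,\mathcal{A}_2)$, in the sense of \S\ref{subsec:closed_immersions}, with a prescribed classical truncation, and to invoke the uniqueness and Beauville--Laszlo gluing results of that subsection.

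For $(W,W_{\Int})=(E,E_{\Int})$ one sets $\mathcal{Z}_K(E_{\Int}):=\mathbb{H}(\mathcal{A}_1,\mathcal{A}_2)$ of Theorem~\ref{thm:deform_end}, so that property~(5) is a definition, property~(1) is the cotangent complex computed there, and properties~(2)--(4) follow once one identifies the internal-Hom objects $\mathcal{H}_\infty(\mathcal{A}_1,\mathcal{A}_2)$ and $\mathcal{H}_{p,crys}(\mathcal{A}_1,\mathcal{A}_2)$ of \S\ref{subsec:homomorphism_spaces} with the automorphic crystals $\Fil^\bullet_{\mathrm{Hdg}}\mathbf{dR}(E)$ and $\mathbf{FFCrys}_p(E_{\Int})$ of Proposition~\ref{prop:cohom realizations}, i.e.\ that $\mathbb{H}_\infty(\mathcal{A}_1,\mathcal{A}_2)\simeq\mathcal{Z}_{K,\infty}(E)$ and $\mathbb{H}^{\form}_{p,crys}(\mathcal{A}_1,\mathcal{A}_2)\simeq\mathcal{Z}_{K,crys}(E_{\Int})^{\form}_p$; here one uses Proposition~\ref{prop:hom_stack_tangent_space} for the formally étale comparison maps, the Hodge-theoretic identification $\Hom(\mathcal{A}_{1,z},\mathcal{A}_{2,z})\simeq\mathbf{HS}_z(E_{\Int})^{(0,0)}$ over $\Comp$, and Tate's isogeny theorem over finite fields together with its crystalline refinement for property~(4). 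For general $(W,W_{\Int})$ with $W_{\Int}$ a direct summand of $E_{\Int}$ (Assumption~\ref{introassump:lattice}), semisimplicity of $\Rep(G)$ and Proposition~\ref{prop:spaces_exactness_properties} exhibit $\mathcal{Z}_{K,\infty}(W)$ and $\mathcal{Z}_{K,crys}(W_{\Int})^{\form}_p$ as open-and-closed direct factors of $\mathcal{Z}_{K,\infty}(E)$ and $\mathcal{Z}_{K,crys}(E_{\Int})^{\form}_p$; pulling them back along the formally étale maps of Proposition~\ref{prop:hom_stack_tangent_space} gives closed derived subschemes $\mathcal{Z}_K(W_{\Int})_\Rat\hookrightarrow\widetilde{\mathbb{H}}(\mathcal{A}_1,\mathcal{A}_2)_\Rat$ and $\mathcal{Z}_K(W_{\Int})_{\Int/p^n\Int}\hookrightarrow\widetilde{\mathbb{H}}(\mathcal{A}_1,\mathcal{A}_2)_{\Int/p^n\Int}$ with cotangent complex $\bm{co}(W_{\Int})[1]$, whose common classical truncation is the subfunctor $\mathcal{Z}^{\mathrm{cl}}_K(W_{\Int})\subset\underline{\Hom}(\mathcal{A}_1,\mathcal{A}_2)$ of \S\ref{subsec:classical}. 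Crucially, the representability of $\mathcal{Z}^{\mathrm{cl}}_K(W_{\Int})$ by an open-and-closed substack of $\underline{\Hom}(\mathcal{A}_1,\mathcal{A}_2)$---the step where a naive classical argument stumbles over torsion in divided power envelopes---is now automatic: $\mathcal{Z}_{K,crys}(W_{\Int})^{\form}_p$ is a locally quasi-smooth formal scheme whose map to $\widetilde{\mathbb{H}}^{\form}_{p,crys}(\mathcal{A}_1,\mathcal{A}_2)$ is a closed immersion (Theorem~\ref{thm:FilFCrys_discrete}, Lemma~\ref{lem:tilde classical same}), so its formally étale pullback to $\widetilde{\mathbb{H}}(\mathcal{A}_1,\mathcal{A}_2)^{\form}_p$ is a closed formal subscheme with the required classical points, and similarly over $\Rat$. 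Feeding $\mathcal{Z}^{\mathrm{cl}}_K(W_{\Int})$ and these two families of local lifts into Corollary~\ref{cor:exist_Y} (equivalently Proposition~\ref{introprop:gluing}) produces the $u$-marked scheme $\mathcal{Z}_K(W_{\Int})$ over $\widetilde{\mathbb{H}}(\mathcal{A}_1,\mathcal{A}_2)$, hence a locally quasi-smooth, finite unramified abelian group object over $\Ss_K$ with cotangent complex $\bm{co}(W_{\Int})[1]$; properties~(1)--(5) are read off as before, (5) since $\mathcal{Z}_K(E_{\Int})$ and $\mathbb{H}(\mathcal{A}_1,\mathcal{A}_2)$ are $u$-marked schemes over $\widetilde{\mathbb{H}}(\mathcal{A}_1,\mathcal{A}_2)$ with the same classical truncation, hence canonically equivalent by Proposition~\ref{prop:unique_Y}. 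Functoriality in $(W,W_{\Int})$, independence of the auxiliary Siegel data, and the asserted uniqueness (``determined by'') all follow from the same principle in the sharp form of Corollary~\ref{cor:Y_determined_by_points}: a $u$-marked scheme over $\widetilde{\mathbb{H}}(\mathcal{A}_1,\mathcal{A}_2)$ is pinned down by its classical truncation, which in turn is pinned down by properties~(3) and~(4).

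The main obstacle is the removal of Assumption~\ref{introassump:lattice}, i.e.\ allowing an arbitrary $K$-stable lattice $W_{\Int}$. The plan is to work one prime at a time: for each $p\nmid N_K$ one may choose the Siegel lattices so that $W_{\Int}\otimes\Int_{(p)}$ is a direct summand of $E_{\Int}\otimes\Int_{(p)}$ (over a discrete valuation ring a saturated sublattice splits off), run the construction of the preceding paragraph after base change to $\Int_{(p)}$, and then glue the finitely many resulting $p$-local objects to the (choice-independent) generic fibre using the gluing lemmas of \S\ref{subsec:closed_immersions}---Lemma~\ref{lem:gluing_p_pinv} and Proposition~\ref{prop:beauville_laszlo}---together with the uniqueness of $u$-marked schemes to see that the local data agree on overlaps. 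I expect the delicate point to be precisely this patching: maintaining compatible $u$-marked structures across the bad primes while simultaneously matching classical truncations globally. Once that is in place, every assertion of the theorem is obtained by transporting the already-established properties of $\widetilde{\mathbb{H}}(\mathcal{A}_1,\mathcal{A}_2)$, $\mathcal{Z}_{K,\infty}$ and $\mathcal{Z}_{K,crys}$ through the gluing.
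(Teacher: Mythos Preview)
Your outline for the direct-summand case (Step~1 in the paper's language) is essentially correct and matches the paper: one defines $\mathcal{Z}_K(E_{\Int})=\mathbb{H}(\mathcal{A}_1,\mathcal{A}_2)$, forms the local lifts $\mathcal{Z}_K(W'_{\Int})_{\Rat}$ and $\mathcal{Z}_K(W'_{\Int})^{\form}_p$ as fiber products over the crystal section spaces, and glues via Corollary~\ref{cor:exist_Y}. Two small corrections: $\mathcal{Z}_{K,\infty}(W)$ is not a direct factor of $\mathcal{Z}_{K,\infty}(E)$ but rather a connective kernel (Proposition~\ref{prop:spaces_exactness_properties}); and the coincidence of the generic and $p$-adic classical truncations is not automatic---it is the content of Lemma~\ref{lem:W_special}, which invokes the Berthelot--Ogus comparison to propagate the condition ``$W'$-special'' across specialization of mixed characteristic DVRs.

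The real divergence is your plan for removing Assumption~\ref{introassump:lattice}. You propose to adjust the Siegel lattices $H_{i,\Int}$ prime-by-prime so that $W_{\Int}\otimes\Int_{(p)}$ becomes a direct summand of $E_{\Int}\otimes\Int_{(p)}$. This is not generally possible: the constraint $E_{\Int}=\Hom(H_{1,\Int},H_{2,\Int})$ with $H_{i,\Int}$ \emph{$K$-stable} is rigid, and there is no reason an arbitrary $K$-stable $W_{\Int}$ can be made $p$-saturated in a lattice of this shape. The paper takes a completely different route. It first constructs $\mathcal{Z}_K(W'_{\Int})$ for the saturation $W'_{\Int}=W\cap E_{\Int}$ (which \emph{is} a direct summand), and then cuts down to $W_{\Int}$ using the \'etale realizations: Proposition~\ref{prop:W_special_l-adic} shows that $\mathcal{Z}_K(W'_{\Int})[\ell^{-1}]$ maps into $\mathbf{Et}_\ell(W'_{\Int})$, so over $\Int[N^{-1}]$ (with $N=[W'_{\Int}:W_{\Int}]$) one takes the fiber over $0$ of the map to $\prod_{\ell\mid N}\mathbf{Et}_\ell(W'_{\Int}/W_{\Int})$; at primes $p\mid N$ one uses the closed immersion $\mathcal{Z}_{K,crys}(W_{\Int})^{\form}_p\hookrightarrow\mathcal{Z}_{K,crys}(W'_{\Int})^{\form}_p$, and the two are matched on $\Rat_p$-points via the integral $p$-adic comparison of~\eqref{subsec:realization_props}\eqref{real:integral_p-adic}. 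The crucial input you are missing is Proposition~\ref{prop:tate_conjecture}: the isomorphism $\Int_p\otimes\mathcal{Z}^{\mathrm{cl}}_K(W'_{\Int})(z)\xrightarrow{\simeq}\mathcal{Z}_{K,crys}(W'_{\Int})^{\form}_p(z)$ over finite fields is \emph{not} classical Tate---its proof (and that of Proposition~\ref{prop:W_special_l-adic}) requires Kisin's results on the Langlands--Rapoport conjecture, specifically the existence of a $\gamma_0\in G(\Rat)$ governing the Frobenius at $z$ compatibly across all $\ell$ (Lemma~\ref{lem:Igamma0}). This is what both establishes property~(4) for general $W_{\Int}$ and makes the \'etale cut-down well-defined.
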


The proof will be completed after Proposition~\ref{prop:W_special_l-adic} below.

\begin{remark}
\label{rem:canonical_cycle}
For \emph{any} Shimura datum of Hodge type, we have a \emph{canonical} instance of a representation $W$ as above: the adjoint representation on the Lie algebra $\Lie G$! This is a subrepresentation of $\End(H)$ for any representation $H$ of Siegel type. Therefore, we obtain a canonical family of locally quasi-smooth maps $\mathcal{Z}_K(L_{\Int})\to \Ss_K$, depending on choices of $K$-stable lattices $L_{\Int}\subset \Lie G$, of virtual codimension $\dim \Sh_K$. As indicated to me by A. Mihatsch, these appear closely related to the `big fat' CM cycles considered in~\cite{MR4250392} and~\cite{Mihatsch2021-mf}.
\end{remark}	

\subsection{}
We begin by fixing $(W,W_{\Int})$ in $\mathrm{Latt}^0_{G,\mu,K}$, as well as representations $H_1$ and $H_2$ of Siegel type such that $W\subset E = \Hom(H_1,H_2)$. Let $H_{i,\Int}\subset H_i$ for $i=1,2$ be $K$-stable lattices such that $W_{\Int}\subset \Hom(H_{1,\Int},H_{2,\Int})$ and set
\[
W'_{\Int} = W\cap \Hom(H_{1,\Int},H_{2,\Int}).
\]

Let $\mathcal{A}_1,\mathcal{A}_2\to \Ss_K$ be the abelian schemes associated with $H_{1,\Int}$ and $H_{2,\Int}$, respectively. 

Set $\mathcal{Z}_K(E_{\Int}) \overset{\mathrm{defn}}{=} \mathbb{H}(\mathcal{A}_1,\mathcal{A}_2)$. Now define:
\[
\mathcal{Z}_K(W'_{\Int})_{\Rat} \overset{\mathrm{defn}}{=} \mathcal{Z}_K(E_{\Int})_{\Rat}\times_{\mathcal{Z}_{K,\infty}(E)}\mathcal{Z}_{K,\infty}(W')\;;\; \mathcal{Z}_K(W'_{\Int})^{\form}_p \overset{\mathrm{defn}}{=}  \mathcal{Z}_K(E_{\Int})^{\form}_p\times_{\mathcal{Z}_{K,crys}(E_{\Int})^{\form}_p}\mathcal{Z}_{K,crys}(W'_{\Int})^{\form}_p,
\]

\begin{lemma}
\label{lem:Winf_Wp_connected}
We have closed immersions
\begin{align}
\label{eqn:generic fiber W to E}
\mathcal{Z}_K(W'_{\Int})_{\Rat}\hookrightarrow \mathcal{Z}_{K}(E_{\Int})_{\Rat}
\end{align}
in $\mathrm{Ab}(\mathrm{PStk}_{/\Sh_K})$; and
\begin{align}
\label{eqn:pcomp W to E}
\mathcal{Z}_K(W'_{\Int})^{\form}_p \hookrightarrow \mathcal{Z}_K(E_{\Int})^{\form}_p
\end{align}
in $\mathrm{Ab}(\mathrm{PStk}_{/\Ss^{\form}_{K,p}})$, which are both open immersions of the underlying classical (formal) stacks.
\end{lemma}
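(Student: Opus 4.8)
The statement is a "connectedness" claim: the local spaces $\mathcal{Z}_{K,\infty}(W')$ and $\mathcal{Z}_{K,crys}(W'_{\Int})^{\form}_p$ sit inside $\mathcal{Z}_{K,\infty}(E)$ and $\mathcal{Z}_{K,crys}(E_{\Int})^{\form}_p$ as closed immersions that are open on classical truncations, and the asserted maps \eqref{eqn:generic fiber W to E}, \eqref{eqn:pcomp W to E} are obtained by base change from these along the formally \'etale maps out of $\mathcal{Z}_K(E_{\Int})$. So the first task is purely a statement about the local spaces, and the plan is to reduce \eqref{eqn:generic fiber W to E}, \eqref{eqn:pcomp W to E} to it formally.

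First I would treat the local spaces. Choose a $G$-stable complement $W^{\perp}$ to $W$ inside $E$ (available by semisimplicity of $\Rep(G)$, part of Lemma~\ref{lem:rep_0_props}), together with compatible $K$-stable lattices, so that $(W'_{\Int},E_{\Int})$ fits into a short exact sequence $0\to (W'_{\Int}, \ldots)\to (E_{\Int},\ldots)\to (W''_{\Int},\ldots)\to 0$ in $\mathrm{Latt}^0_{G,\mu,K}$ with $W''$ a direct summand. Proposition~\ref{prop:spaces_exactness_properties} then identifies $\mathcal{Z}_{K,\infty}(W')$ (resp.\ $\mathcal{Z}_{K,crys}(W'_{\Int})^{\form}_p$) with the connective homotopy kernel of the projection $\mathcal{Z}_{K,\infty}(E)\to \mathcal{Z}_{K,\infty}(W'')$ (resp.\ the crystalline analogue). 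Since the target $\mathcal{Z}_{K,\infty}(W'')$ (resp.\ $\mathcal{Z}_{K,crys}(W''_{\Int})^{\form}_p$) is itself a locally quasi-smooth space with cotangent complex a shifted vector bundle and is discrete on finitely generated points (Proposition~\ref{prop:FIC_discrete}, Theorem~\ref{thm:FilFCrys_discrete}), the map $\mathcal{Z}_{K,\infty}(W')\to \mathcal{Z}_{K,\infty}(E)$ is the pullback of the zero section of a map to such a space; by part~\eqref{fic:separated} of Proposition~\ref{prop:FIC_discrete} (resp.\ part~\eqref{ffc:open_closed} of Theorem~\ref{thm:FilFCrys_discrete}), such a pullback is a closed immersion inducing an open immersion on classical truncations. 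This gives the desired statement at the level of local spaces. The abelian group object structure on all spaces in sight is respected since all the constructions in Sections~\ref{sec:infcrys}, \ref{sec:fcrys} are functorial in short exact sequences and the kernel of a map of abelian group objects is again one.

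Now I would descend to $\mathcal{Z}_K(W'_{\Int})$. By definition $\mathcal{Z}_K(W'_{\Int})_{\Rat} = \mathcal{Z}_K(E_{\Int})_{\Rat}\times_{\mathcal{Z}_{K,\infty}(E)}\mathcal{Z}_{K,\infty}(W')$ and the map $\mathcal{Z}_K(E_{\Int})_{\Rat}\to \mathcal{Z}_{K,\infty}(E)$ is formally \'etale (Theorem~\ref{thm:deform_end}\eqref{hom:generic} together with Proposition~\ref{prop:hom_stack_tangent_space}). Base change of a closed immersion is a closed immersion, and base change of an open immersion on classical truncations along a map of classical stacks is again an open immersion; since formally \'etale maps induce \'etale (in particular flat, hence compatible with fibre products) maps on classical truncations, the map \eqref{eqn:generic fiber W to E} is a closed immersion that is open on classical truncations. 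The same argument, now using Theorem~\ref{thm:deform_end}\eqref{hom:p-adic} and the formally \'etale map $\mathcal{Z}_K(E_{\Int})^{\form}_p\to \mathcal{Z}_{K,crys}(E_{\Int})^{\form}_p$, gives \eqref{eqn:pcomp W to E}. That these maps live in $\mathrm{Ab}(\mathrm{PStk})$ over the respective bases is automatic, since every map in the defining fibre products is a map of abelian group objects.

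\textbf{Main obstacle.} The one point that needs genuine care, rather than formal nonsense, is the claim that $\mathcal{Z}_K(W'_{\Int})\hookrightarrow \mathcal{Z}_K(E_{\Int})$ is \emph{open} and not merely a closed immersion of classical stacks: one must check that the complementary local conditions (lying in $W'_{\Int}$ versus all of $E_{\Int}$ at the level of realizations) cut out a union of connected components. This is exactly what parts~\eqref{fic:separated} and~\eqref{ffc:open_closed} of Proposition~\ref{prop:FIC_discrete} and Theorem~\ref{thm:FilFCrys_discrete} are designed to deliver—they say the relevant fibre products are open immersions on classical truncations—so the work is really in correctly invoking those results for the direct-summand quotient $W''$ and in checking the compatibility of the two descriptions of $\mathcal{Z}_K(W'_{\Int})$ (via the kernel of $\mathcal{Z}_K(E_{\Int})\to \mathcal{Z}_{K,?}(W'')$ and via the fibre product with $\mathcal{Z}_{K,?}(W')$), which follows from Proposition~\ref{prop:spaces_exactness_properties} and the functoriality of the formally \'etale comparison maps. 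I do not expect any new input beyond assembling these.
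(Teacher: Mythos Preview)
Your proposal is correct and follows essentially the same route as the paper: set $(U,U_{\Int})=(E/W,E_{\Int}/W'_{\Int})$, use Proposition~\ref{prop:spaces_exactness_properties} to identify $\mathcal{Z}_{K,\infty}(W')$ and $\mathcal{Z}_{K,crys}(W'_{\Int})^{\form}_p$ as connective kernels, and then invoke Proposition~\ref{prop:FIC_discrete}\eqref{fic:separated} and Theorem~\ref{thm:FilFCrys_discrete}\eqref{ffc:open_closed} on the fiber over $0$ of the resulting map to $\mathcal{Z}_{K,\infty}(U)$ (resp.\ $\mathcal{Z}_{K,crys}(U_{\Int})^{\form}_p$). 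The only cosmetic difference is that the paper writes the fiber product directly as $\mathcal{Z}_K(E_{\Int})_{\Rat}\times_{\mathcal{Z}_{K,\infty}(U),0}\Sh_K$ rather than passing through $\mathcal{Z}_{K,\infty}(W')$, and notes explicitly that $\mathcal{Z}^{\mathrm{cl}}_K(E_{\Int})$ is locally of finite type over $\Int$ to justify the finite-type hypothesis in those results; your detour through a $G$-stable complement is unnecessary since the quotient $(E/W,E_{\Int}/W'_{\Int})$ already lies in $\mathrm{Latt}^0_{G,\mu,K}$ by Lemma~\ref{lem:rep_0_props}(2).
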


\begin{proof}
Consider the pair
\[
(U,U_{\Int}) = (E/W,E_{\Int}/W'_{\Int}).
\]

Then by Proposition~\ref{prop:spaces_exactness_properties}, we have
\[
\mathcal{Z}_{K,\infty}(W') \simeq \mathrm{hker}^{\mathrm{cn}}\left(\mathcal{Z}_{K,\infty}(E)\to \mathcal{Z}_{K,\infty}(U)\right)\text{ (resp. $\mathcal{Z}_{K,crys}(W'_{\Int})^{\form}_p\simeq \mathrm{hker}^{\mathrm{cn}}\left(\mathcal{Z}_{K,crys}(E_{\Int})^{\form}_p\to \mathcal{Z}_{K}(U_{\Int})^{\form}_{p,crys}\right) $)}
\]

Over the generic fiber, we now have
\[
\mathcal{Z}_K(W'_{\Int})_\Rat = \mathcal{Z}_K(E_{\Int})_{\Rat}\times_{\mathcal{Z}_{K,\infty}(U),0}\Sh_K\hookrightarrow \mathcal{Z}_K(E_{\Int})_{\Rat}.
\]

Since the classical truncation $\mathcal{Z}^{\mathrm{cl}}_K(E_{\Int})$ is locally of finite type over $\Int$, we can invoke (2) of Proposition~\ref{prop:FIC_discrete} to conclude.

For the $p$-adic version, the same argument applies, except that we now have to use assertion (2) of Theorem~\ref{thm:FilFCrys_discrete}.
\end{proof}

\begin{definition}
Suppose that $k$ is a field and that $s\in \mathcal{Z}_K(E_{\Int})(k)$ is a $k$-valued point. We will say that $s$ is \defnword{$W'$-special} if one of the following holds:
\begin{enumerate}
  \item $k$ has characteristic $0$ and $s\in \mathcal{Z}_K(W'_{\Int})_{\Rat}(k)$;
  \item $k$ has characteristic $p$ and $s\in \mathcal{Z}_K(W'_{\Int})^{\form}_p(k)$.
\end{enumerate}
\end{definition}

By definition, this is a geometric condition:
\begin{lemma}
\label{lem:W_special_geometric}
With the notation above, $s$ is $W'$-special if and only if for some (hence any) field extension $L/k$, the induced point of $\mathcal{Z}_K(E_{\Int})(L)$ is $W'$-special.
\end{lemma}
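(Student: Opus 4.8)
The statement asserts that $W'$-specialness of a $k$-valued point is insensitive to field extension. The key observation is that $\mathcal{Z}_K(W'_{\Int})_{\Rat}$ (resp. $\mathcal{Z}_K(W'_{\Int})^{\form}_p$) was constructed in Lemma~\ref{lem:Winf_Wp_connected} as the pullback of the zero section along a map of (formal) prestacks landing in $\mathcal{Z}_{K,\infty}(U)$ (resp. $\mathcal{Z}_{K}(U_{\Int})^{\form}_{p,crys}$), and—more to the point—as an \emph{open and closed} substack of $\mathcal{Z}_K(E_{\Int})_{\Rat}$ (resp. of $\mathcal{Z}_K(E_{\Int})^{\form}_p$) on classical truncations. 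Since a $k$-valued point $s$ of a classical scheme factors through an open-and-closed subscheme if and only if the image of the corresponding point of the underlying topological space lies in the open set cut out by that subscheme, and since this is a condition on a single point of the scheme $\mathcal{Z}_K(E_{\Int})^{\mathrm{cl}}$, it does not change under the faithfully flat base change $\Spec L \to \Spec k$: the point of $|\mathcal{Z}_K(E_{\Int})^{\mathrm{cl}}|$ underlying $s_L$ is the same as that underlying $s$.

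First I would reduce to the classical statement. A $k$-valued point $s$ of $\mathcal{Z}_K(E_{\Int})$ is the same as a $k$-valued point of its classical truncation $\mathcal{Z}_K(E_{\Int})^{\mathrm{cl}}$ (since $k$ is discrete), and similarly $s$ being $W'$-special is, by definition and by Lemma~\ref{lem:Winf_Wp_connected}, the condition that this point of $\mathcal{Z}_K(E_{\Int})^{\mathrm{cl}}$ factors through the open-and-closed subscheme $\mathcal{Z}_K(W'_{\Int})^{\mathrm{cl}}_{\Rat}$ in characteristic $0$, or through $(\mathcal{Z}_K(W'_{\Int})^{\form}_p)^{\mathrm{cl}}$ in characteristic $p$. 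So it suffices to check: if $j\colon Z^\circ \hookrightarrow Z$ is an open immersion of classical schemes (here $Z = \mathcal{Z}_K(E_{\Int})^{\mathrm{cl}}$ over $\Rat$ or $Z = (\mathcal{Z}_K(E_{\Int})^{\form}_p)^{\mathrm{cl}}$ over $\Int_p$), $k$ a field, $L/k$ a field extension, and $s\colon \Spec k \to Z$ a morphism, then $s$ factors through $Z^\circ$ if and only if $s_L\colon \Spec L \to Z$ does. This is standard: $s$ factors through the open $Z^\circ$ iff the image point $z = s(\Spec k) \in Z$ lies in the open subset $|Z^\circ| \subset |Z|$; and $s_L$ has the same image point $z$ because $\Spec L \to \Spec k \to Z$ has image $z$.

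The only mild subtlety is in the characteristic $p$ case, where $\mathcal{Z}_K(W'_{\Int})^{\form}_p$ is a $p$-adic formal (pro-)derived scheme rather than an honest scheme; but its classical truncation restricted to $\Field_p$-algebras (or to $\Int/p^n\Int$-algebras) is an honest classical (formal) scheme by Theorem~\ref{thm:FilFCrys_discrete}~\eqref{ffc:rep}, and the open-immersion assertion of Lemma~\ref{lem:Winf_Wp_connected} holds at each level $\Int/p^n\Int$; since a $k$-point with $k$ of characteristic $p$ is already an $\Int/p\Int$-point, the same topological argument applies verbatim. I do not expect any genuine obstacle here; the statement is essentially a bookkeeping consequence of the representability and open-immersion results already established, recorded for later use. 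If one wishes to avoid even mentioning topological spaces, one can instead argue that $\mathcal{Z}_K(W'_{\Int})$ is cut out inside $\mathcal{Z}_K(E_{\Int})$ by the vanishing of the composite to $\mathcal{Z}_{K,\infty}(U)$ (resp. $\mathcal{Z}_{K}(U_{\Int})^{\form}_{p,crys}$), and that a section of a (formal) prestack valued in $\Mod[\mathrm{cn}]{\Int}$ vanishes over $k$ iff it vanishes over $L$, since $\Map_{\Mod{\Int}}(-, k) \to \Map_{\Mod{\Int}}(-, L)$ is injective on $\pi_0$ for the relevant perfect-complex-valued spaces — but the topological formulation is cleaner.
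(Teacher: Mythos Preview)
Your proposal is correct and follows essentially the same approach as the paper: both reduce the claim to the observation from Lemma~\ref{lem:Winf_Wp_connected} that being $W'$-special amounts to lying in an open and closed substack of the classical truncation, which is a condition on the underlying topological point and hence insensitive to field extension. The paper's proof is a one-sentence version of your argument.
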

\begin{proof}
This is because being $W'$-special is a condition equivalent to belonging to the open and closed substack $\mathcal{Z}^{\mathrm{cl}}_K(W'_{\Int})_{\Rat}\subset \mathcal{Z}^{\mathrm{cl}}_{K}(E)_{\Rat}$ (in the characteristic $0$ case), or the open and closed formal substack $\mathcal{Z}_K^{\mathrm{cl}}(W'_{\Int})^{\form}_p\subset \mathcal{Z}_K^{\mathrm{cl}}(E_{\Int})^{\form}_p$ (in the characteristic $p$ case).
\end{proof}

\begin{lemma}
\label{lem:W_special_complex}
Suppose that $k = \Comp$; then $s\in \mathcal{Z}_K(E_{\Int})(\Comp)$ lying over $x\in \Ss_K(\Comp)$ is $W'$-special precisely when its Betti realization lies in the subspace 
\[
\mathbf{HS}^{(0,0)}_x(W'_{\Int}) \subset \mathbf{HS}^{(0,0)}_x(E_{\Int}) = \Hom(H_1(\mathcal{A}_{1,x},\Int),H_1(\mathcal{A}_{2,x},\Int)).
\]
In particular, we have an isomorphism of complex orbifolds:
\[
G(\Rat)\backslash X(W'_{\Int})/K \xrightarrow{\simeq}\mathcal{Z}_K(W'_{\Int})(\Comp),
\]
where $X(W'_{\Int})$ is as in Theorem~\ref{thm:W_representability}.
\end{lemma}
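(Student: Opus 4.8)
\textbf{Proof strategy for Lemma~\ref{lem:W_special_complex}.}
The plan is to pin down both sides of the asserted isomorphism by identifying the complex points of $\mathcal{Z}_K(W'_{\Int})$ and $\mathcal{Z}_K(E_{\Int})$ with explicit spaces of tensors in the relevant variations of Hodge structure, and then to transport the description of $W'$-specialness across the Betti--de Rham comparison. First I would recall that by Theorem~\ref{thm:deform_end}\eqref{hom:classical}, for a complex point $x\in \Ss_K(\Comp)$ we have $\mathcal{Z}_K(E_{\Int})(x) = \mathbb{H}(\mathcal{A}_1,\mathcal{A}_2)(x) = \Hom(\mathcal{A}_{1,x},\mathcal{A}_{2,x})$, which by the classical analytic theory of abelian varieties is the group of morphisms of $\Int$-Hodge structures $\Hom_{\mathrm{HS}}(H_1(\mathcal{A}_{1,x},\Int),H_1(\mathcal{A}_{2,x},\Int)) = \mathbf{HS}^{(0,0)}_x(E_{\Int})$, using that $\mathbf{HS}(H_{i,\Int})$ is the homology variation associated with $\mathcal{A}_i$ (property~\eqref{real:abelian} of~\ref{subsec:realization_props}). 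So the claim to be proven is that, under this identification, $s\in \mathcal{Z}_K(E_{\Int})(\Comp)$ is $W'$-special if and only if the corresponding Hodge morphism lies in $\mathbf{HS}^{(0,0)}_x(W'_{\Int})$.

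The key step is the characteristic-$0$ half of the definition of $W'$-specialness, namely membership in $\mathcal{Z}_K(W'_{\Int})_{\Rat}(\Comp)$. By construction $\mathcal{Z}_K(W'_{\Int})_{\Rat} = \mathcal{Z}_K(E_{\Int})_{\Rat}\times_{\mathcal{Z}_{K,\infty}(E)}\mathcal{Z}_{K,\infty}(W')$, so $s$ is $W'$-special iff its image under the formally \'etale realization map $\mathcal{Z}_K(E_{\Int})_{\Rat}\to \mathcal{Z}_{K,\infty}(E)$ already lies in the subspace $\mathcal{Z}_{K,\infty}(W')\subset \mathcal{Z}_{K,\infty}(E)$. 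Now I would invoke Remark~\ref{rem:points_of_spaces_over_fields}: for $z = x\in \Ss_K(\Comp)$ (a field of characteristic $0$), $\mathcal{Z}_{K,\infty}(E)(x) \simeq \Fil^0_{\mathrm{Hdg}}\mathbf{dR}_x(E)^{\nabla = 0}$, and since the base is a point over $\Comp$ there is no connection condition; thus $\mathcal{Z}_{K,\infty}(E)(x) = \Fil^0_{\mathrm{Hdg}}\mathbf{dR}_x(E)$, with the analogous statement for $W'$. Under the classical Betti--de Rham comparison $\mathbf{dR}_x(E)\simeq \mathbf{HS}_x(E_{\Int})\otimes_{\Int}\Comp$ together with the description of the Hodge filtration as the one split by $\mu_x^{-1}$ (see~\eqref{subsec:automorphic_sheaves}), an element of $\mathbf{HS}^{(0,0)}_x(E_{\Int})$ — i.e. a weight-$(0,0)$ tensor, equivalently an integral class lying in $\Fil^0$ and fixed by $h_x(i)$ — is exactly a Hodge morphism. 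Hence a homomorphism $s$ of abelian varieties, viewed in $\mathbf{HS}^{(0,0)}_x(E_{\Int})$, lands in $\mathbf{HS}^{(0,0)}_x(W'_{\Int})$ precisely when its de Rham realization lies in $\Fil^0_{\mathrm{Hdg}}\mathbf{dR}_x(W')$, i.e. when it is $W'$-special. The compatibility here is essentially the compatibility of the Hodge-theoretic exact sequence $0\to W'\to E\to E/W'\to 0$ with the one used to define $\mathcal{Z}_{K,\infty}(W')$ in Proposition~\ref{prop:spaces_exactness_properties}, so this reduces to unwinding the definitions.

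With the pointwise characterization in hand, the orbifold statement follows by globalizing: the space $X(W'_{\Int}) = \{(w,x,g): w\in gW'_{\Int}(\widehat{\Int}),\ h_x(i)w = w\}$ parametrizes, over each $[(x,g)]$, exactly the integral tensors in $W\cap gW'_{\Int}(\widehat\Int) = \mathbf{HS}_x(W'_{\Int})$ that are weight $(0,0)$, i.e. the elements of $\mathbf{HS}^{(0,0)}_x(W'_{\Int})$; taking the quotient by $G(\Rat)$ and $K$ reproduces $\mathcal{Z}_K(W'_{\Int})(\Comp)$ as the open-and-closed sub-orbifold of $\mathcal{Z}_K(E_{\Int})(\Comp) = G(\Rat)\backslash X(E_{\Int})/K$ cut out by the $W'$-special condition, which by Lemma~\ref{lem:Winf_Wp_connected} is exactly the image of the open immersion $\mathcal{Z}_K^{\mathrm{cl}}(W'_{\Int})\hookrightarrow \mathcal{Z}_K^{\mathrm{cl}}(E_{\Int})$ on $\Comp$-points. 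The main obstacle is bookkeeping rather than conceptual: one must check that the identification of $\mathcal{Z}_{K,\infty}$-points with $\Fil^0\mathbf{dR}$ from Remark~\ref{rem:points_of_spaces_over_fields} is compatible, as the pair $(W,W_{\Int})$ varies through short exact sequences, with the Betti realization functor and the classical dictionary between homomorphisms of abelian varieties and morphisms of Hodge structures — i.e. that all three descriptions of ``$(0,0)$-tensor'' (integral class in $\Fil^0$ fixed by $h_x(i)$; Hodge morphism; element of $\mathcal{Z}_{K,\infty}(W')(x)$) genuinely coincide, which is where one invokes Proposition~\ref{prop:spaces_exactness_properties} and the construction of $\mathbf{dR}(W_{\Int})$ in~\eqref{subsec:automorphic_sheaves}.
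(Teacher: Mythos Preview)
Your approach is correct and matches the paper's, which simply says the first statement is ``immediate from the definition'' and the second ``is now standard'' (citing \cite[Prop.~2.3.2]{HMP:mod_codim}); you are just making the unwinding explicit. One minor imprecision: the claim that over $\Comp$ ``there is no connection condition'' is not literally true, since $\Omega^1_{\Comp/\Rat}$ is nonzero, so $\mathcal{Z}_{K,\infty}(E)(x)$ need not equal all of $\Fil^0_{\mathrm{Hdg}}\mathbf{dR}_x(E)$. This does not affect your argument, however: what you actually use is only that the image of $s$ in $\mathcal{Z}_{K,\infty}(U)(\Comp)$ (with $U=E/W$) vanishes if and only if the de~Rham realization of $s$ lies in $\mathbf{dR}_x(W')$, and this follows because evaluation on the trivial thickening $(\Comp\xrightarrow{\mathrm{id}}\Comp)$ embeds $\mathcal{Z}_{K,\infty}(U)(\Comp)$ into $\mathbf{dR}_x(U)$.
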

\begin{proof}
The first statement is immediate from the definition and the second is now standard. See for instance the proof of~\cite[Prop. 2.3.2]{HMP:mod_codim}.
\end{proof}

\begin{lemma}
\label{lem:W_special}
Suppose that $X$ is a connected, classical Noetherian scheme with a map $x:X\to \mathcal{Z}_K(E_{\Int})$. Then the following are equivalent:
\begin{enumerate}
  \item For some point $s:\Spec k \to X$ with $k$ a field, $x\circ s\in \mathcal{Z}_K(E_{\Int})(k)$ is $W'$-special.
  \item For \emph{every} point $s:\Spec k\to X$ with $k$ a field, $x\circ s\in \mathcal{Z}_K(E_{\Int})(k)$ is $W'$-special.
\end{enumerate}
\end{lemma}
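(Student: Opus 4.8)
The statement is a standard constancy-along-connected-components argument, built on the fact (from Lemmas~\ref{lem:Winf_Wp_connected} and~\ref{lem:W_special_geometric}) that being $W'$-special cuts out an open \emph{and} closed substack of $\mathcal{Z}_K(E_{\Int})$. Concretely, I would argue as follows. The implication (2)$\Rightarrow$(1) is trivial once $X$ is nonempty, so the content is (1)$\Rightarrow$(2). Let $j\colon \mathcal{Z}_K(W'_{\Int})\hookrightarrow \mathcal{Z}_K(E_{\Int})$ denote the inclusion; by Lemma~\ref{lem:Winf_Wp_connected} its underlying classical substack is open and closed, hence so is its preimage $X_{W'}\subset X$ under the map $x\colon X\to \mathcal{Z}_K(E_{\Int})$ (pull back the open-closed subscheme; openness and closedness are preserved under any morphism of classical schemes). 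Now for a point $s\colon \Spec k\to X$, the composite $x\circ s$ is $W'$-special precisely when $s$ factors through $X_{W'}$: this is exactly the geometric characterization of $W'$-specialness provided by Lemma~\ref{lem:W_special_geometric}, combined with the fact that factoring through an open-closed subscheme can be checked on the residue field of the image point (a point of a scheme lies in an open-closed subscheme iff the subscheme contains that scheme-theoretic point).

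Given this, assumption (1) says $X_{W'}$ is nonempty, and since $X_{W'}$ is open and closed in the connected scheme $X$ we get $X_{W'} = X$. Therefore every point $s\colon \Spec k\to X$ factors through $X_{W'}$, and hence $x\circ s$ is $W'$-special, which is (2). (The Noetherian hypothesis is not strictly needed for this argument but is harmless; it is presumably retained because it is in force for the applications, where $\mathcal{Z}_K^{\mathrm{cl}}(E_{\Int})$ is locally of finite type over $\Int$.)

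The one point requiring a small amount of care — and the only candidate for a genuine obstacle — is the claim that ``$x\circ s$ is $W'$-special iff $s$ factors through $X_{W'}$.'' Unwinding definitions: $x\circ s$ is $W'$-special iff, working over the appropriate localization of $\Int$ according to $\mathrm{char}(k)$, the $k$-point $x\circ s$ lies in the open-closed (formal) substack $\mathcal{Z}_K^{\mathrm{cl}}(W'_{\Int})$. Since $X_{W'}$ is by definition the fiber product of $X$ with that open-closed substack over $\mathcal{Z}_K^{\mathrm{cl}}(E_{\Int})$, this is a formal consequence of the universal property of fiber products together with the fact that for a monomorphism (in particular an open-closed immersion) $Z\hookrightarrow Y$ and a scheme $T$, a $T$-point of $Y$ lifts to $Z$ iff it does so after base change to each residue field — which in turn holds because $Z\hookrightarrow Y$ being open-closed means $Z$ is cut out by an idempotent locally, and an idempotent section of $\mathcal{O}_T$ is determined by its values at all points of $T$. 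In the $p$-adic/formal setting one runs the same argument $n$-truncation by $n$-truncation over $\Int/p^n$, using that $\mathcal{Z}_K^{\mathrm{cl}}(W'_{\Int})^{\form}_p$ is open and closed in $\mathcal{Z}_K^{\mathrm{cl}}(E_{\Int})^{\form}_p$ (Lemma~\ref{lem:Winf_Wp_connected} again); since $X$ is a genuine scheme and each $s$ has residue field of some fixed characteristic, no compatibility across primes is needed. I would present this as a short paragraph invoking Lemmas~\ref{lem:Winf_Wp_connected} and~\ref{lem:W_special_geometric} and the elementary open-closed-subscheme fact, without belaboring the idempotent bookkeeping.
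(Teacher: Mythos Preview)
Your argument has a circularity at its core. You invoke an open-and-closed substack $\mathcal{Z}_K(W'_{\Int})\hookrightarrow \mathcal{Z}_K(E_{\Int})$ over all of $\Ss_K$, but at this point in the paper no such global object exists: Lemma~\ref{lem:Winf_Wp_connected} only furnishes the closed immersions~\eqref{eqn:generic fiber W to E} over $\Sh_K$ and~\eqref{eqn:pcomp W to E} over each $\Ss^{\form}_{K,p}$ \emph{separately}, and shows each is open-and-closed on classical truncations. The global open-and-closed substack $\mathcal{Z}_K^{\mathrm{cl}}(W'_{\Int})$ you want is constructed only \emph{after} this lemma (as the union of connected components meeting one of these local pieces), and Proposition~\ref{prop:classical_W}---which says this global thing restricts correctly at each place---is explicitly deduced \emph{from} Lemma~\ref{lem:W_special}. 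So you cannot pull back a single open-and-closed locus to $X$; that is exactly what the lemma is meant to produce.

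Your claim that ``no compatibility across primes is needed'' misses the real issue: the nontrivial case is a connected $X$ of \emph{mixed} characteristic, e.g.\ $X=\Spec\mathcal{O}$ for a complete DVR with residue characteristic $p$. There the generic point uses the characteristic-$0$ definition of $W'$-special (via $\mathcal{Z}_K(W'_{\Int})_{\Rat}$, i.e.\ the de~Rham realization lying in $\mathbf{dR}(W')$), while the closed point uses the $p$-adic one (via $\mathcal{Z}_K(W'_{\Int})^{\form}_p$, i.e.\ the crystalline realization lying in $\mathbf{Crys}_p(W'_{\Int})$). These are \emph{a priori} unrelated conditions, and the paper bridges them with the Berthelot--Ogus isomorphism~\eqref{eqn:de rham cris}; that comparison is the missing ingredient in your argument. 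Your open-and-closed reasoning does handle the equicharacteristic case (and indeed the paper dispatches that case by citing Lemma~\ref{lem:Winf_Wp_connected}), but the reduction to a DVR and the Berthelot--Ogus step are essential for the mixed-characteristic bridge.
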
 
\begin{proof}
If $s_1,s_2$ are two points of $X$, we need to show that $x\circ s_1$ is $W'$-special if and only if $x\circ s_2$ is so. By considering a connecting sequence of genericizations and specializations, we reduce to the case where $X = \Spec \mathcal{O}$ where $\mathcal{O}$ is a complete DVR and $s_1$ and $s_2$ are its generic and special points, respectively. Using Lemma~\ref{lem:W_special_geometric}, we can also assume that the residue field is perfect.

If $\mathcal{O}$ is of equal characteristic, then we are already done by Lemma~\ref{lem:Winf_Wp_connected}.

Therefore, we have to consider the situation where it is of unequal characteristic $(0,p)$. Note that $x$ corresponds to a point $y\in \Ss_K(\mathcal{O})$ and a homomorphism $f:\mathcal{A}_{1,y}\to \mathcal{A}_{2,y}$. Such a homomorphism has realizations
\[
\bm{f}_{crys,s_2}\in \mathbf{Crys}_{p,s_2}(E_{\Int})\;;\; \bm{f}_{dR,s_1}\in \mathbf{dR}_{s_1}(E)..
\]

Now, using Remark~\ref{rem:points_of_spaces_over_fields}, one finds that $x\circ s_2$ (resp. $x\circ s_1$) is special precisely when we have $\bm{f}_{crys,s_2}\in \mathbf{Crys}_{p,s_2}(W'_{\Int})$ (resp. $\bm{f}_{dR,s_1}\in \mathbf{dR}_{s_1}(W')$). That these conditions are equivalent is a consequence of the functorial isomorphism~\eqref{eqn:de rham cris}.
\end{proof}

Let $\mathcal{Z}^{\mathrm{cl}}_K(W'_{\Int})\subset \mathcal{Z}^{\mathrm{cl}}_K(E_{\Int})$ be the union of connected components meeting either $\mathcal{Z}_K(W'_{\Int})_{\Rat}$ or $\mathcal{Z}_K(W_{\Int})^{\form}_p$ for some prime $p$. As an immediate consequence of Lemmas~\ref{lem:Winf_Wp_connected} and~\ref{lem:W_special}, we obtain:

\begin{proposition}
\label{prop:classical_W}
The generic fiber (resp.  $p$-adic formal completion) of the open and closed immersion $\mathcal{Z}^{\mathrm{cl}}_K(W'_{\Int})\hookrightarrow \mathcal{Z}^{\mathrm{cl}}_K(E_{\Int})$ is the open and closed immersion of classical stacks underlying~\eqref{eqn:generic fiber W to E} (resp. ~\eqref{eqn:pcomp W to E}). In particular, for every discrete ring $C$, we have
\[
\mathcal{Z}^{\mathrm{cl}}_K(W'_{\Int})(C) = \{x\in \mathcal{Z}_K(E_{\Int})(C):\;x\circ s\text{ is $W'$-special for all points $s$ of $\Spec C$}\}
\]
\end{proposition}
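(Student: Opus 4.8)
\textbf{Proof of Proposition~\ref{prop:classical_W}.}
The plan is to bootstrap from the two easy inclusions already in hand---namely that $\mathcal{Z}_K(W'_{\Int})_{\Rat}\hookrightarrow \mathcal{Z}_K(E_{\Int})_{\Rat}$ and $\mathcal{Z}_K(W'_{\Int})^{\form}_p\hookrightarrow \mathcal{Z}_K(E_{\Int})^{\form}_p$ are open and closed immersions of \emph{classical} stacks (Lemma~\ref{lem:Winf_Wp_connected})---to the assertion about the open and closed immersion $\mathcal{Z}^{\mathrm{cl}}_K(W'_{\Int})\hookrightarrow \mathcal{Z}^{\mathrm{cl}}_K(E_{\Int})$ over all of $\Ss_K$. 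First I would observe that, since $\mathcal{Z}^{\mathrm{cl}}_K(E_{\Int}) = \mathbb{H}(\mathcal{A}_1,\mathcal{A}_2)^{\mathrm{cl}} = \underline{\Hom}(\mathcal{A}_1,\mathcal{A}_2)$ is locally of finite type over $\Int$, and in particular locally Noetherian, it is the disjoint union of its connected components, each of which is a classical Noetherian scheme (or stack, if $K$ is not neat; the argument is unchanged). So $\mathcal{Z}^{\mathrm{cl}}_K(W'_{\Int})$, defined as the union of those components that meet $\mathcal{Z}_K(W'_{\Int})_{\Rat}$ or some $\mathcal{Z}_K(W_{\Int})^{\form}_p$, is automatically open and closed in $\mathcal{Z}^{\mathrm{cl}}_K(E_{\Int})$; the content is to identify its generic fiber and $p$-adic completions correctly.

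The crux is Lemma~\ref{lem:W_special}, which says that on a connected classical Noetherian scheme mapping to $\mathcal{Z}_K(E_{\Int})$, being $W'$-special at one point is equivalent to being $W'$-special at every point. I would apply this componentwise: let $T$ be a connected component of $\mathcal{Z}^{\mathrm{cl}}_K(E_{\Int})$. If $T$ meets $\mathcal{Z}_K(W'_{\Int})_{\Rat}$ at some field-valued point $s$, then $x\circ s$ is $W'$-special (by definition of $\mathcal{Z}_K(W'_{\Int})_{\Rat}$ as a fiber product with $\mathcal{Z}_{K,\infty}(W')$ and Remark~\ref{rem:points_of_spaces_over_fields}), so by Lemma~\ref{lem:W_special} every field-valued point of $T$ is $W'$-special; in particular $T_{\Rat}$ is contained in $\mathcal{Z}^{\mathrm{cl}}_K(W'_{\Int})_{\Rat}$ and, for each $p$, $T^{\form}_p$ is contained in $\mathcal{Z}_K(W'_{\Int})^{\form}_p$. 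Conversely, if $T$ is \emph{not} one of the selected components, then by construction it meets none of $\mathcal{Z}_K(W'_{\Int})_{\Rat}$ nor any $\mathcal{Z}_K(W_{\Int})^{\form}_p$, so no field-valued point of $T$ lying over those fibers is $W'$-special. Combining, the generic fiber of $\mathcal{Z}^{\mathrm{cl}}_K(W'_{\Int})\hookrightarrow \mathcal{Z}^{\mathrm{cl}}_K(E_{\Int})$ is precisely the open-and-closed locus of $W'$-special points, which by Lemma~\ref{lem:Winf_Wp_connected} is the classical stack underlying~\eqref{eqn:generic fiber W to E}; similarly for the $p$-adic formal completion and~\eqref{eqn:pcomp W to E}. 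One small point to check here is that $\mathcal{Z}_K(W'_{\Int})_{\Rat}$ and $\mathcal{Z}_K(W'_{\Int})^{\form}_p$ are themselves unions of connected components of the respective fibers of $\mathcal{Z}^{\mathrm{cl}}_K(E_{\Int})$---this is exactly the openness-and-closedness in Lemma~\ref{lem:Winf_Wp_connected}, together with the fact that taking generic fiber or $p$-adic completion of a scheme locally of finite type over $\Int$ does not merge connected components (a component of $T_{\Rat}$ or $T^{\form}_p$ extends to a component of $T$).

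Finally, for the description of $C$-points with $C$ a discrete ring: given $x\in \mathcal{Z}_K(E_{\Int})(C)$, the map $x$ factors through $\mathcal{Z}^{\mathrm{cl}}_K(W'_{\Int})$ if and only if the image of $\Spec C$ lands in the open-and-closed substack $\mathcal{Z}^{\mathrm{cl}}_K(W'_{\Int})$, which (since that substack is a union of components) happens if and only if every point of the image lies in it, i.e.\ if and only if $x\circ s$ is $W'$-special for every point $s$ of $\Spec C$. Here one uses Lemma~\ref{lem:W_special_geometric} to pass between field-valued points of $\Spec C$ and actual (possibly non-closed) scheme-theoretic points, and the fact that membership in an open-and-closed subscheme is detected on the underlying topological space. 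The main obstacle, such as it is, is purely bookkeeping: making sure that ``$W'$-special at every point of $\Spec C$'' is genuinely equivalent to ``$x$ factors through the union of components'' rather than some a priori larger or smaller locus; this is precisely where the connectedness-of-components argument via Lemma~\ref{lem:W_special} does the work, and nothing deeper than that is needed.
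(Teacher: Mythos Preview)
Your proposal is correct and follows essentially the same approach as the paper, which simply states the proposition as an immediate consequence of Lemmas~\ref{lem:Winf_Wp_connected} and~\ref{lem:W_special}; you have spelled out the componentwise argument that the paper leaves implicit.
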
 

\begin{remark}
Lemma~\ref{lem:W_special} and hence the representability of the functor given by the right hand side of the equality above can be established using only classical methods; see the argument used for the equivalence of assertions (2) and (3) in~\cite[Prop. 4.3.4]{Andreatta2018-tt}. Where the derived methods seem to be somewhat essential in this general situation is in establishing that the functor has the `correct' deformation theory as described in (2) of Corollary~\ref{cor:classical_truncation} below: In the context of the cited result from~\cite{Andreatta2018-tt}, this would essentially amount to the equivalence of assertions (1) and (3), which used strong \emph{a priori} knowledge of the deformation theory of $W'$-special points of $\mathcal{Z}_K(E_{\Int})$ in that particular context.

Also, as I noted in the introduction, I don't know how to avoid the results of Section~\ref{sec:fcrys} in the construction of the closed substack $\mathcal{Z}_K^{\mathrm{cl}}(W_{\Int})$, if we cannot choose $H_{1,\Int}$ and $H_{2,\Int}$ so that $W_{\Int} = W'_{\Int}$.
\end{remark}

The next result is a generalization of~\cite[Theorem 6.4(2)]{mp:tatek3}, and can be seen as a sort of Tate's theorem for $W'$-special endomorphisms. The proof is a little different from that in \emph{loc. cit.}, in that it uses some finer information from~\cite{Kisin2017-qa}. I should warn the reader that the ideas involved here are somewhat orthogonal to those germane to the rest of this paper.
\begin{proposition}
\label{prop:tate_conjecture}
Suppose that $k$ is a finite field of characteristic $p$; then for all $z\in \Ss_K(k)$, the natural map
\[
\Int_p\otimes_{\Int}\mathcal{Z}^{\mathrm{cl}}_K(W'_{\Int})(z)\to \mathcal{Z}_{K,crys}(W'_{\Int})^{\form}_p(z)
\]
is an isomorphism.
\end{proposition}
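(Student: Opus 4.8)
The statement is a comparison between the $p$-adic completion of the classical special-endomorphism module and the Frobenius-invariants of the filtered $F$-crystal at a point $z\in\Ss_K(k)$, $k$ finite of characteristic $p$. The plan is to reduce it, as in the proof of Theorem~\ref{thm:W_representability}, to the already-available case $(W,W_{\Int}) = (E,E_{\Int})$ with $E = \Hom(H_1,H_2)$, i.e.\ to the statement that $\Int_p\otimes_{\Int}\Hom(\mathcal{A}_{1,z},\mathcal{A}_{2,z})\to \mathbf{FFCrys}_{p,z}(E_{\Int})^{\varphi_0=\mathrm{id}}$ is an isomorphism. This last assertion is a form of Tate's theorem for homomorphisms of abelian varieties over finite fields, combined with the identification of the crystalline side with homomorphisms of the associated Dieudonn\'e modules (the classical Tate isogeny theorem in the version for $F$-crystals / $p$-divisible groups). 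Given this, the general case follows by intersecting with the open and closed locus cut out by the $W'$-special condition: by Proposition~\ref{prop:classical_W}, $\mathcal{Z}^{\mathrm{cl}}_K(W'_{\Int})(z) = \{f\in\Hom(\mathcal{A}_{1,z},\mathcal{A}_{2,z}) : \bm{f}_{crys,z}\in \mathbf{Crys}_{p,z}(W'_{\Int})\}$, and by the functoriality of the realization functors of Proposition~\ref{prop:cohom realizations} together with Proposition~\ref{prop:spaces_exactness_properties} the right-hand side $\mathcal{Z}_{K,crys}(W'_{\Int})^{\form}_p(z)$ is the intersection of $\mathbf{FFCrys}_{p,z}(E_{\Int})^{\varphi_0=\mathrm{id}}$ with $\mathbf{Crys}_{p,z}(W'_{\Int})$ inside $\mathbf{Crys}_{p,z}(E_{\Int})$, using Remark~\ref{rem:points_of_spaces_over_fields}. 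Since $W'_{\Int}$ is saturated in $E_{\Int}$ (it is $W\cap \Hom(H_{1,\Int},H_{2,\Int})$), taking $\Int_p$-points commutes with the intersection, and the general case drops out of the case $W' = E$.

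\textbf{Key steps, in order.} First, set up the reduction: by Proposition~\ref{prop:classical_W} and the definition of $\mathcal{Z}_K^{\mathrm{cl}}(W'_{\Int})$ as an open and closed substack of $\mathcal{Z}_K^{\mathrm{cl}}(E_{\Int})$, together with the parallel statement on the crystalline side, reduce to showing injectivity and surjectivity of $\Int_p\otimes_{\Int}\Hom(\mathcal{A}_{1,z},\mathcal{A}_{2,z})\to \mathbf{FFCrys}_{p,z}(E_{\Int})^{\varphi_0=\mathrm{id}}$. Second, identify the target: by property~\eqref{real:abelian} of~\eqref{subsec:realization_props}, $\mathbf{FFCrys}_{p,z}(E_{\Int})$ is the internal Hom of the Dieudonn\'e $F$-crystals of the $p$-divisible groups $\mathcal{A}_1[p^\infty]_z$ and $\mathcal{A}_2[p^\infty]_z$, so its $\varphi_0=\mathrm{id}$-part (with the $\Fil^0$ condition built into $\mathbf{FFCrys}$) is the module of homomorphisms of filtered $F$-crystals, equivalently the module $\Hom(\mathcal{A}_1[p^\infty]_z, \mathcal{A}_2[p^\infty]_z)$ of homomorphisms of $p$-divisible groups over the perfect field $k$, by classical Dieudonn\'e theory. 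Third, invoke Tate's theorem over finite fields: $\Int_p\otimes_{\Int}\Hom(\mathcal{A}_{1,z},\mathcal{A}_{2,z})\xrightarrow{\simeq}\Hom(\mathcal{A}_1[p^\infty]_z,\mathcal{A}_2[p^\infty]_z)$ (Tate, Zarhin, for $p$ the residue characteristic, in the $p$-divisible-group formulation). Fourth, check compatibility: the map in the statement, which is induced by the crystalline realization functor on homomorphisms, agrees with the composite of the identification in step two with Tate's isomorphism in step three; this is a matter of unwinding the construction of $\mathbf{FFCrys}_p$ and the contravariant/covariant Dieudonn\'e conventions fixed in~\eqref{real:abelian}. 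Fifth, transfer back: re-intersect with $W'_{\Int}$, using that $W'_{\Int}\subset E_{\Int}$ is a direct summand after inverting no primes at all (it is saturated), so $\Int_p\otimes(-)$ and $(-)\cap W'_{\Int}$ commute, finishing the proof.

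\textbf{Expected main obstacle.} The routine-looking but genuinely delicate point is step four: matching the abstractly-defined map---the one coming from applying the functor $\mathbf{FFCrys}_p$ of Proposition~\ref{prop:cohom realizations} to an actual homomorphism of abelian schemes, then passing to the fiber at $z$ and evaluating on $W(k)\twoheadrightarrow k$---with the concrete Dieudonn\'e-theoretic homomorphism module on which Tate's theorem is phrased. One has to verify that the $\Fil^0$ and $\varphi_0=\mathrm{id}$ conditions imposed by the $\mathrm{FilFCrys}^{\geq 0}$-structure correspond exactly to the conditions characterizing morphisms of $p$-divisible groups inside morphisms of the underlying $F$-crystals, and that no Tate twist or Frobenius-shift discrepancy creeps in; this is where the finer input from~\cite{Kisin2017-qa} alluded to in the statement is used, to pin down the comparison on the nose rather than merely up to isogeny. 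Once the identification of the target is secured, the rest is a direct appeal to known results (Tate, classical Dieudonn\'e theory) plus the bookkeeping with saturated lattices already set up in the section.
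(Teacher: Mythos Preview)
Your reduction to the case $W' = E$ via Tate's theorem, and your identification of the crystalline target with homomorphisms of Dieudonn\'e modules, are both correct and match the opening of the paper's proof. But the main obstacle is not your step four; it is step five, and your argument there has a genuine gap.

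Write $L = \mathcal{Z}_K(E_{\Int})(z) = \Hom(\mathcal{A}_{1,z},\mathcal{A}_{2,z})$, $\hat L = \mathcal{Z}_{K,crys}(E_{\Int})^{\form}_p(z)$, and $\hat M = \mathcal{Z}_{K,crys}(W'_{\Int})^{\form}_p(z)\subset \hat L$. Tate gives $\Int_p\otimes L \simeq \hat L$, and by definition $M = \mathcal{Z}^{\mathrm{cl}}_K(W'_{\Int})(z) = L \cap \hat M$. Saturation of $W'_{\Int}\subset E_{\Int}$ does imply that $\hat M$ is saturated in $\hat L$ and hence $M$ in $L$; as the paper notes, this reduces the statement to the equality $\Rat_p\otimes M = \Rat_p\otimes\hat M$ inside $\hat L[1/p]$. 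But that equality does \emph{not} follow formally from Tate plus saturation: $\hat M$ is cut out of $\hat L$ by a $W(k)$-linear condition, and there is no a priori reason its intersection with the $\Int$-lattice $L$ should span it over $\Int_p$. Concretely, take $L = \Int^2\subset \hat L = \Int_p^2$ and $\hat M = \Int_p\cdot(1,\alpha)$ with $\alpha\in\Int_p\setminus\Rat$; then $L\cap\hat M = 0$, so $\Int_p\otimes(L\cap\hat M) = 0 \neq \hat M$.

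The paper supplies exactly the missing rational descent, and this is where the finer input from~\cite{Kisin2017-qa} is actually used---not to pin down a Tate twist in step four. Using the structure of the reductive group $I_z\subset\underline{\Aut}_{\Rat}(\mathcal{A}_z)$ and its local forms $I_{z,\ell}$ established there, the paper produces an element $\gamma_0\in G(\Rat)$ conjugate (in $G$) to the Frobenius $\gamma_{z,\ell}$ at every prime $\ell$, together with isomorphisms $\beta_\ell$ whose induced identifications $j_\ell:\Rat_\ell\otimes I_{\gamma_0}\xrightarrow{\simeq} I_{z,\ell}$ differ from one another only by inner automorphisms coming from $I_{\gamma_0}$ (Lemma~\ref{lem:Igamma0}). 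Since $W\subset E$ is $G$-stable, the subspace $W^{\gamma_0=\mathrm{id}}\subset E^{\gamma_0=\mathrm{id}}$ is $I_{\gamma_0}$-stable, and this inner compatibility is precisely what allows the transports $\eta_\ell = \beta_\ell^{-1}\circ\alpha_\ell$ to carve out a single $\Rat$-subspace $\mathbf{W}(z)\subset\Rat\otimes L$ whose $\Rat_p$-span is $\Rat_p\otimes\hat M$. That is the content of the proposition, and it is not bookkeeping with saturated lattices.
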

\begin{proof}
If we replace $W'_{\Int}$ with $E_{\Int}$, then the proposition is the crystalline avatar of Tate's theorem on homomorphisms between abelian varieties over finite fields. Given that we have
\[
\mathcal{Z}^{\mathrm{cl}}_K(W'_{\Int})(z) = \mathcal{Z}_K(E_{\Int})(z)\cap \mathcal{Z}_{K,crys}(W'_{\Int})^{\form}_p(z)\subset \mathcal{Z}_{K,crys}(E_{\Int})^{\form}_p(z),
\]
it now suffices to verify that the map in question is an isomorphism after inverting $p$.

Let $(H,\psi)$ be a faithful representation of $G$ of Siegel type: replacing it with $H\oplus H_1\oplus H_2$ if necessary, we can assume that $H_1\oplus H_2$ is a direct summand of $H$ as a symplectic representation of $G$.

Associated with $(H,\psi)$ is a family of abelian schemes up-to-isogeny $\mathcal{A}\to \Ss_K$, and by our hypothesis $\mathcal{A}_z$ admits $\mathcal{A}_{1,z}\times \mathcal{A}_{2,z}$ as an isogeny direct summand. 

For each $\ell\neq p$, the Frobenius endomorphism of $\mathcal{A}_z$ induces an element $\gamma_{z,\ell}\in \Aut(\Rat_\ell\otimes_{\Int_\ell}\mathbf{Et}_{\ell,z}(H_{\Int}))$. When $\ell = p$, if $p^r = q = |k|$, then the $r$-th iterate of $\varphi_0$ on $\mathbf{Crys}_{p,z}(H_{\Int})[p^{-1}]$ is $\Int_q$-linear, and so gives rise to an element
\[
\gamma_{z,p}\in \Aut(\Rat_q\otimes_{\Int_q}\mathbf{Crys}_{p,z}(H_{\Int})).
\]
Here, we have written $\Int_q$ for $W(k)$ and $\Rat_q$ for its field of fractions.

As in~\cite[(1.3.4)]{Kisin2021-cl}, write $V^\otimes$ to denote the direct sum of tensor powers of an object $V$ and its dual $V^\vee$ in a rigid tensor category. Then, as explained in \emph{loc. cit.}, any set of tensors $\{s_{\alpha}\}\subset H^\otimes$ whose joint stabilizer is $G$ canonically gives rise to tensors
\[
\{s_{\alpha,\ell,z}\}\subset \Rat_{\ell}\otimes_{\Int_{\ell}}\mathbf{Et}_{\ell,z}(H_{\Int})^\otimes\;;\; \{s_{\alpha,p,z}\}\subset \Rat_q\otimes_{\Int_q}\mathbf{Crys}_{p,z}(H_{\Int})^\otimes
\]
with the following property: For each $\alpha$, $s_{\alpha,\ell,z}$ is $\gamma_{\ell,z}$-invariant for $\ell\neq p$, and $s_{\alpha,p,z}$ is $\varphi_0$-invariant. Moreover, there exist canonical isomorphisms (up to the action of $G$)
\begin{align*}
\Rat_\ell\otimes_{\Rat}H&\xrightarrow{\simeq}\Rat_{\ell}\otimes_{\Int_{\ell}}\mathbf{Et}_{\ell,z}(H_{\Int})\text{ for $\ell\neq p$};\\
\Rat_q\otimes_{\Rat}H&\xrightarrow{\simeq}\Rat_q\otimes_{\Int_q}\mathbf{Crys}_{p,z}(H_{\Int})
\end{align*}
carrying $1\otimes s_{\alpha}$ to $s_{\alpha,z,\ell}$ for all primes $\ell$ and all indices $\alpha$. For $\ell\neq p$, this is immediate from the definition of the functor $W_{\Int}\mapsto \mathbf{Et}_{\ell,z}(W_{\Int})$, and when $\ell = p$, this follows from~\cite[Corollary (1.4.3)(3)]{kisin:abelian}. If one is allowed to replace $k$ with a finite extension, then this also follows from~\cite[Proposition 1.3.7]{Kisin2021-cl} and Steinberg's theorem.

In particular, we can and will view $G_{\Rat_{\ell}}$ for $\ell\neq p$ as the joint stabilizer of the subset $\{s_{\alpha,z,\ell}\}$ of $\Rat_\ell\otimes_{\Int_{\ell}}\mathbf{Et}_{\ell,z}(H_{\Int})^\otimes$, and $G_{\Rat_q}$ as the joint stabilizer of the subset $\{s_{\alpha,z,p}\}$ of $\Rat_q\otimes_{\Int_q}\mathbf{Crys}_{p,z}(H_{\Int})^\otimes$. In particular, we see that we have
\[
\gamma_{z,\ell}\in G(\Rat_\ell)\text{ for $\ell \neq p$}\;;\; \gamma_{z,p}\in G(\Rat_q).
\]

As in~\cite[2.1.3]{Kisin2021-cl}, let $I_z\subset \underline{\Aut}_{\Rat}(\mathcal{A}_z)$ be the largest reductive subgroup fixing the tensors $s_{\alpha,\ell,z}$ for all $\ell$ and all $\alpha$ via its homological realization. 

For each $\ell\neq p$, let $I_{z,\ell}$ be the joint stabilizer in $\GL(\mathbb{Et}_{\ell,z}(H_{\Int})[\ell^{-1}])$ of the tensors $s_{\alpha,\ell,z}$ as well as $\gamma_{z,\ell}$.

For $\ell = p$, let $I_{z,p}$ be as in~\cite[2.1.3]{Kisin2021-cl}: it is the reductive group of automorphsims of the $F$-isocrystal with $G$-structure $\mathbf{Crys}_{p,z}(H_{\Int})[p^{-1}]$. 

From~\cite[Corollary 2.1.9]{Kisin2021-cl}---or, in the case of good reduction considered here, from~\cite[Corollary (2.3.2)]{Kisin2017-qa}---we obtain:
\begin{proposition}
\label{prop:tates_theorem}
For every $\ell$, homological realization gives us an isomorphism of reductive groups
\[
\Rat_{\ell}\otimes_{\Rat}I_z\xrightarrow{\simeq}I_{z,\ell}.
\]
\end{proposition}

\begin{lemma}
\label{lem:Igamma0}
There exists $\gamma_0\in G(\Rat)$ which lies in the $G$-conjugacy class of $\gamma_{z,\ell}$ for every prime $\ell$. Moreover, let $I_{\gamma_0}\subset G$ be the centralizer of $\gamma_0$; then, after replacing $k$ by a finite extension if necessary, we can find $\gamma_0$ such that there exist isomorphisms
\begin{align*}
\beta_{\ell}:\Rat_\ell\otimes_{\Rat}H&\xrightarrow{\simeq}\Rat_{\ell}\otimes_{\Int_{\ell}}\mathbf{Et}_{\ell,z}(H_{\Int})\text{ for $\ell\neq p$};\\
\beta_p:\Rat_q\otimes_{\Rat}H&\xrightarrow{\simeq}\Rat_q\otimes_{\Int_q}\mathbf{Crys}_{p,z}(H_{\Int})
\end{align*}
such that: 
\begin{enumerate}[label=(\roman*)]
    \item $\beta_{\ell}(1\otimes s_{\alpha}) = s_{\alpha,\ell,z}$ for all $\alpha$;
	\item $\beta_{\ell}\circ \gamma_0 = \gamma_{z,\ell}\circ \beta_{\ell}$ for all primes $\ell$, so that conjugation by $\beta_{\ell}$ induces isomorphisms
	\begin{align*}
	j_{\ell}:\Rat_\ell\otimes_{\Rat}I_{\gamma_0}\xrightarrow{\simeq}I_{z,\ell}\xrightarrow{\simeq}\Rat_{\ell}\otimes_{\Rat}I_z\text{ if $\ell\neq p$};\\
	j_{p}:\Rat_q\otimes_{\Rat}I_{\gamma_0}\xrightarrow{\simeq}\Rat_q\otimes_{\Rat_p}I_{z,p}\xrightarrow{\simeq}\Rat_{q}\otimes_{\Rat}I_z\text{ if $\ell = p$}.
	\end{align*}
	
	\item For any pair of primes $\ell,\ell'$ and any algebraically closed field $K$ containing $\Rat_{\ell}$ (for $\ell\neq p$) and $\Rat_q$, $j_{\ell'}^{-1}\circ j_{\ell}$ is given by conjugation by an element of $I_{\gamma_0}(K)$.
\end{enumerate}
\end{lemma}
\begin{proof}
 The existence of isomorphisms $\beta_{\ell}$ satisfying condition (i) was already noted above. That we can find $\gamma_0$ such that there exists isomorphisms satisfying (ii) as well follows from~\cite[Corollary (2.3.1)]{Kisin2017-qa} and Steinberg's theorem (for $\ell =p$). This last point is the reason we might need a finite extension of $k$.

 Assertion (iii) is shown as in the proof of~\cite[Proposition 2.2.13]{Kisin2021-cl} using Weil points; see in particular the penultimate paragraph. Note that, because of~\cite[Corollary (2.3.1)]{Kisin2017-qa}, we can replace $\Aut'(G)$ in the argument from~\cite{Kisin2021-cl} with $G$ itself.
\end{proof}	

If $k'/k$ is a finite extension and $z'\in \Ss_K(k')$ is the induced point, then we have
\[
\mathcal{Z}^{\mathrm{cl}}_K(W'_{\Int})(z) = \mathcal{Z}_K(E_{\Int})(z)\cap \mathcal{Z}^{\mathrm{cl}}_K(W'_{\Int})(z')\;;\; \mathcal{Z}_{K,crys}(W'_{\Int})^{\form}_p(z) = \mathcal{Z}_{K,crys}(E_{\Int})^{\form}_p(z)\cap \mathcal{Z}_{K,crys}(W'_{\Int})^{\form}_p(z'). 
\]
Therefore, it is permissible for our purposes to replace $k$ with any finite extension. In particular, we can assume that the conclusions of the lemma above are valid.

For simplicity, set $\mathbf{E}(z) = \Rat\otimes_{\Int}\mathcal{Z}_K(E_{\Int})(z)$. The proposition would be proved if we could find a (rational) subspace $\mathbf{W}(z)\subset \mathbf{E}(z)$ such that the composition $\mathbf{W}(z)\to \mathbf{E}(z) \to \mathbf{Crys}_{p,z}(E_{\Int})$ induces an isomorphism
\begin{align}\label{eqn:desired_w_isom}
\Rat_q\otimes_{\Rat}\mathbf{W}(z)\xrightarrow{\simeq}\Rat_q\otimes_{\Int_q}\mathbf{Crys}_{p,z}(W'_{\Int})^{\gamma_{z,p} = \mathrm{id}}
\end{align}
of $\Rat_q$-vector spaces.

Tate's theorem gives us canonical isomorphisms
\[
\alpha_{\ell}:\Rat_{\ell}\otimes_{\Rat}\mathbf{E}(z)\xrightarrow{\simeq}\Rat_{\ell}\otimes_{\Int_\ell}\mathbf{Et}_{\ell,z}(E_{\Int})^{\gamma_{z,\ell} = \mathrm{id}}\;;\;\alpha_p:\Rat_q\otimes_{\Rat}\mathbf{E}(z)\xrightarrow{\simeq}\Rat_q\otimes_{\Int_q}\mathbf{Crys}_{p,z}(E_{\Int})^{\gamma_{z,p}=\mathrm{id}}.
\]
On the other hand, set $E^0 = E^{\gamma_0 = \mathrm{id}}$ and $W^0 = W^{\gamma_0 = \mathrm{id}}$; then the isomorphisms $\beta_{\ell}$ from Lemma~\ref{lem:Igamma0} induce isomorphisms
\begin{align*}
\beta_{\ell}:\Rat_{\ell}\otimes_{\Rat}E^0 &\xrightarrow{\simeq} \Rat_{\ell}\otimes_{\Int_\ell}\mathbf{Et}_{\ell,z}(E_{\Int})^{\gamma_{z,\ell} = \mathrm{id}}\text{ if $\ell\neq p$};\\
\beta_p:\Rat_q\otimes_{\Rat}E^0&\xrightarrow{\simeq}\Rat_q\otimes_{\Int_q}\mathbf{Crys}_{p,z}(E_{\Int})^{\gamma_{z,p} = \mathrm{id}}.
\end{align*}

Combining these with the $\alpha_?$ gives us isomorphisms
\[
\eta_{\ell} = \beta_{\ell}^{-1}\circ \alpha_{\ell}:\Rat_{\ell}\otimes_{\Rat}\mathbf{E}(z)\xrightarrow{\simeq}\Rat_{\ell}\otimes_{\Rat}E^{\gamma_0=\mathrm{id}}\;;\;\eta_p = \beta_p^{-1}\circ \alpha_p:\Rat_q\otimes_{\Rat}\mathbf{E}(z)\xrightarrow{\simeq}\Rat_q\otimes_{\Rat}E^0.
\]
Assertion (ii) of Lemma~\ref{lem:Igamma0} implies that, for any algebraically closed field $K$ containing $\Rat_\ell$, for $\ell\neq p$, and $\Rat_q$, the composition $\eta_{\ell'}\circ \eta_{\ell}^{-1}$ is given by an element of $\GL(E^0)(K)$ lying in the image of $I_{\gamma_0}(K)$. From this one deduces that there exists a canonical subspace $\mathbf{W}(z)\subset \mathbf{E}(z)$ such that
\[
\eta_{\ell}(\Rat_{\ell}\otimes_{\Rat}\mathbf{W}(z)) = \Rat_{\ell}\otimes_{\Rat}W^0\;;\; \eta_p(\Rat_q\otimes_{\Rat}\mathbf{W}(z)) = \Rat_q\otimes_{\Rat}W^0.
\]

Therefore we have
\[
\alpha_p(\Rat_q\otimes_{\Rat}\mathbf{W}(z)) = \Rat_q\otimes_{\Int_q}\mathbf{Crys}_{p,z}(W_{\Int})^{\gamma_{z,p} = \mathrm{id}},
\]
which gives us the desired isomorphism~\eqref{eqn:desired_w_isom}.

\end{proof}

As a corollary to the proof above, we find:
\begin{proposition}
\label{prop:W_special_l-adic}
For any prime $\ell$, the $\ell$-adic realization map $\mathcal{Z}_K(E_{\Int})[\ell^{-1}]\to \mathbf{Et}_{\ell}(E_{\Int})$ restricts to a map
\[
\mathcal{Z}_K(W'_{\Int})[\ell^{-1}]\to \mathbf{Et}_{\ell}(W'_{\Int}).
\]
\end{proposition}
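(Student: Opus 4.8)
The plan is to reduce the statement to a pointwise assertion over geometric points, and then to invoke the comparison isomorphisms between the various realization functors, exactly as in the proof of Lemma~\ref{lem:W_special}. Concretely, the $\ell$-adic realization map $\mathcal{Z}_K(E_{\Int})[\ell^{-1}]\to \mathbf{Et}_\ell(E_{\Int})$ is a map of (classical) schemes over $\Ss_K[\ell^{-1}]$, and $\mathbf{Et}_\ell(W'_{\Int})\subset \mathbf{Et}_\ell(E_{\Int})$ is an open and closed immersion (being the kernel of the projection $\mathbf{Et}_\ell(E_{\Int})\to \mathbf{Et}_\ell(U_{\Int})$, where $U_{\Int} = E_{\Int}/W'_{\Int}$; note $\mathbf{Et}_\ell$ is an exact functor, so the projection realizes $\mathbf{Et}_\ell(W'_{\Int})$ as an open and closed $\Int_\ell$-sub-local-system). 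Therefore the pullback of this open and closed substack under the realization map is an open and closed substack of $\mathcal{Z}_K(E_{\Int})[\ell^{-1}]$, and the claim is precisely that this pullback contains $\mathcal{Z}_K(W'_{\Int})[\ell^{-1}]$.

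First I would observe that, since $\mathcal{Z}^{\mathrm{cl}}_K(W'_{\Int})$ is by definition a union of connected components of the Noetherian scheme $\mathcal{Z}^{\mathrm{cl}}_K(E_{\Int})$, it suffices to check the containment of $\ell$-adic realizations componentwise, and hence (by Noetherianity and connectedness of each component) to check it at a single point of each connected component of $\mathcal{Z}_K(W'_{\Int})[\ell^{-1}]$. By Lemma~\ref{lem:W_special}, being $W'$-special is constant on connected Noetherian schemes mapping to $\mathcal{Z}_K(E_{\Int})$; combining this with the fact that the condition ``the $\ell$-adic realization lands in $\mathbf{Et}_\ell(W'_{\Int})$'' is also open and closed, one reduces to comparing the two conditions at geometric points $s:\Spec k\to \mathcal{Z}_K(E_{\Int})$ with $k$ an algebraically closed field. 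If $k$ has characteristic $0$, I would use $k = \Comp$ (via a choice of embedding and Lemma~\ref{lem:W_special_geometric}) and invoke Lemma~\ref{lem:W_special_complex}: a point is $W'$-special iff its Betti realization lies in $\mathbf{HS}^{(0,0)}_x(W'_{\Int})$, and the $\ell$-adic realization is obtained from the Betti one by $\otimes_{\Int}\Int_\ell$ together with the identification of $\mathbf{Et}_\ell$ with the local system attached to the Betti local system; hence $W'$-speciality forces the $\ell$-adic realization into $\mathbf{Et}_{\ell}(W'_{\Int})$.

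The remaining case is $k$ of characteristic $p$; here there are two sub-cases according to whether $\ell\neq p$ or $\ell = p$. For $\ell\neq p$ over a finite field $k$, this is essentially the content of the proof of Proposition~\ref{prop:tate_conjecture}: there we produced, for a suitable $\gamma_0\in G(\Rat)$, canonical isomorphisms $\eta_\ell$ and $\eta_p$ matching $\Rat_\ell\otimes \mathbf{E}(z)$ (resp.\ $\Rat_q\otimes \mathbf{E}(z)$) with $\Rat_\ell\otimes E^{\gamma_0 = \mathrm{id}}$ (resp.\ $\Rat_q\otimes E^{\gamma_0 = \mathrm{id}}$) in a way compatible for varying $\ell$ up to $I_{\gamma_0}$-conjugacy, and deduced the existence of the canonical subspace $\mathbf{W}(z)$ with $\eta_\ell(\Rat_\ell\otimes \mathbf{W}(z)) = \Rat_\ell\otimes W^{\gamma_0 = \mathrm{id}}$ for \emph{all} $\ell$, including $\ell\neq p$. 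Unwinding the definitions of $\alpha_\ell = \beta_\ell\circ \eta_\ell$, this says exactly that $\alpha_\ell(\Rat_\ell\otimes \mathbf{W}(z)) = \Rat_\ell\otimes_{\Int_\ell}\mathbf{Et}_{\ell,z}(W'_{\Int})^{\gamma_{z,\ell} = \mathrm{id}}$, i.e.\ the $\ell$-adic realization of a $W'$-special endomorphism lies in $\mathbf{Et}_\ell(W'_{\Int})$. For a general (algebraically closed) $k$ of characteristic $p$, one specializes/generalizes from a finite field via Lemma~\ref{lem:W_special} and the spreading-out argument already used there. The one point requiring a little care—and the step I expect to be the main obstacle—is verifying that the subspace $\mathbf{W}(z)$ produced in the proof of Proposition~\ref{prop:tate_conjecture} is genuinely independent of $\ell$ and, in particular, that its $\ell$-adic incarnation for $\ell\neq p$ really is compatible with the integral structure $\mathbf{Et}_\ell(W'_{\Int})$ rather than merely with its $\Rat_\ell$-span; this amounts to checking that the formation of $\mathcal{Z}^{\mathrm{cl}}_K(W'_{\Int})$ as a union of connected components is detected compatibly by all $\ell$-adic realizations, which follows from Lemma~\ref{lem:W_special} (speciality is a geometric, hence realization-independent, condition) together with the openness-and-closedness of $\mathbf{Et}_\ell(W'_{\Int})$ inside $\mathbf{Et}_\ell(E_{\Int})$. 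Granting this, the proposition follows.
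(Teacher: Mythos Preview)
Your argument is correct and follows essentially the same route as the paper: reduce to geometric points using that both $\mathcal{Z}_K(W'_{\Int})$ and $\mathbf{Et}_\ell(W'_{\Int})$ are open and closed in their ambient spaces, then invoke the computation from the proof of Proposition~\ref{prop:tate_conjecture} showing that $\beta_\ell$ carries $\Rat_\ell\otimes\mathbf{W}(z)$ onto $\Rat_\ell\otimes_{\Int_\ell}\mathbf{Et}_{\ell,z}(W'_{\Int})$.

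A few simplifications are available. First, since $\mathcal{Z}^{\mathrm{cl}}_K(W'_{\Int})[\ell^{-1}]$ is locally of finite type over $\Int[\ell^{-1}]$, its closed points are dense and all have finite residue field of characteristic $p\neq\ell$; so your separate treatment of characteristic $0$ via Betti comparison, while correct, is unnecessary---the paper checks only at closed points in characteristic $p\neq\ell$. Second, the sub-case $\ell=p$ you mention never arises, for the same reason. Third, your worry about integral versus rational compatibility dissolves once you note that $W'_{\Int}=W\cap E_{\Int}$ is by construction a \emph{saturated} sublattice of $E_{\Int}$, so $\mathbf{Et}_\ell(W'_{\Int})$ is a $\Int_\ell$-direct summand of $\mathbf{Et}_\ell(E_{\Int})$; hence an element of the latter lies in the former if and only if its image in $\Rat_\ell\otimes\mathbf{Et}_\ell(E_{\Int})$ lies in the $\Rat_\ell$-span of $\mathbf{Et}_\ell(W'_{\Int})$, which is exactly what the proof of Proposition~\ref{prop:tate_conjecture} gives.
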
	
\begin{proof}
This can be checked at closed geometric points $z\in\Ss_K(k)$ of characteristic $p\neq \ell$, where it follows from the fact that, in the notation of the proof above, $\beta_\ell$ carries $\Rat_\ell\otimes\mathbf{W}(z)$ onto $\Rat_{\ell}\otimes_{\Int_{\ell}}\mathbf{Et}_{\ell,z}(W'_{\Int})$.
\end{proof}

\begin{proof}
[Proof of Theorem~\ref{thm:W_representability}]

We will employ the constructions above that depended on the fixed choice of $(H_1,H_{1,\Int})$ and $(H_2,H_{2,\Int})$, and proceed in steps:

\emph{Step 1}: \underline{Existence of $\mathcal{Z}_K(W'_{\Int})$}

The cotangent complex of $\mathcal{Z}_K(E_{\Int})$ over $\Ss_K$ can be identified with the pullback of $\bm{co}(E_{\Int})[1]$ by (4) of Theorem~\ref{thm:deform_end}, and the cotangent complexes of $\mathcal{Z}_K(W'_{\Int})_\Rat$ over $\Sh_K$ and of $\mathcal{Z}_K(W'_{\Int})^{\form}_p$ over $\Ss^{\form}_{K,p}$ can both be identified with the respective pullbacks of $\bm{co}(W'_{\Int})[1]$. 

Choose an \'etale neighborhood $\Spec R\to \Ss_K$, write 
\[
\kappa_E = H^0(\Spec R,\bm{co}(E_{\Int}))\;;\; \kappa_{W'}= H^0(\Spec R,\bm{co}(W'_{\Int})),
\]
and write $u':\kappa_E\to \kappa_{W'}$ for the natural surjection.

Then in the language of~\eqref{subsec:closed_immersions} and Proposition~\ref{prop:unique_Y}, the existence of $\mathcal{Z}_K(W'_{\Int})$ satisfying properties (1) through (5) of Theorem~\ref{thm:W_representability} now comes down to the assertion that, for any \'etale neighborhood as above, there exists a $u$-marked scheme over $\mathcal{Z}_K(E_{\Int})\times_{\Ss_K}\Spec R$ with underlying classical scheme $\mathcal{Z}^{\mathrm{cl}}(W'_{\Int})\times_{\Ss_K}\Spec R$. By Corollary~\ref{cor:exist_Y}, this amounts to showing this existence over $R_{\Rat}$ and $R/p^n$ for every prime $p$ and $n\geq 1$, which we already have, from the construction of $\mathcal{Z}_K(W'_{\Int})_{\Rat}$ and $\mathcal{Z}_K(W'_{\Int})^{\form}_p$, respectively. This completes the construction of $\mathcal{Z}_K(W'_{\Int})$.

\emph{Step 2:} \underline{Existence of $\mathcal{Z}_K(W_{\Int})$}

Set $N = [W'_{\Int}:W_{\Int}]$; then the definition over $\Int[N^{-1}]$ is easy. Proposition~\ref{prop:W_special_l-adic} gives us a map
\[
\mathcal{Z}_K(W'_{\Int})[N^{-1}] \to \prod_{\ell\mid N}\mathbf{Et}_{\ell}(W'_{\Int})/\mathbf{Et}_{\ell}(W_{\Int})\vert_{\Ss_K[N^{-1}]},
\]
and we take $\mathcal{Z}_K(W_{\Int})[N^{-1}]$ to be the fiber over $0$ under this map: this is an open and closed substack of $\mathcal{Z}_K(W'_{\Int})[N^{-1}]$.

For $p\mid N$, let $M$ be the largest prime-to-$p$ divisor of $N$ and let $\tilde{\mathcal{Z}} \to \mathcal{Z}_K(W'_{\Int})[M^{-1}]$ be the fiber over $0$ of the map
\[
\mathcal{Z}_K(W'_{\Int})[M^{-1}]\to \prod_{\ell\mid M}\mathbf{Et}_{\ell}(W'_{\Int})/\mathbf{Et}_{\ell}(W_{\Int})\vert_{\Ss_K[M^{-1}]}.
\]
This is of course an open and closed substack of $\mathcal{Z}_K(W'_{\Int})[M^{-1}]$.

The locally finite unramified map $\mathcal{Z}_{K,crys}(W_{\Int})^{\form}_p\to \mathcal{Z}_{K,crys}(W'_{\Int})^{\form}_p$ is a closed immersion of $p$-adic formal stacks since it is injective on points valued in perfect fields; see Remark~\ref{rem:points_of_spaces_over_fields}. Now, set
\[
\mathcal{Z}_K(W_{\Int})^{\form}_p \overset{defn}{=} \tilde{Z}^{\form}_p\times_{\mathcal{Z}_{K,crys}(W'_{\Int})^{\form}_p}\mathcal{Z}_{K,crys}(W_{\Int})^{\form}_p.
\]
This gives a closed formal substack of $\mathcal{Z}_K(W'_{\Int})^{\form}_p$ with cotangent complex $\Reg{\mathcal{Z}_K(W_{\Int})^{\form}_p}\otimes_{\Ss_K} \bm{co}(W_{\Int})[1]$ over $\Ss_{K,p}^{\form}$. If $\Spf R'\to \mathcal{Z}_K(W'_{\Int})$ is an \'etale chart, and $\hat{R}\in \mathrm{CRing}_{R'/}$ is such that
\[
\Spf \hat{R} \xrightarrow{\simeq}\Spf \hat{R}'_p\times_{\mathcal{Z}_K(W'_{\Int})^{\form}_p}\mathcal{Z}_K(W_{\Int})^{\form}_p,
\]
then $\hat{R}'_p[1/p]\to \hat{R}[1/p]$ is an \'etale surjection, and so exhibits $\Spec \hat{R}[1/p]$ as an open and closed subscheme of $\Spec\hat{R}'_p[1/p]$. I claim that this is identified with
\[
\Spec\hat{R}'_p[1/p]\times_{\mathcal{Z}_K(W'_{\Int})[N^{-1}]}\mathcal{Z}_K(W_{\Int})[N^{-1}]
\]
Indeed, it suffices to check this claim on $L$-points for a finite extension $L/\Rat_p$ extending to $\Reg{L}$-points of $\mathcal{Z}_K(W'_{\Int})$, where it is a consequence of the integral comparison isomorphism~\eqref{subsec:realization_props}~\eqref{real:integral_p-adic}.

Therefore, we can glue the $p$-adic formal closed substacks $\mathcal{Z}_K(W_{\Int})^{\form}_p$ of $\mathcal{Z}_K(W'_{\Int})^{\form}_p$ for $p\mid N$ with $\mathcal{Z}_K(W_{\Int})[N^{-1}]$ to get the stack $\mathcal{Z}_K(W_{\Int})$ with the desired properties. 

\emph{Step 3:} \underline{Functoriality and independence from choices}

It remains to show that this construction is functorial in $(W,W_{\Int})$. Suppose that we have two such pairs $(W,W_{\Int})$ and $(W^\flat,W^\flat_{\Int})$, as well as a $G$-equivariant map $\beta:W\to W^\flat$ carrying $W_{\Int}$ into $W^\flat_{\Int}$. The proof now follows a standard embedding trick. We will repeatedly use the functoriality from Corollary~\ref{cor:funct_local_spaces} without comment.

Suppose that $(H^\flat_1,H^\flat_{1,\Int})$ and $(H^\flat_2,H^\flat_{2,\Int})$ is a choice of pairs of Siegel type with $W^\flat\subset E^\flat = \Hom(H^\flat_1,H^\flat_2)$ and $W^\flat_{\Int} \subset \Hom(H^\flat_{1,\Int},H^\flat_{2,\Int})$. Write $\mathcal{Z}_K(W^\flat_{\Int})\hookrightarrow \mathcal{Z}_K(E^\flat_{\Int})$ for the closed immersion of locally quasi-smooth stacks over $\Ss_K$ arising from this data via Step 2.

For $i=1,2$, take $(\tilde{H}_i,\tilde{H}_{i,\Int}) = (H_i\oplus H^\flat_i,H_{i,\Int}\oplus H^\flat_{i,\Int})$; then the graph of $\beta$ gives us an embedding of direct summands
\[
W_{\Int}\hookrightarrow E_{\Int}\oplus E^\flat_{\Int}\hookrightarrow \tilde{E}_{\Int} = \Hom(\tilde{H}_{1,\Int},\tilde{H}_{2,\Int}).
\]
Repeating the construction of Step 2 with this embedding gives us a closed immersion $\widetilde{\mathcal{Z}}_K(W_{\Int})\hookrightarrow \mathcal{Z}_K(\widetilde{E}_{\Int})$.
 
Corollary~\ref{cor:hom_additive} gives us a natural projection $\mathcal{Z}_K(\widetilde{E}_{\Int})\to \mathcal{Z}_K(E_{\Int})$, which in turn gives us a map $\alpha:\widetilde{\mathcal{Z}}_K(W_{\Int})\to\mathcal{Z}_K(E_{\Int})$; I claim that this factors through an equivalence $\widetilde{\mathcal{Z}}_K(W_{\Int})\xrightarrow{\simeq}\mathcal{Z}_K(W_{\Int})$.

By construction, for any \'etale map $\Spec R\to \Ss_K$, the restriction of the maps
\[
\widetilde{\mathcal{Z}}_K(W_{\Int})\to \mathcal{Z}_K(E_{\Int})\;;\; \mathcal{Z}_K(W_{\Int})\to \mathcal{Z}_K(E_{\Int})
\]
over $\Spec R$ are both $u$-marked, where 
\[
u:H^0(\Spec R,\bm{co}(E_{\Int}))\to H^0(\Spec R,\bm{co}(W_{\Int})).
\]

One sees that Proposition~\ref{prop:tate_conjecture} is now valid with $W'_{\Int}$ replaced by $W_{\Int}$. Secondly, in the notation of Lemma~\ref{lem:W_special_complex}, both $\widetilde{\mathcal{Z}}_K(W_{\Int})(\Comp)$ and $\mathcal{Z}_K(W_{\Int})(\Comp)$ are canonically identified with $G(\Rat)\backslash X(W_{\Int})/K$. 

The claim now follows from Corollary~\ref{cor:Y_determined_by_points}.

Now, Corollary~\ref{cor:hom_additive} also gives us a projection $\mathcal{Z}_K(\widetilde{E}_{\Int})\to \mathcal{Z}_K(E^\flat_{\Int})$, and thus a map $\widetilde{\mathcal{Z}}_K(W_{\Int})\to \mathcal{Z}_K(E^\flat_{\Int})$, which factors canonically through $\mathcal{Z}_K(W^\flat_{\Int})$: check this using the definitions separately over the generic fiber and over the $p$-adic formal completions. 

Putting the two previous paragraphs together gives us the arrow $\mathcal{Z}_K(W_{\Int})\to \mathcal{Z}_K(W^\flat_{\Int})$ associated with $\beta$. The only thing that needs to be checked is that this construction is now compatible with composition of maps, and this can be verified using a similar argument. This of course also shows the independence from the choices made during the course of the construction.
\end{proof}

The next corollary yields Theorem~\ref{introthm:classical}:
\begin{corollary}
\label{cor:classical_truncation}
Let $\mathcal{Z}^{\mathrm{cl}}_K(W_{\Int})$ be the classical truncation of $\mathcal{Z}_K(W_{\Int})$
\begin{enumerate}
	\item For every square-zero thickening $\tilde{C}\twoheadrightarrow C$ in $\mathrm{CRing}_{\heartsuit}$ with kernel $I$, and every point $\tilde{x}\in \Ss_K(\tilde{C})$ inducing $x\in \Ss_K(C)$, there is an obstruction map:
\[
\mathrm{obst}_{\tilde{x}}:\mathcal{Z}^{\mathrm{cl}}_K(W_{\Int})(x)\to \gr^{-1}_{\mathrm{Hdg}}\mathbf{dR}_x(W_{\Int})\otimes_CI
\]
such that
\[
\mathcal{Z}^{\mathrm{cl}}_K(W_{\Int})(\tilde{x}) = \ker\mathrm{obst}_{\tilde{x}}.
\]
\item Let $\mathcal{Z}\subset \mathcal{Z}_K(W_{\Int})$ be a connected component, and let $\mathcal{Z}^{\mathrm{cl}}\subset \mathcal{Z}^{\mathrm{cl}}_K(W_{\Int})$ be its classical truncation. Then the following are equivalent:
\begin{enumerate}[label=(\alph*)]
\item The natural map $\mathcal{Z}\to \mathcal{Z}^{\mathrm{cl}}$ is an equivalence.
\item $\mathcal{Z}^{\mathrm{cl}}$ is an lci stack over $\Int$ of relative dimension $\dim\Sh_K - nd_+(W)$.
\item $\mathcal{Z}^{\mathrm{cl}}$ is an equidimensional stack over $\Int$ of relative dimension $\dim\Sh_K - nd_+(W)$.
\end{enumerate}
Here, $d_+(W)$ is the rank of the vector bundle $\gr^{-1}_{\mathrm{Hdg}}\mathbf{dR}(W)$.
\end{enumerate} 
\end{corollary}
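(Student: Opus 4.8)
# Proof Proposal for Corollary~\ref{cor:classical_truncation}

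The plan is to derive both assertions directly from the two main structural facts about $\mathcal{Z}_K(W_{\Int})$ established in Theorem~\ref{thm:W_representability}: that it is locally quasi-smooth of virtual codimension $nd_+(W)$ over $\Ss_K$, and that its relative cotangent complex is the pullback of $\bm{co}(W_{\Int})[1] = (\gr^{-1}_{\mathrm{Hdg}}\mathbf{dR}(W_{\Int}))^\vee[1]$. For part (1), I would argue as follows. Since $\mathcal{Z}_K(W_{\Int})$ is infinitesimally cohesive (it is built by gluing the prestacks $\mathcal{Z}(\Fil^\bullet_{\mathrm{Hdg}}\mathcal{M})$ and $\mathcal{Z}(\mathcal{M})^{\form}_p$, which are infinitesimally cohesive by Proposition~\ref{prop:FIC_functorial_props} and Theorem~\ref{thm:FilFCrys_functorial_props}) and admits the stated cotangent complex, the deformation-theoretic formalism gives, for a square-zero extension $\tilde{C}\twoheadrightarrow C$ with ideal $I$ and a point $\tilde x\in\Ss_K(\tilde C)$ over $x\in\Ss_K(C)$, a fiber sequence relating $\mathcal{Z}^{\mathrm{cl}}_K(W_{\Int})(\tilde x)$ and $\mathcal{Z}^{\mathrm{cl}}_K(W_{\Int})(x)$. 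Concretely, a point $y\in\mathcal{Z}^{\mathrm{cl}}_K(W_{\Int})(x)$ lifts to $\tilde x$ exactly when the canonical obstruction class in $\Ext^1_C(y^*\mathbb{L}_{\mathcal{Z}_K(W_{\Int})/\Ss_K}, I) \simeq \Ext^0_C(y^*(\gr^{-1}_{\mathrm{Hdg}}\mathbf{dR}(W_{\Int}))^\vee, I) \simeq \gr^{-1}_{\mathrm{Hdg}}\mathbf{dR}_x(W_{\Int})\otimes_C I$ vanishes; this defines $\mathrm{obst}_{\tilde x}$ and identifies its kernel with $\mathcal{Z}^{\mathrm{cl}}_K(W_{\Int})(\tilde x)$. (The map is a map of sets/modules in the obvious way: since $\gr^{-1}_{\mathrm{Hdg}}\mathbf{dR}(W_{\Int})$ is a vector bundle, the relevant higher Ext groups vanish and the obstruction theory is concentrated in a single degree.) One should note that this is precisely the shape of the deformation theory recorded in the local models, e.g.\ the fiber sequences in Theorem~\ref{thm:FilFCrys_discrete}\eqref{ffc:form_etale} and Lemma~\ref{lem:canonical_arrow_P_kernel}, so no new input is needed.

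For part (2), the implication (a)$\Rightarrow$(b) is the substantive content of quasi-smoothness: a quasi-smooth derived stack whose classical truncation equals itself is, by the differential criterion recalled in the introduction (namely $\mathbb{L}_{\mathcal{Z}/\Ss_K}[-1]$ a vector bundle of rank $nd_+(W)$, combined with $\Ss_K$ smooth of relative dimension $\dim\Sh_K$ over $\mathscr{O}_E[N_K^{-1}]$), an lci stack over $\Int$ of the stated relative dimension $\dim\Sh_K - nd_+(W)$; this is standard (cf.\ Appendix~\ref{sec:qsmooth}). The implication (b)$\Rightarrow$(c) is immediate since lci stacks are equidimensional with the expected dimension. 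The only implication requiring real work is (c)$\Rightarrow$(a). Here I would use that $\mathcal{Z}_K(W_{\Int})$, being locally quasi-smooth over $\Ss_K$ of virtual codimension $nd_+(W)$, is \emph{locally} cut out inside the smooth ambient space $\widetilde{\mathbb{H}}(\mathcal{A}_1,\mathcal{A}_2)$ (or rather inside a smooth chart over $\Ss_K$) by a Koszul complex on $nd_+(W)$ functions; equivalently, \'etale-locally $\mathcal{Z} = \Spec(\Reg{U}\otimes^{\mathbb{L}}_{\Int[t_1,\dots,t_r]}\Int)$ for $U$ smooth over $\Int$ and $r = nd_+(W)$. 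The classical truncation $\mathcal{Z}^{\mathrm{cl}}$ is then $\Spec(\Reg{U}/(a_1,\dots,a_r))$. If $\mathcal{Z}^{\mathrm{cl}}$ is equidimensional of the expected dimension $\dim U - r$, then by the standard dimension count (Cohen--Macaulay / Krull principal ideal argument: $r$ elements cutting down a catenary regular ring by exactly $r$ in dimension forces a regular sequence up to localization, hence no Tor) the sequence $(a_1,\dots,a_r)$ is regular, the derived and classical quotients agree, and $\mathcal{Z}\to\mathcal{Z}^{\mathrm{cl}}$ is an equivalence. I expect this last step --- ensuring equidimensionality of $\mathcal{Z}^{\mathrm{cl}}$ genuinely forces regularity of the defining sequence, uniformly in mixed characteristic and over all of $\Int$ --- to be the main obstacle, since one must handle the interaction between horizontal (characteristic-zero) and vertical (fiberwise mod-$p$) components and invoke that $U$ is regular (not merely smooth over a field); the flatness of $\Ss_K$ over $\mathscr{O}_E[N_K^{-1}]$ and the catenary/Cohen--Macaulay property of $\Reg{U}$ are what make the count work.

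One technical point worth isolating at the start: since the statement is about a single connected component $\mathcal{Z}$, I would reduce (2) to a local assertion on an \'etale neighborhood of an arbitrary point of $\mathcal{Z}^{\mathrm{cl}}$, noting that being lci, being equidimensional of a fixed dimension, and the map to the classical truncation being an equivalence are all conditions that can be checked \'etale-locally (for the dimension statement, connectedness of $\mathcal{Z}^{\mathrm{cl}}$ guarantees the local dimension is constant). This reduces everything to the affine Koszul picture above. Finally, the whole corollary can be phrased purely in terms of $\mathcal{Z}_K(W'_{\Int})$ and the open-and-closed passage to $\mathcal{Z}_K(W_{\Int})$ of Step~2 in the proof of Theorem~\ref{thm:W_representability}, so no further globalization is needed beyond what that theorem already provides.
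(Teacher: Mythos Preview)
Your proposal is correct and follows essentially the same approach as the paper: part (1) is a direct translation of the cotangent complex identification from Theorem~\ref{thm:W_representability}(1), and part (2) rests on the local Koszul description of a quasi-smooth unramified map together with the Cohen--Macaulay fact that a sequence of $r$ elements in a regular local ring cutting the dimension by exactly $r$ is regular. Your cycle of implications (a)$\Rightarrow$(b)$\Rightarrow$(c)$\Rightarrow$(a) is a slight repackaging of the paper's pair of equivalences (a)$\Leftrightarrow$(b) and (b)$\Leftrightarrow$(c), but the content is identical.
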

\begin{proof}
The assertion about the deformation theory is a direct translation of (1) of Theorem~\ref{thm:W_representability}. 

For the second assertion about classicality, note that a finite unramified map of classical Deligne-Mumford stacks is quasi-smooth if and only if it is locally on the source given by a regular immersion; see~\cite[Propositions 2.2.4,2.3.8]{Khan2018-dk}. Since $\Ss_K$ is smooth over $\Int$, this means that the map $\mathcal{Z}^{\mathrm{cl}}\to \Ss_K$ is quasi-smooth if and only if $\mathcal{Z}^{\mathrm{cl}}$ is an lci stack over $\Int$. So the equivalence of statements (a) and (b) is implied by the observation---immediate from~\cite[(25.3.6.6)]{Lurie2018-kh}---that a quasi-smooth unramified morphism of derived Deligne-Mumford stacks is an equivalence if and only if it has virtual codimension $0$ and is an isomorphism of the underlying classical stacks. For the equivalence of (b) and (c), note that \'etale locally on the source $\mathcal{Z}^{\mathrm{cl}}\to \Ss_K$ is cut out by at most $nd_+(W)$ equations. Therefore, the desired equivalence comes down to the fact that a sequence $a_1,\ldots,a_m$ in a Cohen-Macaulay Noetherian local ring $A$ is regular if and only if 
\[
\dim\left(A/(a_1,\ldots,a_m)\right) = \dim A - m.
\]




\end{proof}

\section{Cycle classes}\label{sec:cycle_classes}

We will maintain the notation from the previous section. The results there will now be combined here with Appendix~\ref{app:cycle_classes} to yield our main theorem on the existence and properties of the classes $\mathcal{C}_{\Ss_K,W}(T,\varphi)$. We begin by establishing some addenda to Theorem~\ref{thm:W_representability}.

\begin{proposition}
\label{prop:product_formula}
Suppose that we have two pairs $(W_1,W_{1,\Int})$ and $(W_2,W_{2,\Int})$ in $\mathrm{Latt}^0_{G,\mu,K}$ with associated locally quasi-smooth maps $\mathcal{Z}_{K}(W_{1,\Int}),\mathcal{Z}_{K}(W_{2,\Int})\to \Ss_K$. Set $(U,U_{\Int}) = (W_1\oplus W_2,W_{1,\Int}\oplus W_{2,\Int})$. Then there is a canonical equivalence 
\[
\mathcal{Z}_{K}(W_{1,\Int})\times_{\Ss_K}\mathcal{Z}_{K}(W_{2,\Int}) \simeq \mathcal{Z}_{K}(U_{\Int}).
\]
\end{proposition}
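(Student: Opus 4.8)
The plan is to deduce this from the functoriality and product identities already established for all the local avatars of $\mathcal{Z}_K(W_{\Int})$, together with the rigidity provided by Corollary~\ref{cor:Y_determined_by_points} (or, more directly, by Proposition~\ref{prop:unique_Y}). First I would observe that the formation of all the relevant objects respects direct sums: the functor $W\mapsto \Fil^\bullet_{\mathrm{Hdg}}\mathbf{dR}(W)$ and the functor $(W,W_{\Int})\mapsto \mathbf{FFCrys}_p(W_{\Int})$ are additive, and by Lemma~\ref{lem:filinfcrys_internal_hom}, Lemma~\ref{lem:filfcrys_internal_hom} and the product-preserving properties recorded in Proposition~\ref{prop:spaces_exactness_properties} we get canonical equivalences
\[
\mathcal{Z}_{K,\infty}(W_1)\times_{\Sh_K}\mathcal{Z}_{K,\infty}(W_2)\xrightarrow{\simeq}\mathcal{Z}_{K,\infty}(U)
\]
and
\[
\mathcal{Z}_{K,crys}(W_{1,\Int})^{\form}_p\times_{\Ss^{\form}_{K,p}}\mathcal{Z}_{K,crys}(W_{2,\Int})^{\form}_p\xrightarrow{\simeq}\mathcal{Z}_{K,crys}(U_{\Int})^{\form}_p.
\]
These follow from the exactness of $\hker^{\mathrm{cn}}$ and the fact that $\mathcal{Z}(\_\_)$ commutes with direct sums of filtered crystals, since $\mathcal{Z}(\mathcal{M}_1\oplus\mathcal{M}_2)=\mathcal{Z}(\mathcal{M}_1)\times\mathcal{Z}(\mathcal{M}_2)$ by the universal property of $\mathcal{O}$ as the unit. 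On the level of abelian schemes, the case where $W_i = \Hom(H_{1,i},H_{2,i})$ is exactly Corollary~\ref{cor:hom_additive}: $\mathbb{H}(\mathcal{A}_1\times\mathcal{A}_1',\mathcal{A}_2)\simeq\mathbb{H}(\mathcal{A}_1,\mathcal{A}_2)\times\mathbb{H}(\mathcal{A}_1',\mathcal{A}_2)$, and more generally the Serre-type construction of $\mathcal{Z}_K(E_{\Int})$ is additive in each symplectic factor.

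Next I would assemble these into a statement over $\Ss_K$ itself. Both sides of the asserted equivalence are locally quasi-smooth, finite unramified maps to $\Ss_K$; by the cotangent-complex formula in Theorem~\ref{thm:W_representability}(1), the relative cotangent complex of $\mathcal{Z}_K(W_{1,\Int})\times_{\Ss_K}\mathcal{Z}_K(W_{2,\Int})$ is the pullback of $\bm{co}(W_{1,\Int})[1]\oplus\bm{co}(W_{2,\Int})[1]\simeq\bm{co}(U_{\Int})[1]$, matching that of $\mathcal{Z}_K(U_{\Int})$, and the identification is compatible with the surjections from $\mathbb{L}_{\widetilde{\mathbb{H}}/\Ss_K}$ coming from the Hodge-theoretic quotient. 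I would fix an \'etale chart $\Spec R\to\Ss_K$ and, choosing Siegel representations with $U_{\Int}\subset\Hom(\tilde H_{1,\Int},\tilde H_{2,\Int})$ realizing $U$ as a summand, exhibit both $\mathcal{Z}_K(W_{1,\Int})\times_{\Ss_K}\mathcal{Z}_K(W_{2,\Int})$ and $\mathcal{Z}_K(U_{\Int})$ as $u$-marked schemes over $\mathcal{Z}_K(\tilde E_{\Int})\times_{\Ss_K}\Spec R$, where $u:H^0(\Spec R,\bm{co}(\tilde E_{\Int}))\to H^0(\Spec R,\bm{co}(U_{\Int}))$, with underlying classical truncations canonically isomorphic (both parametrize pairs of $W_i$-special homomorphisms, equivalently $U$-special ones). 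By Proposition~\ref{prop:unique_Y}(2) a $u$-marked scheme is uniquely determined by its classical truncation, so once I check that the two classical truncations agree — which reduces via Lemma~\ref{lem:W_special} to checking agreement on points valued in $\Comp$ (Lemma~\ref{lem:W_special_complex}: both give $G(\Rat)\backslash X(U_{\Int})/K$, which visibly factors as the fiber product of the analogous spaces for $W_1$ and $W_2$ over $\Sh_K$) and on points valued in finite fields (Proposition~\ref{prop:tate_conjecture} together with the additivity of $\mathcal{Z}_{K,crys}$ just noted) — the desired equivalence follows, and Corollary~\ref{cor:Y_determined_by_points} packages this.

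The main obstacle I anticipate is purely bookkeeping rather than conceptual: ensuring that the equivalence produced \'etale-locally glues to a global one and is natural in $(W_1,W_{1,\Int})$ and $(W_2,W_{2,\Int})$ simultaneously, i.e. that it is functorial with respect to $G$-equivariant maps $W_i\to W_i^\flat$ carrying lattices into lattices. This is handled by the same embedding-trick argument used at the end of the proof of Theorem~\ref{thm:W_representability}: one replaces the chosen Siegel representations by sums $\tilde H_i\oplus \tilde H_i^\flat$, uses the projections of Corollary~\ref{cor:hom_additive} to compare the resulting $u$-marked models, and invokes Corollary~\ref{cor:Y_determined_by_points} to see the comparison is an equivalence and is compatible with composition. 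In particular, one must be slightly careful that the primes dividing $[U'_{\Int}:U_{\Int}]$ are handled as in Step~2 of that proof, gluing the $p$-adic formal pieces for $p\mid N$ with the prime-to-$N$ part via the integral $p$-adic comparison of~\eqref{subsec:realization_props}\eqref{real:integral_p-adic}; since $N$ for $U_{\Int}=W_{1,\Int}\oplus W_{2,\Int}$ is the product of the $N$'s for the two factors, this introduces nothing new.
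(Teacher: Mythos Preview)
Your argument is correct, but it takes a longer route than the paper's. The paper's proof is essentially two lines: since Theorem~\ref{thm:W_representability} has already constructed $\mathcal{Z}_K$ as a \emph{functor} from $\mathrm{Latt}^0_{G,\mu,K}$ to $\mathrm{qSmAb}_{/\Ss_K}$, the projections $U_{\Int}\to W_{i,\Int}$ and inclusions $W_{i,\Int}\hookrightarrow U_{\Int}$ immediately furnish maps in both directions between $\mathcal{Z}_K(U_{\Int})$ and $\mathcal{Z}_K(W_{1,\Int})\times_{\Ss_K}\mathcal{Z}_K(W_{2,\Int})$; the cotangent-complex formula in part~(1) of that theorem shows both maps are \'etale, so it remains only to check they are mutually inverse on geometric points $z\in\Ss_K(k)$. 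For $k$ of characteristic~$0$ this is clear (e.g.\ from the complex uniformization), and for $k$ of characteristic~$p$ one invokes part~(4) of the same theorem to transport the question to the crystalline side, where it is again immediate from the additivity of $\mathcal{Z}_{K,crys}$ you noted.

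Your approach instead re-runs the $u$-marked scheme rigidity argument (Proposition~\ref{prop:unique_Y}, Corollary~\ref{cor:Y_determined_by_points}) that was already absorbed into the proof of Theorem~\ref{thm:W_representability}. This is not wrong---it produces the same equivalence---but it duplicates work and obscures the point that the product formula is formal from the functor structure once (1) and (4) are in hand. The takeaway: once $\mathcal{Z}_K$ is known to be a functor into abelian group objects with the specified cotangent complexes, additivity follows by checking that the obvious \'etale maps are equivalences on geometric points; there is no need to revisit the gluing machinery that built the functor in the first place.
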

\begin{proof}
The theorem gives us \'etale maps in both directions, and it remains to verify that they are inverses when evaluated over geometric points $z\in \Ss_K(k)$. This is clear when $k$ has characteristic $0$, and, when $k$ has characteristic $p$, it can be checked using (4) of Theorem~\ref{thm:W_representability}.
\end{proof}

\begin{proposition}
\label{prop:trivial_rep}
Suppose that $G$ acts trivially on $W$; then $\mathcal{Z}_{K}(W_{\Int})$ is equivalent to the locally constant \'etale sheaf $\underline{W}_{\Int}$ on $\Ss_K$ associated with $W_{\Int}$.
\end{proposition}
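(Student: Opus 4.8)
The statement is that when $G$ acts trivially on $W$, the derived special cycle $\mathcal{Z}_K(W_{\Int})$ collapses to the constant \'etale sheaf $\underline{W}_{\Int}$ — in particular it is already classical, \'etale over $\Ss_K$, and of virtual codimension $0$. The first thing I would do is compute the cotangent complex. By part (1) of Theorem~\ref{thm:W_representability}, $\mathbb{L}_{\mathcal{Z}_K(W_{\Int})/\Ss_K} \simeq \Reg{\mathcal{Z}_K(W_{\Int})}\otimes_{\Reg{\Ss_K}}\bm{co}(W_{\Int})[1]$, and $\bm{co}(W_{\Int}) = (\gr^{-1}_{\mathrm{Hdg}}\mathbf{dR}(W_{\Int}))^\vee$. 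Since $G$ acts trivially on $W$, the associated variation of Hodge structures $\Fil^\bullet_{\mathrm{Hdg}}\mathbf{dR}(W)$ is pure of weight $(0,0)$: the filtration is concentrated in degree $0$, so $\gr^{-1}_{\mathrm{Hdg}}\mathbf{dR}(W_{\Int}) \simeq 0$, hence $\bm{co}(W_{\Int})\simeq 0$ and $\mathbb{L}_{\mathcal{Z}_K(W_{\Int})/\Ss_K}\simeq 0$. Thus $\mathcal{Z}_K(W_{\Int})\to \Ss_K$ is formally \'etale of virtual codimension $0$; being also finite and unramified, it is \'etale and classical (using for instance~\cite[(25.3.6.6)]{Lurie2018-kh} as in Corollary~\ref{cor:classical_truncation}, the virtual-codimension-$0$ quasi-smooth case forces the map from the classical truncation to be an equivalence).

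Next I would identify the underlying classical \'etale sheaf with $\underline{W}_{\Int}$. Since $\mathcal{Z}_K(W_{\Int})\to \Ss_K$ is now a classical finite \'etale map, it suffices to identify its $\Comp$-points and its fibers over geometric points, and to check the identification is $G(\Rat)$- and $K$-equivariant. By part (3) of Theorem~\ref{thm:W_representability},
\[
\mathcal{Z}_K(W_{\Int})(\Comp) \simeq G(\Rat)\backslash X(W_{\Int})/K, \qquad X(W_{\Int}) = \{(w,x,g): w\in gW_{\Int}(\widehat{\Int}),\ h_x(i)\cdot w = w\}.
\]
Because $G$ acts trivially on $W$, the condition $h_x(i)\cdot w = w$ is automatic for every $x$ and every $w$ (the Hodge structure on $W$ is trivial of type $(0,0)$), so $X(W_{\Int}) = \{(w,x,g): w\in gW_{\Int}(\widehat{\Int})\}$, and the projection $(w,x,g)\mapsto (x,g)$ exhibits $X(W_{\Int})$ as the pullback along $X\times G(\Adele_f)\to G(\Adele_f)$ of the set $\{(w,g): w\in gW_{\Int}(\widehat\Int)\}$. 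Quotienting by $G(\Rat)$ (which acts trivially on $w\in W$ but nontrivially on $g$) and by $K$, one recognizes $G(\Rat)\backslash X(W_{\Int})/K$ as the total space over $\Sh_K(\Comp) = G(\Rat)\backslash X\times G(\Adele_f)/K$ of the local system with fiber $W_{\Int}$ cut out by the adelic congruence condition — that is, $\underline{W}_{\Int}(\Comp)$. This is precisely the Betti description of the constant \'etale sheaf $\underline{W}_{\Int}$ over $\Ss_K$ restricted to the generic fiber.

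To promote this from the complex fiber to all of $\Ss_K$, I would invoke the rigidity built into Theorem~\ref{thm:W_representability}: the $\ell$-adic realization (Proposition~\ref{prop:W_special_l-adic}) identifies $\mathcal{Z}_K(W_{\Int})[\ell^{-1}]$ with an \'etale-sheaf-theoretic object compatible with $\mathbf{Et}_\ell(W_{\Int})$, and when $G$ acts trivially, $\mathbf{Et}_\ell(W_{\Int})$ is itself the constant sheaf $W_{\Int}\otimes\Int_\ell$; together with part (4) on mod-$p$ points (for a finite field $k$ of characteristic $p$, $\Int_p\otimes\mathcal{Z}_K(W_{\Int})(z)\simeq \mathbf{Crys}_{p,z}(W_{\Int})^{\varphi_0=\mathrm{id}}$, and the $F$-crystal $\mathbf{Crys}_p(W_{\Int})$ is the unit crystal $\Reg{}\otimes W_{\Int}$ when $G$ acts trivially, whose Frobenius fixed points are exactly $W_{\Int}$) this pins down the finite \'etale $\Ss_K$-scheme $\mathcal{Z}_K(W_{\Int})$ on all geometric points compatibly, hence identifies it with $\underline{W}_{\Int}$. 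A clean way to organize the last step, avoiding a pointwise argument, is to use functoriality (Step 3 of the proof of Theorem~\ref{thm:W_representability}): the lattice $W_{\Int}$ with trivial $G$-action is, as an object of $\mathrm{Latt}^0_{G,\mu,K}$, a direct sum of copies of the trivial rank-one lattice $(\Rat,\Int)$ after a finite \'etale base change trivializing $W_{\Int}(\widehat\Int)$ as a $K$-module, so by the additivity in Proposition~\ref{prop:product_formula} it suffices to treat $(\Rat,\Int)$ and descend; here $\mathcal{Z}_K(\Int)$ has trivial cotangent complex, is finite \'etale over $\Ss_K$, and has a canonical global section $1$, whence $\mathcal{Z}_K(\Int)\simeq \underline{\Int}$. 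The main obstacle is making the descent/$K$-equivariance bookkeeping in this last step precise — i.e. checking that the finite \'etale cover descending to $\underline{W}_{\Int}$ is the correct one and not a twist — which is where the $\ell$-adic compatibility of Proposition~\ref{prop:W_special_l-adic} does the essential work; once one has that the $\ell$-adic realization of $\mathcal{Z}_K(W_{\Int})$ is the constant local system $W_{\Int}\otimes\Int_\ell$ for every $\ell$, the identification with $\underline{W}_{\Int}$ is forced.
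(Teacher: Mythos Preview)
Your argument is essentially correct but considerably longer than necessary, and the paper's proof is a two-line affair that you in fact arrive at near the end of your proposal. The paper simply invokes Proposition~\ref{prop:product_formula} to reduce to the case of the trivial rank-one lattice $(\Rat,\Int)$, and then checks this case directly by realizing $\Rat$ as the space of scalars inside $\End(H)$ for any Siegel-type representation $H$: by Theorem~\ref{thm:W_representability}(5), $\mathcal{Z}_K(\End(H_{\Int}))\simeq \mathbb{H}(\mathcal{A}_H,\mathcal{A}_H)$, and the classical points of the closed substack $\mathcal{Z}_K(\Int)$ are exactly the endomorphisms of $\mathcal{A}_H$ whose homological realization is scalar, i.e.\ $\Int\cdot\mathrm{id}$. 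Combined with your (correct) observation that the cotangent complex vanishes, this finishes.

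A few remarks on your longer route. First, your worry about ``a finite \'etale base change trivializing $W_{\Int}(\widehat\Int)$ as a $K$-module'' is unnecessary: since $G$ acts trivially on $W$, so does $K$, and $W_{\Int}$ is already isomorphic to $\Int^r$ in $\mathrm{Latt}^0_{G,\mu,K}$ with no descent needed. Second, your dispatch of the rank-one case (``has a canonical global section $1$, whence $\mathcal{Z}_K(\Int)\simeq\underline{\Int}$'') is where the real content lies, and you correctly flag that the ``not a twist'' issue is the obstacle. The paper's embedding into $\End(H)$ is exactly what resolves this cleanly: it gives a concrete \emph{map} $\underline{\Int}\to \mathcal{Z}_K(\Int)$ (namely $n\mapsto n\cdot\mathrm{id}_{\mathcal{A}_H}$) whose bijectivity on classical points is immediate, so no $\ell$-adic or crystalline bookkeeping is required. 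Your approach via realizations would work too, but it is the scenic route.
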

\begin{proof}
By proposition~\ref{prop:product_formula}, it suffices to do this for lattices in the $1$-dimensional trivial representation, which embeds as the space of scalars in $\End(H)$ for any representation $H$ of Siegel type, and the result is easy to check directly in this case.
\end{proof}

\begin{proposition}
\label{prop:W_and_W'}
Suppose that we have $W\in \mathrm{Rep}(G)^0_{\mu}$ and two $K$-stable lattices $W_{\Int}\subset W'_{\Int}\subset W$ such that $K$ acts trivially on the quotient $W'_{\Int}/W_{\Int}$. Then homotopy cokernel of the map $\mathcal{Z}_{K}(W_{\Int})\to \mathcal{Z}_K(W'_{\Int})$ of abelian group objects in $\mathrm{PStk}_{/\Ss_K}$ is canonically equivalent to the constant \'etale group scheme $\underline{W'_{\Int}/W_{\Int}}$.
\end{proposition}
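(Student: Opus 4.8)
The strategy is to identify the cokernel by computing it separately over the generic fiber and over the $p$-adic formal completions, and then gluing, exactly as in the proof of Theorem~\ref{thm:W_representability}. First I would reduce to a statement about sheaves of abelian groups: since all the $\mathcal{Z}_K(?)$ are abelian group objects in $\mathrm{PStk}_{/\Ss_K}$ and the map $\mathcal{Z}_K(W_{\Int})\to \mathcal{Z}_K(W'_{\Int})$ is a finite unramified monomorphism of such (it is a closed immersion with the same cotangent complex, both being pullbacks of $\bm{co}(W_{\Int})[1]\simeq \bm{co}(W'_{\Int})[1]$ since $W'_{\Int}/W_{\Int}$ is torsion and $G$-trivial), the homotopy cokernel $\mathcal{Q} = \hcoker(\mathcal{Z}_K(W_{\Int})\to \mathcal{Z}_K(W'_{\Int}))$ is a discrete \'etale sheaf, concentrated in degree $0$; its cotangent complex over $\Ss_K$ vanishes, so it is formally \'etale. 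It then suffices to produce a canonical isomorphism $\mathcal{Q}\xrightarrow{\simeq}\underline{W'_{\Int}/W_{\Int}}$, and since both sides are \'etale sheaves it is enough to check this on geometric points $z\in \Ss_K(k)$, functorially.

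Next I would treat the two kinds of geometric points. Over a point $z\in \Ss_K(k)$ with $k$ of characteristic $0$: using Remark~\ref{rem:points_of_spaces_over_fields} we have $\mathcal{Z}_K(W_{\Int})(z)\hookrightarrow \mathcal{Z}_K(W'_{\Int})(z)\hookrightarrow \Fil^0_{\mathrm{Hdg}}\mathbf{dR}_z(W)^{\nabla=0}$, and by the complex-points description from Lemma~\ref{lem:W_special_complex} applied after base change to $\Comp$ (together with Lemma~\ref{lem:W_special_geometric} to transfer from $\Comp$ to $k$), $\mathcal{Z}_K(W'_{\Int})(z)/\mathcal{Z}_K(W_{\Int})(z)$ is identified with the set of $K$-orbits of elements of $W'_{\Int}(\widehat\Int)/W_{\Int}(\widehat\Int)$ that are fixed by $h_x(i)$ — but $W'_{\Int}/W_{\Int}$ is entirely of type $(0,0)$ (the Hodge structure is trivial there since $G$, hence $h_x$, acts trivially) and $K$ acts trivially, so this quotient is exactly $W'_{\Int}/W_{\Int}$. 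Over a point $z\in \Ss_K(k)$ with $k$ a finite field of characteristic $p$: by property (4) of Theorem~\ref{thm:W_representability} we have $\Int_p\otimes \mathcal{Z}_K(W_{\Int})(z)\simeq \mathbf{Crys}_{p,z}(W_{\Int})^{\varphi_0=\mathrm{id}}$ and likewise for $W'_{\Int}$; since the conjugate/Frobenius structure on $W'_{\Int}/W_{\Int}$ is trivial and $W'_{\Int}/W_{\Int}$ is $p$-power torsion, the $\varphi_0 = \mathrm{id}$ locus of $\mathbf{Crys}_{p,z}(W'_{\Int}/W_{\Int})$ is all of it, and a short exact-sequence argument (Proposition~\ref{prop:spaces_exactness_properties}, applied to $0\to W_{\Int}\to W'_{\Int}\to W'_{\Int}/W_{\Int}\to 0$, together with Proposition~\ref{prop:trivial_rep} identifying $\mathcal{Z}_K(W'_{\Int}/W_{\Int})$ with $\underline{W'_{\Int}/W_{\Int}}$) gives the cokernel on $p$-torsion. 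The prime-to-$p$ part is immediate from the $\ell$-adic realizations (Proposition~\ref{prop:W_special_l-adic}).

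Finally I would glue: having identified $\mathcal{Q}$ with $\underline{W'_{\Int}/W_{\Int}}$ on all geometric points compatibly, and knowing $\mathcal{Q}$ is a discrete \'etale sheaf, the canonical comparison map $\mathcal{Q}\to\underline{W'_{\Int}/W_{\Int}}$ (coming from functoriality of $\mathcal{Z}_K(-)$ applied to $W'_{\Int}\to W'_{\Int}/W_{\Int}$ and Proposition~\ref{prop:trivial_rep}) is an isomorphism, because a map of \'etale sheaves that is an isomorphism on all geometric points is an isomorphism. The main obstacle I anticipate is bookkeeping at mixed-characteristic points — making sure the identification $\Int_p\otimes\mathcal{Z}_K(W_{\Int})(z)\simeq\mathbf{Crys}_{p,z}(W_{\Int})^{\varphi_0=\mathrm{id}}$ from Theorem~\ref{thm:W_representability}(4) is used compatibly with the exact sequence of Proposition~\ref{prop:spaces_exactness_properties} and with the integral $p$-adic comparison~\eqref{real:integral_p-adic}, so that the cokernel really is the full $p$-part $W'_{\Int}/W_{\Int}\otimes\Int_p$ and not merely a subgroup; Proposition~\ref{prop:tate_conjecture} and the argument in Step 3 of the proof of Theorem~\ref{thm:W_representability} furnish exactly the needed input. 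Everything else is a routine assembly of results already established.
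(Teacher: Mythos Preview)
Your overall architecture is right: the key step is showing that $\mathcal{Q}=\hcoker(i)$ has vanishing relative cotangent complex and is therefore finite \'etale over $\Ss_K$, after which the identification with $\underline{W'_{\Int}/W_{\Int}}$ becomes a question about \'etale sheaves. The paper does exactly this, proving a separate lemma (Lemma~\ref{lem:U_for_W'_and_W}) that $\bm{co}(W'_{\Int})\to\bm{co}(W_{\Int})$ is an isomorphism; your parenthetical justification (``torsion and $G$-trivial'') is morally correct but the actual argument, which the paper gives, requires a small pullback trick with a free abelian group surjecting onto $W'_{\Int}/W_{\Int}$.

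There is, however, a real gap in how you construct the comparison map. You repeatedly invoke $\mathcal{Z}_K(W'_{\Int}/W_{\Int})$, Proposition~\ref{prop:trivial_rep}, and Proposition~\ref{prop:spaces_exactness_properties} applied to the sequence $0\to W_{\Int}\to W'_{\Int}\to W'_{\Int}/W_{\Int}\to 0$. But $W'_{\Int}/W_{\Int}$ is a finite torsion group, not a lattice in a $\Rat$-representation, so it does not lie in $\mathrm{Latt}^0_{G,\mu,K}$ and the functor $\mathcal{Z}_K(-)$ is simply not defined on it; likewise Propositions~\ref{prop:trivial_rep} and~\ref{prop:spaces_exactness_properties} take objects of $\mathrm{Latt}^0_{G,\mu,K}$ as input. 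So your comparison map is not actually constructed.

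The paper sidesteps this entirely, and in a way that also eliminates all your characteristic-$p$ work. Once $\mathcal{Q}$ is known to be finite \'etale over the smooth $\Int$-stack $\Ss_K$, it is determined by its restriction to the generic fiber $\Sh_K$. There the paper builds the comparison map via the \emph{\'etale realization}: the functorial map $\mathcal{Z}_K(W'_{\Int})\vert_{\Sh_K}\to \mathbf{Et}_{\widehat{\Int}}(W'_{\Int})$ (from Proposition~\ref{prop:W_special_l-adic}) induces
\[
\hcoker(i)\vert_{\Sh_K}\;\longrightarrow\;\mathbf{Et}_{\widehat{\Int}}(W'_{\Int})/\mathbf{Et}_{\widehat{\Int}}(W_{\Int})\;\simeq\;\underline{W'_{\Int}/W_{\Int}},
\]
the last isomorphism because $K$ acts trivially on the quotient. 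That this is an isomorphism is then checked over $\Comp$ using the explicit description of complex points in Theorem~\ref{thm:W_representability}(3). No crystalline input, no Proposition~\ref{prop:tate_conjecture}, no mixed-characteristic bookkeeping is needed.
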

\begin{proof}
Set $\mathcal{Z} = \mathcal{Z}_K(W_{\Int})$ and $\mathcal{Z}' = \mathcal{Z}_K(W'_{\Int})$, and write $i:\mathcal{Z}\to \mathcal{Z}'$ for the map between them. We now have a Cartesian diagram
\[
\begin{diagram}
\hcoker(i)&\rTo^\pi&\Ss_K\\
\dTo&&\dTo_{0}\\
\mathcal{Z}[1]&\rTo_{i[1]}&\mathcal{Z}'[1]
\end{diagram}
\]
in $\mathrm{PStk}_{/\Ss_K}$. Using this and (1) of Theorem~\ref{thm:W_representability}, we find that we have a cofiber sequence
\begin{equation}
\label{eqn:etale_cotangent_complex}
\mathbb{L}_{\hcoker(i)/\Ss_K}\to \pi^*\bm{co}(W'_{\Int})[2]\to \pi^*\bm{co}(W_{\Int})[2]
\end{equation}
in $\mathrm{QCoh}_{\hcoker(i)}$.

\begin{lemma}\label{lem:U_for_W'_and_W}
The map $\bm{co}(W'_{\Int})\to \bm{co}(W_{\Int})$ is an isomorphism of vector bundles over $\Ss_K$.
\end{lemma}
\begin{proof}
This is clear over $\Sh_K$. To see this over the integral model, choose a surjection $U_{\Int}\to W'_{\Int}/W_{\Int}$ with $U_{\Int}$ a finite free abelian group, and consider the following diagram of short exact sequences, where the top sequence is obtained from the bottom via pullback:
\[
\begin{diagram}
0&\rTo&W_{\Int}&\rTo&W^\sharp_{\Int}&\rTo& U_{\Int}&\rTo& 0\\
&&\dTo&&\dTo&&\dTo\\
0&\rTo &W_{\Int}&\rTo&W'_{\Int}&\rTo&W'_{\Int}/W_{\Int}&\rTo& 0.
\end{diagram}
\]

Then $W^\sharp_{\Int}$ is a lattice in a $\Rat$-vector space $W^\sharp$, viewed as a representation of $G$ obtained as the (necessarily trivial) extension of $U = U_{\Int}\otimes\Rat$ with the trivial action of $G$ by $W$. Now, since $\gr^i_{\mathrm{Hdg}}\mathbf{dR}(U_{\Int}) = 0$ for $i\neq 0$, we find that the natural maps
\[
\gr^{-1}_{\mathrm{Hdg}}\mathbf{dR}(W_{\Int})\to \gr^{-1}_{\mathrm{Hdg}}\mathbf{dR}(W^\sharp_{\Int})\;;\;\gr^{-1}_{\mathrm{Hdg}}\mathbf{dR}(W^\sharp_{\Int})\to \gr^{-1}_{\mathrm{Hdg}}\mathbf{dR}(W'_{\Int})
\]
are both surjective, which gives the lemma.
\end{proof}	

The sequence~\eqref{eqn:etale_cotangent_complex} and the above lemma combine to show that $\hcoker(i)$ is (locally) finite \'etale over $\Ss_K$. Therefore to prove the proposition, it suffices to establish the equivalence $\hcoker(i)\simeq \underline{W'_{\Int}/W_{\Int}}$ over the generic fiber. 

\begin{lemma}
There is a functorial \'etale realization map
\[
\mathcal{Z}_K(W_{\Int})\vert_{\Sh_K}\to \mathbf{Et}_{\widehat{\Int}}(W_{\Int}) \overset{\mathrm{defn}}{=} \prod_{\ell}\mathbf{Et}_{\ell}(W_{\Int}).
\]
\end{lemma}
\begin{proof}
This follows from the construction in the proof of Theorem~\ref{thm:W_representability}, combined with Proposition~\ref{prop:W_special_l-adic}. 
\end{proof}

The above lemma gives us a canonical map
\[
\hcoker(i)\vert_{\Sh_K}\to \mathbf{Et}_{\widehat{\Int}}(W'_{\Int})/\mathbf{Et}_{\widehat{\Int}}(W_{\Int}) \simeq \underline{W'_{\Int}/W_{\Int}},
\]
where the last isomorphism follows from the fact that $K$ acts trivially on $W'_{\Int}/W_{\Int}$. That the above composition is an isomorphism can now be checked over $\Comp$ using (2) of Theorem~\ref{thm:W_representability}.
\end{proof}

\begin{proposition}
\label{prop:W_dual}
Suppose that we have $(W,W_{\Int})$ in $\mathrm{Latt}^0_{G,\mu,K}$ and a $K$-stable lattice $\tilde{W}_{\Int}\subset W^\vee$ in the dual representation. Then, for any $s\in \Ss_K(C)$ with $C$ discrete, there exists a canonical pairing
\[
\mathcal{Z}_K(W_{\Int})(s)\times \mathcal{Z}_K(\tilde{W}_{\Int})(s)\to \Rat,
\]
characterized by the following properties: 
\begin{enumerate}
	\item When $C= \Comp$ and $s = [(x,g)]$,  the pairing is the restriction of the canonical $\Rat$-valued pairing between $W$ and $W^\vee$.
	\item When $C = k$ is a perfect field of characteristic $p$, the pairing is the restriction of the canonical pairing
	\[
      \mathbf{Crys}_{p,s}(W_{\Int})\times \mathbf{Crys}_{p,s}(\tilde{W}_{\Int})\to W(k)[1/p]
	\]
\end{enumerate}
\end{proposition}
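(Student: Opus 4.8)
\textbf{Proof plan for Proposition~\ref{prop:W_dual}.} The strategy is to reduce the existence of the pairing to the case of the space of derived homomorphisms between abelian schemes, where one has a concrete construction via composition and trace, and then to descend the construction along the embedding $W \hookrightarrow \Hom(H_1,H_2)$. More precisely, I would first choose representations $H_1,H_2$ of Siegel type together with $K$-stable lattices $H_{1,\Int},H_{2,\Int}$ such that $W_{\Int}\subset \Hom(H_{1,\Int},H_{2,\Int})$ as in~\eqref{subsec:homomorphism_spaces}; then $W^\vee$ is a quotient of $\Hom(H_2,H_1)$ (by Lemma~\ref{lem:rep_0_props}), and after enlarging the lattices I can find $K$-stable $\tilde{H}_{i,\Int}$ so that $\tilde{W}_{\Int}$ is a $K$-stable lattice in a subrepresentation $\tilde{W}\subset \Hom(\tilde{H}_2,\tilde{H}_1)$ pairing with $W$. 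The key input is that for abelian schemes $\mathcal{A}_1,\mathcal{A}_2$ over $C$, composition $\Hom(\mathcal{A}_{1,s},\mathcal{A}_{2,s})\times \Hom(\mathcal{A}_{2,s},\mathcal{A}_{1,s})\to \End(\mathcal{A}_{1,s})$ followed by the degree/trace map $\End(\mathcal{A}_{1,s})\to \Int$ (extended $\Rat$-linearly) gives a $\Rat$-valued pairing, functorial in $C$; this is the classical-truncation incarnation of the pairing on $\mathbb{H}(\mathcal{A}_1,\mathcal{A}_2)\times\mathbb{H}(\mathcal{A}_2,\mathcal{A}_1)$.

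Second, I would check the two compatibility properties (1) and (2) \emph{at the level of the homomorphism spaces}: over $\Comp$, via the Betti realization identification $\mathbf{HS}^{(0,0)}_x(\Hom(H_{1,\Int},H_{2,\Int})) = \Hom(H_1(\mathcal{A}_{1,x},\Int),H_1(\mathcal{A}_{2,x},\Int))$ from Lemma~\ref{lem:W_special_complex}, the trace pairing corresponds to the canonical evaluation pairing; and over a perfect field $k$ of characteristic $p$, via the crystalline realization $\mathcal{Z}_{K,crys}(W_{\Int})^{\form}_p(z) \simeq \mathbf{Crys}_{p,z}(W_{\Int})^{\varphi_0=\mathrm{id}}$ from Remark~\ref{rem:points_of_spaces_over_fields}, composition of homomorphisms induces composition of Dieudonné crystals and the trace corresponds to the crystalline trace into $W(k)[1/p]$. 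These are essentially bookkeeping once one recalls that the covariant de Rham / crystalline realizations of abelian schemes are compatible with composition and that the degree map is compatible with its realizations (this is standard; it can be extracted from~\cite{bbm:cris_ii}).

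Third, I would transfer the pairing from the homomorphism spaces to $\mathcal{Z}_K(W_{\Int})$ and $\mathcal{Z}_K(\tilde{W}_{\Int})$ using the embeddings $\mathcal{Z}_K(W_{\Int})\hookrightarrow \mathcal{Z}_K(\Hom(H_{1,\Int},H_{2,\Int})) = \mathbb{H}(\mathcal{A}_1,\mathcal{A}_2)$ and $\mathcal{Z}_K(\tilde{W}_{\Int})\hookrightarrow \mathbb{H}(\tilde{\mathcal{A}}_2,\tilde{\mathcal{A}}_1)$, noting by Theorem~\ref{thm:W_representability}(5) and Corollary~\ref{cor:hom_additive} that on a common enlargement $\tilde{H}_i$ one gets compatible projections; one restricts the trace pairing on the big homomorphism spaces. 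Finally, I would check that the resulting pairing on $\mathcal{Z}_K(W_{\Int})(s)\times\mathcal{Z}_K(\tilde{W}_{\Int})(s)$ is independent of all these choices: this follows from properties (1) and (2), since the $\Comp$-points and the points over all perfect fields of characteristic $p$ jointly determine a map on classical $C$-points for $C$ discrete, by the same kind of argument as in the proof of Theorem~\ref{thm:W_representability} (Step 3), using that a natural transformation of functors on $\mathrm{CRing}_\heartsuit$ valued in discrete abelian groups is pinned down by its values on fields together with the connectedness/genericization arguments of Lemma~\ref{lem:W_special}.

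\textbf{Main obstacle.} The genuinely nontrivial point is the compatibility over finite (or perfect) fields of characteristic $p$: one must know that the crystalline realization of the composition of two homomorphisms of abelian schemes over $k$, together with the trace map $\End(\mathcal{A}_{1,s})\otimes\Rat \to \Rat$, matches the pairing $\mathbf{Crys}_{p,s}(W_{\Int})\times\mathbf{Crys}_{p,s}(\tilde{W}_{\Int})\to W(k)[1/p]$ coming purely from the crystal structure—in particular that the trace is computed by the same formula on both sides. This is where one has to invoke functoriality of covariant Dieudonné theory for homomorphisms and the compatibility of the degree of an isogeny with its crystalline determinant; everything else (the $\Comp$-case, the descent along $W\hookrightarrow\Hom(H_1,H_2)$, independence of choices) is formal given the machinery already set up in Sections~\ref{sec:abvar_crystals} and~\ref{sec:shimura}.
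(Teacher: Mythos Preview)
Your overall strategy matches the paper's: reduce to the case $W=\Hom(H_1,H_2)$, where the pairing is composition followed by the trace $\underline{\End}(\mathcal{A}_1)\to\underline{\Int}$, and then restrict. The paper also treats the characterizing properties (1) and (2) as pinning down the pairing uniquely by functoriality, so your independence-of-choices discussion is in the same spirit, if more explicit.

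The one substantive difference is \emph{how} to embed $W^\vee$ into a Hom space compatibly with the embedding of $W$. You note that $W^\vee$ is a quotient of $\Hom(H_2,H_1)$ and then say you will ``enlarge'' to find a subrepresentation $\tilde W\subset\Hom(\tilde H_2,\tilde H_1)$; but with different $\tilde H_i$ you cannot directly compose elements of $\Hom(\mathcal{A}_1,\mathcal{A}_2)$ with elements of $\Hom(\tilde{\mathcal{A}}_2,\tilde{\mathcal{A}}_1)$, and your ``common enlargement'' step is left vague. The paper sidesteps this entirely: it fixes $G$-polarizations on $H_1,H_2$, giving $G$-equivariant isomorphisms $j_i:H_i^\vee\simeq H_i(\nu)$, and hence an involution $\alpha\mapsto\alpha^*=j_1^{-1}\circ\alpha^\vee\circ j_2$ identifying $\Hom(H_1,H_2)$ with its own dual $\Hom(H_2,H_1)$ via a \emph{non-degenerate} $G$-invariant form. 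Taking the orthogonal complement $W^\perp$ yields a canonical splitting $\Hom(H_2,H_1)\simeq W^\vee\oplus W^{\perp,\vee}$, so $W^\vee$ sits inside $\Hom(H_2,H_1)$ for the \emph{same} $H_1,H_2$. This makes the reduction to the full Hom case immediate and the composition well-defined.

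Your ``main obstacle'' is a bit overstated: the paper simply invokes the trace map $\underline{\End}(\mathcal{A}_1)\to\underline{\Int}$ as the unique one compatible with trace on every homological realization, and treats the crystalline compatibility as standard. The genuinely missing ingredient in your sketch is the polarization/orthogonal-complement trick, not the Dieudonn\'e-theoretic bookkeeping.
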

\begin{proof}
The uniqueness of a functorial in $s$ pairing with the given properties is clear. It remains to establish existence. The assertion is insensitive to the choice of $K$-stable lattices $W_{\Int}$ and $\tilde{W}_{\Int}$, so we will choose them as necessary for the proof. 

Fix representations of Siegel type $H_1,H_2$ with $W \subset \Hom(H_1,H_2)$, as well as $G$-polarizations on $H_1$, $H_2$, giving rise to $G$-equivariant isomorphisms $j_i:H_i^\vee\xrightarrow{\simeq}H_i(\nu)$ for $i=1,2$. This equips $\Hom(H_1,H_2)$ with a $G$-equivariant map
\[
\Hom(H_1,H_2)\xrightarrow{\alpha\mapsto \alpha^*}\Hom(H_2,H_1),
\]
where
\[
\alpha^* = j_1\circ\alpha^\vee\circ j_2^{-1}\in \Hom(H_1(\nu),H_2(\nu)) \simeq \Hom(H_2,H_1).
\]
The pairing $(\alpha,\beta)\mapsto \mathrm{Tr}(\alpha^*\circ \beta)$ gives a \emph{non-degenerate} $G$-invariant bilinear form on $\Hom(H_1,H_2)$. We can therefore use this to write down an orthogonal decomposition
\[
\Hom(H_1,H_2) = W \oplus W^{\perp}.
\]
In turn, this gives us a decomposition
\[
\Hom(H_2,H_1) \simeq \Hom(H_1,H_2)^\vee = W^{\vee}\oplus W^{\perp,\vee},
\]
and allows us to view $W^{\vee}$ as a sub-representation of $\Hom(H_2,H_1)$. Given this, we reduce the proof of existence to the case where $W = \Hom(H_1,H_2)$, where we can take a $K$-stable lattices of the form 
\[
W_{\Int} = \Hom(H_{1,\Int},H_{2,\Int})\;;\;\tilde{W}_{\Int} = \Hom(H_{2,\Int},H_{1,\Int})
\]
for $K$-stable lattices $H_{i,\Int}$. In this case, we have associated abelian schemes $\mathcal{A}_1,\mathcal{A}_2$ over $\Ss_K$ along with identifications
\[
\mathcal{Z}^{\mathrm{cl}}_K(W_{\Int})\simeq \underline{\Hom}(\mathcal{A}_1,\mathcal{A}_2)\;;\; \mathcal{Z}^{\mathrm{cl}}_K(\tilde{W}_{\Int})\simeq \underline{\Hom}(\mathcal{A}_2,\mathcal{A}_1),
\]
and the canonical pairing is given by the composition
\[
\underline{\Hom}(\mathcal{A}_1,\mathcal{A}_2)\times \underline{\Hom}(\mathcal{A}_2,\mathcal{A}_1)\xrightarrow{(f,g)\mapsto g\circ f}\underline{\End}(\mathcal{A}_1)\xrightarrow{\mathrm{Tr}}\underline{\Int},
\]
where the last map is the unique one compatible with the trace on every homological realization.
\end{proof}

\begin{remark}
\label{rem:pairing_etale}
By construction, the pairing above is also compatible under the $\ell$-adic realization map with the canonical pairing
\[
\mathbf{Et}_{\ell}(W_{\Int})\times \mathbf{Et}_{\ell}(\tilde{W}_{\Int})\to \underline{\Rat}_{\ell}
\]
over $\Ss_K[\ell^{-1}]$.
\end{remark}

\begin{remark}
\label{rem:pairing_integral}
If $\tilde{W}_{\Int} = W^\vee_{\Int}$ is the $\Int$-dual of $W_{\Int}$, then the pairing constructed above is actually $\Int$-valued.
\end{remark}

\begin{corollary}
\label{cor:modular_quadratic}
With the notation as above, suppose that $q:W\to \Rat$ is a $G$-invariant quadratic form with associated bilinear form $[w_1,w_2]_q = q(w_1+w_2)-q(w_1)-q(w_2)$. Then, for every $s\in \Ss_K(C)$ with $C$ discrete, there exists a canonical quadratic form $q':\mathcal{Z}_K(W_{\Int})(s)\to \Rat$ that, when $C = \Comp$ and $s = [(x,g)]$, agrees with the form
\[
q':\mathcal{Z}_K(W_{\Int})(s) \simeq \{w\in W(\Rat)\cap gW_{\Int}(\widehat{\Int}):\;h_x(\Comp^\times)w = w\}\xrightarrow{w\mapsto q(w)}\Rat.
\]
\end{corollary}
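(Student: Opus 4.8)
The plan is to deduce Corollary~\ref{cor:modular_quadratic} from Proposition~\ref{prop:W_dual} applied with $\tilde{W}_{\Int}$ a suitable $K$-stable lattice in $W^\vee$. The key observation is that a $G$-invariant quadratic form $q$ on $W$ is the same datum as a $G$-equivariant symmetric map $\beta_q\colon W\to W^\vee$, namely $\beta_q(w_1)(w_2) = [w_1,w_2]_q$, together with the compatible quadratic refinement $q$ itself. So first I would fix such a $\beta_q$, pick $K$-stable lattices $W_{\Int}\subset W$ and $\tilde{W}_{\Int}\subset W^\vee$ with $\beta_q(W_{\Int})\subset \tilde{W}_{\Int}$ (possible after scaling, and immaterial by the independence-of-lattice remarks that accompany Theorem~\ref{thm:W_representability}), and apply the functoriality of $(W,W_{\Int})\mapsto \mathcal{Z}_K(W_{\Int})$ from Theorem~\ref{thm:W_representability} to $\beta_q$ to obtain a map of abelian group objects $\mathcal{Z}_K(\beta_q)\colon\mathcal{Z}_K(W_{\Int})\to \mathcal{Z}_K(\tilde{W}_{\Int})$ over $\Ss_K$.

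Next, for $s\in\Ss_K(C)$ with $C$ discrete, I would compose the diagonal $\mathcal{Z}_K(W_{\Int})(s)\xrightarrow{(\mathrm{id},\mathcal{Z}_K(\beta_q)(s))}\mathcal{Z}_K(W_{\Int})(s)\times \mathcal{Z}_K(\tilde{W}_{\Int})(s)$ with the canonical $\Rat$-valued pairing of Proposition~\ref{prop:W_dual}; this produces a $\Rat$-valued function on $\mathcal{Z}_K(W_{\Int})(s)$, which a priori is a quadratic form in the variable (since the pairing is bilinear and $\mathcal{Z}_K(\beta_q)(s)$ is additive). I would then define $q'$ to be \emph{half} of this function, i.e.\ $q'(v) = \tfrac12\langle v,\mathcal{Z}_K(\beta_q)(s)(v)\rangle$. (Rational coefficients are available, so the factor $\tfrac12$ is harmless; one could also avoid it by arranging the normalization in the definition of $\beta_q$, but keeping it matches the Gram-matrix convention $\Phi(\underline w') = (\tfrac12\Phi(w_i',w_j'))$ used elsewhere in the paper.) Functoriality of $\mathcal{Z}_K(\beta_q)$ in $s$, together with the functoriality in $s$ of the pairing in Proposition~\ref{prop:W_dual}, shows $q'$ is functorial in $s$, which also gives uniqueness.

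It remains to identify $q'$ over $\Comp$. Here I would invoke property~(1) of Proposition~\ref{prop:W_dual}: for $s = [(x,g)]$, the canonical pairing on $\mathcal{Z}_K(W_{\Int})(s)\times\mathcal{Z}_K(\tilde W_{\Int})(s)$ is the restriction of the standard pairing between $W$ and $W^\vee$, and by property~(2) of Theorem~\ref{thm:W_representability} the complex points of $\mathcal{Z}_K(W_{\Int})$ are $\{w\in W(\Rat)\cap gW_{\Int}(\widehat{\Int}):h_x(\Comp^\times)w = w\}$, with $\mathcal{Z}_K(\beta_q)$ acting as $w\mapsto \beta_q(w)$ (this last point follows because the functor $W\mapsto \mathcal{Z}_{K,\infty}(W)$ and the Betti realization are compatible, as recorded in Corollary~\ref{cor:funct_local_spaces} and Lemma~\ref{lem:W_special_complex}). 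Thus $q'(w) = \tfrac12[w,w]_q = q(w)$, as claimed. The only mild subtlety --- and the step I would be most careful about --- is checking that $q'$ is genuinely a quadratic form (not merely that its associated bilinear form is $[\,\cdot,\cdot\,]_q$-related): one must verify $q'(\lambda v) = \lambda^2 q'(v)$ and that the polarization of $q'$ recovers the pairing $v,w\mapsto \langle v,\mathcal{Z}_K(\beta_q)(s)(w)\rangle$, which follows from the symmetry of $\beta_q$ (equivalently, symmetry of the Gram pairing), and this symmetry can itself be checked on complex points by the same comparison, since two functorial maps of the locally quasi-smooth stacks agreeing on $\Comp$-points and mod-$p$ points agree by the rigidity arguments (Corollary~\ref{cor:Y_determined_by_points}) used in the proof of Theorem~\ref{thm:W_representability}.
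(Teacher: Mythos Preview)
Your proposal is correct and follows essentially the same route as the paper: the paper's proof simply notes that $q$ induces a $G$-equivariant map $W\to W^\vee$, chooses the smallest $K$-stable lattice $\tilde{W}_{\Int}\subset W^\vee$ receiving $W_{\Int}$, applies the functoriality of Theorem~\ref{thm:W_representability} to obtain $\mathcal{Z}_K(W_{\Int})\to\mathcal{Z}_K(\tilde{W}_{\Int})$, and then invokes the pairing of Proposition~\ref{prop:W_dual}. You have filled in the normalization by $\tfrac12$ and the verification over $\Comp$-points that the paper leaves implicit, but the argument is the same.
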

\begin{proof}
The quadratic form gives rise to a $G$-equivariant map $W\to W^\vee$. Let $\tilde{W}_{\Int}\subset W^\vee$ be the smallest $K$-stable lattice such that $W_{\Int}$ is carried into $\tilde{W}_{\Int}$. Then we obtain a map
\[
\mathcal{Z}_K(W_{\Int})\to \mathcal{Z}_K(\tilde{W}_{\Int})
\]
canonically associated with $q$. 

Combining this with the pairing from Proposition~\ref{prop:W_dual} now proves the lemma.
\end{proof}

\begin{lemma}
\label{lem:modular_quadratic_congruence}
Suppose that in the situation of Corollary~\ref{cor:modular_quadratic}, we have an inclusion of $K$-stable lattices $W_{\Int}\subset W'_{\Int}\subset W$ such that the following properties hold:
\begin{enumerate}[label=(\roman*)]
	\item $q(W_{\Int})\subset \Int$.
	\item For all $x\in W_{\Int}$, $x'\in W'_{\Int}$, we have $[x,x']_q\in \Int$.
	\item $K$ acts trivially on $W'_{\Int}/W_{\Int}$.
\end{enumerate}{}
Then for every $s\in \Ss_K(C)$ with $C$ discrete and for all $u'\in \mathcal{Z}_K(W'_{\Int})(s)$ mapping to $\mu\in \mathcal{Z}_K(W'_{\Int})(s)/\mathcal{Z}_K(W_{\Int})(s)\simeq W'_{\Int}/W_{\Int}$, we have
\[
q'(u') \equiv q(\tilde{\mu})\pmod{\Int},
\]
where $\tilde{\mu}\in W'_{\Int}$ is any lift of $\mu$.
\end{lemma}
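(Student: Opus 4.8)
The plan is to reduce the congruence to a pointwise statement over geometric points, where the forms $q'$ and the pairing are described explicitly, and then to check it there. First I would recall the construction of $q'$ from Corollary~\ref{cor:modular_quadratic}: the $G$-equivariant map $W\to W^\vee$ induced by $q$ carries $W_{\Int}$ into the smallest $K$-stable lattice $\tilde W_{\Int}\subset W^\vee$ containing its image, giving a map $\mathcal{Z}_K(W_{\Int})\to\mathcal{Z}_K(\tilde W_{\Int})$, and $q'$ is obtained by composing with the (diagonal of the) pairing of Proposition~\ref{prop:W_dual}. Since $q'$ is functorial in $s$ and the quotient $\mathcal{Z}_K(W'_{\Int})/\mathcal{Z}_K(W_{\Int})\simeq \underline{W'_{\Int}/W_{\Int}}$ is the constant \'etale sheaf (Proposition~\ref{prop:W_and_W'} applied to the inclusion $W_{\Int}\subset W'_{\Int}$, using hypothesis (iii)), the difference $q'(u')-q(\tilde\mu)$ defines, for each fixed class $\mu$, a function on $\mathcal{Z}_K(W'_{\Int})(s)$ that is additive in the $W_{\Int}$-part; more precisely, writing $u'=w+t$ with $t\in\mathcal{Z}_K(W_{\Int})(s)$ a lift of the zero class and $w$ a fixed lift of $\mu$, bilinearity of the pairing underlying $q'$ gives $q'(u')=q'(w)+[w,t]'_q+q'(t)$ where $[-,-]'_q$ is the pairing of Proposition~\ref{prop:W_dual} between $\mathcal{Z}_K(W_{\Int})(s)$ and $\mathcal{Z}_K(\tilde W_{\Int})(s)$ composed with $t\mapsto (\text{image of }t)$.

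Next I would argue that $q'(t)\in\Int$ for $t\in\mathcal{Z}_K(W_{\Int})(s)$ and that $[w,t]'_q\in\Int$. For the first, hypothesis (i) says $q$ is $\Int$-valued on $W_{\Int}$, hence the induced map $W_{\Int}\to W^\vee$ lands in $W_{\Int}^\vee$; combined with Remark~\ref{rem:pairing_integral}, the pairing between $\mathcal{Z}_K(W_{\Int})$ and $\mathcal{Z}_K(W_{\Int}^\vee)$ is $\Int$-valued, and $q'$ restricted to $\mathcal{Z}_K(W_{\Int})(s)$ factors through it, so $q'(t)\in\Int$. For the second, hypothesis (ii) says the bilinear form $[-,-]_q$ takes $W_{\Int}\times W'_{\Int}$ into $\Int$, which translates (via the same functoriality, now for the map $\mathcal{Z}_K(W'_{\Int})\times\mathcal{Z}_K(W_{\Int})\to\Int$ obtained from $q$) to integrality of $[w,t]'_q$; here one uses that $w$ is a section of $\mathcal{Z}_K(W'_{\Int})$. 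Thus $q'(u')\equiv q'(w)\pmod{\Int}$, and it remains to identify $q'(w)$ with $q(\tilde\mu)$ modulo $\Int$ for a chosen lift.

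The identification $q'(w)\equiv q(\tilde\mu)\pmod\Int$ is where I would do the actual check, and it is essentially the content of Corollary~\ref{cor:modular_quadratic} on the level of $W'_{\Int}$: the quadratic form $q'$ on $\mathcal{Z}_K(W'_{\Int})(s)$ is characterized by its value at complex points $s=[(x,g)]$, where $\mathcal{Z}_K(W'_{\Int})(s)\simeq\{w\in W(\Rat)\cap gW'_{\Int}(\widehat\Int):h_x(\Comp^\times)w=w\}$ and $q'(w)=q(w)$. Since $q'(w)-q(\tilde\mu)$ is again a functorial-in-$s$ function, valued now in $\Rat/\Int$, it suffices to check it vanishes over $\Comp$; and there, both $w$ and $\tilde\mu$ are lifts of $\mu\in W'_{\Int}/W_{\Int}$, so $w-\tilde\mu\in W_{\Int}$, whence $q(w)-q(\tilde\mu)=q(w-\tilde\mu)+[\tilde\mu,w-\tilde\mu]_q$; the first term lies in $\Int$ by (i), and the second lies in $\Int$ by (ii) since $\tilde\mu\in W'_{\Int}$ and $w-\tilde\mu\in W_{\Int}$. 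Combining, $q'(u')\equiv q(\tilde\mu)\pmod\Int$ for all $C=\Comp$-points, hence — by the functoriality and the fact that $\mathcal{Z}_K(W'_{\Int})$ is detected on geometric points together with Proposition~\ref{prop:tate_conjecture}-type density at finite-field points, or simply because $\Rat/\Int$-valued functorial invariants on these \'etale-locally-constant objects are determined by $\Comp$-points — for all $s\in\Ss_K(C)$ with $C$ discrete. The main obstacle I anticipate is making the reduction to complex points rigorous: one must confirm that a functorial $\Rat/\Int$-valued assignment on $\mathcal{Z}_K(W'_{\Int})(-)$ over discrete $C$ is forced by its values on $\Comp$-points. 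This should follow by a standard specialization/genericization argument as in Lemma~\ref{lem:W_special}, using that over a field the relevant groups inject into their \'etale and de Rham/crystalline realizations where the pairings are integral, so the difference, being locally constant on each connected component and $\Int$-valued up to the already-checked complex case, must be identically integral.
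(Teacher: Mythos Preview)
Your algebraic reduction in the first two steps is sound and actually more detailed than the paper's: using bilinearity and hypotheses (i), (ii) together with Remark~\ref{rem:pairing_integral} to show that $q'(u')\pmod{\Int}$ depends only on the image $\mu$ is correct and helpful. The paper skips this and goes straight to geometric points.

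The gap is in your final reduction to $\Comp$-points. Connected components of $\mathcal{Z}_K(W'_{\Int})$ can lie entirely over a single characteristic~$p$ fiber and carry no $\Comp$-point at all, so checking the congruence only over $\Comp$ does not suffice. The specialization/genericization device of Lemma~\ref{lem:W_special} does not bridge this: that lemma propagates an open--closed condition along a connected Noetherian scheme, whereas here you would need to propagate a $\Rat/\Int$-valued quantity from characteristic~$0$ to characteristic~$p$, and the map $\mathcal{Z}_K(W'_{\Int})(s)\to\mathcal{Z}_K(W'_{\Int})(s_0)$ for a specialization $s\rightsquigarrow s_0$ is typically not surjective.

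The paper's proof handles this by checking the congruence \emph{directly} at perfect fields $k$ of characteristic~$p$, verifying integrality one prime at a time: for $\ell\neq p$ it uses the compatibility of the pairing with the $\ell$-adic \'etale realization (Remark~\ref{rem:pairing_etale}), which lands $u'$ in $\mathbf{Et}_{\ell}(W'_{\Int})$ and makes the $\ell$-adic part of the congruence immediate; for $\ell=p$ it uses the crystalline realization together with the integral $p$-adic comparison of~\eqref{subsec:realization_props}\eqref{real:integral_p-adic}. Your argument becomes complete once you replace the attempted reduction to $\Comp$ by this prime-by-prime verification at characteristic~$p$ points.
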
	
\begin{proof}
One reduces immediately to the case where $C = k$ is either $\Comp$ or a perfect field of characteristic $p$. The first case is easy, and for the second, the lemma can be checked after inverting $p$ using Remark~\ref{rem:pairing_etale}. It can be checked after localizing at $p$ by looking at the crystalline realizations, and using the integral comparison from~\eqref{subsec:realization_props}~\eqref{real:integral_p-adic}.
\end{proof}

\begin{lemma}
\label{lem:quadratic_polarization}
Suppose that we have $W\in \mathrm{Rep}(G)^0_\mu$ and a $G$-invariant quadratic form $q:W\to \Rat$ such that, for some (hence any) $z=[(x,g)]\in \Sh_K(\Comp)$, the $\Rat$-Hodge structure $\mathbf{HS}_z(W)$ is polarized by $q$. Then for all $s\in \Ss_K(C)$ with $C$ discrete, and any $K$-stable lattice $W_{\Int}\subset W$, the associated quadratic form $q'$ on $\mathcal{Z}_K(W_{\Int})(s)_{\Rat}$ is positive definite.
\end{lemma}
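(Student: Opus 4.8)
The statement is a positivity assertion: the rational quadratic form $q'$ on $\mathcal{Z}_K(W_{\Int})(s)_{\Rat}$ is positive definite whenever $q$ polarizes the Hodge structure $\mathbf{HS}_z(W)$ for points in the generic fiber. The plan is to reduce the assertion to finitely many geometric points, then handle the complex and $p$-adic cases separately, and in both cases to exhibit the form $q'$ on $\mathcal{Z}_K(W_{\Int})(s)_{\Rat}$ as the restriction of a genuinely positive definite form under an embedding into a real (or $p$-adic crystalline) vector space.

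\textbf{Step 1: Reduce to geometric points over fields.} Since $q'$ is a functorial family of quadratic forms on a locally finite unramified stack, and positive-definiteness of a form on a finite-rank $\Rat$-vector space can be checked on a single point, it suffices to verify the claim for $s\in \Ss_K(k)$ with $k$ a field. By Corollary~\ref{cor:modular_quadratic} and the uniqueness there, $q'$ on $\mathcal{Z}_K(W_{\Int})(s)$ is base-changed from $\mathcal{Z}_K(W_{\Int})(s)$ for any field extension, so I may further assume $k$ is either $\Comp$ or a perfect field of characteristic $p$ (replacing $k$ by an algebraic closure and noting $\mathcal{Z}_K(W_{\Int})(s)_{\Rat}$ is unchanged up to the action of $\Gal$). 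Also, since $\mathcal{Z}_K(W_{\Int})\to \Ss_K$ is unramified, every point of $\mathcal{Z}_K(W_{\Int})(s)$ specializes from/to a point over $\Comp$ or a finite field via a connecting sequence of genericizations/specializations inside a connected component of $\mathcal{Z}^{\mathrm{cl}}_K(W_{\Int})$; this is exactly the mechanism of Lemma~\ref{lem:W_special}. Combined with Proposition~\ref{prop:tate_conjecture} (Tate's theorem for $W$-special endomorphisms, which holds for $W_{\Int}$ by the remark in the proof of Theorem~\ref{thm:W_representability}), it suffices to treat $k=\Comp$ and $k$ a finite field.

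\textbf{Step 2: The complex case.} For $k=\Comp$ and $s=[(x,g)]$, Lemma~\ref{lem:W_special_complex} identifies $\mathcal{Z}_K(W_{\Int})(s)$ with $\{w\in W(\Rat)\cap gW_{\Int}(\widehat{\Int}):\; h_x(i)w = w\}$, i.e. the weight-$(0,0)$ part of the $\Int$-Hodge structure $\mathbf{HS}_x(W_{\Int})$, and by Corollary~\ref{cor:modular_quadratic} the form $q'$ is just the restriction of $q$. Since $q$ polarizes $\mathbf{HS}_x(W)$ and the Hodge structure is pure of weights $(-1,1),(0,0),(1,-1)$, the polarization form restricted to the real $(0,0)$-part $\{w\in W_{\Real}:\; h_x(i)w=w\}$ is (by definition of a polarization, with the appropriate Weil-operator sign, which is trivial on the $(0,0)$-part) positive definite. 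Hence its restriction to the rational subspace $\mathcal{Z}_K(W_{\Int})(s)_{\Rat}\subset \{w\in W_{\Real}:\; h_x(i)w=w\}$ is positive definite.

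\textbf{Step 3: The finite-field case, and why it is the main obstacle.} For $k$ a finite field of characteristic $p$ and $z\in \Ss_K(k)$, Proposition~\ref{prop:tate_conjecture} gives $\Int_p\otimes_{\Int}\mathcal{Z}^{\mathrm{cl}}_K(W_{\Int})(z)\xrightarrow{\simeq}\mathbf{Crys}_{p,z}(W_{\Int})^{\varphi_0=\mathrm{id}}$, and by Remark~\ref{rem:points_of_spaces_over_fields} and Proposition~\ref{prop:W_dual}(2) the form $q'$ is the restriction of the canonical crystalline pairing. The real input is to lift $z$ to a characteristic-$0$ point and transport positivity: concretely, choose a complete DVR $\mathcal{O}$ of mixed characteristic $(0,p)$ with residue field $k$ and a point $y\in \Ss_K(\mathcal{O})$ lifting $z$ (possible by smoothness of $\Ss_K$), with generic fiber $y_{\eta}$ over $\overline{\mathcal{O}[p^{-1}]}$; the Berthelot--Ogus isomorphism~\eqref{real:berthelot-ogus}, equation~\eqref{eqn:de rham cris}, identifies $\mathbf{Crys}_{p,z}(W_{\Int})[p^{-1}]$ with $\mathbf{dR}_{y}(W_{\Int})[p^{-1}]$, and by Lemma~\ref{lem:W_special} a $W$-special crystalline class lifts to a $W$-special de Rham (hence Hodge, hence Betti after choosing an embedding $\overline{\mathcal{O}[p^{-1}]}\hookrightarrow \Comp$) class. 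Chasing through the comparison isomorphisms — which are compatible with the pairings by the construction of the pairing in Proposition~\ref{prop:W_dual} and Remark~\ref{rem:pairing_etale} — the value $q'(u)$ for $u\in \mathcal{Z}^{\mathrm{cl}}_K(W_{\Int})(z)$ equals $q(w)$ for a corresponding Betti class $w\in \mathbf{HS}^{(0,0)}_{y_{\eta}}(W)$, which is positive by Step 2. The delicate point — and where I expect to spend the most care — is that a rational class in $\mathcal{Z}_K(W_{\Int})(z)_{\Rat}$ need not itself lift to a rational class over characteristic $0$ in an obvious way; one must instead argue via the isomorphism $\Rat\otimes_{\Int} I_z \xrightarrow{\simeq}$ the global centralizer group and the construction of the subspace $\mathbf{W}(z)$ in the proof of Proposition~\ref{prop:tate_conjecture}, observing that this construction is compatible with the $G$-invariant quadratic form (the form $q$ being an element of $W^{\vee}\otimes W^{\vee}$ fixed by $G$, hence matched under all the $\beta_{\ell}$, $\beta_p$), so that $q'$ on $\mathcal{Z}_K(W_{\Int})(z)_{\Rat}$ coincides, under the isomorphisms of that proof, with the restriction of $q$ to a rational subspace of $W^{\gamma_0=\mathrm{id}}$ that sits inside the positive-definite real space $\{w\in W_{\Real}:\;h_{x}(i)w=w\}$ for an appropriate lift $x$. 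This yields positivity, completing the proof.
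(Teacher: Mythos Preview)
Your Step~2 (the complex case) is fine, but Step~3 has a genuine gap that I do not see how to close along the lines you sketch. Positive definiteness is an \emph{Archimedean} condition on the rational form $q'$, and the machinery you invoke from the proof of Proposition~\ref{prop:tate_conjecture} only produces isometries over the \emph{non-Archimedean} completions $\Rat_\ell$ and $\Rat_q$: the maps $\eta_\ell = \beta_\ell^{-1}\circ\alpha_\ell$ are different for each $\ell$ and do \emph{not} assemble into a $\Rat$-rational isometry between $\mathbf{W}(z)$ and $W^{\gamma_0=\mathrm{id}}$. So even granting that each $\eta_\ell$ is an isometry (which one could arrange by including $q$ among the tensors $s_\alpha$), you learn nothing about the signature of $q'$ at the infinite place. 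Your final claim---that the resulting rational subspace ``sits inside the positive-definite real space $\{w\in W_\Real:\;h_x(i)w=w\}$ for an appropriate lift $x$''---is unjustified: no such $x$ is produced, and there is no reason $W^{\gamma_0=\mathrm{id}}$ should lie in the $(0,0)$-part of any Hodge structure on $W$. The underlying obstruction you yourself flag (classes in characteristic $p$ need not lift to characteristic $0$) is exactly why the lifting strategy cannot reach the real place.

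The paper's proof avoids all of this by using a positivity statement that is already available over \emph{any} base: the positivity of the Rosati involution on $\End^0(\mathcal{A}_s)$. Concretely, one embeds $W\subset\End(H)$ for $H$ of Siegel type, so that $\mathcal{Z}_K(W_{\Int})(s)_{\Rat}\subset\End^0(\mathcal{A}_s)$, and the Rosati form $\tilde{q}'(x)=\Tr(x^*x)$ is positive definite there by~\cite[III.21, Theorem 1]{Mumford1974-bw}. This $\tilde{q}'$ arises via Corollary~\ref{cor:modular_quadratic} from another $G$-invariant polarizing form $\tilde{q}$ on $W$. The comparison between $q$ and $\tilde{q}$ is then purely representation-theoretic: there is $b\in B^\times$ with $B=\End_G(W)$ such that $q(w)=[bw,w]_{\tilde{q}}$, and since both forms polarize $\mathbf{HS}_z(W)$, one has $b=c^*c$ for some $c\in B^\times_\Real$ (\cite[Lemma 2.8]{kottwitz:points}). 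Functoriality of $\mathcal{Z}_K$ gives an action of $B$ on $\mathcal{Z}_K(W_{\Int})(s)_\Rat$, and then $q'(u)=[bu,u]_{\tilde{q}'}=2\tilde{q}'(cu)>0$. The key idea you are missing is to anchor positivity in Rosati rather than trying to transport it from $\Comp$.
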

\begin{proof}
The positive definiteness of $q'$ a consequence of the construction and the positivity of the Rosati involution associated with a polarization on an abelian variety; see~\cite[III.21, Theorem 1]{Mumford1974-bw}. 

More precisely, choose any pair $H_1,H_2$ of representations of Siegel type with a $G$-equivariant embedding $W\subset\Hom(H_1,H_2)$. Replacing both $H_1$ and $H_2$ by $H = H_1\oplus H_2$, we see that we have $W\subset\End(H)$. If $\psi$ is a $G$-polarization on $H$, then the abelian scheme up-to-isogeny  $\mathcal{A}\to \Ss_K$ determined by $H$ is equipped with a polarization associated with $\psi$, and this gives a positive Rosati involution $x\mapsto x^*$ on $\End^0(\mathcal{A}_s)$, whose restriction to
\[
\mathcal{Z}_K(W_{\Int})(s)_{\Rat}\subset \End^0(\mathcal{A}_s)
\]
gives a positive definite form $\tilde{q}':x\mapsto \mathrm{Tr}(x^*x)$.

This positive definite form arises from the construction from Corollary~\ref{cor:modular_quadratic} via a non-degenerate$G$-invariant quadratic form $\tilde{q}:W\to \Rat$ determined as follows: Let $j_{\psi}:H\xrightarrow{\simeq}H^\vee(\nu)$ be the $G$-equivariant isomorphism attached to $\psi$, then we have, for $w\in W$,
\[
\tilde{q}(w) = \mathrm{Tr}(j_{\psi}^{-1}\circ w^\vee \circ j_{\psi}\circ w).
\]

Set $B = \End_G(W)$; then we obtain an involution $b\mapsto b^*$ on $B$ determined by 
\[
[bw_1,w_2]_{\tilde{q}} = [w_1,b^*w_2]_{\tilde{q}}
\]
for all $w_1,w_2\in W$ and $b\in B$. The fact that $\tilde{q}$ polarizes $\mathbf{HS}_z(W)$ implies that this is a \emph{positive} involution.\footnote{Recall this means that that $(b_1,b_2)\mapsto \mathrm{Tr}_{B/\Rat}(b_1b_2^*)$ is a positive definite bilinear form on $B$.}

Now, there exists an element $b\in B^\times$ such that, for all $w\in W$, we have
\[
q(w) = [bw,w]_{\tilde{q}} = [w,b^*w]_{\tilde{q}}.
\]
Moreover, the fact that $q$ also polarizes $\mathbf{HS}_z(W)$ is equivalent to the assertion that $b$ is of the form $c^*c$ for some $c\in B^\times_{\Real}$; see~\cite[Lemma 2.8]{kottwitz:points}.

By the functoriality of the construction of $\mathcal{Z}_K(W_{\Int})$ in Theorem~\ref{thm:W_representability}, we have a map
\[
B\to \End(\mathcal{Z}_K(W_{\Int})(s)_{\Rat}),
\]
and we also have
\[
q'(u) = [bu,u]_{\tilde{q}'} = [cu,cu]_{\tilde{q}'}  = 2\tilde{q}'(cu)
\]
for all $u\in \mathcal{Z}_K(W_{\Int})(s)_{\Real}$, showing that $q'$ is also positive definite.
\end{proof}

\subsection{}\label{subsec:W_D_hypotheses}
For the rest of this section, we will fix a semisimple $\Rat$-algebra $D$ equipped with a positive involution $\iota$, and we will assume that $W$ is a representation of $G$ equipped with a $G$-equivariant structure of a $D$-module. We will also assume that there is a non-degenerate $G$-invariant $\iota$-Hermitian form $\Phi:W\times W\to D$. This datum is equivalent to that of the underlying $G$-invariant $\Rat$-bilinear form $\mathrm{Trd}_{D/\Rat}\circ \Phi$, which is of the form $[\cdot,\cdot]_q$ for a $G$-invariant quadratic form $q:W\to \Rat$, and satisfies 
\begin{align}\label{eqn:hermitian_q}
[dw_1,w_2]_q = [w_1,\iota(d)w_2]_q
\end{align}
for all $d\in D$ and $w_1,w_2\in W$. Suppose also that $q$ satisfies the hypotheses of Lemma~\ref{lem:quadratic_polarization}. Then the lemma (and its proof) gives:

\begin{proposition}
\label{prop:W_hermitian}
If $C$ is discrete, then, for any $s\in \Ss_K(C)$, $\mathcal{Z}_K(W_{\Int})(s)_{\Rat}$ has a canonical $D$-module structure, and is equipped with a canonical positive definite $\iota$-Hermitian form $\Phi'$ valued in $D$.
\end{proposition}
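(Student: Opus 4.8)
The plan is to extract $\Phi'$ from the $\Rat$-bilinear data by reversing the construction of the $\iota$-Hermitian form from its trace form, using the functoriality of $\mathcal{Z}_K(-)$ established in Theorem~\ref{thm:W_representability}. First I would recall that $q = \tfrac12 \Trd_{D/\Rat}\circ \Phi$ is a $G$-invariant quadratic form whose associated bilinear form $[\cdot,\cdot]_q$ satisfies the adjunction~\eqref{eqn:hermitian_q} and polarizes every Hodge structure $\mathbf{HS}_z(W)$; therefore Lemma~\ref{lem:quadratic_polarization} applies and produces, for each $s\in \Ss_K(C)$ with $C$ discrete, a positive definite $G$-invariant quadratic form $q'$ on $\mathcal{Z}_K(W_{\Int})(s)_{\Rat}$ together with a positive definite bilinear form $[\cdot,\cdot]_{q'}$. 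I would also invoke the $D$-action: the algebra $D$ sits inside $B = \End_G(W)$, so by the functoriality in Theorem~\ref{thm:W_representability}(5) (the same mechanism used at the end of the proof of Lemma~\ref{lem:quadratic_polarization} to get $B\to \End(\mathcal{Z}_K(W_{\Int})(s)_{\Rat})$) we obtain a ring map $D\to \End(\mathcal{Z}_K(W_{\Int})(s)_{\Rat})$, hence a canonical $D$-module structure on $\mathcal{Z}_K(W_{\Int})(s)_{\Rat}$.

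Next I would define $\Phi'$ by the usual formula recovering a Hermitian form from its reduced trace: for $u_1,u_2\in \mathcal{Z}_K(W_{\Int})(s)_{\Rat}$, set $\Phi'(u_1,u_2)$ to be the unique element of $D$ such that $\Trd_{D/\Rat}(d\cdot \Phi'(u_1,u_2)) = [du_1,u_2]_{q'}$ for all $d\in D$, which is well-defined because the reduced trace pairing on the semisimple $\Rat$-algebra $D$ is non-degenerate and $[\cdot,\cdot]_{q'}$ is $D$-compatible in the sense of~\eqref{eqn:hermitian_q} (this compatibility on $\mathcal{Z}_K(W_{\Int})(s)_{\Rat}$ follows from the corresponding identity on $W$ by functoriality, since $\iota$ acts compatibly). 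Then I would check the two defining characterizations of $\Phi'$: over $\Comp$ it restricts to $\Phi$ on $\{w\in W(\Rat)\cap gW_{\Int}(\widehat{\Int}): h_x(\Comp^\times)w=w\}$ because $q'$ restricts to $q$ there by Lemma~\ref{lem:quadratic_polarization}, and over a perfect field of characteristic $p$ it restricts to the crystalline analogue because $q'$ does, per the same lemma; uniqueness of a functorial-in-$s$ form with these properties is immediate as in Proposition~\ref{prop:W_dual}.

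It then remains to verify that $\Phi'$ is $\iota$-Hermitian and positive definite. $\iota$-sesquilinearity in the first and conjugate-sesquilinearity in the second variable is a formal consequence of the $D$-linearity of $u\mapsto du$ on $\mathcal{Z}_K(W_{\Int})(s)_{\Rat}$ together with the adjunction identity, exactly as in the classical recovery of a Hermitian form from its trace form; Hermitian symmetry $\Phi'(u_2,u_1) = \iota(\Phi'(u_1,u_2))$ reduces to symmetry of $[\cdot,\cdot]_{q'}$ and the identity $\Trd(d\iota(e)) = \Trd(\iota(\iota(d)e))$. Positive definiteness of $\Phi'$ follows from positive definiteness of $q'$ (Lemma~\ref{lem:quadratic_polarization}) plus positivity of $\iota$: $\Trd_{D/\Rat}(\iota(d)d) > 0$ for $d\neq 0$ forces $\Phi'(u,u)$, which satisfies $q'(u) = \tfrac12\Trd(\Phi'(u,u))$ and more refined positivity via the $c^*c$-decomposition of the polarizing element, to be a totally positive element of $D$. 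The main obstacle I expect is purely bookkeeping: making the $D$-action and the adjunction identity~\eqref{eqn:hermitian_q} transport correctly through the functoriality of Theorem~\ref{thm:W_representability} (in particular that $\iota$, viewed as an anti-involution of $D\subset B$, acts as the adjoint of the $D$-action with respect to $[\cdot,\cdot]_{q'}$ on the cycle space, not merely on $W$), but this is exactly the content already exploited in the proof of Lemma~\ref{lem:quadratic_polarization}, so no genuinely new input is needed.
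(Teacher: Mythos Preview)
Your proposal is correct and follows essentially the same approach as the paper, which simply states that the proposition follows from Lemma~\ref{lem:quadratic_polarization} and its proof. You have spelled out the details the paper leaves implicit: the $D$-module structure comes from $D\subset B=\End_G(W)$ via the functoriality of Theorem~\ref{thm:W_representability}, and the Hermitian form $\Phi'$ is recovered from the positive definite $q'$ and the $D$-action by the standard reduced-trace formula, with the adjunction~\eqref{eqn:hermitian_q} transporting by the same functoriality argument already used in the proof of Lemma~\ref{lem:quadratic_polarization}.
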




\subsection{}
Fix an integer $n\ge 1$. Suppose that  we have a $K$-invariant coset $\mu+U_{\Int}(\widehat{\Int})$ of a $K$-stable lattice $U_{\Int}\subset W^n$. We need not (and indeed should not) assume that $U_{\Int}$ is the $n$-th power of the lattice $W_{\Int}$.

We now have a locally quasi-smooth and finite unramified map $\mathcal{Z}_K(U_{\Int},\mu)\to \Ss_K$ constructed as follows: Let $U'_{\Int}\subset W^n$ be the $K$-stable lattice generated by $U_{\Int}$ and $\mu$; then in the notation of Proposition~\ref{prop:W_and_W'}, $\mathcal{Z}_K(U_{\Int},\mu)$ is the pre-image in $\mathcal{Z}_K(U'_{\Int})$ of the constant section $\mu$ of $\underline{U'_{\Int}/U_{\Int}}$.

By Proposition~\ref{prop:W_hermitian}, $\mathcal{Z}_K(U_\Int,\mu)$ is fibered over the discrete set $\mathrm{Herm}^+_n(D)$ of positive semi-definite $\iota$-Hermitian matrices $n\times n$-matrices over $D$. Indeed, it suffices to see this in the case where $\mu$ can be chosen to be $0$, where we have a canonical identification $\mathcal{Z}_{K}(U_{\Int})(s)_{\Rat} = \mathcal{Z}_K(W_{\Int})(s)_{\Rat}^n$, for any $s\in \Ss_K(C)$ with $C$ discrete. Now, taking the Gram matrix of an element of this space with respect to the canonical $\iota$-Hermitian form $\Phi'$ on $\mathcal{Z}_K(W_{\Int})(s)_{\Rat}$ gives us the desired map $\mathcal{Z}_K(U_{\Int},\mu)\to \mathrm{Herm}^+_n(D)$. More precisely, given an element $(x_1,\ldots,x_n)\in \mathcal{Z}_K(W_{\Int})(s)_{\Rat}^n$, the associated matrix is
\[
\left(\frac{1}{2}\Phi'(x_i,x_j)\right)_{1\leq i,j,\leq n}\in \mathrm{Herm}_n^+(D).
\]

For any $T\in \mathrm{Herm}^+_n(D)$, write $\mathcal{Z}_K(U_{\Int},\mu,T)$ for the fiber of this map over $T$. The positive definiteness of the Hermitian form on $\mathcal{Z}_K(W_{\Int})(s)$ ensures that this is a \emph{finite} quasi-smooth stack over $\Ss_K$.

\subsection{}
Let $\mathcal{S}(W(\Adele_f)^n)$ be the Schwartz space of locally constant $\Comp$-valued functions with compact support on $W(\Adele_f)^n$. For $g\in \GL_n(D)$ and $\varphi\in \mathcal{S}(W(\Adele_f)^n)$ write $L_g\varphi$ for the function $(L_g\varphi)(x) = \varphi(g^{-1}x)$, where the action of $\GL_n(D)$ on $W(\Adele_f)^n$ is via the identification $W(\Adele_f)^n \simeq D^n\otimes_DW(\Adele_f)$. Note that this action commutes with that of $G$ and so preserves $K$-invariant functions. 

Now, $W^n$ inherits an $\iota$-Hermitian form $\Phi^n$ from $W$. For $N$ in the space $\mathrm{Herm}_n(D)$ of $\iota$-Hermitian $n\times n$-matrices over $D$, we obtain a new $\iota$-Hermitian form $\Phi^n_N$ on $W^n$ via $\Phi^n_N(w_1,w_2) = \Phi_n(w_1,(N\otimes \mathrm{id})w_2)$. Here, we are viewing $N\otimes \mathrm{id}$ as an endomorphism of $W^n$ via the identification $W^n = D^n\otimes_DW$. Let $\psi_f:\Adele_f\to \Comp^\times$ be the standard additive character given by
\[
\psi_f((x_p)) = e^{-2\pi i\sum_p\overline{x}_p},
\]
where $\overline{x}_p$ is the image of $x_p\in \Rat_p$ in $\Rat_p/\Int_p\simeq \Int[1/p]/\Int$, and define
\begin{align*}
s_N:W(\Adele_f)^n&\to \Comp\\
w&\mapsto \psi_f\left(\frac{1}{2}\mathrm{Trd}_{D/\Rat}(\Phi^n_N(w,w))\right),
\end{align*}
where $\mathrm{Trd}_{D/\Rat}:D\to \Rat$ is the reduced trace map. 

The next theorem and its proof verify most of Theorems~\ref{introthm:derived_cycles} and~\ref{introthm:main} from the introduction.
\begin{theorem}
\label{thm:main_thm_adelic}
For each $n\ge 1$ and each $T\in \mathrm{Herm}^+_n(D)$, there is a $\Comp$-linear map
\begin{align*}
\mathcal{S}(W(\Adele_f)^n)^K&\to \mathrm{CH}^{nd_+}(\Ss_K)_{\Comp}\\
\varphi&\mapsto \mathcal{C}_{\Ss_K,W}(T,\varphi)
\end{align*}
with the following properties:
\begin{enumerate}
	\item (Characterization) If $\varphi$ is the characteristic function of a $K$-invariant coset $\mu+U_{\Int}(\widehat{\Int})$ of a $K$-stable lattice $U_{\Int}\subset W^n$, then we have
	\[
      \mathcal{C}_{\Ss_K,W}(T,\varphi) = [\mathcal{Z}_K(U_{\Int},\mu,T)/\Ss_K].
	\]
	Here we are using Definition~\ref{defn:qs_cycle_class}.
    
	\item (Linear invariance) For $g\in \GL_n(D)$, we have
	\[
     \mathcal{C}_{\Ss_K,W}({}^t\iota(g)Tg,L_g\varphi) = \mathcal{C}_{\Ss_K,W}(T,\varphi).
	\]

	\item (Translation invariance) For $N\in \mathrm{Herm}_n(D)$, we have
	\[
      \mathcal{C}_{\Ss_K,W}(T,s_N\varphi) = e^{-2\pi i\mathrm{Trd}_{D/\Rat}(\mathrm{Tr}(NT))}\mathcal{C}_{\Ss_K,W}(T,\varphi)
	\]
    Here, $\mathrm{Tr}:M_n(D)\to D$ is the usual trace map on $n\times n$-matrices over $D$.
\end{enumerate}

\end{theorem}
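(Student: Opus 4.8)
The plan is to reduce the statement about arbitrary Schwartz functions to the case of characteristic functions of cosets, where everything is controlled by the geometric constructions of Section~\ref{sec:shimura}. First I would define $\mathcal{C}_{\Ss_K,W}(T,\varphi)$ on characteristic functions of $K$-invariant cosets $\mu+U_{\Int}(\widehat{\Int})$ by the formula in (1), using the virtual fundamental class of $\mathcal{Z}_K(U_{\Int},\mu,T)$ from Definition~\ref{defn:qs_cycle_class} (valid since this stack is finite and quasi-smooth of virtual codimension $nd_+$ over $\Ss_K$, by the discussion preceding the theorem together with Theorem~\ref{thm:W_representability}(1)). The key linear-algebra input is that these characteristic functions span $\mathcal{S}(W(\Adele_f)^n)^K$; moreover the relations among them are generated by the obvious additivity relations: if $U_{\Int}\subset U'_{\Int}$ are two $K$-stable lattices on which $K$ acts trivially on the quotient, then $\mathbf{1}_{\mu+U'_{\Int}(\widehat{\Int})} = \sum_{\nu}\mathbf{1}_{\nu+U_{\Int}(\widehat{\Int})}$ where $\nu$ runs over the preimage of $\mu$ in $U'_{\Int}/U_{\Int}$. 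So the first real step is to check that the assignment on characteristic functions respects these additivity relations, which follows from Proposition~\ref{prop:W_and_W'}: the map $\mathcal{Z}_K(U_{\Int})\to \mathcal{Z}_K(U'_{\Int})$ has homotopy cokernel the constant \'etale group scheme $\underline{U'_{\Int}/U_{\Int}}$, so $\mathcal{Z}_K(U'_{\Int},\mu)$ decomposes as the disjoint union over $\nu$ of the $\mathcal{Z}_K(U_{\Int},\nu)$, and intersecting with the fiber over $T\in\mathrm{Herm}^+_n(D)$ preserves this decomposition; virtual fundamental classes are additive over disjoint unions, giving the relation on the level of Chow classes. This produces a well-defined $\Comp$-linear map satisfying (1).

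Next I would verify (2), linear invariance. Given $g\in\GL_n(D)$, the action of $g$ on $W^n = D^n\otimes_D W$ carries a $K$-stable lattice $U_{\Int}$ to another $K$-stable lattice $gU_{\Int}$ and the coset $\mu+U_{\Int}(\widehat{\Int})$ to $g\mu + gU_{\Int}(\widehat{\Int})$, and the characteristic function of the latter is exactly $L_g\mathbf{1}_{\mu+U_{\Int}(\widehat{\Int})}$. By the functoriality of $\mathcal{Z}_K(\_\_)$ in the pair $(W,W_{\Int})$ from Theorem~\ref{thm:W_representability} (applied to the $G$-equivariant, $D$-linear automorphism of $W^n$ given by $g$), there is a canonical equivalence $\mathcal{Z}_K(U_{\Int},\mu)\xrightarrow{\simeq}\mathcal{Z}_K(gU_{\Int},g\mu)$ of $\Ss_K$-stacks. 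Under Proposition~\ref{prop:W_hermitian} the Hermitian form on $\mathcal{Z}_K(W_{\Int})(s)_{\Rat}$ is $G$- and $D$-equivariantly natural, so the induced map on Gram matrices intertwines the $T$-fibration on the source with the $({}^t\iota(g)Tg)$-fibration on the target; thus the equivalence restricts to $\mathcal{Z}_K(U_{\Int},\mu,{}^t\iota(g)Tg)\xrightarrow{\simeq}\mathcal{Z}_K(gU_{\Int},g\mu,T)$, and equivalences preserve virtual fundamental classes. This gives (2) on characteristic functions, hence on all of $\mathcal{S}(W(\Adele_f)^n)^K$ by linearity (since the spanning set is permuted by each $L_g$).

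For (3), translation invariance, the point is that $s_N$ is a locally constant function of absolute value $1$, so $s_N\varphi$ is again Schwartz and $K$-invariant; I would again reduce to $\varphi = \mathbf{1}_{\mu+U_{\Int}(\widehat{\Int})}$. On a fixed coset, by refining $U_{\Int}$ if necessary (using the additivity relation already established, and the locally constant nature of $s_N$) one can assume $s_N$ is \emph{constant} on $\mu+U_{\Int}(\widehat{\Int})$, with value $\psi_f\bigl(\tfrac12\mathrm{Trd}_{D/\Rat}(\Phi^n_N(w,w))\bigr)$ for any $w$ in the coset. The crucial congruence is that for $w$ in the coset $\mu+U_{\Int}(\widehat{\Int})$ with Gram matrix $\Phi'(w)/2 = T$ (so that $\mathrm{Trd}_{D/\Rat}(\Phi^n_N(w,w))/2 = \mathrm{Trd}_{D/\Rat}(\mathrm{Tr}(NT))$ up to the appropriate normalization coming from $(\ref{eqn:hermitian_q})$), the finite-adelic value of $s_N$ depends only on $T$ and $N$ modulo the kernel of $\psi_f$ — this is precisely the content of Lemma~\ref{lem:modular_quadratic_congruence} applied to the quadratic form $w\mapsto \mathrm{Trd}_{D/\Rat}(\Phi^n_N(w,w))/2$ on $W^n$: the Archimedean invariant $q'(w)$ and the coset label agree mod $\Int$, so $\psi_f$ of one equals $e^{-2\pi i}$ times $\psi_f$ of the other. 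Consequently, on each fiber $\mathcal{Z}_K(U_{\Int},\mu,T)$ the function $s_N$ is constant with value $e^{-2\pi i\mathrm{Trd}_{D/\Rat}(\mathrm{Tr}(NT))}$, so $\mathcal{C}_{\Ss_K,W}(T,s_N\varphi) = e^{-2\pi i\mathrm{Trd}_{D/\Rat}(\mathrm{Tr}(NT))}\mathcal{C}_{\Ss_K,W}(T,\varphi)$. I expect the main obstacle to be not any single step but the careful bookkeeping in step (3): matching the Weil-representation normalization of $s_N$, the factor of $\tfrac12$ in the definition of the Gram map $\mathcal{Z}_K(U_{\Int},\mu)\to\mathrm{Herm}^+_n(D)$, the reduced trace, and the integrality statement of Lemma~\ref{lem:modular_quadratic_congruence}, so that the exponent comes out exactly as $-2\pi i\,\mathrm{Trd}_{D/\Rat}(\mathrm{Tr}(NT))$ with no spurious constants; the geometric content (additivity, functoriality, the congruence) is already in hand from the cited results.
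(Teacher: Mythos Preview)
Your proposal is correct and follows essentially the same route as the paper: well-definedness via the additivity relation from Proposition~\ref{prop:W_and_W'} (the paper phrases this through an auxiliary lattice $U''_{\Int}$ generated by the larger lattice and $\mu$, but the content is identical), linear invariance via the functoriality in Theorem~\ref{thm:W_representability}, and translation invariance via Lemma~\ref{lem:modular_quadratic_congruence} after refining so that $s_N$ is constant on the coset. One small slip: in your statement of (2) the Gram matrices are swapped---the equivalence induced by $g$ carries $\mathcal{Z}_K(U_{\Int},\mu,T)$ to $\mathcal{Z}_K(gU_{\Int},g\mu,{}^t\iota(g)Tg)$, not the other way around---but this is exactly the bookkeeping you flagged and does not affect the argument.
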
	

\begin{proof}
Given the formula for $\mathcal{C}_{\Ss_K,W}(T,\varphi)$ in assertion (1) when $\varphi = \varphi_{U_{\Int},\mu}$ is the characteristic function of a $K$-invariant coset $\mu+U_{\Int}(\widehat{\Int})$ of a $K$-stable lattice, the proof that it defines an assignment on all of $\mathcal{S}(W(\Adele_f)^n)$ can be seen as follows: First, note that $\mathcal{S}(W(\Adele_f)^n)$ is generated as a $\Comp$-vector space by such characteristic functions subject to the relations
\[       
\varphi_{U_{\Int},\mu} = \sum_{i=1}^r\varphi_{U'_{\Int},\nu_i},
\]
whenever we have $\mu+U_{\Int}(\widehat{\Int}) = \bigsqcup_{i=1}^r\nu_i+U'_{\Int}(\widehat{\Int})$ with the right hand side a decomposition into (once again) $K$-invariant cosets of $K$-stable lattices. In this situation, $U_{\Int}/U'_{\Int}$ is generated by the images of the differences $\nu_i-\nu_j$, and in particular, inherits a \emph{trivial} $K$-action. Set:
\[
U''_{\Int} = U_{\Int} + \langle \mu\rangle = U'_{\Int} + \langle \nu_i:\;1\leq i\leq r\rangle.
\]
This is once again a $K$-stable lattice with $K$-acting trivially on $U''_{\Int}/U'_{\Int}$.

Assertion (1) comes down to the geometric statement that we have an equivalence 
\begin{align}\label{eqn:desired_disjoint_decomp}
\mathcal{Z}_K(U_{\Int},\mu,T)&\xrightarrow{\simeq}\bigsqcup_{i=1}^r\mathcal{Z}_K(U'_{\Int},\nu_i,T)
\end{align}
of quasi-smooth stacks over $\Ss_K$. To see this, note that, by Proposition~\ref{prop:W_and_W'}, we obtain a commuting diagram of $\mathrm{Ab}$-valued prestacks over $\Ss_K$:
\[
\begin{diagram}
\mathcal{Z}_{K}(U'_{\Int})&\rTo &\mathcal{Z}_{K}(U''_{\Int}) &\rTo &\underline{U''_{\Int}/U_{\Int}}\\
\dTo&&\dTo&&\dTo\\
\mathcal{Z}_{K}(U_{\Int})&\rTo &\mathcal{Z}_{K}(U''_{\Int}) &\rTo&\underline{U''_{\Int}/U'_{\Int}},
\end{diagram}
\]
where all the maps in the left square are finite \'etale, and where each row is a cofiber sequence. One now finds that both sides of~\eqref{eqn:desired_disjoint_decomp} are equivalent to the pre-images of $\mu$ under the bottom right map.

For statement (2), it suffices to know that there is a canonical equivalence of $\Ss_K$-stacks
\[
\mathcal{Z}_K(U_{\Int},\mu,T)\xrightarrow{\simeq}\mathcal{Z}_K(gU_{\Int},g\mu,{}^t\iota(g)Tg).
\]
But such an equivalence is provided by the functoriality of $(W^n,U_{\Int})\mapsto \mathcal{Z}_{K}(U_{\Int})$ and the $G$-equivariant isomorphism $(W^n,U_{\Int})\xrightarrow{\simeq}(W^n,gU_{\Int})$ given by the action of $g\in \GL_n(D)$ on $W^n = D^n\otimes_DW$.

Statement (3) is a consequence of the following claim: Suppose that $U_{\Int}\subset W^n$ is a $K$-stable lattice and that $\mu+U_{\Int}(\widehat{\Int})$ is a $K$-invariant coset on which $s_N$ takes constant value $\alpha$. If $\mathcal{Z}_K(U_{\Int},\mu,T)$ is non-empty, then we have
\[
\alpha = e^{-2\pi i\mathrm{Trd}_{D/\Rat}(\mathrm{Tr}(NT))}.
\]
To see this, first note that our hypotheses are equivalent to the following statements being true: 
\begin{itemize}
	\item $s_N$ is identically $1$ on $U_{\Int}(\widehat{\Int})$.
	\item For any lift $\tilde{\mu}\in W$ of $\mu$, we have $\alpha = e^{-\pi i\mathrm{Trd}_{D/\Rat}(\Phi^n_N(\tilde{\mu},\tilde{\mu}))}$.
	\item For all $x\in U_{\Int}(\widehat{\Int})$, we have
	\[
      \psi_f\left(\mathrm{Trd}_{D/\Rat}(\Phi^n_N(\tilde{\mu},x))\right) = 1
	\]
\end{itemize}

This can be reformulated in the language of Lemma~\ref{lem:modular_quadratic_congruence} as follows: Consider the quadratic form $q_N$ on $W^n$ given by
\[
q_N(x) = \mathrm{Trd}_{D/\Rat}\left(\frac{1}{2}\Phi^n_N(x,x)\right),
\]
so that $s_N = \psi_f\circ q_N$. Let $U'_{\Int}\subset W^n$ be the $K$-stable lattice generated by $U_{\Int}$ and any lift $\tilde{\mu}$ of $\mu$. Then $q_N$ is $\Int$-valued on $U_{\Int}$, and we have $[x,x']_{q_N}\in \Int$ for all $x\in U_{\Int}$ and $x'\in U'_{\Int}$, and also $\alpha = e^{-2\pi i q_N(\tilde{\mu})}$.

By Lemma~\ref{lem:modular_quadratic_congruence}, $q_N$ induces a canonical quadratic form $q'_N$ on $\mathcal{Z}_{K}(U'_{\Int})(s)$ for $s\in \Ss_K(C)$ with $C$ discrete, satisfying
\[
q'_N(w')\equiv q_N(\tilde{\mu})\pmod{\Int}.
\]
Here, $w'\in \mathcal{Z}_K(U_{\Int},\mu)(s)\subset \mathcal{Z}_{K}(U'_{\Int})(s)$. To finish, we now only need to observe that we have
\[
q'_N(w') = \mathrm{Trd}_{D/\Rat}(\mathrm{Tr}(NT))
\]
for any $w'\in \mathcal{Z}_K(U_{\Int},\mu,T)(s)$. Indeed, $q'_N$ is nothing but the quadratic form $q'_N(w) = \frac{1}{2}\mathrm{Trd}_{D/\Rat}(\Phi'_n(w,Nw))$, where $\Phi'_n$ is the $\iota$-Hermitian form on $\mathcal{Z}_{K}(U'_{\Int})(s)_{\Rat}\simeq \mathcal{Z}_K(W_{\Int})(s)^n_{\Rat}$ induced from $\Phi'$: Here the action of $N$ on $\mathcal{Z}_K(W_{\Int})(s)^n_{\Rat}$ is via the identification $D^n\otimes_D\mathcal{Z}_K(W_{\Int})(s)_{\Rat}\simeq \mathcal{Z}_K(W_{\Int})(s)^n_{\Rat}$.
\end{proof}	

\begin{remark}
As observed in Remark~\ref{rem:adeel_construction}, the results of~\cite{khan:virtual} give virtual fundamental classes for quasi-smooth morphisms in the motivic cohomology of any stable motivic spectrum over $\Int$. The argument used in Theorem~\ref{thm:main_thm_adelic} would give an assignment valued in such motivic cohomology groups. It would be interesting to know if there are any arithmetic consequences to be drawn from a different choice of such spectrum. For instance, using the results of~\cite[\S 3]{Holmstrom2015-to}, we find that we can associate canonical cycle classes  in Deligne-Beilinson cohomology: these might be relevant to the construction of arithmetic cycle classes.
\end{remark}

\begin{remark}
\label{rem:generic_fiber_cycles}
If one is happy to work only over the generic fiber, then a quick analysis of the construction will indicate that it works compatibly for arbitrary $K\subset G(\Adele_f)$. So it is permissible to take the inductive limit over all $K$, and therefore produce a $G(\Adele_f)$-equivariant map
\[
\mathcal{S}(W(\Adele_f)^n) \xrightarrow{\varphi\mapsto C_W(T,\varphi)}\mathrm{CH}^{nd_+}(\Sh),
\]
where $\Sh = \varprojlim_K \Sh_K$.
\end{remark}

The following proposition proves the product formula and completes the proof of Theorems~\ref{introthm:main} and~\ref{introthm:derived_cycles} from the introduction.
\begin{proposition}[Product formula]
\label{prop:product_formula_cycles}
For $n_1,n_2\ge 1$, $\varphi_1\in \mathcal{S}(W(\Adele_f)^{n_1})^K$, $\varphi_2\in \mathcal{S}(W(\Adele_f)^{n_2})^K$ and $T_1\in \mathrm{Herm}^+_{n_1}(D)$, $T_2\in \mathrm{Herm}^+_{n_2}(D)$, we have
	\[
     \mathcal{C}_{\Ss_K,W}(T_1,\varphi_1)\cdot \mathcal{C}_{\Ss_K,W}(T_2,\varphi_2) = \sum_{T = \begin{pmatrix}T_1&*\\ *&T_2\end{pmatrix}}\mathcal{C}_{\Ss_K,W}(T,\varphi_1\otimes\varphi_2)
	\]
	where the sum on the right is indexed by $\iota$-Hermitian matrices in $\mathrm{Herm}_{n_1+n_2}^+(D)$ with $T_1$ and $T_2$ appearing as block diagonals.
\end{proposition}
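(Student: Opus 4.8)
The plan is to reduce the identity to a purely geometric statement about the derived special cycles, combined with the multiplicativity of the $K$-theoretic virtual fundamental classes recalled in Appendix~\ref{app:cycle_classes}. Since $\varphi\mapsto \mathcal{C}_{\Ss_K,W}(T,\varphi)$ is $\Comp$-linear, the intersection product on $\mathrm{CH}^\bullet(\Ss_K)_{\Comp}$ is bilinear, and $\mathcal{S}(W(\Adele_f)^n)^K$ is spanned by characteristic functions $\varphi_{U_{\Int},\mu}$ of $K$-invariant cosets $\mu+U_{\Int}(\widehat{\Int})$ of $K$-stable lattices $U_{\Int}\subset W^n$, it suffices to treat the case $\varphi_i=\varphi_{U_{i,\Int},\mu_i}$ for $i=1,2$. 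Under the identification $W(\Adele_f)^{n_1+n_2}=W(\Adele_f)^{n_1}\times W(\Adele_f)^{n_2}$ one has $\varphi_1\otimes\varphi_2=\varphi_{U_{1,\Int}\oplus U_{2,\Int},\,(\mu_1,\mu_2)}$, so by the characterization in Theorem~\ref{thm:main_thm_adelic}(1) the claim becomes
\[
[\mathcal{Z}_K(U_{1,\Int},\mu_1,T_1)/\Ss_K]\cdot[\mathcal{Z}_K(U_{2,\Int},\mu_2,T_2)/\Ss_K] = \sum_{T=\begin{pmatrix}T_1&*\\ *&T_2\end{pmatrix}}[\mathcal{Z}_K(U_{1,\Int}\oplus U_{2,\Int},(\mu_1,\mu_2),T)/\Ss_K],
\]
a finite sum, since positive semi-definiteness of the Hermitian form of Proposition~\ref{prop:W_hermitian} forces all but finitely many summands to be empty over each component of $\Ss_K$.

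The heart of the matter is then the geometric product identity
\[
\mathcal{Z}_K(U_{1,\Int},\mu_1,T_1)\times_{\Ss_K}\mathcal{Z}_K(U_{2,\Int},\mu_2,T_2)\;\simeq\;\bigsqcup_{T=\begin{pmatrix}T_1&*\\ *&T_2\end{pmatrix}}\mathcal{Z}_K(U_{1,\Int}\oplus U_{2,\Int},(\mu_1,\mu_2),T)
\]
of finite unramified quasi-smooth $\Ss_K$-stacks. I would deduce this from Proposition~\ref{prop:product_formula}, which gives an equivalence $\mathcal{Z}_K(U'_{1,\Int})\times_{\Ss_K}\mathcal{Z}_K(U'_{2,\Int})\xrightarrow{\simeq}\mathcal{Z}_K(U'_{1,\Int}\oplus U'_{2,\Int})$ for the auxiliary lattices $U'_{i,\Int}$ generated by $U_{i,\Int}$ and $\mu_i$; both sides of the displayed identity are open and closed substacks of $\mathcal{Z}_K(U'_{1,\Int}\oplus U'_{2,\Int})$ (on the left, using Proposition~\ref{prop:W_and_W'} together with the local constancy of the Gram-matrix map; on the right, using $(U'_{1,\Int}\oplus U'_{2,\Int})/(U_{1,\Int}\oplus U_{2,\Int})\cong U'_{1,\Int}/U_{1,\Int}\oplus U'_{2,\Int}/U_{2,\Int}$ and matching $(\mu_1,\mu_2)$ with the pair of cosets). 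To see that these two open-closed substacks coincide, I would invoke the connected-component arguments of Section~\ref{sec:shimura} (see Lemmas~\ref{lem:W_special_geometric} and~\ref{lem:W_special}) to reduce to checking equality on $\Comp$-points and on points valued in finite fields. On $\Comp$-points this is the elementary fact that, under $\mathcal{Z}_K(W_{\Int})(s)_{\Rat}^{n_1}\times\mathcal{Z}_K(W_{\Int})(s)_{\Rat}^{n_2}\simeq\mathcal{Z}_K(W_{\Int})(s)_{\Rat}^{n_1+n_2}$, the Gram matrix of a concatenated tuple has $T_1$ and $T_2$ as its diagonal blocks while its off-diagonal block ranges over all admissible cross-Gram matrices; on a finite field $k$ of characteristic $p$ one transports the same computation along the isomorphism of Theorem~\ref{thm:W_representability}(4), using the compatibility of the Hermitian pairing on $\mathcal{Z}_K(W_{\Int})(z)$ with the crystalline pairing (Proposition~\ref{prop:W_dual}, Corollary~\ref{cor:modular_quadratic}, Remark~\ref{rem:pairing_integral}).

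The final step is multiplicativity of virtual classes: if $\pi_i\colon\mathcal{Z}_i\to\Ss_K$ are finite unramified quasi-smooth ($i=1,2$), then affine base change gives $(\pi_1\times\pi_2)_*\Reg{\mathcal{Z}_1\times_{\Ss_K}\mathcal{Z}_2}\simeq\pi_{1,*}\Reg{\mathcal{Z}_1}\otimes^{\mathbb{L}}_{\Reg{\Ss_K}}\pi_{2,*}\Reg{\mathcal{Z}_2}$, so the class of the left-hand structure sheaf is the product of the classes of $\pi_{i,*}\Reg{\mathcal{Z}_i}$ in $K_0(\Ss_K)_{\Rat}$; passing to $\mathrm{CH}^\bullet(\Ss_K)_{\Rat}$ via the Adams-graded ring isomorphism of Appendix~\ref{app:cycle_classes} yields $[\mathcal{Z}_1/\Ss_K]\cdot[\mathcal{Z}_2/\Ss_K]=[\mathcal{Z}_1\times_{\Ss_K}\mathcal{Z}_2/\Ss_K]$. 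Combining this with the geometric identity above and the additivity of virtual fundamental classes over disjoint unions gives the proposition. I expect the main obstacle to be the second paragraph: making the Gram-matrix and block-decomposition bookkeeping precise at the level of derived $\Ss_K$-stacks, in particular verifying that the lattice-and-coset identifications are compatible with the equivalence of Proposition~\ref{prop:product_formula} and that the two resulting substacks agree \emph{integrally} rather than merely over the generic fibre, which is exactly where the mod-$p$ comparison and the compatibility of pairings with crystalline realizations are indispensable.
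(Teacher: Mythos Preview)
Your proposal is correct and follows exactly the paper's strategy: reduce by bilinearity to characteristic functions, establish the geometric product identity for the derived cycles, and then invoke the multiplicativity of virtual fundamental classes from Remark~\ref{rem:product_classes}. The first and third steps match the paper verbatim.

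The only difference is in the middle step, where you work considerably harder than necessary. The paper dispatches the equivalence
\[
\mathcal{Z}_K(U_{1,\Int},\mu_1,T_1)\times_{\Ss_K}\mathcal{Z}_K(U_{2,\Int},\mu_2,T_2)\;\simeq\;\bigsqcup_{T}\mathcal{Z}_K(U_{1,\Int}\oplus U_{2,\Int},(\mu_1,\mu_2),T)
\]
in one line as ``an immediate consequence of the definitions and Proposition~\ref{prop:product_formula}.'' The reason no point-by-point check over $\Comp$ or over finite fields is needed is that the Gram-matrix map is \emph{defined} via the single Hermitian form $\Phi'$ on $\mathcal{Z}_K(W_{\Int})(s)_{\Rat}$ (Proposition~\ref{prop:W_hermitian}), and the classical truncation of Proposition~\ref{prop:product_formula} identifies $\mathcal{Z}_K(U_{1,\Int})(s)\times\mathcal{Z}_K(U_{2,\Int})(s)$ with $\mathcal{Z}_K(U_{1,\Int}\oplus U_{2,\Int})(s)$ as subsets of $\mathcal{Z}_K(W_{\Int})(s)_{\Rat}^{n_1+n_2}$; the block shape of the Gram matrix of a concatenated tuple is then a tautology, uniformly in $s$. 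Since the $T$-decomposition and the $\mu$-coset decomposition are both into open and closed pieces, this settles the matter without any appeal to Lemmas~\ref{lem:W_special_geometric},~\ref{lem:W_special}, or to crystalline compatibilities. Your anticipated ``main obstacle'' is therefore not one.
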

\begin{proof}
Since both sides are bilinear in $\varphi_1,\varphi_2$, using Remark~\ref{rem:product_classes}, it is enough to know that there is a canonical equivalence of $\Ss_K$-stacks
\[
\mathcal{Z}_K(U_{1,\Int},\mu_1,T_1)\times_{\Ss_K}\mathcal{Z}_K(U_{2,\Int},\mu_2,T_2)\xrightarrow{\simeq}\bigsqcup_{T = \begin{pmatrix}T_1&*\\ *&T_2\end{pmatrix}}\mathcal{Z}_K(U_{1,\Int}\oplus U_{2,\Int},\mu_1+\mu_2,T).
\]
This is an immediate consequence of the definitions and Proposition~\ref{prop:product_formula}.
\end{proof}

Using Proposition~\ref{prop:product_formula} and applying Remark~\ref{rem:product_classes}, we also obtain:
\begin{proposition}[Pullback formula]\label{prop:pullback_formula}
Suppose now that we have another representation $W'$ equipped with a $G$-equivariant action of $D$ satisfying the hypotheses in~\eqref{subsec:W_D_hypotheses}; then so does $\tilde{W} = W\oplus W'$. Write $d_+,d'_+$ and $\tilde{d}_+ = d'_++d_+$ for the minimal codimensions associated with $W,W'$ and $\tilde{W}$, respectively. Then, for any $\tilde{T}\in \mathrm{Herm}_n^+(D)$, we have a commuting diagram
\[
\begin{diagram}
\mathcal{S}(W(\Adele_f)^n)^K\otimes_{\Comp}\mathcal{S}(W'(\Adele_f)^n)^K&\rTo^{\underset{T+T' = \tilde{T}}{\sum}\mathcal{C}_{\Ss_K,W}(T,\_\_)\otimes \mathcal{C}_{\Ss_K,W'}(T',\_\_)}&\mathrm{CH}^{nd_+}(\Ss_K)_{\Comp}\otimes_{\Comp}\mathrm{CH}^{nd'_+}(\Ss_K)_{\Comp}\\
\dTo^{\simeq}&&\dTo_{\cdot}\\
\mathcal{S}(\tilde{W}(\Adele_f)^n)^K&\rTo_{\mathcal{C}_{\Ss_K,\tilde{W}}(\tilde{T},\_\_)}&\mathrm{CH}^{n\tilde{d}_+}(\Ss_K)_{\Comp},
\end{diagram}
\]
where the right vertical arrow is the intersection product.
\end{proposition}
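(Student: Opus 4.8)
The plan is to reduce the assertion --- exactly as in the proof of the product formula (Proposition~\ref{prop:product_formula_cycles}) --- to a geometric decomposition of the stacks $\mathcal{Z}_K(\_\_,\_\_,\_\_)$ under direct sums, and then to pass to cycle classes via Remark~\ref{rem:product_classes}. First one checks that $\tilde{W} = W\oplus W'$ satisfies the hypotheses of~\eqref{subsec:W_D_hypotheses}: it carries the diagonal $G$-equivariant $D$-module structure and the orthogonal direct sum $\tilde{\Phi} = \Phi\oplus\Phi'$, which is again non-degenerate; the underlying quadratic form $\tilde{q} = q\oplus q'$ polarizes $\mathbf{HS}_z(\tilde{W})\simeq\mathbf{HS}_z(W)\oplus\mathbf{HS}_z(W')$ since each summand is polarized, so the hypothesis of Lemma~\ref{lem:quadratic_polarization} holds; and $\tilde{W}\in\Rep(G)^0_\mu$ by Lemma~\ref{lem:rep_0_props}(2). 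Additivity of the de Rham realization gives $\gr^{-1}_{\mathrm{Hdg}}\mathbf{dR}(\tilde{W})\simeq\gr^{-1}_{\mathrm{Hdg}}\mathbf{dR}(W)\oplus\gr^{-1}_{\mathrm{Hdg}}\mathbf{dR}(W')$, whence $\tilde{d}_+ = d_+ + d'_+$ and the Chow groups in the diagram have the stated codimensions. The left vertical map is the canonical bilinear map arising from the identification $\tilde{W}(\Adele_f)^n = W(\Adele_f)^n\times W'(\Adele_f)^n$, under which $\varphi\otimes\varphi'$ is the external product (it is $K$-invariant because $K$ acts diagonally). Since both composites in the square are bilinear in $(\varphi,\varphi')$ and characteristic functions $\varphi_{U_{\Int},\mu}$ of $K$-invariant cosets of $K$-stable lattices span $\mathcal{S}(W(\Adele_f)^n)^K$, it suffices to verify the identity $\mathcal{C}_{\Ss_K,\tilde{W}}(\tilde{T},\varphi\otimes\varphi') = \sum_{T+T'=\tilde{T}}\mathcal{C}_{\Ss_K,W}(T,\varphi)\cdot\mathcal{C}_{\Ss_K,W'}(T',\varphi')$ when $\varphi = \varphi_{U_{\Int},\mu}$ and $\varphi' = \varphi_{U'_{\Int},\mu'}$ with $U_{\Int}\subset W^n$, $U'_{\Int}\subset W'^n$.

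For such $\varphi,\varphi'$ the external product $\varphi\otimes\varphi'$ is the characteristic function of $(\mu,\mu') + \tilde{U}_{\Int}(\widehat{\Int})$ with $\tilde{U}_{\Int} = U_{\Int}\oplus U'_{\Int}\subset\tilde{W}^n = W^n\oplus W'^n$, so by Theorem~\ref{thm:main_thm_adelic}(1) all three cycle classes are virtual fundamental classes and it is enough to exhibit a canonical equivalence of $\Ss_K$-stacks
\[
\mathcal{Z}_K(\tilde{U}_{\Int},\mu+\mu',\tilde{T})\ \simeq\ \bigsqcup_{T+T'=\tilde{T}}\mathcal{Z}_K(U_{\Int},\mu,T)\times_{\Ss_K}\mathcal{Z}_K(U'_{\Int},\mu',T'),
\]
the union being effectively finite (each of $\mathcal{C}_{\Ss_K,W}(T,\varphi)$, $\mathcal{C}_{\Ss_K,W'}(T',\varphi')$ vanishes for all but finitely many $T,T'$). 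Here Proposition~\ref{prop:product_formula}, applied to $(W^n,U_{\Int})$ and $(W'^n,U'_{\Int})$, already gives $\mathcal{Z}_K(U_{\Int})\times_{\Ss_K}\mathcal{Z}_K(U'_{\Int})\simeq\mathcal{Z}_K(\tilde{U}_{\Int})$, compatibly with the constant sections $\mu$, $\mu'$, $(\mu,\mu')$; what must be checked is that this identification is compatible with the Gram-matrix fibrations over $\mathrm{Herm}_n^+(D)$. For $s\in\Ss_K(C)$ with $C$ discrete, the functoriality in Theorem~\ref{thm:W_representability} identifies $\mathcal{Z}_K(\tilde{W}_{\Int})(s)_{\Rat}\simeq\mathcal{Z}_K(W_{\Int})(s)_{\Rat}\oplus\mathcal{Z}_K(W'_{\Int})(s)_{\Rat}$ as $D$-modules, and --- since the $\iota$-Hermitian form on $\mathcal{Z}_K(\tilde{W}_{\Int})(s)_{\Rat}$ of Proposition~\ref{prop:W_hermitian} is characterized (Proposition~\ref{prop:W_dual}(1),(2), via Corollary~\ref{cor:modular_quadratic}) by compatibility with the canonical pairings on the Betti and crystalline realizations of $\tilde{W}$, which are the orthogonal sums of those of $W$ and $W'$ --- it is the orthogonal direct sum of the forms attached to $W$ and to $W'$. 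Hence the Gram matrix of an $n$-tuple in $\mathcal{Z}_K(\tilde{W}_{\Int})(s)_{\Rat}^n$ is the sum of the Gram matrices of its two components, and the fiber of $\mathcal{Z}_K(\tilde{U}_{\Int},\mu+\mu')$ over $\tilde{T}$ is precisely the displayed disjoint union.

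Granting this equivalence, Remark~\ref{rem:product_classes} (the virtual class of a fiber product of quasi-smooth $\Ss_K$-stacks is the intersection product of the virtual classes) yields $[\mathcal{Z}_K(\tilde{U}_{\Int},\mu+\mu',\tilde{T})/\Ss_K] = \sum_{T+T'=\tilde{T}}[\mathcal{Z}_K(U_{\Int},\mu,T)/\Ss_K]\cdot[\mathcal{Z}_K(U'_{\Int},\mu',T')/\Ss_K]$, and translating through Theorem~\ref{thm:main_thm_adelic}(1) gives the desired identity, hence the commutativity of the diagram. The only genuinely substantive point is the compatibility of the Hermitian forms under the direct-sum decomposition $\mathcal{Z}_K(\tilde{W}_{\Int})(s)_{\Rat}\simeq\mathcal{Z}_K(W_{\Int})(s)_{\Rat}\oplus\mathcal{Z}_K(W'_{\Int})(s)_{\Rat}$; everything else is formal bookkeeping of lattices, cosets and Gram matrices together with Proposition~\ref{prop:product_formula} and Remark~\ref{rem:product_classes}.
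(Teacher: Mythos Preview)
Your proof is correct and follows exactly the same approach as the paper, which simply states that the proposition is obtained ``using Proposition~\ref{prop:product_formula} and applying Remark~\ref{rem:product_classes}''. You have filled in the details the paper leaves implicit --- in particular the verification that $\tilde{W}$ satisfies the hypotheses of~\eqref{subsec:W_D_hypotheses} and that the Hermitian form on $\mathcal{Z}_K(\tilde{W}_{\Int})(s)_{\Rat}$ is the orthogonal direct sum --- but the skeleton of the argument is identical.
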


\subsection{}
Write 
\[
\Phi:W^n\to \mathrm{Herm}_n(D)
\]
for the map taking $\underline{x} = (x_1,\ldots,x_n)$ to the matrix $\left(\frac{1}{2}\Phi(x_i,x_j)\right)_{1\leq i,j\leq n}$, set $q = \mathrm{Trd}_{D/\Rat}\circ \mathrm{Tr}\circ \Phi$. The next result is immediate from the definitions and Proposition~\ref{prop:trivial_rep}.

\begin{proposition}
\label{prop:trivial_cycles}
Suppose that $G$ acts trivially on $W$; then $W$ is a positive definite $\iota$-Hermitian space, and, for all $T\in \mathrm{Herm}_n^+(D)$, $\varphi\in \mathcal{S}(W(\Adele_f)^n)^K$, we have
\[
\mathcal{C}_{\Ss_K,W}(T,\varphi) = r_W(T,\varphi)\cdot \mathbf{1}\in \mathrm{CH}^0(\Ss_K)_{\Comp},
\]
where 
\[
r_W(T,\varphi) = \sum_{\underset{\Phi(\underline{x}) = T}{\underline{x}\in W(\Rat)^n}}\varphi(\underline{x})
\]
and $\mathbf{1}$ is the fundamental class of $\Ss_K$.
\end{proposition}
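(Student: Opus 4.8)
The plan is to reduce the identity to the case where $\varphi$ is the characteristic function $\varphi_{U_{\Int},\mu}$ of a $K$-invariant coset $\mu+U_{\Int}(\widehat{\Int})$ of a $K$-stable lattice $U_{\Int}\subset W^n$: such functions span $\mathcal{S}(W(\Adele_f)^n)^K$, and both sides of the asserted equality are $\Comp$-linear in $\varphi$ (the left side by Theorem~\ref{thm:main_thm_adelic}, the right side by inspection of the definition of $r_W$), so it suffices to check the formula on these characteristic functions.

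First I would dispose of the preliminary assertions. If $G$ acts trivially on $W$, then for any $z=[(x,g)]\in\Sh_K(\Comp)$ the Hodge structure $\mathbf{HS}_z(W)$ is pure of weight $(0,0)$, so $\gr^i_{\mathrm{Hdg}}\mathbf{dR}(W)=0$ for $i\neq 0$; in particular $d_+(W)=0$ and $\mathcal{C}_{\Ss_K,W}(T,\varphi)\in\mathrm{CH}^0(\Ss_K)_{\Comp}$. Moreover $\Phi$ is automatically $G$-invariant, and the standing hypothesis in~\eqref{subsec:W_D_hypotheses} that the quadratic form underlying $\Phi$ polarize the weight-$(0,0)$ Hodge structure $\mathbf{HS}_z(W)$ forces that form — hence $\Phi$ — to be positive definite (consistently with Lemma~\ref{lem:quadratic_polarization}). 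This shows $W$ is a positive definite $\iota$-Hermitian space.

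Next I would identify the cycle attached to $\varphi=\varphi_{U_{\Int},\mu}$. By Theorem~\ref{thm:main_thm_adelic}(1), $\mathcal{C}_{\Ss_K,W}(T,\varphi)=[\mathcal{Z}_K(U_{\Int},\mu,T)/\Ss_K]$, so the task is to describe the $\Ss_K$-stack $\mathcal{Z}_K(U_{\Int},\mu,T)$. Since $G$ acts trivially on $W^n$, Proposition~\ref{prop:trivial_rep} together with Proposition~\ref{prop:product_formula} gives $\mathcal{Z}_K(U_{\Int})\simeq\underline{U_{\Int}}$, the constant étale group scheme on the lattice $U_{\Int}$; unwinding the construction of $\mathcal{Z}_K(U_{\Int},\mu)$ via Proposition~\ref{prop:W_and_W'} (as the preimage of the constant section $\mu$ of $\underline{U'_{\Int}/U_{\Int}}$, where $U'_{\Int}$ is generated by $U_{\Int}$ and $\mu$) identifies it with the constant sheaf $\underline{\mu+U_{\Int}}$ on the coset $\mu+U_{\Int}\subset W(\Rat)^n$. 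Finally, the Gram-matrix fibration over $\mathrm{Herm}^+_n(D)$ is computed using the canonical $\iota$-Hermitian form $\Phi'$ from Proposition~\ref{prop:W_hermitian}, which on $\underline{W_{\Int}}$ is simply $\Phi$; passing to the fiber over $T$ yields
\[
\mathcal{Z}_K(U_{\Int},\mu,T)\simeq\underline{\{\underline{x}\in\mu+U_{\Int}:\Phi(\underline{x})=T\}},
\]
a constant finite étale cover of $\Ss_K$. Such a cover is quasi-smooth of virtual codimension $0$, so its virtual fundamental class in the sense of Definition~\ref{defn:qs_cycle_class} is its degree times the fundamental class, namely $\#\{\underline{x}\in\mu+U_{\Int}:\Phi(\underline{x})=T\}\cdot\mathbf{1}$.

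To conclude, note that for $\varphi=\varphi_{U_{\Int},\mu}$ one has $r_W(T,\varphi)=\#\{\underline{x}\in W(\Rat)^n:\Phi(\underline{x})=T,\ \underline{x}\in\mu+U_{\Int}(\widehat{\Int})\}=\#\{\underline{x}\in\mu+U_{\Int}:\Phi(\underline{x})=T\}$, which agrees with the computation above, and then invoke linearity. The only part needing care is the bookkeeping in the third step — tracking the coset $\mu$ and the Gram condition through Propositions~\ref{prop:W_and_W'} and~\ref{prop:W_hermitian} — but this presents no real obstacle, since once $G$ acts trivially all the étale sheaves in play are constant.
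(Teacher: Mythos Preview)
Your proposal is correct and follows exactly the route the paper intends: the paper's own proof is a one-liner (``immediate from the definitions and Proposition~\ref{prop:trivial_rep}''), and you have simply unpacked that line, tracking the constant \'etale sheaf identification through the coset and Gram-matrix constructions. The only redundancy is the explicit appeal to Proposition~\ref{prop:product_formula}, since Proposition~\ref{prop:trivial_rep} already covers arbitrary lattices in trivial representations.
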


Combining the two previous propositions gives us:
\begin{corollary}
\label{cor:pullback_trivial}
Suppose that in the context of Proposition~\ref{prop:pullback_formula}, $G$ acts trivially on $W'$; then, for any $\tilde{T}\in \mathrm{Herm}_n^+(D)$, $\varphi\in \mathcal{S}(W(\Adele_f)^n)^K$ and $\varphi'\in \mathcal{S}(W'(\Adele_f)^n)$, we have
\[
\mathcal{C}_{\Ss_K,\tilde{W}}(\varphi\otimes\varphi',\tilde{T}) = \sum_{T+T'=\tilde{T}}r_{W'}(T',\varphi')\mathcal{C}_{\Ss_K,W}(\varphi,T)\in \mathrm{CH}^{nd_+}(\Ss_K)_{\Comp}.
\]
\end{corollary}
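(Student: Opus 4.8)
The plan is to deduce the corollary directly from the pullback formula of Proposition~\ref{prop:pullback_formula} combined with the computation of cycle classes attached to a trivial representation in Proposition~\ref{prop:trivial_cycles}. First I would record two preliminary bookkeeping points. Since $G$ acts trivially on $W'$, the associated variation of Hodge structures on $W'$ is constant and pure of weight $(0,0)$, so $\gr^{-1}_{\mathrm{Hdg}}\mathbf{dR}(W') = 0$ and hence $d'_+ = 0$; therefore $\tilde d_+ = d_+ + d'_+ = d_+$, and the asserted identity is an identity of classes in the single Chow group $\mathrm{CH}^{nd_+}(\Ss_K)_{\Comp}$. Moreover a trivial $G$-action entails a trivial $K$-action on $W'$, so every $\varphi'\in \mathcal{S}(W'(\Adele_f)^n)$ is automatically $K$-invariant and the hypotheses of Proposition~\ref{prop:pullback_formula} are met; I would also note (as already recorded in the statement of that proposition) that the data of Subsection~\ref{subsec:W_D_hypotheses} needed to form $\mathcal{C}_{\Ss_K,\tilde W}$ pass to $\tilde W = W\oplus W'$, the Hermitian form on $\tilde W$ being the orthogonal direct sum $\Phi\oplus\Phi'$.

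Next I would apply Proposition~\ref{prop:pullback_formula}, which (reading off the commuting diagram there) gives
\[
\mathcal{C}_{\Ss_K,\tilde W}(\tilde T,\varphi\otimes\varphi') = \sum_{T+T'=\tilde T}\mathcal{C}_{\Ss_K,W}(T,\varphi)\cdot \mathcal{C}_{\Ss_K,W'}(T',\varphi'),
\]
the product on the right being the intersection product in $\mathrm{CH}^\bullet(\Ss_K)_{\Comp}$ and the sum indexed by $\iota$-Hermitian matrices with block diagonals $T,T'$. Then I would invoke Proposition~\ref{prop:trivial_cycles} for the trivial representation $W'$: it is a positive definite $\iota$-Hermitian space and
\[
\mathcal{C}_{\Ss_K,W'}(T',\varphi') = r_{W'}(T',\varphi')\cdot\mathbf{1}\in\mathrm{CH}^0(\Ss_K)_{\Comp},
\]
where $\mathbf{1}$ is the fundamental class of $\Ss_K$. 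Since intersection with $\mathbf{1}$ is the identity on $\mathrm{CH}^\bullet(\Ss_K)_{\Comp}$, each summand collapses to $r_{W'}(T',\varphi')\,\mathcal{C}_{\Ss_K,W}(T,\varphi)$, yielding exactly the claimed formula.

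There is no genuine obstacle here: all the substance resides in the two cited propositions, and the argument is a one-line combination once they are available. The only matters requiring any care are the grading/codimension match (handled by $d'_+ = 0$ above) and checking that the indexing $T+T' = \tilde T$ produced by the orthogonality of the Hermitian form on $W\oplus W'$ is precisely the indexing appearing in Proposition~\ref{prop:pullback_formula} — which it is, since $\Phi_{\tilde W}^n(\underline x) = \tilde T$ decomposes as $\Phi_W^n(\underline x_W) = T$ and $\Phi_{W'}^n(\underline x_{W'}) = T'$ with $T + T' = \tilde T$. With these remarks in place the corollary follows.
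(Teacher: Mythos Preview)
Your proposal is correct and follows precisely the paper's approach, which is simply to combine Proposition~\ref{prop:pullback_formula} with Proposition~\ref{prop:trivial_cycles}; you have merely filled in the bookkeeping (that $d'_+=0$, that $K$-invariance is automatic, and that intersection with $\mathbf{1}$ is the identity) which the paper leaves implicit. One tiny slip: in the pullback formula the sum is over pairs $(T,T')$ of $n\times n$ matrices with $T+T'=\tilde T$ (as you correctly explain at the end), not over block-diagonal matrices as you wrote earlier---but this does not affect the argument.
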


\begin{proposition}
[Constant term]\label{prop:constant_term}
If $T = 0\in \mathrm{Herm}^+_n(D)$ is the zero matrix, then $\mathcal{C}_{\Ss_K,W}(0,\varphi)$ is determined by the following property: For any $K$-stable lattice $U_{\Int}\subset W^n$ and any $K$-invariant element $\mu\in W^n/U_{\Int}$, let $\varphi_{U_{\Int},\mu}\in \mathcal{S}(W(\Adele_f)^n)^K$ be the characteristic function of the corresponding coset of $U_{\Int}(\widehat{\Int})$. Then we have
\[
\mathcal{C}(0,\varphi_{U_{\Int},\mu}) = \begin{cases}
c_{top}(\gr^{-1}_{\mathrm{Hdg}}\mathbf{dR}(U_{\Int}))&\text{if $\mu = 0$};\\
0&\text{otherwise.}
\end{cases}
\]
\end{proposition}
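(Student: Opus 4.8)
The plan is to prove the explicit formula for $\mathcal{C}_{\Ss_K,W}(0,\varphi_{U_{\Int},\mu})$; since the characteristic functions $\varphi_{U_{\Int},\mu}$ of $K$-invariant cosets of $K$-stable lattices $U_{\Int}\subset W^n$ span $\mathcal{S}(W(\Adele_f)^n)^K$ over $\Comp$ (any $K$-invariant Schwartz function is constant on, and supported on finitely many, cosets of a $K$-stable lattice), this determines $\mathcal{C}_{\Ss_K,W}(0,-)$ completely. By the characterization in Theorem~\ref{thm:main_thm_adelic}(1), what we must compute is the virtual fundamental class $[\mathcal{Z}_K(U_{\Int},\mu,0)/\Ss_K]\in\mathrm{CH}^{nd_+}(\Ss_K)_{\Comp}$.

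First I would analyze the classical truncation of $\mathcal{Z}_K(U_{\Int},\mu,0)$. Let $U'_{\Int}\subset W^n$ be the $K$-stable lattice generated by $U_{\Int}$ and $\mu$, so that $\mathcal{Z}_K(U_{\Int},\mu,0)$ is the locus inside $\mathcal{Z}_K(U'_{\Int})$ where the image in $\underline{U'_{\Int}/U_{\Int}}$ is the constant section $\mu$ and the Gram-matrix map to $\mathrm{Herm}_n^+(D)$ (built from the $\iota$-Hermitian form $\Phi'$ of Proposition~\ref{prop:W_hermitian}) vanishes. Over any discrete ring $C$ it suffices to test on field-valued points of $\Spec C$; over a field $s$ the group $\mathcal{Z}^{\mathrm{cl}}_K(U'_{\Int})(s)$ is torsion-free (after $\otimes\Rat$, resp. $\otimes\Rat_p$, it embeds into the weight $(0,0)$ part of a Hodge structure, resp. into a crystalline realization), and its rationalization carries the positive-definite quadratic form $q'$ of Lemma~\ref{lem:quadratic_polarization} applied to $W^n$; hence an element with vanishing Gram matrix is already $0$. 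Since $0$ lies in the zero coset of $U'_{\Int}/U_{\Int}$, we conclude that $\mathcal{Z}_K(U_{\Int},\mu,0)=\emptyset$ whenever $\mu\neq 0$, so its virtual fundamental class vanishes, while for $\mu=0$ one has $U'_{\Int}=U_{\Int}$ and $\mathcal{Z}^{\mathrm{cl}}_K(U_{\Int},0,0)$ is exactly the zero section, mapping isomorphically onto $\Ss_K$.

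It remains to compute $[\mathcal{Z}_K(U_{\Int},0,0)/\Ss_K]$ when $\mu=0$. Here $\mathcal{Z}_K(U_{\Int},0,0)$ is the open-and-closed ``$T=0$'' substack of $\mathcal{Z}_K(U_{\Int})$, so it is quasi-smooth and unramified over $\Ss_K$ of virtual codimension $nd_+$, with $\mathbb{L}_{\mathcal{Z}_K(U_{\Int},0,0)/\Ss_K}$ the pullback of $\bm{co}(U_{\Int})[1]$ (Theorem~\ref{thm:W_representability}(1)); since its classical truncation is an isomorphism onto $\Ss_K$, it is in fact a quasi-smooth closed immersion into $\Ss_K$ with conormal sheaf $\bm{co}(U_{\Int})$. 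Working \'etale-locally on $\Ss_K=\Spec R$, such an immersion has the form $\Spec R/^{\mathbb{L}}(a_1,\ldots,a_{nd_+})$, and the classical quotient $R/(a_1,\ldots,a_{nd_+})$ being $R$ forces each $a_i=0$; thus $\Reg{\mathcal{Z}_K(U_{\Int},0,0)}$ is the Koszul complex of the zero section, namely $\bigwedge^{\bullet}\bm{co}(U_{\Int})$ with zero differential. (One sees this more structurally by transporting through the formally \'etale realization maps of Theorem~\ref{thm:W_representability}(2) and the description, via Lemmas~\ref{lem:M'toM} and~\ref{lem:M'_cohesive}, of $\mathcal{Z}_{K,\infty}(W)$ and $\mathcal{Z}_{K,crys}(W_{\Int})^{\form}_p$ as preimages of the zero section of $\mathbf{V}(\bm{co}(W_{\Int}))$ under a formally \'etale map.) Hence, by the construction of virtual fundamental classes recalled in Appendix~\ref{app:cycle_classes} (Definition~\ref{defn:qs_cycle_class}), $[\mathcal{Z}_K(U_{\Int},0,0)/\Ss_K]$ is the image in $\mathrm{CH}^{nd_+}(\Ss_K)$ of the class $\sum_i(-1)^i[\bigwedge^i\bm{co}(U_{\Int})]=\lambda_{-1}(\bm{co}(U_{\Int}))\in K_0(\Ss_K)^{(nd_+)}_{\Rat}$, which is the standard $K$-theoretic avatar of $c_{nd_+}(\bm{co}(U_{\Int})^{\vee})=c_{top}(\gr^{-1}_{\mathrm{Hdg}}\mathbf{dR}(U_{\Int}))$. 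This yields the stated formula.

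The main obstacle is this last identification of the derived structure of $\mathcal{Z}_K(U_{\Int},0,0)$ along the zero section --- confirming that its structure sheaf is the trivial (zero-differential) Koszul complex $\bigwedge^{\bullet}\bm{co}(U_{\Int})$ rather than a nontrivial twist --- after which the passage to $c_{top}$ through the $\lambda_{-1}$/Adams-eigenspace formalism is routine (it is the derived self-intersection formula, essentially the argument of~\cite{sga6} and~\cite{Gillet1987-ny}). Everything else is bookkeeping with the positive-definiteness of $\Phi'$ and the spanning property of the functions $\varphi_{U_{\Int},\mu}$.
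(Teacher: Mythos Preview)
Your approach matches the paper's: both use positive-definiteness of $\Phi'$ to show that the classical truncation of $\mathcal{Z}_K(U_{\Int},\mu,0)$ is empty for $\mu\neq 0$ and isomorphic to $\Ss_K$ for $\mu=0$, then compute the virtual class by identifying the derived structure with the derived self-intersection of the zero section of $\mathbf{V}(\bm{co}(U_{\Int}))$ and invoking the $\lambda_{-1}$/top-Chern-class identity (Remark~\ref{rem:top_chern_class} in the paper).

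The one place your argument is not quite complete is precisely where you flag it. Your local computation---\'etale locally the immersion is $\Spec R/^{\mathbb{L}}(a_1,\ldots,a_{nd_+})$ and the classical truncation being $R$ forces each $a_i=0$---is correct, but this by itself does not produce a \emph{global} equivalence of $\Reg{\mathcal{Z}_K(U_{\Int},0,0)}$ with $\bigwedge^\bullet\bm{co}(U_{\Int})$: the local trivial Koszul presentations must be glued compatibly with the specific bundle $\bm{co}(U_{\Int})$, and nothing in the Rydh--Khan local form pins that down. The paper handles this via Lemma~\ref{lem:quasi-smooth_with_section}, which shows that any finite unramified quasi-smooth $Y\to X$ (with $X$ classical) equipped with a section $s:X\to Y$ inducing an equivalence on classical truncations is \emph{canonically} equivalent to $X\times_{0,\mathbf{V}(s^*\mathbb{L}_{Y/X}[-1]),0}X$. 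The proof there does not glue local Koszul presentations; instead it builds the comparison map intrinsically from the universal property of the derived self-intersection (it parameterizes loops at the zero section, so the canonical identification $N\simeq\pi_1(C)$ supplies the map) and then checks it is an equivalence via cotangent complexes. Once you have that lemma, Remark~\ref{rem:top_chern_class} finishes exactly as you say.
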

\begin{proof}
One checks that the quasi-smooth stack $\mathcal{Z}_K(U_{\Int},\mu,0)\to \Ss_K$ is empty if $\mu \neq 0$ and that its underlying classical stack maps isomorphically onto $\Ss_K$ when $\mu = 0$: This amounts to saying that the only element in $\mathcal{Z}_K(W_{\Int})(s)_{\Rat}^n$ with zero Gram matrix is the zero vector. The rest now follows from Lemma~\ref{lem:quasi-smooth_with_section} and Remark~\ref{rem:top_chern_class}, and the fact that the cotangent complex for $\mathcal{Z}_K(U_{\Int})$ over $\Ss_K$ is the pullback of $\bm{co}(W_{\Int})[1]$.
\end{proof}

\begin{remark}
\label{rem:product_with_zero}
Suppose that we have $g\in \GL_n(D)$ such that $({}^t\iota(g))^{-1}Tg^{-1}$ is a block diagonal matrix of the form $ \begin{pmatrix}
T_0&0\\
0&0
\end{pmatrix} $ with $T_0\in \mathrm{Herm}^+_{n_0}(D)$. Then combining Propositions~\ref{prop:product_formula_cycles} and~\ref{prop:constant_term} with (2) of Theorem~\ref{thm:main_thm_adelic} shows that, for $\varphi_0\in \mathcal{S}(W(\Adele_f)^{n_0})^K$ and a $K$-stable lattice $U_{\Int}\subset W(\Adele_f)^{n-n_0}$, we have
\[
\mathcal{C}_{\Ss_K,W}(T_0,\varphi_0)\cdot c_{top}(\gr^{-1}_{\mathrm{Hdg}}\mathbf{dR}(U_{\Int})) = \mathcal{C}_{\Ss_K,W}(T,L_g(\varphi_0\otimes\varphi_{U_{\Int},0})). 
\]
\end{remark}

\begin{remark}
\label{rem:cycles_over_generic_fiber}
Suppose that Assumption~\ref{assumption} holds; then using the complex uniformization from Lemma~\ref{lem:W_special_complex}, one can show that the classical truncation of $\mathcal{Z}_K(U_{\Int},\mu,T)_{\Rat}$ is a smooth stack over $\Comp$ of dimension $\dim \Sh_K - d_+\mathrm{rank}(T)$. 

Combining this with the previous remark now shows that we have
\[
\mathcal{C}_{\Ss_K,W}(T,\varphi_{U_{\Int},\mu})\vert_{\Sh_K} = [\mathcal{Z}_K(U_{\Int},\mu,T)_{\Rat}]\cdot c_{top}(\gr^{-1}_{\mathrm{Hdg}}\mathbf{dR}(W))^{n-\mathrm{rank}(T)}\in \mathrm{CH}^{nd_+}(\Sh_K)_{\Rat},
\]
where $[\mathcal{Z}_K(U_{\Int},\mu,T)_{\Rat}]$ is the na\"ive cycle class defined as in~\cite[(A.1.4)]{HMP:mod_codim}.

In particular, we find that the construction here agrees with that of Kudla as found for instance in~\cite{kudla:special_cycles} or~\cite{Kudla2019-dq} for GSpin Shimura varieties. This justifies the last sentence of Theorem~\ref{introthm:main}.
\end{remark}

\section{The modularity conjecture}\label{sec:modularity}
Here I formulate the main modularity conjecture for the cycle classes defined above, and provide some rudimentary evidence for it. The main contribution here is the unconditional definition of the classes themselves in the general context to which Kudla's conjectures should extend.

\subsection{}\label{subsec:the_group_for_modularity}
Let $\mathrm{U}_D(n,n)$ be the reductive group over $\Rat$ obtained as the unitary group associated with the $\iota$-skew Hermitian space $D^{2n} = D^n\oplus D^n$ with the form given by 
\[
\Psi((\underline{x}_1,\underline{y}_1),(\underline{x}_2,\underline{y}_2)) = \iota({}^t\underline{x}_1)\cdot\underline{y}_2 - \iota({}^t\underline{x}_2)\cdot \underline{y}_1.
\]

Set $\psi = \mathrm{Trd}_{D/\Rat}\circ \Psi$: this is a symplectic form on the $\Rat$-vector space underlying $D^{2n}$.

We have a \emph{connected} Shimura datum $(\mathrm{U}_D(n,n),\mathcal{H}_D(n,n))$, where $\mathcal{H}_D(n,n)$ is the space of maximal $\Comp\otimes_{\Rat}D$-stable, isotropic subspaces $E\subset \Comp\otimes_{\Rat}D^{2n}$ such that the Hermitian form $(\underline{z}_1,\underline{z}_2)\mapsto i\psi(\overline{\underline{z}}_1,\underline{z}_2)$ on the underlying $\Comp$-vector space of $E$ is positive definite. Here, the complex conjugation is with respect to the natural real structure $\Real\otimes_{\Rat}D^{2n}$.

Since we have a decomposition
\begin{align}
\label{eqn:polarization_D2n}
D^{2n} = (D^n\oplus \{0\})\oplus (\{0\}\oplus D^n)
\end{align}
into complementary rational maximal isotropic subspace, both of which are complementary to any $\tau\in \mathcal{H}_D(n,n)$, we can identify $\mathcal{H}_D(n,n)$ with an open subspace of $\mathrm{Herm}_n(D)$: It consists of those matrices $\tau$ such that the elements $(\tau(\underline{y}),\underline{y})$ span an isotropic subspace in $\mathcal{H}_D(n,n)$ as $\underline{y}$ ranges over $\Comp\otimes_{\Rat}D^n$. We will use this to identify $\mathcal{H}_D(n,n)$ with a subspace of $\mathrm{Herm}_n(D)$; in particular, for any $T\in \mathrm{Herm}_n^+(D)$, and $\tau\in \mathcal{H}_D(n,n)$, one can make sense of the parameter
\[
\bm{q}^T = e^{2\pi i\mathrm{Trd}_{D/\Rat}(\mathrm{Tr}(\tau T))}
\]
as a holomorphic function on $\mathcal{H}_D(n,n)$. 

In the notation of Theorem~\ref{thm:main_thm_adelic}, we now set
\begin{align}
\label{defn:geometric_theta_function}
\Theta^{\mathrm{geom}}_{\Ss_K,W}(\varphi,\tau) = \sum_{T\in \mathrm{Herm}_n^+(D)}\mathcal{C}_{\Ss_K,W}(T,\varphi)\bm{q}^T.
\end{align}
Here, we are viewing the right hand side as a \emph{formal} sum of holomorphic functions on $\mathcal{H}_D(n,n)$ valued in $\mathrm{CH}^{nd_+}(\Ss_K)_{\Comp}$. 

\subsection{}
\label{subsec:weil_representation}
Before I explain the precise way in which~\eqref{defn:geometric_theta_function} is supposed to be automorphic, I need to recall the Weil representation on the vector space $\mathcal{S}(W(\Adele)^n)$ obtained by viewing $W^n$ as the Lagrangian subspace
\[
W^n = (\{0\}\oplus D^n)\otimes_DW\subset D^{2n}\otimes_DW \overset{\mathrm{defn}}{=} \mathbf{V}(W,n),
\]
where we equip $\mathbf{V}(W,n)$---viewed as a $\Rat$-vector space---with the symplectic form 
\[
\langle v_1\otimes w_2,v_2\otimes w_2\rangle = \mathrm{Trd}_{D/\Rat}\left(\iota(\Psi(v_1,v_2))\Phi(w_1,w_2)\right).
\]
We have mutually commuting actions of $\mathrm{U}_D(n,n)$ and $G$ on $\mathbf{V}(W,n)$ giving an embedding
\[
\mathrm{U}_D(n,n)\times G \hookrightarrow \mathrm{Sp}(\mathbf{V}(W,n)).
\]

Set $\mathbf{V} = \mathbf{V}(W,n)$. Let $\psi:\Adele\to \Comp^\times$ be the lift of $\psi_f$ restricting to $\psi_\infty:x\mapsto e^{2\pi i x}$ on $\Real$. Let $\mathcal{S}(W(\Real)^n)$ be the usual space of Schwartz functions on the real vector space $W(\Real)^n$. 

For every place $v$ of $\Rat$, there is a canonical central $\mu_2$-extension $\mathrm{Mp}(\mathbf{V})(\Rat_v)$ of $\mathrm{Sp}(\mathbf{V})(\Rat_v)$ equipped with the Weil representation $\omega^\psi_v$ on $\mathcal{S}(W(\Rat_v)^n)$. These can be amalgamated into a central $\mu_2$-extension $\mathrm{Mp}(\mathbf{V})(\Adele)$ of $\mathrm{Sp}(\mathbf{V}(\Adele))$ equipped with a representation $\omega^\psi$ on $\mathcal{S}(W(\Adele)^n)$. This is \emph{automorphic} in the sense that there is a canonical lift $\mathrm{Sp}(\mathbf{V})(\Rat)$ to a subgroup of $\mathrm{Mp}(\mathbf{V})(\Adele)$, and we have $\omega^\psi(\gamma\tilde{g}) = \omega^\psi(\tilde{g})$ for all $\gamma\in \mathrm{Sp}(\mathbf{V})(\Rat)$ and $\tilde{g}\in \mathrm{Mp}(\mathbf{V})(\Adele)$.

For explicit formulas describing all this, see~\cite{Ranga_Rao1993-rc}.

There now exists a central $\mu_2$-extension $\widetilde{\mathrm{U}}_D(n,n)(\Adele)$ of $\mathrm{U}_D(n,n)(\Adele)$, independent of $W$, and a lift
\[
\widetilde{\mathrm{U}}_D(n,n)(\Adele)\times G(\Adele)\to \mathrm{Mp}(\mathbf{V})(\Adele)
\]
such that $\omega^\psi$ restricts to the natural representation $(\omega^\psi((1,g))\varphi)(\underline{w}) = \varphi((\mathrm{id}\otimes g^{-1})\underline{w})$ on $G(\Adele)$. 

If $W^n$ and $\psi$ are clear from context, we will simply write $\omega$ for the corresponding representation of $\widetilde{\mathrm{U}}_D(n,n)(\Adele)$. This is once again an automorphic representation in these sense that there exists a lift of $\mathrm{U}_D(n,n)(\Rat)$ to a subgroup of $\widetilde{\mathrm{U}}_D(n,n)(\Adele)$, compatible with the lift of $\mathrm{Sp}(\mathbf{V})(\Rat)$ to $\mathrm{Mp}(\mathbf{V}(\Adele))$, so that $\omega$ is invariant under the left action of $\mathrm{U}_D(n,n)(\Rat)$.


\subsection{}\label{subsec:half-integer_wt}
When $v = \infty$, we can describe $\widetilde{\mathrm{U}}_D(n,n)(\Real)$ in terms of the decomposition~\eqref{eqn:polarization_D2n} as the group of pairs $(g,j(g;\tau))$ in $\mathrm{U}_D(n,n)(\Real)\times \Comp^\times$, where 
\[
g = \begin{pmatrix}
A_1&A_2\\
A_3&A_4
\end{pmatrix}\in \mathrm{U}_D(n,n)(\Real)\;;\; j(g;\tau)^2 = \mathrm{Nrd}_{M_n(D)/\Rat}(A_3\tau+A_4).
\]
Here, $\mathrm{Nrd}_{M_n(D)/\Rat}$ is the reduced norm on $M_n(D)$. 

For any $k\in \Real$, we now have a right action of $\widetilde{\mathrm{U}}_D(n,n)(\Real)$ on holomorphic functions on $\mathcal{H}_D(n,n)$ given for $\tilde{g} = (g,j(g;\tau))$ by:
\[
(f\vert_k[\tilde{g}])(\tau) = j(\tau;g)^{-2k}f(g\tau).
\]
More generally, for any discrete set $S$, we can equip holomorphic functions on $S\times \mathcal{H}_D(n,n)$ with an action given by the same formula, and for which we will use the same notation.

For $k\in \frac{1}{2}\Int_{\ge 0}$, we now say that a holomorphic function
\[
f:\mathcal{S}(W(\Adele_f)^n)^K\times \mathcal{H}_D(n,n) \to \Comp
\]
is a \defnword{parallel weight $k$ automorphic form on $\mathrm{U}_D(n,n)$ with respect to the Weil representation on $\mathcal{S}(W(\Adele)^n)$} if, for all $\gamma\in \mathrm{U}_D(n,n)(\Rat)$, $\varphi\in \mathcal{S}(W(\Adele_f)^n)^K$ and $\tau\in \mathcal{H}_D(n,n)$, we have
\[
(f\vert_k[\tilde{\gamma}_\infty])(\omega(\tilde{\gamma}_f)\varphi,\tau) = f(\varphi,\tau).
\]
Here, $\tilde{\gamma}\in \tilde{\mathrm{U}}_D(n,n)(\Adele)$ is the lift of $\gamma$ determined by the fixed splitting of the double cover over $\mathrm{U}_D(n,n)(\Rat)$ mentioned at the end of~\eqref{subsec:weil_representation}.

Actually, if the adjoint group of $\mathrm{U}_D(n,n)$ has $\Rat$-rank $1$---which happens precisely when $D $ is $\Rat$ or an imaginary quadratic extension of $\Rat$ with non-trivial $\iota$, and $n = 1$---$\mathcal{H}_D(n,n)$ is (biholomorphic to) the upper half plane, and in this case, we will also require that $(f\vert_k[\tilde{\gamma}_\infty])(\varphi,\tau)$ is holomorphic at $\infty$ for all $\gamma\in \mathrm{U}_D(n,n)(\Rat)$ and all $\varphi\in \mathcal{S}(W(\Adele_f)^n)^K$.

I can finally state the conjecture:
\begin{conjecture}
\label{conj:main}
Suppose that $W$ has rank $m$ over $D$. For any linear functional $\ell:\mathrm{CH}^{nd_+}(\Ss_K)_{\Comp}\to \Comp$ and any $\varphi\in \mathcal{S}(W(\Adele_f)^n)^K$, the series 
\[
\ell\left(\Theta^{\mathrm{geom}}_{\Ss_K,W}(\varphi,\tau)\right) = \sum_{T\in \mathrm{Herm}_n^+(D)}\ell(\mathcal{C}_{\Ss_K,W}(T,\varphi))\bm{q}^T 
\]
converges absolutely to a parallel weight $\frac{m}{2}$ automorphic form on $\mathrm{U}_D(n,n)$ with respect to the Weil representation on $\mathcal{S}(W(\Adele)^n)$.
\end{conjecture}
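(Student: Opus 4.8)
The plan is to split Conjecture~\ref{conj:main} into automorphy under the Siegel parabolic, absolute convergence, and automorphy under a Weyl element, and to treat the last (and only genuinely hard) piece by a modularity criterion combined with an induction on $n$ that is fed by the product and pullback formulas proved above. \emph{Automorphy under the Siegel parabolic.} Write $P = MN \subset \mathrm{U}_D(n,n)$ for the Siegel parabolic, with Levi $M \simeq \GL_n(D)$ and unipotent radical $N \simeq \mathrm{Herm}_n(D)$. Invariance of $\ell(\Theta^{\mathrm{geom}}_{\Ss_K,W}(\varphi,\tau))$ under $N(\Rat)$ is exactly the translation invariance (3) of Theorem~\ref{thm:main_thm_adelic}: the factor $e^{-2\pi i\,\mathrm{Trd}_{D/\Rat}(\mathrm{Tr}(NT))}$ cancels the transformation of $\bm{q}^T$. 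Invariance under $M(\Rat)$ is the linear invariance (2) of the same theorem, once one checks that the automorphy factor $j(g;\tau)^{-2(m/2)} = \mathrm{Nrd}_{M_n(D)/\Rat}(g)^{-m/2}$ matches the $(n-r(T))$-fold top Chern weighting built into $\mathcal{C}_{\Ss_K,W}(T,\varphi)$; this matching is precisely what Proposition~\ref{prop:constant_term} and Remark~\ref{rem:product_with_zero} make explicit. This step is unconditional and is the content of Proposition~\ref{prop:easy_modularity}.

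\emph{Absolute convergence.} I would bound $\ell(\mathcal{C}_{\Ss_K,W}(T,\varphi))$ polynomially in the entries of $T$. Fixing an ample class $\eta$ on a suitable compactification of $\Ss_K$, the product formula (Proposition~\ref{prop:product_formula_cycles}) reduces the estimate for $\deg\!\big(\mathcal{C}_{\Ss_K,W}(T,\varphi)\cdot \eta^{\dim\Ss_K - nd_+}\big)$ to the case $n=1$, where it becomes a count of $W_{\Int}$-lattice points of bounded Gram length --- controlled, as in the classical theory, by Fourier coefficients of ordinary theta series --- times a bounded intersection number. For $\ell$ of cohomological origin one may alternatively invoke absolute convergence of the Kudla--Millson series, but the direct route handles all $\ell$.

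\emph{Weyl-element automorphy.} This is the crux. Following Zhang and Bruinier--Raum, I would invoke a modularity criterion asserting that a formal $\mathrm{CH}^{nd_+}(\Ss_K)_{\Comp}$-valued $q$-series which is $P(\Rat)$-invariant, has the correct constant term, and all of whose Fourier--Jacobi coefficients are already modular for $\mathrm{U}_D(n-1,n-1)$, is modular for $\mathrm{U}_D(n,n)$. The Fourier--Jacobi coefficients of $\Theta^{\mathrm{geom}}_{\Ss_K,W}(\varphi_1\otimes\varphi_2,-)$ are identified by the product formula with products $\mathcal{C}_{\Ss_K,W}(T_1,\varphi_1)\cdot\Theta^{\mathrm{geom}}_{\Ss_K,W}(\varphi_2,-)$, so the induction on $n$ closes provided one also knows modularity of such a mixed product against a fixed lower cycle; the pullback formula (Corollary~\ref{cor:pullback_trivial}) and the embedding trick (Proposition~\ref{prop:embedding_trick}) supply exactly this, at the cost of enlarging $W$ and hence lowering $n$ relative to $\dim\Ss_K$.

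\emph{Base case and main obstacle.} The induction bottoms out at $n=1$, $d_+=1$: modularity of $\sum_T \mathcal{C}_{\Ss_K,W}(T,\varphi)\,q^T$ as a weight $m/2$ form for $\mathrm{U}_D(1,1)$. This I would handle by the Borcherds--Bruinier method: regularized theta lifts produce Green functions and Borcherds products on $\Ss_K$ whose divisors are explicit combinations of the $\mathcal{Z}_K(W_{\Int},\mu,T)$ together with Chern-class corrections, forcing the relevant obstruction pairing to vanish and hence the generating series to be modular; the integral refinement proceeds as in~\cite{HMP:mod_codim}, and the case $d_+>1$ at $n=1$ reduces to it via Remark~\ref{rem:product_with_zero}. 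The main obstacle is precisely this base case over the \emph{integral} model $\Ss_K$ in the full generality of $(D,\iota)$ (totally real, CM, or quaternionic): it requires Borcherds-type input --- regularized theta lifts with controlled Green currents, ideally valued in an arithmetic Chow group --- that is currently available only in special cases, together with the expected but not yet recorded convergence estimate of the second step. Everything feeding the induction (parabolic automorphy, product formula, pullback formula, embedding trick) is furnished by the constructions of this paper.
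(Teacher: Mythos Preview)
The statement is a \emph{conjecture}, and the paper does not prove it. What the paper does establish is precisely your first step: automorphy under the Siegel parabolic $P_D(n,n)(\Rat)$, which is Proposition~\ref{prop:easy_modularity}, together with the embedding trick (Proposition~\ref{prop:embedding_trick}). The remaining parts---absolute convergence and automorphy under a Weyl element---are explicitly left open; see the remarks following Conjecture~\ref{introconj} in the introduction. So there is no proof in the paper to compare your proposal against beyond that first step, where you agree.

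Your outline for the remaining steps is a reasonable sketch of the strategy that succeeded in~\cite{HMP:mod_codim} for orthogonal groups over $\Rat$, and you are candid that the base case is the obstacle. A few points deserve more caution, however. First, your absolute convergence argument is not complete: pairing against a fixed ample power controls only a single linear functional, not all $\ell$, and the reduction to $n=1$ via the product formula does not give a single term but a sum over all $T$ with prescribed block diagonals, so the bound does not decouple as cleanly as you suggest. Second, the Zhang--Bruinier--Raum modularity criterion you invoke is established for $\mathrm{Sp}_{2n}$ over $\Rat$; its analogue for $\mathrm{U}_D(n,n)$ with arbitrary $(D,\iota)$ is not in the literature and would itself need to be proven. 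Third, even granting that criterion, your identification of the Fourier--Jacobi coefficients with $\mathcal{C}_{\Ss_K,W}(T_1,\varphi_1)\cdot \Theta^{\mathrm{geom}}_{\Ss_K,W}(\varphi_2,-)$ omits the theta-function in the abelian (Jacobi) variable, which carries its own modularity requirement. These are genuine gaps beyond the base-case obstacle you already flag, and together they are why the statement remains a conjecture.
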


If the above conjecture is true, we will say that it \defnword{holds for (the representation) $W$}. We will also engage in some concision of language and say simply that $\Theta^{\mathrm{geom}}_{\Ss_K,W}$ \defnword{converges absolutely to a parallel weight $\frac{m}{2}$ automorphic form on $\mathrm{U}_D(n,n)$}.

It is quite easy now to deduce automorphy with respect to the Siegel parabolic in $\mathrm{U}_D(n,n)$.
\begin{proposition}
\label{prop:easy_modularity}
Let $\mathrm{P}_D(n,n)\subset\mathrm{U}_D(n,n)$ be the maximal parabolic subgroup stabilizing $D^n\oplus \{0\}\subset D^{2n}$. For any $\gamma\in \mathrm{P}_D(n,n)(\Rat)$, we have an identity of formal series
\[
\Theta^{\mathrm{geom}}_{\Ss_K,W}\vert_{\frac{m}{2}}[\tilde{\gamma}_\infty](\omega(\tilde{\gamma}_f)\varphi,\tau) = \Theta^{\mathrm{geom}}_{\Ss_K,W}(\varphi,\tau).
\]
\end{proposition}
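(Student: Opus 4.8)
The plan is to reduce the asserted identity of formal series to the two families of generators of $\mathrm{P}_D(n,n)(\Rat)$ — the Levi factor and the unipotent radical — and, for each, to match the transformation of the formal $\bm q$-expansion term by term against the invariance properties of the cycle classes recorded in Theorem~\ref{thm:main_thm_adelic}. Write a general element of $\mathrm{P}_D(n,n)(\Rat)$ as $\gamma = m(g)\,n(b)$, where $m(g)$ is the Levi element attached to $g\in \GL_n(D)$ and $n(b)$ is the unipotent element attached to an $\iota$-Hermitian matrix $b$. Under the identification of $\mathcal{H}_D(n,n)$ with a subspace of $\mathrm{Herm}_n(D)$ from~\ref{subsec:the_group_for_modularity}, $m(g)$ acts by $\tau\mapsto g\tau\,{}^t\iota(g)$, with factor of automorphy $j(m(g);\tau)^2$ a reduced-norm expression in $g$ that is independent of $\tau$, and $n(b)$ acts by $\tau\mapsto\tau+b$, with $j(n(b);\tau)=1$ since the unipotent radical splits canonically in the metaplectic cover of~\ref{subsec:weil_representation}. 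As $\gamma\mapsto\bigl(f\mapsto f\vert_{m/2}[\tilde\gamma_\infty](\omega(\tilde\gamma_f)(\_\_),\_\_)\bigr)$ is a right action on formal $\bm q$-series, it suffices to verify the identity for $\gamma = n(b)$ and $\gamma = m(g)$ separately.

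For $\gamma = n(b)$: the substitution $\tau\mapsto\tau+b$ multiplies $\bm q^T$ by $e^{2\pi i\,\mathrm{Trd}_{D/\Rat}(\mathrm{Tr}(bT))}$, and there is no archimedean scalar since $j(n(b);\tau)=1$. On the finite part, the explicit formulas for the Weil representation (see~\cite{Ranga_Rao1993-rc}) give $\omega(\widetilde{n(b)}_f)\varphi = s_b\cdot\varphi$, where $s_b$ is the character appearing in assertion (3) of Theorem~\ref{thm:main_thm_adelic} — the sign in the definition of $\psi_f$ is chosen precisely so that this holds — whence $\mathcal{C}_{\Ss_K,W}(T,s_b\varphi) = e^{-2\pi i\,\mathrm{Trd}_{D/\Rat}(\mathrm{Tr}(bT))}\,\mathcal{C}_{\Ss_K,W}(T,\varphi)$ by that assertion. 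The two factors cancel coefficient by coefficient.

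For $\gamma = m(g)$: cyclicity of the reduced trace gives $\mathrm{Trd}_{D/\Rat}\bigl(\mathrm{Tr}((g\tau\,{}^t\iota(g))T)\bigr) = \mathrm{Trd}_{D/\Rat}\bigl(\mathrm{Tr}(\tau\,({}^t\iota(g)Tg))\bigr)$, so the substitution $\tau\mapsto g\tau\,{}^t\iota(g)$ carries $\bm q^T$ to $\bm q^{{}^t\iota(g)Tg}$; since $T\mapsto{}^t\iota(g)Tg$ is a bijection of $\mathrm{Herm}_n^+(D)$, after reindexing the sum the identity becomes the coefficient-wise statement that $j(m(g);\tau)^{-m}\,\mathcal{C}_{\Ss_K,W}\bigl(({}^t\iota(g))^{-1}Tg^{-1},\,\omega(\widetilde{m(g)}_f)\varphi\bigr)$ equals $\mathcal{C}_{\Ss_K,W}(T,\varphi)$. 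The explicit formulas of~\ref{subsec:weil_representation} give $\omega(\widetilde{m(g)}_f)\varphi = \chi(g)\,L_{g^{-1}}\varphi$ with $\chi(g)$ a scalar built from reduced norms, and applying assertion (2) of Theorem~\ref{thm:main_thm_adelic} with $g^{-1}$ in place of $g$ gives $\mathcal{C}_{\Ss_K,W}(({}^t\iota(g))^{-1}Tg^{-1},L_{g^{-1}}\varphi) = \mathcal{C}_{\Ss_K,W}(T,\varphi)$. Thus the identity collapses to the scalar equality $\chi(g) = j(m(g);\tau)^{m}$ — consistent because both sides are constant in $\tau$ — and this holds precisely because the weight has been normalized to $m/2 = \mathrm{rank}_D(W)/2$: that is the power of the reduced norm through which $\omega$ acts on the Levi.

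The hard part will be this last bookkeeping: pinning down the scalar $\chi(g)$ in the adelic Weil representation on the Levi and the factor $j(m(g);\tau)$, including the $\mu_2$-cover conventions for $\widetilde{\mathrm{U}}_D(n,n)(\Adele)$ and the chosen splitting over $\mathrm{U}_D(n,n)(\Rat)$, and confirming that they cancel exactly when the weight equals $\mathrm{rank}_D(W)/2$. This is the one place where the weight normalization is genuinely used. Everything else — the reduction to the two types of generators, the unipotent computation, and the reindexing of the $T$-sum — is formal and comes directly from Theorem~\ref{thm:main_thm_adelic}.
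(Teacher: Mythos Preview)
Your proposal is correct and follows exactly the approach the paper takes: the paper's proof is a single sentence stating that the identity is ``simply a translation of properties (2) and (3) of Theorem~\ref{thm:main_thm_adelic}'' via the standard Weil representation formulas, and you have spelled out precisely that translation---property (3) handling the unipotent radical $n(b)$ and property (2) handling the Levi $m(g)$, with the weight $m/2$ absorbing the reduced-norm scalar on the Levi.
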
	
\begin{proof}
Using the usual formulas determining the Weil representation, this is simply a translation of properties (2) and (3) of Theorem~\ref{thm:main_thm_adelic}.
\end{proof}

\begin{remark}[Trivial case]
\label{rem:trivial_theta}
When $G$ acts trivially on $W$, Proposition~\ref{prop:trivial_cycles} shows that we have
\[
\Theta^{\mathrm{geom}}_{\Ss_K,W}(\varphi,\tau) = \Theta_W(\varphi,\tau)\cdot \mathbf{1},
\]
where $\Theta_W(\varphi,\tau) = \sum_{T\in \mathrm{Herm}_n^+(D)}r_W(T,\varphi)\bm{q}^T$ is a classical theta function associated with the positive definite $\iota$-Hermitian space $W$. The modularity of this series is now a consequence of Poisson summation.
\end{remark}

We can actually generalize this observation, leading to what is sometimes called the `embedding trick', used in~\cite{Kudla2019-dq} and~\cite{HMP:mod_codim}.
\begin{proposition}[Embedding trick]
\label{prop:embedding_trick}
Let us return to the hypotheses of Corollary~\ref{cor:pullback_trivial}, and suppose that Conjecture~\ref{conj:main} holds for $\tilde{W}$; then it also holds for $W$.
\end{proposition}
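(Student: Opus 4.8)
The plan is to deduce modularity for $W$ from modularity for $\tilde{W} = W\oplus W'$ (where $G$ acts trivially on $W'$) by exploiting the pullback identity in Corollary~\ref{cor:pullback_trivial}, together with the classical modularity of the theta series attached to the positive definite $\iota$-Hermitian space $W'$ recorded in Remark~\ref{rem:trivial_theta}. The key point is that Corollary~\ref{cor:pullback_trivial} expresses the Fourier coefficients of $\Theta^{\mathrm{geom}}_{\Ss_K,\tilde{W}}$ as a convolution of those of $\Theta^{\mathrm{geom}}_{\Ss_K,W}$ with the representation numbers $r_{W'}(T',\varphi')$, and one wants to invert this relation. Concretely, for a linear functional $\ell$ on $\mathrm{CH}^{nd_+}(\Ss_K)_{\Comp}$, write the identity of formal $\bm{q}$-expansions
\[
\ell\bigl(\Theta^{\mathrm{geom}}_{\Ss_K,\tilde{W}}(\varphi\otimes\varphi',\tau)\bigr) = \sum_{\tilde{T}}\sum_{T+T'=\tilde{T}}r_{W'}(T',\varphi')\,\ell\bigl(\mathcal{C}_{\Ss_K,W}(T,\varphi)\bigr)\bm{q}^{\tilde{T}},
\]
and recognize the right side, after the usual unfolding, as (the $\ell$-image of) a product-type pairing of $\Theta^{\mathrm{geom}}_{\Ss_K,W}(\varphi,\tau)$ against the classical theta kernel $\Theta_{W'}(\varphi',\tau)$ attached to $W'$.

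First I would set up the $\iota$-Hermitian spaces carefully: $\tilde{W} = W\oplus W'$ is $\iota$-Hermitian of rank $m + m'$ over $D$, with $W'$ anisotropic positive definite. The relevant dual pair is $(\mathrm{U}_D(n,n), G)$ acting on $\mathbf{V}(\tilde{W},n) \simeq \mathbf{V}(W,n)\oplus \mathbf{V}(W',n)$, so the Weil representation $\omega^\psi_{\tilde{W}}$ on $\mathcal{S}(\tilde{W}(\Adele)^n)$ factors as $\omega^\psi_W\otimes\omega^\psi_{W'}$ on $\mathcal{S}(W(\Adele)^n)\otimes\mathcal{S}(W'(\Adele)^n)$ compatibly with the action of $\widetilde{\mathrm{U}}_D(n,n)(\Adele)$ (the $\mu_2$-cover being independent of the Hermitian space, as noted in~\eqref{subsec:weil_representation}). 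Then I would observe that for fixed $\varphi'\in \mathcal{S}(W'(\Adele_f)^n)^K$, the assignment $\tau\mapsto \sum_{T'}r_{W'}(T',\varphi')\bm{q}^{T'}$ is, by Remark~\ref{rem:trivial_theta} and Poisson summation, a parallel weight $m'/2$ automorphic form $\Theta_{W'}(\varphi',\tau)$ on $\mathrm{U}_D(n,n)$ with respect to $\omega^\psi_{W'}$. Granting absolute convergence (this is where the hypothesis that the conjecture holds for $\tilde{W}$ enters, via the majorization of Fourier coefficients), the identity of formal series becomes an identity of honest holomorphic functions on $\mathcal{H}_D(n,n)$:
\[
\ell\bigl(\Theta^{\mathrm{geom}}_{\Ss_K,\tilde{W}}(\varphi\otimes\varphi',\tau)\bigr) = \sum_{T'}r_{W'}(T',\varphi')\,\ell\bigl(\Theta^{\mathrm{geom}}_{\Ss_K,W}(L_{?}\varphi,\tau)\bigr)\text{-type terms},
\]
i.e. the left side is the image of the pair $\bigl(\ell\circ\Theta^{\mathrm{geom}}_{\Ss_K,W}(\varphi,-),\ \Theta_{W'}(\varphi',-)\bigr)$ under the natural ``multiplication'' map sending a weight $m/2$ form and a weight $m'/2$ form to a weight $(m+m')/2$ form.

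The extraction step is the crux. I would argue that, since $W'$ can be chosen of sufficiently large rank (one is free to enlarge $W'$: replacing $W'$ by $W'\oplus W''$ for another anisotropic positive definite space changes $\tilde{W}$ but the argument is uniform), the theta series $\Theta_{W'}(\varphi',\tau)$, as $\varphi'$ ranges over $\mathcal{S}(W'(\Adele_f)^n)^K$, spans a space of automorphic forms on $\mathrm{U}_D(n,n)$ containing a ``unit-like'' element — more precisely, by the Siegel--Weil philosophy and an explicit choice of $\varphi'$ supported near a lattice point, one produces $\varphi'$ with $r_{W'}(0,\varphi') \neq 0$ and $r_{W'}(T',\varphi') = 0$ for $T'$ in a prescribed finite set — so that dividing the displayed identity by $\Theta_{W'}(\varphi',\tau)$ (a nowhere-vanishing, or at least generically non-vanishing, modular form of weight $m'/2$) exhibits $\ell\bigl(\Theta^{\mathrm{geom}}_{\Ss_K,W}(\varphi,\tau)\bigr)$ as a meromorphic modular form of weight $m/2$; combined with the known polynomial growth of its Fourier coefficients (again from the hypothesis applied to $\tilde{W}$, or from Proposition~\ref{prop:easy_modularity}) and holomorphy of the individual $\bm{q}$-expansion, one concludes it is a holomorphic parallel weight $m/2$ automorphic form. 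The main obstacle I anticipate is making the ``division'' rigorous: one must either (i) invoke a Siegel--Weil type non-vanishing to divide by a theta kernel, or (ii) follow the route of~\cite{Kudla2019-dq} and~\cite{HMP:mod_codim} directly, verifying the transformation law under a Weyl element of $\mathrm{U}_D(n,n)$ for $\ell\bigl(\Theta^{\mathrm{geom}}_{\Ss_K,W}\bigr)$ by transporting, via Corollary~\ref{cor:pullback_trivial} and Proposition~\ref{prop:easy_modularity}, the already-established Weyl-element transformation law for $\ell\bigl(\Theta^{\mathrm{geom}}_{\Ss_K,\tilde{W}}\bigr)$ and ``stripping off'' the classical theta factor $\Theta_{W'}(\varphi',\cdot)$ whose Weyl transformation is governed by Poisson summation. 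Route (ii) is cleaner: it reduces the generator (Weyl element plus Siegel parabolic — the latter handled by Proposition~\ref{prop:easy_modularity} for $W$ itself) to an identity among $\bm{q}$-expansions that is a formal consequence of the pullback identity and the classical case, with absolute convergence the only analytic input, and that input is exactly the hypothesis.
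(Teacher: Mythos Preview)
Your setup is correct: Corollary~\ref{cor:pullback_trivial} gives an identity of formal $\bm{q}$-series
\[
\ell\bigl(\widetilde{\Theta}(\varphi\otimes\varphi')\bigr) = \ell\bigl(\Theta(\varphi)\bigr)\cdot\Theta'(\varphi'),
\]
the left side converges absolutely to a weight-$(m+m')/2$ form by hypothesis, and $\Theta'(\varphi')$ converges to a weight-$m'/2$ form by Remark~\ref{rem:trivial_theta}. The paper agrees with you that the only thing left to prove is that $\ell(\Theta(\varphi))$ itself converges absolutely to a holomorphic function; automorphy then falls out of the factorization.

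The gap is in your extraction step. You cannot ``grant absolute convergence'' of $\ell(\Theta(\varphi))$ by majorization from the hypothesis --- the hypothesis controls coefficients of $\widetilde{\Theta}$, not of $\Theta$, and passing from one to the other is exactly the content. Your route (ii), transporting the Weyl-element law, presupposes you already have a function to transform. Route (i) is closer, but the non-vanishing input you propose (choosing $\varphi'$ with prescribed representation numbers, or enlarging $W'$) is not what is needed. The paper, following~\cite{Kudla2019-dq}, uses two different facts: (a) the ring of formal $\bm{q}$-series in which these generating series live is an \emph{integral domain} --- this reduces, via linear invariance, to the normality of the Baily--Borel compactification of the connected Shimura variety for $\mathrm{U}_D(n,n)$; and (b) for every $\tau_0\in\mathcal{H}_D(n,n)$ there exists $\varphi'$ with $\Theta'(\varphi',\tau_0)\neq 0$ --- this comes from non-vanishing of the theta lift from the anisotropic unitary group of $W'$ together with real approximation for $\mathrm{U}_D(n',n')$. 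Given (a) and (b), near each $\tau_0$ the holomorphic quotient $\ell(\widetilde{\Theta})/\Theta'(\varphi',\cdot)$ is well-defined, and by (a) its Fourier expansion is forced to be the formal series $\ell(\Theta(\varphi))$, which is independent of $\varphi'$; varying $\varphi'$ over (b) then covers all of $\mathcal{H}_D(n,n)$ and gives absolute convergence.
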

\begin{proof}
For simplicity, for $\varphi\in \mathcal{S}(W(\Adele_f)^n))^K$ and $\varphi'\in \mathcal{S}(W'(\Adele_f)^n)$, set
\[
\widetilde{\Theta}(\varphi\otimes\varphi') = \Theta^{\mathrm{geom}}_{\Ss_K,\tilde{W}}(\varphi\otimes\varphi',\tau)\;;\;\Theta(\varphi) = \Theta^{\mathrm{geom}}_{\Ss_K,W}(\varphi,\tau)\;;\; \Theta'(\varphi') = \Theta_{W'}(\varphi',\tau).
\]

Corollary~\ref{cor:pullback_trivial} shows that we have an equality of formal series
\[
\widetilde{\Theta}(\varphi\otimes \varphi') = \Theta(\varphi)\cdot \Theta'(\varphi')
\]
with coefficients in $\mathrm{CH}^{nd_+}(\Ss_K)_{\Comp}$. 

By hypothesis, for any linear functional $\ell:\mathrm{CH}^{nd_+}(\Ss_K)_{\Comp}$, $\ell(\tilde{\Theta}(\varphi\otimes\varphi')) = \ell(\Theta(\varphi))\cdot \Theta'(\varphi')$ converges absolutely to a parallel weight $\tilde{n}/2$ automorphic form on $\mathrm{U}_D(n,n)$ with respect to the Weil representation on 
\[
\mathcal{S}(\tilde{W}(\Adele)^n) = \mathcal{S}(W(\Adele)^n)\otimes_{\Comp}\mathcal{S}(W'(\Adele)^n). 
\]
By Remark~\ref{rem:trivial_theta}, $\Theta'(\varphi')$ converges absolutely to a parallel weight $\frac{n'}{2}$ automorphic form, but now with respect to $\mathcal{S}(W'(\Adele)^n)$. 

To conclude, it is now sufficient to show that, for every $\varphi\in \mathcal{S}(W(\Adele_f)^n)^K$, $\ell(\Theta(\varphi))$ converges absolutely to a holomorphic function on $\mathcal{H}_D(n,n)$. 

The idea is explained in~\cite[\S 7,\S 8]{Kudla2019-dq} in the case where $D$ is a totally real extension of $\Rat$. The same argument applies in general. There are two key points:

\begin{enumerate}[label=(\roman*)]
	\item First, the ring of formal $\bm{q}$-series in which these generating functions live is an integral domain. This comes down (after invoking linear invariance) to the fact that the Baily-Borel compactifications of the (connected) Shimura varieties associated with $\mathrm{U}_D(n,n)$ are \emph{normal}.

	\item Second, for any $\tau_0\in \mathcal{H}_D(n,n)$, we can find $\varphi'$ such that $\Theta'(\varphi',\tau_0)\neq 0$. This follows from the argument of~\cite[Lemma 7.2]{Kudla2019-dq} (taken from~\cite{MR2551992}), using the non-vanishing of the theta lift to $\mathrm{U}_D(n',n')$ from the unitary group attached to the positive definite $\iota$-Hermitian space $W'$. The main thing here is to know that, for a given element $g_\infty\in \mathrm{U}_D(n',n')(\Real)$, the subset
\[
\mathrm{U}_D(n',n')(\Rat)g_\infty\mathrm{U}_D(n',n')(\Adele_f)\subset \mathrm{U}_D(n',n')(\Adele)
\]
is dense: this is a consequence of real approximation~\cite[Theorem 7.7]{Platonov1994-ib}.
\end{enumerate}
Given these two points, the rest of the argument is now quite formal; see the proof of~\cite[Prop. 8.2]{Kudla2019-dq}.
\end{proof}

\begin{remark}
\label{rem:pullback}
The reason this is called the `embedding trick' is that in practice the representation $\tilde{W}$ is obtained via restriction along a map of Shimura data $(G,X)\to (\tilde{G},\tilde{X})$, and the series $\Theta^{\mathrm{geom}}_{\Ss_K,\tilde{W}}$ is obtained via pullback of the corresponding one valued in the Chow group of the (integral model of a) Shimura variety associated with $\tilde{G}$. In favorable situations, one can exploit the fact that the coefficients of the series on the larger Shimura variety are cycles in relatively small codimension to obtain stronger modularity results than otherwise: this is done in different ways in~\cite{Kudla2019-dq} and~\cite{HMP:mod_codim}. The embedding trick above then allows us to conclude modularity for the series $\Theta^{\mathrm{geom}}_{\Ss_K,W}$ as well.

I wonder if it's possible to get some stronger `bootstrapping' results by using the full power of Proposition~\ref{prop:pullback_formula}.
\end{remark}	

\begin{remark}
\label{rem:tautological_series}
In the setting of the example in point~\eqref{rem:canonical_cycle} of Remark~\ref{introrem:cycles}, Conjecture~\ref{conj:main} predicts that we should obtain a whole range of classical half-integer weight modular forms indexed by Shimura data of Hodge type. It would be very interesting to work this out explicitly.
\end{remark}

\appendix

\section{Fibrations}\label{sec:fibrations}

\subsection{}\label{subsec:cartesian}
Suppose that we have a functor $\pi:\mathcal{D}\to \mathcal{C}$ of $\infty$-categories. Following~\cite{Mazel-Gee2015-gl}, we will say that an arrow $\tilde{f}:d_1\to d_2$ is a \defnword{$\pi$-Cartesian} if, with $f = \pi(\tilde{f})$, if the natural diagram
\[
\begin{diagram}
\mathcal{D}_{/d_1}&\rTo^{\circ \tilde{f}}&\mathcal{D}_{/d_2}\\
\dTo&&\dTo\\
\mathcal{C}_{/c_1}&\rTo_{\circ f}&\mathcal{C}_{/c_2}
\end{diagram}
\]
is a pullback square in $\mathrm{Cat}_\infty$. In this situation, we will say that $\tilde{f}$ is a \defnword{Cartesian lift} of $f$, or that it is \defnword{$\pi$-Cartesian}

Given the same functor $\pi$, applying these definitions to the induced functor of the opposite categories gives us the notion of a \defnword{coCartesian lift} of $f$, along with the analogous notion of a $\pi$-\defnword{coCartesian} map in $\mathcal{D}$.

\begin{remark}
If $\tilde{f}:d_1\to d_2$ is $\pi$-Cartesian, then in particular, for any object $d$ in $\mathcal{D}$ with $c = \pi(d)$, we have a pullback diagram of spaces
\[
\begin{diagram}
\Map_{\mathcal{D}}(d,d_1)&\rTo^{\circ \tilde{f}}&\Map_{\mathcal{D}}(d,d_2)\\
\dTo&&\dTo\\
\Map_{\mathcal{C}}(c,c_1)&\rTo_{\circ f}&\Map_{\mathcal{C}}(c,c_2)
\end{diagram}
\]
From this, one can deduce that for $d_2$ and $f$ fixed, the $\pi$-Cartesian lift $\tilde{f}$ of $f = \pi(\tilde{f})$ (and in particular $d_1$) is unique up to a contractible space of choices.

The same remark applies to $\pi$-coCartesian lifts.
\end{remark}

\subsection{}
We say that $\pi$ is a \defnword{Cartesian fibration} (resp. \defnword{coCartesian fibration}) if, for every object $d$ of $\mathcal{D}$ and every arrow $f:c\to \pi(d)$ (resp. $f:\pi(d)\to c$), there exists a Cartesian lift (resp. a coCartesian lift) of $f$. We say that a Cartesian (resp. coCartesian) fibration $\pi$ is a \defnword{right fibration} (resp. \defnword{left fibration}) if, for every object $c$ in $\mathcal{C}$, the pre-image $\pi^{-1}(c)$ is an $\infty$-groupoid (that is, it belongs to the essential image of the functor $\mathrm{Spc}\to \mathrm{Cat}_{\infty}$).

Cartesian fibrations (resp. right fibrations) over $\mathcal{C}$ form an $\infty$-category $\mathrm{CartFib}_{\mathcal{C}}$ (resp. $\mathrm{RFib}_{\mathcal{C}})$, where the arrows are functors of $\infty$-categories over $\mathcal{C}$ that preserve Cartesian lifts (such functors are called \defnword{Cartesian}). Similarly, coCartesian fibrations and left fibrations form $\infty$-categories $\mathrm{coCartFib}_{\mathcal{C}}$ and $\mathrm{LFib}_{\mathcal{C}}$, respectively.

Note that the identity functor $\mathcal{C}\to \mathcal{C}$ is a right and left fibration; a Cartesian (resp. coCartesian) functor between this and any Cartesian (resp. coCartesian) fibration $\pi:\mathcal{D}\to\mathcal{C}$ is called a \defnword{Cartesian section} (resp. \defnword{coCartesian section})

Via the Grothendieck construction (or `straightening-unstraightening'; see~\cite[\S 3.2]{Lurie2009-oc}), when $\pi$ is a Cartesian (resp. right) fibration, the assignment 
\[
c\mapsto \pi^{-1}(c)
\]
underlies a functor $F_\pi:\mathcal{C}^{\op}\to \mathrm{Cat}_{\infty}$ (resp. a presheaf $F_\pi\in \mathrm{PSh}(\mathcal{C})$). Heuristically, given an arrow $f:c'\to c$, the associated functor $F_\pi(f):\pi^{-1}(c)\to \pi^{-1}(c')$ is given by carrying $d$ in $\pi^{-1}(c)$ to the source of a Cartesian lift of $f$ with target $d$. We will usually refer to this functor as \defnword{base-change along $f$} and denote it by $f^*$.

In fact, there are equivalences of $\infty$-categories
\[
\mathrm{CartFib}_{\mathcal{C}}\xrightarrow{\pi\mapsto F_\pi} \mathrm{Fun}(\mathcal{C}^{\op},\mathrm{Cat}_{\infty})\;;\; \mathrm{RFib}_{\mathcal{C}}\xrightarrow{\pi\mapsto F_\pi}\mathrm{PSh}(\mathcal{C}).
\]
With this notation, will say that the Cartesian (or left) fibration $\pi$ is \defnword{classified by the functor} $F_\pi$.

\begin{example}
The basic example of a right fibration is given for $c$ in $\mathcal{C}$ by the natural functor $\mathcal{C}_{/c}\to \mathcal{C}$ that carries an arrow $c'\to c$ to its source. It is classified by the representable pre-sheaf $h_c\in \mathrm{PSh}(\mathcal{C})$.
\end{example}

\begin{example}
When $\mathcal{C}$ admits finite limits, the functor
\[
t:\mathrm{Fun}(\Delta[1],\mathcal{C})\to \mathcal{C}
\]
carrying an arrow to its target is a Cartesian fibration, classified by the functor 
\[
\mathcal{C}\xrightarrow{c\mapsto \mathcal{C}_{/c}}\mathrm{Cat}_\infty,
\]
where the base-change functor is given by fiber product.
\end{example}

\begin{example}\label{ex:presheaf_cartesian}
Applying the previous example to $\mathrm{PSh}(\mathcal{C})$ gives us a Cartesian fibration
\[
\mathrm{P}(\mathcal{C}) = \mathrm{Fun}(\Delta[1],\mathrm{PSh}(\mathcal{C})\times_{t,\mathrm{PSh}(\mathcal{C}),h}\mathcal{C}\to \mathcal{C}
\]
classified by the functor
\[
\mathcal{C}\xrightarrow{c\mapsto \mathrm{PSh}(\mathcal{C})_{/h_c}}\mathrm{Cat}_\infty.
\]
Given a presheaf $F\in \mathrm{PSh}(\mathcal{C})$, there is up to equivalence a unique Cartesian section $s_F:\mathcal{C}\to \mathrm{P}(\mathcal{C})$ such that $s_F(c) = F\times h_c \to h_c$.
\end{example}

\begin{example}
The basic example of a coCartesian fibration is the forgetful functor $\Mod{}\to \mathrm{CRing}$: here the coCartesian lifts are precisely the ones of the form $M\to S\otimes_RM$ for $R\to S$ an arrow in $\mathrm{CRing}$ and $M\in \Mod{R}$. We will see in Appendix~\ref{sec:filtrations} below another example $\mathrm{FilMod}\to \mathrm{FilCRing}$ involving filtered modules over filtered animated commutative rings.
\end{example}

\subsection{}\label{subsec:cart_sec_functor}
Suppose that we have a Cartesian fibration $\pi:\mathcal{D}\to \mathcal{C}$ equipped with two Cartesian sections $s_1,s_2:\mathcal{C}\to \mathcal{D}$. Then we obtain a presheaf
\[
\shvMap(s_1,s_2):\mathcal{C}^{\op}\to \mathrm{Spc}
\]
given on objects by $c\mapsto \Map_{\mathcal{D}}(s_1(c),s_2(c))$. More precisely, this is the pre-sheaf classifying the right fibration 
\[
\mathcal{M}(s_1,s_2)\to \mathcal{C}
\]
where the left hand side sits in a pullback diagram
\[
\begin{diagram}
\mathcal{M}(s_1,s_2)&\rTo&\mathcal{C}\\
\dTo&&\dTo_{(s_1,s_2,\mathrm{id})}\\
\Fun([1],\mathcal{D})&\rTo_{(d_1\xrightarrow{f}d_2)\mapsto (d_1,d_2,\pi(f))}&\mathcal{D}\times \mathcal{D}\times\Fun([1],\mathcal{C})
\end{diagram}
\]

\begin{remark}\label{rem:presheaf_int_hom}
Applying this in the situation of Example~\ref{ex:presheaf_cartesian} shows that we have an \defnword{internal Hom} in $\mathrm{PSh}(\mathcal{C})$: For $F_1,F_2\in \mathrm{PSh}(\mathcal{C})$, we obtain the presheaf
\[
\shvMap(F_1,F_2):\mathcal{C}^{\op}\to \mathrm{Spc}
\]
given on objects by $c\mapsto \Map_{\mathrm{PSh}(\mathcal{C})/h_c}(F_1\times h_c,F_2\times h_c)$.
\end{remark}

\begin{remark}
\label{rem:int_hom_functors}
Suppose that $\pi':\mathcal{D}'\to \mathcal{C}$ is another Cartesian fibration, and that we have Cartesian functor $F:\mathcal{D}\to \mathcal{D}'$; then there is a canonical arrow of presheaves 
\[
\shvMap(s_1,s_2)\to \shvMap(F(s_1),F(s_2))
\]
given on objects $c$ by applying $F$ to $\Map_{\mathcal{D}}(s_1(c),s_2(c))$.
\end{remark}

\begin{lemma}
\label{lem:maps_cocartesian_sections}
Let $\mathrm{Cart}(\pi)$ be the $\infty$-category of Cartesian sections of $\pi$; then for any two such sections $s_1,s_2$ we have
\[
\Map_{\mathrm{Cart}(\pi)}(s_1,s_2) \simeq \varprojlim\shvMap(s_1,s_2)\in \mathrm{Spc}
\]
\end{lemma}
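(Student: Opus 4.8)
The plan is to identify $\mathrm{Cart}(\pi)$ with the limit of the constant-shape diagram of fibers in a suitable functor $\infty$-category, and then to move the computation of mapping spaces inside that limit. Concretely, recall that a Cartesian fibration $\pi:\mathcal{D}\to\mathcal{C}$ is classified by a functor $F_\pi:\mathcal{C}^{\op}\to\mathrm{Cat}_\infty$, and under the straightening/unstraightening equivalence the $\infty$-category $\mathrm{Cart}(\pi)$ of Cartesian sections is equivalent to the limit $\varprojlim_{\mathcal{C}^{\op}}F_\pi$ taken in $\mathrm{Cat}_\infty$ (this is \cite[\S 3.3.3]{Lurie2009-oc}; the Cartesian sections are exactly the functors $\mathcal{C}\to\mathcal{D}$ over $\mathcal{C}$ sending every arrow to a Cartesian lift, which corresponds to a coherent family of objects $s(c)\in\pi^{-1}(c)$ together with compatible identifications $f^*s(c)\simeq s(c')$). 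So first I would set up this equivalence carefully, noting that it carries a Cartesian section $s$ to the point of the limit whose component at $c$ is $s(c)\in\pi^{-1}(c)$.

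Next, the key input is that the mapping-space functor $\Map_{(-)}(-,-)$ commutes with limits of $\infty$-categories in an appropriate sense: if $\mathcal{E}=\varprojlim_{j\in J}\mathcal{E}_j$ in $\mathrm{Cat}_\infty$ with projections $p_j:\mathcal{E}\to\mathcal{E}_j$, then for objects $X,Y\in\mathcal{E}$ one has a natural equivalence
\[
\Map_{\mathcal{E}}(X,Y)\simeq \varprojlim_{j\in J}\Map_{\mathcal{E}_j}(p_j X,p_j Y)
\]
of spaces. This follows because the functor $\mathrm{Cat}_\infty\to\mathrm{Fun}(\mathbf{\Delta}^1,\mathrm{Spc})$, or more directly the twisted-arrow construction $\mathcal{E}\mapsto\mathrm{Map}_{\mathcal{E}}(-,-)$ viewed as a functor $\mathcal{E}^{\op}\times\mathcal{E}\to\mathrm{Spc}$, is built out of evaluation functors and limits, and $\mathrm{Cat}_\infty\to\mathrm{Spc}$ via the maximal subgroupoid preserves limits; one can also cite the fact that $\mathrm{Map}$ is corepresentable by slice categories and that limits of $\infty$-categories are computed levelwise on mapping spaces once one is careful. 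Applying this with $J=\mathcal{C}^{\op}$, $\mathcal{E}_c=\pi^{-1}(c)=F_\pi(c)$, $X=s_1$, $Y=s_2$, $p_c(s_i)=s_i(c)$, gives
\[
\Map_{\mathrm{Cart}(\pi)}(s_1,s_2)\simeq\varprojlim_{c\in\mathcal{C}^{\op}}\Map_{\pi^{-1}(c)}(s_1(c),s_2(c)).
\]

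Finally I would match the right-hand side with $\varprojlim\shvMap(s_1,s_2)$ from~\eqref{subsec:cart_sec_functor}. By construction $\shvMap(s_1,s_2)$ is the presheaf on $\mathcal{C}$ whose value at $c$ is $\Map_{\mathcal{D}}(s_1(c),s_2(c))$; I must observe that for a Cartesian fibration $\pi$ and objects lying over the \emph{same} base object $c$, the mapping space in $\mathcal{D}$ agrees with the mapping space in the fiber $\pi^{-1}(c)$ \emph{after} passing to the limit over the slice — more precisely, the transition maps in the presheaf $\shvMap(s_1,s_2)$ are exactly the ones induced by the base-change functors $f^*$, because $s_1,s_2$ are Cartesian sections, so $\shvMap(s_1,s_2)$ is equivalent as a presheaf to $c\mapsto\Map_{\pi^{-1}(c)}(s_1(c),s_2(c))$ (using that a Cartesian lift exhibits $\Map_{\mathcal{D}}(s_1(c),s_2(c))$ as the fiber of $\Map_{\mathcal{D}}(s_1(c),s_2(c'))\to\Map_{\mathcal{C}}(c,c')$ over $f$, which is the standard pullback square in the Remark after~\eqref{subsec:cartesian}). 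Taking $\varprojlim$ over $\mathcal{C}^{\op}$ of this identification of presheaves then finishes the proof. The main obstacle I anticipate is the bookkeeping in this last step: verifying that the presheaf $\shvMap(s_1,s_2)$ defined via the pullback square of $\infty$-categories in~\eqref{subsec:cart_sec_functor} really does have the base-change maps as its functoriality, and that its limit is computed by the same cosimplicial/diagram data as the limit describing $\mathrm{Cart}(\pi)$; this is essentially a compatibility between two models (the slice-category model and the straightening model) for the same object, and making it precise without an explicit reference may require unwinding the definitions in~\cite{Mazel-Gee2015-gl} and~\cite[\S 3]{Lurie2009-oc}.
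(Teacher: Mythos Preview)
Your argument is correct and follows a genuinely different route from the paper's proof. You straighten $\pi$ to identify $\mathrm{Cart}(\pi)\simeq\varprojlim_{\mathcal{C}^{\op}}F_\pi$ and then use that mapping spaces in a limit of $\infty$-categories are the limit of the mapping spaces. The paper instead works directly on the unstraightened side: it observes that a morphism $s_1\to s_2$ in $\mathrm{Cart}(\pi)$ is by definition a functor $\mathcal{C}\to\Fun([1],\mathcal{D})$ lying over the identity section of $\Fun([1],\mathcal{C})$ with prescribed source and target, and that this is exactly a section of the right fibration $\mathcal{M}(s_1,s_2)\to\mathcal{C}$ from~\eqref{subsec:cart_sec_functor}; the equivalence with the limit then follows from the single citation \cite[Cor.~3.3.3.2]{Lurie2009-oc}, which says that the space of sections of a right fibration computes the limit of the classifying presheaf.

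The paper's approach avoids straightening $\pi$ altogether and needs no separate verification that $\shvMap(s_1,s_2)$ has the expected transition maps, since that presheaf is \emph{defined} as the functor classified by $\mathcal{M}(s_1,s_2)$; this sidesteps exactly the bookkeeping you flag as the main obstacle. Your approach, on the other hand, makes the conceptual content (mapping spaces commute with limits of categories) more transparent, at the cost of having to reconcile two models for the same presheaf at the end. One small clarification: the fiber of $\mathcal{M}(s_1,s_2)$ over $c$ is really $\Map_{\pi^{-1}(c)}(s_1(c),s_2(c))$ (maps lying over $\mathrm{id}_c$), not the full $\Map_{\mathcal{D}}(s_1(c),s_2(c))$, because the pullback diagram includes the factor $\Fun([1],\mathcal{C})$ with $\mathcal{C}$ mapping in via the identity section; this is what makes the match with your limit go through.
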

\begin{proof}
The space on the left is the fiber of the functor
\[
\Fun([1],\mathrm{Cart}(\pi)) \to \Fun(\{0\},\mathrm{Cart}(\pi))\times \Fun(\{1\},\mathrm{Cart}(\pi)) \simeq \mathrm{Cart}(\pi)\times \mathrm{Cart}(\pi)
\]
over $(s_1,s_2)$. Since $\mathrm{Cart}(\pi)$ is a fully faithful subcategory of the $\infty$-category $\Fun_{\mathcal{C}}(\mathcal{C},\mathcal{D})$ of sections of $\pi$, which in turn is equivalent to the $\infty$-category 
\[
\Fun_{\pi}(\mathcal{C},\Fun([1],\mathcal{D}))
\]
of functors $\mathcal{C}\to \Fun([1],\mathcal{D})$ lying over the identity section $\mathcal{C}\to \Fun([1],\mathcal{C})$, we see that what we have is the fiber over $(s_1,s_2)$ of the functor
\[
\Fun_{\pi}(\mathcal{C},\Fun([1],\mathcal{D})) \to \Fun_{\mathcal{C}}(\mathcal{C},\mathcal{D})\times \Fun_{\mathcal{C}}(\mathcal{C},\mathcal{D}).
\]
But this is precisely the space of sections of the right fibration $\mathcal{M}(s_1,s_2)\to \mathcal{C}$ considered in~\eqref{subsec:cart_sec_functor}. 

Therefore, the lemma now follows from~\cite[Cor. 3.3.3.2]{Lurie2009-oc}.
\end{proof}

\begin{lemma}
\label{lem:cofinal_effective_epi}
The following are equivalent for an object $X\in \mathcal{C}$ in an $\infty$-category $\mathcal{C}$ with finite products, and let $X_\bullet:\bm{\Delta}^{\op}\to \mathcal{C}$ be the associated \v{C}ech nerve with $X_n = \prod_{i\in [n]}X$. Then the following are equivalent:
\begin{enumerate}
  \item For every other object $Y$, the geometric realization
  \[
    \colim_{[n]\in \bm{\Delta}^{\op}}\Map_{\mathcal{C}}(Y,X_\bullet)\in \mathrm{Spc}
  \]
  is contractible.

  \item For every object $Y$, the space $\Map_{\mathcal{C}}(Y,X)$ is non-empty.
\end{enumerate}
\end{lemma}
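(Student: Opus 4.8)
Since $\mathcal{C}$ has finite products it has a terminal object $\ast$ (the empty product), and $X_\bullet$ is precisely the \v{C}ech nerve of the canonical map $X\to\ast$: a morphism $[m]\to[n]$ in $\bm{\Delta}$ induces the reindexing map $\prod_{i\in[n]}X\to\prod_{j\in[m]}X$, which is exactly the simplicial structure of $X\times_\ast\cdots\times_\ast X$. The functor $\Map_{\mathcal{C}}(Y,-)$ preserves finite products, so for every object $Y$ there is an equivalence of simplicial spaces
\[
\Map_{\mathcal{C}}(Y,X_\bullet)\;\simeq\;A_Y^{\times(\bullet+1)},\qquad A_Y:=\Map_{\mathcal{C}}(Y,X)\in\mathrm{Spc},
\]
the right-hand side being the \v{C}ech nerve in $\mathrm{Spc}$ of the map $A_Y\to\ast$, i.e. $[n]\mapsto A_Y^{\times(n+1)}$. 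Both conditions (1) and (2) are quantified over all $Y$ and, by the displayed equivalence, are tested pointwise in $Y$; so it suffices to prove the following statement in $\mathrm{Spc}$: for a space $A$, the geometric realization $\lvert A^{\times(\bullet+1)}\rvert$ is contractible if and only if $A$ is non-empty.

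I would handle the case $A\simeq\emptyset$ first. Then $A^{\times(n+1)}\simeq\emptyset$ for every $n\ge0$, so $A^{\times(\bullet+1)}$ is the constant simplicial object with value the initial object $\emptyset$; as $\bm{\Delta}^{\op}$ is non-empty, its colimit is $\emptyset$, which is not contractible, so (1) fails. For the converse, suppose $A$ is non-empty and choose a point $p\colon\ast\to A$. This equips the augmented simplicial object $A^{\times(\bullet+1)}\to\ast$ (with augmentation the unique map to $A^{\times 0}=\ast$) with extra degeneracies $h_n\colon A^{\times(n+1)}\to A^{\times(n+2)}$ given by inserting $p$ in the last coordinate, which satisfy the identities making it a split augmented simplicial object. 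By the standard extra-degeneracy argument (see \cite{Lurie2009-oc}), the augmentation then exhibits $\ast$ as $\colim_{\bm{\Delta}^{\op}}A^{\times(\bullet+1)}$, so $\lvert A^{\times(\bullet+1)}\rvert\simeq\ast$. Equivalently, one may invoke the fact that in the $\infty$-topos $\mathrm{Spc}$ the geometric realization of the \v{C}ech nerve of a morphism computes its image, together with the observation that the image of $A\to\ast$ is $\ast$ precisely when $A$ is non-empty.

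I do not anticipate a genuine obstacle here: the only points requiring (brief) care are the identification of $X_\bullet$ with the \v{C}ech nerve of $X\to\ast$, the verification that both conditions are pointwise in $Y$ (immediate from the displayed equivalence, since $\Map_{\mathcal{C}}(Y,-)$ preserves products), and the simplicial identities for the extra degeneracies (or, in the alternative line, the topos-theoretic input). The whole argument thus reduces to the elementary assertion about non-emptiness of $A_Y=\Map_{\mathcal{C}}(Y,X)$, and I would present it in the three steps just described.
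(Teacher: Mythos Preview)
Your proposal is correct and follows essentially the same reduction as the paper: both recognize that $\Map_{\mathcal{C}}(Y,X_\bullet)$ is the \v{C}ech nerve of $A_Y\to\ast$ in $\mathrm{Spc}$, and the paper then simply cites that $\mathrm{Spc}$ is an $\infty$-topos (your alternative line), whereas you also spell out the more elementary extra-degeneracy argument. The two arguments are interchangeable here; your version is more self-contained, the paper's is terser.
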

\begin{proof}
Clearly (1) implies (2). The other implication is because $\mathrm{Spc}$ is an $\infty$-topos; see for instance the proof of~\cite[Prop. A.3.3.1]{Lurie2018-kh}.
\end{proof}

\begin{definition}\label{defn:quasi_final}
An object $X$ in an $\infty$-category $\mathcal{C}$ with finite products is \defnword{weakly final} if it satisfies the equivalent properties of the previous lemma. We will say that it is \defnword{weakly initial} if it is weakly final in the opposite category $\mathcal{C}^{\op}$ (which is assumed to admit finite products).
\end{definition}

\begin{proposition}
\label{prop:cartesian_sections}
Suppose that $\pi:\mathcal{D}\to \mathcal{C}$ is a Cartesian fibration, that $\mathcal{C}$ has finite products and that $X\in \mathcal{C}$ is a weakly final object. Consider the associated \v{C}ech nerve
\[
f:\Delta^{\op}\xrightarrow{[n]\mapsto X_n}\mathcal{C},
\]
and write
\[
\pi':\mathcal{D}\times_{\mathcal{C}}\Delta^{\op}\to \Delta^{\op}
\]
for the pullback of $\pi$ along $f$. Then the natural restriction functor
\[
\mathrm{Cart}(\pi) \to \mathrm{Cart}(\pi')
\]
is an equivalence.
\end{proposition}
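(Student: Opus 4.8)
The plan is to recognize Proposition~\ref{prop:cartesian_sections} as an instance of the general fact that a Cartesian fibration is determined by its straightening, combined with descent along a cofinal-type diagram, and to carry it out by reducing to a statement about presheaves and then to a colimit computation in $\mathrm{Spc}$. First I would pass through the Grothendieck construction: a Cartesian fibration $\pi:\mathcal{D}\to\mathcal{C}$ is classified by a functor $F_\pi:\mathcal{C}^{\op}\to\mathrm{Cat}_\infty$, and the pullback $\pi'$ along $f:\bm{\Delta}^{\op}\to\mathcal{C}$ is classified by $F_\pi\circ f^{\op}$. The $\infty$-category $\mathrm{Cart}(\pi)$ of Cartesian sections is then identified with the limit $\varprojlim_{\mathcal{C}^{\op}}F_\pi$ (this is the standard identification of Cartesian sections with the limit of the straightening; cf.~\cite[\S 3.3.3]{Lurie2009-oc}), and similarly $\mathrm{Cart}(\pi')\simeq\varprojlim_{\bm{\Delta}^{\op}}(F_\pi\circ f^{\op})\simeq \mathrm{Tot}(F_\pi(X_\bullet))$. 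So the proposition becomes the assertion that the natural map
\[
\varprojlim_{\mathcal{C}^{\op}}F_\pi \to \mathrm{Tot}\bigl(F_\pi(X_\bullet)\bigr)
\]
is an equivalence, i.e.\ that the augmented cosimplicial diagram obtained by evaluating $F_\pi$ on the Čech nerve is a limit diagram.

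The key step is to show that the functor $f^{\op}:\bm{\Delta}\to\mathcal{C}^{\op}$ is \emph{coinitial} (equivalently, that $f:\bm{\Delta}^{\op}\to\mathcal{C}$ is cofinal), because a limit indexed by $\mathcal{C}^{\op}$ agrees with the limit of the restriction along any coinitial functor. By Lurie's cofinality criterion~\cite[Theorem 4.1.3.1]{Lurie2009-oc}, it suffices to check that for every object $c\in\mathcal{C}$ the comma $\infty$-category $\bm{\Delta}^{\op}\times_{\mathcal{C}}\mathcal{C}_{c/}$ (the fiber over $c$ of the "slice under $c$" construction) is weakly contractible. Unraveling, the space of objects of this comma category over $[n]$ is $\Map_{\mathcal{C}}(c,X_n)$, and its classifying space is the geometric realization $\colim_{[n]\in\bm{\Delta}^{\op}}\Map_{\mathcal{C}}(c,X_\bullet)$. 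This is exactly condition (1) of Lemma~\ref{lem:cofinal_effective_epi}, which holds because $X$ is weakly final by hypothesis (Definition~\ref{defn:quasi_final}). Hence the comma category is weakly contractible, $f$ is cofinal, and the comparison map above is an equivalence.

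The only remaining point is to check that this colimit computation genuinely computes the comma category's classifying space rather than merely its mapping spaces over $\bm{\Delta}^{\op}$: one must verify that the Čech-nerve construction $X_\bullet$ together with the simplicial structure maps (all of which are the appropriate diagonals and projections for the finite products $X_n=\prod_{i\in[n]}X$) produces precisely the cosimplicial diagram of spaces $[n]\mapsto\Map_{\mathcal{C}}(c,X_n)$ whose totalization/realization is controlled by Lemma~\ref{lem:cofinal_effective_epi}, and that the weakly-contractible fiber condition matches the hypothesis of the cofinality theorem. I expect this bookkeeping — keeping straight which category is op'd, that cofinality of $f:\bm{\Delta}^{\op}\to\mathcal{C}$ corresponds to coinitiality of $f^{\op}$, and that "Cartesian sections $=$ limit of the straightening" is the correct variance — to be the main obstacle, more a matter of care than of substance. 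Once assembled, the proof is essentially two citations (the straightening equivalence and the cofinality criterion) glued by Lemma~\ref{lem:cofinal_effective_epi}; I would also remark, following the pattern used for the unstated infinitesimal/crystalline analogues (Propositions~\ref{prop:crystals_explicit_inf} and~\ref{prop:crystals_explicit}), that the same argument applies verbatim to the coCartesian-fibration and filtered/module-valued variants, since nothing used was special to the $\mathrm{Cat}_\infty$-valued straightening.
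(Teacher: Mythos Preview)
Your proposal is correct and follows essentially the same route as the paper: both arguments establish that $f:\bm{\Delta}^{\op}\to\mathcal{C}$ is cofinal via Lurie's Theorem~A criterion together with Lemma~\ref{lem:cofinal_effective_epi}, and then transfer this to the statement about Cartesian sections. The only difference is in packaging: the paper splits the conclusion into fully faithfulness (using Lemma~\ref{lem:maps_cocartesian_sections} to reduce mapping spaces to limits of $\mathrm{Spc}$-valued presheaves) and essential surjectivity (passing to the groupoid core, i.e.\ the underlying right fibration), whereas you invoke the identification $\mathrm{Cart}(\pi)\simeq\varprojlim_{\mathcal{C}^{\op}}F_\pi$ in $\mathrm{Cat}_\infty$ once and apply cofinality there directly. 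Your version is slightly more streamlined; the paper's version has the mild advantage of making the fully-faithful part self-contained via the already-proved Lemma~\ref{lem:maps_cocartesian_sections} rather than citing the $\mathrm{Cat}_\infty$-valued limit identification.
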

\begin{proof}
First, we claim that $f$ is \emph{cofinal} as in~\cite[(4.1.1.1)]{Lurie2009-oc}. By Theorem 4.1.3.1 and Proposition 3.3.4.5 of \emph{loc. cit.}, this is equivalent to knowing that, for every object $Y$ of $\mathcal{C}$, the left fibration  $\bm{\Delta}^{\op}\times_{\mathcal{C}}\mathcal{C}_{Y/}\to \bm{\Delta}^{\op}$ is classified by a functor $F_Y:\bm{\Delta}^{\op}\to \mathrm{Spc}$ such that $\colim F_Y$ is contractible. But $F_Y$ is just the functor $[n]\mapsto \Map_{\mathcal{C}}(C,X_n)$, whose colimit is contractible by hypothesis.

Therefore, by~\cite[4.1.1.8]{Lurie2009-oc}, we see that, for any presheaf $p:\mathcal{C}^{\op}\to \mathrm{Spc}$ inducing a functor $p\circ f:\bm{\Delta}\to \mathrm{Spc}$, the natural map $\varprojlim p \to \varprojlim p\circ f $ is an equivalence. Using Lemma~\ref{lem:maps_cocartesian_sections}, this shows that the functor $\mathrm{Cart}(\pi) \to \mathrm{Cart}(\pi')$ is fully faithful. 

To show that it is also essentially surjective, we can replace $\mathcal{D}$ by the underlying left fibration classified by the functor 
\[
\mathcal{C}\xrightarrow{c\mapsto \mathcal{D}_c}\mathrm{Spc},
\]
where the proposition is immediate from the cofinality of $f$.
\end{proof}

Dually we have:
\begin{proposition}
\label{prop:cocartesian_sections}
For any coCartesian fibration $\pi:\mathcal{D}\to \mathcal{C}$ write $\mathrm{coCart}(\pi)$ for the $\infty$-category of coCartesian sections of $\pi$. Suppose that $\mathcal{C}$ has finite coproducts and that $X\in \mathcal{C}$ is a weakly initial object. Consider the associated \v{C}ech conerve
\[
f:\Delta\xrightarrow{[n]\mapsto X^{(n)}}\mathcal{C},
\]
and write
\[
\pi':\mathcal{D}\times_{\mathcal{C}}\Delta\to \Delta
\]
for the pullback of $\pi$ along $f$. Then the natural restriction functor
\[
\mathrm{coCart}(\pi) \to \mathrm{coCart}(\pi')
\]
is an equivalence. In particular, for any two sections $s_1,s_2\in \mathrm{coCart}(\pi)$, we have
\[
\Map_{\mathrm{coCart}(\pi)}(s_1,s_2)\simeq \mathrm{Tot}\Map_{\mathcal{D}}(s_1(X^{(\bullet)}),s_2(X^{(\bullet)}))
\]
\end{proposition}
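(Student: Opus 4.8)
The plan is to deduce the whole statement from Proposition~\ref{prop:cartesian_sections} and Lemma~\ref{lem:maps_cocartesian_sections} by systematically passing to opposite categories. First I would set up the dictionary. A functor $\pi\colon\mathcal{D}\to\mathcal{C}$ is a coCartesian fibration precisely when $\pi^{\op}\colon\mathcal{D}^{\op}\to\mathcal{C}^{\op}$ is a Cartesian fibration; a map of $\mathcal{D}$ is $\pi$-coCartesian exactly when the corresponding map of $\mathcal{D}^{\op}$ is $\pi^{\op}$-Cartesian; and a section $s\colon\mathcal{C}\to\mathcal{D}$ of $\pi$ preserves coCartesian lifts exactly when $s^{\op}\colon\mathcal{C}^{\op}\to\mathcal{D}^{\op}$ preserves Cartesian lifts. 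Passing to opposites on functor categories then gives a canonical equivalence $\mathrm{coCart}(\pi)\simeq\mathrm{Cart}(\pi^{\op})^{\op}$. On the combinatorial side, $\mathcal{C}$ has finite coproducts iff $\mathcal{C}^{\op}$ has finite products, $X$ is weakly initial in $\mathcal{C}$ (Definition~\ref{defn:quasi_final}) iff $X$ is weakly final in $\mathcal{C}^{\op}$, and the \v{C}ech conerve $f\colon\bm{\Delta}\to\mathcal{C}$, $[n]\mapsto X^{(n)}=\bigsqcup_{i\in[n]}X$, corresponds under $(-)^{\op}$ to the functor $\bm{\Delta}^{\op}\to\mathcal{C}^{\op}$, $[n]\mapsto\prod_{i\in[n]}X$, which is exactly the \v{C}ech nerve of $X$ viewed as an object of $\mathcal{C}^{\op}$. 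Under these identifications the pullback $\pi'\colon\mathcal{D}\times_{\mathcal{C}}\bm{\Delta}\to\bm{\Delta}$ becomes the pullback of $\pi^{\op}$ along the \v{C}ech nerve, and the restriction functor $\mathrm{coCart}(\pi)\to\mathrm{coCart}(\pi')$ becomes the opposite of the restriction functor $\mathrm{Cart}(\pi^{\op})\to\mathrm{Cart}((\pi^{\op})')$.

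With this translation in place, the first assertion is immediate: Proposition~\ref{prop:cartesian_sections}, applied to the Cartesian fibration $\pi^{\op}$ and the weakly final object $X\in\mathcal{C}^{\op}$, says precisely that $\mathrm{Cart}(\pi^{\op})\to\mathrm{Cart}((\pi^{\op})')$ is an equivalence, and taking opposite $\infty$-categories yields that $\mathrm{coCart}(\pi)\to\mathrm{coCart}(\pi')$ is an equivalence.

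For the ``in particular'' clause I would first use the equivalence just established to replace $\Map_{\mathrm{coCart}(\pi)}(s_1,s_2)$ by $\Map_{\mathrm{coCart}(\pi')}(s_1|_{\bm{\Delta}},s_2|_{\bm{\Delta}})$, and then invoke the opposite-category version of Lemma~\ref{lem:maps_cocartesian_sections}. Dualizing that lemma gives, for any coCartesian fibration over $\bm{\Delta}$ and sections $t_1,t_2$, a natural equivalence $\Map_{\mathrm{coCart}}(t_1,t_2)\simeq\lim_{[n]\in\bm{\Delta}}\Map(t_1([n]),t_2([n]))$; since on the fiber over $[n]$ the relevant mapping space is just $\Map_{\mathcal{D}}(-,-)$ evaluated at $X^{(n)}$, the right-hand side becomes $\lim_{[n]\in\bm{\Delta}}\Map_{\mathcal{D}}(s_1(X^{(n)}),s_2(X^{(n)}))=\mathrm{Tot}\,\Map_{\mathcal{D}}(s_1(X^{(\bullet)}),s_2(X^{(\bullet)}))$. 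Alternatively one can argue directly over $\mathcal{C}$: Lemma~\ref{lem:maps_cocartesian_sections} applied to $\pi^{\op}$ gives $\Map_{\mathrm{coCart}(\pi)}(s_1,s_2)\simeq\lim_{c\in\mathcal{C}}\Map_{\mathcal{D}}(s_1(c),s_2(c))$, and the cofinality of the \v{C}ech-nerve functor $\bm{\Delta}^{\op}\to\mathcal{C}^{\op}$ established inside the proof of Proposition~\ref{prop:cartesian_sections} identifies this limit over $\mathcal{C}$ with the totalization over $\bm{\Delta}$.

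I expect the only real obstacle to be bookkeeping rather than mathematics: one must keep straight that coCartesian sections dualize to Cartesian sections with a reversal of the direction of mapping spaces, that the \v{C}ech conerve in $\mathcal{C}$ is the \v{C}ech nerve in $\mathcal{C}^{\op}$, and that the indexing category flips between $\bm{\Delta}$ and $\bm{\Delta}^{\op}$ under dualization. Getting any one of these variances backwards would break the reduction, but once the dictionary is written out correctly every step is a formal consequence of results already proved in Appendix~\ref{sec:fibrations}.
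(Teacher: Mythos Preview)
Your proposal is correct and is exactly the approach taken in the paper: the proposition is stated immediately after the phrase ``Dually we have,'' with no further proof given, so the intended argument is precisely the formal passage to opposite categories that you have spelled out. Your careful bookkeeping of variances (coCartesian $\leftrightarrow$ Cartesian, conerve $\leftrightarrow$ nerve, $\bm{\Delta}\leftrightarrow\bm{\Delta}^{\op}$, and the reversal of mapping-space directions) simply makes explicit what the paper leaves to the reader.
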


\section{Completions}\label{sec:completions}

Fix  a prime $p$. Recall that $M\in \Mod{\Int}$ is \defnword{(derived) $p$-complete} if the natural map $M\to \varprojlim_n M/{}^\mathbb{L}p^n$ is an equivalence. 

\begin{lemma}
\label{lem:torsion_complete}
Suppose that we have $M\in \Mod[\mathrm{cn}]{\Int}$ and $n\in \Int_{\geq 1}$ such that $p^n\cdot H^i(M) = 0$ for all $i$. Then $M$ is $p$-complete.
\end{lemma}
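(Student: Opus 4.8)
This statement is a standard fact about derived $p$-completion: a connective complex all of whose cohomology groups are killed by a fixed power of $p$ is automatically $p$-complete. The plan is to reduce to the case of a single module in a single degree, and then to the case of a $p^n$-torsion abelian group, where the claim is elementary.

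First I would recall the characterization of $p$-completeness in terms of vanishing of $\underline{\Map}_{\Mod{\Int}}(\Int[1/p], -)$ (equivalently, the homotopy limit $\varprojlim(\cdots \xrightarrow{p} M \xrightarrow{p} M)$ is nullhomotopic): $M$ is derived $p$-complete if and only if $\underline{\Map}_{\Mod{\Int}}(\Int[1/p], M) \simeq 0$. Since $\Int[1/p] = \colim(\Int \xrightarrow{p}\Int\xrightarrow{p}\cdots)$, this mapping object is the homotopy limit of the tower $M\xrightarrow{p}M\xrightarrow{p}\cdots$, and this formulation is manifestly closed under (co)fiber sequences and filtered colimits in $M$.

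Next I would perform the dévissage. Writing $M$ via its Postnikov tower, each associated graded piece is $H^i(M)[-i]$ with $p^n H^i(M) = 0$; since the class of $p$-complete objects is closed under extensions (cofiber sequences) and under shifts, and $M\simeq \varprojlim \tau^{\le -i}$-truncations with each truncation built from finitely many such pieces — more carefully, $M \simeq \varprojlim_i \tau_{\le i}M$ realizes $M$ as a limit of objects each of which is a finite extension of shifted modules — it suffices to treat the case $M = A[-i]$ for a single abelian group $A$ with $p^nA = 0$, and by shift-invariance the case $M = A$ concentrated in degree $0$. Here one computes directly: multiplication by $p$ on $A$ satisfies $p^n = 0$, so the homotopy limit of $A\xrightarrow{p}A\xrightarrow{p}\cdots$ has $\varprojlim$ and $\varprojlim^1$ both vanishing (the tower is pro-zero because the composite of $n$ consecutive maps is zero), whence $\underline{\Map}(\Int[1/p],A) \simeq 0$. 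Equivalently, $A/{}^{\mathbb{L}}p^k = A \oplus A[1]$ stabilizes for $k\ge n$ and the transition maps in $\{A/{}^{\mathbb{L}}p^k\}_k$ induce an equivalence $A\xrightarrow{\simeq}\varprojlim_k A/{}^{\mathbb{L}}p^k$; one checks this on $H^0$ and $H^1$ of the relevant $\varprojlim$/$\varprojlim^1$ sequence.

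The only mild subtlety — and the step I expect to require the most care — is ensuring the limit $M \simeq \varprojlim_i \tau_{\le i}M$ interacts correctly with $p$-completeness; but since derived $p$-completion is itself a limit (it commutes with all limits as a right adjoint / Bousfield localization), and each $\tau_{\le i}M$ is a finite extension of shifted $p^n$-torsion modules hence $p$-complete by the previous paragraph, the limit $M$ is $p$-complete as well. Alternatively one can cite that the $p$-complete objects form a full subcategory closed under limits and retracts, making the dévissage immediate once the single-module case is done. No genuinely hard computation is involved.
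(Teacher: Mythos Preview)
Your argument is correct, but it takes a different route from the paper. The paper's proof is a one-liner: it asserts that the hypothesis forces the map $p^m:M\to M$ to be nullhomotopic for $m\geq n$ (this uses, implicitly, that $\Int$ is hereditary, so that $M\simeq\bigoplus_i H^i(M)[-i]$ in $\Mod{\Int}$), and then reads off that the pro-system $\{M/{}^{\mathbb{L}}p^m\}_m$ is eventually $\{M[1]\oplus M\}$ with transition maps $(p,\mathrm{id})$, whose limit is visibly $M$.

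Your d\'evissage via the Postnikov tower, closure of $p$-complete objects under limits, and the single-module case is a perfectly good alternative; it is slightly longer but has the advantage of not invoking the splitting of complexes over $\Int$, so the same argument would work verbatim over any base ring. The paper's approach is shorter precisely because it exploits that special feature of $\Int$. Your computation in the single-module case (the tower $A\xrightarrow{p}A\xrightarrow{p}\cdots$ being pro-zero) is essentially the same endgame as the paper's $\varprojlim_p M\simeq 0$, just localized to one degree at a time.
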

\begin{proof}
The hypothesis implies that the map $p^m:M\to M$ is nullhomotopic for $m\geq n$, and so the inverse system $\{M/{}^{\mathbb{L}}p^n\}_n$ is equivalent to to $\{M[1]\oplus M\}_n$ where the transition maps are given by multiplication-by-$p$ in the first summand and the identity on the second. Therefore, to finish, it is enough to observe that
\[
\varprojlim_{p}M \simeq \varprojlim_{p^n}M \simeq 0.
\]
\end{proof}

The next lemma is standard, and the proof is merely a derived restatement of~\cite[Theorem 2.8]{Yekutieli2018-qt}.
\begin{lemma}\label{lem:completions}
Suppose that we have $a\in \Int$ and an object
\[
\{M_n\}_{n\ge 1}\in \varprojlim_n\Mod[\leq a]{\Int/p^n\Int}.
\]
Set $M = \varprojlim_n M_n\in \Mod{\Int}$. Then $M$ is $p$-complete, and for each $n\geq 1$, the natural map $M/{}^{\mathbb{L}}p^n\to M_n$ is an equivalence.
\end{lemma}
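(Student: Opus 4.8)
The plan is to reduce both assertions to a single statement: that the projection $M=\varprojlim_k M_k\to M_n$ refines, compatibly in $n$, to an equivalence $M/{}^{\mathbb{L}}p^n\xrightarrow{\simeq}M_n$. First I would unpack the hypothesis. An object of $\varprojlim_n\Mod[\leq a]{\Int/p^n\Int}$ is a compatible system $\{M_n\}$ with $M_n\in\Mod[\leq a]{\Int/p^n\Int}$ whose structure equivalences identify $M_n$ with the extension of scalars $M_{n+1}\otimes^{\mathbb{L}}_{\Int/p^{n+1}\Int}\Int/p^n\Int$ (this functor preserves $\Mod[\leq a]{}$ because $\Int/p^n\Int$ is connective over $\Int/p^{n+1}\Int$); iterating gives $M_n\simeq M_k\otimes^{\mathbb{L}}_{\Int/p^k\Int}\Int/p^n\Int$ for all $k\ge n$. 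Granting the key equivalence $M/{}^{\mathbb{L}}p^n\simeq M_n$, taking $\varprojlim_n$ identifies $\varprojlim_n M/{}^{\mathbb{L}}p^n$ with $\varprojlim_n M_n=M$, which is exactly $p$-completeness of $M$; alternatively $p$-completeness follows directly, since each bounded-above complex $M_n$ of $\Int/p^n\Int$-modules is derived $p$-complete by the truncation argument underlying Lemma~\ref{lem:torsion_complete}, and $M$ is a limit of $p$-complete objects.

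For the key equivalence I would use that $\Int/p^n\Int$ is a perfect, hence dualizable, $\Int$-module, so $(-)\otimes^{\mathbb{L}}_\Int\Int/p^n\Int$ commutes with limits:
\[
M/{}^{\mathbb{L}}p^n=\Bigl(\varprojlim_k M_k\Bigr)\otimes^{\mathbb{L}}_\Int\Int/p^n\Int\;\simeq\;\varprojlim_{k\ge n}\bigl(M_k/{}^{\mathbb{L}}p^n\bigr),
\]
so it remains to show $\varprojlim_{k\ge n}(M_k/{}^{\mathbb{L}}p^n)\simeq M_n$. The crux is the analysis of this tower. For $k\ge n$ the unit and multiplication maps of the $\Int/p^k\Int$-algebra $\Int/p^n\Int$ exhibit $M_n\simeq M_k\otimes^{\mathbb{L}}_{\Int/p^k\Int}\Int/p^n\Int$ as a retract of $M_k/{}^{\mathbb{L}}p^n=M_k\otimes^{\mathbb{L}}_{\Int/p^k\Int}(\Int/p^k\Int\otimes^{\mathbb{L}}_\Int\Int/p^n\Int)$. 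Since $\Int$ has global dimension one, the two-term complex $[\Int/p^k\Int\xrightarrow{p^n}\Int/p^k\Int]$ computing $\Int/p^k\Int\otimes^{\mathbb{L}}_\Int\Int/p^n\Int$ splits functorially (in $k$) as $H^0\oplus H^{-1}[1]$; a direct computation gives $H^0\cong\Int/p^n\Int$ with identity transition maps in $k$, and $H^{-1}\cong\Int/p^n\Int$ with transition maps equal to multiplication by $p$. Base-changing along the (varying) rings $\Int/p^k\Int$ and using $M_n\simeq M_k\otimes^{\mathbb{L}}_{\Int/p^k\Int}\Int/p^n\Int$ yields a functorial-in-$k$ splitting $M_k/{}^{\mathbb{L}}p^n\simeq M_n\oplus M_n[1]$ in which the first summand has identity transition maps and the second has transition maps given by multiplication by $p$.

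Finally, $\varprojlim_{k\ge n}$ of the constant tower on the first summand is $M_n$, while the second tower $\cdots\xrightarrow{p}M_n[1]\xrightarrow{p}M_n[1]$ is pro-zero: the $n$-fold composite of its transition maps is multiplication by $p^n$, which is nullhomotopic since $M_n$ is a $\Int/p^n\Int$-module, so its limit (and $\lim^1$) vanishes. Hence $\varprojlim_{k\ge n}(M_k/{}^{\mathbb{L}}p^n)\simeq M_n$, and this identification is natural in $n$ by construction, so it assembles into an equivalence of towers $\{M/{}^{\mathbb{L}}p^n\}_n\simeq\{M_n\}_n$, giving the last assertion and, upon passing to limits, the $p$-completeness of $M$. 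I expect the main obstacle to be the bookkeeping in the middle step: verifying that the retraction, the functorial splitting of the two-term complex, and the identification of the "bad" transition maps with multiplication by $p$ are all natural in $k$, so that they survive base change along the changing rings $\Int/p^k\Int$ and can be fed into the limit. Once the tower is split functorially, everything else is formal.
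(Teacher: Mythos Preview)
Your approach is correct and takes a genuinely different route from the paper's.

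The paper argues as follows: it first shows $M$ is $p$-complete by exhibiting it as a retract of $\widehat{M}$; then, setting $K_n = \hker(M/{}^{\mathbb{L}}p^n \to M_n)$, it observes that $\{K_n\}$ is again a compatible system in $\varprojlim_n \Mod[\leq a+1]{\Int/p^n\Int}$ with $\varprojlim_n K_n \simeq 0$, and concludes $K_n \simeq 0$ by a descending induction on $a$ using degree truncations and the surjectivity of the transition maps on top cohomology. In particular, the bound $\leq a$ is used essentially in that induction.

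Your argument is more direct: you pull $(-)/{}^{\mathbb{L}}p^n$ inside the limit using dualizability of $\Int/p^n\Int$, and then analyze the tower $\{M_k/{}^{\mathbb{L}}p^n\}_{k \ge n}$ explicitly. This avoids the induction and in fact never invokes the hypothesis $M_n \in \Mod[\leq a]{}$, so it proves a slightly stronger statement.

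One point to tighten: the claim that the two-term complex $C_k = [\Int/p^k\Int \xrightarrow{p^n} \Int/p^k\Int]$ \emph{splits functorially in $k$} does not follow merely from $\Int$ having global dimension one; that only gives a splitting for each individual $k$, and the space of such splittings is a torsor under $\Ext^1_\Int(\Int/p^n\Int,\Int/p^n\Int) \cong \Int/p^n\Int$, so compatibility in $k$ is not automatic. However, you do not need the splitting. The canonical fiber sequence $\tau^{\le -1}C_k \to C_k \to \tau^{\ge 0}C_k$ is functorial in $k$ on the nose, and applying $M_k \otimes^{\mathbb{L}}_{\Int/p^k\Int}(-)$ (which is exact) gives the functorial-in-$k$ fiber sequence
\[
M_n[1] \to M_k/{}^{\mathbb{L}}p^n \to M_n
\]
that you describe, with the transition on the left term given by multiplication by $p$. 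Since $M_n$ is a $\Int/p^n\Int$-module, multiplication by $p^n$ is literally the zero map, so this left tower is pro-zero and its limit vanishes; the rest of your argument then goes through unchanged. You were right to flag this step as the main obstacle; the fix is simply to replace ``splitting'' by ``fiber sequence'' throughout.
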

\begin{proof}
For any $Q\in \Mod{\Int}$, the object $\widehat{Q} = \varprojlim_n Q/{}^{\mathbb{L}}p^n$ is always $p$-complete, and the natural map $Q/{}^{\mathbb{L}}p^n \to \widehat{Q}/{}^\mathbb{L}p^n$ is an equivalence. 

In particular, the canonical factorization $M\to M/{}^{\mathbb{L}}p^n\to M_n$ of the map $M\to M_n$ gives us a factorization $M\to \widehat{M}/{}^{\mathbb{L}}p^n\to M_n$, and taking inverse limits gives us a section $\widehat{M}\to M$. Thus, $M$ is a direct summand of a $p$-complete object, and must itself therefore be $p$-complete, implying of course that the natural map $M\to \widehat{M}$ is an equivalence.

Let $K_n$ be the fiber in $\Mod{\Int}$ of the natural map $M/{}^{\mathbb{L}}p^n\to M_n$; note that this underlies an object $\{K_n\}$ in $\varprojlim_n\Mod[\leq a+1]{\Int/p^n\Int}$, and also that $\varprojlim_nK_n \simeq 0$. 

We have to show that this implies $K_n\simeq 0$ for all $n$. Via descending induction on $a$ combined with shifting, it is enough to show that, when $a+1  = 0$, we have $H^0(K_n)=0$ for all $n\geq 1$. Taking the limit of the cofiber sequence $\tau^{\geq -1}K_n\to K_n\to H^0(K_n)$ shows that we have an equivalence
\[
\varprojlim_n H^0(K_n)[0] \simeq \varprojlim_n\tau^{\leq {-1}}K_n[1]
\]
Since the inverse limit functor has cohomological dimension $1$, the right hand side is in $\Mod[\geq -1]{\Int}$, while the left hand side is concentrated in degrees $0,1$; so we see that both sides have to be equivalent to $0$. Now, for $m\leq n$, we have $H^0(K_n)/p^m \simeq H^0(K_m)$, so the transition maps in the sequence $\{H^0(K_n)\}$ of abelian groups are surjective, and the vanishing of the limit now shows that $H^0(K_n) = 0$ for all $n\geq 1$.
\end{proof}

\section{Filtrations and gradings}\label{sec:filtrations}

We review the $\infty$-category theoretic perspective on filtered and graded objects. A good summary can be found in~\cite[\S 2.4]{Mao2021-jt}. We will also use the language of symmetric monoidal $\infty$-categories from~\cite{Lurie2017-oh}.

\subsection{}\label{subsec:filtered}
Given an $\infty$-category $\mathcal{C}$, the $\infty$-category of \defnword{filtered objects in $\mathcal{C}$} is the functor category 
\[
\mathrm{Fil}(\mathcal{C}) = \mathrm{Fun}((\Int,\geq),\mathcal{C}). 
\]

We will usually denote an object in $\Fil(\mathcal{C})$ by $\Fil^\bullet X$ where, for $k\in \Int$, $\Fil^kX$ is the object associated with $k\in \Int$. Usually, $\mathcal{C}$ will be admit all colimits and in this case we will say that $\Fil^\bullet X$ is an \defnword{exhaustive filtration} of $X = \colim_{k\in \Int}\Fil^k X$. The filtration is \defnword{bounded above} (resp. \defnword{bounded below}) if there exists $n\in \Int$ such that the map $\Fil^{k+1}X \to \Fil^{k}X$ is an equivalence for $k\leq n$ (resp. $k \geq n$); it is \defnword{bounded} or \defnword{finite} if it is bounded both below and above. 

For $m\in \Int$, there is a \defnword{shift functor}
\begin{align*}
\mathrm{Fil}(\mathcal{C}) &\xrightarrow{\Fil^\bullet X \mapsto \Fil^\bullet X\{m\}}\Fil(\mathcal{C}),
\end{align*}
where $\Fil^kX\{m\} = \Fil^{k+m}X$ for $k\in\Int$. 

\subsection{}
If $\mathcal{C}$ underlies a symmetric monoidal $\infty$-category, then so does $\Fil(\mathcal{C})$. The monoidal structure is given by the \defnword{Day convolution} induced by the additive monoidal structure on $(\Int,\geq)$. Prosaically, for two filtered objects $\Fil^\bullet X$, $\Fil^\bullet Y$ in $\mathcal{C}$, we have
\[
\Fil^k(X\otimes Y) \simeq \colim_{i+j\geq k}\Fil^iX\otimes\Fil^jY.
\]
Therefore, we can make sense of an $\mathcal{Z}_{K,\infty}(E)$ or commmutative algebra object in $\Fil(\mathcal{C})$. If $\mathcal{C}$ admits all (small) colimits and if the symmetric monoidal structure respects colimits in each variable, then, for any commutative algebra object $\Fil^\bullet X$ in $\Fil(\mathcal{C})$, $X$ is a commutative algebra object in $\mathcal{C}$

This will apply in particular when $\mathcal{C} = \Mod{R}$ for some $R\in \mathrm{CRing}$. We will write $\mathrm{FilMod}_R$ for $\Fil(\Mod{R})$. If $R$ is represented by a simplicial commutative ring with underlying differential graded ring $R_{\mathrm{dg}}$, then any filtered commutative differential graded algebra over $R_{\mathrm{dg}}$ will give rise to a commutative algebra object in $\mathrm{FilMod}_R$. 

A \defnword{filtered animated commutative algebra} over $R$ is a commutative algebra object $\Fil^\bullet S$ in $\mathrm{FilMod}_R$ equipped with the structure of an animated commutative $R$-algebra on $S$ underlying its $\mathcal{Z}_{K,\infty}(E)-R$-algebra structure; write $\mathrm{FilCRing}_{R/}$ for the $\infty$-category of such objects. If $R = \Int$, then we will write $\mathrm{FilCRing}$ for the corresponding category and refer to its objects as \defnword{filtered animated commutative rings}. Any $A\in \mathrm{CRing}$ can be canonically upgraded to a filtered animated commutative ring with $\Fil^i A = A$ for $i\leq 0$ and $\Fil^i A = 0$ for $i>0$.

\subsection{}\label{subsec:filmod}
By general considerations (see~\cite[\S 4.5.2]{Lurie2017-oh}), for any $\Fil^\bullet S$ in $\mathrm{FilCRing}$, there is a symmetric monoidal $\infty$-category $\mathrm{FilMod}_{\Fil^\bullet S}$ of filtered modules over $\Fil^\bullet S$ where the symmetric monoidal structure admits $\Fil^\bullet S$ as the unit object. 

To alleviate notation, we will write $\Map_{\Fil^\bullet S}(\_\_,\_\_)$ instead of $\Map_{\mathrm{FilMod}_{\Fil^\bullet S}}(\_\_,\_\_)$ for the mapping spaces here.

Given a map of filtered animated commutative rings $f:\Fil^\bullet R \to \Fil^\bullet S$, there exists a monoidal filtered base-change functor
\begin{align*}
\mathrm{FilMod}_{\Fil^\bullet R} &\to \mathrm{FilMod}_{\Fil^\bullet S}\\
\Fil^\bullet M &\mapsto \Fil^\bullet S\otimes_{\Fil^\bullet R}\Fil^\bullet M
\end{align*}
that is left adjoint to the restriction functor in the other direction.

More succinctly the symmetric monoidal $\infty$-categories $\mathrm{FilMod}_{\Fil^\bullet S}$ organize into a coCartesian fibration $\mathrm{FilMod}\to \mathrm{FilCRing}$.

\subsection{}\label{subsec:graded}
There is a completely analogous graded analogue. Given an $\infty$-category $\mathcal{C}$, the $\infty$-category of \defnword{graded objects in $\mathcal{C}$} is defined to be the functor category 
\[
\mathrm{Gr}(\mathcal{C}) = \mathrm{Fun}(\Int,\mathcal{C}),
\]
where we view $\Int$ as a discrete category. We will usually denote an object in $\mathrm{Gr}(\mathcal{C})$ by $X_\bullet$ where, for $k\in \Int$, $X_k$ is the object associated with $k\in \Int$. 

For $m\in \Int$, there is a \defnword{shift functor}
\begin{align*}
\mathrm{Gr}(\mathcal{C}) &\xrightarrow{X_\bullet \mapsto X\{m\}_\bullet}\mathrm{Gr}(\mathcal{C}),
\end{align*}
where $X\{m\}_k = X_{k+m}$ for $k\in\Int$.

\subsection{}
If $\mathcal{C}$ underlies a symmetric monoidal $\infty$-category, then so does $\Fil(\mathcal{C})$. The monoidal structure is given by the \defnword{Day convolution} induced by the additive monoidal structure on $\Int$. Prosaically, for two graded objects $X_\bullet$ and $Y_\bullet$, we have
\[
(X\otimes Y)_k \simeq \bigoplus_{i+j = k}X_i\otimes Y_j
\]
Therefore, we can make sense of an $\mathcal{Z}_{K,\infty}(E)$ or commmutative algebra object in $\mathrm{Gr}(\mathcal{C})$.

This will apply in particular when $\mathcal{C} = \Mod{R}$ for some $R\in \mathrm{CRing}$. We will write $\mathrm{GrMod}_R$ for $\mathrm{Gr}(\Mod{R})$. If $R$ is represented by a simplicial commutative ring with underlying differential graded ring $R_{\mathrm{dg}}$, then any graded commutative differential graded algebra over $R_{\mathrm{dg}}$ will give rise to a commutative algebra object in $\mathrm{GrMod}_R$. 

A \defnword{graded animated commutative algebra} over $R$ is a commutative algebra object $S_\bullet$ in $\mathrm{GrMod}_R$ equipped with the structure of an animated commutative $R$-algebra on $\bigoplus_k S_k$ underlying its $\mathcal{Z}_{K,\infty}(E)-R$-algebra structur; write $\mathrm{GrCRing}_{R/}$ for the $\infty$-category of such objects. If $R = \Int$, then we will write $\mathrm{GrCRing}$ for the corresponding category and refer to its objects as \defnword{graded animated commutative rings}. Any $A\in \mathrm{CRing}$ can be canonically upgraded to a graded animated commutative ring $A_\bullet$ with $A_0 = A$ and $A_n = 0$ for $n\neq 0$.

\subsection{}
By general considerations (see~\cite[\S 4.5.2]{Lurie2017-oh}), for any $S_\bullet$ in $\mathrm{GrCRing}$, there is a symmetric monoidal $\infty$-category $\mathrm{GrMod}_{S_\bullet}$ of graded modules over $S_\bullet$ where the symmetric monoidal structure admits $S_\bullet$ as the unit object.

To alleviate notation, we will write $\Map_{S_\bullet}(\_\_,\_\_)$ instead of $\Map_{\mathrm{GrMod}_{S_\bullet}}(\_\_,\_\_)$ for the mapping spaces here.

Given a map of graded animated commutative rings $f:R_\bullet \to S_\bullet$, there exists a graded base-change functor
\begin{align*}
\mathrm{GrMod}_{R_\bullet} &\to \mathrm{GrMod}_{S_\bullet}\\
M_\bullet &\mapsto S_\bullet\otimes_{R_\bullet}M_\bullet
\end{align*}
that is left adjoint to the restriction functor.

\subsection{}\label{subsec:filtered_to_graded}
Let $\mathcal{C}$ be a stable $\infty$-category. Then $\Fil(\mathcal{C})$ and $\mathrm{Gr}(\mathcal{C})$ are also stable $\infty$-categories, and, for each $i\in \Int$, there is a canonical functor
\begin{align*}
\mathrm{gr}^i:\mathrm{Fil}(\mathcal{C})&\to \mathcal{C}\\
\Fil^\bullet X &\mapsto \hcoker(\Fil^{i+1}X \to \Fil^iX).
\end{align*}

These organize to give a natural functor
\begin{align*}
\mathrm{gr}^\bullet:\mathrm{Fil}(\mathcal{C})&\to \mathrm{Gr}(\mathcal{C})\\
\Fil^\bullet X &\mapsto \gr^{\bullet}X.
\end{align*}

If $\mathcal{C} = \Mod{R}$, then $\gr^\bullet$ is a symmetric monoidal functor; see for instance~\cite[Prop. 2.26]{Gwilliam2018-qm}. In particular, for any filtered animated commutative ring $\Fil^\bullet R$, $\gr^\bullet R$ is a graded animated commutative ring, and we obtain a symmetric monoidal functor
\[
\gr^\bullet: \mathrm{FilMod}_{\Fil^\bullet R}\to \mathrm{GrMod}_{\gr^\bullet R}.
\]

\begin{lemma}
\label{lem:graded_weight_filtration}
Suppose that $A_\bullet$ is a graded animated commutative ring with $A_m \simeq 0$ for $m<0$, and suppose that we have $M_\bullet\in \mathrm{GrMod}_{A_\bullet}$ with $a\in \Int$ such that $M_k\simeq 0$ for $k<a$. Write 
\[
\overline{M}_\bullet = A_0\otimes_{A_\bullet}M_\bullet
\]
for the graded base change of $M_\bullet$. Then $M_\bullet$ admits a canonical filtration $\Fil^\bullet_{\mathrm{wt}}M_\bullet$ in $\mathrm{GrMod}_{A_\bullet}$ with
\[
\gr^{-i}_{\mathrm{wt}}M_\bullet \simeq A_\bullet\otimes_{A_0}\gr^i\overline{M}_\bullet\left\{-i\right\}.
\]
\end{lemma}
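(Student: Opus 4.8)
The plan is to build the weight filtration $\Fil^\bullet_{\mathrm{wt}}M_\bullet$ directly from the grading, using the hypothesis that $A_\bullet$ is connective (i.e.\ $A_m\simeq 0$ for $m<0$) so that the grading on $A_\bullet$ gives an increasing internal filtration. Concretely, for each $i$, I would set $\Fil^{-i}_{\mathrm{wt}}M_\bullet$ to be the graded $A_\bullet$-submodule generated in internal degrees $\leq i$; more precisely, since $M_k\simeq 0$ for $k<a$, I would define $\Fil^{-i}_{\mathrm{wt}}M_\bullet$ by truncating $M_\bullet$ in internal degrees, via the subfunctor $(\Fil^{-i}_{\mathrm{wt}}M)_k = M_k$ for $k\leq i$ and letting the rest be built from $A_\bullet$-action. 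The cleanest way to make this precise in the $\infty$-categorical setting is to introduce the auxiliary grading by a second copy of $\Int$: regard $M_\bullet$ as a module over $A_\bullet$ where $A_\bullet$ carries the diagonal bigrading, and then take the filtration by $A_0$-submodules spanned by the pieces of internal weight $\leq i$. This is the exact analogue of the $\gamma$-filtration / Rees construction, and it should be available by the same formalism used in Appendix~\ref{sec:filtrations} for passing between filtered and graded objects.

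The key steps, in order, would be: (1) Construct $\Fil^\bullet_{\mathrm{wt}}M_\bullet$ as a functor $(\Int,\geq)\to \mathrm{GrMod}_{A_\bullet}$, exhaustive with $\colim$ equal to $M_\bullet$ and bounded above by $-a$ because $M_k\simeq 0$ for $k<a$. (2) Identify the graded pieces. For this, observe that $\gr^{-i}_{\mathrm{wt}}M_\bullet$ is, by construction, the graded $A_\bullet$-module freely generated by the internal-weight-$i$ part of $M_\bullet$ modulo the image of lower weights; the point is that this quotient, as an $A_0$-module in internal degree $i$, is precisely $\gr^i\overline M_\bullet$ placed in degree $i$, because $\overline M_\bullet = A_0\otimes_{A_\bullet}M_\bullet$ kills exactly the $A_{>0}$-generated part. (3) Assemble: $\gr^{-i}_{\mathrm{wt}}M_\bullet$ is then the free graded $A_\bullet$-module on $\gr^i\overline M_\bullet$ concentrated in degree $i$, which is exactly $A_\bullet\otimes_{A_0}\gr^i\overline M_\bullet\{-i\}$ once one accounts for the shift convention $X\{m\}_k = X_{k+m}$ from~\eqref{subsec:graded}. (4) Check functoriality and the boundedness/convergence claims, which are formal.

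The main obstacle, I expect, is step (2): making rigorous the claim that the associated graded of the weight filtration is \emph{free} over $A_\bullet$ on the mod-$A_{>0}$ reduction, rather than merely an extension. In the classical (discrete) setting this is the statement that a graded module over a connective graded ring, filtered by the weight of generators, has associated graded equal to the induced module on $\gr$ of the reduction --- but the animated statement requires knowing that the relevant comparison map is an equivalence, which should follow by reduction to the case of free modules $M_\bullet = A_\bullet\{-j\}$ (where it is tautological) together with the fact that both $\Fil^\bullet_{\mathrm{wt}}(\_)$ and the target $A_\bullet\otimes_{A_0}\gr^{\bullet}\overline{(\_)}$ commute with colimits in $M_\bullet$ and are exact. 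I would therefore phrase the argument as: both sides of the desired identity are exact functors of $M_\bullet$ commuting with filtered colimits, they agree on the generators $A_\bullet\{-j\}$, and hence they agree on all of $\mathrm{GrMod}_{A_\bullet}$ (using that the bounded-below condition lets one build $M_\bullet$ from such generators via colimits and finite limits, as in the Postnikov/cell argument). The boundedness hypothesis $M_k\simeq 0$ for $k<a$ is what guarantees the filtration is exhaustive and eventually constant, so no completeness subtleties arise.
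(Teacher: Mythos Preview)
Your proposal has the right intuition but leaves step~(1) imprecise: ``generated in internal degrees $\leq i$'' and ``spanned by'' are not well-defined operations in the stable $\infty$-category $\mathrm{GrMod}_{A_\bullet}$, and the bigrading/Rees suggestion would need substantial elaboration to yield an actual functor $(\Int,\geq)\to \mathrm{GrMod}_{A_\bullet}$. The paper instead constructs the filtration by descending induction on $a$: it first observes that the map $M_a \to \overline{M}_a$ is an equivalence (from the fiber sequence $A_{\geq 1}\otimes_{A_\bullet}M_\bullet \to M_\bullet \to \overline{M}_\bullet$, whose first term is supported in degrees $\geq a+1$), which yields a canonical map $A_\bullet\otimes_{A_0}\overline{M}_a\{-a\} \to M_\bullet$. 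Its cofiber $M'_\bullet$ is supported in degrees $\geq a+1$ and satisfies $\overline{M}'_k \simeq \overline{M}_k$ for $k\geq a+1$; one then applies the inductive hypothesis to $M'_\bullet$ and pulls the resulting filtration back along $M_\bullet \to M'_\bullet$.

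Your step~(2) defers to a ``Postnikov/cell argument'' to build bounded-below $M_\bullet$ from the generators $A_\bullet\{-j\}$, but that cell decomposition is exactly what the paper's induction produces: exhibiting $M_\bullet$ as an iterated extension of the free modules $A_\bullet\otimes_{A_0}\overline{M}_i\{-i\}$ \emph{is} the construction of the weight filtration with the stated associated graded. So your steps~(1) and~(2) collapse into a single task, and the concrete ingredient your sketch lacks is the identification $M_a \simeq \overline{M}_a$, which makes the bottom cell canonical and anchors the recursion.
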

\begin{proof}
The construction of the filtration is by descending induction on $a$.  

We first claim that the natural map $M_a \to \overline{M}_a$ is an equivalence. To see this, observe that we have a fiber sequence
\[
A_{\geq 1}\otimes_{A_\bullet}M_\bullet \to M_\bullet \to \overline{M}_{\bullet}
\]
and note that the left hand side is a graded module supported in degrees $\geq a+1$.

Therefore, we have a canonical map
\[
A_\bullet\otimes_{A_0}\overline{M}_a\left\{-a\right\}\xrightarrow{\simeq}A_\bullet\otimes_{A_0}M_a\left\{-a\right\}\to M_\bullet 
\]
whose cofiber $M'_\bullet$ is supported in degrees $\leq a$. Graded base-change to $A_0$ yields a cofiber sequence
\[
\overline{M}_a\left\{-a\right\}\to \overline{M}\to \overline{M}',
\]
which shows that $\overline{M}_i\xrightarrow{\simeq}\overline{M}'_i$ for $i\geq a+1$ and $\overline{M}_i\simeq 0$ for $i<a+1$.

By our inductive hypothesis, $M'_\bullet$ admits a filtration $\Fil^\bullet_{\mathrm{wt}}M'_\bullet$ with 
\[
\gr^{-i}_{\mathrm{wt}}M'_\bullet \simeq A_\bullet\otimes_{A_0}\gr^i\overline{M}'_{\bullet}\left\{-i\right\}.
\]

We now obtain our desired filtration on $M_\bullet$ by setting
\[
\Fil^i_{\mathrm{wt}}M_\bullet = \begin{cases}
0&\text{if $i>-a$}\\
\Fil^i_{\mathrm{wt}}\overline{M}_\bullet\times_{\overline{M}_\bullet}M_\bullet&\text{if $i\leq -a$}.
\end{cases} 
\]
\end{proof}

As an immediate consequence, we obtain:
\begin{lemma}
\label{lem:associated_graded}
Suppose that $\Fil^\bullet A'$ is a non-negatively filtered animated commutative ring. Set $A = \gr^0A'$. Let $\Fil^\bullet M'\in \mathrm{FilMod}_{\Fil^\bullet A'}$ be such that $\gr^{k}M' \simeq 0$ for $-k$ sufficiently large. Set
\[
\Fil^\bullet M = A\otimes_{\Fil^\bullet A'}\Fil^\bullet M',
\]
where $A$ is equipped with the trivial filtration. Then there is a canonical filtration $\mathrm{Fil}^\bullet_{\mathrm{wt}}$ on $\gr^\bullet M'$ in $\mathrm{GrMod}_{\gr^\bullet A'}$ such that
\[
\gr^{-i}_{\mathrm{wt}}\gr^\bullet M' \simeq \gr^\bullet A'\otimes_A\gr^iM\{-i\}.
\]
In particular, if $\gr^{a-1}M' \simeq 0$, then we have $\gr^aM' \simeq \gr^aM$.
\end{lemma}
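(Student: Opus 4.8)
The statement follows directly from Lemma~\ref{lem:graded_weight_filtration}, so the plan is to set up the hypotheses of that lemma and read off the conclusion. First I would observe that $\gr^\bullet A'$ is a graded animated commutative ring (since $\gr^\bullet$ is a symmetric monoidal functor on filtered modules, as recorded in~\eqref{subsec:filtered_to_graded}) and that the non-negativity of the filtration $\Fil^\bullet A'$ translates into $(\gr^\bullet A')_m \simeq 0$ for $m<0$; moreover the degree-zero piece $(\gr^\bullet A')_0$ is precisely $A = \gr^0 A'$. Applying the symmetric monoidal functor $\gr^\bullet$ to the filtered module $\Fil^\bullet M'$ gives a graded module $M_\bullet = \gr^\bullet M'$ over $A_\bullet = \gr^\bullet A'$, and the hypothesis that $\gr^k M' \simeq 0$ for $-k$ sufficiently large says exactly that $M_\bullet$ is supported in degrees $\geq a$ for some integer $a$. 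Thus Lemma~\ref{lem:graded_weight_filtration} applies to $M_\bullet$ over $A_\bullet$.

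The second step is to identify the graded base change $\overline{M}_\bullet = (\gr^\bullet A')_0 \otimes_{\gr^\bullet A'}\gr^\bullet M' = A\otimes_{\gr^\bullet A'}\gr^\bullet M'$ appearing in that lemma with $\gr^\bullet M$, where $\Fil^\bullet M = A\otimes_{\Fil^\bullet A'}\Fil^\bullet M'$ with $A$ carrying the trivial filtration. This is where the symmetric monoidality of $\gr^\bullet$ does the real work: since $\gr^\bullet$ is symmetric monoidal and $A$ with the trivial filtration goes to $A$ concentrated in degree $0$, we get a canonical equivalence
\[
\gr^\bullet(A\otimes_{\Fil^\bullet A'}\Fil^\bullet M')\simeq A\otimes_{\gr^\bullet A'}\gr^\bullet M',
\]
i.e. $\gr^\bullet M \simeq \overline{M}_\bullet$, compatibly with the graded module structures over $\gr^\bullet A'$. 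Substituting this identification into the conclusion of Lemma~\ref{lem:graded_weight_filtration} gives the filtration $\Fil^\bullet_{\mathrm{wt}}$ on $\gr^\bullet M'$ with associated graded pieces $\gr^\bullet A'\otimes_A\gr^i M\{-i\}$, which is exactly the asserted statement.

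For the final sentence, I would specialize: if $\gr^{a-1}M' \simeq 0$ then $M_\bullet$ is supported in degrees $\geq a$, so in the weight filtration from Lemma~\ref{lem:graded_weight_filtration} the only potentially nonzero graded piece in top degree $a$ comes from $i=a$, namely $\gr^\bullet A'\otimes_A \gr^a M\{-a\}$; taking the degree-$a$ component of this (and using that $\gr^\bullet A'$ is supported in degrees $\geq 0$ with degree-$0$ part $A$) collapses everything to $\gr^a M' \simeq \gr^a M$. I do not expect any genuine obstacle here; the proof is essentially a bookkeeping exercise applying an already-established lemma, and the only point requiring a little care is making the identification $\overline{M}_\bullet \simeq \gr^\bullet M$ precise and functorial, which is immediate from the symmetric monoidality of the associated-graded functor together with the fact that an object with the trivial filtration has associated graded concentrated in degree $0$.
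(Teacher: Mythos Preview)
Your proposal is correct and matches the paper's approach exactly: the paper introduces this lemma with the words ``As an immediate consequence, we obtain:'' right after Lemma~\ref{lem:graded_weight_filtration} and gives no further argument, so you have supplied precisely the intended details, including the key identification $\gr^\bullet M\simeq \overline{M}_\bullet$ via the symmetric monoidality of $\gr^\bullet$. One minor wording point: your reading of the final clause (that $a$ is the lower bound of the support of $\gr^\bullet M'$, so in particular $\gr^{a-1}M'\simeq 0$) is the intended one and is how the lemma is applied throughout the paper, even though the literal hypothesis as stated only mentions the single degree $a-1$.
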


\subsection{}\label{subsec:internal_hom_filt}
Let $\Fil^\bullet A$ be a filtered animated commutative ring.\footnote{By this, we mean that $\gr^iA \simeq 0$ for $i<0$.} For $\Fil^\bullet M, \Fil^\bullet N$ in $\mathrm{FilMod}_{\Fil^\bullet A}$ we have an \defnword{internal Hom} object $\Fil^\bullet\underline{\Map}(\Fil^\bullet M,\Fil^\bullet N)\in \mathrm{FilMod}_{\Fil^\bullet A}$ characterized by the fact there is a natural equivalence of contravariant functors
\[
\Map_{\Fil^\bullet A}(\Fil^\bullet M\otimes_{\Fil^\bullet A}\_\_,\Fil^\bullet N)\simeq \Map_{\Fil^\bullet A}(\_\_,\Fil^\bullet\underline{\Map}(\Fil^\bullet M,\Fil^\bullet N))
\]
out of $\mathrm{FilMod}_{\Fil^\bullet A}$.

We set $\Fil^\bullet M^\vee = \underline{\Map}(\Fil^\bullet M,\Fil^\bullet A)$.

Completely analogously, if $A_\bullet$ is a graded animated commutative ring, we have, for every $M_\bullet,N_\bullet\in \mathrm{GrMod}_{A_\bullet}$ an internal Hom object $\underline{\Map}(M_\bullet,N_\bullet)_\bullet\in \mathrm{GrMod}_{A_\bullet}$ and we set $M^\vee_\bullet = \underline{\Map}(M_\bullet,A_\bullet)$.

\begin{lemma}
\label{lem:fil_int_hom}
Suppose that $\Fil^\bullet A'$ is a non-negatively filtered animated commutative ring, and that we have $\Fil^\bullet M'$ and $\Fil^\bullet N'$ in $\mathrm{FilMod}_{\Fil^\bullet A'}$.
\begin{enumerate}
  \item For any $i\in \Int$, we have
  \[
\Fil^i\underline{\Map}(\Fil^\bullet M',\Fil^\bullet N') \simeq \underline{\Map}_{\Fil^\bullet A'}(\Fil^\bullet M'\{-i\},\Fil^\bullet N')
  \]
  and the map
  \[
  \Fil^{i+1}\underline{\Map}(\Fil^\bullet M',\Fil^\bullet N')\to \Fil^{i}\underline{\Map}(\Fil^\bullet M',\Fil^\bullet N') 
  \] 
  corresponds under this equivalence to restriction along the natural map
  \[
    \Fil^\bullet M'\{-i\} \to \Fil^\bullet M'\{-(i+1)\}.
  \]

  \item There are canonical equivalences
  \[
   \gr^i\underline{\Map}(\Fil^\bullet M',\Fil^\bullet N')\simeq \underline{\Map}_{\Fil^\bullet A'}(\gr^\bullet M'\{-(i+1)\}[-1],\Fil^\bullet N')\xleftarrow{\simeq} \underline{\Map}_{\gr^\bullet A'}(\gr^\bullet M'\{-i\},\gr^\bullet N').
  \]

  \item Suppose that $\Fil^\bullet A'$ and $\Fil^\bullet M'$ satisfy the conditions in Lemma~\ref{lem:associated_graded}, and suppose in addition that there exist integers $a,b,c\in \Int$ such that $\gr^i M\simeq 0$ for $i\notin [a,b]$, and that $\gr^iN'\simeq 0$ for $i<c$. Suppose also that $\gr^iM$ is perfect for every $i$. Then
  \[
    \gr^i\underline{\Map}(\Fil^\bullet M',\Fil^\bullet N') \simeq 0
  \]
  for $i<c-b$.

  \item Let the hypotheses be as in (3). Then the natural map
  \[
   \Fil^\bullet M^{',\vee}\otimes_{\Fil^\bullet A}\Fil^\bullet N'\to \Fil^\bullet\underline{\Map}(\Fil^\bullet M',\Fil^\bullet N')
  \]
  is an equivalence.

  \item Let the hypotheses be as in (3). for any map $\Fil^\bullet A'\to \Fil^\bullet B'$ of filtered animated commutative rings, the natural map
  \[
\Fil^\bullet B'\otimes_{\Fil^\bullet A'}\Fil^\bullet\underline{\Map}(\Fil^\bullet M',\Fil^\bullet N')\to \Fil^\bullet\underline{\Map}(\Fil^\bullet B'\otimes_{\Fil^\bullet A'}\Fil^\bullet M',\Fil^\bullet B'\otimes_{\Fil^\bullet A'}\Fil^\bullet N')
  \]
  is an equivalence.
\end{enumerate}
\end{lemma}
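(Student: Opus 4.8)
The plan is to bootstrap from the adjunction defining the internal Hom $\Fil^\bullet\underline{\Map}(\Fil^\bullet M',\Fil^\bullet N')$ and from the dualizability theory for filtered modules developed earlier in this appendix. For part (1), the starting point is that for each $i$ the level-$i$ functor $\Fil^i$ on $\mathrm{FilMod}_{\Fil^\bullet A'}$ is corepresented by $\Fil^\bullet A'\{-i\}$: a map $\Fil^\bullet A'\{-i\}\to \Fil^\bullet X$ of filtered $\Fil^\bullet A'$-modules is determined by the image of the unit in filtration degree $i$, since $\Fil^{k}(\Fil^\bullet A'\{-i\}) = \Fil^{k-i}A'$ is generated over $A'$ by that unit. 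Substituting $\Fil^\bullet P = \Fil^\bullet A'\{-i\}$ into
\[
\underline{\Map}_{\Fil^\bullet A'}\bigl(\Fil^\bullet P\otimes_{\Fil^\bullet A'}\Fil^\bullet M',\Fil^\bullet N'\bigr)\simeq \underline{\Map}_{\Fil^\bullet A'}\bigl(\Fil^\bullet P,\Fil^\bullet\underline{\Map}(\Fil^\bullet M',\Fil^\bullet N')\bigr)
\]
and using $\Fil^\bullet A'\{-i\}\otimes_{\Fil^\bullet A'}\Fil^\bullet M'\simeq \Fil^\bullet M'\{-i\}$ yields the first half of (1); the claim about the transition maps is the naturality of this identification in $i$.

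For part (2), I would apply the (stable, hence exact) functor $\underline{\Map}_{\Fil^\bullet A'}(-,\Fil^\bullet N')$ to the cofiber sequence $\Fil^\bullet M'\{-i\}\to \Fil^\bullet M'\{-(i+1)\}\to C$; by (1) the first two terms produce $\Fil^{i}$ and $\Fil^{i+1}$ of $\underline{\Map}(\Fil^\bullet M',\Fil^\bullet N')$, so $\gr^i\underline{\Map}(\Fil^\bullet M',\Fil^\bullet N')$ is identified with $\underline{\Map}_{\Fil^\bullet A'}(C[-1],\Fil^\bullet N')$. One then observes that all transition maps of $C$ vanish — the image of $\Fil^{k-i}M'$ in $\gr^{k-i-1}M'$ is zero — so that $C[-1]$ is the filtered module attached to the graded module $\gr^\bullet M'\{-(i+1)\}[-1]$ with zero transitions, giving the first equivalence; the second equivalence of (2) is then the standard relation between mapping out of such a purely graded filtered module and the graded internal Hom over $\gr^\bullet A'$, after the usual reindexing that exchanges a filtration shift $\{-1\}$ and a homological shift $[-1]$. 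Part (3) falls out of this: via (2) it suffices to bound $\underline{\Map}_{\gr^\bullet A'}(\gr^\bullet M'\{-i\},\gr^\bullet N')$, and since a map of graded modules is degree preserving while $\gr^j M'\simeq 0$ for $j<a$ and $\gr^j N'\simeq 0$ for $j<c$ — the perfectness of the $\gr^j M$ ensuring the relevant internal Hom over $\gr^\bullet A'$ is a finite construction — this vanishes for $i<c-b$.

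The substance of the lemma is part (4): the canonical comparison map $\Fil^\bullet M^{',\vee}\otimes_{\Fil^\bullet A'}\Fil^\bullet N'\to \Fil^\bullet\underline{\Map}(\Fil^\bullet M',\Fil^\bullet N')$ is an equivalence precisely when $\Fil^\bullet M'$ is a dualizable object of $\mathrm{FilMod}_{\Fil^\bullet A'}$, and this dualizability is where I would invoke the criterion established earlier in this appendix, namely that a suitably bounded filtered $\Fil^\bullet A'$-module is dualizable as soon as its associated graded is dualizable over $\gr^\bullet A'$. By Lemma~\ref{lem:associated_graded}, $\gr^\bullet M'$ carries a finite filtration whose graded pieces are $\gr^\bullet A'\otimes_A\gr^i M\{-i\}$ for $i$ ranging over the finite set where $\gr^i M\not\simeq 0$; each of these is the free graded $\gr^\bullet A'$-module on the perfect $A$-module $\gr^i M$ placed in degree $i$, hence perfect — in particular dualizable — over $\gr^\bullet A'$, and a finite extension of dualizable objects is dualizable, so $\gr^\bullet M'$ and therefore $\Fil^\bullet M'$ is dualizable. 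Granting (4), part (5) is formal: the formation of the dual of a dualizable object commutes with any symmetric monoidal functor, in particular the base-change functor $\Fil^\bullet B'\otimes_{\Fil^\bullet A'}(-)$, while $\otimes$ visibly commutes with base change, so rewriting the left side of (5) through (4) and applying (4) once more over $\Fil^\bullet B'$ (the hypotheses being stable under base change) gives the asserted equivalence. The points requiring the most care are the bookkeeping of shifts in (2) and checking that the boundedness hypothesis in the cited dualizability criterion is met, which it is since the $\gr^i M$ are nonzero for only finitely many $i$.
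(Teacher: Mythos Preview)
Your approach to (1), (2), and (5) matches the paper's closely. For the second equivalence in (2) you call it ``standard''; the paper actually constructs the comparison map from the cofiber sequence $\Fil^\bullet N'\to \Fil^\bullet N'\{-1\}\to \gr^\bullet N'\{-1\}$ and checks it is an equivalence by reducing to $P'_\bullet=\gr^\bullet A'$, but your sketch is in the same spirit.

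There are two genuine gaps. In (3), you assert vanishing for $i<c-b$ from ``$\gr^j M'\simeq 0$ for $j<a$'' and ``maps of graded modules are degree preserving'', but that information alone does not produce the upper bound $b$: the module $\gr^\bullet M'$ is in general supported in \emph{unbounded} degrees above (it is a $\gr^\bullet A'$-module). What matters is that $\gr^\bullet M'$ is \emph{generated} as a $\gr^\bullet A'$-module in degrees $\leq b$, and this is exactly what the weight filtration of Lemma~\ref{lem:associated_graded} supplies. The paper makes this explicit: it computes each weight-graded piece as $(\gr^k M)^\vee\otimes_A\gr^{i+k}N'$, which vanishes unless $k\leq b$ and $i+k\geq c$.

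In (4), you invoke ``the criterion established earlier in this appendix'' that a bounded filtered module is dualizable once its associated graded is. No such criterion is proved in the paper prior to this lemma; in fact, part (4) \emph{is} that criterion. The paper's argument is instead a direct d\'evissage on $\Fil^\bullet M'$ itself: under the hypotheses of (3), such modules are obtained from the unit $\Fil^\bullet A'$ by finite colimits and retracts, so one reduces to $\Fil^\bullet M'\simeq\Fil^\bullet A'$ where the claim is trivial. Your detour through dualizability of $\gr^\bullet M'$ is correct as far as it goes, but the step back up to $\Fil^\bullet M'$ is precisely the content you still owe; supplying it amounts to carrying out the same d\'evissage the paper does, only lifted to the filtered level.
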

\begin{proof}
(1) follows from the fact that, for any $\Fil^\bullet P'$ in $\mathrm{FilMod}_{A'}$, we have
\[
\underline{\Map}_{\Fil^\bullet A'}(\Fil^\bullet A\{-i\},\Fil^\bullet P') \simeq \Fil^iP',
\]
and the first equivalence in (2) is a consequence of (1) and the fiber sequence
\[
\Fil^\bullet M'\{-i\}\to \Fil^\bullet M'\{-(i+1)\}\to \gr^\bullet M'\{-(i+1)\}.
\]

For the second equivalence in (2), we first use the fiber sequence
\[
\Fil^\bullet N'\to \Fil^\bullet N'\{-1\}\to \gr^\bullet N'\{-1\}
\]
to obtain, for any $P'_\bullet\in \mathrm{GrMod}_{\gr^\bullet A'}$, a canonical map
\begin{align}\label{eqn:can_arrow_int_hom_assoc_graded}
\underline{\Map}_{\gr^\bullet A'}(P'_\bullet,\gr^\bullet N')\xrightarrow{\simeq}\underline{\Map}_{\gr^\bullet A'}(P'_\bullet\{-1\},\gr^\bullet N'\{-1\})\to \underline{\Map}_{\Fil^\bullet A'}(P'_\bullet\{-1\}[-1],\Fil^\bullet N').
\end{align}
To show that this is an equivalence, it suffices to consider the case $P'_\bullet = \gr^\bullet A'$, in which case both sides are naturally equivalent to $\gr^0N'$.

For (3), we have to show 
\[
\gr^i\underline{\Map}(\Fil^\bullet M',\Fil^\bullet N')\simeq \underline{\Map}_{\gr^\bullet A'}(\gr^\bullet M'\{-i\},\gr^\bullet N')\simeq 0
\]
for $i<c-b$. For this, we will use the filtration $\Fil^\bullet_{\mathrm{wt}}\gr^\bullet M'$ from Lemma~\ref{lem:associated_graded}, which is a \emph{finite} filtration by our hypotheses. It is now enough to show that
\begin{align*}
\underline{\Map}_{\gr^\bullet A'}(\gr^\bullet A'\otimes_A\gr^k M\{-(i+k)\},\gr^\bullet N')&\simeq (\gr^kM)^\vee\otimes_{A}\underline{\Map}_{\gr^\bullet A'}(\gr^\bullet A'\{-(i+k)\},\gr^\bullet N')\\
&\simeq (\gr^kM)^\vee\otimes_{A}\gr^{i+k}N'
\end{align*}
is zero for $i<c-b$. Indeed, for this last object to be non-zero, we need both $k\leq b$ and $i+k\geq c$.

For (4), the natural arrow is obtained via adjunction from the arrow
\[
\Fil^\bullet M\otimes_{\Fil^\bullet A'}\Fil^\bullet M^{',\vee}\otimes_{\Fil^\bullet A'}\Fil^\bullet N'\to \Fil^\bullet N'
\]
that arises in turn from the natural arrow
\[
\Fil^\bullet M\otimes_{\Fil^\bullet A'}\Fil^\bullet M^{',\vee}\to \Fil^\bullet A'
\]
corresponding via adjunction to the identity on $\Fil^\bullet M^{',\vee}$.

To see that the natural arrow is an equivalence in the situation of (3), one first sees that the objects $\Fil^\bullet M'$ satisfying the conditions in (3) are all obtained via the operations of finite colimits and their retracts, beginning with the unit object $\Fil^\bullet A'$. Therefore, it is enough to show that the arrow in (4) is an equivalence when $\Fil^\bullet M' \simeq \Fil^\bullet A'$ where it is clear.

For (5), we now only have to show that the natural map
\[
\Fil^\bullet B'\otimes_{\Fil^\bullet A'}\Fil^\bullet M^{',\vee}\to \Fil^\bullet(\Fil^\bullet B'\otimes_{\Fil^\bullet A'}M')^\vee
\]
is an equivalence. This follows because $\Fil^\bullet M^{',\vee}$ is the dual of the dualizable object $\Fil^\bullet M'$ (as is easily checked using (4)), and because any symmetric monoidal functor preserves dualizable objects and their duals.
\end{proof}

\section{Module objects}\label{sec:modules}

\subsection{}\label{subsec:module_objects}
Let $D$ be a discrete (not necessarily commutative) ring, and write $\mathrm{Latt}_D$ for the category of finite free $D$-modules. In a mild generalization of~\cite[\S 1.2]{lurie:spec_AB}, given an $\infty$-category $\mathcal{C}$ that admits finite products, we define a \defnword{$D$-module object of }$\mathcal{C}$ to be a functor
\[
F:\mathrm{Latt}_D^{\op}\to \mathcal{C}
\]
that preserves finite products. If $F(D) = M$, then we will say that $M$ \defnword{underlies} a $D$-module object of $\mathcal{C}$, though we might sometimes abuse notation and conflate $M$ with the $D$-module object itself.

These objects are organized into an $\infty$-category $\mathrm{Mod}_D(\mathcal{C})$. If $D = \Int$, we will also call them \defnword{abelian group objects} in $\mathcal{C}$, and write $\mathrm{Ab}(\mathcal{C})$ for the corresponding $\infty$-category.

\subsection{}\label{subsec:module_objects_sets}
When $\mathcal{C} = \mathrm{Set}$ is the ordinary category of sets, this is just the category of $D$-modules. The case of $D = \Int$ is~\cite[Proposition 1.2.7]{lurie:spec_AB}, and the general case is similar: Given a $D$-module $M$, we obtain the functor $\Hom_D(\_,M)$ on $\mathrm{Latt}_D^{\op}$. Conversely, given a functor $F$, we set $M = F(D)$; applying $F$ to the diagonal $D\xrightarrow{\Delta}D^2$ gives us the addition on $M$; applying it to the map $D\to 0$ gives us the zero element; applying it to $D\xrightarrow{x\mapsto xd}D$ for $d\in D$ gives us scalar multiplication by $d$. One now checks that this gives the inverse functor, and establishes the claimed equivalence.

In general, if $\mathcal{C}$ is an ordinary category, then a similar argument shows that a $D$-module object in $\mathcal{C}$ is an object $M\in \mathcal{C}$ along with a lift of its representable functor $h_M$ to one valued in the discrete category of $D$-modules; see~\cite[Example 1.2.12]{lurie:spec_AB}.

\subsection{}\label{subsec:module_objects_spaces}
When $\mathcal{C} = \mathrm{Spc}$, the argument in~\cite[Example 1.2.9]{lurie:spec_AB} shows that the $\infty$-category of $D$-module objects in $\mathrm{Spc}$ can be identified with the animation of the category of $D$-modules, and is thus the $\infty$-category $\Mod[\mathrm{cn}]{D}$ underlying the simplicial category of simplicial $D$-modules.

For any $\infty$-category, we can identify (see~\cite[(1.2.11)]{lurie:spec_AB})
 \[
    \mathrm{Mod}_D(\mathrm{PSh}(\mathcal{C})) \simeq \mathrm{Fun}(\mathcal{C}^{\op},\mathrm{Mod}_D(\mathrm{Spc})) \simeq \mathrm{Fun}(\mathcal{C}^{\op},\Mod[\mathrm{cn}]{D}).
  \]
That is we can identify $D$-module objects in $\mathrm{PSh}(\mathcal{C})$ with presheaves on $\mathcal{C}$ valued in $\Mod[\mathrm{cn}]{D}$.

\begin{remark}\label{rem:module_objects_additive}
If $\mathcal{C}$ is a fully faithful subcategory of $\Mod{R}$ for some $R\in \mathrm{CRing}$ (or more generally a stable $\infty$-category with the structure of a $\Mod{\Int}$-module), then any object $M$ of $\mathcal{C}$ underlies a canonical abelian group object of $\mathcal{C}$: this is just a rephrasing of the fact that, for any other object $N$, the space $\Map_{\mathcal{C}}(N,M)$ underlies an object in $\Mod[\mathrm{cn}]{\Int}$.
\end{remark}

\subsection{}\label{subsec:module_objects_hom_spaces}
Suppose that we have $F$ in $\Mod{D}(\mathcal{C})$ and $G$ in $\mathcal{C}$; then the functor
\begin{align*}
\mathrm{Latt}^{\mathrm{op}}_D &\to \mathrm{Spc}\\
\Lambda&\mapsto \Map_{\mathcal{C}}(G,F(\Lambda))
\end{align*}
shows that $\Map_{\mathcal{C}}(G,F(D))$ underlies a canonical $D$-module object of $\mathrm{Spc}$; that is, it has a lift to $\Mod[\mathrm{cn}]{D}$.

\section{Square zero thickenings and cotangent complexes}\label{sec:square_zero}

\subsection{}\label{subsec:square_zero}
One of the many aspects of algebraic geometry that is clarified by the derived perspective is that of square-zero thickenings of commutative rings. Suppose that we have $R\in \mathrm{CRing}$ and $C\in \mathrm{CRing}_{R/}$; given $M\in \mathrm{Mod}^{\mathrm{cn}}_R$, we obtain a new animated ring $C\oplus M\in \mathrm{CRing}_C$ equipped with a section $C\oplus M\to C$ in $\mathrm{CRing}_C$. Concretely, if $C$ is represented by a simplicial commutative ring (again denoted $C$) and $M$ by a simplicial module over $C$ (again denoted $M$), then $C\oplus M$ is represented by the direct sum with multiplication given by the usual formula: 
\[
(c_1,m_1)\cdot (c_2,m_2) = (c_1c_2,c_1m_2+c_2m_1).
\]

The space $\mathrm{Der}_R(C,M)$  of $R$-\defnword{derivations} of $C$ with values in $M$ is the fiber over the identity of the natural map
\[
\mathrm{Map}_{\mathrm{CRing}_{R/}}(C,C\oplus M)\to \mathrm{Map}_{\mathrm{CRing}_{R/}}(C,C)
\]
In particular, the tautological map $C\to C\oplus M$ corresponds to a derivation $d_{\mathrm{triv}}:C\to C\oplus M$.

The theory of the cotangent complex (see~\cite[\S 1.4]{TV2},~\cite[Ch. 7]{Lurie2017-oh}) now gives us $\mathbb{L}_{C/R}\in \mathrm{Mod}^{\mathrm{cn}}_C$ equipped with a canonical derivation $d_{\mathrm{univ}}:C\to C\oplus \mathbb{L}_{C/R}$ such that, for any $M\in \mathrm{Mod}_C^{\mathrm{cn}}$, composition with $d_{\mathrm{univ}}$ induces an equivalence:
\[
\mathrm{Map}_{\mathrm{Mod}_C}(\mathbb{L}_{C/R},M)\xrightarrow{\simeq}\mathrm{Der}_R(C,M).
\]
When $R$ is discrete and $C$ is a smooth $R$-algebra, then we can identify $\mathbb{L}_{C/R}$ with the module of differentials $\Omega^1_{C/R}$. In general, $\mathbb{L}_{C/R}$---or more precisely, the pair $(C,\mathbb{L}_{C/R})$---is obtained by animating the functor
\[
\mathrm{Poly}^{\mathrm{op}}_R\xrightarrow{C\mapsto (C,\Omega^1_{C/R})}\mathrm{Mod}^{\mathrm{cn}}
\]
on finitely generated polynomial $R$-algebras.

If we have arrows $R\to C\to C'$, then we have a canonical cofiber sequence
\[
C'\otimes_C\mathbb{L}_{C/R}\to \mathbb{L}_{C'/R}\to \mathbb{L}_{C'/C}
\]
in $\mathrm{Mod}_{C'}^{\mathrm{cn}}$; see~\cite[(25.3.2.5)]{Lurie2018-kh}. Also, if $R\to R'$ is any map, we have a canonical equivalence
\[
R'\otimes_R\mathbb{L}_{C/R}\xrightarrow{\simeq}\mathbb{L}_{R'\otimes_RC/R'}.
\]

All square zero thickenings of $C$ of interest to us can now be obtained as follows: For any $M\in \mathrm{Mod}_C^{\mathrm{cn}}$, and any map $\eta:\mathbb{L}_{C/R}\to M[1]$, we obtain a map $C_{\eta}\to C$ that is a square zero thickening with kernel $M$: this means that it is a surjection on $\pi_0$, and that its homotopy fiber is in the image of $\mathrm{Mod}_C^{\mathrm{cn}}$ and is in fact equivalent to $M$.

The construction of $C_{\eta}$ is quite simple: It sits in a homotopy pullback diagram
\[
\begin{diagram}
C_{\eta}&\rTo&C\\
\dTo&&\dTo_{d_{\eta}}\\
C&\rTo_{d_{\mathrm{triv}}}&C\oplus M[1]
\end{diagram}
\]
where $d_{\eta}$ is the derivation associated with $\eta$.

This applies in particular to the Postnikov tower $\{\tau_{\leq n}R\}_{n\in \Int_{\geq 0}}$ associated with any $C\in \mathrm{CRing}_{R/}$: For any $n\geq 1$, there exists a derivation $\eta_{n,C}:\mathbb{L}_{\tau_{\leq n}C/R}\to \pi_{n+1}(C)[n+2]$ such that we have a Cartesian diagram
\[
\begin{diagram}
\tau_{\leq n+1}C&\rTo&\tau_{\leq n}C\\
\dTo&&\dTo_{d_{\eta_{n,C}}}\\
\tau_{\leq n}C&\rTo_{d_{\mathrm{triv}}}&C\oplus \pi_{n+1}(C)[n+2].
\end{diagram}
\]



\section{Lurie's representability theorem and derived mapping stacks}\label{sec:lurie}

Suppose that $\mathcal{C}$ is an $\infty$-category admitting all finite and sequential limits, totalizations of cosimplicial objects, as well as filtered colimits. A $\mathcal{C}$-\defnword{valued prestack over} $R\in \mathrm{CRing}$ is a functor
\[
F:\mathrm{CRing}_{R/}\to \mathcal{C}.
\]

When $\mathcal{C} = \mathrm{Spc}$, we will write $\mathrm{PStk}_R$ for the $\infty$-category of $\mathrm{Spc}$-valued prestacks over $R$: these can be seen as presheaves of spaces on $\mathrm{Aff}_R = \mathrm{CRing}_{R/}^{\op}$, the $\infty$-category of \defnword{affine derived schemes} over $R$. When $R = \Int$, we will simply write $\mathrm{PStk}$ for this $\infty$-category.

Fix a $\mathcal{C}$-valued prestack $F$ over $R$. The following definitions are taken from~\cite{lurie_thesis}, except that we are allowing more general target categories.

\begin{definition}
\label{defn:almost_fp}
$F$ is \defnword{locally of finite presentation} if for every filtered system $\{C_i\}_{i\in I}$ in $\mathrm{CRing}_{R/}$ with colimit $C\in \mathrm{CRing}_{R/}$, the natural map
the natural map
\[
\colim_{i\in I}F(C_i)\to F(C)
\]
is an equivalence.

$F$ is \defnword{almost locally of finite presentation} if for every $k\in \Int_{\ge 0}$, and for every filtered system $\{C_i\}_{i\in I}$ in $\mathrm{CRing}_{\leq k,R/}$ with colimit $C\in \mathrm{CRing}_{\leq k,R/}$, the natural map
\[
\colim_{i\in I}F(C_i)\to F(C)
\]
is an equivalence.
\end{definition}

\begin{definition}\label{defn:nilcomplete}
The prestack $F$ is \defnword{nilcomplete} if for every $S\in \mathrm{CRing}_{R/}$, the natural arrow
\[
F(S)\to \varprojlim_n F(\tau_{\leq n}S)
\]
is an equivalence.
\end{definition}

\begin{definition}\label{defn:inf_cohesive}
The prestack $F$ is \defnword{infinitesimally cohesive} if, for every Cartesian diagram
\[
\begin{diagram}
A&\rTo &A'\\
\dTo&&\dTo\\
B&\rTo&B'
\end{diagram}
\]
in $\mathrm{CRing}_{R/}$ where all the arrows are square-zero thickenings as in~\eqref{subsec:square_zero}, the diagram
\begin{align}\label{eqn:inf_coh_diag}
\begin{diagram}
F(A)&\rTo&F(A')\\
\dTo&&\dTo\\
F(B)&\rTo&F(B')
\end{diagram}
\end{align}
in $\mathcal{C}$ is once again Cartesian.
\end{definition}

\begin{definition}\label{defn:integrable}
The prestack $F$ is \defnword{integrable} if for every discrete complete local Noetherian ring $C\in \mathrm{CRing}_{\heartsuit,R/}$ with maximal ideal $\mathfrak{m}\subset C$ the map
\[
F(C)\to \varprojlim_n F(C/\mathfrak{m}^n)
\]
is an equivalence.
\end{definition}

\subsection{}\label{subsec:cotangent_complex}
For any prestack $F\in \mathrm{PStk}_{R}$, we have a stable $\infty$-category $\mathrm{QCoh}(F)$ of \defnword{quasi-coherent sheaves on $F$}. The precise definition can be found in~\cite[\S 6.2.2]{Lurie2018-kh}: roughly speaking, it is obtained by right Kan extension of the contravariant functor sending $S\in \mathrm{CRing}_{R/}$ to $\Mod{S}$. One can think of an object $\mathcal{M}$ in $\mathrm{QCoh}(F)$ as a way of assigning to every point $x\in F(S)$ an object $\mathcal{M}_x\in \mathrm{Mod}_S$ compatible with base-change.

We will say that $\mathcal{M}$ is \defnword{connective} if $\mathcal{M}_x$ belongs to $\mathrm{Mod}^{\mathrm{cn}}_S$ for each $x\in F(S)$ as above. We will say that it is \defnword{almost connective} if, for every $x\in F(S)$, there exists $n\in \Int_{\geq 0}$ such that $\mathcal{M}_x[n]$ is connective. We will say that it is \defnword{perfect} if, for every $x\in F(S)$, $\mathcal{M}_x$ is perfect.

Write $\mathrm{QCoh}^{\mathrm{cn}}_X$ (resp. $\mathrm{QCoh}^{\mathrm{acn}}_X$) for the  $\infty$-category spanned by the connective (resp. almost connective) objects in $\mathrm{QCoh}_F$.

\begin{definition}
\label{defn:cotangent_complex}
Following~\cite[\S 17.2.4]{Lurie2018-kh}, we will say that a morphism $f:F\to G$ in $\mathrm{PStk}_{R}$ \defnword{admits a cotangent complex} if there exists $\mathbb{L}_{F/G}\in \mathrm{QCoh}^{\mathrm{acn}}_X$ such that, for every $C\in \mathrm{CRing}_{R/}$, every $M\in \mathrm{Mod}^{\mathrm{cn}}_R$, and every $x\in F(C)$, we have a canonical equivalence
\[
\mathrm{Map}_{\mathrm{Mod}_C}(\mathbb{L}_{F/G,x},M)\xrightarrow{\simeq}\mathrm{fib}_{(f(x)[M],x)}(F(C\oplus M)\to G(C\oplus M)\times_{G(C)}F(C)).
\]
Here, $f(x)[M]\in G(C\oplus M)$ is the image of $f(x)$ along the natural section $G(C)\to G(C\oplus M)$.

If $F = \Spec C$ and $G = \Spec D$, then by Yoneda, any morphism $f:F\to G$ corresponds to an arrow $D\to C$ in $\mathrm{CRing}_{R/}$, and $f$ admits a cotangent complex, namely $\mathbb{L}_{C/D}$.
\end{definition}

\begin{definition}\label{defn:differential_conditions}
An object $C\in \mathrm{CRing}_{R/}$ is \defnword{finitely presented} (over $R$) if the functor $S\mapsto \mathrm{Map}_{\mathrm{CRing}_{R/}}(C,S)$ respects filtered colimits. For any such finitely presented $C$, the cotangent complex $\mathbb{L}_{C/R}\in \mathrm{Mod}_C^{\mathrm{cn}}$ is perfect; see~\cite[17.4.3.18]{Lurie2017-oh}.

If in addition $\mathbb{L}_{C/R}$ is $1$-connective, we say that $C$ is \defnword{unramified} over $R$; if $\mathbb{L}_{C/R}\simeq 0$, we say that $C$ is \defnword{\'etale} over $R$.

We say that a finitely presented $C\in \mathrm{CRing}_{R/}$ is \defnword{smooth} over $R$ if $\mathbb{L}_{C/R}\in \mathrm{Mod}^{\mathrm{cn}}_C$ is locally free of finite rank.
\end{definition}

\begin{definition}
\label{defn:etale_sheaf}
We will say that $F$ is an \defnword{\'etale sheaf} if for every \'etale map $S\to T$\footnote{This just means that $T$ is flat over $S$ with $\pi_0(T)$ \'etale over $\pi_0(S)$} in $\mathrm{CRing}_{R/}$ the map
\[
F(S)\to \mathrm{Tot}\left(F(\check{C}^{(\bullet)}(T/S))\right)
\]
is an equivalence. Here, $\check{C}^{\bullet}(T/S)$ is the \u{C}ech conerve of $T$ in $\mathrm{CRing}_{S/}$, so that $\check{C}^{(n)}(T/S)\simeq T^{\otimes_S n}$.
\end{definition}






\begin{definition}
An animated commutative ring $R$ is a \defnword{derived $G$-ring} if $\pi_0(R)$ is a $G$-ring in the sense of~\cite[\href{https://stacks.math.columbia.edu/tag/07GH}{Tag 07GH}]{stacks-project}.
\end{definition}

\begin{definition}
The \defnword{classical truncation} $F^{\mathrm{cl}}$ of the prestack $F$ is its restriction to $\mathrm{CRing}_{\heartsuit,R/}$
\end{definition}

We finally have (almost) all the terminology needed to state the following result, which is (a special case of)~\cite[Theorem 7.1.6]{lurie_thesis}.
\begin{theorem}[Lurie representability]\label{thm:lurie_representability}
Suppose that a prestack $F$ over a derived $G$-ring $R\in \mathrm{CRing}$ has the following properties:
\begin{enumerate}
  \item $F$ is almost locally of finite presentation.
  \item $F$ is an \'etale sheaf.
  \item $F$ is integrable
  \item $F$ is infinitesimally cohesive.
  \item $F$ is nilcomplete.
  \item $F$ admits a cotangent complex over $\Spec R$.
  \item $F^{\mathrm{cl}}$ takes values in $1$-truncated (resp $0$-truncated or discrete) objects.
\end{enumerate}
Then $F$ is represented by a derived Deligne-Mumford stack (resp. derived algebraic space) over $R$.
\end{theorem}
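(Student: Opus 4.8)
The plan is to deduce this from Lurie's general representability criterion \cite[Theorem 7.1.6]{lurie_thesis} (together with the Artin-type representability theorems in that reference), whose axioms are essentially the conditions listed here: conditions (1)--(6) appear among the hypotheses of \emph{loc. cit.}, while condition (7) controls whether the conclusion is a Deligne--Mumford stack or an algebraic space. At the level of bookkeeping the proof is therefore just a matter of matching hypotheses against those of the cited theorem; but for the reader who wants to see why such a statement holds, here is the shape of the argument.

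First one reduces to the classical truncation. By nilcompleteness (condition (5)) one has $F(S)\xrightarrow{\simeq}\varprojlim_n F(\tau_{\leq n}S)$ for every $S\in\mathrm{CRing}_{R/}$, so it suffices to produce $F$ as the limit of the Postnikov tower $\{\tau_{\leq n}F\}$ of restrictions to $\mathrm{CRing}_{\leq n,R/}$. The base case $F^{\mathrm{cl}}=\tau_{\leq 0}F$ is handled by classical Artin representability: restricted to discrete rings, the hypotheses say that $F^{\mathrm{cl}}$ is an \'etale sheaf (condition (2)), locally of finite presentation (condition (1)), effective on complete local Noetherian rings (condition (3)), and admits a deformation--obstruction theory — this last being extracted from the cotangent complex $\mathbb{L}_{F/R}$ together with infinitesimal cohesiveness (condition (4)), which supplies the Schlessinger-type gluing of first-order deformations. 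Because $R$ is a derived $G$-ring, so that $\pi_0(R)$ is a $G$-ring in the sense of \cite[\href{https://stacks.math.columbia.edu/tag/07GH}{Tag 07GH}]{stacks-project}, one may invoke Artin approximation, and conclude that $F^{\mathrm{cl}}$ is represented by a classical Deligne--Mumford stack (resp. algebraic space, by condition (7)) over $\pi_0(R)$.

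Second one lifts an atlas up the tower. Choose an \'etale atlas $\Spec A_0\to F^{\mathrm{cl}}$ with $A_0$ discrete. By infinitesimal cohesiveness and the existence of the cotangent complex, the passage from $\tau_{\leq n}F$ to $\tau_{\leq n+1}F$ is controlled by square-zero extensions classified by $\pi_{n+1}$-valued derivations; since an \'etale morphism has vanishing relative cotangent complex, any \'etale chart of $\tau_{\leq n}F$ lifts \emph{uniquely} to an \'etale chart of $\tau_{\leq n+1}F$, with neither obstruction nor ambiguity. Passing to the limit over $n$ and invoking nilcompleteness once more yields a derived affine scheme $\Spec A$ with an \'etale surjection $\Spec A\to F$, and the \'etale equivalence relation $\Spec A\times_F\Spec A\rightrightarrows\Spec A$ then exhibits $F$ as a derived Deligne--Mumford stack (resp., when $F^{\mathrm{cl}}$ is $0$-truncated, a derived algebraic space).

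The main obstacle is the first step: classical Artin representability is itself a deep theorem, and the real work is checking that conditions (3) and (4) together with $\mathbb{L}_{F/R}$ produce an obstruction theory meeting Artin's precise requirements — coherence and finiteness of the obstruction modules, compatibility with formal completion, and constructibility — which is precisely where the $G$-ring hypothesis is indispensable. Granting classical representability, the derived lifting in the second step is comparatively routine deformation theory along \'etale maps.
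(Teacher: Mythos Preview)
Your proposal is correct and matches the paper's approach: the paper does not prove this theorem at all but simply cites it as a special case of \cite[Theorem 7.1.6]{lurie_thesis}, exactly as you do in your first paragraph. The expository sketch you provide is additional content beyond what the paper offers, but it is accurate and not in conflict with anything there.
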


\begin{remark}
Here, we do not have to know what a derived algebraic space (resp. derived Deligne-Mumford stack) is, precisely---for our purposes, we could just as well take the theorem above to be their \emph{definition}.\footnote{Though this would only account for those derived algebraic spaces or Deligne-Mumford stacks that are almost locally finitely presented. See~\cite{lurie_thesis} for the actual definitions.} It is enough for us to know that the classical truncation is a classical algebraic space as given in\cite[\href{https://stacks.math.columbia.edu/tag/025X}{Tag 025X}]{stacks-project} (resp. a classical Deligne-Mumford stack).

A \defnword{derived scheme} is now simply a derived algebraic space whose restriction to $\mathrm{CRing}_{\heartsuit,R/}$ is represented by a classical scheme. The basic example is the affine derived scheme $\Spec C$ with $C\in \mathrm{CRing}_{R/}$.

As explained in~\cite[(2.1.1)]{Khan2020-pd}, we can think of a (quasi-compact) derived scheme (resp. derived algebraic space, resp. derived Deligne-Mumford stack) over $R$ as an \'etale sheaf of prestacks admitting a Zariski (resp. Nisnevich, resp. \'etale) cover by an affine derived scheme.
\end{remark}

When we already know that the classical truncation of $F$ is representable, we need much less. The following result can be found in~\cite[Appendix C]{TV2}.
\begin{theorem}[`Easy' Lurie representability]\label{thm:lurie_representability_easy}
A prestack $F$ over $R\in \mathrm{CRing}$ is represented by a derived Deligne-Mumford stack (resp. derived algebraic space, resp. derived scheme) over $R$ if and only if:
\begin{enumerate}
	\item The classical truncation of $F$ is represented by a classical Deligne-Mumford stack (resp. algebraic space, resp. scheme) over $\pi_0(R)$.
	\item $F$ is infinitesimally cohesive.
  \item $F$ is nilcomplete.
	\item $F$ admits a cotangent complex over $\Spec R$.
\end{enumerate}
\end{theorem}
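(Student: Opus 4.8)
The plan is as follows. The ``only if'' direction is routine: if $F$ is represented by a derived Deligne--Mumford stack $X$ over $R$, then $X^{\mathrm{cl}}$ is a classical Deligne--Mumford stack (resp.\ algebraic space, resp.\ scheme); infinitesimal cohesiveness and nilcompleteness hold because, working \'etale-locally on $X$, they reduce to the corresponding statements for $\Spec$ of an animated ring, which are immediate since $\mathrm{CRing}_{R/}$-representable functors preserve the relevant limits of rings; and the existence of $\mathbb{L}_{X/R}\in \mathrm{QCoh}^{\mathrm{acn}}_X$ is part of the standard cotangent formalism. So the content is the converse, and I would prove it by constructing the derived enhancement of $F^{\mathrm{cl}}$ one Postnikov layer at a time, assembling the layers using nilcompleteness.

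Write $X_0 := F^{\mathrm{cl}}$, which is a classical Deligne--Mumford stack (resp.\ algebraic space, resp.\ scheme) by hypothesis (1), and let $F_{\leq n}$ denote the restriction of $F$ to $\mathrm{CRing}_{\leq n,R/}$. First I would show, by induction on $n$, that $F_{\leq n}$ is represented by an $n$-truncated derived Deligne--Mumford stack $X_{\leq n}$ with $X_{\leq n}^{\mathrm{cl}} = X_0$; the base case $n=0$ is hypothesis (1). For the inductive step, recall from the square-zero formalism of~\eqref{subsec:square_zero} that every $C\in \mathrm{CRing}_{\leq n+1,R/}$ sits in a canonical pullback square in $\mathrm{CRing}_{R/}$ with vertices $C$, $\tau_{\leq n}C$, $\tau_{\leq n}C$, and $\tau_{\leq n}C\oplus \pi_{n+1}(C)[n+2]$, all of whose edges are square-zero thickenings. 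Infinitesimal cohesiveness (2) turns this into a pullback square for $F$, which exhibits $F(C)$ as a fiber product involving $F(\tau_{\leq n}C)$ --- represented by $X_{\leq n}$ by the inductive hypothesis --- and the ``linear'' term $F(\tau_{\leq n}C\oplus \pi_{n+1}(C)[n+2])$; the fibers of $F(D\oplus M)\to F(D)$ are in turn computed by $\underline{\Map}(x^*\mathbb{L}_{F/R},M)$ via the cotangent complex (4). Thus $F_{\leq n+1}$ is built from the representable functor $F_{\leq n}$ by gluing in, over each point, a torsor-like affine-linear piece controlled by the almost-connective complex $\mathbb{L}_{F/R}$, and the crux is that this operation preserves representability by an $(n+1)$-truncated derived Deligne--Mumford stack.

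To make this precise I would pass to an \'etale atlas $u_0\colon U_0\to X_0$ by affine schemes (Zariski for the scheme case; in the Deligne--Mumford case also lifting the \'etale groupoid $U_1\rightrightarrows U_0$ presenting $X_0$). Since \'etale morphisms have vanishing relative cotangent complex, they lift uniquely (up to contractible choice) along derived thickenings, so it suffices to produce a compatible tower of animated rings $A_{\leq n}$ with $\pi_0(A_{\leq n})=A_0$ and refining points $x_{\leq n}\in F(A_{\leq n})$, where $A_{\leq n+1}$ is the square-zero extension of $A_{\leq n}$ classified by the appropriate shift of $x_{\leq n}^*\mathbb{L}_{F/R}$ --- here almost-connectivity of $\mathbb{L}_{F/R}$ guarantees that the relevant cohomology lands in the degree that yields an $(n+1)$-truncated extension. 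Because $F$ is an \'etale sheaf, these canonical local models descend and glue to $X_{\leq n+1}$. Finally one sets $X := \varprojlim_n X_{\leq n}$; its classical truncation is $X_0$, its structure sheaf is $\varprojlim_n \Reg{X_{\leq n}}$, and nilcompleteness (3) gives $F(C)\simeq \varprojlim_n F(\tau_{\leq n}C)\simeq \varprojlim_n X_{\leq n}(\tau_{\leq n}C)\simeq X(C)$ for all $C$, so $F\simeq X$. I expect the main obstacle to be exactly the inductive step --- showing that the ``linear layer'' is representable and that the \'etale-local square-zero extensions can be chosen functorially in $A_0$ so as to descend, and that the resulting $X_{\leq n+1}$ remains Deligne--Mumford (resp.\ an algebraic space, resp.\ a scheme); this is essentially a relative-to-$X_{\leq n}$ version of the deformation theory underlying the full representability theorem~\eqref{thm:lurie_representability}, but now with the classical base already in hand, so no finite-presentation or integrability input is needed.
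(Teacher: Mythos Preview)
The paper does not provide a proof; the statement is cited from \cite[Appendix C]{TV2}. Your outline follows the standard Postnikov-tower approach found there, and the architecture---represent $F_{\leq n}$ by $X_{\leq n}$ inductively using infinitesimal cohesiveness, lift an \'etale atlas of $X_0$ through the tower since \'etale maps deform uniquely, and assemble via nilcompleteness---is the right one.

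There is, however, a real gap at the heart of the inductive step. You write that $A_{\leq n+1}$ is ``the square-zero extension of $A_{\leq n}$ classified by the appropriate shift of $x_{\leq n}^*\mathbb{L}_{F/R}$'', but this does not pin down an extension: a square-zero thickening of $A_{\leq n}$ by a module $M$ in degree $n+1$ is classified by a map $\mathbb{L}_{A_{\leq n}/R}\to M[n+2]$, and neither $M$ nor this map is a shift of $x_{\leq n}^*\mathbb{L}_{F/R}$ alone. The construction that actually works uses the \emph{relative} cotangent complex $\mathbb{L}_{A_{\leq n}/F}=\hcoker\bigl(x_{\leq n}^*\mathbb{L}_{F/R}\to\mathbb{L}_{A_{\leq n}/R}\bigr)$: one shows this is $(n+2)$-connective (because $\Spec A_{\leq n}\to F$ is an equivalence on $n$-truncated inputs), sets $M=\pi_{n+2}(\mathbb{L}_{A_{\leq n}/F})$, and takes the classifying map to be the composite $\mathbb{L}_{A_{\leq n}/R}\to\mathbb{L}_{A_{\leq n}/F}\to M[n+2]$. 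Verifying that the resulting $\Spec A_{\leq n+1}$ corepresents $F_{\leq n+1}$ \'etale-locally is then the actual substance of the step. A secondary point: you invoke ``$F$ is an \'etale sheaf'' in the gluing, but this is not among (1)--(4); in \cite{TV2} it is an explicit hypothesis, and if it is to be omitted here it must be derived from (1)--(4), which is a separate (though routine) Postnikov induction.
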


\subsection{}\label{subsec:mapping_functor}
An example of a functor to which the above theorem applies is the functor of maps between (derived) schemes over $R$; this is considered in~\cite[(2.2.6.3)]{TV2}. Suppose that we have two presheaves $X,Y\in \mathrm{PStk}_{R}$; then we can consider the mapping functor
\[
\shvMap(X,Y): \mathrm{CRing}_{R/} \to \mathrm{Spc}
\]
whose points on $C\in \mathrm{CRing}_{R/}$ are given by 
\[
\shvMap(X,Y)(C) = \Map_{\mathrm{PStk}_C}(X_C,Y_C).
\]

Now, suppose that $X$ is a derived Deligne-Mumford stack over $R$ with an \'etale cover by $U = \Spec A$ with $A\in \mathrm{CRing}_{R/}$. This gives rise to the cosimplicial object 
\[
A^{(\bullet)}\in \Fun(\bm{\Delta},\mathrm{CRing}_{R/})
\]
with $\underbrace{U\times_X\times \cdots \times_XU}_{n} = \Spec A^{(n)}$. 

Suppose that $Y$ satisfies \'etale descent;  then we see that we have
\[
\Map_{\mathrm{PStk}_C}(X_C,Y_C) = \mathrm{Tot}( Y(C\otimes_RA^{(\bullet)}) ).
\]

If, in addition, $Y$ admits a cotangent complex $\mathbb{L}_{Y/R}$, and if we have a trivial square-zero thickening $C\oplus M\to C$ in $\mathrm{CRing}_{R/}$, then, for any point $f\in \Map_{\mathrm{PStk}_C}(X_C,Y_C)$ mapping to $f^{(\bullet)}\in Y(C\otimes_RA^{(\bullet)})$, we find that
\[
\mathrm{fib}_f\left(\Map_{\mathrm{PStk}_C}(X_{C\oplus M},Y_{C\oplus M})\to \Map_{\mathrm{PStk}_C}(X_C,Y_C)\right)
\]
is equivalent to
\[
\mathrm{Tot}\left(\mathrm{fib}_{f^{(\bullet)}}\left(Y((C\otimes M)\otimes_RA^{(\bullet)})\to Y(C\otimes_RA^{(n)})\right)\right)\simeq \mathrm{Tot}\left(\Map_{\mathrm{Mod}_{C\otimes_RA^{(\bullet)}}}((f^{(\bullet)})^*\mathbb{L}_{Y/R},M\otimes_RA^{(\bullet)})\right).
\]

Suppose now that the structure map $g:X\to \Spec R$ is proper and of finite Tor dimension; for instance, it could be the base change over $R$ of a proper and flat scheme over some discrete ring mapping to $R$. . Then the above limit is equivalent to
\[
\Map_{\mathrm{QCoh}_{X_C}}(f^*\mathbb{L}_{Y/R},g_C^*M) = \Map_{\Mod{C}}(g_{C,+}f^*\mathbb{L}_{Y/R},M),
\]
where $g_{C,+}:\mathrm{QCoh}_{X_C}\to \Spec C$ is the left adjoint to $g_C^*$ given by Grothendieck-Serre duality; see~\cite[Prop. 6.4.5.3]{Lurie2018-kh}.

\begin{proposition}
\label{prop:deform_maps}
Suppose the following hold: 
\begin{enumerate}
  \item $X$ is of proper and of finite Tor dimension over $R$;
  \item $Y$ is nilcomplete and infinitesimally cohesive, and admits a perfect cotangent complex over $R$;
\end{enumerate}
Then the mapping functor $\shvMap(X,Y)$ is also nilcomplete and infinitesimally cohesive, and admits a cotangent complex that at a point $f\in \Map(X_C,Y_C)$ is equivalent to $g_{C,+}f^*\mathbb{L}_{Y/R}$. In particular, if the restriction of $\shvMap(X,Y)$ to $\mathrm{CRing}_{\heartsuit,R/}$ is represented by a classical Deligne-Mumford stack (scheme) over $\pi_0(R)$, then $\shvMap(X,Y)$ is represented by a derived Deligne-Mumford stack (scheme) over $R$.
\end{proposition}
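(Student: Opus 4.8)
The plan is to verify the four hypotheses of the "easy" Lurie representability theorem (Theorem~\ref{thm:lurie_representability_easy}) for the prestack $\shvMap(X,Y)$ over $R$, since by assumption the classical truncation is already known to be representable. The essential computations were sketched in~\eqref{subsec:mapping_functor}; what remains is to organize them and to check the three abstract properties (nilcompleteness, infinitesimal cohesion, existence of a cotangent complex). First I would fix an \'etale cover $U=\Spec A\to X$ and the associated cosimplicial object $A^{(\bullet)}\in\Fun(\bm{\Delta},\mathrm{CRing}_{R/})$, so that for $Y$ an \'etale sheaf we have, for every $C\in\mathrm{CRing}_{R/}$,
\[
\shvMap(X,Y)(C)\simeq \mathrm{Tot}\!\left(Y(C\otimes_R A^{(\bullet)})\right).
\]
(I should remark that $Y$ being infinitesimally cohesive does not by itself give \'etale descent, but the intended application is to $Y$ which is a derived scheme or algebraic space, hence an \'etale sheaf; alternatively one replaces $Y$ by its \'etale sheafification without changing the hypotheses used, since cotangent complexes and the other properties are \'etale-local. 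I would add a line clarifying this point.)

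Granting the totalization formula, nilcompleteness and infinitesimal cohesion of $\shvMap(X,Y)$ follow formally: both are conditions asserting that $\shvMap(X,Y)$ carries certain limit diagrams in $\mathrm{CRing}_{R/}$ to limit diagrams in $\mathrm{Spc}$. Since $C\mapsto C\otimes_R A^{(n)}$ preserves the relevant diagrams (Postnikov towers for nilcompleteness, the square-zero Cartesian squares of Definition~\ref{defn:inf_cohesive}, because $\otimes_R A^{(n)}$ is an exact functor on connective modules) and $Y$ respects them by hypothesis, each $C\mapsto Y(C\otimes_R A^{(n)})$ does too; totalization is a limit, and limits commute with limits, so $\shvMap(X,Y)$ inherits both properties. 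For the cotangent complex, I would run the computation from~\eqref{subsec:mapping_functor}: at a point $f\in\shvMap(X,Y)(C)$ lifting to $f^{(\bullet)}\in Y(C\otimes_R A^{(\bullet)})$, the fiber of $\shvMap(X,Y)(C\oplus M)\to\shvMap(X,Y)(C)$ is
\[
\mathrm{Tot}\!\left(\Map_{\Mod{C\otimes_R A^{(\bullet)}}}\bigl((f^{(\bullet)})^*\mathbb{L}_{Y/R},\,M\otimes_R A^{(\bullet)}\bigr)\right)\simeq \Map_{\mathrm{QCoh}_{X_C}}(f^*\mathbb{L}_{Y/R},g_C^*M),
\]
using that $\mathbb{L}_{Y/R}$ is perfect (so pullback and the internal-hom are well-behaved) and that $\mathrm{QCoh}_{X_C}$ is recovered as the totalization of $\Mod{C\otimes_R A^{(\bullet)}}$ along the \'etale hypercover. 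Finally, since $g\colon X\to\Spec R$ is proper and of finite Tor dimension, the pushforward $g_{C,+}$ left adjoint to $g_C^*$ exists (Grothendieck--Serre duality, \cite[Prop.~6.4.5.3]{Lurie2018-kh}) and preserves perfect objects, so $\Map_{\mathrm{QCoh}_{X_C}}(f^*\mathbb{L}_{Y/R},g_C^*M)\simeq\Map_{\Mod{C}}(g_{C,+}f^*\mathbb{L}_{Y/R},M)$; this exhibits $g_{C,+}f^*\mathbb{L}_{Y/R}$ as the value at $f$ of an almost-connective quasi-coherent sheaf $\mathbb{L}_{\shvMap(X,Y)/R}$, with compatibility under base change inherited from that of $\mathbb{L}_{Y/R}$ and of $g_{C,+}$.

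With all four hypotheses verified, Theorem~\ref{thm:lurie_representability_easy} applies and yields that $\shvMap(X,Y)$ is a derived Deligne--Mumford stack (resp.\ derived scheme) over $R$ whenever its classical truncation is a classical Deligne--Mumford stack (resp.\ scheme) over $\pi_0(R)$. I expect the main obstacle to be the careful handling of the cotangent complex computation, specifically the interchange of the totalization $\mathrm{Tot}$ with the formation of mapping spectra of modules and the identification of $\mathrm{Tot}\,\Mod{C\otimes_R A^{(\bullet)}}$ with $\mathrm{QCoh}_{X_C}$: this requires knowing that $f^*\mathbb{L}_{Y/R}$ is a bounded-above perfect (or at least almost perfect) complex so that the relevant limits converge and commute, and that properness plus finite Tor dimension of $g$ is exactly what makes $g_{C,+}$ exist and preserve the boundedness needed for $\mathbb{L}_{\shvMap(X,Y)/R}$ to be almost connective. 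The remaining points — \'etale descent for $Y$, exactness of $\otimes_R A^{(n)}$, and commuting limits with limits — are routine and I would not spell them out in detail.
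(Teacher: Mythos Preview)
Your approach is essentially the same as the paper's: the paper simply cites To\"en--Vezzosi (Lemma 2.2.6.13 of \cite{TV2}) for nilcompleteness and infinitesimal cohesion of $\shvMap(X,Y)$, then invokes the cotangent-complex computation already done in~\eqref{subsec:mapping_functor} and applies Theorem~\ref{thm:lurie_representability_easy}. Your proposal supplies a direct argument in place of the citation, but otherwise matches step for step.

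One imprecision worth fixing in your nilcompleteness step: the functor $C\mapsto C\otimes_R A^{(n)}$ does \emph{not} carry the Postnikov tower of $C$ to the Postnikov tower of $C\otimes_R A^{(n)}$, so nilcompleteness of $Y$ alone does not give $Y(C\otimes_R A^{(n)})\simeq\varprojlim_k Y(\tau_{\le k}C\otimes_R A^{(n)})$. What is true is that the fiber of $\tau_{\le k}C\otimes_R A^{(n)}\to \tau_{\le k-1}C\otimes_R A^{(n)}$ is $\pi_k(C)[k]\otimes_R A^{(n)}$, which is $k$-connective, so this tower consists of increasingly connective square-zero extensions; one then uses nilcompleteness \emph{together with} infinitesimal cohesion of $Y$ (comparing with the genuine Postnikov tower of $C\otimes_R A^{(n)}$, whose truncations agree with $\tau_{\le k}(\tau_{\le k}C\otimes_R A^{(n)})$) to conclude. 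This is presumably what the To\"en--Vezzosi reference unpacks; your argument for infinitesimal cohesion via exactness of $-\otimes_R A^{(n)}$ on Cartesian squares is fine as stated.
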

\begin{proof}
 Under our hypotheses, To\"en and Vezzosi show that the mapping functor is nilcomplete and infinitesimally cohesive (see Lemma 2.2.6.13 and the following paragraph in \emph{loc. cit.}). 
Therefore, the Proposition follows from Theorem~\ref{thm:lurie_representability_easy} and the discussion above.
\end{proof}	

\begin{remark}
\label{rem:mapping_stack_criteria}
Suppose that $R$ is discrete. The conditions above hold when $X$ and $Y$ are projective schemes over $R$ with $Y$ a local complete intersection over $R$: this follows from the theory of Hilbert schemes. 
\end{remark}

\begin{remark}
\label{rem:abelian_scheme_dualizing}
If $\mathbb{L}_{Y/R} \simeq \Reg{Y}\otimes_R\kappa_Y$ for some \emph{perfect} object $\kappa_Y\in \Mod[\mathrm{perf}]{R}$, then by~\cite[Prop. 6.4.5.4]{Lurie2018-kh}, we have
\[
g_{C,+}f^*\mathbb{L}_{Y/R} \simeq (g_{C,*}f^*\mathbb{L}_{Y/R}^\vee)^\vee\simeq C\otimes_R(\kappa_Y\otimes_R R\Gamma(X,\Reg{X})^\vee).
\]
This shows that the cotangent complex for $\shvMap(X,Y)$ is the pullback of the object
\[
\kappa_Y\otimes_R R\Gamma(X,\Reg{X})^\vee\in \Mod{R}.
\]

This applies in particular when $R$ is discrete and $Y$ is an abelian scheme over $R$, in which case, we can take $\kappa_Y = \omega_{Y/R}$ to be the module of invariant differentials on $Y$. It also applies when $Y = B\Gm$, the classifying stack of line bundles, in which case we can take $\kappa_Y = R[1]$.
\end{remark}

\section{Quasi-smooth morphisms}\label{sec:qsmooth}

\begin{definition}
Suppose that we have $R\in \mathrm{CRing}$. If $C$ is finitely presented over $R$ and if $\mathbb{L}_{C/R}$ has Tor amplitude in $[-1,0]$, we say that $C$ is \defnword{quasi-smooth} over $R$. 

A derived Deligne-Mumford stack $X$ over $R$ is \defnword{locally quasi-smooth (resp. smooth, \'etale, resp. unramified)} if it admits an \'etale cover $\{U_i\}$ with $U_i \simeq \Spec C_i$ for a quasi-smooth (resp. smooth, \'etale, resp. unramified) object in $\mathrm{CRing}_{R/}$. This is equivalent to saying that $X$ is locally finitely presented and $\mathbb{L}_{X/R}\in \mathrm{QCoh}_X$ has Tor amplitude in $[-1,0]$ (resp. $\mathbb{L}_{X/R}$ is a vector bundle, resp. $\mathbb{L}_{X/R}\simeq 0$, resp. $\mathbb{L}_{X/R}$ is $1$-connective).

A representable map of prestacks $f:X\to Y$ over $R$ is \defnword{locally quasi-smooth (resp. smooth, resp. \'etale, resp. unramified)} if for every $C\in \mathrm{CRing}_{R/}$ and $y\in Y(C)$, the fiber product
\[
X\times_{Y,y}\Spec C\to \Spec C
\]
is a locally quasi-smooth (resp. smooth, resp. \'etale, resp. unramified) derived scheme.
\end{definition}

\begin{definition}
If $X\to Y$ is locally quasi-smooth and unramified, then $\mathbb{L}_{X/Y}[-1]$ is a vector bundle over $X$ of finite rank, and we will call this rank the \defnword{virtual codimension of $X$ in $Y$}.
\end{definition}

\subsection{}
For every $n\geq 0$, we have a symmetric power functor
  \[
    \mathrm{Sym}^n: \Mod[\mathrm{cn}]{}\to \Mod[\mathrm{cn}]{}
  \]
  fibered over $\mathrm{CRing}$: this is obtained by animating the functor
  \[
      (R,N)\mapsto (R,\Sym^n_R(N))
  \]
  where $R$ is a finitely generated polynomial algebra over $\Int$ and $N$ is a finite free $R$-module; for more on this, see~\cite[\S 25.2]{Lurie2018-kh}.

  The direct sum 
  \[
      \Sym_R(N) = \bigoplus_{n=0}^\infty \Sym_R^n(N)
  \]
  is naturally an object in $\mathrm{CRing}_{R/}$, equipped with a canonical equivalence
  \[
     \mathbb{L}_{\Sym_R(N)/R}\simeq \Sym_R(N)\otimes_RN.
  \]
  See for example~\cite[(25.3.2.2)]{Lurie2018-kh}. 

Therefore, we see:
\begin{itemize}
  \item When $N$ is finite locally free, $\Sym_R(N)$ is smooth over $R$;
  \item When $N$ is perfect with Tor amplitude $[-1,0]$, $\Sym_R(N)$ is quasi-smooth over $R$.
\end{itemize}

Globalizing this construction, for any connective complex $\mathcal{F}\in \mathrm{QCoh}^{\mathrm{cn}}_X$ over a derived Deligne-Mumford stack $X$, we obtain a map
\[
\mathbf{V}(\mathcal{F})\to X
\]
whose base-change over any \'etale map $x:\Spec R\to X$ satisfies
\[
\mathbf{V}(\mathcal{F})\times_X\Spec R \simeq \Spec \Sym_R(\mathcal{F}_x)
\]
Note that we have 
\[
\mathbb{L}_{\mathbf{V}(\mathcal{F})/X}\simeq \Reg{\mathbf{V}(\mathcal{F})}\otimes_{\Reg{X}}\mathcal{F}.
\]

\begin{remark}
\label{rem:relative_functor_of_points}
Let $\mathrm{Aff}_R = \mathrm{CRing}_{R/}^{\op}$ be the $\infty$-category of affine derived schemes over $R$. Given any prestack $X$ over $R$, we can consider the comma category $\mathrm{Aff}_{/X}$ of arrows $\Spec C\to X$ of prestacks over $R$ with $C\in \mathrm{CRing}_{R/}$; equivalently, this is the opposite category to that of pairs $(C,x)$ where $C\in \mathrm{CRing}_{R/}$ and $x\in X(C)$.

By general considerations giving a map of prestacks $Y\to X$ is equivalent to specifying the associated presheaf of spaces on $\mathrm{Aff}_{/X}$ given by $(C,x)\mapsto \mathrm{fib}_x(Y(C)\to X(C))$.
\end{remark}

\begin{remark}
\label{rem:zero_section_cotangent_complex}
Via the previous remark, the map $\mathbf{V}(\mathcal{F})\to X$ is determined by the associated presheaf $\mathrm{Aff}_{/X}^{\op}\to \mathrm{Spc}$ given by
\[
(C,x)\mapsto \Map_{\Mod{C}}(\mathcal{F}_x,C).
\]

In particular, there is a canonical zero section $0:X\to \mathbf{V}(\mathcal{F})$ given locally by the map $\Sym_C(\mathcal{F}_x)\to C$ in $\mathrm{CRing}_{C/}$ carrying $\mathcal{F}_x$ to $0$.

For any map $s:W\to \mathbf{V}(\mathcal{F})$, we can consider the fiber product
\[
Z(s) = W\times_{\mathbf{V}(\mathcal{F}),0}X\to W.
\]
This is the \defnword{zero locus} of $s$ and is a closed immersion\footnote{This is just a map that \'etale locally on the target looks like $\Spec C'\to \Spec C$ with $C\to C'$ surjective on $\pi_0$; in the terminology of~\eqref{subsec:anipair}, this is an arrow in $\mathrm{AniPair}$.} that has cotangent complex
\[
\mathbb{L}_{Z(s)/W} \simeq \Reg{Z(s)}\otimes_{\Reg{W}}\mathcal{F}_s[1].
\]

This follows from the local fiber sequence (for any $N\in \Mod[\mathrm{cn}]{R}$)
 \[
     (N\simeq R\otimes_{\Sym_R(N)}\mathbb{L}_{\Sym_R(N)/R})\to (\mathbb{L}_{R/R}\simeq 0)\to \mathbb{L}_{R/\Sym_R(N)}.
  \]

In particular, if $\mathcal{F}$ is a \emph{vector bundle} of rank $n$, $Z(s)\to W$ is a quasi-smooth closed immersion of virtual codimension $n$.
\end{remark}

Every quasi-smooth closed immersion locally looks like the example given above:
\begin{lemma}
[Rydh-Khan]
A closed immersion $X\to Y$ of derived Deligne-Mumford stacks over $R$ is quasi-smooth of virtual codimension $n$ if and only if \'etale locally on $X$ there exists a map $Y\to \mathbb{V}(\Reg{Y}^n) = \mathbb{A}^n_Y$ such that we have a Cartesian diagram of prestacks
\[
\begin{diagram}
X&\rTo&Y\\
\dTo&&\dTo_{0}\\
Y&\rTo&\mathbb{A}^n_{Y}.
\end{diagram}
\]
\end{lemma}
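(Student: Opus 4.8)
The statement to prove is a local structure result for quasi-smooth closed immersions, often attributed to Rydh (and appearing in work of Khan and collaborators): a closed immersion $X \hookrightarrow Y$ of derived Deligne--Mumford stacks is quasi-smooth of virtual codimension $n$ if and only if, \'etale-locally on $X$, it arises as the zero locus of a section of the trivial rank-$n$ bundle on $Y$, i.e.\ sits in a pullback square against the zero section $0\colon Y \to \mathbb{A}^n_Y$.

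\medskip

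\textbf{Plan of proof.} The ``if'' direction is immediate from Remark~\ref{rem:zero_section_cotangent_complex}: if $X = Y\times_{\mathbb{A}^n_Y,0} Y$, then the cotangent complex computation there gives $\mathbb{L}_{X/Y}\simeq \Reg{X}\otimes_{\Reg{Y}}\Reg{Y}^n[1]$, so $\mathbb{L}_{X/Y}[-1]$ is free of rank $n$, which is the definition of quasi-smooth unramified of virtual codimension $n$ (and a closed immersion is automatically unramified). So the content is the ``only if'' direction, and since the assertion is \'etale-local on $X$ (and then on $Y$, by passing to an \'etale chart), I may assume $Y = \Spec B$ and $X = \Spec B/I$ for $B \in \mathrm{CRing}_{R/}$ with $\pi_0(B)\to \pi_0(B/I)$ surjective, and $\mathbb{L}_{(B/I)/B}[-1]$ a locally free $B/I$-module of rank $n$; shrinking further I may assume it is \emph{free} of rank $n$.

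\medskip

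First I would pin down what a free rank-$n$ module in degree $1$ for $\mathbb{L}_{(B/I)/B}$ forces. The fiber sequence $I/{}^{\mathbb{L}} \to B \to B/I$ shows $\mathbb{L}_{(B/I)/B}$ is the shift of the relative ``conormal'' object, and working over $\pi_0$ one gets that the Zariski tangent space / conormal module $I_0/I_0^2$ (where $I_0 = \ker(\pi_0 B \to \pi_0(B/I))$) is locally free of rank $n$ over $\pi_0(B/I)$. Choosing a basis, and lifting the corresponding elements to $\pi_0(B)$ and then to $\pi_0$-classes of elements $a_1,\dots,a_n \in \pi_0(B)$ generating $I_0$ in a neighborhood (using that $I_0$ is finitely generated since $B/I$ is finitely presented over $B$), I get a map $\mathbb{A}^n_Y \leftarrow Y$, equivalently a map $f\colon \Spec B \to \mathbb{A}^n_{\Spec B} = \Spec B[x_1,\dots,x_n]$ sending $x_i \mapsto a_i$. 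Let $X' = \Spec\big(B/{}^{\mathbb{L}}(a_1,\dots,a_n)\big) = Y\times_{\mathbb{A}^n_Y,0}Y$ be the derived zero locus. There is then a canonical map $X' \to X$ over $Y$ (since the $a_i$ map to $0$ in $\pi_0(B/I)$, hence the derived quotient receives $B$-algebra maps compatibly), and I want to show it is an equivalence. Both $X'$ and $X$ are closed in $Y$, both are quasi-smooth of virtual codimension $n$ over $Y$ (for $X'$ by the ``if'' direction; for $X$ by hypothesis), and by construction the map $X' \to X$ induces an isomorphism on classical truncations $\pi_0 B/(a_1,\dots,a_n) \xrightarrow{\simeq} \pi_0(B/I)$ near the point in question (since the $a_i$ were chosen to generate $I_0$ locally). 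Now I invoke the rigidity principle already used in this paper, Corollary~\ref{cor:classical_truncation}(2) (via \cite[(25.3.6.6)]{Lurie2018-kh}): \emph{a quasi-smooth unramified morphism of derived Deligne--Mumford stacks of virtual codimension $0$ that is an isomorphism on classical truncations is an equivalence}. The map $X' \to X$ is quasi-smooth unramified — being a map between two objects that are each quasi-smooth unramified of the same virtual codimension $n$ over $Y$, so by the cotangent complex fiber sequence $\mathbb{L}_{X'/X}$ is concentrated in degrees $[-1,0]$ and indeed $\mathbb{L}_{X'/X}[-1]$ has rank $0$ — hence of virtual codimension $0$, and it is an isomorphism on classical truncations; therefore it is an equivalence. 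This exhibits $X \simeq X' = Y\times_{\mathbb{A}^n_Y,0}Y$ \'etale-locally, as desired.

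\medskip

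\textbf{Main obstacle.} The delicate point is producing the elements $a_1,\dots,a_n$ with the two required properties \emph{simultaneously}: that they generate the classical ideal $I_0$ \'etale-locally (so that the classical truncations agree), \emph{and} that the resulting map $X'\to X$ has vanishing relative cotangent complex rather than merely agreeing on $\pi_0$. The honest way to arrange this is to choose the $a_i$ so that their images form a \emph{basis} of the conormal module $I_0/I_0^2$ at the point — which is exactly where the rank-$n$ freeness of $\mathbb{L}_{(B/I)/B}[-1]$ is used — and then Nakayama plus the finite presentation of $X$ over $Y$ upgrades ``basis of $I_0/I_0^2$'' to ``minimal generating set of $I_0$'' on a smaller \'etale neighborhood; the derived regularity (i.e.\ that $\mathbb{L}_{X/Y}[-1]$ has \emph{no} higher homology, not just rank $n$) is then what forces $X' \to X$ to be an equivalence via the rigidity lemma. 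I would also need to check the routine compatibility that the chosen map $f$ really does exhibit $X$ as the honest pullback against the zero section (as opposed to some twist), but this is formal once the $a_i$ are fixed, since $Y\times_{\mathbb{A}^n_Y,0}Y$ is by definition $\Spec(B\otimes^{\mathbb{L}}_{B[x_1,\dots,x_n]}B)$ with $x_i\mapsto a_i$ on one factor and $x_i \mapsto 0$ on the other, which is precisely $\Spec B/{}^{\mathbb{L}}(a_1,\dots,a_n) = X'$.
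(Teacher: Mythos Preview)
Your approach is exactly that of \cite[Prop.~2.3.8]{Khan2018-dk}, which is all the paper invokes here. Two things need correcting.

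First, the comparison map runs the other way. The universal property of the derived quotient gives $B$-algebra maps \emph{out of} $B/{}^{\mathbb{L}}(a_1,\dots,a_n)$ to any $C$ in which the $a_i$ are nullhomotopic; since the $a_i$ vanish in $\pi_0(B/I)$ you get $B/{}^{\mathbb{L}}(a)\to B/I$, i.e.\ a map $X\to X'$, not $X'\to X$.

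Second, the step ``$\mathbb{L}_{X/X'}$ is concentrated in degrees $[-1,0]$ and has rank $0$, hence virtual codimension $0$'' does not follow from the transitivity triangle alone: the cofiber of a map between two objects of the form $N[1]$ with $N$ free of rank $n$ has Tor amplitude in $[-2,-1]$ and need not vanish just because the ranks agree. What you actually need---and this is precisely where your basis choice does its work---is that the map $i^*\mathbb{L}_{X'/Y}\to\mathbb{L}_{X/Y}$ in the triangle is an \emph{equivalence}. Under the identification $\pi_1(\mathbb{L}_{(B/I)/B})\simeq I_0/I_0^2$ of \cite[(25.3.6.1)]{Lurie2018-kh}, that map carries the standard basis of $(B/I)^n$ to the classes of the $a_i$ in $I_0/I_0^2$, which you chose to be a basis; hence it is an isomorphism of free rank-$n$ modules, $\mathbb{L}_{X/X'}\simeq 0$, and \cite[(25.3.6.6)]{Lurie2018-kh} concludes. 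This is exactly how the paper argues in the proof of Lemma~\ref{lem:quasi-smooth_with_section}.
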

\begin{proof}
See~\cite[Prop. 2.3.8]{Khan2018-dk} and its proof. Note that the definition of quasi-smooth in \emph{loc. cit.} is in terms of the local criterion stated in the lemma, and what they show is that that definition agrees with our differential one here.
\end{proof}

\begin{remark}
What the above lemma is saying is that \'etale locally on $X$, the structure sheaf for $X$ is represented by a Koszul complex on $Y$ associated with a sequence of $n$ global sections.
\end{remark}

The following lemma will be used in Section~\ref{sec:cycle_classes}.
\begin{lemma}
\label{lem:quasi-smooth_with_section}
Suppose that $X$ is a classical Deligne-Mumford stack, and that $Y\to X$ is a finite unramified, quasi-smooth map of derived Deligne-Mumford stacks, equipped with a section $s:X\to Y$ inducing an equivalence $X\xrightarrow{\simeq}Y^{\mathrm{cl}}$. Then there is a canonical equivalence
\[
X\times_{0,\mathbf{V}(s^*\mathbb{L}_{Y/X}[-1]),0}X\xrightarrow{\simeq}Y
\]
of quasi-smooth schemes over $X$. 
\end{lemma}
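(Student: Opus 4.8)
The plan is to reduce the statement to an \'etale-local computation over $X$, identify the right-hand side with an explicit total space, and then exhibit $Y$ as the same total space.

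Since the statement is \'etale-local on $X$, I would take $X=\Spec A$ with $A$ classical and, using that $Y\to X$ is quasi-smooth and unramified, arrange that $\mathcal{E}:=s^*\mathbb{L}_{Y/X}[-1]$ is free of rank $n$, the virtual codimension. Applying Remark~\ref{rem:zero_section_cotangent_complex} with $W=X$, $\mathcal{F}=\mathcal{E}$ and the zero section identifies the left-hand side with $\Spec\!\bigl(A\otimes^{\mathbb{L}}_{\Sym_A\mathcal{E}}A\bigr)$, and the standard identification of this iterated derived quotient with $\Spec\Sym_A(\mathcal{E}[1])=\mathbf{V}(\mathcal{E}[1])=\mathbf{V}(s^*\mathbb{L}_{Y/X})$ (Koszul complex of the zero sequence; cf.\ d\'ecalage for symmetric powers) shows that $X\times_{0,\mathbf{V}(\mathcal{E}),0}X\simeq\mathbf{V}(s^*\mathbb{L}_{Y/X})$, carrying the zero section $X\to\mathbf{V}(s^*\mathbb{L}_{Y/X})$ and with relative cotangent complex over $X$ the pullback of $\mathcal{E}[1]$. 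Thus $Y\to X$ and $\mathbf{V}(s^*\mathbb{L}_{Y/X})\to X$ are both finite, unramified, quasi-smooth of virtual codimension $n$, each equipped with a section inducing an equivalence onto the classical truncation $X$, and the restrictions of their relative cotangent complexes along these sections are both $\mathcal{E}[1]=s^*\mathbb{L}_{Y/X}$.

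Next I would present $Y$ explicitly. By the structure theory for quasi-smooth morphisms (\cite[\S 2.3]{Khan2018-dk}, the same input used in the previous lemma), \'etale-locally on $Y$ there is a factorization $Y\hookrightarrow W\to X$ with $W\to X$ smooth and $Y\hookrightarrow W$ a quasi-smooth closed immersion of codimension $c$, so $Y$ is locally the zero locus of a section $\tau$ of $\mathcal{O}_W^{\oplus c}$. Composing $s$ with $Y\hookrightarrow W$ gives a section of $W\to X$, so $W\to X$ is smooth of some relative dimension $d$ and, near that section, $W\cong\mathbb{A}^d_X=\Spec A[x_1,\dots,x_d]$ with the section at the origin; a cotangent-complex count gives $c=n+d$. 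Since $Y^{\mathrm{cl}}=s(X)=V(x_1,\dots,x_d)$, the sequence $\tau=(\tau_1,\dots,\tau_c)$ generates the ideal $(x_1,\dots,x_d)$; in particular each $\tau_j$ lies in $(x_1,\dots,x_d)$ and the linear parts of the $\tau_j$ span $(x_1,\dots,x_d)/(x_1,\dots,x_d)^2$. Because the derived ring underlying a Koszul complex is unchanged, canonically, under the $\mathrm{GL}_c(\mathcal{O}_W)$-action on the defining sequence, one can normalize $\tau$ (after a further localization so that Nakayama's lemma applies) to the form $(\tau_1,\dots,\tau_d,0,\dots,0)$ with $\tau_1,\dots,\tau_d$ a regular sequence generating $(x_1,\dots,x_d)$. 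The Koszul complex of this sequence then splits as the tensor product of the Koszul resolution of $A[\vec x]/(\tau_1,\dots,\tau_d)=A$ with the Koszul complex of the zero sequence of length $n$, giving $Y\simeq A\otimes_{A[\vec x]}\bigl(A[\vec x]\otimes_A\Sym_A(A^{\oplus n}[1])\bigr)\simeq\Sym_A(\mathcal{E}[1])$, where the identification of $A^{\oplus n}$ with $\mathcal{E}=s^*\mathbb{L}_{Y/X}[-1]$ is read off from $\mathbb{L}_{\Sym_A(A^{\oplus n}[1])/A}$ and the section. This yields the required equivalence locally.

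Finally one must check that the local equivalences are independent of the choices of $W$, the coordinates, and the $\mathrm{GL}_c$-normalization, so that they glue to a single equivalence over $X$ compatible with the sections and with the two identifications of the relative cotangent complex with $\mathcal{E}[1]$: the honest automorphisms of $\mathbf{V}(s^*\mathbb{L}_{Y/X})$ over $X$ fixing the section and inducing the identity on the cotangent complex reduce, by writing such an automorphism through its effect on the generators $s^*\mathbb{L}_{Y/X}\hookrightarrow\Sym$ and using vanishing of negative $\Ext$ between vector bundles, to the identity, so the local equivalences agree in $\pi_0$ and glue. A cleaner route to the canonicity asserted is to identify both sides directly as functors over $X$: a deformation-theoretic computation (induction along Postnikov towers, using $Y^{\mathrm{cl}}=s(X)$ and quasi-smoothness of $Y\to X$) should show that for $x:\Spec C\to X$ the space of lifts of $x$ along $Y\to X$ is naturally $\Map_{\mathcal{O}_{\Spec C}}\bigl(x^*s^*\mathbb{L}_{Y/X},\mathcal{O}_{\Spec C}\bigr)$, which is exactly the functor represented by $\mathbf{V}(s^*\mathbb{L}_{Y/X})$ (Remark~\ref{rem:zero_section_cotangent_complex}). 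I expect the main obstacle to be this identification of $Y$ with the explicit model --- in the first approach, normalizing the defining sequence so as to split off the $n$ redundant equations using precisely the section and the classical-truncation hypothesis, and then controlling the gluing; in the second, the Postnikov-tower deformation argument --- whereas the reduction to the affine case and the Koszul computation of the self-intersection are routine.
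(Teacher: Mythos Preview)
Your local computation is fine, but the gluing step has a real gap. The claim that automorphisms of $\mathbf{V}(s^*\mathbb{L}_{Y/X})$ over $X$ fixing the section and inducing the identity on the cotangent complex ``reduce to the identity'' is only a statement about $\pi_0$ of the automorphism \emph{space}: an $A$-algebra endomorphism of $\Sym_A(\mathcal{E}[1])$ fixing the augmentation and the cotangent is a point of $\prod_{k\geq 2}\Map_A(\mathcal{E}[1],(\wedge^k\mathcal{E})[k])\simeq\prod_{k\geq 2}K(\mathcal{E}^\vee\otimes_A\wedge^k\mathcal{E},\,k-1)$, which has trivial $\pi_0$ but nontrivial higher homotopy as soon as $n\geq 2$. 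In derived geometry ``agree in $\pi_0$'' does not imply ``glue''; one has to control the higher coherences. Here the obstructions happen to live in \'etale cohomology of quasi-coherent sheaves on an affine, so they do vanish and your approach can be salvaged, but that is not the argument you wrote. Your alternative Postnikov-tower sketch is not carried out either.

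The paper's proof sidesteps all of this by producing the map globally from a universal property. After reducing to $X=\Spec R$, $Y=\Spec C$, and setting $N=R\otimes_C\mathbb{L}_{C/R}[-1]$, it uses the section to write $C\simeq R\oplus F[1]$ in $\Mod{R}$, identifies $N\simeq\pi_1(C)$ via \cite[(25.3.6.1)]{Lurie2018-kh}, and then invokes \cite[Lemma 2.3.5]{Khan2018-dk}: the derived self-intersection $R\otimes_{\Sym_R(N)}R$ parameterizes loops, so
\[
\pi_0\Map_{\mathrm{CRing}_{R/}}\bigl(R\otimes_{\Sym_R(N)}R,\,C\bigr)\simeq\Hom_R(N,\pi_1(C)).
\]
The identity element gives a \emph{canonical} homotopy class of maps $R\otimes_{\Sym_R(N)}R\to C$, which is then an equivalence by \cite[(25.3.6.6)]{Lurie2018-kh} since it is an isomorphism on $\pi_0$ and on cotangent complexes. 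No presentation of $Y$, no normalization of defining equations, no gluing.
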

\begin{proof}
We immediately reduce to the local situation where $X = \Spec R$ with $R$ discrete and $Y = \Spec C$ with $C$ finite, unramified and quasi-smooth in $\mathrm{CRing}_{R/}$, equipped with a section $C\to R$. We now have to show that there is a canonical equivalence
\[
R\otimes_{\Sym_R(N)}R \xrightarrow{\simeq}C
\]
in $\mathrm{CRing}_{R/}$. Here, the left hand side is the (derived) base-change over $R$ of the zero section $\Sym_R(N)\to R$ along itself, where 
\[
N = R\otimes_C\mathbb{L}_{C/R}[-1].
\]

For this we argue as in~\cite[Proposition 2.3.8]{Khan2018-dk}. Set $
F = \mathrm{hker}(R\to C)$; then the existence of the section shows that we have $C \simeq R\oplus F[1]\in \mathrm{Mod}^{\mathrm{cn}}_R$

By~\cite[(25.3.6.1)]{Lurie2018-kh}, there is a natural equivalence $\pi_0(C\otimes_RF)\xrightarrow{\simeq} \pi_1(\mathbb{L}_{C/R})$, where the left hand side in turn is equivalent to
\[
\pi_0(F[1]\otimes_RF)\oplus \pi_0(F)\simeq \pi_0(F)\simeq \pi_1(C).
\]
Since the fiber of the section $C\to R$ is $1$-connective, we see that $\pi_1(\mathbb{L}_{C/R})\simeq N$, where $N$ is as in the statement of the lemma. Therefore, we have a canonical equivalence $N\xrightarrow{\simeq}\pi_1(C)$.

Now, the point is that $R\otimes_{\Sym_R(N)}R$ parameterizes loops from the zero section to itself. More precisely, by~\cite[Lemma 2.3.5]{Khan2018-dk}, we have
\[
\pi_0\left(\mathrm{Map}_{\mathrm{CRing}_{R/}}(R\otimes_{\Sym_R(N)}R,C)\right)\simeq \Hom_R(N,\pi_1(C)).
\]
Therefore we have a canonical (homotopy class of) arrow(s)
\[{}
R\otimes_{\Sym_R(N)}R\to C,
\]
of quasi-smooth unramified animated $R$-algebras, which by construction is an isomorphism on $\pi_0$ as well as on cotangent complexes. We now conclude by~\cite[(25.3.6.6)]{Lurie2018-kh}.
\end{proof}

\section{Chow groups and $K$-groups}\label{app:cycle_classes}
\subsection{}
Suppose that $X$ is a classical, regular, equidimensional Deligne-Mumford stack of finite type over $\Int$. As explained in~\cite[\S 2]{Gillet2009-tw}, we have the rational $K$-group $K_0(X)_{\Rat}$, which is endowed with a ring structure arising from tensor product of complexes, and is equipped with the \defnword{coniveau filtration} $F^\bullet K_0(X)_{\Rat}$. See~\cite[Appendix A.2]{HMP:mod_codim} for details. In particular, for any coherent sheaf $\mathcal{F}$ over $X$ supported on a closed substack of codimension $d$, we obtain a canonical class $[\mathcal{F}]\in F^dK_0(X)_{\Rat}$.

We set
\[
\mathrm{Gr}^d_{\gamma}K_0(X)_{\Rat} = F^dK_0(X)_{\Rat}/F^{d+1}(X)_{\Rat}.
\] 

\begin{theorem}[Gillet-Soul\'e, Khan]
\label{thm:lambda}
\begin{enumerate}
  \item The exterior power operations $\mathcal{F}\mapsto \wedge^k\mathcal{F}$ give rise to ring homomorphisms, the Adams operations $\psi^k:K_0(X)_{\Rat}\rightarrow K_0(X)_{\Rat}$. If 
  \[
   K_0(X)_{\Rat}^{(d)} = \{\alpha\in K_0(X)_{\Rat}^{(d)}:\;\psi^k(\alpha) = k^d\alpha\text{ for all $d\geq 1$}\},
  \]
  then we have $K_0(X)_{\Rat}^{(d)}\subset F^dK_0(X)_{\Rat}$, inducing an isomorphism
  \[
    K_0(X)_{\Rat}^{(d)}\xrightarrow{\simeq}\gr^d_{\gamma}K_0(X)_{\Rat}.
  \]

  \item If $\pi:Z\to X$ is a quasi-smooth finite unramified morphism of derived stacks of virtual codimension $d$, then we have
  \[
   [\pi_*\Reg{Z}]\in K_0(X)_{\Rat}^{(d)}.
  \]

\end{enumerate}

\end{theorem}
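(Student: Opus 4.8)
The statement has two parts, and I would treat them separately but with a common thread: the key algebraic input is that the $K$-theoretic class of a finite Koszul complex of length $d$ lies in the $d$-th Adams eigenspace, which is the observation of Gillet--Soul\'e recalled (in the classical case) in the introduction via \cite{Gillet1987-ny}, together with the refinement from \cite{sga6} that the structure sheaf of a regular immersion of codimension $d$ also lies there. Part (1) is essentially the content of \cite[\S\S 3--6]{sga6} combined with the comparison of the $\gamma$-filtration and the coniveau filtration; I would not reprove it, but rather cite it in the form found in \cite{Gillet2009-tw} or \cite[Appendix A.2]{HMP:mod_codim}. The only point worth spelling out is that on a regular equidimensional Deligne--Mumford stack of finite type over $\Int$ the map $K_0(X)^{(d)}_{\Rat}\to \gr^d_\gamma K_0(X)_{\Rat}$ is an isomorphism: this follows from the Riemann--Roch-type theorem of Gillet--Soul\'e, using that $X$ admits an ample family of line bundles (or, for stacks, resolution-type arguments as in \cite{Gillet2009-tw}), so that $K_0$ is generated by classes of vector bundles and the $\lambda$-ring formalism applies.

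For part (2), the plan is to reduce to the Koszul case by Khan's derived version of the deformation-to-the-normal-bundle argument. First I would observe that the question is local on $X$ in the \'etale topology: $K_0$ rational classes, Adams eigenspaces, and the coniveau filtration all behave well under \'etale (even flat) pullback, and a quasi-smooth finite unramified morphism of virtual codimension $d$ is, \'etale-locally on $X$, pulled back from the zero section $0\colon Y\hookrightarrow \mathbb{A}^d_Y$ along a map $Y\to \mathbb{A}^d_Y$ — this is the Rydh--Khan local structure result quoted as the lemma in Appendix~\ref{sec:qsmooth}. Hence $\pi_*\Reg{Z}$ is \'etale-locally represented by the Koszul complex on $X$ attached to a sequence of $d$ global functions. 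Now the content of \cite{khan:gtheory} (Khan's $G$-theory / $K$-theory realization of the virtual fundamental class, following \cite{sga6} in the derived setting) is precisely that the class of such a structure sheaf — equivalently, of this Koszul complex, but now \emph{without} any regularity hypothesis on the section, since we are in the derived world — lies in $K_0(X)^{(d)}_{\Rat}$. I would invoke this directly. The one subtlety is globalization: the local Koszul presentations need not glue as complexes, only the virtual fundamental class does. This is handled by the functoriality and descent properties of Khan's construction: the assignment $Z\mapsto [\pi_*\Reg{Z}]\in K_0(X)_{\Rat}$ is defined globally (it is just the class of the perfect complex $\pi_*\Reg{Z}$, which makes sense since $\pi$ is finite and $Z$ is quasi-smooth over the regular base $X$, hence $\pi_*\Reg{Z}$ is a perfect $\Reg{X}$-complex), and the containment in the $d$-th Adams eigenspace can be checked after \'etale localization because $\psi^k$ commutes with \'etale pullback and $\bigcap$ over an \'etale cover detects membership in a sub-$\Rat$-vector space cut out by eigenvalue conditions.

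I expect the main obstacle to be purely expository rather than mathematical: namely, making precise the passage from ``$\pi_*\Reg{Z}$ is \'etale-locally a Koszul complex of length $d$'' to ``$[\pi_*\Reg{Z}]$ lies in the $d$-th Adams eigenspace'', in a way that is valid for Deligne--Mumford \emph{stacks} and not just schemes, and over $\Int$ rather than a field. The stack case requires knowing that $K_0(X)_{\Rat}$ for $X$ a regular DM stack of finite type over $\Int$ still carries a well-behaved $\lambda$-ring structure with the splitting into Adams eigenspaces — this is in \cite{Gillet2009-tw} — and that Khan's virtual-class machinery applies to such stacks, which it does since it is built motivically. Given these inputs, the argument is a formal reduction; the substantive work has already been done in \cite{sga6}, \cite{khan:gtheory}, and \cite{Gillet2009-tw}, and the role of this theorem in the paper is simply to package them. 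Accordingly I would keep the proof short, emphasizing the \'etale-local reduction to the Koszul complex and citing Khan for the derived Adams-eigenspace statement.
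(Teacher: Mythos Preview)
Your proposal for (1) matches the paper's: reduce to schemes and cite Gillet--Soul\'e. For (2), however, the paper takes a different route. You propose an \'etale-local reduction to the Koszul case via the Rydh--Khan local structure lemma, followed by a descent argument to globalize the Adams-eigenspace membership. The paper instead observes that Khan's theorem in \cite{khan:gtheory} is stated only for quasi-compact quasi-projective algebraic spaces over a field, and then argues that an inspection of Khan's proof shows it needs only two ingredients: the existence of Adams operations on $K_0(X)_{\Rat}$ (which is part (1)) and the excess intersection formula for the derived blowups of \cite{Khan2018-dk}---an argument already essentially in \cite{sga6}. This is more robust than your \'etale-local-Koszul-plus-descent strategy, which has two loose ends you did not fully address: first, the Rydh--Khan description is \'etale-local on the \emph{source} of the closed immersion, not on the target $X$, so you would need an extra step to translate this into something checkable on $X$; second, your claim that Adams-eigenspace membership can be tested after \'etale pullback presupposes injectivity of \'etale pullback on $K_0(X)_{\Rat}$, which is true for regular schemes but needs to be said. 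The paper's approach sidesteps both issues by working directly with the global perfect complex $\pi_*\Reg{Z}$ and the deformation-to-the-normal-cone/blowup argument, rather than patching local presentations.
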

\begin{proof}
The existence of Adams operations, and their relationship with the coniveau filtration reduces to the case of schemes, where it is explained in~\cite[Theorem 4.6]{Gillet1987-ny}.

As for (2), once again, it reduces to the case where $X$ is a scheme. If $X$ is a quasi-compact and quasi-projective algebraic space over a field, then this is shown in~\cite[Theorem 6.21]{khan:gtheory}. However, an analysis of the proof shows that it only requires the existence of Adams operations on $K_0(X)_{\Rat}$---which is assertion (1)---as well as the excess intersection formula for the derived blowups constructed in~\cite{Khan2018-dk}. This argument is essentially already contained in ~\cite{sga6}.
\end{proof}

\subsection{}
For any equidimensional classical Noetherian Deligne-Mumford stack $Z$, we also have the rational Chow group $\mathrm{CH}^\bullet(Z)_{\Rat}$, which are covariant for finite morphisms; see the discussion in~\cite[\S A.1]{HMP:mod_codim}. 

Given a finite map $\pi:Z\to X$ of such stacks with $\dim Z = \dim X - r$, we have the \defnword{fundamental class} $[Z] \in \mathrm{CH}^r(X)_\Rat$; see~\cite[A.1.2]{HMP:mod_codim}.

The following result is due to Gillet-Soul\'e and Gillet; see~\cite{Gillet1987-ny},~\cite{Gillet1984-tk},~\cite{Gillet2009-tw}. For more details, see also the discussion in~\cite[Appendix A]{HMP:mod_codim}.
\begin{theorem}
\label{thm:GS}
Assume once again that $X$ is a classical, regular Noetherian Deligne-Mumford stack over $\Int$.
\begin{enumerate}
 \item There are natural isomorphisms
\[
\mathrm{CH}^{d}(X)_\Rat \xrightarrow{\simeq} \mathrm{Gr}^d_{\gamma} K_0(X)_\Rat  \xleftarrow{\simeq} K_0(X)_{\Rat}^{(d)}.
\]
\item If $\pi:Z\to X$ is a finite morphism with $Z$ classical of dimension $\dim X - d$, then the first isomorphism above carries the fundamental class $[Z]\in \mathrm{CH}^d(X)_\Rat$ to the image of the class $[\pi_*\Reg{Z}]\in F^dK_0(X)_{\Rat}$. 
\item Multiplication in $K$-theory induces maps
  \[
     K_0(X)_{\Rat}^{(d_1)}\times K_0(X)_{\Rat}^{(d_2)}\to K_0(X)_{\Rat}^{d_1+d_2}
  \]
  which give rise via the isomorphisms in (1) to the intersection product
  \[
     \mathrm{CH}^{d_1}(X)_{\Rat}\times \mathrm{CH}^{d_2}(X)_{\Rat}\to \mathrm{CH}^{d_1+d_2}(X)_{\Rat}.
  \]
\item There is a commuting diagram:
\[
\begin{diagram}
\Pic(X)_\Rat &\rTo^{\mathcal{L}\mapsto [\Reg{M}]-[\mathcal{L}^{-1}]}& K_0(X)_{\Rat}^{(1)}\\
\dTo^{c_1}_\simeq &\ruTo_{\simeq}\\
\mathrm{CH}^1(X)_\Rat,
\end{diagram}
\]
where $c_1$ is the Chern class map sending the isomorphism class of a line bundle to the class of the Weil divisor of any rational section.
\end{enumerate}
\end{theorem}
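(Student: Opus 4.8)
The plan is to assemble this from the work of Gillet--Soul\'e on intersection theory via Adams operations together with Gillet's comparison of $K$-theory with Chow groups, following the exposition in~\cite[Appendix A]{HMP:mod_codim}. The first preparatory step is to reduce everything to the case where $X$ is a regular equidimensional Noetherian scheme: since $X$ is a classical Deligne--Mumford stack of finite type over $\Int$, it admits an \'etale atlas by regular affine schemes, and both $K_0(-)_{\Rat}$ and $\mathrm{CH}^\bullet(-)_{\Rat}$ carry the pullback and transfer functoriality needed for the comparison maps, the coniveau and gamma filtrations, and the Adams operations to be compatible with descent along this atlas and its \v{C}ech nerve; regularity and equidimensionality are preserved. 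One then either works \'etale-locally or, as in \emph{loc. cit.}, quotes the versions of the Gillet--Soul\'e statements already phrased for Deligne--Mumford stacks.

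For part (1), I would first recall that for a regular scheme the coniveau filtration $F^\bullet K_0(X)_{\Rat}$ agrees rationally with the gamma filtration, whose associated graded is computed by the Adams eigenspaces $K_0(X)^{(d)}_{\Rat}$; this is exactly Theorem~\ref{thm:lambda}(1), whose input is the $\lambda$-ring structure constructed by Gillet--Soul\'e. The identification $\mathrm{CH}^d(X)_{\Rat}\xrightarrow{\simeq}\gr^d_\gamma K_0(X)_{\Rat}$ is then the rational Riemann--Roch isomorphism of Gillet--Soul\'e: the map sends the class of an integral codimension-$d$ subvariety $V$ to $[\Reg{V}]$, and bijectivity follows from d\'evissage along associated primes together with the projective-bundle and localization formalism of~\cite{sga6}, as recalled in~\cite{Gillet1987-ny,Gillet2009-tw}. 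Naturality in $X$ is built into the construction. Part (2) is then essentially definitional once (1) is available: the fundamental class $[Z]$ of a finite $\pi\colon Z\to X$ of codimension $d$ is $\sum_i \on{length}(\Reg{Z,\eta_i})\,[\overline{\{\eta_i\}}]$ over the generic points $\eta_i$ of $Z$, the isomorphism of (1) carries each $[\overline{\{\eta_i\}}]$ to $[\Reg{\overline{\{\eta_i\}}}]$, and a short d\'evissage along the filtration of $\pi_*\Reg{Z}$ by its associated primes shows $[\pi_*\Reg{Z}]$ has the same image in $\gr^d_\gamma K_0(X)_{\Rat}$.

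For part (3), the key point is that the multiplicative structure on $K_0(X)_{\Rat}$ is compatible with the filtration---$F^{d_1}\cdot F^{d_2}\subset F^{d_1+d_2}$ and the Adams operations are ring homomorphisms---so multiplication descends to a pairing $\gr^{d_1}_\gamma\times\gr^{d_2}_\gamma\to\gr^{d_1+d_2}_\gamma$, and that this pairing matches the intersection product under the isomorphisms of (1) is the content of the Gillet--Soul\'e comparison, ultimately resting on the \cite{sga6} form of Grothendieck--Riemann--Roch. Part (4) is a direct computation: for a line bundle $\mathcal{L}$, choosing a rational section exhibits $[\Reg{X}]-[\mathcal{L}^{-1}]$ as the class of a complex supported in codimension $\geq 1$, hence an element of $F^1K_0(X)_{\Rat}$, and unwinding the definition of the first Chern class and of the divisor of a rational section identifies its image in $\gr^1_\gamma K_0(X)_{\Rat}$ with $c_1(\mathcal{L})$.

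The main obstacle I anticipate is not a single deep step but the bookkeeping required to run the Gillet--Soul\'e machinery---classically stated for schemes, or at most quasi-projective schemes over a field---over regular Deligne--Mumford stacks of finite type over $\Int$ under only mild hypotheses: one must check that the comparison isomorphism is genuinely natural and compatible with the transfer maps defining $[Z]$, and that no flatness or quasi-projectivity assumptions slip in. Since all of this is carried out in~\cite[Appendix A]{HMP:mod_codim}, the proof proper is then a matter of quoting the relevant statements there and applying (1)--(2) to the quasi-smooth derived stacks produced in the body of the paper, combined with Theorem~\ref{thm:lambda}(2) to see that their structure sheaves land in the correct Adams eigenspace.
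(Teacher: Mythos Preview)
Your proposal is correct and in fact more detailed than what the paper provides: the paper gives no proof of this theorem at all, merely stating it with the attribution ``due to Gillet--Soul\'e and Gillet; see~\cite{Gillet1987-ny},~\cite{Gillet1984-tk},~\cite{Gillet2009-tw}. For more details, see also the discussion in~\cite[Appendix A]{HMP:mod_codim}.'' Your outline of how the pieces assemble from these sources, and your identification of the main bookkeeping issue (extending the scheme-theoretic statements to Deligne--Mumford stacks over $\Int$), accurately reflects what one would find upon consulting those references.
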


\begin{definition}
\label{defn:qs_cycle_class}
For every quasi-smooth finite unramified morphism $\pi:Z\to X$ of derived stacks with $Z$ of virtual codimension $d$, we get a \defnword{fundamental class}
\[
[Z/X]\in \mathrm{CH}^d(X)_{\Rat}
\]
carried to the class $[\pi_*\Reg{Z}]\in K_0(X)_{\Rat}^{(d)}$ (see (2) of Theorem~\ref{thm:lambda}) under the isomorphisms of Theorem~\ref{thm:GS}(1).
\end{definition}

\begin{remark}
\label{rem:product_classes}
If $\pi_1:Z_1\to X$ and $\pi_2:Z_2\to X$ are two quasi-smooth finite unramified maps of virtual codimensions $d_1$ and $d_2$, then we have
  \[
     [Z_1/X]\cdot [Z_2/X] = [(Z_1\times_XZ_2)/X]\in \mathrm{CH}^{d_1+d_2}(X)_{\Rat},
  \]
  where on the right hand side we are taking the derived fiber product 
  \[
  \pi:Z_1\times_XZ_2\to X.
  \]

  Note that $Z_1\times_XZ_2\to X$ is once again quasi-smooth and finite unramified of virtual codimension $d_1+d_2$: its relative cotangent complex is $p_1^*\mathbb{L}_{Z_1/X}\oplus p_2^*\mathbb{L}_{Z_2/X}$, where $p_1:Z_1\times_XZ_2\to Z_1$ and $p_2:Z_1\times_XZ_2\to Z_2$ are the two projections. 

  So the desired equality is immediate from the fact that we have a canonical equivalence
  \[
    \pi_{1,*}\Reg{Z_1}\otimes_{\Reg{X}}\pi_{2,*}\Reg{Z_2}\simeq \pi_*\Reg{Z_1\times_XZ_2}\in \mathrm{QCoh}_X.
  \]
\end{remark}

\begin{remark}
\label{rem:top_chern_class}
Suppose that we have a vector bundle $\mathcal{E}$ over $X$ of rank $r$. Consider the `trivial' quasi-smooth morphism $Y = X\times_{0,\mathbf{V}(\mathcal{E})}\{0\}\to X$ with relative cotangent complex $\mathcal{E}$. Then the class
\[
[Y/X]\in \mathrm{CH}^r(X)_{\Rat}
\]
is equal to $c_{\mathrm{top}}(\mathcal{E}^\vee) = (-1)^r c_{\mathrm{top}}(\mathcal{E})$. 

Indeed, the class in question is represented in $K_0(X)_{\Rat}^{(r)}$ by the Koszul complex associated with the zero section of $\mathcal{E}$; in other words, it is
\[
\sum_{i=0}^{r}(-1)^i[\wedge^i\mathcal{E}]\in K_0(X)_{\Rat}^{(r)}.
\]
We want to see that this is carried to $c_{\mathrm{top}}(\mathcal{E}^\vee)$ under the inverse of the isomorphism in Theorem~\ref{thm:GS}(1). Using the splitting principle and the definition of the top Chern class, it suffices to check this for the Koszul complex associated with the zero section of a line bundle, where it follows from Theorem~\ref{thm:GS}(4)
\end{remark}

\begin{remark}
\label{rem:adeel_construction}
 There is another way of obtaining cycle classes, using the theory in~\cite{khan:virtual}: Here, Khan associates with every quasi-smooth morphism $Z\to X$ of virtual codimension $d$, and every \'etale motivic spectrum $\mathcal{F}$ over $\Int$, a canonical virtual fundamental class
  \[
      [Z]_{\mathcal{F}}\in H^{2d}(X,\mathcal{F}(d)),
  \]
  where the right hand side involves motivic cohomology; see~\cite[(3.21)]{khan:virtual}. There is a natural cup product on motivic cohomology, and given another quasi-smooth morphism $Z'\to X$ of virtual codimension $d'$ we have 
  \[
     [Z]_{\mathcal{F}}\cup [Z']_{\mathcal{F}} = [Z\times_XZ']_{\mathcal{F}}\in H^{2(d+d')}(X,\mathcal{F}(d+d')).
  \]

  As explained in~\cite[Example 2.10]{khan:virtual}, if we take $\mathcal{F}$ to be the rational motivic cohomology spectrum $\mathbf{Q}$, and $X$ to be a smooth scheme over a field, then there is an identification
  \[
     H^{2d}(X,\mathbf{Q}(d))\simeq \mathrm{CH}^d(X)_{\Rat},
  \]
  and we once again recover virtual fundamental classes in Chow groups associated with quasi-smooth morphisms, and compatible with products in a natural way.

    This excludes the case of interest to us, which is of a flat, regular scheme (or Deligne-Mumford stack) over $\Int$.
\end{remark}

\printbibliography

\end{document}